 \numberwithin{equation}{section}
      \newtheorem{theorem}{Theorem}[section]
\theoremstyle{break} \newtheorem{satz}{Theorem}
      \newtheorem{lemma}[theorem]{Lemma}
      \newtheorem{corollary}[theorem]{Corollary}
      \newtheorem{definition}[theorem]{Definition}
        \newtheorem{proposition}[theorem]{Proposition}  \newtheorem{conj}[theorem]{Conjecture}        \newtheorem{question}[theorem]{Question}
\newtheorem{remark}[theorem]{Remark}
      \theoremstyle{remark}
      \newtheorem{example}[theorem]{Example}
\definecolor{orange}{rgb}{1,0.5,0}
\newcommand\proofsymbol{\frame{\rule[0pt]{0pt}{8pt}\rule[0pt]{8pt}{0pt}}}
\newcommand\with{\ \vrule\ }  
\newcommand\C{\mathbb C}         
\newcommand\R{\mathbb R}         
\newcommand\N{\mathbb N}         
\newcommand\Ha{\mathbb H}        
\newcommand\D{\mathbb D}         
\newcommand\Do{\mathcal{D}_{n}}         
\renewcommand\i{\operatorname{i}}        
\renewcommand\Im{\operatorname{Im}}        
\renewcommand\Re{\operatorname{Re}}        
\newcommand\aut{\operatorname{Aut}}
\newcommand\id{\operatorname{\bf id}}
      \newcommand{\eps}{\varepsilon}
      \newcommand{\lip}{\operatorname{Lip_{left}}\left(\frac{1}{2}\right)}
   \newcommand{\Lip}{\operatorname{Lip}\left( \frac{1}{2}\right)}
\newcommand{\LandauO}{\mathcal{O}} 
\newcommand{\Landauo}{{\scriptstyle\mathcal{O}}}
\newcommand{\diam}{\operatorname{diam}}
\newcommand{\dist}{\operatorname{dist}}
\newcommand{\cara}{\overset{Cara}{\longrightarrow}}
\newcommand{\supp}{\operatorname{supp}}
\newcommand{\hcap}{\operatorname{hcap}}
\newcommand{\B}{\mathbb B}
\newcommand{\I}{\mathcal{I}} \newcommand{\M}{\mathcal{M}}
\newcommand{\Po}{\mathcal{P}} \newcommand{\Ho}{\mathcal{H}}
\begin{document} 
\parindent 0pt 

\setcounter{section}{0}

\begin{titlepage}


\begin{center}
\medskip
{{\LARGE {\bf Embedding Problems in Loewner Theory}}}\\

\medskip
\end{center}

\vspace*{2cm}
\begin{center}
Dissertationsschrift zur Erlangung des
naturwissenschaftlichen\\[1ex]
Doktorgrades der Julius-Maximilians-Universit\"at W\"urzburg

\end{center}
\vspace*{1cm}
\begin{figure}[h]
    \centering
   \includegraphics[width=60mm]{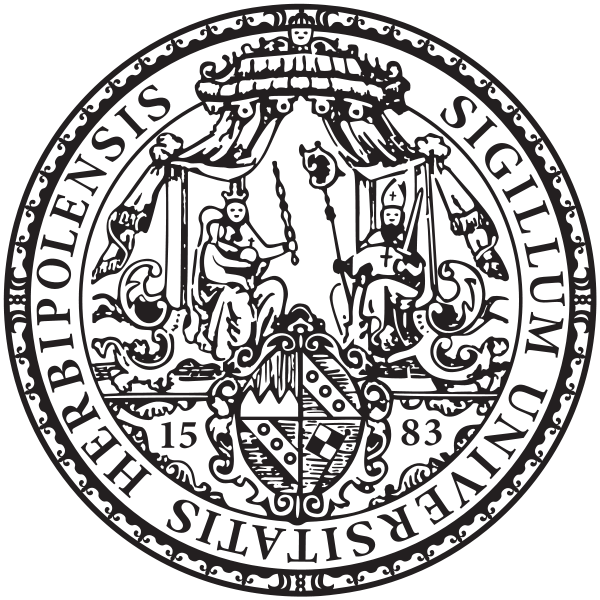}
 \end{figure}

\vspace*{1cm}

\begin{center}
vorgelegt von\\[4ex]
{ {\large Sebastian Schlei\ss inger}}\\[4ex]
aus\\[4ex]
W\"urzburg, Deutschland
\end{center}

\vspace*{2cm}

\begin{center}
{W\"urzburg 2013}
\end{center}

\end{titlepage}

\newgeometry{left=24mm,right=24mm, top=-5mm, bottom=25mm} 
\tableofcontents
 \newgeometry{left=24mm,right=24mm, top=30mm, bottom=35mm} 
\newpage
\section*{Notation}
 
\begin{tabular}{ll}
$\N$ & $:=\{1,2,3,...\}$, the set of positive integers\\[1mm]
$\N_0$ & $:=\N\cup\{0\}$, the set of non-negative integers\\[1mm]
$\R$ & the set of real numbers\\[1mm]
$\C$ & the set of complex numbers\\[1mm]
$\hat{\C}$ & := $\C\cup \{\infty\}$, the extended complex plane\\[1mm]
$\mathcal{B}(z,r)$ & :=$\{w\in\C\;|\; |w-z|<r\}$, the open Euclidean disc \\[1mm]
${}$ & with radius $r>0$ and center $z\in\C$\\[1mm]
$\D$  & := $\mathcal{B}(0,1)$, the (Euclidean) unit disc\\[1mm]
$\B_n$ & :=$\{(z_1,...,z_n)\in \C^n \;|\; \sum_{k=1}^n |z_k|^2<1\}$, the Euclidean unit ball\\[1mm]
$\mathcal{D}_n$ & either $\B_n$ or the polydisc $\D^n=\underbrace{\D\times...\times\D}_{\text{$n$ times}}$\\[1mm]
$\Ho(D,E)$ & the set of analytic functions defined in a domain $D\subset\C^n$ and mapping \\[1mm]
${}$ &   into the domain $E\subset \C^m$\\[1mm]
$U(\mathcal{D}_n)$ & :=$\{f\in \Ho(\mathcal{D}_n,\C^n) \;|\; f \; \text{is univalent}\}$\\[1mm]
$S(\mathcal{D}_n)$ & :=$\{f\in \Ho(\mathcal{D}_n,\C^n) \;|\; f \; \text{is univalent and}\; f(0)=0, Df(0)=I_n\}$\\[1mm]
$S^*(\mathcal{D}_n)$ & := the set of all $f\in S(\mathcal{D}_n)$ such that $f(\mathcal{D}_n)$ is starlike with respect to $0$\\[1mm]
$\aut(\C^n)$ & :=$\{f\in \Ho(\C^n,\C^n) \;|\; f \; \text{is biholomorphic}\}$\\[1mm]
$\textbf{id}_{D}$ & := the identity $\textbf{id}_{D}:D\to D,$ $\textbf{id}_D(z)=z$ for all $z\in D$, where $D$ is an arbitrary domain\\[1mm]
$\mathcal{I}(\mathcal{D}_n)$ & := the set of infinitesimal generators of semigroups in $\mathcal{D}_n$\\[1mm]
$E(\mathcal{D}_n)$ & := the set of all $f\in U(\mathcal{D}_n)$ that can be embedded into a Loewner chain with range $\C^n$\\[1mm]
$F(\mathcal{D}_n)$ & := the set of all $f\in U(\mathcal{D}_n)$ such that $f(\mathcal{D}_n)$ is a Runge domain\\[1mm]
$\widetilde{E}(\mathcal{D}_n)$ & := the set of all $f\in U(\mathcal{D}_n)$ that can be embedded into a Loewner chain whose range\\[1mm]
${}$ & is biholomorphic to $\C^n$\\[1mm]
$\dot{f}_t$ & :=$\frac{\partial f_t}{\partial t},$ the partial derivative with respect to the variable $t$\\[1mm]
$D{f}_t$ & := the partial derivative with respect to the variable $z\in \C^n$\\[1mm]
$S^0(\mathcal{D}_n)$ & the set of all $f\in S(\mathcal{D}_n)$ having parametric representation\\[1mm]
$\mathbb{H}$ & $:=\{x+iy\in\C \;|\; y>0\}$, the upper half-plane\\[1mm]
$\mathcal{H}_\infty$ & the set of all holomorphic mappings $f:\Ha\to\Ha$ with hydrodynamic normalization\\[1mm]
$\mathcal{H}_u$ & the set of all univalent $f\in\mathcal{H}_\infty$\\[1mm]
$\mathcal{H}_b$ & the set of all $f\in\mathcal{H}_u$ such that $\Ha\setminus f(\Ha)$ is bounded\\[1mm]
$g_A$ & the unique conformal mapping from $\Ha\setminus A$ onto $\Ha$ with hydrodynamic normalization,\\[1mm]
${}$ & where $A$ is a hull\\[1mm]
$\hcap(A)$ & the half-plane capacity of a hull $A$\\[1mm]
$C([a,b],\R)$ & the set of all continuous functions $f:[a,b]\to\R,$ $a,b\in\R$ with $a<b$\\[1mm]
$\diam(A)$ & :=$\displaystyle \sup_{z,w\in A}|z-w|,$ the diameter of a subset $A\subset \C$\\[1mm]

\end{tabular}

\newpage

\chapter{Introduction to Loewner theory}

Loewner theory arose from an article by C. Loewner published in 1923 (see \cite{Loewner:1923}) and has become a powerful tool in complex analysis. It consists of the study of biholomorphic mappings by certain differential equations. These equations make it possible to use various techniques, e.g. from control theory, in order to attack (extremal) problems concerning biholomorphic functions, e.g. the famous Bieberbach conjecture.\\
Conversely, the Loewner differential equations can be used to provide relatively simple models for the description of planar growth processes, such as Hele-Shaw flows. O. Schramm realized that, in particular, those models are extremely useful for certain stochastic growth models of curves in the plane. He invented the stochastic Loewner evolution (or Schramm-Loewner evolution, SLE, see \cite{MR1776084}), which turned out to have striking applications, e.g. in statistical physics. O. Schramm, G. Lawler and W. Werner investigated SLE in a series of articles. Among other things, they used SLE to prove the Mandelbrot conjecture\footnote{The Mandelbrot conjecture states that the boundary of a two-dimensional Brownian motion has fractal dimension $4/3.$ }, see \cite{MR1849257}. Recently, two Fields medals were awarded to W. Werner (2006) and S. Smirnov (2010) for their contributions to SLE and related fields.\\

From a geometric or function theoretic point of view, Loewner's differential equations arise from a family $\{D_t\}_{t\geq 0}$ of increasing (or decreasing) subdomains of $\C$ (or more generally: from a complex manifold like $\C^n$), i.e. $D_t \subseteq D_s$ for $t\leq s$. Suppose that all such domains are biholomorphically equivalent to $\D$, and, for every $t\geq0,$ denote by $f_t:\D\to D_t$ a conformal map. Under some conditions for the family $\{f_t\}_{t\geq0}$, the map $t\to f_t$ satisfies a differential equation, which allows an effective analytic description of the growth process of the domains. \\
In fact, this construction still works for families of domains which are not biholomorphically equivalent to one fixed domain. The case of an increasing family of multiply connected domains was considered by Y. Komatu in 1943, see \cite{KomatuZweifach} and \cite{Komatu}, and leads to so-called Komatu-Loewner equations; see also \cite{MR2357634}, \cite{annulus1}, \cite{annulus2} for some recent references.\\

In this work, we will only deal with the simpler case where all domains are biholomorphically equivalent to one domain $D$. In Chapter 2 we will look at the several variable cases where $D$ is either the Euclidean unit ball or the polydisc and in Chapter 3 we will consider the case $D=\D.$\\
In order to give some motivation for the problems we will talk about, we now give a short (and certainly not comprehensive) historical overview over the classical Loewner equations as they have been considered by Loewner, Pommerenke, Kufarev, et al. \\
Except for Example \ref{lion}, we will not discuss SLE at all, as our work is only related to the deterministic setting and because this topic with all its applications is too extensive to be explained in a few lines.

\section{The radial Loewner equation and the Bieberbach conjecture}
One of the most famous and useful results of complex analysis is the Riemann Mapping Theorem: every simply connected domain $\Omega\subsetneq \C$ is conformally equivalent to the unit disc $\D:=\{z\in\D\with |z|<1\},$ i.e. there exists a conformal (holomorphic and bijective) map $f:\D \to\Omega.$ This mapping is unique if we require the normalization $f(0)=z_0,$ $f'(0)>0,$ for some $z_0\in\Omega.$ Thus the set of all simply connected, proper subdomains of $\C$ can be described (modulo translation and scaling) by the class $S$ of normalized univalent functions:
$$S:=\{f\in\Ho(\D,\C)\with f(0)=0,\; f'(0)=1,\; f\; \text{univalent}\}.$$
Now it is of interest to find connections between analytic properties of a function $f\in S$ and geometric properties of its image domain $f(\D).$ Many results of this kind have been established since the beginning of the 20th century. Exemplarily, we mention the characterization of normalized starlike mappings: Denote by $S^*\subset S$ the set of all $f\in S$ that map $\D$ onto a domain that is starlike with respect to 0\footnote{A domain $D\subset \C$ with $0\in D$ is called \emph{starlike w.r.t. 0} if $z\in D$ implies $rz\in D$ for all $r\in[0,1].$}. Now the following characterization of $S^*$ is known: Let $f\in\Ho(\D,\C)$ with $f(0)=0$, $f'(0)=1.$ Then $f\in S^*$ if and only if 
\begin{equation}\label{star} \Re \left(\frac{f(z)}{zf'(z)}\right)>0 \qquad \text{for all} \; z\in\D. \end{equation} Every $f\in S$ can be described by its power series expansion in $0:$ If we write $f(z)=z+a_2z+a_3z^2+...$, then all properties of $f$ are encoded in the coefficients $a_2,a_3,...$  \\
In 1909, P. Koebe proved that $S$ is compact with respect to the topology induced by locally uniform convergence. Thus, for every $n\geq 2,$ there exists a universal bound for $|a_n|$ w.r.t. the class $S.$ In 1916, L. Bieberbach proved that $|a_n|\leq 2$ and he conjectured  that
$$|a_n|\leq n \qquad \text{for all} \; n\geq 2.$$
C. Loewner found a proof for the case $n=3$ in 1923, see \cite{Loewner:1923}. His article represents an important step towards a proof of the Bieberbach conjecture and a remarkable contribution to complex analysis in general, as he introduced a new method that has been extended to what is now called \emph{Loewner theory}. Loewner was inspired by the work of S. Lie and his idea was to describe conformal mappings as ``a concatenation of infinitesimal conformal mappings''. More precisely, this means to represent a conformal mapping as (the first element of) the solution of a non-autonomous version of the differential equation for one real-parameter semigroups of holomorphic self-mappings of $\D$ (or any other domain).\\  In the following we briefly describe the setting of Loewner's method, in which two differential equations appear, an ordinary and a partial differential equation for conformal mappings that fix the origin. Because of this normalization, these equations are nowadays called radial Loewner ODE and radial Loewner PDE. Major contributions to the theory following Loewner's original setting were made by C. Pommerenke.\\
A mapping $f:\D \times [0,\infty) \to \C$ is called \emph{normalized Loewner chain} if $f(\cdot,t)$ is univalent with $f(0,t)=0$, $f'(0,t)\footnote{By $f'$ we denote the partial derivative $\partial f/\partial z.$ }=e^t$ for all $t\geq0$ and $f(\D,s)\subset f(\D,t),$ whenever $0\leq s\leq t < \infty.$ \\
 In particular, the function $f(\cdot,s)$ is subordinated to $f(\cdot,t)$ when $s\leq t$, and we can write $$f(\cdot,s)=f(\varphi_{s,t}(\cdot),t)$$ where $z\mapsto \varphi_{s,t}(z)$ is a univalent mapping defined in $\D$ and bounded by $1$.  The ``transition mappings'' $\varphi_{s,t}$ have the normalization $\varphi_{s,t}(z)=e^{s-t}z+...$ and from the definition it follows that
$$ \varphi_{s,\tau} = \varphi_{t,\tau}\circ \varphi_{s,t} \qquad \text{for all}\; 0\leq s\leq t\leq \tau<\infty. $$
This property shows that the family $\{\varphi_{s,t}\}_{0\leq s\leq t}$, nowadays called \emph{evolution family}, is connected to semigroups of analytic self-mappings of $\D.$ Now, the following differential equations hold 
  \begin{eqnarray}
 \dot{f}(z,t)=&f'(z,t)\cdot zp(z,t) \qquad &\text{(Loewner PDE)}, \label{PDE}\\
\dot{\varphi}_{s,t}(z)=& -\varphi_{s,t}(z)\cdot p(\varphi_{s,t}(z),t) \qquad &\text{(Loewner ODE)}, \label{ODE}
  \end{eqnarray}
where $p(z,\cdot)$ is measurable for every $z\in\D$ and the function $p(\cdot,t)$ is analytic and maps $\D$ into the right half-plane with $p(0,t)=1$ for every $t\geq 0.$ Because of the normalizations of $f(z,t)$ and $\varphi_{s,t},$ these equations are called \emph{radial} Loewner equations.  

\begin{example}\label{infinf}
Let $f\in S^*.$ Then $f(z,t):=e^t\cdot f(z)$ is a normalized Loewner chain and it satisfies (\ref{PDE}) with $p(z,t)=p(z):= \frac{f(z)}{z\cdot f'(z)}.$ Note that $\Re(p(z))>0$ for all $z\in \D $ by relation (\ref{star}).\\
Conversely, one can show that if $p(z,t)$ does not depend on $t,$ then a normalized Loewner chain satisfying (\ref{PDE}) has the property $f(\cdot,t)=e^t\cdot f(\cdot,0)$. From this it follows immediately that $f(\cdot,0)\in S^*.$ \hfill $\bigstar$ 
\end{example}

One of the main problems in Loewner theory is to find connections between the function $p(z,t)$ and the corresponding evolution families and Loewner chains that satisfy the differential equations. Moreover, Loewner's description of conformal mappings makes it possible to attack various problems by the powerful methods of control theory.\\

Pommerenke showed that every $f\in S$ can be embedded into normalized Loewner chains, i.e. for every $f\in S$ there exists a normalized Loewner chain $f(z,t)$ with $f=f(\cdot,0).$ Loewner showed that this is true for all slit mappings and that the Loewner chain is uniquely determined in this case, see \cite{Loewner:1923}.
\begin{satz}\label{Michelle}
Let $f\in S,$ such that $f(\D)=\C\setminus \Gamma,$ where $\Gamma$ is a Jordan arc. Then there exists a uniquely determined normalized Loewner chain $f(z,t)$ with $f(\cdot ,0)=f$ and  
\begin{equation}\label{Obama} p(z,t) = \frac{1+\kappa(t)z}{1-\kappa(t)z}, \end{equation}
where $\kappa:[0,\infty)\to \partial \D$ is a continuous function, called \emph{driving function}.
\end{satz}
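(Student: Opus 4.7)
The approach is geometric. First, parametrize $\Gamma$ as a continuous injection $\eta:[0,1)\to\C$ with finite tip $\eta(0)$ and $\eta(s)\to\infty$ as $s\to 1^-$, and define $\Omega_s:=\C\setminus\eta([s,1))$. These domains are simply connected and strictly increasing in $s$, with $\Omega_0=f(\D)$ and $\bigcup_s\Omega_s=\C$. Let $\tilde f_s:\D\to\Omega_s$ be the Riemann map normalized by $\tilde f_s(0)=0$, $\tilde f_s'(0)>0$, so $\tilde f_0=f$. By the Schwarz lemma applied to $\tilde f_t^{-1}\circ\tilde f_s:\D\to\D$, the conformal radius $r(s):=\tilde f_s'(0)$ is strictly increasing; by Carath\'eodory's kernel convergence theorem (using continuity of $\eta$), $s\mapsto\tilde f_s$ is continuous, hence so is $r$; and $r(s)\to\infty$ as $s\to 1^-$ because the $\Omega_s$ exhaust $\C$.

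Second, reparametrize via $t:=\log r(s)$, a homeomorphism $[0,1)\to[0,\infty)$ with inverse $s(t)$, and set $f(z,t):=\tilde f_{s(t)}(z)$. Then $f(z,t)$ is a normalized Loewner chain with $f(\cdot,0)=f$. Write $g_t:=f(\cdot,t)^{-1}$; since $\partial\Omega_{s(t)}$ is a Jordan arc in $\hat\C$, Carath\'eodory's prime-end theorem lets one extend $g_t$ continuously at the tip prime end, and I define $\kappa(t)\in\partial\D$ as the image of the current tip $\eta(s(t))$ under this extension. Continuity of $\kappa$ will follow from continuity of $\eta$ and of $s(t)$ combined with stability of the prime-end extension under Carath\'eodory convergence of the $\Omega_{s(t)}$.

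Third, to identify $p(z,t)$, consider the transition maps $\varphi_{s,t}:=g_t\circ f(\cdot,s):\D\to\D$; for $s<t$ the image is $\D$ minus a Jordan arc $\sigma_{s,t}$ with exactly one endpoint $\kappa(t)\in\partial\D$, and $\sigma_{s,t}$ shrinks to the point $\kappa(t)$ as $s\uparrow t$. Starting from the Herglotz representation
\[p(z,t)=\int_{\partial\D}\frac{w+z}{w-z}\,d\mu_t(w)\]
of the generator $p(\cdot,t)$ (which exists by the general theory once the chain is constructed, with $\mu_t$ a probability measure by the normalization $p(0,t)=1$), I would exploit the one-sided slit geometry of $\sigma_{t-\varepsilon,t}$ to conclude $\mu_t=\delta_{\kappa(t)}$, which yields $p(z,t)=(1+\kappa(t)z)/(1-\kappa(t)z)$. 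Uniqueness is then immediate, since each $f(\cdot,t)$ is forced to be the normalized Riemann map of $\Omega_{s(t)}$.

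\textbf{Main obstacle.} The hardest step is the identification of $\mu_t$ as a Dirac mass at $\kappa(t)$. The existence of a generator $p(\cdot,t)$ with positive real part and $p(0,t)=1$ is part of the general theory, but its concentration at a single boundary point encodes the geometric fact that $\Omega_t$ grows only at the unique tip $\eta(s(t))$; making this concentration rigorous (weak-$*$ convergence of the measures coming from the vanishing slits $\sigma_{t-\varepsilon,t}$, joint continuity in $t$, and legitimate interchange of the limit with the Loewner ODE) requires careful use of Carath\'eodory convergence, as the boundary of $f(\D)$ need not have any regularity beyond being a Jordan arc.
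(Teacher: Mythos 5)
The paper itself contains no proof of Theorem \ref{Michelle}: it is quoted as a classical result from Loewner's 1923 paper, so your attempt can only be measured against the standard argument. Your architecture --- erase the arc from its free end, reparametrize by conformal radius, define $\kappa(t)$ as the prime-end image of the current tip, and identify the Herglotz measure of the generator as a point mass --- is exactly that classical route, and steps such as the strict monotonicity and continuity of $r(s)$, the exhaustion of $\C$, and the reparametrization are fine. Your uniqueness argument is also essentially correct, once you add the one missing observation that for \emph{any} normalized Loewner chain with $f_0=f$ the set $\hat{\C}\setminus f_t(\D)$ is a closed connected subset of $\Gamma\cup\{\infty\}$ containing $\infty$, hence a terminal sub-arc of the slit, so that the normalization pins $f_t$ down as the Riemann map of some $\Omega_{\sigma}$ with $r(\sigma)=e^t$.

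The genuine gap is the estimate you delegate to ``stability of the prime-end extension under Carath\'eodory convergence.'' Kernel convergence of the $\Omega_s$ gives locally uniform convergence of the Riemann maps \emph{inside} $\D$ and carries no boundary information whatsoever; it is not a theorem that prime-end correspondences are stable under it, and this appeal cannot replace the load-bearing lemma of the proof. That lemma is quantitative: with $\sigma_{s,t}:=\D\setminus\varphi_{s,t}(\D)$ and $B_{s,t}\subset\partial\D$ the closed arc that $\varphi_{s,t}$ maps onto the two sides of $\sigma_{s,t}$, one must show $\diam(\overline{\sigma_{s,t}})\to 0$ and $\diam(B_{s,t})\to 0$ as $t-s\to 0$, locally uniformly in $t$; the usual proof is a harmonic-measure plus length--area (or extremal-length) argument exploiting that the erased sub-arc $\eta([s(s),s(t)])$ shrinks to a point. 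This single estimate is what yields \emph{both} the continuity of $\kappa$ (since $\kappa(t)\in\overline{\sigma_{s,t}}$ and $\kappa(s)\in B_{s,t}$, so $|\kappa(s)-\kappa(t)|$ is controlled by these diameters) \emph{and} the weak-$*$ collapse $\mu_t=\delta_{\kappa(t)}$; without it, your steps two and three are both unsupported. A secondary point: the ``general theory'' you invoke only provides the PDE with some Herglotz $p(\cdot,t)$ for almost every $t$, whereas the theorem (and the remark following it in the text) asserts the representation for \emph{every} $t$ with a continuous $\kappa$; upgrading from a.e.\ to everywhere again requires running the difference-quotient argument at each fixed $t$ using the diameter estimate above, not merely the absolute continuity of the chain.
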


\begin{remark}
Clearly, the Loewner chain is uniquely determined as there is only one way to erase the slit $\Gamma$. Let us have a look at another formulation of  Theorem \ref{Michelle}:\\[0.2cm]
There exists exactly one parameterization $\gamma:[0,\infty)\to \Gamma$ of the slit $\Gamma$ such that the normalized conformal mappings $f_t:\D\to \C\setminus \gamma[t,\infty)$ satisfy the Loewner PDE (\ref{PDE}) where $p(z,t)$ has the form (\ref{Obama}). \\[0.2cm]
Thus, even though there is no further assumption on the regularity of the slit $\Gamma$, there always exists a parameterization such that the family $\{f_t\}_{t\geq 0}$ of conformal mappings is differentiable with respect to $t$. In some sense, this problem is related to Hilbert's fifth problem of finding differentiable structures for continuous groups, see \cite{Goodman}.
\end{remark}

Note that the set of all $f\in S$ that map $\D$ onto $\C$ minus a slit is \emph{dense} in $S.$ In view of this, Theorem \ref{Michelle} is quite useful, as certain extremal problems can be reduced to the study of slit mappings, which in turn can be described by those Loewner equations where $p(z,t)$ has the simple form (\ref{Obama}). \\

The Bieberbach conjecture was finally proven by L. de Branges in 1984. He used ideas related to Loewner's and methods from functional analysis. FitzGerald and Pommerenke gave a simpler, purely function theoretic proof in \cite{MR792819}. 

\section{The chordal case}

Besides Pommerenke, several Soviet mathematicians, especially P. Kufarev, made important contributions to Loewner theory, see, e.g., \cite{MR0013800,  MR0023907, MR0257336}. In particular, they considered a differential equation for certain univalent mappings $f:\Ha\to\Ha,$ where we denote by $\Ha$  the upper half-plane $\Ha=\{z\in\C\with \Im(z)>0\}$. Instead of fixing an interior point, as in the class $S$, they considered mappings with \emph{hydrodynamic normalization}, i.e. $f(z)=z-\frac{c}{z}+\Landauo(\frac1{z})$ for a $c\geq 0$ in an angular sense. In other words, the following limit should exist and be finite:
$$\angle\lim_{z\to\infty}z\left(f(z)-z\right)=c\geq 0.$$

If $\Gamma \subset \overline{\Ha}$ is a slit, i.e. a simple curve such that $\Ha\setminus \Gamma$ is simply connected, then there exists exactly one conformal mapping $f_\Gamma:\Ha\to \Ha\setminus \Gamma$ with hydrodynamic normalization $f_\Gamma(z)=z-\frac{c}{z}+\Landauo(\frac1{z})$ and $c=:\hcap(\Gamma)$ is called the half-plane capacity of $\Gamma$. Kufarev et al. proved the following counterpart to Theorem \ref{Michelle}, see \cite{MR0257336}.
\begin{satz}\label{Putin} For every slit $\Gamma$ with $\hcap(\Gamma)=2T$ there exists a uniquely determined, continuous function $U:[0,T]\to\R$ such that the solution to 
\begin{equation}\label{Moskau} \dot{g}_t = \frac{2}{g_t-U(t)}, \qquad g_0 = \id_\Ha,\end{equation}
satisfies $g_{T}=f_{\Gamma}^{-1}.$
\end{satz}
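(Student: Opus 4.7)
The plan is to construct the driving function by parametrizing the slit $\Gamma$ by half-plane capacity and then reading off $U(t)$ as the image of the moving tip under the associated family of conformal maps. Concretely, let $\gamma : [0,T'] \to \overline{\Ha}$ be an injective parametrization of $\Gamma$ starting at its base point on $\R$. Since half-plane capacity is strictly monotone under inclusion of hulls and continuous under Carath\'eodory convergence of complements, the map $t \mapsto \hcap(\gamma[0,t])$ is a strictly increasing continuous bijection from $[0,T']$ onto $[0,2T]$, so after reparametrizing $\gamma$ we may assume $\hcap(\gamma[0,t]) = 2t$ for every $t \in [0,T]$. Set $g_t := f_{\gamma[0,t]}^{-1}$, the hydrodynamically normalized conformal map of $\Ha \setminus \gamma[0,t]$ onto $\Ha$; then $g_0 = \id_\Ha$ and $g_T = f_\Gamma^{-1}$, as required. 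Since $\gamma[0,t]$ is a Jordan arc, $g_t$ extends continuously to the tip $\gamma(t)$ (viewed as a prime end; the image lies on $\R$ after Schwarz reflection), and I define $U(t)$ to be the image of this tip.

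For continuity of $U$, the key observation is that for $0 \leq s < t \leq T$ the composition $\psi_{s,t} := g_t \circ g_s^{-1}$ is the hydrodynamically normalized conformal map of $\Ha$ minus the hull $K_{s,t} := g_s(\gamma(s,t])$, which has half-plane capacity $2(t-s)$. A standard diameter-vs-capacity estimate for hulls touching $\R$ forces $\diam(K_{s,t}) \to 0$ as $|t-s| \to 0$; combined with the fact that $U(s)$ is the base of $K_{s,t}$ and $U(t)$ is the image of its tip under $\psi_{s,t}$ (itself controlled by the same estimate), this yields uniform continuity of $U$ on $[0,T]$.

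To extract the ODE, I would use the Schwarz-kernel representation $g_K(z) = z + \int_\R d\mu_K(x)/(z-x)$ valid for any bounded hull $K \subset \overline{\Ha}$, where $\mu_K \geq 0$ is a positive measure on $\R$ of total mass $\hcap(K)$ supported on the projection of $K$ to $\R$ (after reflection). Applied to $\psi_{t,t+h}$, this gives
\begin{equation*}
\frac{g_{t+h}(z) - g_t(z)}{h} = \frac{1}{h}\int_\R \frac{d\mu_h(x)}{g_t(z) - x},
\end{equation*}
where $\mu_h$ has total mass $2h$ and is supported in a neighborhood of $U(t)$ whose diameter vanishes with $h$. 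Hence $\mu_h/h \to 2\delta_{U(t)}$ weakly, and the right-hand side converges to $2/(g_t(z) - U(t))$ locally uniformly on $\Ha \setminus \gamma[0,t]$. A symmetric treatment of the left derivative then gives the Loewner ODE \eqref{Moskau}.

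For uniqueness, any continuous $\tilde U$ whose ODE-solution $\tilde g_t$ satisfies $\tilde g_T = f_\Gamma^{-1}$ generates a family of hulls $\tilde K_t = \Ha \setminus \tilde g_t^{-1}(\Ha)$ parametrized by capacity $2t$ with $\tilde K_T = \Gamma$; since a sub-hull of $\Gamma$ of prescribed capacity is uniquely determined, this forces $\tilde g_t = g_t$ and hence $\tilde U(t) = U(t)$. The main obstacle in the whole argument is the quantitative diameter-vs-capacity estimate used for continuity of $U$: one has to control how a hull of small half-plane capacity, arising as $g_s$-image of the tail of $\Gamma$, shrinks to a single point. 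Once that is in place, both the ODE and the uniqueness assertion are routine consequences of the Schwarz-kernel representation and a weak-convergence argument.
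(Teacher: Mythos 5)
The paper does not prove Theorem \ref{Putin} itself; it cites Kufarev--Sobolev--Spory{\v{s}}eva \cite{MR0257336}, Lawler \cite{Lawler:2005} (p.~92f) and the survey \cite{GM13}, and your outline follows exactly that standard route (capacity parametrization, $U(t)$ as the image of the tip, Nevanlinna representation of the transition maps, uniqueness via uniqueness of subhulls of a slit of prescribed capacity). The reparametrization step, the ODE derivation and the uniqueness argument are all essentially correct as sketched (one small inaccuracy: the Herglotz/Nevanlinna representation with a positive measure on $\R$ holds for the \emph{inverse} $\psi_{s,t}^{-1}\in\mathcal{H}_b$, not for the forward map $g_K$ with support on ``the projection of $K$ to $\R$''; applying it to $\psi_{t,t+h}^{-1}$ gives the same difference quotient, so this is fixable).

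The genuine gap is the continuity step, which you yourself identify as ``the main obstacle'' and then dispose of with a false estimate. There is no ``diameter-vs-capacity estimate'' in the direction you need: small half-plane capacity does \emph{not} force small diameter, even for connected hulls attached to $\R$. A curve running from $0$ up to height $\eps$ and then horizontally to $1+i\eps$ has $\hcap\asymp\eps$ by Theorem \ref{hsiz} but diameter $\asymp 1$; more to the point, Remark \ref{Lola} makes clear that capacity-parametrized increasing hulls need \emph{not} satisfy the local growth property, so $\hcap(K_{s,t})=2(t-s)\to 0$ alone cannot yield $\diam(K_{s,t})\to 0$. What actually has to be proved is that $\diam\bigl(g_s(\gamma(s,t])\bigr)\to 0$ uniformly as $t-s\to 0$, and this is precisely where the hypothesis that $\Gamma$ is a \emph{curve} enters: one uses that $\gamma(s,t]$ is a connected set of small diameter attached to the tip of $\gamma(0,s]$ and then controls its image under $g_s$ by a Beurling-type harmonic-measure (or extremal-length) estimate, together with the bound $g^+_{B}(a)-g^-_{B}(a)\le M\cdot\diam(\cdot)$ of the type quoted in the proof of Lemma \ref{lem:III}. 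This is the heart of the theorem, and the same shrinking-support fact is what makes $\mu_h/h\to 2\delta_{U(t)}$ in your ODE derivation, so the gap propagates there as well. Until that estimate is supplied, neither the continuity of $U$ nor the pointwise validity of \eqref{Moskau} is established.
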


\begin{remark}
Again, the function $U$ is called driving function of $\Gamma$ and the differential equation for $g_t$ is called \emph{chordal} Loewner equation\footnote{The name \emph{chordal} is derived from the picture we get, when $T\to\infty:$ The equation will produce a slit that connects a point on the real axis to $\infty$, which is another boundary point of $\Ha$ on the Riemann sphere. In this case, $\Gamma$ looks like a \emph{chord} within $\Ha$. (Note that $\Ha$ looks like a disc on the Riemann sphere.)}.  Thus, Theorem \ref{Michelle} as well as Theorem \ref{Putin} can both be stated roughly as $$\text{\textit{``For every slit there exists a unique, continuous driving function.''}}$$
Of course, the meaning of ``slit'' and ``driving function'' depends on the domain and the Loewner equation under consideration. 
\end{remark}

Conversely, if one takes a continuous driving function and solves the Loewner equation  (\ref{Moskau}), then the solution is not necessarily describing the growth of a slit. The first example for such a driving function was given by Kufarev (for the radial case) in \cite{MR0023907}.\\

 The most famous example for a driving function $U$ is certainly given by the following example.
\begin{example}\label{lion}
 The case $U(t)=B(\kappa t)$, where $B$ is a standard Brownian motion and $\kappa \geq 0$, is called  \emph{chordal SLE} \footnote{Similarly, \emph{radial SLE} is obtained by putting $\kappa(t)=e^{iB(\kappa t)}$ in (\ref{Obama}).}.
Here, the Loewner equation generates a random curve, which need not be simple. More precisely, there are three different cases dependent on the choice of $\kappa,$ see Figure \ref{sle}: 
\begin{itemize}
 \item $0\leq \kappa\leq 4:$ The random curve is a slit with probability 1.
\item  $4<\kappa<8:$ The random curve hits itself and $\R$ infinitely many often with probability 1. 
\item $8\leq \kappa:$\hspace{0.7cm} The random curve is spacefilling with probability 1.
\end{itemize}\hfill $\bigstar$
\begin{figure}[h]
    \centering
   \includegraphics[width=140mm]{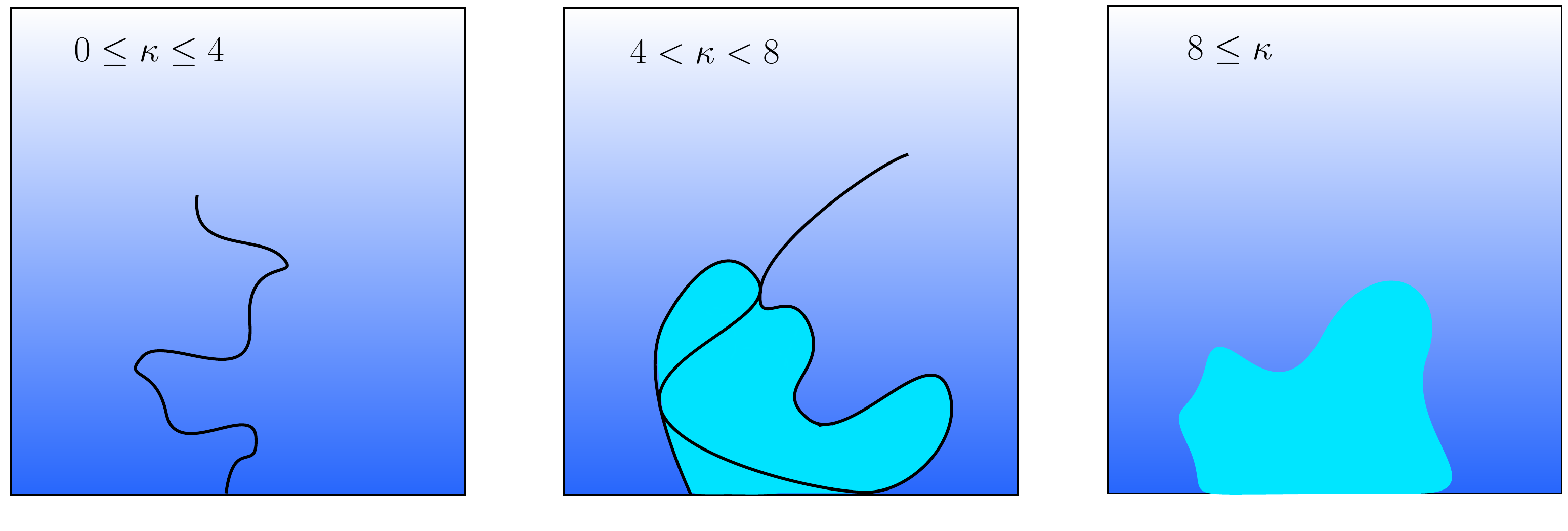}
 \caption{The three cases of chordal SLE.}\label{sle}
 \end{figure}
\end{example}

\section{Generalization of Loewner theory}

The invention of SLE by Schramm led to an explosion in the study of random planar curves as well as of deterministic Loewner theory. In particular, F. Bracci, M. Contreras and S. D\'{\i}az-Madrigal et al. generalized the basic notions of Loewner theory and derived Loewner equations that contain the radial and chordal equations as special cases, see \cite{MR2995431, MR2789373}. They gave more general definitions for Loewner chains and evolution families and showed that those are connected to so called Herglotz vector fields by the Loewner differential equations.

\begin{figure}[h] 
    \centering
   \includegraphics[width=105mm]{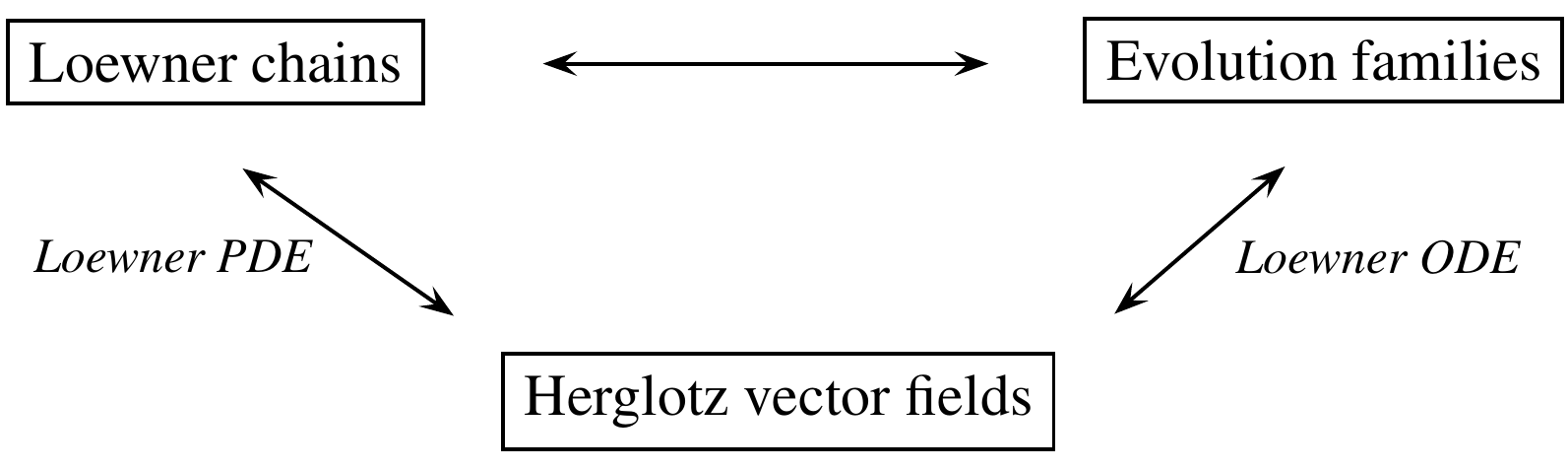}
 \caption{The three basic notions of Loewner theory.}
 \end{figure}

 Moreover, they generalized and transferred these concepts from the case of a simply connected, proper subdomain of $\C$ to (Kobayashi) hyperbolic complex manifolds, see \cite{MR2507634}. As in the case of almost every notion from one dimensional complex analysis that is transferred to the higher-dimensional theory, also Loewner theory for several complex variables bares new  phenomena and problems. Special Loewner equations for complex variables were considered before by J. Pfaltzgraff, T. Poreda, I. Graham, G. Kohr, M. Kohr and others. 
\newpage
\section{Outline of the thesis}
In this thesis, we study several embedding problems for Loewner chains and evolution families, respectively. On the one hand, for a given set $A$ of univalent mappings, embedding problems ask for the characterization of the set of all Loewner chains whose first element is a function from the set $A$. (Or: Describe the set of all evolution families $\{\varphi_{s,t}\}$ such that $\varphi_{0,1}\in A$.)\\
Conversely, for a given a set $L$ of Loewner chains, we might ask for a characterization of the set of all univalent functions that can be the first element $f_0 $ of a Loewner chain $\{f_t\}_{t\geq0}\in L$. (Or: Given a set $E$ of evolution families, how can the set of all elements of the form $\varphi_{0,1}$ be characterized?)\\

The work at hand is divided into two parts. In Chapter 2 we describe the Loewner equations in the modern sense for the two most studied subdomains of $\C^n$: the Euclidean unit ball $\B_n:=\{(z_1,...,z_n)\in \C^n \with |z_1|^2+...+|z_n|^2<1\}$ and the polydisc $\D^n=\{z\in \C^n \with |z_1|,...,|
z_n|<1\}$, and we prove a conjecture made in \cite{MR2943779} about support points of the higher-dimensional analog $S^0(\B_n)$ of the class $S.$ Before we actually prove the statement, we discuss the Runge property of the image domains of all elements of $S^0(\B_n)$ and some important implications. The connections of Loewner chains to this property follows from a theorem by Docquier and Grauert from the 1960's and was established by Arosio, Bracci and Wold in \cite{ABWold}. \\  

Chapter 3 focuses on two problems that come along with Theorem \ref{Putin}. Firstly, we would like to generalize Theorem \ref{Putin} to several slits. Suppose we are given $n$ disjoint slits. We should expect to be able to derive a similar equation for the growth of $n$ slits such that we can assign $n$ unique continuous driving functions to these slits. This will lead us to the problem of finding ``constant coefficients''. Roughly speaking, this means that we have to find a Loewner equation which describes the growth of slits in such a way that each single slit grows with a constant speed.\\
The second problem asks for the converse of Theorem \ref{Putin}. The theorem induces a mapping $\mathcal{DR}$ from the set $$\{\Gamma\with \Gamma \; \text{is a slit with}\,\hcap(\Gamma)=2T\}\quad \text{into the set} \quad \{f:[0,T]\to\R \with f \;\text{is continuous} \}.$$ The mapping $\mathcal{DR}$ is one-to-one but \emph{not surjective}, as some simple examples show. We will have a look at the problem how to characterize the image of $\mathcal{DR}$ and we will also consider this problem for the case of several slits. 

\newpage

\chapter{On Loewner theory in the unit ball and polydisc}

\section{Introduction}

There are various ways to find ``higher dimensional'' analogs of classical complex analysis, and, in particular, of univalent functions: The classical case of a holomorphic (univalent) function $f:D\to \C,$ where $D\subset \C$ is a domain, can be extended, e.g., in the following ways:
\begin{enumerate}
 \item[$\blacktriangleright$] Quaternionic analysis: Replace $\C$ by the skew field $Q$ of all quaternions and consider differentiable (and injective) functions $f:D\to Q,$ $D\subseteq Q.$ 
 \item[$\blacktriangleright$] Quasiregular maps: Replace $\C$ by $\R^n,$ $n\geq 2$, and consider quasiregular functions $f: D\to \R^n,$ $D\subseteq \R^n.$ Injective quasiregular maps are called quasiconformal.
 \item[$\blacktriangleright$] Complex analysis in several variables: Consider holomorphic functions $f:D\to \C^n$, $D\subseteq \C^n,$ i.e. $f$ depends holomorphically on each variable separately. The map $f$ is called univalent if it is holomorphic and injective.
\end{enumerate}
Loewner theory has been generalized successfully to the third (and most studied) case, complex analysis in several variables. In this chapter we will throw a glance at this theory and some problems and differences that appear when $n\geq 2.$\\
First we introduce some notations: For $z,w\in\C^n,$ we denote by $\left<z,w\right>$ the Euclidean inner product: $\left<z,w\right>=\sum_{j=1}^n z_j\overline{w_j}.$ Let $\|z\|=\sqrt{\left<z,z\right>}$ be the Euclidean norm of $z$ and denote by $\|z\|_{\infty}=\max_{j=1,...,n}|z_j|$ its maximum norm. Furthermore, if $D,E\subset \C^n$ are  domains, we denote by $\Ho(D,E)$ the set of all holomorphic functions from $D$ into $E.$\\ Even though many phenomena and results from Loewner theory can be stated for complex manifolds with quite general assumptions, we will treat only two cases:   In the following, we let the standard domain $\Do,$ $n\geq 1,$ be either
\begin{enumerate}
\item[\textbullet] the  \textit{Euclidean unit ball}:\hspace{0.3cm} $\Do= \B_n:=\{z\in \C^n \with \|z\|<1\}$ or
\item[\textbullet] the \textit{polydisc}: \hspace{2.15cm}$\Do= \D^n=\{z\in \C^n \with \|z\|_{\infty}<1\}$.\footnote{More precisely, either $\Do=\B_n$ in the whole Chapter 2 or $\Do=\D^n$ in the whole Chapter 2. For example: the set $\Ho(\Do,\Do)$ refers to $\Ho(\B_n,\B_n)$ or $\Ho(\D^n,\D^n)$.}
\end{enumerate}
Looking for a Riemann Mapping Theorem in higher dimensions means trying to classify the biholomorphism class $$[D]:=\{E \subset \C^n \with \text{There exists a biholomorphic map}\; f:D\to E\},$$ where $D\subseteq\C^n$ is a given domain. This task seems to be much harder compared to the case $n=1$, as a remarkable result by H. Poincar{\'e} shows: he proved that $[\B_n]\not= [\D^n]$ for $n\geq 2,$ see \cite{Poi}. (This result is sometimes referred to as the ``nonexistence of a Riemann mapping theorem'' in higher dimensions, which, however, is only half the story. There are also positive mapping results, see \cite{MR1608852}, p. 30, or \cite{greene2011geometry}.) In fact, many useful or beautiful theorems from classical complex analysis don't generalize to higher dimensions, which led to the introduction of many new notions and tools in order to handle problems that only appear when $n\geq 2,$ like the famous \textit{Levi problem}.\\

However, there are many classical results concerning geometric and analytic properties of univalent functions which have been transferred to the higher dimensional case. Let $$U(\Do):=\{f\in\Ho(\Do,\C^n)\with f \;\, \text{is univalent}\}.$$ 
Furthermore, we define the following natural generalization of the class $S$:
$$ S(\Do):=\{f\in\Ho(\Do,\C^n) \with f\; \text{is univalent and}\; f(0)=0, Df(0)=I_n\}.$$
  
A collection of properties of univalent functions on $\Do$, like conditions for starlikeness, convexity, etc., and many references can be found in  Chapter 6 and 7 of \cite{graham2003geometric}. Compared to the case $n=1,$ the normalization $f(0)=0, Df(0)=I_n,$ is of much less use when $n\geq 2.$ This is due to the fact that the automorphism group 
$$ \aut(\C^n) := \{f\in \Ho(\C^n,\C^n)\with f \;\, \text{is biholomorphic}\}, $$ is much more complicated in this case. So the normalization $f(0)=0, Df(0)=I_n$, ``factors out'' $\aut(\C^n)$ if and only if $n=1,$ see \cite{graham2003geometric}, p. 210. 

\begin{proposition}
 $S(\Do)$ is closed, but not compact for $n\geq 2.$
\end{proposition}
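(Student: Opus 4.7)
My plan is to handle closedness and non-compactness separately, both arguments being short once the right tools are invoked.

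For closedness, I would take any sequence $\{f_k\}\subset S(\mathcal{D}_n)$ that converges locally uniformly on $\mathcal{D}_n$ to some $f\in\Ho(\mathcal{D}_n,\C^n)$. The normalizations pass to the limit: $f(0)=\lim f_k(0)=0$, and by locally uniform convergence of the partial derivatives, $Df(0)=\lim Df_k(0)=I_n$. In particular $\det Df(0)=1\ne 0$. Now I would invoke the several-variable Hurwitz-type theorem (see e.g.\ Chapter 6 of \cite{graham2003geometric}): a locally uniform limit of univalent holomorphic maps $f_k:\mathcal{D}_n\to\C^n$ is either itself univalent or satisfies $\det Df\equiv 0$ on $\mathcal{D}_n$. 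Since the second alternative is ruled out at the origin, $f$ is univalent, so $f\in S(\mathcal{D}_n)$, proving closedness.

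For non-compactness it suffices to exhibit a sequence in $S(\mathcal{D}_n)$ with no locally uniformly convergent subsequence, i.e.\ a non-normal family. The idea is to exploit the richness of $\aut(\C^n)$ for $n\geq 2$ via shears. Concretely, for $n=2$ I would set
\begin{equation*}
f_k(z_1,z_2) := (z_1 + k\, z_2^2,\; z_2), \qquad k\in\N,
\end{equation*}
and for general $n\geq 2$ extend trivially by $f_k(z_1,\ldots,z_n)=(z_1+k z_2^2, z_2,\ldots,z_n)$. Each $f_k$ is a polynomial automorphism of $\C^n$ (an elementary shear), in particular univalent on $\mathcal{D}_n$, and satisfies $f_k(0)=0$ and $Df_k(0)=I_n$; hence $f_k\in S(\mathcal{D}_n)$.

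To see that $\{f_k\}$ has no convergent subsequence, pick any point $p=(0,z_2,0,\ldots,0)\in\mathcal{D}_n$ with $z_2\ne 0$. Then the first coordinate of $f_k(p)$ equals $k z_2^2$, which is unbounded in $k$. Therefore $\{f_k\}$ is not even locally uniformly bounded in a neighborhood of $p$, and no subsequence can converge locally uniformly on $\mathcal{D}_n$. The only delicate point is having the correct Hurwitz-type statement in $\C^n$; the choice of sequence is then essentially forced by the need to escape compactness, which is impossible in dimension one (where $S$ is compact by Koebe's theorem) but becomes available precisely because $\aut(\C^n)$ is large for $n\geq 2$.
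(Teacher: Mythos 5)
Your proof is correct and follows essentially the same route as the paper: closedness via the several-variable Hurwitz theorem together with continuity of $f\mapsto f(0)$ and $f\mapsto Df(0)$, and non-compactness via shear automorphisms of $\C^n$ (the paper uses the family $(z_1,\ldots,z_n)\mapsto(z_1,\ldots,z_n+h(z_1))$ with $h(0)=h'(0)=0$, of which your $h_k(z)=kz^2$ in permuted coordinates is a concrete instance). Your explicit verification of non-normality at a point is a nice touch but the argument is the same.
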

 \begin{proof}
The closedness of $S(\Do)$ follows from Hurwitz's theorem and the fact that the functionals $f\mapsto f(0),$ and $f\mapsto Df(0)$ are continuous.\\
Let $h:\C\to\C$ be an entire function with $h(0)=h'(0)=0.$ Then, the function $(z_1,z_2,...,z_n)\mapsto (z_1,z_2,...,z_n+h(z_1))$ is a normalized univalent function on $\C^n.$ Its restriction to $\Do$ clearly belongs to $S(\Do),$ but the set of all these functions is not a normal family. 
 \end{proof}

When $n\geq 2,$ the class $S(\Do)$ fails to share another important property with the class $S$: \\
An important fact from Loewner theory in one dimension is that every $f\in S$ can be embedded in at least one classical radial Loewner chain. Geometrically, this means that any simply connected, proper subdomain of $\C$ can be spread biholomorphically to $\C.$ In Section \ref{Teerapat}, we will see that this is not longer true for the class $S(\Do)$ in higher dimensions. \\

One can use an analog to the radial Loewner equation to define the subclass $S^0(\Do)$ of all elements of $S(\Do)$ that have \textit{parametric representation}, see Section \ref{Julia}. In particular, every element of $S^0(\Do)$ can be embedded in a radial Loewner chain. This was done by T. Poreda for $\Do=\D^n$ and by G. Kohr for the case $\Do=\B_n.$ This class shares many properties with the class $S$, especially compactness. However, there are still many open questions concerning $S^0(\Do)$ -- questions that arise from drawing comparisons to the one-dimensional case, e.g. estimates on the coefficients of the power series expansion in $0$ -- even though one can apply tools from Loewner theory by the very \textit{definition} of $S^0(\Do)$.\\

In Sections \ref{Oana} and \ref{Ikkei}, we answer two of those questions, namely two conjectures on support points made in \cite{MR2943779}. A crucial ingredient for the solution is the fact that each $f\in S^0(\Do)$ maps $\Do$ onto a Runge domain. In fact, this is true for a much larger class of univalent functions, which, in turn, has quite interesting consequences for the general Loewner theory in higher dimensions, see Section \ref{Teerapat}. In Sections \ref{Herglotz} and \ref{Uta} we recall the basic notions and results concerning modern Loewner theory, as it was developed by F. Bracci, M. Contreras and S. D\'{i}az-Madrigal et al.

\section{Herglotz vector fields, evolution families and Loewner chains}\label{Herglotz}

In this section, we introduce the three basic notions from modern Loewner theory: Herglotz vector fields, evolution families and Loewner chains, and we briefly describe their connections. The  definitions and theorems of this section are taken from \cite{MR2507634} and \cite{ABWold}.\\

The fundamental concept of Loewner theory is the description of infinitesimal changes of biholomorphic mappings $f:\Do \to\C^n$ by \textit{semigroups} of holomorphic self-mappings of $\Do$.

\begin{definition}
 A (\textit{continuous one-real-parameter}) \textit{semigroup} in $\Do$ is a mapping $\Phi_t:[0,\infty)\to \mathcal{H}(\Do,\Do)$ with
 \begin{itemize}
\item[(1)] $\Phi_0=\textbf{id}_{\Do},$ 
\item[(2)] $\Phi_{t+s}=\Phi_t\circ \Phi_s,$
\item[(3)] $\Phi_t$ tends to the identity $\textbf{id}_{\Do}$ on compacta of $\Do$ for $t\to0.$ \end{itemize}
\end{definition}

Given a semigroup $\Phi_t$ and a point $z\in \Do,$ then the limit $$G(z):=\lim_{t\to 0}\frac{\Phi_t(z)-z}{t}$$ exists and the vector field $G:\Do\to\C^n$, called the \textit{infinitesimal generator} of $\Phi_t,$ is a holomorphic function (see, e.g., \cite{MR1174816}). We denote by $\I(\Do)$ the set of all infinitesimal generators of semigroups in $\Do$. For any $z\in \Do,$ $\Phi_t(z)$ is the solution of the initial value problem \begin{equation}\label{cauchy}\dot{w}_t=G(w_t),\quad w_0=z.\end{equation}

Now, the \textit{ordinary Loewner equation} is the following generalized version of (\ref{cauchy}):
\begin{equation}\label{ordLoewner}\dot{w}_t=G(w_t,t),\quad w_0(z)=z\in\Do \quad\text{with} \quad G(\cdot,t)\in \I(\Do) \quad \text{for almost all }\; t\geq 0;\end{equation}
i.e. for almost each $t\geq 0$, the right side of the differential equation is an infinitesimal generator, but it may change with $t$.\\

In modern Loewner theory the vector field $(z,t)\mapsto G(z,t)$ is required to be a so called Herglotz vector field. This notion was  introduced in \cite{MR2507634} for general complete hyperbolic manifolds.

\begin{definition}
 A \textit{Herglotz vector field} of order $d\in[1,+\infty]$ on $\Do$ is a mapping $G:\Do\times [0,\infty) \to \C^n$ with the following properties:
\begin{itemize}
 \item[(1)] The mapping $G(z,\cdot)$ is measurable on $[0,\infty)$ for all $z\in \Do.$
\item[(2)] The mapping $G(\cdot,t)$ is holomorphic for all $t\in[0,\infty).$
\item[(3)] $G(\cdot, t)\in \I(\Do)$ for almost all $t\in[0,\infty).$
\item[(4)] For any compact set $K\subset \Do$ and $T>0,$ there exists a function $C_{T,K}\in L^d([0,T],\R^+_0)$ such that $\|G(z,t)\|\leq C_{T,K}(t)$ for all $z\in K$ and for almost all $t\in[0,T].$ 
\end{itemize}
\end{definition}
\begin{remark}
In \cite{MR2507634} and in some other early works, the definition of a Herglotz vector field involves a condition on the function $G(\cdot,t)$ (for almost all $t$) using the Kobayashi metric of $\Do$ instead of property (3). In \cite{MR2887104} (Theorem 1.1), however, it was shown that this condition is equivalent to $G(\cdot, t)\in \I(\Do)$.
\end{remark}

 One usually introduces a second time parameter $s$ for the solution to the ordinary Loewner equation in order to vary the initial condition, which leads to the definition of evolution families.

\begin{definition} Let $d\in[1,+\infty].$ A family $\{\varphi_{s,t}\}_{0\leq s\leq t<\infty}$ of univalent self-mappings of $\Do$ is called \emph{evolution family of order $d$} if the following three conditions are satisfied: 
\begin{itemize}
\item[(1)] $\varphi_{s,s}=\textbf{id}_{\Do}$ for all $s\geq 0.$
\item[(2)] $\varphi_{s,t}=\varphi_{u,t}\circ \varphi_{s,u}$ for all \;$0\leq s\leq u\leq t< \infty.$
\item[(3)] For any compact subset $K\subset \Do$ and for any $T>0$ there exists a non-negative function $k_{K,T}\in L^d([0,T],\R^+_0)$ such that for all $0\leq s\leq u\leq t\leq T$ and for all $z\in K,$
$$ \|\varphi_{s,u}(z)-\varphi_{s,t}(z)\| \leq \int_u^t k_{K,T}(\tau)\, d\tau. $$ 
\end{itemize}
\end{definition}

For Herglotz vector fields, we always get a unique solution of (\ref{ordLoewner}), which establishes a one-to-one correspondence between Herglotz vector fields and evolution families, see Propositions 3.1, 4.1 and 5.1 in \cite{MR2507634}.

\begin{theorem}\textbf{(Herglotz vector fields and evolution families)}\label{Eva}\\[1mm]
 For any Herglotz vector field $G(z,t)$ of order $d$ on $\Do$  there exists a unique evolution family  $\{\varphi_{s,t}\}_{0\leq s\leq t<\infty}$ of order $d$ such that for all $z\in \Do$ 
\begin{equation}\label{cauchy2}\frac{\partial{\varphi}_{s,t}(z)}{\partial t}=G({\varphi}_{s,t}(z),t) \quad \text{for almost all} \;\; t\in [s,\infty)\quad \text{and}\quad \varphi_{s,s}(z)=z.\end{equation}
Conversely, for any evolution family $\varphi_{s,t}$ of order $d$, there exists a Herglotz vector field $G(z,t)$ of order $d$ such that $\varphi_{s,t}$ satisfies (\ref{cauchy2}).
\end{theorem}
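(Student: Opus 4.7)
The proof establishes the bijection explicitly: given a Herglotz vector field $G$, one solves the Carath\'eodory ODE (\ref{cauchy2}) to produce the evolution family; given an evolution family, one differentiates to recover $G$. Each direction requires verifying the defining axioms of the target object, plus the matching order $d$.

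\textbf{From $G$ to $\varphi_{s,t}$.} Fix $s\geq 0$ and $z\in\Do$. Cauchy estimates yield a local Lipschitz bound on $G(\cdot,t)$ controlled by $C_{T,K}(t)$, and together with measurability in $t$ this puts the ODE into Carath\'eodory form. Standard theory gives a unique, locally absolutely continuous solution $t\mapsto\varphi_{s,t}(z)$. To extend it globally and keep it in $\Do$, I invoke the assumption $G(\cdot,t)\in\I(\Do)$ for a.e. $t$: by the characterization used in the Remark above and in \cite{MR2887104}, this is a dissipativity condition with respect to the Kobayashi metric of $\Do$, and a Gronwall-type comparison prevents orbits from reaching $\partial\Do$ in finite time, so the solution lives on $[s,\infty)$. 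Uniqueness of the Carath\'eodory problem forces both the cocycle identity $\varphi_{u,t}\circ\varphi_{s,u}=\varphi_{s,t}$ and the univalence of each $\varphi_{s,t}$. Condition (3) of an evolution family is obtained by writing
\[
\|\varphi_{s,u}(z)-\varphi_{s,t}(z)\|=\left\|\int_u^t G(\varphi_{s,\tau}(z),\tau)\,d\tau\right\|\leq\int_u^t C_{T,K'}(\tau)\,d\tau,
\]
where $K'\Subset\Do$ is any compact absorbing the orbits $\varphi_{s,\tau}(K)$ for $s\leq\tau\leq T$; existence of $K'$ follows from the global-existence step.

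\textbf{From $\varphi_{s,t}$ to $G$.} Condition (3) on the evolution family makes $t\mapsto\varphi_{s,t}(z)$ absolutely continuous, hence differentiable for a.e. $t$. Exploiting the semigroup law I set
\[
G(z,t):=\lim_{h\to 0^+}\frac{\varphi_{t,t+h}(z)-z}{h}
\]
at those $t$ where the limit exists. A Lebesgue-point argument, together with a countable dense collection of base times $s$, shows $G$ is well defined for a.e. $t$ and satisfies $\partial_t\varphi_{s,t}(z)=G(\varphi_{s,t}(z),t)$ a.e. Measurability of $G(z,\cdot)$ is immediate from the difference-quotient description; holomorphy of $G(\cdot,t)$ on $\Do$ follows from Weierstrass' theorem applied to the difference quotients, whose uniform boundedness on compacta is again furnished by condition (3); and the same inequality gives $\|G(\cdot,t)\|\leq k_{K,T}(t)$ on any compact $K$, producing the $L^d$ majorant required of a Herglotz field of order $d$.

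\textbf{The main obstacle.} The genuinely delicate step is verifying that $G(\cdot,t)\in\I(\Do)$ for almost every $t$, i.e.\ that the frozen vector field actually generates a one-parameter semigroup of self-mappings of $\Do$ (and not merely a formally tangent holomorphic vector field). The natural route is to approximate the autonomous flow of $G(\cdot,t^*)$ by Euler-type compositions $\varphi_{t^*,t^*+h}\circ\cdots\circ\varphi_{t^*,t^*+h}$; each factor is a self-map of $\Do$, so all iterates are as well, and one then passes to the limit at a Lebesgue point of $t\mapsto G(\cdot,t)$ in an appropriate topology on holomorphic vector fields. Justifying the limit exchange is where the $L^d$-integrability in condition (4) of a Herglotz vector field (equivalently, condition (3) of an evolution family) is exactly the hypothesis one needs.
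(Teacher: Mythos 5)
The paper does not actually prove this theorem: it is quoted from the literature, with the proof delegated to Propositions 3.1, 4.1 and 5.1 of \cite{MR2507634} together with the equivalence (Kobayashi-metric condition $\Leftrightarrow$ $G(\cdot,t)\in\I(\Do)$) from \cite{MR2887104} that the paper records in the remark following the definition of Herglotz vector fields. Your architecture coincides with that of the cited references: Carath\'eodory ODE theory for existence, uniqueness, the cocycle identity and univalence (via backward uniqueness, which is what injectivity really uses); the difference-quotient construction of $G$ from $\varphi_{s,t}$; and Vitali/Weierstrass plus condition (3) for holomorphy and the $L^d$ majorant. Those parts are sound as a sketch, although in the forward direction your ``Gronwall-type comparison'' hides the actual mechanism: what prevents finite-time escape is that holomorphic self-maps and infinitesimal generators are contractive/dissipative for the Kobayashi distance \emph{and} that $\B_n$ and $\D^n$ are complete hyperbolic, so Kobayashi-bounded orbits stay in compact subsets; without completeness the argument fails.

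The genuine gap is the step you yourself flag as the main obstacle: showing $G(\cdot,t)\in\I(\Do)$ for almost every $t$. Your Trotter-type scheme $\varphi_{t^*,t^*+h}\circ\cdots\circ\varphi_{t^*,t^*+h}$ is only named, not carried out, and the limit exchange is exactly where it is delicate: the per-step error $o(h)$ coming from the Lebesgue-point property is uniform only on compacta, while the iterates a priori may drift toward $\partial\Do$, so one needs an a priori confinement of the discrete orbits before one can sum the errors --- which is circular unless supplied separately. The route taken in the cited sources avoids this: since each $\varphi_{u,t}$ is a holomorphic self-map of $\Do$, the function $t\mapsto k_{\Do}(\varphi_{s,t}(z),\varphi_{s,t}(w))$ is non-increasing, and differentiating this at a.e.\ $t$ shows that the frozen field $G(\cdot,t)$ satisfies the Kobayashi dissipativity inequality; Theorem 1.1 of \cite{MR2887104} then identifies such fields with infinitesimal generators. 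Either you should import that equivalence (as the paper does) or you must supply the missing compactness argument for your product formula; as written, the proposal does not establish that the object you construct is a Herglotz vector field in the sense of the definition used here.
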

 
\begin{definition}\label{Mats}
Let $d\in[1,+\infty].$ A family $\{f_t:\Do\to\C^n\}_{t\in[0,\infty)}$ of univalent mappings is called \emph{Loewner chain of order $d$} if the following two conditions are satisfied:
\begin{itemize}
 \item[(1)] $\{f_t\}_{t\geq 0}$ has growing images: $f_s(\Do)\subseteq f_t(\Do)$ for all $0\leq s\leq t.$
\item[(2)] For any compact set $K\subset \Do$ and $T>0,$ there exists a function $c_{K,T}\in L^d([0,T],\R^+_0)$ such that $$\|f_s(z) - f_t(z)\| \leq \int_s^t c_{K,T}(\tau)\, d\tau\quad \text{for all} \quad z\in K\;\; \text{and}\;\;  0\leq s\leq  t\leq T.$$
\end{itemize}
A Loewner chain $f_t$ is called normalized if $e^{-t}f_t \in S(\Do)$ for all $t\geq 0.$
\end{definition}

Thus, a Loewner chain spreads the initial domain $f_0(\Do)$ ``biholomorphically'' to the larger domain $$R=\bigcup_{t\geq 0} f_t(\Do).$$
$R$ is called the \emph{Loewner range} of the Loewner chain.\\
Loewner chains are described by the partial Loewner equation, which has the following form:
\begin{equation}\label{PDE2}\dot{f}_t(z)=-Df_t(z)G(z,t) \quad \text{for almost all} \; t\in [0,\infty),\end{equation}
where $G(z,t)$ is a Herglotz vector field. The important difference between the ordinary equation is that we don't know the possible initial values $f_0,$ i.e. we cannot prescribe an arbitrary univalent function $f:\Do\to\C^n.$ However, Herglotz vector fields also guarantee the existence of solutions to the Loewner PDE in $\Do$ (and more generally in complete hyperbolic starlike domains), see \cite{ABWold}:

\begin{theorem}\textbf{(Herglotz vector fields and Loewner chains)} \\[1mm]
 For any  Herglotz vector field $G(z,t)$ of order $d$ on $\Do,$ there exists a Loewner chain $\{f_t:\Do\to\C^n\}_{t\geq 0}$ of order $d$ such that for all $z\in \Do$ equation (\ref{PDE2}) is satisfied.\\
Furthermore, if $R=\cup_{t\geq 0}f_t(\Do)$ and $\{g_t\}_{t\geq0}$ is another solution to (\ref{PDE2}), then it has the form $\{g_t=\Phi\circ f_t\}_{t\geq 0}$ for a holomorphic map $\Phi: R\to \C^n.$ $\{g_t\}_{t\geq 0}$ is a family of univalent mappings if and only if $\Phi$ is univalent.\\
Conversely, if $\{f_t:\Do\to\C^n\}_{t\geq0}$ is a Loewner chain of order $d$, then there exists a Herglotz vector field $G(z,t)$ of order $d$ on $\Do$ such that $\{f_t\}_{t\geq0}$ satisfies (\ref{PDE2}).
\end{theorem}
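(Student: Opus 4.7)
The plan is to prove the three assertions separately: existence of a Loewner chain solving (\ref{PDE2}), the parametrisation of all solutions by a single holomorphic $\Phi$, and the converse construction of a Herglotz field from a given chain.

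\emph{Existence.} Starting from $G$, Theorem \ref{Eva} provides the associated evolution family $\{\varphi_{s,t}\}$ of order $d$. I would then build an abstract Loewner range as a direct limit: on $\bigsqcup_{t\geq 0}\Do\times\{t\}$ impose the equivalence $(z,s)\sim(\varphi_{s,t}(z),t)$ for $s\leq t$. This produces a complex manifold $M$ together with holomorphic open embeddings $\iota_t:\Do\hookrightarrow M$ satisfying $\iota_s=\iota_t\circ\varphi_{s,t}$, and $M=\bigcup_t\iota_t(\Do)$ as an increasing union. The crucial step is to realise $M$ as a domain in $\C^n$: since $\Do$ is a Runge pseudoconvex domain and each inclusion $\iota_s(\Do)\subset\iota_t(\Do)$ is a Runge pair, the Docquier--Grauert theorem (as invoked by Arosio--Bracci--Wold in \cite{ABWold}) gives a biholomorphism $F:M\to R$ onto a Runge domain $R\subset\C^n$. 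Setting $f_t:=F\circ\iota_t$ produces univalent maps with $f_s(\Do)\subset f_t(\Do)$ and $f_s=f_t\circ\varphi_{s,t}$; differentiating the latter in $t$ (the left side is $t$-independent) and specialising to $s=t$ using (\ref{cauchy2}) yields (\ref{PDE2}), while the $L^d$-bound required in Definition \ref{Mats}(2) is inherited from the analogous bound on $\{\varphi_{s,t}\}$ via $f_s-f_t=f_t\circ\varphi_{s,t}-f_t$ and a local Lipschitz estimate on $f_t$.

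\emph{Classification of all solutions.} If $\{f_t\}$ and $\{g_t\}$ both solve (\ref{PDE2}) for the same $G$, define $\psi_{s,t}:=f_t^{-1}\circ f_s$ and $\widetilde\psi_{s,t}:=g_t^{-1}\circ g_s$. Both are evolution families, and differentiating their defining identities together with the PDE shows each satisfies (\ref{cauchy2}) for the same $G$; hence uniqueness in Theorem \ref{Eva} forces $\psi_{s,t}=\widetilde\psi_{s,t}=\varphi_{s,t}$. Now $\Phi_t:=g_t\circ f_t^{-1}$ is holomorphic on $f_t(\Do)$, and the identities $g_s=g_t\circ\varphi_{s,t}$, $f_s=f_t\circ\varphi_{s,t}$ give $\Phi_s=\Phi_t$ on $f_s(\Do)$ whenever $s\leq t$. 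So the $\Phi_t$'s glue to a single holomorphic map $\Phi:R\to\C^n$ with $g_t=\Phi\circ f_t$ for every $t$. Because each $f_t$ is univalent, $g_t$ is univalent iff $\Phi$ is injective on $f_t(\Do)$; since $R=\bigcup_t f_t(\Do)$ is an increasing exhaustion, this holds for every $t$ if and only if $\Phi$ is univalent on $R$.

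\emph{Converse, and the main obstacle.} Given a Loewner chain $\{f_t\}$, set $\varphi_{s,t}:=f_t^{-1}\circ f_s$; this is a well-defined univalent self-map of $\Do$ by Definition \ref{Mats}(1). The three evolution-family axioms, including the $L^d$ bound, follow from Definition \ref{Mats} together with locally uniform bounds on $Df_t^{-1}$, and then Theorem \ref{Eva} delivers a Herglotz field $G$ of the same order $d$ for which $\varphi_{s,t}$ solves (\ref{cauchy2}); differentiating $f_s=f_t\circ\varphi_{s,t}$ in $t$ and setting $s=t$ recovers (\ref{PDE2}). The substantive step throughout is existence: the direct-limit manifold $M$ is purely abstract, and embedding it biholomorphically into $\C^n$ relies on the special Runge/Stein structure of $\Do\in\{\B_n,\D^n\}$ via Docquier--Grauert. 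Without this ingredient one would only obtain a ``Loewner chain into $M$'' rather than into $\C^n$.
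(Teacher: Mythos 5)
Your overall architecture --- direct limit of the evolution family, realization of the abstract range in $\C^n$, gluing of $\Phi$, and the inverse construction --- is exactly the route of Arosio--Bracci--Wold \cite{ABWold}, which is what the paper cites for this theorem (no proof is given in the text). However, there are two concrete gaps.

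First, the step you yourself identify as crucial is not justified by the tool you invoke. The Docquier--Grauert theorem (Theorem \ref{doc}) is a statement about two domains already sitting inside $\C^n$ (or a Stein manifold): it upgrades semicontinuous holomorphic extendability to the Runge-pair property. Applied here it yields only that $\varphi_{s,t}(\Do)$ is Runge in $\Do$ (this is Theorem \ref{runge}); it does not embed the abstract manifold $M$ into $\C^n$. An increasing union of Stein Runge pairs is Stein, but a Stein manifold of dimension $n$ generally embeds only into $\C^{2n+1}$, not into $\C^n$ as a domain. The actual content of \cite{ABWold} is precisely that an increasing union of copies of $\Do$ with Runge inclusions is biholomorphic to a domain of $\C^n$, and for $n\geq 2$ this rests on Anders\'en--Lempert theory --- the approximation of univalent maps from starlike domains onto Runge images by automorphisms of $\C^n$ (cf.\ Lemma \ref{ru}~d) and \cite{MR1185588}), used to build the embedding as a limit of automorphisms. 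As written, your existence proof replaces the hard theorem by a citation that does not prove it.

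Second, your classification argument breaks down at the point where you form $\widetilde\psi_{s,t}:=g_t^{-1}\circ g_s$. The theorem does \emph{not} assume the second solution $\{g_t\}$ is univalent --- the final clause (``$\{g_t\}$ is univalent iff $\Phi$ is univalent'') would be vacuous otherwise --- so $g_t^{-1}$ need not exist. The correct direction is the opposite one: with $\{\varphi_{s,t}\}$ the evolution family of $G$ from Theorem \ref{Eva}, compute for fixed $s$ and $z$
\begin{equation*}
\frac{\partial}{\partial t}\,g_t\bigl(\varphi_{s,t}(z)\bigr)
= \dot g_t(\varphi_{s,t}(z)) + Dg_t(\varphi_{s,t}(z))\,G(\varphi_{s,t}(z),t) = 0
\quad\text{for a.e.\ } t\geq s,
\end{equation*}
so that $g_s = g_t\circ\varphi_{s,t}$ without any invertibility assumption on $g_t$; then $\Phi:=g_t\circ f_t^{-1}$ is well defined and consistent on $R=\bigcup_t f_t(\Do)$ because $f_s=f_t\circ\varphi_{s,t}$. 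Your gluing and the univalence equivalence go through once this is fixed. The converse direction of your proposal is essentially fine.
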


Finally, the following relation holds between evolution families and Loewner chains, see, e.g., \cite{ABWold}.

\begin{theorem}\textbf{(Evolution families and Loewner chains)}\label{Thomas}\\[1mm]
Let $\{\varphi_{s,t}\}_{0\leq s\leq t<\infty}$ be an evolution family of order $d$ satisfying (\ref{cauchy2}) with $G(z,t).$ Any family $\{f_t:\Do\to\C^n\}_{t\geq0}$ of univalent mappings with
$$ f_s = f_t \circ \varphi_{s,t}, \qquad 0\leq s\leq t, $$ 
is a Loewner chain of order $d$ satisfying (\ref{PDE2}) with $G(z,t).$\\
Conversely, if $\{f_t:\Do\to\C^n\}_{t\geq0}$ is a Loewner chain of order $d$ satisfying (\ref{PDE2}) with $G(z,t),$ then $\varphi_{s,t}:=f_t^{-1}\circ f_s$ is an evolution family of order $d$ satisfying (\ref{cauchy2}) with $G(z,t).$
\end{theorem}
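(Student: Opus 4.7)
The plan is to establish both directions of the equivalence by exploiting the single identity $f_s = f_t \circ \varphi_{s,t}$, through which the chain rule interchanges equations (\ref{cauchy2}) and (\ref{PDE2}).

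\textbf{Direct direction.} Starting from an evolution family $\{\varphi_{s,t}\}$ and a family $\{f_t\}$ of univalent mappings satisfying $f_s = f_t \circ \varphi_{s,t}$, first I would verify the two conditions of Definition \ref{Mats}. The inclusion $f_s(\Do) = f_t(\varphi_{s,t}(\Do)) \subseteq f_t(\Do)$ is immediate since $\varphi_{s,t}$ is a self-map of $\Do$. For local integrability, fix a compact $K \subset \Do$ and $T > 0$; taking $u = s$ in condition (3) of the evolution-family definition gives $\|\varphi_{s,t}(z) - z\| \leq \int_s^t k_{K,T}(\tau)\, d\tau$, so the orbit set $\{\varphi_{s,t}(z) : z \in K,\; 0 \leq s \leq t \leq T\}$ lies in a compact $\widetilde K \subset \Do$. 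Setting $M := \sup_{t \in [0,T]} \sup_{w \in \widetilde K} \|Df_t(w)\|$, finite by continuity of $t \mapsto f_t$ locally uniformly, the estimate $\|f_s(z) - f_t(z)\| = \|f_t(\varphi_{s,t}(z)) - f_t(z)\| \leq M \int_s^t k_{K,T}(\tau)\, d\tau$ supplies $c_{K,T} := M\, k_{K,T} \in L^d$. To derive (\ref{PDE2}), I would differentiate $f_s = f_t \circ \varphi_{s,t}$ in $t$ to obtain $0 = \dot f_t(\varphi_{s,t}(z)) + Df_t(\varphi_{s,t}(z))\, G(\varphi_{s,t}(z),t)$ for almost every $t \geq s$, and specialize to $t = s$, using $\varphi_{s,s} = \id_{\Do}$.

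\textbf{Converse direction.} Given a Loewner chain $\{f_t\}$ solving (\ref{PDE2}) with some Herglotz vector field $G$, I would set $\varphi_{s,t} := f_t^{-1} \circ f_s$. Well-definedness and univalence come from $f_s(\Do) \subseteq f_t(\Do)$ together with univalence of $f_t$. The identities $\varphi_{s,s} = \id_{\Do}$ and the cocycle property $\varphi_{u,t} \circ \varphi_{s,u} = f_t^{-1} \circ f_u \circ f_u^{-1} \circ f_s = \varphi_{s,t}$ are immediate. For the ODE (\ref{cauchy2}), I would again differentiate $f_s = f_t \circ \varphi_{s,t}$ in $t$, substitute (\ref{PDE2}), and cancel the invertible factor $Df_t(\varphi_{s,t}(z))$ to obtain $\partial_t \varphi_{s,t}(z) = G(\varphi_{s,t}(z), t)$ for almost every $t$. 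The local integrability in condition (3) of evolution families then follows by integrating this ODE and invoking property~(4) of the Herglotz vector field restricted to the compact $\widetilde K$ obtained as before.

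\textbf{Main obstacle.} The delicate point is the pointwise application of the chain rule when $f_t$ and $\varphi_{s,t}$ are only differentiable in $t$ almost everywhere. I would argue that the maps $t \mapsto f_t(z)$ and $t \mapsto \varphi_{s,t}(z)$ are locally absolutely continuous (this is an integrated form of the defining inequalities, combined with holomorphy in $z$ via Cauchy estimates), and that the null sets of non-differentiability can be chosen independently of $z$ on compact subsets. Granted this, the chain-rule differentiations above hold for almost every $t$, and the uniqueness half of Theorem \ref{Eva} guarantees that the Herglotz vector field reconstructed from $\{\varphi_{s,t}\}$ in the converse coincides with the one originally appearing in (\ref{PDE2}).
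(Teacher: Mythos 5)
The paper itself offers no proof of this statement; it is quoted from the literature with the citation ``see, e.g., \cite{ABWold}'', so there is no in-text argument to compare against. Your outline is the standard chain-rule argument used in that reference, and you correctly identify the genuine technical difficulty (locally absolute continuity in $t$ and the dependence of the exceptional null sets on $(s,z)$). However, as written the proposal contains two circularities that must be untangled before it is a proof.

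First, in the direct direction you justify the finiteness of $M=\sup_{t\in[0,T]}\sup_{w\in\widetilde K}\|Df_t(w)\|$ by ``continuity of $t\mapsto f_t$ locally uniformly'' --- but that continuity is precisely condition (2) of Definition \ref{Mats}, which is what you are in the middle of proving. The fix is to write $f_t=f_T\circ\varphi_{t,T}$, so that $Df_t(w)=Df_T(\varphi_{t,T}(w))\,D\varphi_{t,T}(w)$; the first factor is bounded because the orbit set is a compact subset of $\Do$ and $Df_T$ is continuous there, and the second is bounded uniformly in $t$ by Cauchy estimates since all $\varphi_{t,T}$ map into the bounded set $\Do$. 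Relatedly, ``specialize to $t=s$'' cannot be applied to an identity that holds only for almost every $t\geq s$: the correct mechanism is to produce a single full-measure set $E$ of times $t$ (via the Fubini-type argument you allude to) at which both $\dot f_t$ exists locally uniformly and the ODE holds for all $s\le t$, and then, for $t\in E$ and arbitrary $w\in\Do$, to choose $s<t$ close enough to $t$ that $w\in\varphi_{s,t}(\Do)$ and read off the PDE at $w=\varphi_{s,t}(z)$. Second, and more seriously, in the converse direction you derive the ODE for $\varphi_{s,t}$ by differentiating $f_s=f_t\circ\varphi_{s,t}$ in $t$, and then claim condition (3) for the evolution family ``follows by integrating this ODE.'' But differentiating $t\mapsto\varphi_{s,t}(z)$ presupposes exactly the absolute continuity that condition (3) encodes, so the argument as ordered is circular. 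One must first establish condition (3) directly: from $f_u(\varphi_{s,u}(z))=f_t(\varphi_{s,t}(z))$ one gets $\|f_u(\varphi_{s,u}(z))-f_u(\varphi_{s,t}(z))\|=\|f_t(\varphi_{s,t}(z))-f_u(\varphi_{s,t}(z))\|\le\int_u^t c_{K',T}(\tau)\,d\tau$, and then one needs a uniform lower distortion bound $\|f_u(a)-f_u(b)\|\ge m\|a-b\|$ for $a,b$ in a compact subset of $\Do$ and all $u\in[0,T]$ (obtained from a normality and Hurwitz-type argument, using that $u\mapsto f_u$ is continuous and each $f_u$ is univalent) to convert this into the required estimate for $\|\varphi_{s,u}(z)-\varphi_{s,t}(z)\|$. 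Only after that is the differentiation legitimate and the appeal to the uniqueness part of Theorem \ref{Eva} available.
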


 \begin{remark}\label{Chiffre} Sometimes it is convenient to consider the inverse mappings of solutions to (\ref{cauchy2}) and (\ref{PDE2}), or ``decreasing Loewner chains'', which lead to slightly modified differential equations, again called Loewner equations in the literature. We will encounter such a modified version in Chapter 3. Therefore,
we mention two further Loewner equations at this point. Let $G$ be a Herglotz vector field, then we distinguish between the following four basic types.

{\center
\renewcommand{\arraystretch}{1.5}
\begin{tabular}{|c|c|c|} \hline
Type & Name & Differential Equation \\[1mm] \hline
$(I)$ & ordinary Loewner equation & $\dot{w}_t(z)=G(w_t(z),t)$ \\[2mm] \hline 
$(II)$ & partial  Loewner equation & $\dot{f}_t(z)=-Df_t(z)G(z,t) $  \\[2mm] \hline
$(III)$ & reversed ordinary Loewner equation & $\dot{w}_t(z)=-G(w_t(z),t)$ \\[2mm] \hline
$(IV)$ & reversed partial  Loewner equation & $\dot{f}_t(z)=Df_t(z)G(z,t) $\\\hline
\end{tabular}\\[0.5cm]}

We will not discuss the general reversed equations here. Instead, we refer to \cite{localdu}, where the authors study relations between those equations and ``decreasing Loewner chains'' for the case $n=1.$ \end{remark}

Many problems in Loewner theory ask for connections between properties of the three basic notions we introduced in this section. For example, one may be interested in finding analytic properties of a Herglotz vector field $G(z,t)$ that imply a certain geometric property for the domains $\varphi_{s,t}(\Do),$ such as: $\varphi_{s,t}$ has quasiconformal extension to the boundary $\partial \Do,$ or, when $n=1:$ $\varphi_{s,t}(\D)$ is a slit disc, etc.\\
  In Chapter 3, we will look at some problems of this kind for a particular type of Herglotz vector fields in one dimension. Next we briefly describe the general situation for the case $n=1.$

\section{The one-dimensional case}

Fortunately, the set $\I(\D)$ of infinitesimal generators of the unit disc can be characterized completely by the Berkson-Porta representation formula (see \cite{MR0480965}): $$\I(\D)=\{z\mapsto (\tau-z)(1-\overline{\tau}z)p(z)\with \tau\in\overline{\D}, \; p\in\Ho(\D,\C) \;\text{with}\; \Re p(z)\geq0 \;\text{for all}\; z\in\D\}.$$

\begin{remark}
Let  $p\in\Ho(\D,\C) \;\text{with}\; \Re p(z)\geq0 \;\text{for all}\; z\in\D$. Note that either $p(z)\equiv ib,$ for some  $b\in \R$ or $\Re p(z)>0$ for all $z\in\D$ by the maximum principle. A useful description of the second case is given by the Riesz-Herglotz representation formula for the Carath\'{e}odory class $$\mathcal{P}:=\{p\in\Ho(\D,\C) \with \Re p(z)>0 \;\text{for all}\; z\in\D, \;p(0)=1\}.$$ For any $f\in \Ho(\D,\C)$ with $f/f(0)\in\mathcal{P}$ there exists a probability measure $\mu$ on $\partial \D$ such that 
\begin{equation}\label{Christian}     f(z) = f(0)\cdot \int_{\partial \D} \frac{u + z}{u -z} \, d\mu(\theta)\quad \text{for all}\; z\in\D.  \end{equation}
\end{remark}

\begin{remark}
 If $\Phi_t$ is a semigroup on $\D$ such that $\Phi_t$ is not an elliptic automorphism of $\D$ for all $t> 0,$ then the point $\tau$ of its infinitesimal generator is the Denjoy-Wolff point of the semigroup, i.e. $\lim_{t\to\infty}\Phi_t(z)=\tau$ for all $z\in\D$.
\end{remark}

Consequently, the ordinary Loewner equation for $\D$ reads \begin{equation}\label{loe}\dot{\varphi}_{s,t}=(\tau_t-\varphi_{s,t})(1-\overline{\tau_t}\cdot\varphi_{s,t})\cdot p(\varphi_{s,t},t), \quad \varphi_{s,s}(z)=z\in \D, \; t\geq s,\end{equation}  where the right side of the differential equation is a Herglotz vector field.\\

Assume that an evolution family $\varphi_{s,t}$ does not contain automorphisms of $\D$ whenever $s<t.$ It has been proven that in this case all non-identical elements of $\varphi_{s,t}$ share the same Denjoy-Wolff point $\tau$ if and only if $\tau_t\equiv \tau$ for almost all $t\geq 0,$ see \cite{MR2719792}, p. 4. Therefore, one considers the following two special cases: $$\tau_t \equiv \tau_0\in\D  \quad \text{and} \quad \tau_t \equiv \tau_0\in\partial\D.$$ Of course, we can assume without loss of generality that $\tau_0=0$ in the first and $\tau_0=1$ in the second case. In modern literature, e.g. in \cite{MR2719792}, the first case is called \emph{radial} and the second \emph{chordal}. In the following we will use these terms in a different sense, namely only for special cases where also the function $p$ has further normalizations.
\begin{enumerate}
 \item[\textbullet] A Herglotz vector field that satisfies $\tau_t\equiv 0,$ $\Re p(z,t)>0$ and $p(0,t)=1$ for all $z\in\D$ and almost all $t\geq 0$ is called \emph{radial} (in the classical sense). In other words: $z\mapsto -G(z,t)/z\in \mathcal{P}$ for almost all $t\geq0.$ 
Geometrically, a radial evolution family, i.e. an evolution family $\varphi_{s,t}$ satisfying the radial Loewner ODE, describes the growth of sets from $\partial \D$ to 0. It is normalized by $\varphi_{s,t}(0)=0$ and $\varphi_{s,t}'(0)=e^{t-s}.$ The radial Loewner PDE has the form
\begin{equation}\label{Patricia} \dot{f}_t(z)=zp(z,t)f_t'(z) \quad \text{for all} \; z\in\D \; \text{and for almost all} \; t\geq 0. \end{equation}
Many important, well-known statements concerning the radial Loewner PDE can be found in  \cite{Pom:1975}, Chapter 6. We summarize some of them in the following Theorem.
\begin{theorem}\label{Pommes} Every normalized Loewner chain $\{f_t\}_{t\geq0}$ on $\D$, i.e. $e^{-t}f_t\in S$ for all $t\geq 0,$ satisfies (\ref{Patricia}). Conversely, every univalent solution to (\ref{Patricia}) is a normalized Loewner chain. Furthermore, the following statements hold: \begin{enumerate}
\item If $\{f_t\}_{t\geq0}$ is a normalized Loewner chain, then $f=f_0$ has \emph{parametric representation}, which means that $f$ can be calculated by the associated evolution family $\varphi_{s,t}$ via $$ f = \lim_{t\to\infty}e^t\varphi_{0,t}, $$
see \cite{Pom:1975}, Theorem 6.3. Furthermore, the Loewner range of the chain is $\C.$
\item For every $f\in S,$ there exists a normalized Loewner chain $\{f_t\}_{t\geq0}$ such that $f=f_0,$
see \cite{Pom:1975}, Theorem 6.1.
\item For every $f\in S$ that maps $\D$ onto $\C$ minus a slit, there exists exactly one normalized Loewner chain $\{f_t\}_{t\geq0}$ with $f=f_0$ and $p(z,t)$ has the form
\begin{equation}\label{Pavel} p(z,t)= \frac{\kappa(t)+z}{\kappa(t)-z}, \end{equation}
where $\kappa:[0,\infty)\to\partial\D$ is a continuous function (the so-called driving function), see \cite{Loewner:1923}. 
\end{enumerate}

\end{theorem}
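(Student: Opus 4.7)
The strategy is to deduce everything from the general Herglotz/evolution family/Loewner chain correspondence recalled above, combined with the Berkson--Porta formula and classical compactness properties of the classes $S$ and $\mathcal P$. For the equivalence with (\ref{Patricia}): given a normalized Loewner chain $\{f_t\}$, the general Loewner PDE theorem supplies a Herglotz vector field $G$ with $\dot f_t = -Df_t\cdot G(\cdot,t)$. Evaluating this identity and its $z$-derivative at $0$, together with $f_t(0)=0$ and $f_t'(0)=e^t$, forces $G(0,t)=0$ and $\partial_z G(0,t)=-1$. Berkson--Porta now applies: since $G(\cdot,t)\in\mathcal I(\D)$ has an interior fixed point at $0$, its Denjoy--Wolff point must be $0$, so $G(z,t)=-zp(z,t)$ with $p(\cdot,t)\in\mathcal P$, which is exactly (\ref{Patricia}). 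Conversely, any such $p$ makes $G(z,t)=-zp(z,t)$ a Herglotz vector field (measurability is immediate and the local boundedness in $\mathcal P$ yields the integrable majorant), and by the general theorem a univalent solution is a Loewner chain in the sense of Definition \ref{Mats}; evaluation at $0$ recovers $e^{-t}f_t\in S$.

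For part~(1) use $f_0=f_t\circ\varphi_{0,t}$ (Theorem~\ref{Thomas}) and set $\phi_t(w):=e^{-t}f_t(w)/w$, so that $\phi_t(0)=1$ and $f_0(z)=e^t\varphi_{0,t}(z)\cdot\phi_t(\varphi_{0,t}(z))$. Since $e^{-t}f_t\in S$, the growth theorem bounds $\phi_t$ on compacta of $\D$ uniformly in $t$; Cauchy estimates then give $|\phi_t(w)-1|\leq C|w|$ locally. Because $\{e^t\varphi_{0,t}\}\subset S$ is normal and $\varphi_{0,t}'(0)=e^{-t}\to 0$, we get $\varphi_{0,t}(z)\to 0$ locally uniformly, hence $\phi_t(\varphi_{0,t}(z))\to 1$ and $e^t\varphi_{0,t}\to f_0$. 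Koebe's $1/4$-theorem applied to $e^{-t}f_t$ gives $f_t(\D)\supset\mathcal B(0,e^t/4)$, so the range $\bigcup_t f_t(\D)$ is $\C$.

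For part~(3), suppose $f(\D)=\C\setminus\gamma([0,1])$ with $\gamma$ a Jordan arc running to $\infty$. Monotonicity $f_t(\D)\supseteq f(\D)$ together with $|f_t'(0)|=e^t$ forces $f_t(\D)=\C\setminus\gamma([\tau(t),1])$ for a unique continuous, strictly increasing reparameterization $\tau$ (determined by the conformal-radius normalization), which gives uniqueness. Each $f_t$ extends continuously to $\overline{\D}$ by Carath\'eodory's theorem, so the tip $\gamma(\tau(t))$ has a single pre-image $\kappa(t)\in\partial\D$ under $f_t$, and continuity of $\kappa$ follows from continuous dependence of the tip on $t$. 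Substituting into (\ref{Patricia}) and comparing with the Riesz--Herglotz representation (\ref{Christian}) of $p(\cdot,t)\in\mathcal P$ forces the probability measure $\mu_t$ to be concentrated at $\kappa(t)$, yielding (\ref{Pavel}). Part~(2) then follows by approximation: any $f\in S$ can be approximated locally uniformly by $f^{(n)}\in S$ whose image domains are complements of Jordan arcs to $\infty$ (slit domains are Carath\'eodory-dense among simply connected proper subdomains of $\C$); each $f^{(n)}$ embeds via part~(3) into a Loewner chain with driving $p^{(n)}(z,t)\in\mathcal P$, and compactness of $\mathcal P$ in the topology of locally uniform convergence yields a subsequential limit $p(z,t)$ whose associated PDE (\ref{Patricia}) has a univalent solution starting at $f$.

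The main obstacle is the last step, where one must ensure that the subsequential limit $p(z,t)$ is jointly measurable in $(z,t)$ and that the resulting Loewner chain really begins at $f$ rather than at some other element of $S$; this is controlled through Carath\'eodory kernel convergence of the image domains $f_t^{(n)}(\D)\to f_t(\D)$, which is the classical core of Pommerenke's argument.
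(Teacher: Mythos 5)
The paper does not actually prove Theorem \ref{Pommes}: it is presented as a summary of classical results, with the three parts attributed to Pommerenke (\cite{Pom:1975}, Theorems 6.1 and 6.3) and to Loewner (\cite{Loewner:1923}). Your proposal is therefore a reconstruction of those classical proofs rather than a comparison with anything in the text. As such, the overall architecture is right: the Berkson--Porta reduction to $G(z,t)=-zp(z,t)$ with $p(\cdot,t)\in\mathcal P$, the limit $e^t\varphi_{0,t}\to f_0$ via the factorization $f_0=e^t\varphi_{0,t}\cdot\phi_t(\varphi_{0,t})$ together with $\varphi_{0,t}\to 0$, the identification of $f_t(\D)$ with a complement of a tail of the slit via conformal radius, and the density-plus-compactness argument for part (2) are all the standard steps.

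Two places in your sketch hide the genuinely hard work. First, in part (3) the continuity of $\kappa$ does \emph{not} simply ``follow from continuous dependence of the tip on $t$'': the tip $\gamma(\tau(t))$ moves continuously in $\C$, but $\kappa(t)$ is its preimage under the \emph{varying} boundary extension of $f_t$, and one must show that the arc of $\partial\D$ mapped by $f_t$ onto the two sides of $\gamma([\tau(t),\tau(t+h)])$ shrinks to a point as $h\to0$. This requires a quantitative estimate (Koebe distortion or an extremal-length/modulus argument) and is the central difficulty of Loewner's theorem; as stated, your argument is close to circular. Second, in part (2) you cannot extract a single subsequence along which $p^{(n)}(\cdot,t)$ converges for \emph{all} $t$ simultaneously: compactness of $\mathcal P$ gives convergent subsequences for each fixed $t$, but a diagonal argument only handles countably many times, and the resulting limit need not be measurable in $t$. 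The standard repair (and Pommerenke's actual route) is to pass to the limit in the chains $f_t^{(n)}$ themselves, using normality of $\{e^{-t}f_t^{(n)}\}\subset S$, diagonalization over a countable dense set of times, and monotonicity in $t$ to fill in the remaining times; the Herglotz function $p(z,t)$ is then recovered only afterwards, for almost every $t$, by differentiating the limit chain (or, alternatively, by weak-$*$ compactness of the associated measures in $L^\infty_{loc}([0,\infty))$). With these two steps supplied, your outline matches the classical proofs the paper cites.
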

\item[\textbullet] In the (classical) \emph{chordal case}, we have $\tau_t\equiv 1$ and the function $p(z,t)$ satisfies further technical assumptions. This case is treated in more detail in Chapter 3. Here we only look at a special case that explains where the name ``chordal'' comes from:
Let the Herglotz vector field $G(z,t)$ have the form $$G(z,t)=(z-1)^2\cdot\frac1{\frac{1+z}{1-z}-iu(t)},$$
where $u(t)$ is a continuous, real-valued function. Every simple curve that grows from a point $p\in\partial \D\setminus{1}$ to $1$ within $\D$ -- a ``chord'' joining $p$ and 1 -- can be described by an evolution family that corresponds to a Herglotz vector field of this form, see Section \ref{Pascha}. This is comparable to the radial case of the form (\ref{Pavel}).
\end{enumerate}

We have seen that the Loewner range of a normalized Loewner chain is always the whole complex plane $\C.$ The following result handles the general case.

\begin{theorem}[Theorem 1.6 and 1.7 in \cite{MR2789373}]\label{Fipi}
 Let $G(z,t)$ be a Herglotz vector field on $\D$ with evolution family $\varphi_{s,t}$. Then there exists a unique Loewner chain $\{f_t\}_{t\geq0}$ with $f_0\in S$ such that its range $R$ is either an Euclidean disc or the complex plane. Furthermore,
\begin{enumerate}[(a)]
 \item $R=\C$ if and only if  for one $z\in \D$ (and thus for all $z\in \D$), 
$$ \beta(z):=\lim_{t\to\infty}\frac{|\varphi_{0,t}'(z)|}{1-|\varphi_{0,t}(z)|^2} =0.$$
\item If $R\not=\C,$ then the Euclidean disc $R$ has the form $R=\{z\in\C\with |z|<1/\beta(0)\}.$
\end{enumerate}
\end{theorem}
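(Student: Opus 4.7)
The plan is to first normalise an arbitrary Loewner chain for $G$ by a Riemann map so that its range becomes either $\C$ or a disc centred at the origin, and then to identify $\beta(z)$ with the Poincar\'e density of the Loewner range at $f_0(z)$, from which parts (a) and (b) will fall out in one stroke.

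For existence I would start with an arbitrary Loewner chain $\{g_t\}$ of order $d$ solving (\ref{PDE2}) with~$G$ (supplied by the previous theorem). Each $g_t(\D)$ is simply connected and the family is increasing, so $R_g:=\bigcup_tg_t(\D)$ is simply connected; by the Riemann mapping theorem it is either $\C$ or biholomorphic to $\D$. In the former case I set $\Phi_0(w):=w-g_0(0)$; in the latter I let $\Phi_0\colon R_g\to\D$ be the Riemann map with $\Phi_0(g_0(0))=0$ and $\Phi_0'(g_0(0))>0$. The chain rule shows that $\Phi_0\circ g_t$ still solves (\ref{PDE2}), and dividing by $c:=\Phi_0'(g_0(0))g_0'(0)$ yields a chain $\{f_t\}$ with $f_0\in S$ whose range is either $\C$ or the open disc $\{|w|<1/c\}$ centred at the origin. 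Uniqueness would follow from the ``solutions differ by a univalent $\Psi$'' part of the theorem linking Herglotz vector fields and Loewner chains: for another such chain $\{\tilde f_t\}$ one has $\tilde f_t=\Psi\circ f_t$ for a univalent $\Psi$ on the range, and the conditions $f_0,\tilde f_0\in S$ force $\Psi(0)=0,\ \Psi'(0)=1$. Liouville's theorem excludes mapping a disc biholomorphically onto $\C$, and classifying entire univalent maps (affine) or biholomorphisms of discs centred at $0$ (scaled rotations) then forces $\Psi=\id$.

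For (a) and (b) the key identity is obtained by differentiating $f_0=f_t\circ\varphi_{0,t}$ and invoking the conformal invariance of the Poincar\'e density $\lambda_D$, under the convention $\lambda_\D(w)=(1-|w|^2)^{-1}$ and $\lambda_\C\equiv 0$, for the biholomorphism $f_t\colon\D\to f_t(\D)$ at the point $\varphi_{0,t}(z)$:
\[
|f_0'(z)|\,\lambda_{f_t(\D)}(f_0(z))=\frac{|\varphi_{0,t}'(z)|}{1-|\varphi_{0,t}(z)|^2}.
\]
Monotonicity of $\lambda_D$ under inclusions of hyperbolic domains, applied to the exhaustion $f_t(\D)\nearrow R$, lets me pass to the limit $t\to\infty$ and obtain
\[
\beta(z)=|f_0'(z)|\,\lambda_R(f_0(z)).
\]
Since $f_0$ is univalent and therefore $f_0'$ never vanishes, $\beta(z)=0$ for one $z\in\D$ is equivalent to $\lambda_R\equiv 0$, i.e.\ to $R=\C$, which is (a). When $R\neq\C$, the construction forces $R=\{|w|<1/c\}$ and $\lambda_R(0)=c$; evaluating the formula above at $z=0$ gives $\beta(0)=c$, hence $R=\{|w|<1/\beta(0)\}$, which is~(b).

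The hardest step will be the hyperbolic-density identification, since it bundles the algebraic differentiation above with the geometric input that $\lambda_{f_t(\D)}$ converges to $\lambda_R$ along the increasing exhaustion. Once this identification is made, the existence/uniqueness arguments are essentially bookkeeping with the Riemann mapping theorem, and the centering of the disc at the origin (required for the explicit radius formula in (b)) is precisely what the normalisation $f_0\in S$ enforces via the computation at $z=0$.
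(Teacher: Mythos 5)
Your argument is sound, and note that the thesis itself offers no proof of this statement: Theorem \ref{Fipi} is quoted from Contreras, D\'{\i}az-Madrigal and Gumenyuk \cite{MR2789373}, so the only proof to compare with is theirs, which runs in essentially the same spirit (Schwarz--Pick monotonicity of $t\mapsto |\varphi_{0,t}'(z)|/(1-|\varphi_{0,t}(z)|^2)$ to get existence of $\beta(z)$, and identification of $\beta$ with the hyperbolic density of the Loewner range). Your shortcut of starting from an arbitrary chain supplied by the existence theorem for the Loewner PDE and renormalising it by a Riemann map is legitimate and efficient. Two points need to be made explicit. First, in the uniqueness step you classify only biholomorphisms between discs \emph{centred at the origin}; read literally, the normalisation ``range is a Euclidean disc'' admits competing chains, since $\Psi(w)=w/(1-aw)$ with small $a\neq 0$ satisfies $\Psi(0)=0$, $\Psi'(0)=1$ and maps a disc centred at $0$ onto an off-centre Euclidean disc, so $\{\Psi\circ f_t\}$ would be a second admissible chain. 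Uniqueness therefore holds only with the normalisation ``disc centred at $0$'' (which is what part (b) asserts and what the cited theorem actually imposes), and your proof should say so. Second, monotonicity of the Poincar\'e density under inclusion yields only $\lim_{t\to\infty}\lambda_{f_t(\D)}(w_0)\geq\lambda_R(w_0)$; the reverse inequality requires the exhaustion argument (for every $r<1$ the compact set $h(\overline{r\D})$, with $h$ a Riemann map of $R$, is eventually contained in $f_t(\D)$, whence $\lambda_{f_t(\D)}(w_0)\leq r^{-1}\lambda_R(w_0)$), i.e. Carath\'eodory kernel convergence. With these two points filled in, the proof is complete.
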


\section{The Loewner PDE and the range problem}\label{Uta}

Let $G(z,t)$ be a Herglotz vector field on $\Do.$ Recall that there exists a Loewner chain $\{f_t: \Do\to \C^n\}_{t\geq0}$ that satisfies the corresponding Loewner PDE (\ref{PDE2}) and that $R=\bigcup_{t\geq0} f_t(\Do)$ is the Loewner range of this chain.\\
If $\{g_t: \Do\to \C^n\}_{t\geq0}$ is another Loewner chain on $\Do$ satisfying the same PDE, then $g_t=\Phi \circ f_t$ with a univalent map $\Phi: R \to \C^n.$ Consequently, the biholomorphism class $[R]$ of the Loewner range is uniquely determined by the Herglotz vector field $G(z,t)$ and we call $[R]$ the \textit{Loewner range of} $G(z,t)$.
\begin{example}
 For $n=1,$ any Loewner range $R$ of a Loewner chain is a union of increasing, simply connected domains, and thus, $R$ is simply connected too. Hence $[R]=\C$ or $[R]=\D,$ where $[R]=\C$ if and only if $R=\C.$ \hfill $\bigstar$
\end{example}

This bivalence of the biholomorphism class $[R]$ doesn't hold in higher dimensions any longer: the constant Loewner chain $\{\textbf{id}_{\Do}\}_{t\geq 0}$ is an example for the case $[R]=[\Do]$ and the case $[R]=[\C^n]$ is given, e.g., by the Loewner chain $\{e^t\textbf{id}_{\Do}\}_{t\geq 0}.$  By combining these two cases, one easily obtains examples for the ranges $[R]=[\Do^{n-k}\times \C^k]$, $k=0,...,n\geq2.$ Thus the number of possible values for $[R]$ is greater than two for $n\geq 2.$ However, the following statement can be regarded as a replacement for this two--valuedness.
\begin{proposition} If the range $R$ of a Loewner chain is a (Kobayashi) hyperbolic domain, then it is biholomorphically equivalent to $\Do.$ \\
In particular, if $\Do=\B_n,$ then $[R]=[\D^n]$ is impossible for $n\geq 2$ and vice versa. 
\end{proposition}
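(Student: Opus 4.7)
First I fix $p_0 \in U_0 := f_0(\Do)$ and $z_0 := f_0^{-1}(p_0)$. Because $\aut(\Do)$ acts transitively on $\Do$, I pre-compose each $f_t$ with an automorphism of $\Do$ to produce biholomorphisms $g_t : \Do \to U_t := f_t(\Do)$ with $g_t(z_0) = p_0$, and set $h_t := g_t^{-1} : U_t \to \Do$, so that $h_t(p_0) = z_0$ for every $t$.

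Next I run a normal family argument on the inverses. Any compact $K \subset R = \bigcup_t U_t$ lies inside $U_t$ for $t \geq T_K$, and $h_t(K) \subset \Do$ is bounded in $\C^n$. Montel's theorem plus a diagonal extraction over a compact exhaustion of $R$ yields a subsequence $h_{t_k} \to h_\infty$ converging locally uniformly on $R$, with $h_\infty : R \to \overline{\Do}$ holomorphic. To promote $h_\infty$ to a univalent map I use the hyperbolicity hypothesis: since $h_t$ is a biholomorphism, the infinitesimal Kobayashi (pseudo)metric satisfies
\begin{equation*}
\kappa_{\Do}(z_0,\, Dh_t(p_0)\, v) \;=\; \kappa_{U_t}(p_0,\, v),
\end{equation*}
and the right hand side decreases to $\kappa_R(p_0,v) > 0$ (by Royden's formula for increasing unions together with the hyperbolicity of $R$). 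Combined with the Cauchy-estimate upper bound on $\|Dh_t(p_0)\|$ coming from $h_t(U_t) \subset \Do$, this forces $Dh_t(p_0)$ to stay in a compact subset of invertible matrices, so $Dh_\infty(p_0)$ is invertible. The multidimensional Hurwitz theorem then makes $h_\infty$ univalent on $R$; openness of univalent holomorphic maps, together with $h_\infty(R) \subset \overline{\Do}$, places $h_\infty(R)$ strictly inside $\Do$.

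To show $h_\infty(R) = \Do$, fix $z \in \Do$. For large $k$, $g_{t_k}(z) \in U_{t_k} \subset R$, and biholomorphism invariance of the Kobayashi distance gives
\begin{equation*}
k_R(p_0,\, g_{t_k}(z)) \;\leq\; k_{U_{t_k}}(p_0,\, g_{t_k}(z)) \;=\; k_{\Do}(z_0,\, z) \;<\; \infty.
\end{equation*}
Completeness of each $U_t$ together with $R = \bigcup_t U_t$ and $k_R = \inf_t k_{U_t}$ makes the $k_R$-ball of that radius around $p_0$ relatively compact in $R$, so $\{g_{t_k}(z)\}$ accumulates inside $R$. A further diagonal extraction yields $g_{t_k} \to g_\infty : \Do \to R$ locally uniformly, with $g_\infty$ univalent again by Hurwitz. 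Passing to the limit in $h_{t_k} \circ g_{t_k} = \id_{\Do}$ and $g_{t_k} \circ h_{t_k} = \id_{U_{t_k}}$ (the latter restricted to any compact of $R$) shows that $g_\infty$ and $h_\infty$ are mutual inverses, yielding the biholomorphism $R \cong \Do$. The ``in particular'' statement follows immediately: if $\Do = \B_n$ and $[R] = [\D^n]$ for some $n \geq 2$, then $R$ would be Kobayashi hyperbolic (as $\D^n$ is), so the first part would give $R \cong \B_n$, contradicting Poincar\'e's $[\B_n] \neq [\D^n]$; the opposite case is symmetric.

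The main obstacle is the relative compactness of the $k_R$-ball used in the surjectivity step: Kobayashi hyperbolicity of $R$ does not automatically yield completeness, so one must genuinely exploit the specific structure $R = \bigcup_t U_t$ with each $U_t$ complete hyperbolic together with the identity $k_R = \inf_t k_{U_t}$ in order to transfer compactness of Kobayashi balls from the $U_t$ up to $R$.
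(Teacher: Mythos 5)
Your construction of the injective limit $h_\infty:R\to\Do$ is sound: the normal-family argument on the inverses, the lower bound $\kappa_R(p_0,v)>0$ (which does require the standard fact that hyperbolicity is equivalent to \emph{local uniform} positivity of the Kobayashi--Royden metric, not just pointwise positivity), and the several-variables Hurwitz theorem all work. The gap is the surjectivity step. Relative compactness of $\overline{B}:=\{q\in R\with k_R(p_0,q)\le C\}$ does not follow from ``$k_R=\inf_t k_{U_t}$ and each $U_t$ is complete'': these facts only exhibit $\overline{B}$ inside an increasing union of compact sets $\{q\in U_t\with k_{U_t}(p_0,q)\le C+\eps\}$, which need not be relatively compact in $R$. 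Worse, the claim is \emph{circular}. Take $\Do=\B_n$, $z_0=0$; then $k_{U_t}(p_0,q)=\tanh^{-1}\|h_t(q)\|$, the Schwarz lemma applied to $h_t\circ g_s:\Do\to\Do$ (which fixes $0$) shows $t\mapsto\|h_t(q)\|$ is non-increasing, whence $k_R(p_0,q)=\tanh^{-1}\|h_\infty(q)\|$. Since $h_\infty$ is a homeomorphism onto its open image $\Omega:=h_\infty(R)$, the ball $\overline{B}$ is the $h_\infty$-preimage of $\{w\in\Omega\with\|w\|\le\tanh C\}$, and this is compact for every $C$ exactly when $\Omega=\Do$ --- which is what you are trying to prove. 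Concretely: if $\Omega\subsetneq\Do$, pick $z\in\partial\Omega\cap\Do$ and $w_m\in\Omega$ with $w_m\to z$; then $q_m:=h_\infty^{-1}(w_m)$ satisfies $k_R(p_0,q_m)\to\tanh^{-1}\|z\|<\infty$ but has no accumulation point in $R$, so the very compactness you invoke fails.

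The thesis gives no proof here (it cites Theorem 3 of \cite{MR0435441}), and the Forn\ae ss--Stout argument closes exactly this gap without ever using completeness of $R$: consider $\tau_j:=h_\infty\circ g_j:\Do\to\Omega\subseteq\Do$, injective with $\tau_j(z_0)=z_0$. From $\kappa_{U_j}(p_0,\cdot)\downarrow\kappa_R(p_0,\cdot)$ and $\kappa_\D o$-invariance one shows that, along a subsequence, $D\tau_j(z_0)$ converges to a linear isometry of $\kappa_{\Do}(z_0,\cdot)$; the limit map $\tau$ fixes $z_0$ with isometric derivative, so the equality case of the Cartan--Carath\'eodory (Schwarz) lemma forces $\tau\in\aut(\Do)$. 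Since $\Omega=\bigcup_j\tau_j(\Do)$ is an increasing union and every compact subset of $\Do=\tau(\Do)$ lies in $\tau_j(\Do)$ for large $j$, one concludes $\Omega=\Do$. So keep your normal-family skeleton, but derive surjectivity from the rigidity of self-maps of $\Do$ at the common fixed point rather than from completeness of $R$.
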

\begin{proof} See Theorem 3 in \cite{MR0435441}.
\end{proof}
Consequently, either $[R]=[\Do]$ or $R$ is not a hyperbolic domain. For more information about  Loewner ranges, we refer to \cite{MR3043148}.\\

 The general problem of determining the Loewner range of a given Herglotz vector field  seems to be nontrivial.  An example for sufficient conditions on the Herglotz vector  field such that $[R]=[\C^n]$ can be found in \cite{Arosio:2012}, Theorem 1.4.\\
From Theorem \ref{Eva} it follows that the biholomorphism class $[R]$ of a Loewner range is also uniquely determined by the corresponding evolution family $\varphi_{s,t}$. In Section 4 of \cite{MR3043148} one can find certain conditions on $\varphi_{s,t}$ that determine $[R],$ see also Theorem \ref{Fipi} for the case $n=1$. However, also from this point of view, it seems to be difficult to find out whether $[R]=[\C^n]$ or not, as this problem is related to the so-called Bedford conjecture.

\begin{example}
 Let $\{\phi_t: \B_n \to \C^n\}_{t\geq0}$ be a Loewner chain in the unit ball such that $\phi_t\in \aut(\C^n)$ for all $t\geq 0$ and $\phi_0(z)=z.$  Thus, this Loewner chain extends the unit ball to a domain $\Omega=\bigcup_{t\geq 0}\phi_t(\B_n).$ \\
We assume the simplest case where  $t\mapsto \phi_t^{-1}$ is a semigroup of automorphisms. Furthermore, we assume that $\phi_t^{-1}$ converges in $\overline{\B_n}$ locally uniformly to one point $p\in \overline{\B_n}$ for $t\to\infty.$ As $p\in \Omega,$ we conclude that $\Omega$ is the basin of attraction of $\phi_1^{-1}$ with respect to $p,$ i.e. $$ \Omega = \{z\in\C^n\with \phi_1^{-k}(z)\to p \; \text{for} \; k\to\infty\}\supset \overline{\B_n}.$$
Under certain conditions, e.g. the eigenvalues of $(\phi_1^{-1})'(p)$ have modulus $< 1,$  it is known that $[\Omega] = [\C^n]$, see p. 84 in \cite{MR929658}. This should still hold when $\phi_t^{-1}$ is not necessarily a semigroup but the domains $\phi_t^{-1}(\B_n)$ are shrinking ''uniformly``.
\hfill $\bigstar$\end{example}

More precisely, the Bedford conjecture is the following statement about a more general basin of attraction, see \cite{MR2262775} and \cite{Arosio:2012}.
\begin{conj}\textbf{(Bedford's conjecture)}\\
Let $f_1, f_2,...$ be a sequence of automorphisms of $\C^n$ that fix the origin. Suppose that there exist $a,b\in\R$ with $0<a<b<1,$ such that for every $z\in\B_n$ and $k\in \N$ the following holds:
$$ a\|z\| \leq \|f_k(z)\| \leq b\|z\|. $$
Then the basin of attraction $\{z\in\C^n \with F_m(z):=(f_m\circ ... \circ f_1)(z)\to 0 \quad \text{for}\quad m\to\infty\}$ is biholomorphic to $\C^n.$
\end{conj}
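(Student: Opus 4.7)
The plan is to recognize the basin of attraction $\Omega$ as the Loewner range of an explicit Loewner chain on $\B_n$, and then to exploit the uniform two-sided bound $a\|z\|\le\|f_k(z)\|\le b\|z\|$ to verify a sufficient condition for $[\Omega]=[\C^n]$. The geometric input is immediate: the upper bound forces $f_k(\B_n)\subset b\B_n\subsetneq\B_n$, so each $f_k$ restricts to a self-map of $\B_n$, while the lower bound together with $f_k\in\aut(\C^n)$ gives $\B_n\subsetneq f_k^{-1}(\B_n)$. Iterating, $F_m(\B_n)\subset b^m\B_n$, so $z\in\Omega$ if and only if $z\in F_m^{-1}(\B_n)$ for some $m$, and
\[\Omega=\bigcup_{m\ge 0}F_m^{-1}(\B_n),\]
an increasing exhaustion by domains biholomorphic to $\B_n$ via the univalent mappings $g_m:=F_m^{-1}|_{\B_n}$.

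Next I would upgrade $\{g_m\}_{m\in\N_0}$ to a continuous Loewner chain. Setting $\varphi_{k-1,k}:=f_k|_{\B_n}$ produces a discrete evolution family on $\B_n$; interpolating between integer times, for instance by flowing along a smoothly chosen time-dependent vector field that realizes the prescribed $f_k$ at integer times, yields a continuous evolution family $\{\varphi_{s,t}\}_{0\le s\le t}$ on $\B_n$. By Theorem~\ref{Thomas} this corresponds to a Loewner chain $\{f_t:\B_n\to\C^n\}_{t\ge 0}$ with $f_m=g_m$ whose Loewner range equals $\Omega$. The Bedford conjecture is thereby reduced to proving that this range has biholomorphism class $[\C^n]$, which I would attack in the spirit of Theorem~1.4 of \cite{Arosio:2012}: construct a candidate biholomorphism $\Phi:\Omega\to\C^n$ as the locally uniform limit of $L_m^{-1}\circ F_m$, where $L_m$ is a suitable linear normalization built from $Df_1(0),\ldots,Df_m(0)$, mimicking the Rosay--Rudin construction of Fatou--Bieberbach domains in the autonomous setting.

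The main obstacle is unquestionably this last step. The hypothesis is purely metric and carries essentially no spectral information about $Df_k(0)$ beyond $a\le\|Df_k(0)v\|\le b$ on the unit sphere; the classical Sternberg-type linearizations and the known range criteria for Loewner chains typically require either a resonance-free spectrum or commensurable eigenvalue bounds, neither of which is available here. In particular I do not expect the linear normalizations $L_m$ to converge automatically: $\Omega$ is \emph{a priori} only what is called a Short $\C^n$ in the literature, and the identification of Short $\C^n$'s with $\C^n$ is itself a notorious open problem. A successful proof would presumably have to exploit the uniform ratio $a/b$ in a genuinely new way, perhaps via a Runge-type approximation of the non-autonomous dynamics by autonomous Rosay--Rudin contractions whose basins are known Fatou--Bieberbach domains, in order to compensate for the absence of spectral data.
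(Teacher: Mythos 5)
The statement you were asked about is not a theorem of the paper at all: it is stated there as an open \emph{conjecture} (Bedford's conjecture), introduced only to motivate why determining whether the Loewner range satisfies $[R]=[\C^n]$ is hard. The paper offers no proof, and none is known. Your proposal, to your credit, essentially recognizes this: the final paragraph concedes that the decisive step is out of reach and that the basin is a priori only a so-called Short $\C^n$, whose identification with $\C^n$ is itself open. But that concession means what you have written is a reformulation, not a proof.

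Concretely, the gap is the entire content of the conjecture. The first two steps of your plan are fine but essentially tautological: the two-sided bound $a\|z\|\le\|f_k(z)\|\le b\|z\|$ does give $\Omega=\bigcup_{m\ge 0}F_m^{-1}(\B_n)$ as an increasing union of biholomorphic copies of $\B_n$, and interpolating the discrete evolution family $\varphi_{k-1,k}=f_k|_{\B_n}$ into a continuous Loewner chain with range $\Omega$ is routine (this is exactly the paper's point of view in Section~2.4). At that stage, however, ``prove $[\Omega]=[\C^n]$'' is precisely Bedford's conjecture restated in Loewner-chain language; no reduction has occurred. The appeal to Theorem~1.4 of \cite{Arosio:2012} does not close it, because that result (like the Rosay--Rudin construction via convergence of the normalizations $L_m^{-1}\circ F_m$) requires quantitative spectral control on the differentials, typically a condition of the form $b^2<a$ or a resonance-free spectrum, which the hypothesis here does not supply. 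Under such an additional assumption the argument you sketch does work and yields known partial results, but in the generality stated the step ``construct $\Phi:\Omega\to\C^n$ as a limit of normalized compositions'' is exactly where every known attempt breaks down. So the proposal should be read as a correct survey of the state of the art, not as a proof; the statement remains open.
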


An important question is, which elements of $U(\Do)$ can be embedded in (i.e. can be the first element of) a Loewner chain. However, if we have no further restrictions, this question is trivial, as the constant Loewner chain $f(\cdot,t)=c$ satisfies the Loewner PDE with $G(z,t)\equiv0$ for every $c\in U(\Do).$ Thinking of the classical case, where a simply connected domain is extended to the whole complex plane, we might ask which univalent functions can be embedded in a Loewner chain with Loewner range $R$ satisfying $R=\C^n$ or more generally $[R]=[\C^n].$ \\
So let us define the following two subsets of $U(\Do)$: $$E(\Do)=\{f\in U(\Do)\with \text{There is a Loewner chain} \; \{f_t\}_{t\geq0} \;\text{with}\; f_0=f \;\text{and} \;R=\C^n\},$$
$$\widetilde{E}(\Do)=\{f\in U(\Do)\with \text{There is a Loewner chain} \; \{f_t\}_{t\geq0} \;\text{with}\; f_0=f \;\text{and} \;[R]=[\C^n]\}.$$
When $n=1,$ these two sets are identical and by Theorem \ref{Pommes} (b) we have $$E(\D)=\widetilde{E}(\D)=U(\D).$$ In higher dimensions, the elements of $E(\Do)$ are connected to those $f\in U(\Do)$ which map $\Do$ onto Runge domains. We will discuss this connection in the next section.

\section{The Runge property}\label{Teerapat}

Let $D\subseteq \C^n$ be a domain. We let $\Po(D,\C^n)\subset\Ho(D,\C^n)$ be the set of all polynomials with the topology induced by locally uniform convergence in $D$.\\ For $n=1,$ a version of the polynomial Runge theorem says that $\Po(D,\C)$ is dense in $\Ho(D,\C)$ whenever $D$ is simply connected. In higher dimensions, this is no longer true and one calls $D$ a \textit{Runge domain} if $\Po(D,\C^n)$ is dense in $\Ho(D,\C^n)$. More generally, if $A, B\subset \C^n$ are domains with $A\subset B$, then $(A,B)$ is called a \textit{Runge pair} if $\Ho(B,\C^n)$ is dense in $\Ho(A,\C^n)$. Thus, $D$ is a Runge domain if and only if $(D,\C^n)$ is a Runge pair.\\
Both the unit ball $\B_n$ and the polydisc $\D^n$ are simple examples of Runge domains, because they are even domains of convergence (of a power series). However, the Runge property is not invariant with respect to biholomorphic mappings. The first example was found by J. Wermer, see \cite{MR0121500}. He constructed a non-Runge domain which is biholomorphically equivalent to the bidisc $\D^2.$ The fact that also $\C^n$ can be mapped biholomorphically onto a non-Runge domain for $n\geq 2$ readily implies that the same is true for $\Do$:

\begin{proposition}\label{nonrunge}
 For every $n\geq 2,$ there exists a non-Runge domain $D$ which is biholomorphic equivalent to $\Do.$ 
\end{proposition}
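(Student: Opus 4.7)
The plan is to reduce the statement to the analogous fact for $\C^n$ cited in the paragraph preceding the proposition. Let $n\geq 2$ and let $\Psi:\C^n\to W$ be a biholomorphism onto a non-Runge domain $W\subsetneq \C^n$. The idea is to exhibit $W$ as an increasing union of biholomorphic copies of $\Do$ and argue that not all of these copies can themselves be Runge.

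Concretely, I would use the natural exhaustion of $\C^n$ by the scaled domains $r\Do$, $r>0$; each such $r\Do$ is biholomorphic to $\Do$ via $z\mapsto z/r$ (both for $\Do=\B_n$ and for $\Do=\D^n$). Setting $D_r:=\Psi(r\Do)\subset W$, we obtain an increasing family of subdomains of $W$, each biholomorphic to $\Do$, with $\bigcup_{r>0} D_r = W$.

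The key step is to show that $D_r$ cannot be Runge for every $r>0$. Suppose, for contradiction, that each $D_r$ is Runge. Given any $f\in\Ho(W,\C^n)$ and any compact set $K\subset W$, the exhaustion property yields some $r^*>0$ with $K\subset D_{r^*}$. The Runge property of $D_{r^*}$ then produces polynomials $P_k\in\Po(\C^n,\C^n)$ with $P_k\to f$ uniformly on $K$. Since $K\subset W$ was arbitrary, this would force $\Po(W,\C^n)$ to be dense in $\Ho(W,\C^n)$, contradicting the choice of $W$. Hence there exists $r^*>0$ with $D_{r^*}$ non-Runge, and $D:=D_{r^*}$ is the desired non-Runge domain biholomorphic to $\Do$.

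The only delicate input is the existence of a non-Runge biholomorphic image $W$ of $\C^n$ for $n\geq2$; this is the Fatou-Bieberbach phenomenon cited just before the proposition, and is the main obstacle if one were to prove everything from scratch. Once $W$ is granted, the exhaustion argument is routine and works uniformly for both choices of $\Do$.
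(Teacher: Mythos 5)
Your proof is correct and follows essentially the same route as the paper: exhaust the non-Runge biholomorphic image of $\C^n$ by the scaled copies $\Psi(r\Do)$ and invoke (indeed, you spell out the proof of) the fact that an increasing union of Runge domains is Runge, so some copy must fail to be Runge. The only point the paper makes explicit that you leave to the citation is how to get the non-Runge image of $\C^n$ for general $n\geq2$ from Wold's two-dimensional example, namely by taking $G(z)=(F(z_1,z_2),z_3,\dots,z_n)$.
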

\begin{proof}
In \cite{MR2372737}, E. Wold constructs a Fatou-Bieberbach domain $E\subset \C^2$ which is not Runge. Let $F: \C^2 \to E $ be biholomorphic. For $n\geq 2$ we define $G:\C^n\to\C^n,$ $G(z_1,z_2,z_3,...,z_n):=(F(z_1,z_2),z_3,...,z_n).$ It is easy to see that also $G(\C^n)$ is not Runge. Now, write $$G(\C^n)=\bigcup_{k=1}^\infty G(k\cdot \Do).$$ If $G(k\cdot \Do)$ was Runge for every $k\geq 1,$ then $G(\C^n)$ would be Runge as a union of increasing Runge domains. Consequently, there exists $k_0\in \N$ such that $D:=G(k_0\cdot \Do)$ is not Runge. 
\end{proof}

Now we rephrase the property that $\Do$ is mapped biholomorphically by $f$ onto a Runge domain in terms of $f$.

 \begin{lemma}\label{ru}
  Let $f\in U(\Do).$ The following statements are equivalent:
\begin{itemize}
 \item[a)] $f(\Do)$ is a Runge domain.
\item[b)]  There exists a sequence $p_k$ of polynomials with $\displaystyle f^{-1}=\lim_{k\to\infty}p_k$ locally uniformly on $f(\Do).$
\item[c)] For every $g\in \Ho(\Do,\C^n)$ there exists a sequence $p_k$ of polynomials with $\displaystyle g=\lim_{k\to\infty}{(p_k\circ f)}$ locally uniformly on $\Do.$
\end{itemize}
When $n\geq 2,$ each of these statements is equivalent to:
\begin{itemize}
\item[d)] There exists a sequence $\varphi_k$ of automorphisms of $\C^n$ with $\displaystyle f=\lim_{k\to\infty}\varphi_k$ locally uniformly on $\Do.$
\end{itemize}  
 \end{lemma}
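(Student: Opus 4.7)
My plan is to prove the equivalences in the cyclic order (a)$\Rightarrow$(c)$\Rightarrow$(a), (a)$\Leftrightarrow$(b), and finally (in the case $n\geq 2$) (b)$\Leftrightarrow$(d). For (a)$\Rightarrow$(c), given $g\in\Ho(\Do,\C^n)$, observe $g\circ f^{-1}\in\Ho(f(\Do),\C^n)$ and apply the Runge property of $f(\Do)$ to obtain polynomials $p_k\to g\circ f^{-1}$ locally uniformly; right-composition with the (continuous, hence bounded on compacta) map $f$ yields $p_k\circ f\to g$ locally uniformly on $\Do$. For (c)$\Rightarrow$(a), take an arbitrary $h\in\Ho(f(\Do),\C^n)$, apply (c) to $g:=h\circ f$, and note that for any compact $K\subset f(\Do)$ the set $K':=f^{-1}(K)$ is compact in $\Do$, so uniform convergence $p_k\circ f\to h\circ f$ on $K'$ is exactly uniform convergence $p_k\to h$ on $K=f(K')$.

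The implication (a)$\Rightarrow$(b) is immediate since $f^{-1}\in\Ho(f(\Do),\C^n)$. For (b)$\Rightarrow$(a) I would use that $\Do$ itself is a Runge domain (both $\B_n$ and $\D^n$ are logarithmically convex complete Reinhardt domains, hence domains of convergence). Given $h\in\Ho(f(\Do),\C^n)$, approximate $h\circ f$ on $\Do$ by polynomials $q_j$, and compose with the polynomials $p_k\to f^{-1}$ from (b). Fix a compact $K\subset f(\Do)$; since $p_k\to f^{-1}$ uniformly on $K$, the images $p_k(K)$ are eventually contained in a single compact $L\subset\Do$. On $L$ we have $q_j\to h\circ f$ uniformly, so the polynomials $q_j\circ p_k$ satisfy
\[
\|q_j(p_k(w))-h(w)\|\leq\|q_j(p_k(w))-h(f(p_k(w)))\|+\|h(f(p_k(w)))-h(w)\|
\]
for $w\in K$; the first term is small for large $j$ uniformly in $k$, and the second is small for large $k$ by continuity of $h\circ f$ and $p_k\to f^{-1}$. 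A diagonal choice of $(j,k)$ gives polynomials on $f(\Do)$ converging locally uniformly to $h$, establishing (a).

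For $n\geq 2$, (d)$\Rightarrow$(b) is the easier half: since $\varphi_k\to f$ locally uniformly and $f$ is univalent, a standard consequence of Hurwitz's theorem gives $\varphi_k^{-1}\to f^{-1}$ locally uniformly on $f(\Do)$; each entire $\varphi_k^{-1}$ is the locally uniform limit of its Taylor polynomials on $\C^n$, and a diagonal argument produces polynomials converging to $f^{-1}$ on $f(\Do)$. The main obstacle is (a)$\Rightarrow$(d), which is not elementary and must be reduced to the Andersén--Lempert approximation theorem (in the form of Forstneri\v{c}--Rosay): on a star-shaped Runge domain $\Omega\subset\C^n$ with $n\geq 2$, a univalent holomorphic map $f\colon\Omega\to\C^n$ whose image is Runge is a locally uniform limit of elements of $\aut(\C^n)$. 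Since $\Do$ is star-shaped with respect to $0$ and Runge, and (a) supplies the Runge image, this theorem applies directly. The crux of the whole lemma is precisely this last implication: it rests on the rich structure of $\aut(\C^n)$ (in particular the density of shear-generated automorphisms), a phenomenon that has no analogue in one complex variable, which is why (d) is stated only for $n\geq 2$.
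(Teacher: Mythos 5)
Your proof is correct and follows essentially the same route as the paper: elementary composition arguments (using that $\Do$ itself is Runge) link (a), (b) and (c), and the equivalence with (d) rests on the Anders\'en--Lempert/Forstneri\v{c}--Rosay theorem that a univalent map of a starlike domain onto a Runge domain is approximable by automorphisms of $\C^n$. The only real deviation is in (d)$\Rightarrow$(a): the paper simply cites Forstneri\v{c}--Rosay, whereas you reduce to (b) via the claim that $\varphi_k\to f$ forces $\varphi_k^{-1}\to f^{-1}$ locally uniformly on $f(\Do)$ --- that claim is precisely the content of the cited Proposition 1.2 of Forstneri\v{c}--Rosay, so you should either cite it or supply the several-variables Rouch\'e/degree argument rather than dismissing it as a ``standard consequence of Hurwitz's theorem.''
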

 \begin{proof} 
 ``$a)\Rightarrow b):$'' This follows from the definition of the Runge property.\\[3mm] ``$b)\Rightarrow c):$'' Let $h_k$ be a sequence of polynomials which approximate $ f^{-1}$ locally uniformly on $f(\Do)$ and let  $g\in \Ho(\Do,\C^n).$ As $\Do$ is Runge, there exists a sequence $q_k$ of polynomials which approximate $g$ locally uniformly on $\Do.$ Then $q_k\circ h_k\circ f$ approximates locally uniformly the map $g\circ f^{-1}\circ f=g.$ \\[3mm]
``$c)\Rightarrow a):$'' First, note that locally uniform convergence of a sequence $g_k(w)$ in $f(\Do)$ is equivalent to locally uniform convergence of the sequence $g_k(f(z))$ in $\Do$ as $f$ is biholomorphic. Now let $h\in \Ho(f(\Do),\C^n)$ and write $h=(h\circ f)\circ f^{-1}.$ Then $h\circ f\in \Ho(\Do,\C^n)$ and there exists a sequence $p_k$ of polynomials with $\displaystyle h\circ f=\lim_{k\to\infty}p_k(f)$ and thus $\displaystyle h=\lim_{k\to\infty}p_k$ locally uniformly in $f(\Do).$  \\[3mm]
Now let $n\geq2:$\\[3mm]
``$a)\Rightarrow d):$'' This follows from Theorem 2.1 in \cite{MR1185588}, which states that every biholomorphic map from a starlike domain onto a Runge domain can be approximated by automorphisms of $\C^n$, when $n\geq2$.\\[3mm]
``$d)\Rightarrow a):$'' This direction is already mentioned as a remark at the end of page 372 in \cite{MR1185588}, see also Proposition 1.2 (c) in \cite{MR1213106} for a complete proof.
 \end{proof}

 In \cite{ABWold}, a remarkable fact concerning solutions to the ordinary Loewner equation is shown.
\begin{theorem}[Proposition 5.1 in \cite{ABWold}]\label{runge}
\textcolor{white}{0}\\If $\varphi_{s,t}$ is the solution to the ordinary Loewner equation (\ref{cauchy2}), then $\varphi_{s,t}(\Do)$ is a Runge domain for all $0\leq s\leq t$.
\end{theorem}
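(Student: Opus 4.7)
The idea is to exploit Lemma \ref{ru}. Since $\Do$ is a Runge domain in $\C^n$, it suffices to show that $(\varphi_{s,t}(\Do),\Do)$ is a Runge pair---that is, $\Ho(\Do,\C^n)$ is dense in $\Ho(\varphi_{s,t}(\Do),\C^n)$---because combining the Runge pair $(\varphi_{s,t}(\Do),\Do)$ with the Runge pair $(\Do,\C^n)$ yields a Runge pair $(\varphi_{s,t}(\Do),\C^n)$ via a diagonal approximation argument. By Theorem \ref{Eva}, the evolution family $\varphi_{s,t}$ arises from a Herglotz vector field $G(z,t)$. The existence theorem for Loewner chains (the theorem preceding Theorem \ref{Thomas}) provides a Loewner chain $\{f_u\colon\Do\to\C^n\}_{u\geq 0}$ solving the Loewner PDE for $G$, and Theorem \ref{Thomas} then gives $f_s=f_t\circ\varphi_{s,t}$, so that $\varphi_{s,t}(\Do)=f_t^{-1}(f_s(\Do))$. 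Since $f_t\colon\Do\to f_t(\Do)$ is biholomorphic and the Runge-pair property is preserved under biholomorphism, it is enough to show that $(f_s(\Do),f_t(\Do))$ is a Runge pair inside the Loewner range $R=\bigcup_{u\geq 0}f_u(\Do)$.

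\textbf{Key step.} The range $R$ is an increasing union of biholomorphic images of $\Do$, each of which is Stein, so $R$ itself is Stein by the Behnke--Stein theorem on increasing unions. I would then argue that each $f_u(\Do)$ is Runge in $R$ via a Docquier--Grauert style characterization: construct a continuous plurisubharmonic exhaustion $\rho\colon R\to[0,\infty)$ whose sublevel sets $\{\rho<c_u\}$ recover the chain $f_u(\Do)$. For $\Do=\B_n$ the natural candidate is to define $\rho$ piecewise as $\|f_u^{-1}(w)\|^2$ on $f_u(\Do)$ and to check via the Loewner PDE that this assignment is consistent and plurisubharmonic across the parameter $u$; for $\Do=\D^n$ the analogous construction uses the maximum norm. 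Granted this, since $f_s(\Do)\subset f_t(\Do)\subset R$ are both Runge in $R$, the pair $(f_s(\Do),f_t(\Do))$ is automatically a Runge pair: any $h\in\Ho(f_s(\Do),\C^n)$ is approximated on compacta by some $\tilde h\in\Ho(R,\C^n)$, whose restriction to $f_t(\Do)$ provides the desired approximant in $\Ho(f_t(\Do),\C^n)$.

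\textbf{The main obstacle} lies in constructing the plurisubharmonic exhaustion of $R$ in a way that is compatible with the whole chain $\{f_u(\Do)\}$. The Loewner chain is only unique up to post-composition with a biholomorphism of $R$, so there is genuine freedom; a \emph{canonical} choice is required for $\rho$ to glue consistently across the parameter $u$. One natural strategy is to build $f_t$ as a suitable limit along the evolution family---analogous to Pommerenke's formula $f_t=\lim_{u\to\infty}e^{u-t}\varphi_{t,u}$ in the one-dimensional radial case---so that the Runge-in-$R$ property of each $f_u(\Do)$ can be read off from the limiting procedure itself. Executing this is where the explicit geometry of $\Do=\B_n$ or $\D^n$ (Stein, starlike, with a concretely computable plurisubharmonic defining function) enters essentially; failure of biholomorphic invariance of the Runge property (Proposition \ref{nonrunge}) is what prevents any shortcut and makes this canonical construction unavoidable.
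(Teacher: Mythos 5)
Your opening reduction is the right one: since $(\Do,\C^n)$ is a Runge pair and the Runge--pair property is transitive, it suffices to show that $(\varphi_{s,t}(\Do),\Do)$ is a Runge pair. But from there the argument has a genuine gap. The ``key step'' --- producing a plurisubharmonic exhaustion $\rho$ of the Loewner range $R$ whose sublevel sets recover the chain $\{f_u(\Do)\}$ --- is exactly the hard content, and it is not carried out. Your candidate $\rho(w)=\|f_u^{-1}(w)\|^2$ ``on $f_u(\Do)$'' is not even well defined: a point $w\in f_s(\Do)\subset f_t(\Do)$ receives the two distinct values $\|f_s^{-1}(w)\|^2$ and $\|f_t^{-1}(w)\|^2$, so the piecewise definition does not glue, and no consistency check via the Loewner PDE is offered. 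You acknowledge this as ``the main obstacle,'' which means the proof is incomplete at precisely the point where it needs to be complete. Moreover, Satz 19 of Docquier--Grauert (Theorem \ref{doc}) is not a statement about plurisubharmonic exhaustions; it is about \emph{semicontinuous holomorphic extendability} in the sense of Definition \ref{Gio}, i.e.\ about an increasing family of domains of holomorphy interpolating between $M$ and $N$. Conflating the two replaces an available tool with an unavailable one.

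The detour through the Loewner chain is also unnecessary: the evolution family itself already supplies the interpolating family. For $u\in[0,1]$ set $M_u:=\varphi_{s+u(t-s),\,t}(\Do)$. The identity $\varphi_{s,t}=\varphi_{v,t}\circ\varphi_{s,v}$ gives $\varphi_{s,t}(\Do)\subseteq\varphi_{v,t}(\Do)$ for $s\le v\le t$, so $\{M_u\}$ increases from $M_0=\varphi_{s,t}(\Do)$ to $M_1=\varphi_{t,t}(\Do)=\Do$; each $M_u$ is biholomorphic to $\Do$ and hence a domain of holomorphy, and the continuity property (3) of evolution families yields conditions (4) and (5) of Definition \ref{Gio}. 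Theorem \ref{doc} then gives directly that $(\varphi_{s,t}(\Do),\Do)$ is a Runge pair, and composing with the Runge pair $(\Do,\C^n)$ finishes the proof. This is the argument of Arosio--Bracci--Wold and is the same mechanism used in the proof of Theorem \ref{denseness} for the images $f_s(\Do)\subset f_t(\Do)$ of a Loewner chain; no Stein-theoretic exhaustion of $R$ and no canonical normalization of the chain are needed.
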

This result follows directly from a much more general result of Docquier and Grauert in \cite{MR0148939}. We will apply this result to show that every $f\in E(\Do)$ maps onto a Runge domain. To show this, we have to recall the definition of ``semicontinuous holomorphic extendability'' which was introduced by Docquier and Grauert. We cite the English translation from \cite{ABWold}.
\begin{definition}\label{Gio}
 Let $N\subset \C^n$ be a domain and $M\subset N$ be open and nonempty. Then $M$ is \textit{semicontinuously holomorphically extendable to $N$} if there exists a family $\{M_t\}_{0\leq t \leq 1}$ of nonempty open subsets of $N$ with 
\begin{enumerate}[(1)]
 \item $M_t$ is a domain of holomorphy for all $t$ in a dense subset of $[0,1],$
\item $M_0=M$ and $\cup_{0\leq t\leq 1}M_t=N,$
\item $M_s\subseteq M_t$ for all $0 \leq s\leq t \leq 1,$
\item $\bigcup_{0\leq t< t_0}M_t$ is a union of connected components of $M_{t_0}$ for every $0< t_0\leq 1,$
\item $M_{t_0}$ is a union of connected components of the interior part of $\bigcap_{t_0< t\leq 1}M_t$ for every \\$0 \leq t_0 < 1.$
\end{enumerate}
 
\end{definition}
 Docquier and Grauert showed:

\begin{theorem}[Satz 19 in \cite{MR0148939}]\label{doc}
 Let $M,N\subset \C^n$ be domains. If $M$ is semicontinuously holomorphically extendable to $N,$ then $(M,N)$ is a Runge pair.
\end{theorem}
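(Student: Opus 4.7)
The plan is a connectedness argument in the parameter $t\in[0,1]$. I would define
$$T := \{t\in[0,1] \with (M_0,M_t) \text{ is a Runge pair}\}$$
and aim to show $T=[0,1]$; since by conditions (2) and (3) one has $M_1=\bigcup_{0\le t\le 1}M_t=N$, this yields the assertion. Clearly $0\in T$. Moreover $T$ is downward closed: if $s\le t$ and $t\in T$, then restricting approximants in $\Ho(M_t,\C^n)$ to $M_s\subseteq M_t$ shows $s\in T$. Hence $T=[0,t^*)$ or $T=[0,t^*]$ for some $t^*\in[0,1]$, and one must show $t^*\in T$ and $t^*=1$.

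The first step is closedness from below: if $t_n\nearrow t_0$ with $t_n\in T$, then $t_0\in T$. Condition (4) is designed exactly for this. The increasing union $\bigcup_{t<t_0}M_t$ is a union of connected components of $M_{t_0}$, so (since $M$ is connected) one such component contains $M$; holomorphic functions on $M_{t_0}$ can be prescribed independently on distinct components. Thus it suffices to approximate each $f\in\Ho(M,\C^n)$ on compacta of $M$ by functions in $\Ho(\bigcup_{t<t_0}M_t,\C^n)$, and this follows from the Runge property for each $(M,M_{t_n})$ via a standard Behnke-Stein style diagonal exhaustion of the increasing union. Together with downward closedness, this proves $t^*\in T$.

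The substantive step is openness on the right at $t^*$: assuming $t^*<1$, I would pick $t_1>t^*$ close to $t^*$ such that $M_{t_1}$ is a domain of holomorphy, which is possible by condition (1). Runge-ness is transitive in the sense that if $(M,M_{t^*})$ and $(M_{t^*},M_{t_1})$ are Runge pairs then so is $(M,M_{t_1})$, via a diagonal approximation, so the task reduces to verifying that $(M_{t^*},M_{t_1})$ is a Runge pair. Invoking the classical criterion for nested domains of holomorphy, this is equivalent to showing that the $\Ho(M_{t_1},\C)$-convex hull of every compact $K\subset M_{t^*}$ remains in $M_{t^*}$. Here condition (5) enters decisively: $M_{t^*}$ is a union of connected components of the interior of $\bigcap_{t>t^*}M_t$, and this structural description, combined with a plurisubharmonic exhaustion on the pseudoconvex set $M_{t_1}$, lets one rule out hull escape for $t_1$ sufficiently close to $t^*$. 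This would contradict $t^*=\sup T$ and finish the argument.

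The main obstacle is precisely this openness step: the Runge property of the small extension $(M_{t^*},M_{t_1})$. Establishing it requires the full strength of Docquier-Grauert/Oka-Weil type pseudoconvexity theory, and the apparently technical content of conditions (4) and (5) (the components book-keeping) is exactly what makes the hull-control argument work even when the $M_t$ are disconnected. Everything else -- downward closedness, diagonal exhaustion, and connectedness of $[0,1]$ -- is essentially routine once the local Runge-ness of a small extension across a domain of holomorphy is in hand.
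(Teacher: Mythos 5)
This theorem is not proved in the thesis at all: it is quoted verbatim as Satz 19 of Docquier--Grauert \cite{MR0148939}, so your proposal has to be judged on its own. The skeleton you set up --- a connectedness argument in the parameter $t$, with downward closedness of $T$, closedness from below via condition (4) plus a Behnke--Stein/Montel telescoping over the increasing union, and an ``openness to the right'' step at $t^*=\sup T$ --- is indeed the right architecture and is essentially how Docquier and Grauert organize the proof. The downward closedness and the limit step are fine as sketched. The problem is that the one step you label ``substantive'' is not an argument but a placeholder, and it is exactly where the entire analytic content of the theorem lives. Nothing in conditions (1)--(5) gives any quantitative relation between $|t_1-t^*|$ and the $\Ho(M_{t_1},\C)$-convex hull of a compact $K\subset M_{t^*}$: condition (5) only says that $M_{t^*}$ consists of connected components of the interior of $\bigcap_{t>t^*}M_t$, and the assertion that ``a plurisubharmonic exhaustion on the pseudoconvex set $M_{t_1}$ lets one rule out hull escape for $t_1$ sufficiently close to $t^*$'' is precisely the claim that needs proof. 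Docquier and Grauert devote a chain of preparatory results (on $p$-convexity and on Runge pairs of Stein domains) to exactly this point; you cannot obtain it by ``choosing $t_1$ close to $t^*$,'' since the hull $\widehat{K}$ with respect to $\Ho(M_{t_1},\C)$ does not depend continuously on $t_1$ in any way your hypotheses control.

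Two smaller inaccuracies in the same step. First, condition (1) only makes $M_t$ a domain of holomorphy for $t$ in a \emph{dense} subset of $[0,1]$, so $M_{t^*}$ need not be one; the ``classical criterion for nested domains of holomorphy'' you invoke is therefore not an equivalence for the pair $(M_{t^*},M_{t_1})$. (The direction you actually need --- hull containment plus Oka--Weil implies Runge --- does survive without $M_{t^*}$ being Stein, but you should say that rather than appeal to a criterion whose hypotheses fail.) Second, since the $M_t$ may be disconnected, the object you must track throughout is the component of $M_t$ containing $M=M_0$, and the transitivity and limit arguments have to be phrased for that component; conditions (4) and (5) are what prevent it from jumping, and this bookkeeping should be made explicit rather than absorbed into ``functions can be prescribed independently on components.'' In short: correct strategy, routine steps handled adequately, but the theorem's core --- Runge-ness of a small extension across a domain of holomorphy under (5) --- is asserted, not proved.
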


Now we can prove the following denseness result, which implies a striking difference between the cases $n=1$ and $n\geq 2$ for the class $E(\Do).$ It follows more or less directly from Theorem \ref{doc} and for sure, the authors of \cite{ABWold} were aware of this statement. As it is not mentioned directly, we also include a proof here.
\begin{theorem}\label{denseness}
 $E(\Do)$ is a dense subset of $F(\Do):=\{f\in U(\Do)\with f(\Do) \;\text{is a Runge domain}\}.$ Furthermore, $F(\Do)$ is a (relatively) closed subset of $U(\Do).$\\
In particular, for $n\geq 2$, this implies $$\overline{E(\Do)}\subsetneq U(\Do).$$
\end{theorem}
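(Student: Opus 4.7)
The plan is to split the assertion into three pieces: (a) $E(\Do)\subseteq F(\Do)$; (b) $F(\Do)\subseteq\overline{E(\Do)}$; (c) $F(\Do)$ is relatively closed in $U(\Do)$. Combining (a)--(c) yields $\overline{E(\Do)}=F(\Do)$, and the strict inclusion $\overline{E(\Do)}\subsetneq U(\Do)$ for $n\geq 2$ then follows immediately from Proposition \ref{nonrunge}, which supplies an $f\in U(\Do)\setminus F(\Do)$.

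For (a), given $f\in E(\Do)$, fix a Loewner chain $\{f_t\}_{t\geq 0}$ with $f_0=f$ and $\bigcup_{t\geq 0}f_t(\Do)=\C^n$. The strategy is to apply Docquier--Grauert (Theorem \ref{doc}) to the family $M_t:=f_t(\Do)$, after reparametrizing $[0,\infty)$ to $[0,1)$ by an increasing homeomorphism. Each $M_t$ is biholomorphic to $\Do$ and hence a domain of holomorphy, so condition (1) of Definition \ref{Gio} holds everywhere; conditions (2) and (3) are immediate from the Loewner chain axioms together with the assumption that the range is $\C^n$. The genuine content is in conditions (4) and (5), which amount to the left- and right-continuity of $\{M_t\}$. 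Using Theorem \ref{Thomas} to write $f_s=f_t\circ\varphi_{s,t}$ and invoking the fact that the associated evolution family satisfies $\varphi_{s,t}\to\id_{\Do}$ locally uniformly as $s\to t$, one obtains $\bigcup_{s<t_0}\varphi_{s,t_0}(\Do)=\Do$ and an analogous right-continuity statement; composing with $f_{t_0}$ translates these into (4) and (5) for the $M_t$. This is essentially the argument used by ABWold in proving Theorem \ref{runge}, and is the only non-routine step. Once the five conditions are checked, Docquier--Grauert gives that $(f_0(\Do),\C^n)$ is a Runge pair, so $f(\Do)$ is a Runge domain and $f\in F(\Do)$.

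For (b), let $f\in F(\Do)$; one may assume $n\geq 2$, since for $n=1$ the image $f(\D)$ is simply connected and hence automatically Runge, while $E(\D)=U(\D)$ follows from Theorem \ref{Pommes}(b) after a trivial affine normalization. By Lemma \ref{ru}(d) there is a sequence $\varphi_k\in\aut(\C^n)$ with $\varphi_k\to f$ locally uniformly on $\Do$. Each $\varphi_k|_{\Do}$ lies in $E(\Do)$, witnessed by the explicit chain $f_t(z):=\varphi_k((1+t)z)$: the dilates $(1+t)\Do$ form an increasing family exhausting $\C^n$ (since $\Do$ is star-shaped with respect to $0$), so the range of the chain is $\varphi_k(\C^n)=\C^n$, and the continuity axiom in Definition \ref{Mats} is trivial because $\varphi_k$ is smooth. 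Hence $f\in\overline{E(\Do)}$.

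For (c), suppose $f_k\in F(\Do)$ with $f_k\to f$ locally uniformly and $f\in U(\Do)$. The case $n=1$ is trivial; for $n\geq 2$, Lemma \ref{ru}(d) writes each $f_k$ as a locally uniform limit of automorphisms $\psi_{k,j}\in\aut(\C^n)$, and a diagonal argument yields $\psi_j\in\aut(\C^n)$ with $\psi_j\to f$ locally uniformly on $\Do$. Applying the converse direction of Lemma \ref{ru}(d) then shows that $f(\Do)$ is a Runge domain, i.e. $f\in F(\Do)$. Thus the only genuinely hard step in the theorem is the verification of the semicontinuous-extendability conditions in (a); everything else reduces to short manipulations with automorphisms of $\C^n$ via Lemma \ref{ru}(d) together with an explicit Loewner chain.
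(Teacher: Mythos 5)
Your proof is correct, and parts (b) and (c) coincide with the paper's own argument: approximation by automorphisms of $\C^n$ via Lemma \ref{ru}\,d), an explicit dilation chain witnessing that restrictions of automorphisms lie in $E(\Do)$ (the paper uses $\Phi(e^tz)$ where you use $\varphi_k((1+t)z)$, which is immaterial), and the automorphism/diagonal argument for the closedness of $F(\Do)$. The genuine divergence is in part (a). The paper does \emph{not} apply Docquier--Grauert to the whole chain at once: it applies Theorem \ref{doc} only to the compact segments $M_q=f_{s+q(t-s)}(\Do)$, obtaining that $(f_s(\Do),f_t(\Do))$ is a Runge pair for each finite $s\le t$, and then passes to the pair $(f(\Do),\C^n)$ by a separate telescoping approximation (choosing $g_m$ with errors $6\eps/(\pi^2m^2)$ on an exhausting subsequence $f_{k_m}(\Do)$) combined with Montel's theorem to manufacture an entire approximant. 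You instead reparametrize $[0,\infty)$ to $[0,1)$, adjoin $M_1=\C^n$, and invoke Theorem \ref{doc} once. This is legitimate: each $M_t$ is a biholomorphic image of $\Do$, hence Stein and therefore a domain of holomorphy, $M_1=\C^n$ is one as well, condition (4) of Definition \ref{Gio} at $t_0=1$ is precisely the hypothesis $\bigcup_{t\ge0}f_t(\Do)=\C^n$, and conditions (4) and (5) at interior times reduce to the same left/right continuity of $t\mapsto f_t(\Do)$ that the paper also invokes without further elaboration (your derivation of it from $\varphi_{s,t}\to\id_{\Do}$ is fine, since the images $\varphi_{s,t_0}(\Do)$ increase with $s$ and eventually cover any compact subset of $\Do$). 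Your route buys a shorter proof by eliminating the Montel/telescoping step entirely; the paper's route never has to verify the Docquier--Grauert conditions at the endpoint and isolates the reusable intermediate fact that $(f_s(\Do),f_t(\Do))$ is always a Runge pair.
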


\begin{proof} For $n=1$ we have $F(\D)=U(\D).$ Let $f\in U(\D),$ then there exists a radial Loewner chain $\{f_t\}_{t\geq0}$ with $f_0=(f-f(0))/f'(0)\in S$ according to Theorem \ref{Pommes} (b).  Hence $\{g_t\}_{t\geq0}:=\{f'(0)f_t+f(0)\}_{t\geq0}$ is a Loewner chain with $g_0=f.$ We conclude $f\in E(\D)$ and  $$E(\D)=F(\D).$$
Now let $n\geq 2,$ $f\in E(\Do)$ and $\{f_t\}_{t\geq0}$ be a  Loewner chain with $f=f_0$ such that \begin{equation}\label{Smaug}\bigcup_{t\geq 0}f_t(\Do)=\C^n.\end{equation}
 First we show that $f(\Do)$ is a Runge domain:\\
For every $s,t$ with $0\leq s\leq t$ we have $f_s(\Do)\subset f_t(\Do)$ and Theorem \ref{doc} implies that
 $(f_s(\Do),f_t(\Do))$ is a Runge pair: The family $\{M_q\}_{0\leq q\leq 1}$ can be chosen as $M_q= f_{s+q\cdot(t-s)}(\Do).$ Clearly, $\{M_q\}$ satisfies the conditions (1), (2), (3) from Definition \ref{Gio}. Moreover, the continuity of Loewner chains, i.e. property (2) in Definition \ref{Mats}, implies that
$$  \bigcup_{0\leq t< t_0}M_t = M_{t_0}\quad \text{for every}\quad 0< t_0\leq 1 \quad \text{and} \quad \bigcap_{t_0< t\leq 1}M_t = M_{t_0} \quad \text{for every}\quad 0\leq t_0< 1. $$
Thus, also conditions (4) and (5) in Definition \ref{Gio} are satisfied.\\
Now we will show that $(f(\Do),\C^n)$ is also a Runge pair. This can be seen as follows:\\[1mm]
Let $g\in \Ho(f(\Do),\C^n)$ be arbitrary and let $\eps>0$ and $K\subset f(\Do)$ be compact. We have to find a function $h\in\Ho(\C^n,\C^n)$ with $\|g(z)-h(z)\|\leq \eps$ for all $z\in K.$ \\
First, as $(f_0(\Do),f_1(\Do))$ is a Runge pair, we can choose $$g_1\in \Ho(f_1(\Do),\C^n) \quad \text{with} \quad \|g(z)-g_1(z)\|<\frac{6\eps}{\pi^2} \quad \text{for all} \; z\in K.$$ 
Because of (\ref{Smaug}), we can find a strictly increasing sequence $(k_m)_{m\in\N_0}$ of non-negative integers with $k_0:=0$, such that  $$\overline{f_{k_{m-1}}(\Do)}\subset f_{k_m}(\Do)\quad \text{for every}\quad m\in\N.$$
Then we can choose $g_m$ for $m\in \N\setminus\{1\}$ inductively such that $$g_m\in\Ho(f_{k_m}(\Do),\C^n) \quad\text{and}\quad \|g_m(z)-g_{m-1}(z)\|<\frac{6\eps}{\pi^2 m^2}\quad \text{for all}\; z\in \overline{f_{k_{m-2}}(\Do)}.$$ Thus, we have for all $m\in\N$  and $z\in K:$ \begin{equation}\label{smeagol}\|g(z)-g_{m}(z)\|\leq \|g(z)-g_{1}(z)\|+\sum_{l=2}^\infty \|g_{l}(z)-g_{l-1}(z)\|< \frac{6\eps}{\pi^2}\sum_{l=1}^\infty \frac1{l^2}=\eps.\end{equation}
For a fixed $m_0\in\N,$ the sequence $\{g_m\}_{m\geq m_0}$ is defined in $f_{k_{m_0}}(\Do)$ and it is locally bounded. Montel's theorem gives us the existence of a subsequence that converges  uniformly on compacta to a function $h\in \Ho(\C^n,\C^n).$ (\ref{smeagol}) implies $$\|g(z)-h(z)\|\leq \eps \quad \text{for all} \; z\in K.$$
So $(f(\Do),\C^n)$ is a Runge pair, i.e. $f(\Do)=f_0(\Do)$ is a Runge domain.\\[2mm]
Next, let $\Phi\in \aut(\C^n)$ and denote by $\phi$ the restriction of $\Phi$ to $\Do.$ For $t\geq0,$ define the function $f_t: \Do \to \C^n$ by $f_t(z)=\Phi(e^tz).$ Then $\{f_t\}_{t\geq0}$ is a Loewner chain with $f_0=\phi$ and $\cup_{t\geq0}f_t(\Do)=\cup_{t\geq0}\Phi(e^t\cdot \Do)=\Phi(\C^n)=\C^n$ and consequently $\phi \in E(\Do).$ \\
Lemma \ref{ru} implies that any $f\in F(\Do)$ can be approximated by automorphisms of $\C^n$ when $n\geq 2.$ So $E(\Do)$ is dense in $F(\Do).$\\[2mm]
Finally, we have to show that $F(\Do)$ is (relatively) closed in $U(\Do).$ Suppose $f_k$ is a sequence of elements of $F(\Do)$ with limit $f\in U(\Do).$ Every $f_k$ can be approximated by automorphisms of $\C^n$. Hence, also $f$ can be approximated by automorphisms of $\C^n$ and consequently $f(\Do)$ is a Runge domain according to Lemma \ref{ru}.
\end{proof}

\begin{remark}
Note that we use only one simple Herglotz vector field in the proof of denseness of $E(\Do)$ with respect to $F(\Do)$ for $n\geq 2$, namely $G(z,t)=-z.$ Hence, the key argument is Lemma \ref{ru}  d), which has been one of the first results of what is now called Anders\'{e}n-Lempert theory. This theory treats general complex manifolds that have the so called \emph{density property} (and \emph{volume density property} respectively), see the  survey article \cite{MR2768636} for further reading.
\end{remark}

\begin{corollary}
 $\widetilde{E}(\Do)$ is a dense subset of $\widetilde{F}(\Do):=\{f\in U(\Do)\with (\Psi^{-1}\circ f)(\Do) \;\text{is a Runge}$ \\ $\text{domain for some univalent} \; \Psi:\C^n\to\C^n, \Psi(\C^n)\supseteq f(\Do)\}.$
\end{corollary}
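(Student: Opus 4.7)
The plan is to mimic the proof of Theorem \ref{denseness} after reducing to the already-proved case by ``factoring out'' the univalent map $\Psi$. The first observation is that the definition of $\widetilde{F}(\Do)$ amounts to saying: $f\in \widetilde{F}(\Do)$ iff there exists a univalent $\Psi:\C^n\to\C^n$ with $f(\Do)\subset\Psi(\C^n)$ such that $g:=\Psi^{-1}\circ f$ belongs to $F(\Do)$. Thus $\widetilde{F}(\Do)$ is obtained from $F(\Do)$ by post-composing with arbitrary Fatou--Bieberbach-type images, and a natural strategy is to transport the approximation provided by Theorem \ref{denseness} through $\Psi$.

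Fix $f\in \widetilde{F}(\Do)$ and write $f=\Psi\circ g$ with $g\in F(\Do)$. By Theorem \ref{denseness} there exists a sequence $g_k\in E(\Do)$ with $g_k\to g$ locally uniformly on $\Do$. Set $f_k:=\Psi\circ g_k$. Since $\Psi$ is holomorphic, it is uniformly continuous on compact subsets of $\C^n$; the sequence $g_k$ is locally uniformly bounded, so the images $g_k(K)$ stay in a common compact set for any compact $K\subset\Do$, and therefore $f_k\to f$ locally uniformly on $\Do$. Hence once we show $f_k\in \widetilde{E}(\Do)$ we are done.

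To prove $f_k\in \widetilde{E}(\Do)$, let $\{\phi_t\}_{t\geq 0}$ be a Loewner chain with $\phi_0=g_k$ and $\bigcup_{t\geq 0}\phi_t(\Do)=\C^n$ (it exists because $g_k\in E(\Do)$). Define $h_t:=\Psi\circ\phi_t$. Then each $h_t$ is univalent, $h_s(\Do)\subset h_t(\Do)$ for $s\leq t$, and $h_0=f_k$. The only point that needs a small verification is condition (2) of Definition \ref{Mats}: given compact $K\subset\Do$ and $T>0$, the set $\bigcup_{t\in[0,T]}\phi_t(K)$ is relatively compact in $\C^n$ (by the growth estimate for $\phi_t$), so $\Psi$ is Lipschitz on its closure with some constant $L=L(K,T)$; then
\[
\|h_s(z)-h_t(z)\|\leq L\int_s^t c_{K,T}(\tau)\,d\tau,\qquad z\in K,\ 0\leq s\leq t\leq T,
\]
where $c_{K,T}$ is the bounding function for $\{\phi_t\}$. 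Thus $\{h_t\}$ is a Loewner chain, and its range is $\bigcup_t \Psi(\phi_t(\Do))=\Psi(\C^n)$, which is biholomorphic to $\C^n$. Hence $[R]=[\C^n]$ and $f_k\in \widetilde{E}(\Do)$, completing the density argument. The only technical step worth flagging is the Lipschitz estimate for $\Psi$ on the (compact) orbit set needed to transfer the integrability condition; this is elementary but is where one must be slightly careful since $\Psi$ is only locally Lipschitz on $\C^n$.
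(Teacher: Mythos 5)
Your argument is essentially the paper's: factor $f=\Psi\circ g$ with $g\in F(\Do)$, approximate $g$ by elements $g_k$ of $E(\Do)$ via Theorem \ref{denseness}, and push forward by $\Psi$. The extra detail you supply -- that $\{\Psi\circ\phi_t\}_{t\geq0}$ is again a Loewner chain (using the local Lipschitz bound for $\Psi$ on the relatively compact orbit set to transfer condition (2) of Definition \ref{Mats}) with range $\Psi(\C^n)\cong\C^n$ -- is exactly the justification the paper leaves implicit when it asserts $\Psi\circ p_k\in\widetilde{E}(\Do)$, so this is a welcome addition rather than a deviation. The only thing missing is the inclusion $\widetilde{E}(\Do)\subseteq\widetilde{F}(\Do)$, which the statement ``dense subset'' also requires: given $\widetilde{f}\in\widetilde{E}(\Do)$ with chain of range $R\cong\C^n$ and a biholomorphism $\Phi:R\to\C^n$, one notes $\Phi\circ\widetilde{f}\in E(\Do)$, so its image is Runge by Theorem \ref{denseness}, and $\Psi:=\Phi^{-1}$ witnesses $\widetilde{f}\in\widetilde{F}(\Do)$; you should add this one-line verification.
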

\begin{proof}
 Let $\widetilde{f} \in \widetilde{E}(\Do)$ with a Loewner chain $\{f_t\}_{t\geq0},$ $f_0=f,$  having a range $R$ that is biholomorphically equivalent to $\C^n.$ There exists a biholomorphic map $\Phi:R\to \C^n$ and then $f:=\Phi\circ \widetilde{f} \in E(\Do).$ The previous Theorem implies that $((\Phi^{-1})^{-1}\circ \widetilde{f})(\Do)$ is a Runge domain. Furthermore, let $g\in \widetilde{F}(\Do)$ with a univalent $\Psi:\C^n\to\C^n$ such that $(\Psi^{-1}\circ g)$ maps $\Do$ onto a Runge domain. Then we can write $\Psi^{-1} \circ g=\lim_{k\to\infty}p_k$ for a sequence $p_k$ of elements of $E(\Do).$ Consequently $g=\lim_{k\to\infty}\Psi\circ p_k$ and $\Psi \circ p_k \in \widetilde{E}(\Do)$ for every $k\in \N$. 
\end{proof}

\begin{conj}\label{Isrunge}
 $E(\Do)=F(\Do)$ and $\widetilde{E}(\Do)=\widetilde{F}(\Do)$ for all $n\in \N.$
\end{conj}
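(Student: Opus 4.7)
The inclusion $E(\Do)\subseteq F(\Do)$ is already established in Theorem \ref{denseness}, so the content of the first equality is $F(\Do)\subseteq E(\Do)$; equivalently, every univalent $f:\Do\to\C^n$ whose image is Runge can actually be embedded into a Loewner chain of range $\C^n$ (not merely approximated by such). My plan is to exploit Lemma \ref{ru}(d): write $f=\lim_{k\to\infty}\varphi_k$ locally uniformly on $\Do$ with $\varphi_k\in\aut(\C^n)$. For each individual $k$, we already have $\varphi_k|_{\Do}\in E(\Do)$ via the trivial chain $t\mapsto\varphi_k(e^{t}z)$. The goal is therefore to promote this approximation to an actual embedding of $f$ itself.

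My strategy would be to produce a single Loewner chain on $[0,\infty)$ by concatenating finite-time subchains. Fix an exhaustion $K_1\Subset K_2\Subset\cdots$ of $\Do$ and pass to a subsequence with $\|\varphi_k-f\|_{K_k}<2^{-k}$. On an initial interval $[0,1]$ I would try to build a Loewner chain $\{f_t^{(0)}\}$ with $f_0^{(0)}=f$ and $f_1^{(0)}=\varphi_{k_0}\circ R_{\rho_0}$ for a suitable dilation $R_{\rho_0}(z)=\rho_0 z$ with $\rho_0<1$; this interpolation is the delicate step and is where Anders\'en--Lempert theory should enter, since $\varphi_{k_0}\circ R_{\rho_0}\circ f^{-1}$ is a univalent self-map of the Runge domain $f(\Do)$ which is close to the identity and, by Theorem 2.1 of \cite{MR1185588}, can be approximated by automorphisms isotopic to the identity. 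A suitable non-autonomous isotopy furnishes a Herglotz vector field on $\Do$ whose Loewner PDE advances $f$ to $\varphi_{k_0}\circ R_{\rho_0}$ while keeping the images increasing. On each subsequent interval $[m,m+1]$ one continues with the trivial autonomous chain $t\mapsto\varphi_{k_m}(e^{t-m}R_{\rho_m} z)$, choosing $\rho_m\nearrow 1$ and $k_m\to\infty$ so that the new image absorbs the previous one and, in the limit, exhausts $\C^n$. The resulting chain would have $f_0=f$ and Loewner range $\C^n$, giving $f\in E(\Do)$.

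The principal obstacle is twofold. First, monotonicity: for the concatenation to be a Loewner chain at all, the images must grow, so the Anders\'en--Lempert-type isotopy used on $[0,1]$ must be realized by an evolution family in $\Do$ rather than by an abstract approximation in $\aut(\C^n)$; this is not obviously available and may require a parametric strengthening of Theorem 2.1 of \cite{MR1185588} adapted to the Loewner setting. Second, passing from the approximating automorphisms to $f$ as an honest initial value (not merely a locally uniform limit) requires uniform control of the interpolating Herglotz fields near $t=0$, which is reminiscent of the notoriously subtle time-$0$ behaviour of non-autonomous holomorphic flows. The second equality in the conjecture reduces to the first in a straightforward manner: given $g\in\widetilde{F}(\Do)$ with biholomorphism $\Psi:\C^n\to\Psi(\C^n)\supseteq g(\Do)$, the map $\Psi^{-1}\circ g$ lies in $F(\Do)$; if $F(\Do)=E(\Do)$ then it admits a Loewner chain $\{h_t\}$ of range $\C^n$, and $\{\Psi\circ h_t\}$ is a Loewner chain embedding $g$ with range $\Psi(\C^n)$, biholomorphic to $\C^n$. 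The reverse inclusion $\widetilde{E}(\Do)\subseteq\widetilde{F}(\Do)$ follows by composing a witnessing chain for $g$ with the inverse of the biholomorphism between the range and $\C^n$, and invoking $E(\Do)\subseteq F(\Do)$ to conclude that the resulting image is Runge.
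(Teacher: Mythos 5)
The statement you are addressing is labelled as a \emph{conjecture} in the paper, and the paper does not prove it: it only establishes that $E(\Do)$ is \emph{dense} in $F(\Do)$ and that $F(\Do)$ is closed in $U(\Do)$ (Theorem \ref{denseness}), together with the special case of Proposition \ref{Fredi} (maps extending univalently past $\overline{\Do}$) and the partial result of Arosio--Bracci--Wold recorded in Remark \ref{Fil}. Your text is accordingly a strategy sketch rather than a proof, and the two obstacles you yourself flag are not technical loose ends but precisely the open content of the conjecture. Anders\'en--Lempert approximation (Theorem 2.1 of \cite{MR1185588}, i.e.\ Lemma \ref{ru}\,d)) gives you a sequence $\varphi_k\in\aut(\C^n)$ converging to $f$ locally uniformly on $\Do$, but a Loewner chain requires a genuinely monotone, absolutely continuous family of images driven by a Herglotz vector field; nothing in the approximation statement produces an isotopy through univalent self-maps of $f(\Do)$ with increasing images, and the map $\varphi_{k_0}\circ R_{\rho_0}\circ f^{-1}$, although close to the identity on compacta, need not even map $f(\Do)$ into itself. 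This is exactly why Theorem \ref{denseness} only yields denseness: one can embed each $\varphi_k|_{\Do}$, not their limit.

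The concatenation step has a second, independent problem. Since $\varphi_k\to f$ only locally uniformly on $\Do$, you have no control over $\varphi_{k}(\Do)$ near $\partial\Do$, so the required containments $\varphi_{k_m}(R_{\rho_m}\Do)\supseteq\varphi_{k_{m-1}}(e\,R_{\rho_{m-1}}\Do)$ at the junctions are not guaranteed by any choice of $\rho_m$ and $k_m$; moreover condition (2) of Definition \ref{Mats} forbids jumps at $t=m$, so every junction --- not just the initial one --- would need its own interpolating subchain, each requiring the same unproven monotone Anders\'en--Lempert isotopy. By contrast, your reduction of the second equality to the first is correct and essentially reproduces the paper's Corollary following Theorem \ref{denseness}; but the first equality remains open after your argument, so the conjecture is not proved.
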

\begin{remark}
 This first equality would imply in particular that every Runge domain biholomorphically equivalent to $\Do$ would be semicontinuously holomorphically extendable to $\C^n.$ A slightly weaker version of this statement is true indeed, see Satz 20 in \cite{MR0148939}.
\end{remark}

\begin{remark}\label{Fil}
 L. Arosio, F. Bracci and E. F. Wold could show that any $f\in F(\B_n),$ such that $f(\B_n)$ is a bounded strongly pseudoconvex $C^\infty$-smooth domain and $\overline{f(\B_n)}$ is polynomially convex, belongs to $E(\B_n),$ see Theorem 1.2 in \cite{filtering}.
\end{remark}

\section{The class \texorpdfstring{$S^0(\Do)$}{}}\label{Julia}

Recall that the class $S(\Do)$ is the set of all normalized univalent functions on $\Do$ and that it is not compact for all $n\geq 2.$\\
In \cite{MR1049182}, Poreda introduced the class $S^0(\D^n)$ as the set of all $f\in S(\D^n)$ that have ``parametric representation''. Later,  G. Kohr defined the corresponding subclass $S^0(\mathcal{\B}^n)$ for the unit ball, see \cite{MR1929522}, which has been extensively studied since its introduction.\\
 First, one considers a special subset $\M(\Do)\subset I(\Do)$ of normalized infinitesimal generators on $\Do.$ We define $$\mathcal{M}(\B_n):=\{h\in \Ho(\B_n,\C^n)\with h(0)=0, Dh(0)=-I_n, \Re\left<h(z),z \right><0 \; \text{for all}\; z\in \B_n\setminus\{0\}\}$$ 
and, in case of the polydisc,
$$ \mathcal{M}(\D^n):=\{h\in\Ho(\D^n,\C^n)\;|\; h(0)=0, Dh(0)=-I_n, \Re\left(\frac{h_j(z)}{z_j}\right)<0 \;\text{when}\;  \|z\|_\infty=|z_j|>0 \}. $$

For $n=1,$ we have $$\M(\D)=\{z\mapsto -zp(z)\with \Re(p(z))>0\; \text{for all}\; z\in\D \; \text{and}\; p(0)=1\},$$ which is the set of all infinitesimal generators that correspond to the classical radial case.\\

We will see soon that there is a one-to-one correspondence between $\M(\Do)$ and the set $S^*(\Do)\subset S(\Do)$ of all normalized \textit{starlike mappings} in $S(\Do),$ i.e. those mappings whose image domain is starlike with respect to the origin. The following  characterization of starlike mappings was proven by Matsuno in 1955 for $\Do=\B_n,$ see \cite{MR0080931}, and for $\Do=\D^n$ in 1970 by Suffridge, see \cite{MR0261040}.

\begin{theorem}\label{Juergen}
Let $f:\Do \to \C^n$ be locally biholomorphic, i.e. $Df(z)$ is invertible for every $z\in\Do,$ with $f(0)=0.$ Then the domain $f(\Do)$ is starlike with respect to $0$ if and only if the function $z\mapsto -(Df(z))^{-1}\cdot f(z)$ belongs to $\M(\Do).$
\end{theorem}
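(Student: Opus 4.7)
The plan is to establish a bijection between starlike mappings and $\M(\Do)$ by identifying $h(z) := -(Df(z))^{-1} f(z)$ as the infinitesimal generator of a natural semigroup associated to $f$. This reduces the geometric statement ``$f(\Do)$ is starlike'' to the existence, for each $z \in \Do$, of a smooth curve $t \mapsto w_t$ in $\Do$ with $f(w_t) = e^{-t} f(z)$, plus the differential equation satisfied by that curve.

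For the forward direction, suppose $f(\Do)$ is starlike with respect to $0$. Then $e^{-t} f(\Do) \subset f(\Do)$ for all $t \geq 0$, and because $f$ is locally biholomorphic the ray $s \mapsto s f(z)$ in $f(\Do)$ admits a smooth lift $w_t$ in $\Do$ starting from $z$, i.e.\ $f(w_t) = e^{-t} f(z)$. Differentiation gives $\dot{w}_t = -(Df(w_t))^{-1} f(w_t) = h(w_t)$, and the family $\Phi_t(z) := w_t$ forms a continuous one-parameter semigroup in $\Do$ fixing $0$, with infinitesimal generator $h$. In particular $h \in \I(\Do)$. The normalizations $h(0) = 0$ and $Dh(0) = -I_n$ follow from a Taylor expansion: regardless of what $Df(0)$ is, the product $(Df(z))^{-1} f(z)$ equals $z + O(\|z\|^2)$ near $0$, so the factor $(Df)^{-1}$ automatically cancels $Df(0)$ to first order.

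For the sign condition, I would invoke the Schwarz lemma adapted to $\Do$: for a holomorphic self-map $\Phi$ of $\Do$ fixing $0$ one has $\|\Phi(z)\| \leq \|z\|$ on $\B_n$, respectively $|\Phi^j(z)| \leq \|z\|_\infty$ on $\D^n$ (the latter by restricting $\Phi^j$ to the one-dimensional slice through $0$ and $z$ and applying the classical one-variable Schwarz lemma). Applied to $\Phi_t$ and differentiated at $t = 0$, this yields
$$\Re\langle h(z), z \rangle \leq 0 \text{ on } \B_n, \qquad \Re\!\left(\overline{z_j}\, h_j(z)\right) \leq 0 \text{ whenever } |z_j| = \|z\|_\infty \text{ on } \D^n.$$
For the converse, start from $h \in \M(\Do) \subset \I(\Do)$. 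The Cauchy problem $\dot{\Phi}_t = h(\Phi_t)$, $\Phi_0 = \id_{\Do}$, produces a semigroup of univalent self-maps of $\Do$ with $\Phi_t(0) = 0$, and the identity $h = -(Df)^{-1} f$ gives
$$\frac{d}{dt} f(\Phi_t(z)) = Df(\Phi_t(z))\, h(\Phi_t(z)) = -f(\Phi_t(z)), \qquad \text{hence} \qquad f(\Phi_t(z)) = e^{-t} f(z) \text{ for all } t \geq 0.$$
Because $\Phi_t(z) \in \Do$, the full ray $\{e^{-t} f(z) : t \geq 0\}$ lies in $f(\Do)$, and together with $0 \in f(\Do)$ this is precisely starlikeness with respect to $0$.

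The main obstacle is upgrading the Schwarz inequality to the \emph{strict} inequality required by $\M(\Do)$. Near $0$ strictness is immediate from $Dh(0) = -I_n$, which yields $\Re\langle h(z), z\rangle = -\|z\|^2 + O(\|z\|^3)$. To rule out equality at an interior point $z_0 \neq 0$ one exploits that $e^{-t} f(\Do) \subsetneq f(\Do)$ is a \emph{proper} inclusion, so the semigroup $\Phi_t$ is a strict contraction in the appropriate invariant metric; combined with a maximum-principle / open-mapping argument applied to the non-positive real-analytic function $z \mapsto \Re\langle h(z), z\rangle$ on $\B_n$ (respectively its coordinate analogue on each distinguished face $\{|z_j| = \|z\|_\infty\}$ on $\D^n$), this forces the strict inequality. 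The polydisc case is more delicate precisely because the inequality is indexed by the coordinate attaining the maximum modulus, so the argument must be carried out separately on each face of the distinguished boundary.
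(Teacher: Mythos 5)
The paper itself contains no proof of this theorem --- it cites Matsuno and Suffridge, whose original arguments are direct and do not use semigroups --- so I compare your proposal against the classical proofs. Your semigroup route is the standard modern alternative and its skeleton is sound; in particular the converse direction is essentially complete (and note that $\M(\Do)\subset\I(\Do)$ can be justified on the spot: the sign condition gives $\frac{d}{dt}\|w_t\|^2=2\Re\langle h(w_t),w_t\rangle<0$, so the flow stays in $\B_n$ for all $t\ge0$, and the analogous estimate on the maximal coordinate works for $\D^n$). The first genuine gap is in the forward direction: the lift $w_t$ with $f(w_t)=e^{-t}f(z)$ is asserted to exist ``because $f$ is locally biholomorphic'', but a locally biholomorphic map need not have the path-lifting property onto its image, and the lifted ray can leave every compact subset of $\Do$ in finite time. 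The classical statements assume $f$ univalent in this direction, in which case $\Phi_t=f^{-1}\circ(e^{-t}f)$ is well defined, holomorphic and visibly a semigroup; without univalence the step is unjustified (indeed the analytic condition is what one normally uses to \emph{deduce} univalence, so the forward implication for merely locally biholomorphic $f$ is exactly the delicate point).

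The second gap is the strictness upgrade, which you correctly identify as the crux but for which your proposed mechanisms fail: $z\mapsto\Re\langle h(z),z\rangle$ is neither harmonic nor plurisubharmonic in general, so no maximum principle or open-mapping argument applies to it directly, and ``strict contraction in an invariant metric'' is not a proof. The standard repair is a one-variable slice argument. For $w\in\partial\B_n$ set $q(\zeta):=\langle h(\zeta w),w\rangle/\zeta$; this is holomorphic on $\D$ because $h(0)=0$, it satisfies $\Re q(\zeta)=|\zeta|^{-2}\Re\langle h(\zeta w),\zeta w\rangle\le 0$ by the non-strict inequality you derived from the Schwarz lemma, and $q(0)=\langle Dh(0)w,w\rangle=-1$. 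The maximum principle for the harmonic function $\Re q$ then forces $\Re q<0$ on all of $\D$, hence $\Re\langle h(z),z\rangle<0$ for every $z\neq0$. The same argument applied to $\zeta\mapsto h_j(\zeta w)/(\zeta w_j)$ with $\|w\|_\infty=|w_j|=1$ settles the polydisc case face by face. With the univalence hypothesis made explicit and the strictness step replaced by this slice argument, your proof goes through.
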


The following property shows a crucial difference between the class $\I(\Do)$ and its subset $\M(\Do)$.

\begin{theorem}[see Theorem 6.1.39 in \cite{graham2003geometric}]
$\M(\Do)$ is compact. 
\end{theorem}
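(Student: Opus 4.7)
I plan to prove the compactness of $\M(\Do)$ in $\Ho(\Do,\C^n)$ (with the topology of locally uniform convergence) by verifying that the family is both (i) normal and (ii) closed under locally uniform limits; this combination, together with Montel's theorem, yields compactness.

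For normality, I will reduce to the one-dimensional Carath\'eodory class $\mathcal{P}$ via slice functions. In the case $\Do=\B_n$, for each $h\in\M(\B_n)$ and each unit vector $u\in\partial \B_n$ I consider
$$ p_u(\zeta):=-\frac{1}{\zeta}\langle h(\zeta u),u\rangle, \qquad \zeta\in\D\setminus\{0\}, $$
which by $h(0)=0$ and $Dh(0)=-I_n$ extends holomorphically to $\zeta=0$ with $p_u(0)=1$. A short computation gives, for $z=\zeta u$, the identity $\Re\langle h(z),z\rangle=-|\zeta|^2\Re p_u(\zeta)$, so the defining inequality of $\M(\B_n)$ is equivalent to $p_u\in\mathcal{P}$. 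The standard Herglotz bound then yields
$$ |p_u(\zeta)|\leq \frac{1+|\zeta|}{1-|\zeta|} $$
uniformly in $h$ and in $u$. For the polydisc $\D^n$ I would use the coordinate slices $q_j(\zeta):=-h_j(\zeta e_j)/\zeta$: the normalizations $h(0)=0$, $Dh(0)=-I_n$ and the fact that $|z_j|=\|z\|_\infty$ at $z=\zeta e_j$ place them in $\mathcal{P}$, and Herglotz again applies.

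The next step, which I expect to be the main technical obstacle, is to promote these bounds on the ``diagonal'' quantities $\langle h(z),z\rangle$ (resp.\ $h_j(\zeta e_j)$) to a locally uniform bound on the full vector $\|h(z)\|$. I would do this by expanding $h(z)=-z+\sum_{k\geq 2}P_k(z)$ in homogeneous polynomials. Reading the Herglotz bound coefficient by coefficient gives $|\langle P_k(u),u\rangle|\leq 2$ for every unit $u$ and every $k\geq 2$; standard polarization identities and the Cauchy inequalities for homogeneous polynomials on $\B_n$ (respectively, for multi-homogeneous pieces on $\D^n$) translate this into a bound $\|P_k\|\leq M_k$ with $\sum_k M_k r^k<\infty$ for each $r<1$. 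Consequently $\sup_{\|z\|\leq r}\sup_{h\in\M(\Do)}\|h(z)\|<\infty$ for every $r<1$, and Montel's theorem gives relative compactness.

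Finally, for closedness, assume $h_k\to h$ locally uniformly with $h_k\in\M(\Do)$. Continuity of evaluation and of the differential at $0$ immediately yields $h(0)=0$ and $Dh(0)=-I_n$. The delicate point is the strict inequality, which a priori could degenerate into equality. I handle this using the slice functions again: the pointwise limit of $p_u^{(k)}$ (resp.\ $q_j^{(k)}$) is a function with nonnegative real part on $\D$ taking the value $1$ at the origin, so by the minimum principle applied to the harmonic function $\Re p_u$ the alternative $\Re p_u(\zeta_0)=0$ for some interior point is ruled out (otherwise $\Re p_u\equiv 0$, contradicting $p_u(0)=1$). Translating back, the strict inequality condition defining $\M(\Do)$ is preserved in the limit, so $h\in\M(\Do)$. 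Combined with normality, this proves compactness.
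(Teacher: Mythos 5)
The paper itself does not prove this statement; it simply cites Theorem 6.1.39 of Graham--Kohr, and your outline (locally uniform boundedness via slice functions in the Carath\'eodory class $\mathcal{P}$, plus closedness via the minimum principle for harmonic functions) is exactly the strategy used in that source. Two steps, however, are thinner than you make them sound.

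First, for $\B_n$ the passage from $|\langle P_k(u),u\rangle|\le 2$ to a bound on $\sup_{\|u\|=1}\|P_k(u)\|$ is not a consequence of ``standard polarization identities and the Cauchy inequalities.'' What you actually need is that the numerical radius $v(P_k)=\sup_{\|u\|=1}|\langle P_k(u),u\rangle|$ controls the sup-norm of a homogeneous polynomial of degree $k$, with constant at most $k^{k/(k-1)}\le ek$; this is a theorem of L.~A.~Harris and is precisely the lemma invoked in the cited proof. It is true, and $\sum_{k\ge 2} 2ek\,r^k<\infty$ then closes the normality argument, but it should be cited or proved rather than attributed to polarization: naive polarization of $u\mapsto\langle P_k(u),u\rangle$ gains nothing, since replacing $u$ by $e^{i\theta}u$ merely rotates the value by $e^{i(k-1)\theta}$.

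Second, in the polydisc case the coordinate slices $\zeta\mapsto h_j(\zeta e_j)$ control $h_j$ only on a one-dimensional subset of $\D^n$ and give neither a locally uniform bound nor, in the closedness step, the defining inequality of $\M(\D^n)$ at a general point $z$ with $\|z\|_\infty=|z_j|>0$. The correct slices are $\zeta\mapsto -h_j(\zeta u)/(\zeta u_j)$ for arbitrary $u$ with $\|u\|_\infty=|u_j|=1$: these belong to $\mathcal{P}$, hence bound $|h_j(z)|$ by $\|z\|_\infty(1+\|z\|_\infty)/(1-\|z\|_\infty)$ for every $z$ with $|z_j|=\|z\|_\infty$, and the maximum principle applied in the single variable $z_j$ then extends this bound to all of $\D^n$ --- which in fact makes the polydisc case entirely elementary and lets you bypass the numerical-radius machinery there. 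With these two repairs your proof is complete and coincides with the standard one.
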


Thus, by Montel's theorem, for every $r\in(0,1)$ there exists $C(r)\geq0$ such that $\|h(z)\|\leq C(r)$ for all $z\in r\cdot \overline{\Do}$ and $h\in \M(\Do).$\\
 Consequently, a mapping $G:\Do\times \R^+ \to \C^n$ with $G(\cdot,t)\in \mathcal{M}_n$ for  all $t\geq 0$ such that $G(z,\cdot)$ is measurable on $\R^+$ for all $z\in\Do$ is automatically a Herglotz vector field of order $\infty$. The corresponding evolution family $\varphi_{s,t}$ is normalized by $\varphi_{s,t}(0)=0,$ $D\varphi_{s,t}(0)=e^{s-t}I_n$. \\

For such Herglotz vector fields, one can compute a special solution to the Loewner PDE from the evolution family $\varphi_{s,t},$ which directly leads to the definition of $S^0(\Do),$ see \cite{graham2003geometric}, Chapter 8 for the case $\Do=\B_n$ and \cite{MR1049182} for $\Do=\D^n$. 

\begin{theorem}\label{Gabriela}
 Let $G(z,t)$ be a Herglotz vector field with $G(\cdot,t)\in \mathcal{M}_n$ for all $t\geq0.$ For every $s\geq 0$ and $z\in \Do,$ let $\varphi_{s,t}(z)$ be the solution of the initial value problem $$\frac{\partial{\varphi_{s,t}(z)}}{\partial t}=G(\varphi_{s,t}(z),t) \quad \text{for almost all}\quad t\geq s, \quad \varphi_{s,s}(z)=z.$$
Then the limit \begin{equation}\label{represent}\lim_{t\to\infty}e^t\varphi_{s,t}(z)=:f_s(z)\end{equation}
exists for all $s\geq 0$ locally uniformly on $\Do$ and defines a univalent function there. Moreover, $f_s(z)=f_t(\varphi_{s,t}(z))$ for all $z\in \Do$ and $0\leq s\leq t<\infty.$  $\{f_s\}_{s\geq 0}$ is a normalized Loewner chain with the property that $\{e^{-s}f_s\}_{s\geq0}$ is a normal family on $\Do.$ Finally,
$$\dot{f}_t(z)=-Df_t(z)G(z,t) \quad \text{for all}\quad z\in \Do\quad \text{and for almost all }\quad t\geq 0.$$
\end{theorem}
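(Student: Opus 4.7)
The strategy is to introduce the normalized family $u_{s,t}(z):=e^{t-s}\varphi_{s,t}(z)$, prove that $u_{s,t}\to u_s$ locally uniformly as $t\to\infty$, and then set $f_s:=e^s u_s$. The first step is to establish $\varphi_{s,t}(0)=0$ and $D\varphi_{s,t}(0)=e^{s-t}I_n$. Since $G(\cdot,t)\in\M(\Do)$ satisfies $G(0,t)=0$, uniqueness of solutions to (\ref{cauchy2}) forces $\varphi_{s,t}(0)=0$; differentiating (\ref{cauchy2}) at $z=0$ yields the linear matrix ODE $\dot A_t=-A_t$ with $A_s=I_n$ for $A_t:=D\varphi_{s,t}(0)$, whence $D\varphi_{s,t}(0)=e^{s-t}I_n$. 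In particular each $u_{s,t}$ belongs to $S(\Do)$.

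The crux is the convergence of $u_{s,t}$. Write $G(w,t)=-w+Q(w,t)$. Compactness of $\M(\Do)$ provides (i) a constant $C>0$ and $r_0>0$ with $\|Q(w,t)\|\leq C\|w\|^2$ for $\|w\|\leq r_0$, uniformly in $t$, and (ii) for every compact $K'\subset\Do$, a positive constant $m_{K'}$ with $-\Re\langle h(w),w\rangle\geq m_{K'}\|w\|^2$ for all $h\in\M(\Do)$ and $w\in K'$ (in the ball case, with an analogous coordinate-wise bound on the polydisc). Since $\|\varphi_{s,t}(z)\|$ is monotone non-increasing in $t$ (from $\frac{d}{dt}\|\varphi_{s,t}\|^2=2\Re\langle G(\varphi_{s,t},t),\varphi_{s,t}\rangle\leq 0$), every trajectory starting in a compact $K\subset\Do$ stays in a slightly larger compact $K'$, and (ii) yields exponential decay; once $\|\varphi_{s,t}(z)\|<r_0$, the sharper bound provided by $Q$ improves the rate to approach $1$ asymptotically. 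Differentiating $u_{s,t}$ in $t$ cancels the linear part:
\[
\frac{\partial}{\partial t}u_{s,t}(z)=e^{t-s}\bigl(\varphi_{s,t}(z)+G(\varphi_{s,t}(z),t)\bigr)=e^{t-s}Q(\varphi_{s,t}(z),t),
\]
and the combined decay ensures that $\int_s^\infty\|\partial_t u_{s,t}(z)\|\,dt$ converges locally uniformly in $z$. Hence $u_s(z):=\lim_{t\to\infty}u_{s,t}(z)$ exists locally uniformly, is holomorphic with $u_s(0)=0$, $Du_s(0)=I_n$, and is univalent by the several-variable Hurwitz-type theorem (a locally uniform limit of univalent maps is either univalent or has everywhere singular Jacobian; the nondegenerate derivative at $0$ excludes the latter).

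The remaining assertions follow directly. Passing to the limit $\tau\to\infty$ in $e^\tau\varphi_{s,\tau}(z)=e^\tau\varphi_{t,\tau}(\varphi_{s,t}(z))$ gives $f_s=f_t\circ\varphi_{s,t}$ and in particular $f_s(\Do)\subseteq f_t(\Do)$. From $f_s(0)=0$ and $Df_s(0)=e^s I_n$ we get $e^{-s}f_s=u_s\in S(\Do)$; the bounds $\|u_s(z)-z\|\leq\int_s^\infty\|\partial_u u_{s,u}(z)\|\,du$ are locally bounded independently of $s$, so $\{e^{-s}f_s\}_{s\geq 0}$ is normal by Montel. Condition (2) of Definition \ref{Mats} follows from
\[
\|f_s(z)-f_t(z)\|=\Bigl\|\int_s^t\dot f_u(z)\,du\Bigr\|\leq\int_s^t\|Df_u(z)\|\cdot\|G(z,u)\|\,du,
\]
both factors being locally bounded on $K\times[0,T]$. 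Finally, the PDE is obtained by differentiating $f_s(z)=f_t(\varphi_{s,t}(z))$ in $t$ and setting $t=s$: the left-hand side is constant in $t$, so $0=\dot f_s(z)+Df_s(z)G(z,s)$ for almost every $s\geq 0$.

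The hardest step will be making the decay estimate of step two quantitative enough to guarantee integrability of $\partial_t u_{s,t}$. This is delicate because the crude rate $m_{K'}$ supplied by compactness may be arbitrarily small, and one has to bootstrap using the sharper near-origin behaviour $G(w,t)=-w+O(\|w\|^2)$ to recover a decay rate approaching $1$ once trajectories enter a small ball. This is precisely where the compactness of $\M(\Do)$---one of the defining advantages of the subclass tied to $S^0(\Do)$---plays the decisive role.
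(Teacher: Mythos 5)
This theorem is not proved in the thesis: it is the classical parametric-representation theorem of Poreda (polydisc) and Graham--Kohr (ball), and the text simply cites \cite{MR1049182} and Chapter 8 of \cite{graham2003geometric}. Your proposal reconstructs essentially the standard argument from those sources, and it is correct. The decisive points are all present: the normalization $D\varphi_{s,t}(0)=e^{s-t}I_n$; the use of compactness of $\M(\Do)$ to get a uniform quadratic bound on $Q(w,t)=G(w,t)+w$ near $0$ together with a uniform coercivity bound away from $0$; the cancellation $\partial_t\bigl(e^{t-s}\varphi_{s,t}\bigr)=e^{t-s}Q(\varphi_{s,t},t)$; and the bootstrap giving a decay rate $\alpha=1-Cr_1>1/2$ once the trajectory enters a small ball, which is exactly what makes $e^{t-s}\|\varphi_{s,t}\|^2$ integrable. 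In the literature this last step is usually packaged as the monotone quantity $e^{t-s}\|\varphi_{s,t}(z)\|/(1-\|\varphi_{s,t}(z)\|)^2\le\|z\|/(1-\|z\|)^2$, derived from the sharp estimate $-\Re\langle h(w),w\rangle\ge\|w\|^2(1-\|w\|)/(1+\|w\|)$ for $h\in\M(\B_n)$; your two-regime argument buys the same integrability with slightly less precision, which is fine here. Two small points to tidy up: (i) you invoke $\dot f_u(z)=-Df_u(z)G(z,u)$ to verify condition (2) of Definition \ref{Mats} before you derive the PDE --- either derive the PDE first, or (cleaner) estimate $\|f_t(z)-f_s(z)\|=\|f_t(z)-f_t(\varphi_{s,t}(z))\|\le\sup_K\|Df_t\|\cdot\int_s^t\|G(\varphi_{s,u}(z),u)\|\,du$ directly, which also supplies the local Lipschitz continuity in $t$ needed to justify the a.e.\ differentiation in your final step; (ii) for the polydisc the monotonicity and coercivity arguments must be run coordinate-wise in the maximum norm, as you note parenthetically, and this should be spelled out if the proof is written in full.
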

 
The first element $f_0\in S(\Do)$ of the Loewner chain in the theorem above is said to have \textit{parametric representation.} It can be easily verified that $e^{-s}f_s$ has parametric representation too for every $s\geq 0$. 
\begin{definition}
 $S^0(\Do):=\{f\in S(\Do)\with \, f \; \text{has parametric representation}\}.$
\end{definition}

The following characterization of the class $S^0(\Do)$ can be found in \cite{graham2003geometric}, Remark 8.1.7, in the case of the unit ball. It is easy to see that it holds true for the polydisc too.
\begin{proposition}$f\in S^0(\Do)$ if and only if there exists a normalized Loewner chain $\{f_t\}_{t\geq0}$ with $f=f_0$ such that $\{e^{-t}f_t\}_{t\geq 0}$ is a normal family on $\Do.$
\end{proposition}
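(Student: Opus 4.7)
The forward implication is immediate from Theorem~\ref{Gabriela}: if $f\in S^0(\Do)$, then by definition there is a Herglotz vector field $G$ with $G(\cdot,t)\in\mathcal{M}(\Do)$ such that the associated parametric Loewner chain $\{f_t\}_{t\geq 0}$ produced by~\eqref{represent} satisfies $f_0=f$, and the last sentence of Theorem~\ref{Gabriela} asserts precisely that $\{e^{-t}f_t\}_{t\geq 0}$ is a normal family on $\Do$.

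For the reverse implication, let $\{f_t\}_{t\geq 0}$ be a normalized Loewner chain with $f=f_0$ and $\{e^{-t}f_t\}_{t\geq 0}$ a normal family. I would first use the correspondence between Loewner chains and Herglotz vector fields to extract a Herglotz vector field $G(z,t)$ with $\dot f_t=-Df_t\cdot G(\cdot,t)$ almost everywhere. Differentiating the normalizations $f_t(0)=0$ and $Df_t(0)=e^t I_n$ at $z=0$ through the PDE yields $G(0,t)=0$ and $DG(0,t)=-I_n$ for almost every $t$. To upgrade these to $G(\cdot,t)\in\mathcal{M}(\Do)$, I would examine the autonomous semigroup $\Phi_s$ generated by $h:=G(\cdot,t)$: since $h(0)=0$ we have $\Phi_s(0)=0$ and $Dh(0)=-I_n$ forces $D\Phi_s(0)=e^{-s}I_n$. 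Schwarz's lemma on $\B_n$ (respectively its coordinatewise analogue on $\D^n$) then gives $\|\Phi_s(z)\|\leq\|z\|$, whose derivative in $s$ at $s=0$ is exactly the real-part inequality defining $\mathcal{M}(\Do)$; strictness for $z\neq 0$ follows from the quadratic-order expansion $h(z)=-z+\mathcal{O}(\|z\|^2)$ together with a strong maximum-principle argument.

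With $G(\cdot,t)\in\mathcal{M}(\Do)$ secured, Theorem~\ref{Gabriela} attaches to $G$ a parametric Loewner chain $\{\tilde f_s\}_{s\geq 0}$, and by the uniqueness in Theorem~\ref{Eva} the evolution family of $G$ coincides with $\varphi_{s,t}=f_t^{-1}\circ f_s$ from Theorem~\ref{Thomas}. Writing $F_t:=e^{-t}f_t\in S(\Do)$, the identity
\begin{equation*}
 e^t\varphi_{0,t}(z) \;=\; e^t F_t^{-1}\bigl(e^{-t}f_0(z)\bigr)
\end{equation*}
reduces the task to showing that this expression tends to $f_0(z)$ as $t\to\infty$. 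Normality of $\{F_t\}$ together with $F_t(0)=0$ and $DF_t(0)=I_n$ implies, via Cauchy estimates, that the quadratic Taylor coefficients of the $F_t$ are uniformly bounded on a fixed ball around the origin; a quantitative inverse function theorem then yields a common radius $r>0$ such that each $F_t^{-1}$ is defined on $\mathcal{B}(0,r)$ and satisfies $\|F_t^{-1}(u)-u\|=\mathcal{O}(\|u\|^2)$ uniformly in $t$. Plugging in $u=e^{-t}f_0(z)$, which lies in $\mathcal{B}(0,r)$ for all $t$ sufficiently large, and multiplying by $e^t$ gives $\tilde f_0(z)=f_0(z)$, whence $f_0\in S^0(\Do)$.

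The step I expect to be the main obstacle is precisely the uniform asymptotic $F_t^{-1}(u)=u+\mathcal{O}(\|u\|^2)$: this is the point at which the normal-family hypothesis is genuinely consumed, since in $n\geq 2$ variables $S(\Do)$ fails to be compact and the mere membership $e^{-t}f_t\in S(\Do)$ affords no uniform control on the image of a neighbourhood of $0$ under $F_t$, so one cannot define $F_t^{-1}$ on a common domain without this extra assumption. Upgrading the non-strict inequality coming from Schwarz's lemma to the strict inequality of $\mathcal{M}(\Do)$ is a secondary, standard technicality.
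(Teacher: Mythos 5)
Your proof is correct in outline, but note that the paper itself offers no proof of this proposition: it simply cites Remark 8.1.7 of Graham--Kohr, and what you have reconstructed is essentially the standard argument from that source. The forward direction is indeed immediate from Theorem~\ref{Gabriela}. For the converse, your two key steps are the right ones. Concerning the first, a generator $G(\cdot,t)$ with $G(0,t)=0$ and $DG(0,t)=-I_n$ does lie in $\M(\Do)$; for the strictness the cleanest version of your ``maximum-principle argument'' is to restrict to a complex line through the origin, where $\lambda\mapsto\langle G(\lambda u,t),u\rangle/\lambda$ (resp.\ $G_j(\lambda u,t)/(\lambda u_j)$ for the polydisc) is holomorphic with nonpositive real part and equals $-1$ at $\lambda=0$, so the one-variable maximum principle forces strict negativity. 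Concerning the second step, your route via a uniform quantitative inverse of $F_t=e^{-t}f_t$ works, but the more usual arrangement avoids inverting: write $f_0(z)=e^tF_t(\varphi_{0,t}(z))$, use normality to get $\|F_t(w)-w\|\leq C\|w\|^2$ on a fixed ball uniformly in $t$, and combine this with the growth estimate $\|\varphi_{0,t}(z)\|\leq e^{-t}\|z\|/(1-\|z\|)^2$ for evolution families of $\M(\Do)$--fields to conclude $\|f_0(z)-e^t\varphi_{0,t}(z)\|=\LandauO(e^{-t})$; both versions consume the normal-family hypothesis exactly where you say they do. Two small points to tidy: you obtain $G(\cdot,t)\in\M(\Do)$ only for almost every $t$, whereas Theorem~\ref{Gabriela} is stated for all $t$ (modify $G$ on a null set; the evolution family is unchanged), and in your inverse-function route you must also record that $e^{-t}f_0(z)$ lies in $F_t(\Do)$ for large $t$, which the uniform covering $F_t(\mathcal{B}(0,\rho))\supset\mathcal{B}(0,r)$ coming from your Cauchy estimates supplies.
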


\begin{remark}
 There is also a more geometric characterization of the domains $f(\Do),$ $f\in S^0(\Do),$ called \emph{asymptotic starlikeness}. This notion was introduced by Poreda in \cite{MR1049183}. He showed that this property is a necessary condition for a domain to be the image of a function $f\in S^0(\D^n).$ Under some further assumptions this condition is also sufficient. In \cite{MR2425737}, Theorem 3.1, Graham, Hamada, G. Kohr and M. Kohr proved that $f\in S^0(\B_n)$ if and only if $f\in S(\B_n)$ and $f(\B_n)$ is an asymptotically starlike domain. 
\end{remark}

If $f\in S(\Do),$ then $f\in S^*(\Do)$ if and only if $\{e^t f\}_{t\geq0}$ is a normalized Loewner chain. In particular \begin{equation*}S^*(\Do)\subset S(\Do).\end{equation*}
The following example demonstrates that there is a one-to-one correspondence between $\M(\Do)$ and $S^*(\Do).$ 

\begin{example} 
If $f\in S^*(\Do),$ then $-(Df)^{-1}\cdot f \in \M(\Do)$ by Theorem \ref{Juergen}. The converse relation is given by Theorem \ref{Gabriela}:\\
Let $h\in \M(\Do)$ and denote by $\varphi_{s,t}$ the evolution family that corresponds to the time-independent  Herglotz vector field $G(z,t):=h(z).$ It has the property $\varphi_{s,t}(z)=\varphi_{0,t-s}(z)$ for all $t\geq s$ and $z\in\Do.$ Thus, if $\{f_s\}_{s\geq0}$ is the Loewner chain from Theorem \ref{Gabriela}, then 
$$f_s(z)=\lim_{t\to\infty}e^t \varphi_{s,t}(z)=e^s\lim_{t\to\infty}e^{t-s}\varphi_{0,t-s}(z)=e^s\cdot f_0(z).$$
Hence, $f:=f_0\in S^*(\Do)$ and $$-(Df(z))^{-1}f(z)=-(e^tDf(z))^{-1}e^tf(z)=-(Df_t(z))^{-1}\dot{f}_t(z)=h(z).$$
\hfill $\bigstar$\end{example}\vspace{0.4cm}

Elements of the class $S^0(\Do)$ enjoy the following inequalities, which are known as the Koebe distortion theorem when $n=1.$

\begin{theorem}[Corollary 8.3.9 in \cite{graham2003geometric}]\label{koebe1}
 If $f\in S^0(\B_n)$, then $$\frac{\|z\|}{(1+\|z\|)^2} \leq \|f(z)\|\leq \frac{\|z\|}{(1-\|z\|)^2} \quad \text{for all} \quad z\in \B_n. $$
In particular, $\frac1{4}\B_n \subseteq f(\B_n).$ (Koebe quarter theorem for the class $S^0(\B_n).$)
\end{theorem}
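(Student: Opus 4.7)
The plan is to use the parametric representation of $f\in S^{0}(\B_{n})$ supplied by Theorem \ref{Gabriela}, pull the estimate back to a one-dimensional Herglotz--Carath\'{e}odory bound, and then integrate a scalar ODE along each trajectory of the associated evolution family.

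By Theorem \ref{Gabriela} we may write $f(z)=\lim_{t\to\infty}e^{t}\varphi_{0,t}(z)$, where $\varphi_{s,t}$ is the evolution family of a Herglotz vector field $G$ with $G(\cdot,t)\in\M(\B_{n})$ for every $t\ge 0$. The key analytic input is the following quantitative sharpening of the defining condition of $\M(\B_{n})$: for every $h\in\M(\B_{n})$ and every $w\in\B_{n}\setminus\{0\}$,
\[
\|w\|^{2}\,\frac{1-\|w\|}{1+\|w\|}\;\le\;-\Re\langle h(w),w\rangle\;\le\;\|w\|^{2}\,\frac{1+\|w\|}{1-\|w\|}.
\]
I would prove this by a slicing argument: with $u:=w/\|w\|$, the function $p(\zeta):=-\zeta^{-1}\langle h(\zeta u),u\rangle$ extends holomorphically across $\zeta=0$, satisfies $p(0)=-\langle Dh(0)u,u\rangle=1$ (since $Dh(0)=-I_{n}$), and $\Re p(\zeta)>0$ on $\D$ (since $\Re\langle h(\zeta u),\zeta u\rangle<0$). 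Hence $p$ lies in the Carath\'{e}odory class $\mathcal{P}$, and the classical Herglotz bound $\tfrac{1-|\zeta|}{1+|\zeta|}\le\Re p(\zeta)\le\tfrac{1+|\zeta|}{1-|\zeta|}$ evaluated at $\zeta=\|w\|$ yields the displayed estimate.

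Now fix $z\in\B_{n}\setminus\{0\}$ and set $u(t):=\|\varphi_{0,t}(z)\|$. Differentiating $u(t)^{2}=\langle\varphi_{0,t}(z),\varphi_{0,t}(z)\rangle$ and inserting the bound above with $h=G(\cdot,t)$ gives the a.e.\ differential inequality
\[
-u(t)\,\frac{1+u(t)}{1-u(t)}\;\le\;u'(t)\;\le\;-u(t)\,\frac{1-u(t)}{1+u(t)},
\]
so $u$ is strictly decreasing with $u(t)\downarrow 0$. Introducing $\rho(t):=e^{t}u(t)$, so that $\rho(0)=\|z\|$ and $\rho(t)\to\|f(z)\|$, and re-parametrising by $u$ (legitimate by monotonicity), one has $\tfrac{d\log\rho}{du}=\tfrac{1}{u'}+\tfrac{1}{u}$; the two one-sided bounds on $u'$ then telescope to
\[
\frac{-2}{1-u}\;\le\;\frac{d\log\rho}{du}\;\le\;\frac{2}{1+u}.
\]
Integrating from $u=\|z\|$ down to $u=0$ (the reversed direction swaps the inequalities) produces
\[
-2\log(1+\|z\|)\;\le\;\log\frac{\|f(z)\|}{\|z\|}\;\le\;-2\log(1-\|z\|),
\]
which is the advertised two-sided growth estimate; the case $z=0$ is trivial from $f(0)=0$, $Df(0)=I_{n}$.

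For the quarter theorem, suppose for contradiction that some $w_{0}\in\B(0,\tfrac14)$ is not in $f(\B_{n})$. Since $0\in f(\B_{n})$ and $f(\B_{n})$ is open, the segment $[0,w_{0}]$ meets $\partial f(\B_{n})$ at some $w^{*}$ with $\|w^{*}\|<\tfrac14$. Choose $z_{k}\in\B_{n}$ with $f(z_{k})\to w^{*}$; after passing to a subsequence $z_{k}\to z^{*}\in\overline{\B_{n}}$. If $\|z^{*}\|<1$ then continuity gives $f(z^{*})=w^{*}\in f(\B_{n})$, a contradiction, so $\|z_{k}\|\to 1$; the lower growth bound then forces $\|w^{*}\|\ge\lim\|z_{k}\|/(1+\|z_{k}\|)^{2}=\tfrac14$, again a contradiction. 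Thus $\tfrac14\B_{n}\subseteq f(\B_{n})$. The main technical obstacle is the one-variable reduction in the first paragraph, which is the only place the multidimensional hypothesis $G(\cdot,t)\in\M(\B_{n})$ is actually used; once the Carath\'{e}odory bound is available, what remains is careful sign tracking when integrating with the independent variable $u$ running in the negative direction.
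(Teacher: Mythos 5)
Your proof is correct and is essentially the standard argument behind the cited result (the paper itself offers no proof, only the reference to Corollary 8.3.9 of Graham--Kohr): the slicing of $h\in\M(\B_n)$ into a Carath\'eodory-class function $p(\zeta)=-\zeta^{-1}\langle h(\zeta u),u\rangle$, the resulting two-sided differential inequality for $u(t)=\|\varphi_{0,t}(z)\|$, and its integration against $\rho(t)=e^{t}u(t)$ are exactly the ingredients of that proof. All steps check out, including the sign bookkeeping when inverting the negative quantity $u'$ and the derivation of $\tfrac14\B_n\subseteq f(\B_n)$ from the lower growth bound.
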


\begin{theorem}[Theorem 1 and Theorem 2 in \cite{MR1049182}]\label{koebe2}
 If $f\in S^0(\D^n)$, then $$\frac{\|z\|_\infty}{(1+\|z\|_\infty)^2} \leq \|f(z)\|_\infty\leq \frac{\|z\|_\infty}{(1-\|z\|_\infty)^2} \quad \text{for all} \quad z\in \D^n. $$
In particular, $\frac1{4}\D^n \subseteq f(\D^n).$ (Koebe quarter theorem for the class $S^0(\D^n).$)
\end{theorem}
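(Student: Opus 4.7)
The plan is to mirror the one-variable proof of the Koebe distortion theorem: use the parametric representation of $S^0(\D^n)$ to translate the distortion bounds into a differential inequality satisfied by the flow $t\mapsto \|\varphi_{0,t}(z)\|_\infty$, integrate it, and let $t\to\infty$. The inclusion $\tfrac14\D^n\subseteq f(\D^n)$ will then follow from the lower bound by a topological argument. Given $f\in S^0(\D^n)$ and $z\in\D^n\setminus\{0\}$, Theorem \ref{Gabriela} supplies an evolution family $\{\varphi_{s,t}\}$ generated by a Herglotz vector field $G$ with $G(\cdot,t)\in\M(\D^n)$ such that $f(z)=\lim_{t\to\infty} e^t\varphi_{0,t}(z)$. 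I would set $w(t):=\varphi_{0,t}(z)$ and $v(t):=\|w(t)\|_\infty$ and work with these throughout.

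The first step would be to prove a polydisc analogue of the Carath\'eodory estimate: for $h\in\M(\D^n)$ and $z$ with $|z_j|=\|z\|_\infty=:r$,
\begin{equation*}
\frac{1-\|z\|_\infty}{1+\|z\|_\infty}\;\leq\; -\Re\!\left(\frac{h_j(z)}{z_j}\right)\;\leq\;\frac{1+\|z\|_\infty}{1-\|z\|_\infty}.
\end{equation*}
To obtain this I would restrict $h_j$ to the complex disc $\lambda\mapsto \lambda z/r$ and form $p(\lambda):=-r\,h_j(\lambda z/r)/(\lambda z_j)$. The normalization $Dh(0)=-I_n$ yields $p(0)=1$, while $|\lambda z_j/r|=|\lambda|=\|\lambda z/r\|_\infty$ allows the defining condition of $\M(\D^n)$ to deliver $\Re p(\lambda)>0$, so $p\in\mathcal{P}$. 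Evaluating at $\lambda=r$ and using the classical Carath\'eodory bound $\tfrac{1-r}{1+r}\leq\Re p(r)\leq\tfrac{1+r}{1-r}$ yields the displayed inequality.

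Next, I would apply this along the flow to derive a differential inequality for $v$. Since $w$ is absolutely continuous, so is $v$. At a.e.\ $t$ where $v$ and every $|w_j|$ are differentiable, a short continuity argument shows that the index set $J(t):=\{j:|w_j(t)|=v(t)\}$ locally absorbs the max, giving $\dot v(t)=v(t)\cdot\max_{j\in J(t)}\Re(G_j(w(t),t)/w_j(t))$; together with the lemma this yields
\begin{equation*}
-\,v(t)\,\frac{1+v(t)}{1-v(t)}\;\leq\;\dot v(t)\;\leq\;-\,v(t)\,\frac{1-v(t)}{1+v(t)}.
\end{equation*}
Using $\tfrac{d}{dt}\log\tfrac{v}{(1\mp v)^2}=\tfrac{(1\pm v)\dot v}{v(1\mp v)}$, I would integrate these to obtain
\begin{equation*}
e^{-t}\frac{\|z\|_\infty}{(1+\|z\|_\infty)^2}\;\leq\;\frac{v(t)}{(1+v(t))^2}\qquad\text{and}\qquad\frac{v(t)}{(1-v(t))^2}\;\leq\; e^{-t}\frac{\|z\|_\infty}{(1-\|z\|_\infty)^2}.
\end{equation*}
Finiteness of $\lim_{t\to\infty} e^t w(t)=f(z)$ forces $v(t)\to 0$; multiplying by $e^t$ and letting $t\to\infty$ then produces the claimed distortion inequalities.

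Finally, for the Koebe $1/4$ statement, fix $r\in(0,1)$. The lower distortion bound gives $\|f(\zeta)\|_\infty\geq r/(1+r)^2$ on $\partial(r\D^n)$, so $f(r\D^n)$ is a bounded domain containing $0$ whose topological boundary lies in $\{w:\|w\|_\infty\geq r/(1+r)^2\}$. The connected open set $\{w:\|w\|_\infty<r/(1+r)^2\}$ contains $0$ and avoids this boundary, hence lies inside $f(r\D^n)$; letting $r\to1$ yields $\tfrac14\D^n\subseteq f(\D^n)$. I expect the main technical obstacle to be the subtlety in step three: $v$ is only Lipschitz, not smooth, and $J(t)$ may have size greater than one. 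This is handled by observing that for $j\notin J(t)$ one has $|w_j(t+h)|<v(t+h)$ for all $|h|$ sufficiently small, which reduces the derivative of the max to the max of the componentwise derivatives over $J(t)$ alone, making the bounds on $\Re(G_j/w_j)$ from step two directly available.
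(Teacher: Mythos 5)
Your proof is correct, and it is essentially the standard argument of the cited source (Poreda) for this statement, which the thesis itself does not reprove but only quotes: the Carath\'eodory-type bound for $\M(\D^n)$ obtained by restricting to the slice $\lambda\mapsto\lambda z/r$, the resulting differential inequality for $t\mapsto\|\varphi_{0,t}(z)\|_\infty$, integration, and the topological argument for the covering statement. The technical points you flag (positivity of $v(t)$ via injectivity of $\varphi_{0,t}$, the derivative of the maximum over the active index set $J(t)$, and $v(t)\to 0$ following already from the integrated upper bound) are all handled correctly.
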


We summarize some consequences of Theorem \ref{koebe1} and \ref{koebe2} in combination with Theorem \ref{denseness}.

\begin{corollary}\label{range}
Let $n\geq 2$, $f\in S^0(\Do)$ and let $\{f_t\}_{t\geq0}$ be a normalized Loewner chain with $f=f_0$ such that $\{e^{-t}f_t\}_{t\geq0}$ is a normal family. Then
\begin{itemize}
 \item[a)] $\bigcup_{t\geq0}f_t(\Do)=\C^n$, $f(\Do)$ is a Runge domain and $f$ can be approximated locally uniformly by automorphisms of $\aut(\C^n).$
 \item[b)] $S^0(\Do)$ is a proper subset of $S(\Do)\cap F(\Do).$
\item[c)] $ \{\textbf{id}_{\Do}\} \subsetneq S^0(\Do)\cap \operatorname{Aut}(\C^n)\subsetneq S(\Do)\cap \aut(\C^n).$ 
\end{itemize}
 
\end{corollary}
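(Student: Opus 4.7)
The plan is to establish the three claims in sequence, leveraging the normalized Loewner chain $\{f_t\}_{t\geq 0}$ supplied by the parametric representation hypothesis.

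For (a), I would first observe that, by the proposition characterizing $S^0(\Do)$ applied to the tail chain $\{f_{t+\tau}\}_{\tau\geq 0}$, each $e^{-t}f_t$ itself lies in $S^0(\Do)$. The Koebe quarter theorems for $S^0(\Do)$ (Theorems \ref{koebe1} and \ref{koebe2}) then give $\tfrac{1}{4}\Do\subseteq e^{-t}f_t(\Do)$, whence $\tfrac{e^{t}}{4}\Do\subseteq f_t(\Do)$, and taking the union over $t\geq 0$ yields $\bigcup_{t\geq 0}f_t(\Do)=\C^n$. In particular $f\in E(\Do)$, so Theorem \ref{denseness} forces $f\in F(\Do)$; since $n\geq 2$, Lemma \ref{ru}(d) then delivers the asserted approximation of $f$ by elements of $\aut(\C^n)$.

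Part (b) is then essentially immediate: $S^0(\Do)\subseteq S(\Do)$ is definitional, and $S^0(\Do)\subseteq F(\Do)$ is the Runge conclusion of (a). For strict inclusion I would introduce the shear family $F_c(z_1,\ldots,z_n):=(z_1,z_2,\ldots,z_{n-1},z_n+cz_1^2)$, $c\in\C$. Each $F_c$ is a polynomial automorphism of $\C^n$ whose restriction lies in $S(\Do)$, and $F_c(\Do)$ is Runge by Lemma \ref{ru}(d). Because $S^0(\Do)$ is compact while the family $\{F_c\}_{c\in\C}$ is unbounded on any compact subset of $\Do$ containing a point with $z_1\neq 0$, some $F_c$ necessarily lies outside $S^0(\Do)$; equivalently, the Koebe upper estimate of Theorem \ref{koebe1} or \ref{koebe2} fails for $|c|$ large.

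For (c), the identity clearly lies in both intersections, and the strict upper inclusion is witnessed by the very same $F_c$ for $|c|$ large, since $F_c\in\aut(\C^n)\cap S(\Do)\setminus S^0(\Do)$. For the strict lower inclusion I would take $F_a$ with $|a|$ small: a short computation gives $-(DF_a)^{-1}F_a(z)=(-z_1,\ldots,-z_{n-1},-z_n+az_1^2)$, and an elementary two-variable estimate shows that this vector field belongs to $\M(\Do)$ for $|a|$ below an explicit threshold. Theorem \ref{Juergen} then yields $F_a\in S^*(\Do)\subseteq S^0(\Do)$, while $F_a\in\aut(\C^n)$ by construction and $F_a\neq\textbf{id}_{\Do}$.

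I expect the main obstacle to be purely computational: the $\M(\Do)$-membership of $-(DF_a)^{-1}F_a$ must be verified in the appropriate norm ($\|\cdot\|$ for $\B_n$, $\|\cdot\|_\infty$ for $\D^n$), and the Koebe bound failure for $|c|$ large must be confirmed in each geometry. Both reductions come down to a real two-variable inequality, but the ball and polydisc cases need to be treated in parallel rather than subsumed into a single argument.
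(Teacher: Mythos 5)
Your proof is correct and follows essentially the same route as the paper: Koebe quarter theorem plus Theorem \ref{denseness} for (a), and the compactness of $S^0(\Do)$ against the non-normal family of shears for the strict inclusions in (b) and (c), with small shears shown to be starlike via Theorem \ref{Juergen} for the lower inclusion in (c). The only difference is cosmetic --- you specialize the paper's general shear $F_g(z)=(z_1,\dots,z_{n-1},z_n+g(z_1))$ to the quadratic case $g(z_1)=cz_1^2$, which suffices.
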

\begin{proof}
Theorem \ref{koebe1} and \ref{koebe2} imply $$\bigcup_{t\geq 0}f_t(\Do)\supseteq \bigcup_{t\geq 0}\left(\frac{e^t}{4}\cdot \Do\right)=\C^n.$$ Consequently, $f\in E(\Do)$ and a) follows from Theorem \ref{denseness}.\\
$b):$ $S^0(\Do)$ is compact, but the set $S(\Do)\cap F(\Do)$ is not a normal family:
 Let $g:\C\to \C$ with $g(0)=g'(0)=0$ and let $F_g(z_1,...,z_n):= (z_1,...,z_{n-1}, z_n+g(z_1)).$ Then $F_g\in S(\Do)\cap\aut(\C^n)\subset S(\Do)\cap F(\Do),$ but the set of all $F_g$'s is not a normal family.
\\
$c):$ $S^0(\Do)\cap \aut(\C^n)$ is a normal family but we have seen that $S(\Do)\cap\aut(\C^n)$ is not normal.\\ Furthermore, there exists $\eps>0$ such that $F_g \in S^*(\Do)\subset S^0(\Do)$ for all $g$ with $|g(z)|\leq \eps$ for all $z\in \overline{\Do},$ which can be easily checked by using Theorem \ref{Juergen}. Thus, $S^0(\Do)\cap \aut(\C^n)$ contains infinitely many elements.
\end{proof}
\begin{question}
 Is $ S^0(\Do)\cap \operatorname{Aut}(\C^n)$ dense in $S^0(\Do)$?
\end{question}

 \section{Support points of \texorpdfstring{$S^0(\Do)$}{}}\label{Oana}

Let $X$ be a locally convex $\C$--vector space and $E\subset X.$ The set $ex E$ of extreme points  and the set $\supp E$ of support points  of $E$ are defined as follows:
\begin{itemize}
\item[\textbullet] $x\in ex E$ if the representation $x=ta + (1-t)b$ with $t\in[0,1],$ $a,b\in E$,  always implies $x=a=b.$
\item[\textbullet] $x\in supp E$ if there exists a continuous linear functional $L:X\to\C$ such that $\Re L$ is nonconstant on $E$ and $$\Re L(x) = \max_{y\in E}\Re L(y).$$
\end{itemize}

The class $S^0(\Do)$ is a nonempty compact subset of the locally convex vector space $\Ho(\Do, \C^n).$ Thus the Krein--Milman theorem implies that $ex S^0(\Do)$ is nonempty. Of course, $\supp S^0(\Do)$ is nonempty too: Let $f=(f_1,...,f_n)\in \Ho(\Do,\C^n)$ and $f_1(z_1,...,z_n)=a_f z_1+b_fz_1^2+...$ Then the map $f\mapsto b_f$ is an example for a continuous linear functional on $\Ho(\Do,\C^n)$, which is nonconstant on $S^0(\Do).$\\

Extreme points as well as support points of the class $S^0(\mathcal{D}_1)=S$ map $\D$ onto $\C$ minus a slit (which has increasing modulus when one runs through the slit from its starting point to $\infty$), see Corollary 1 and 2 in \cite{MR708494}, \S9.5. In particular, they are unbounded mappings. It would be quite interesting to find similar geometric properties of extreme and support points of $S^0(\Do)$ when $n\geq 2.$ However, it is not even known whether support points of $S^0(\Do)$ are always unbounded in this case.
In the following, we touch this question and our aim is to prove the following theorem which is already known to be true for the case $n=1,$ i.e. for the class $S.$ 

\begin{theorem}\label{sup} 
Let $f\in\supp S^0(\Do)$ and let $\{f_t\}_{t\geq0}$ be a normalized Loewner chain with $f_0=f$ such that $\{e^{-t}f_t\}_{t\geq 0}$ is a normal family on $\Do,$ then $e^{-t}f_t\in\supp S^0(\Do)$ for all $t\geq 0.$ 
\end{theorem}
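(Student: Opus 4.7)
The plan is to pull back the support functional for $f$ through the evolution family of $\{f_s\}_{s\geq 0}$ and show that the pull-back witnesses $g := e^{-t}f_t$ as a support point. Fix $t\geq 0$ and let $L$ be a continuous linear functional on $\Ho(\Do,\C^n)$ with $\Re L$ non-constant on $S^0(\Do)$ and $\Re L(f) = \max_{h\in S^0(\Do)}\Re L(h)$. By Theorem \ref{Thomas} the evolution family $\{\varphi_{s,u}\}$ of $\{f_s\}$ satisfies $f = f_0 = f_t\circ\varphi_{0,t} = e^{t}\,g\circ\varphi_{0,t}$. Define the continuous linear map $\Phi:\Ho(\Do,\C^n)\to\Ho(\Do,\C^n)$ by
\[
\Phi(h)(z) := e^{t}h(\varphi_{0,t}(z)),
\]
so that $\Phi(g)=f$, and take as candidate support functional at $g$ the composite $\tilde L := L\circ\Phi$.

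The first step is to check that $\Phi(S^0(\Do))\subseteq S^0(\Do)$. Given $h\in S^0(\Do)$ with a normalized Loewner chain $\{h_s\}_{s\geq 0}$, $h_0=h$, for which $\{e^{-s}h_s\}$ is a normal family, I splice together a chain for $\Phi(h)$:
\[
H_s(z) := \begin{cases} e^{t}h(\varphi_{s,t}(z)) & 0\leq s\leq t,\\ e^{t}h_{s-t}(z) & s\geq t.\end{cases}
\]
Using $D\varphi_{s,t}(0)=e^{s-t}I_n$ and $\varphi_{t,t}=\id_\Do$ one verifies $DH_s(0)=e^{s}I_n$ and continuity at $s=t$. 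The subordination $H_s=H_u\circ\varphi_{s,u}$ for $0\leq s\leq u\leq t$ follows from $\varphi_{s,t}=\varphi_{u,t}\circ\varphi_{s,u}$, and for $t\leq s\leq u$ from the Loewner chain property of $\{h_s\}$; the integral Lipschitz bound of Definition \ref{Mats} is inherited piecewise. Normality of $\{e^{-s}H_s\}_{s\geq 0}$ holds on $[0,t]$ because the family is locally bounded in terms of $h$, and on $[t,\infty)$ because $e^{-s}H_s = e^{-(s-t)}h_{s-t}$ is a time-shift of the normal family $\{e^{-u}h_u\}_{u\geq 0}$. Hence $\Phi(h)=H_0\in S^0(\Do)$.

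The maximization is now immediate: for every $h\in S^0(\Do)$,
\[
\Re\tilde L(h) = \Re L(\Phi(h)) \leq \Re L(f) = \Re L(\Phi(g)) = \Re\tilde L(g),
\]
so $g$ maximizes $\Re\tilde L$ on $S^0(\Do)$. The main obstacle will be verifying that $\Re\tilde L$ is non-constant on $S^0(\Do)$. My approach is via Taylor expansions at $0$: from $\varphi_{0,t}(z) = e^{-t}z+O(\|z\|^2)$ an induction on $|\alpha|$ yields the triangular formula
\[
a_\alpha(\Phi(h)) = e^{t(1-|\alpha|)}a_\alpha(h) + Q_\alpha\bigl(a_\beta(h):|\beta|<|\alpha|\bigr)
\]
for every multi-index $\alpha$, where $Q_\alpha$ depends only on $\varphi_{0,t}$. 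Representing $L$ through its action on Taylor coefficients, and using that $L$ is non-constant on $S^0(\Do)$, one picks $\alpha_0$ with $|\alpha_0|\geq 2$ maximal among those multi-indices on which $L$ acts non-trivially; the non-vanishing diagonal factor $e^{t(1-|\alpha_0|)}$ shows that the coefficient of $a_{\alpha_0}(h)$ in $\tilde L(h)$ is non-zero. Since the rotational rescalings $h\mapsto U^{-1}h(U\cdot)$, with $U$ a diagonal unitary, preserve $S^0(\Do)$ (both $\B_n$ and $\D^n$ are invariant under such $U$, and the rescaled Loewner chain $U^{-1}h_s(U\cdot)$ inherits normality of $\{e^{-s}h_s\}$) while multiplying $a_{\alpha_0}$ by a non-trivial phase, one produces elements of $S^0(\Do)$ on which $\Re\tilde L$ takes distinct values. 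Combined with the maximization, $\tilde L$ witnesses $g\in\supp S^0(\Do)$, completing the proof.
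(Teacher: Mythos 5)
Your overall architecture coincides with the paper's: the functional $\tilde L = L\circ\Phi$ with $\Phi(h)=e^{t}h\circ\varphi_{0,t}$ is exactly the functional $J$ used there, your verification that $\Phi(S^0(\Do))\subseteq S^0(\Do)$ via the spliced chain $H_s$ is precisely Lemma \ref{s0}, and the maximization step is identical. The proof therefore stands or falls with the non-constancy of $\Re\tilde L$ on $S^0(\Do)$, and this is where there is a genuine gap.

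Your coefficient argument requires a multi-index $\alpha_0$ of \emph{maximal} degree among those on which $L$ acts nontrivially. But a continuous linear functional on $\Ho(\Do,\C^n)$ is only constrained by a geometric decay bound on its coefficient sequence $(c_\alpha)$; in general infinitely many $c_\alpha$ with $|\alpha|\ge 2$ are nonzero and no maximal degree exists. Since your triangular relation expresses $\tilde c_\alpha$ in terms of the $c_\beta$ with $|\beta|\ge|\alpha|$, back-substitution has no top degree to start from, and the formal manipulation cannot exclude that every $\tilde c_\alpha$ with $|\alpha|\ge 2$ vanishes. What you actually need is that $L\bigl(P\circ\varphi_{0,t}\bigr)\neq 0$ for some polynomial $P$ vanishing to second order at $0$; the negation would only force $L$ to annihilate all functions vanishing to second order --- and hence $\Re L$ to be constant on $S^0(\Do)$ --- because the functions $P\circ\varphi_{0,t}$ are \emph{dense} in that space, which is exactly the Runge property of $\varphi_{0,t}(\Do)$ (Theorem \ref{runge}, resting on Docquier--Grauert, combined with Lemma \ref{ru} c)). This analytic input cannot be replaced by Taylor-coefficient bookkeeping. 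The paper supplies it through Proposition \ref{marina}, which shows via Lemma \ref{pol} and the Runge property that $e^{t}\varphi_{0,t}$ is not a support point, so that $\Re\tilde L(\id_{\Do})=\Re L(e^{t}\varphi_{0,t})<\Re L(f)=\Re\tilde L(e^{-t}f_t)$ and non-constancy follows at once. Your argument as written is complete only when $L$ involves Taylor coefficients of bounded degree; to close it in general you must bring in the Runge property of $\varphi_{0,t}(\Do)$, either directly or through Proposition \ref{marina}.
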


The proof shows that a special class of bounded mappings cannot be support points -- a class which is actually conjectured to be the set of all bounded mappings in $S^0(\Do)$. 
Theorem \ref{sup} was conjectured in \cite{MR2943779} for $\Do=\B_n$, as the analogous statement for extreme points of $S^0(\B_n)$ could be shown there. It is also true for $S^0(\D^n)$: 

\begin{theorem}[See \cite{MR2943779}]\label{extreme} 
Let $f\in ex S^0(\Do)$ and let $\{f_t\}_{t\geq0}$ be a normalized Loewner chain with $f_0=f$ such that $\{e^{-t}f_t\}_{t\geq 0}$ is a normal family on $\Do,$ then $e^{-t}f_t\in ex S^0(\Do)$ for all $t\geq 0.$ 
\end{theorem}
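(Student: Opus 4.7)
I argue by contraposition: suppose that for some $t_0 > 0$ the mapping $e^{-t_0} f_{t_0}$ fails to be extreme in $S^0(\Do)$; I will show that $f = f_0$ is then also not extreme, contradicting the hypothesis. Write
\[ e^{-t_0} f_{t_0} = \tfrac{1}{2}(F_1 + F_2), \qquad F_1, F_2 \in S^0(\Do),\ F_1 \neq F_2. \]
Let $\{\varphi_{s,t}\}$ be the evolution family associated with $\{f_t\}$ via Theorem \ref{Thomas}, so that $f_0 = f_{t_0} \circ \varphi_{0,t_0}$. Composing the decomposition above with $\varphi_{0,t_0}$ yields
\[ f = \tfrac{1}{2}(G_1 + G_2), \qquad G_i := e^{t_0}\, F_i \circ \varphi_{0,t_0}. \]
Using $\varphi_{0,t_0}(0) = 0$, $D\varphi_{0,t_0}(0) = e^{-t_0} I_n$, and $F_i \in S(\Do)$, one checks $G_i(0) = 0$ and $DG_i(0) = I_n$; univalence of $\varphi_{0,t_0}$ together with the identity theorem shows that $G_1 = G_2$ would force $F_1 = F_2$, a contradiction. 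Thus the argument reduces to the central claim: $G_1, G_2 \in S^0(\Do)$.

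For the claim, I construct for each $i$ an explicit normalized Loewner chain $\{g^{(i)}_t\}_{t\geq 0}$ with $g^{(i)}_0 = G_i$ and $\{e^{-t} g^{(i)}_t\}_{t\geq 0}$ a normal family; by the proposition characterizing $S^0(\Do)$ stated just after Theorem \ref{Gabriela}, this suffices. The construction is by splicing. For $t \in [0, t_0]$ put $g^{(i)}_t := e^{t_0}\, F_i \circ \varphi_{t, t_0}$; for $t \geq t_0$ put $g^{(i)}_t := e^{t_0}\, \hat{F}^{(i)}_{t - t_0}$, where $\{\hat{F}^{(i)}_s\}_{s \geq 0}$ is any normalized Loewner chain supplied by $F_i \in S^0(\Do)$ with $\{e^{-s}\hat{F}^{(i)}_s\}$ normal. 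The two pieces agree at $t = t_0$ (both equal $e^{t_0} F_i$), and the normalization $Dg^{(i)}_t(0) = e^t I_n$ as well as the monotonicity $g^{(i)}_s(\Do) \subset g^{(i)}_t(\Do)$ for $s \leq t$ follow routinely from the semigroup identity $\varphi_{s, t_0} = \varphi_{t, t_0} \circ \varphi_{s, t}$, which gives $g^{(i)}_s = g^{(i)}_t \circ \varphi_{s, t}$ for $s, t \in [0, t_0]$. The $L^d$ regularity in Definition \ref{Mats}(2) is inherited from the two constituent chains.

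The step I expect to be the main obstacle is the normality of $\{e^{-t}g^{(i)}_t\}$ on $[0, t_0]$, where one has $e^{-t} g^{(i)}_t(z) = e^{t_0 - t} F_i(\varphi_{t, t_0}(z))$: the prefactor $e^{t_0-t}$ is harmless, but a priori $\varphi_{t,t_0}(z)$ could drift towards $\partial\Do$, making $F_i(\varphi_{t,t_0}(z))$ unbounded. The plan is to exploit the Schwarz-type contraction available on $\B_n$ and $\D^n$ for self-maps fixing the origin, giving $\|\varphi_{t,t_0}(z)\|_\ast \leq \|z\|_\ast$ in the Euclidean or maximum norm as appropriate. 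On each $r\overline{\Do}$ with $r<1$ this confines the image to $r\overline{\Do}$, and then the Koebe growth bound of Theorem \ref{koebe1}/Theorem \ref{koebe2} estimates $\|F_i(\varphi_{t,t_0}(z))\|_\ast \leq r/(1-r)^2$, giving a uniform bound on $e^{-t}g^{(i)}_t$ over $[0,t_0]\times r\overline{\Do}$. For $t \geq t_0$, normality is immediate from $F_i \in S^0(\Do)$. Once the claim is proved, $f = \tfrac12(G_1+G_2)$ with $G_1, G_2 \in S^0(\Do)$ distinct contradicts $f \in ex\, S^0(\Do)$, completing the argument.
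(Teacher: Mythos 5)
Your argument is correct and is essentially the paper's own: it too proceeds by contradiction, pulls the decomposition back through $\varphi_{0,t}$, and establishes your ``central claim'' that $e^{t}F_i\circ\varphi_{0,t}\in S^0(\Do)$ as a separate lemma (Lemma \ref{s0}) via exactly your splicing construction of a new normalized Loewner chain. The only cosmetic slip is that non-extremality of $e^{-t_0}f_{t_0}$ yields a representation $sF_1+(1-s)F_2$ for \emph{some} $s\in(0,1)$ rather than necessarily $s=\tfrac12$ (the class $S^0(\Do)$ is not convex, so you cannot normalize to the midpoint), but your argument goes through verbatim with a general $s$.
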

 Though the proof from \cite{MR2943779} readily generalizes to $S^0(\Do)$, we will also give a proof for the sake of completeness. Our proof for Theorem \ref{sup} generalizes ideas from a proof for the case $n=1$, which is described in \cite{hallenbeck1984linear}.  \\

First, we note the important fact  that, given an evolution family $\varphi_{s,t}$ associated to a $\M(\Do)$--Herglotz vector field and a $G\in S^0(\Do)$, then $e^{t-s} G(\varphi_{s,t})$ is also in $S^0(\Do),$ which is mentioned in the proof of Theorem 2.1 in \cite{MR2943779}.

\begin{lemma}\label{s0}
Let $G\in S^0(\Do)$ and $t\geq 0.$ Furthermore, let  $\{f_u\}_{u\geq0}$ be a normalized Loewner chain  such that $\{e^{-u}f_u\}_{u\geq0}$ is a normal family and let $\varphi_{s,t}$ be the associated evolution family.  Then $e^{t-s} G(\varphi_{s,t}) \in S^0(\Do)$ for every $0\leq s \leq t.$
\end{lemma}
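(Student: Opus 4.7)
The plan is to write $e^{t-s}G(\varphi_{s,t})$ as the first element of a normalized Loewner chain $\{\tilde f_u\}_{u\geq 0}$ with $\{e^{-u}\tilde f_u\}$ a normal family, and then invoke the characterization of $S^0(\Do)$ (the proposition above). Since $G\in S^0(\Do)$, there is a normalized Loewner chain $\{g_u\}_{u\geq 0}$ with $g_0=G$ and $\{e^{-u}g_u\}$ a normal family; denote its Herglotz vector field by $H(z,u)$ and its evolution family by $\psi_{u,v}$. By Theorem \ref{Gabriela} one has $g_u=\lim_{w\to\infty}e^w\psi_{u,w}$ and $H(\cdot,u)\in \M(\Do)$ for almost all $u$. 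The latter inclusion also holds for the Herglotz vector field $V(z,u)$ of the given chain $\{f_u\}$: the normalizations $f_u(0)=0$, $Df_u(0)=e^uI_n$ combined with the Loewner PDE force $V(0,u)=0$ and $DV(0,u)=-I_n$, so $V(\cdot,u)\in \I(\Do)$ with the right normalization lies in $\M(\Do)$ a.e.

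Set $\tau:=t-s$ and concatenate the two vector fields by
\[
\tilde V(z,u):=\begin{cases} V(z,s+u), & 0\leq u<\tau,\\ H(z,u-\tau), & u\geq \tau. \end{cases}
\]
Measurability in $u$ is immediate, and the compactness of $\M(\Do)$ (Theorem 6.1.39 in \cite{graham2003geometric}) together with Montel's theorem delivers a uniform bound on each compactum of $\Do$, so $\tilde V$ is a Herglotz vector field of order $\infty$ with $\tilde V(\cdot,u)\in \M(\Do)$ a.e.\ (redefining on a null set if needed). Let $\tilde\varphi_{u,v}$ be its associated evolution family. By uniqueness of solutions to the Loewner ODE (\ref{cauchy2}) applied piecewise, combined with the cocycle property, one obtains $\tilde\varphi_{u,v}=\varphi_{s+u,s+v}$ for $0\leq u\leq v\leq \tau$, $\tilde\varphi_{u,v}=\psi_{u-\tau,v-\tau}$ for $\tau\leq u\leq v$, and, by composition, $\tilde\varphi_{0,w}=\psi_{0,w-\tau}\circ\varphi_{s,t}$ for every $w\geq \tau$.

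Now apply Theorem \ref{Gabriela} to $\tilde V$: this yields a normalized Loewner chain $\{\tilde f_u\}_{u\geq 0}$ with $\{e^{-u}\tilde f_u\}$ a normal family, given by $\tilde f_u(z)=\lim_{w\to\infty}e^w\tilde\varphi_{u,w}(z)$. Taking $u=0$, the substitution $w'=w-\tau$ gives
\[
\tilde f_0(z)=\lim_{w\to\infty}e^w\psi_{0,w-\tau}(\varphi_{s,t}(z))=e^\tau\lim_{w'\to\infty}e^{w'}\psi_{0,w'}(\varphi_{s,t}(z))=e^{t-s}g_0(\varphi_{s,t}(z))=e^{t-s}G(\varphi_{s,t}(z)).
\]
The characterization of $S^0(\Do)$ immediately forces $e^{t-s}G(\varphi_{s,t})=\tilde f_0\in S^0(\Do)$, as required. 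The only step that needs careful verification is the identification of $\tilde\varphi_{u,v}$ on the three regimes above; this is routine from uniqueness of ODE solutions together with the cocycle property, so I do not expect a substantive obstacle.
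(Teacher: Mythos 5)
Your argument is correct, and the underlying idea is the same as the paper's: concatenate the dynamics that transport $e^{t-s}G(\varphi_{s,t})$ to $e^{t-s}G$ during the time interval $[0,t-s]$ with the dynamics of a chain witnessing $G\in S^0(\Do)$ afterwards. The difference is in execution. The paper simply writes down the concatenated Loewner chain
$F(\cdot,u)=e^{t-s}G(\varphi_{s+u,t})$ for $u\le t-s$ and $F(\cdot,u)=e^{t-s}G(\cdot,u+s-t)$ for $u>t-s$, checks the normalization, monotonicity of images and the normal-family property directly, and concludes via the characterization of $S^0(\Do)$. You instead concatenate at the level of Herglotz vector fields and recover the chain through the limit formula of Theorem \ref{Gabriela}. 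This works, but it forces you to supply two facts the paper's route never needs: (i) that the Herglotz vector field of a normalized Loewner chain with the normal-family property is $\M(\Do)$-valued a.e. -- your derivation of $V(0,u)=0$, $DV(0,u)=-I_n$ is right, but the step ``infinitesimal generator with this normalization lies in $\M(\Do)$'' is a nontrivial lemma (it is standard, cf.\ \cite{graham2003geometric}, but not free); and (ii) the identification $g_0=\lim_{w\to\infty}e^w\psi_{0,w}$ for the chosen chain $\{g_u\}$. Point (ii) is the one to be careful about: a Loewner chain is determined by its evolution family only up to post-composition with a univalent map of the range, and for $n\ge2$ a normalized such map need not be the identity; it is precisely the normal-family hypothesis on $\{e^{-u}g_u\}$ (equivalently, the parametric-representation definition of $S^0(\Do)$) that pins down $g_u=\lim_w e^w\psi_{u,w}$, and this deserves an explicit word since your final computation collapses without it. With those two points made precise, your proof is complete; the paper's direct construction is shorter because it bypasses both.
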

\begin{proof}
Let $\{G(\cdot,u)\}_{u\geq0}$ be a normalized Loewner chain with $G(\cdot,0)=G$ such that $\{e^{-u}G(\cdot,u)\}_{u\geq0}$ is a normal family and let  $F(z,u):\Do\times [0,\infty)\to \C^n$ be the mapping $$F(z,u)=\begin{cases}
e^{t-s} G(\varphi_{s+u,t}(z)), &\quad 0\leq u \leq t-s,\\
e^{t-s} G(z, u+s-t), &\quad u>t-s.                                                                   \end{cases}
$$ Then $\{F(\cdot,u)\}_{u\geq0}$ is a normalized Loewner chain, $F(\cdot,0)=e^{t-s}G(\varphi_{s,t})$ and $\{e^{-u}F(\cdot,u)\}_{u\geq 0}$ is a normal family. Thus $e^{t-s}G(\varphi_{s,t})\in S^0(\Do).$ 
\end{proof}

\begin{proof}[Proof of Theorem \ref{extreme}.]
 Suppose that $e^{-t}f_t\not\in exS^0(\Do)$ for some $t>0.$ Then $e^{-t}f_t=sa+(1-s)b$ for some $a,b\in S^0(\Do)$ with $a\not=b$ and $s\in(0,1).$ As $f=f_t\circ \varphi_{0,t},$ we have
$$ f = s \cdot (e^ta\circ \varphi_{0,t}) +  (1-s)\cdot (e^tb\circ \varphi_{0,t}).$$
The functions $e^ta\circ \varphi_{0,t}$ and $e^tb\circ \varphi_{0,t}$ belong to $S^0(\Do)$ according to Lemma \ref{s0}. Thus, as $f\in exS^0(\Do)$, they are identical and the identity theorem implies $a=b$, a contradiction.
\end{proof}

Choosing $G(z)=z$ in Lemma \ref{s0} shows that $e^{t-s}\varphi_{t-s}\in S^0(\Do).$

\begin{lemma}\label{pol}
Let $\varphi_{s,t}$ be defined as in Lemma \ref{s0} and let $h= e^{t-s}\varphi_{s,t}\in S^0(\Do).$ Furthermore, let  $P:\C^n\to\C^n$ be a polynomial with $P(0)=0,$ $DP(0)=0,$ then there exists $\delta>0$ such that $$h+\eps e^{t-s} P(e^{s-t}h) \in S^0(\Do)\quad \text{for all} \quad \eps\in\C \quad \text{with} \quad |\eps|<\delta.$$
\end{lemma}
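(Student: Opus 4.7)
Observe first that the target expression factors as
$$h + \eps e^{t-s} P(e^{s-t} h) = e^{t-s}\varphi_{s,t} + \eps e^{t-s} P(\varphi_{s,t}) = e^{t-s}(\id_{\Do} + \eps P)(\varphi_{s,t}).$$
By Lemma \ref{s0} applied with $G = \id_{\Do} + \eps P$, it therefore suffices to prove that $f_\eps := \id_{\Do} + \eps P \in S^0(\Do)$ for all sufficiently small $|\eps|$. My plan is to establish the stronger statement that $f_\eps$ is a normalized starlike mapping; this is enough because $S^*(\Do) \subseteq S^0(\Do)$ (the family $\{e^{u} f_\eps\}_{u\geq 0}$ is a normalized Loewner chain, and $\{e^{-u}\cdot e^u f_\eps\}_{u\geq 0}$ is trivially a normal family).

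Univalence of $f_\eps$ on $\Do$ follows for small $|\eps|$ from a direct Lipschitz estimate on the polynomial $P$: if $f_\eps(z_1) = f_\eps(z_2)$, then $\|z_1 - z_2\| \leq |\eps| L \|z_1 - z_2\|$, forcing $z_1 = z_2$ as soon as $|\eps| L < 1$. Moreover $Df_\eps(z) = I_n + \eps DP(z)$ is invertible on $\overline{\Do}$ whenever $|\eps|\cdot\sup_{z \in \overline{\Do}}\|DP(z)\| < 1$, so $f_\eps$ is locally biholomorphic. By Theorem \ref{Juergen}, $f_\eps(\Do)$ is then starlike with respect to $0$ if and only if
$$h_\eps(z) := -(Df_\eps(z))^{-1} f_\eps(z) \in \M(\Do).$$
The decisive algebraic identity, obtained from $Df_\eps(z)\bigl(h_\eps(z) + z\bigr) = -f_\eps(z) + Df_\eps(z) z$, is
$$h_\eps(z) + z = -\eps\,(Df_\eps(z))^{-1}\bigl(P(z) - DP(z) z\bigr).$$
Expanding $P = \sum_{k\geq 2} P_k$ into its (finitely many) $k$-homogeneous components -- the sum starts at $k = 2$ because $P(0) = 0$ and $DP(0) = 0$ -- Euler's identity $DP_k(z)\,z = k P_k(z)$ gives $P(z) - DP(z) z = -\sum_{k \geq 2}(k-1) P_k(z)$, whence $\|P(z) - DP(z) z\| \leq C\|z\|^2$ on $\B_n$ and $\|P(z) - DP(z) z\|_\infty \leq C \|z\|_\infty^2$ on $\D^n$, with $C$ depending only on $P$.

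The normalizations $h_\eps(0) = 0$ and $Dh_\eps(0) = -I_n$ are then immediate. For the dissipativity condition in the ball case,
$$\Re\langle h_\eps(z), z\rangle = -\|z\|^2 + \Re\langle h_\eps(z) + z, z\rangle \leq -\|z\|^2 + |\eps| K\|z\|^3 = \|z\|^2\bigl(-1 + |\eps| K \|z\|\bigr),$$
which is strictly negative on $\B_n \setminus \{0\}$ as soon as $|\eps| K < 1$. For the polydisc, writing $h_\eps(z) + z = \eps R(z)$ with $|R_j(z)| \leq C' \|z\|_\infty^2$ componentwise, every $z$ with $|z_j| = \|z\|_\infty > 0$ satisfies
$$\Re\bigl(h_{\eps, j}(z)/z_j\bigr) = -1 + \Re\bigl(\eps R_j(z)/z_j\bigr) \leq -1 + |\eps| C' \|z\|_\infty < 0$$
for $|\eps|$ small enough. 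The main obstacle I anticipate is that the inequality defining $\M(\Do)$ must hold uniformly up to $\partial \Do$, not merely on compact subsets of $\Do$. This is resolved precisely by the fact that $P(z) - DP(z) z$ vanishes to order two at the origin: the perturbation of $-z$ is of size $|\eps|\cdot \|z\|^2$ (resp.\ $|\eps|\cdot\|z\|_\infty^2$), and is dominated by the leading quadratic (resp.\ constant) term throughout $\Do \setminus \{0\}$ as soon as $|\eps|$ is small.
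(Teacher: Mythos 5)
Your proposal is correct and follows essentially the same route as the paper: factor the expression as $e^{t-s}(\textbf{id}_{\Do}+\eps P)(\varphi_{s,t})$, show $\textbf{id}_{\Do}+\eps P\in S^*(\Do)\subset S^0(\Do)$ for small $|\eps|$ via Theorem \ref{Juergen}, and conclude with Lemma \ref{s0}. Your quantitative verification of the starlikeness criterion (the identity $h_\eps(z)+z=-\eps(Df_\eps(z))^{-1}(P(z)-DP(z)z)$ together with Euler's identity) is if anything more explicit than the paper's Neumann-series/eigenvalue argument, but it is the same perturbation-of-the-identity idea.
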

\begin{proof} Let $g_\eps(z)=z+\eps P(z).$
Obviously we have $g_{\eps}(0)=0, \; Dg_{\eps}(0)=I_n.$\\
Now $\det(Dg_{\eps}(z))\to 1$ for $\eps\to 0$ uniformly on $\overline{\Do},$ so $g_{\eps}$ is locally biholomorphic for $\eps$ small enough. In this case, for every $z\in \overline{\Do}$, we have:\\ $$[Dg_{\eps}(z)]^{-1}=[I_n+\eps DP(z)]^{-1}=I_n-\eps DP(z) + \eps^2DP(z)^2+... = I_n-\eps\underbrace{(DP(z)+...)}_{:=U(z)\in \C^{n\times n}}.$$ 
Write $[Dg_{\eps}(z)]^{-1}g_{\eps}(z) = z+\eps P(z)-\eps U(z)z-\eps^2 U(z)P(z)=$
$(I_n+\eps M(z))z,$ with a matrix-valued function $M(z).$\\
Now we show that $g_\eps\in S^*(\Do)$ for $|\eps|$ small enough and we distinguish between the unit ball and the polydisc.\\
Case 1: $\Do=\B_n:$\\ Here, $\left< [Dg_{\eps}(z)]^{-1}g_{\eps}(z),z\right>=\left< (I_n+\eps M(z))z, z\right>$ and there is an $\delta>0$ such that $I_n+\eps M(z)$ has only eigenvalues with positive real part for all $\eps\in\C$ with $|\eps|<\delta$. In this case we have $$ \Re\left< [Dg_{\eps}(z)]^{-1}g_{\eps}(z),z\right>>0 \quad \forall z\in \B_n\setminus\{0\}$$ and thus $g_{\eps}\in S^*(\B_n)\subset S^0(\B_n)$ by Theorem \ref{Juergen}. \\
Case 2: $\Do=\D^n:$\\
Let $g_j(z)$ be the $j$-th component of $[Dg_{\eps}(z)]^{-1}g_{\eps}(z)$. For $\eps\to 0,$ the function $g_j(z)/z_j$ converges uniformly to $1$ on the set $K:=\overline{\{z\in\D^n \with  \|z\|_{\infty}=|z_j|>0\}}.$ Thus there exists $\delta >0$ such that $$\Re \left(\frac{g_j(z)}{z_j}\right)>0\quad \text{for all} \quad z\in K, j=1,...,n\quad \text{and all} \quad \eps\in\C \quad \text{with} \; |\eps|<\delta.$$ Hence, $g_\eps \in S^*(\D^n)\subset S^0(\D^n)$ for all $\eps$ small enough by Theorem \ref{Juergen}.\\

From Lemma \ref{s0} it follows that $e^{t-s}g_\eps(\varphi_{s,t})=e^{t-s}g_\eps(e^{s-t}h)=h+\eps e^{t-s}P(e^{s-t}h)\in S^0(\Do).$\end{proof}

The next Lemma shows that a special class of bounded mappings are not support points of $S^0(\Do).$
 
\begin{proposition}\label{marina}
 Let $\varphi_{s,t}$ be defined as in Lemma \ref{s0} and let $h=e^{t-s} \varphi_{s,t} \in S^0(\Do).$ Then $h$ is not a support point of $S^0(\Do).$
\end{proposition}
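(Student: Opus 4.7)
The plan is to argue by contradiction, exploiting the variational formula provided by Lemma \ref{pol}. Suppose $h=e^{t-s}\varphi_{s,t}$ is a support point with continuous linear functional $L:\Ho(\Do,\C^n)\to\C$, so that $\Re L(h)=\max_{g\in S^0(\Do)}\Re L(g)$ and $\Re L$ is nonconstant on $S^0(\Do)$. For any polynomial $P$ with $P(0)=0$ and $DP(0)=0$, Lemma \ref{pol} provides $\delta>0$ with $h+\eps e^{t-s}P(\varphi_{s,t})\in S^0(\Do)$ for all $\eps\in\C$ of modulus less than $\delta$. The maximality inequality
\[
\Re\bigl(\eps\cdot L(e^{t-s}P(\varphi_{s,t}))\bigr)\le 0
\]
holding for every complex $\eps$ in a disk forces $L(P(\varphi_{s,t}))=0$ for every such polynomial $P$.

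The next step is to promote this vanishing from polynomials to arbitrary holomorphic functions via the Runge property. By Theorem \ref{runge}, $\varphi_{s,t}(\Do)$ is a Runge domain, so Lemma \ref{ru}~(c) shows that every $g\in\Ho(\Do,\C^n)$ is a locally uniform limit $g=\lim_{k\to\infty} p_k\circ\varphi_{s,t}$ for some sequence of polynomials $p_k:\C^n\to\C^n$. To keep the required normalizations, I would replace $p_k$ by $\tilde p_k(z):=p_k(z)-p_k(0)-Dp_k(0)z$, which satisfies $\tilde p_k(0)=0$ and $D\tilde p_k(0)=0$. If in addition $g(0)=0$ and $Dg(0)=0$, then $p_k(0)=p_k(\varphi_{s,t}(0))\to g(0)=0$ and $Dp_k(0)=e^{t-s}D(p_k\circ\varphi_{s,t})(0)\to e^{t-s}Dg(0)=0$, so $\tilde p_k\circ\varphi_{s,t}\to g$ locally uniformly on $\Do$.

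Applying the first step with $P=\tilde p_k$ and invoking continuity of $L$ then yields $L(g)=0$ for every $g\in\Ho(\Do,\C^n)$ with $g(0)=0$ and $Dg(0)=0$. For any two elements $f_1,f_2\in S^0(\Do)$ the difference $f_1-f_2$ has exactly these vanishing first jet data, because of the normalization $f_j(0)=0$, $Df_j(0)=I_n$. Hence $L(f_1)=L(f_2)$, so $L$ (and in particular $\Re L$) is constant on $S^0(\Do)$, contradicting the assumption that $h$ is a support point.

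The routine steps are the variational/extremality passage in the first paragraph and the linear-algebra trick to subtract off the constant and linear parts of the approximating polynomials. The main obstacle I anticipate is the bookkeeping of the Runge approximation: one must verify that the approximating polynomials can be normalized to satisfy $\tilde p_k(0)=0$ and $D\tilde p_k(0)=0$ while still converging to the original $g$, which relies crucially on $\varphi_{s,t}(0)=0$ and on $D\varphi_{s,t}(0)$ being a nonzero scalar multiple of the identity so that the correction terms themselves tend to zero. Once this is in place, the contradiction is immediate.
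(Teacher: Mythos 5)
Your proof is correct and follows essentially the same route as the paper: the variation $h+\eps e^{t-s}P(\varphi_{s,t})$ from Lemma \ref{pol} forces $L(P(\varphi_{s,t}))=0$, and the Runge property of $\varphi_{s,t}(\Do)$ promotes this to all $g$ with vanishing first jet, making $\Re L$ constant on $S^0(\Do)$. Your explicit normalization $\tilde p_k=p_k-p_k(0)-Dp_k(0)z$ just fills in a detail the paper leaves implicit.
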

\begin{proof}
Assume that $h$ is a support point of $S^0(\Do),$ i.e. there is a continuous linear functional $L:\Ho(\Do,\C^n)\to \C$ such that $\Re L$ is nonconstant on $S^0(\Do)$ and $$\Re L(h)=\max_{g\in S^0(\Do)}\Re L(g).$$ Let $P$ be a polynomial with $P(0)=0$ and $DP(0)=0.$ Then $h+\eps e^{t-s} P(e^{s-t}h)\in S^0(\Do)$ for all $\eps\in\C$ small enough by Lemma \ref{pol}.\\
We conclude $$\Re L(P(e^{s-t}h))=\Re L(P(\varphi_{s,t}))=0,$$ otherwise we could choose $\eps$ such that $\Re L(h+\eps e^{t-s}P(e^{s-t}h))>\Re L(h).$\\ Now $\varphi_{s,t}(\Do)$ is a Runge domain by Theorem \ref{runge}. Hence we can write any analytic function $g$ defined in $\Do$ with $g(0)=0$ and $Dg(0)=0$ as $g=\lim_{k\to \infty}P_k(\varphi_{s,t}),$ where every $P_k$ is a polynomial with $P_k(0)=0$ and $DP_k(0)=0$, according to Lemma \ref{ru} c). The continuity of $L$ implies $\Re L(g)=0.$ Hence $\Re L$ is constant on $S(\Do),$ a contradiction.
\end{proof}

\begin{proof}[Proof of Theorem \ref{sup}.]
 Let $L$ be a continuous linear functional on $\Ho(\Do,\C^n)$ such that $\Re L$  is nonconstant on $S^0(\Do)$ with $$\Re L(f)=\max_{g\in S^0(\Do)} \Re L(g).$$
Fix $t\geq 0,$ then $f(z)=f_t(\varphi_{0,t}(z))$ for all $z\in \Do$. Define the continuous linear functional  $$J(g):=L(e^t\cdot g\circ \varphi_{0,t})  \quad \text{for}\quad g\in \Ho(\Do,\C^n).$$
Now we have $$J(e^{-t} f_t)=L(f)\quad \text{and}\quad \Re J(g)\leq \Re J(e^{-t}f_t) \quad \text{for all}\quad g\in \Ho(\Do,\C^n).$$
Furthermore, $\Re J$ is not constant on $S^0(\Do)$: as $e^t\varphi_{0,t}$ is not a support point of $S^0(\Do)$ by Proposition \ref{marina}, we have $\Re J(\text{id})=\Re L(e^t \varphi_{0,t})< \Re L(f)=\Re J(e^{-t}f_t).$
\end{proof}
In view of Proposition \ref{marina}, it is natural to state the following conjecture.
\begin{conj}\label{Conny1}
 If $f\in S^0(\Do)$ is bounded, then $f$ is not a support point of $S^0(\Do).$
\end{conj}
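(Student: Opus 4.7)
The plan is to adapt the variational argument from Proposition \ref{marina} to the full class of bounded elements of $S^0(\Do)$. Assume for contradiction that $f\in S^0(\Do)$ is bounded and that there is a continuous linear functional $L:\Ho(\Do,\C^n)\to\C$ with $\Re L$ nonconstant on $S^0(\Do)$ and $\Re L(f)=\max_{g\in S^0(\Do)}\Re L(g)$. The strategy is to exhibit enough one-parameter families $\{f_\eps\}_{|\eps|<\delta}\subset S^0(\Do)$ with $f_0=f$ to force $\Re L$ to annihilate a dense subset of $\{g\in\Ho(\Do,\C^n):g(0)=0,\,Dg(0)=0\}$; continuity of $L$ will then make $\Re L$ constant on $S^0(\Do)$.

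The natural candidate is $f_\eps:=g_\eps\circ f$, where $g_\eps(w)=w+\eps P(w)$ for a polynomial $P$ with $P(0)=0$ and $DP(0)=0$. Because $f$ is bounded, $\overline{f(\Do)}$ is compact, hence $g_\eps$ is univalent on a neighborhood of $\overline{f(\Do)}$ for $\eps$ small enough (this is where boundedness of $f$ is crucial). Thus $f_\eps$ is a normalized univalent function on $\Do$. If one can show $f_\eps\in S^0(\Do)$, then, as in the proof of Proposition \ref{marina}, the inequality $\Re L(f_\eps)\leq \Re L(f)$ for all small complex $\eps$ forces $\Re L(P\circ f)=0$. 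Since $f(\Do)$ is a Runge domain (Corollary \ref{range}(a)), Lemma \ref{ru}(c) lets us approximate any $g\in\Ho(\Do,\C^n)$ with $g(0)=0$, $Dg(0)=0$ by polynomials of the form $P_k\circ f$ with $P_k(0)=0$, $DP_k(0)=0$, so continuity of $L$ yields $\Re L(g)=0$ for every such $g$, the desired contradiction.

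The main obstacle is to verify that $f_\eps$ actually has \emph{parametric representation}, i.e.\ lies in $S^0(\Do)$ and not merely in $S(\Do)$. The naive choice $h_t:=g_\eps\circ f_t$ (where $\{f_t\}_{t\geq 0}$ is the Loewner chain associated to $f$) breaks down: the polynomial perturbation $g_\eps$ of the identity is not univalent on all of $\C^n=\bigcup_{t\geq0}f_t(\Do)$, so $h_t$ ceases to be univalent for $t$ large enough, no matter how small $\eps$ is. A more flexible construction would be to perturb the associated $\M(\Do)$-Herglotz vector field $G$ by a compactly supported (in $t$) term, say $\widetilde G(z,t):=G(z,t)+\eps\,u(t)H(z)$ with $H:\Do\to\C^n$ chosen so that $\widetilde G(\cdot,t)\in\M(\Do)$ for almost all $t$; this automatically produces a chain $\{\widetilde f_t\}_{t\geq0}$ with $\widetilde f_0\in S^0(\Do)$. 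One would then need to analyse the first-order variation $\partial_\eps \widetilde f_0|_{\eps=0}$, which solves a linear non-autonomous ODE, and show that by varying $H$ and $u$ the corresponding variations span (modulo closure) the whole ``tangent space'' $\{g(0)=0,\,Dg(0)=0\}$. This controllability/denseness step is the principal technical hurdle, and it is the reason the statement remains a conjecture.
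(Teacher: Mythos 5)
You have correctly reconstructed the variational framework of Proposition \ref{marina}, but be aware that the statement you are addressing is stated in the paper as Conjecture \ref{Conny1}: the paper offers no proof of it, and your proposal does not close the gap either, as you yourself concede in your final sentence. The obstacle you isolate --- verifying that the perturbed map $f_\eps=g_\eps\circ f$ still has \emph{parametric representation}, not merely that it is normalized and univalent --- is exactly the point at which the paper's own argument stops. Indeed, the paper reduces Conjecture \ref{Conny1} to Conjecture \ref{Conny2} (if $f\in S^0(\Do)$ is bounded by $M$ and $G\in S^0(\Do)$, then $MG(\tfrac1{M}f)\in S^0(\Do)$), observing that one could then ``replace $\varphi_{s,t}$ by $\frac1{M}f$ in Lemma \ref{pol} and Proposition \ref{marina}.'' Your $f_\eps$ is precisely an instance of this composition with $G=g_\eps\in S^*(\Do)\subset S^0(\Do)$, so the missing step in your argument is a special case of Conjecture \ref{Conny2}, which in turn would follow from Conjecture \ref{Conny3}. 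In the proved case (Proposition \ref{marina}) the membership $e^{t-s}g_\eps(\varphi_{s,t})\in S^0(\Do)$ is supplied by Lemma \ref{s0}, which crucially uses that $\varphi_{s,t}$ is an element of an evolution family --- one concatenates the chain for $g_\eps$ onto the tail of the chain for $\varphi_{s,t}$. For a general bounded $f\in S^0(\Do)$ no such evolution-family structure is available (that it always is available is Conjecture \ref{Conny3}), so the concatenation trick is unavailable and your construction stalls.

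Two smaller remarks. First, your diagnosis of why the naive chain $h_t=g_\eps\circ f_t$ fails is correct and worth retaining: since $\bigcup_{t}f_t(\Do)=\C^n$ for any chain witnessing $f\in S^0(\Do)$ (Corollary \ref{range}), univalence of $g_\eps$ would be required on all of $\C^n$, which fails for a nonlinear polynomial perturbation of the identity. Second, your alternative route via perturbations $\widetilde G(z,t)=G(z,t)+\eps\,u(t)H(z)$ of the $\M(\Do)$-Herglotz vector field does automatically stay inside $S^0(\Do)$ provided $\widetilde G(\cdot,t)\in\M(\Do)$, but the resulting first-order variations of $f_0$ are constrained by the compactness of $\M(\Do)$ and there is no reason they should span a dense subspace of $\{g\with g(0)=0,\ Dg(0)=0\}$; establishing such a controllability statement is an open problem of comparable difficulty to the conjecture itself. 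So your proposal is a faithful account of the state of the problem rather than a proof.
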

We note two further closely related problems. 
\begin{conj}\label{Conny2}
 If $f\in S^0(\Do)$ is bounded by $M>0$ and $G\in S^0(\Do),$ then $MG(\frac1{M}f)\in S^0(\Do).$
\end{conj}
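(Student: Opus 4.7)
My approach is to invoke the characterization stated after Theorem \ref{Gabriela}: $h:=MG(\tfrac{1}{M}f)$ lies in $S^0(\Do)$ precisely when there exists a normalized Loewner chain $\{h_t\}_{t\geq 0}$ with $h_0=h$ such that $\{e^{-t}h_t\}_{t\geq 0}$ is a normal family on $\Do$. Since $f$ is bounded by $M$, the maximum principle yields $\|f(z)\|<M$ strictly on $\Do$, so that for every compact $K\subset\Do$ we have $\tfrac{1}{M}f(K)\Subset\Do$, and the natural candidate
\begin{equation*}
h_t(z):=MG_t\!\left(\tfrac{1}{M}f(z)\right),\qquad z\in\Do,\ t\geq 0,
\end{equation*}
is well defined, where $\{G_t\}_{t\geq 0}$ is any normalized Loewner chain for $G$ with $\{e^{-t}G_t\}$ normal. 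One checks at once that $h_0=h$; each $h_t$ is univalent; $Dh_t(0)=M\cdot e^tI_n\cdot\tfrac{1}{M}I_n=e^tI_n$; and $\{e^{-t}h_t\}$ is normal, because the normality of $\{e^{-t}G_t\}$ transfers through composition with the fixed map $\tfrac{1}{M}f$ whose compact image stays compactly inside $\Do$.

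The sole nontrivial chain property is growing images $h_s(\Do)\subseteq h_t(\Do)$ for $0\leq s\leq t$. Writing $G_s=G_t\circ\psi^G_{s,t}$ with $\psi^G_{s,t}$ the evolution family of $\{G_t\}$, this is equivalent to the forward-invariance condition
\begin{equation*}
\psi^G_{s,t}\!\left(\tfrac{1}{M}f(\Do)\right)\subseteq \tfrac{1}{M}f(\Do),\qquad 0\leq s\leq t.
\end{equation*}
Equivalently, at the infinitesimal level, the candidate Herglotz vector field driving $\{h_t\}$ comes out to $\tilde H(z,t)=M(Df(z))^{-1}H(\tfrac{1}{M}f(z),t)$, where $H$ is the Herglotz vector field of $\{G_t\}$; the normalizations $\tilde H(0,t)=0$ and $D\tilde H(0,t)=-I_n$ follow immediately, but the sign condition $\Re\langle\tilde H(z,t),z\rangle<0$ on $\B_n\setminus\{0\}$ (or its polydisc analogue) required for $\tilde H(\cdot,t)\in\mathcal{M}(\Do)$ is a nontrivial algebraic constraint tying the geometry of $f$ to the infinitesimal generator of $\{G_t\}$. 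I expect this to be the principal obstacle.

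To overcome it my plan is a two-stage reduction built on Lemma \ref{s0}. Lemma \ref{s0} already proves the conjecture for the special class $f=e^{t-s}\varphi_{s,t}$ (bounded by $M=e^{t-s}$): its proof constructs a valid Loewner chain by concatenating $\{\varphi_{s+u,t}\}_{u\in[0,t-s]}$ with $\{G_u\}_{u\geq t-s}$, whose first element is precisely $e^{t-s}G(\varphi_{s,t})=MG(\tfrac{1}{M}f)$. The growing-images obstacle is circumvented here by an explicit gluing at time $u=t-s$, which uses the fact that $\tfrac{1}{M}f=\varphi_{s,t}$ is itself an element of an evolution family coming from an $\mathcal{M}(\Do)$-Herglotz vector field. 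For a general bounded $f\in S^0(\Do)$ the task is to realize $\tfrac{1}{M}f\in\Ho(\Do,\Do)$ as such an evolution-family element, or at least approximate it by such. Since $f\in S^0(\Do)$, the parametric representation $f=\lim_{\tau\to\infty}e^{\tau}\varphi^f_{0,\tau}$ gives natural candidate approximants, but they are bounded by $e^{\tau}$ rather than uniformly by $M$ and tend to $f$ rather than to $\tfrac{1}{M}f$; the delicate step, and where I expect the proof to stand or fall, is to modify the chain $\{f_t\}$ -- via a time change, a rescaling, or an exploitation of the Runge property of $\varphi^f_{0,\tau}(\Do)$ (Theorem \ref{runge}) in the spirit of Proposition \ref{marina}'s polynomial perturbation -- so as to produce a sequence in $S^0(\Do)$ converging locally uniformly to $MG(\tfrac{1}{M}f)$, after which the compactness of $S^0(\Do)$ finishes the argument.
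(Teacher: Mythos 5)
The statement you are trying to prove is not a theorem of the paper: it appears as Conjecture \ref{Conny2} and is left open, so there is no proof to compare against. Your proposal, read as a proof, has a genuine gap, and it sits exactly where the paper locates the difficulty. Your candidate chain $h_t=MG_t(\tfrac{1}{M}f)$ and the computation of its would-be Herglotz vector field $\tilde H(z,t)=M(Df(z))^{-1}H(\tfrac{1}{M}f(z),t)$ are correct, and you rightly identify that everything reduces to the growing-images property, equivalently to $\tilde H(\cdot,t)\in\M(\Do)$, equivalently to forward invariance of $\tfrac{1}{M}f(\Do)$ under the evolution family of $G$. But none of these is established, and your fallback plan --- to realize $\tfrac{1}{M}f$ as an element $\varphi_{0,t}$ of an evolution family generated by an $\M(\Do)$-Herglotz vector field so that Lemma \ref{s0} applies --- is verbatim Conjecture \ref{Conny3} of the paper, which the text explicitly flags as the hardest of the three conjectures and which is also open. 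So the ``delicate step'' you defer is not a technical loose end; it is the entire content of the problem.

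The approximation route you sketch as an alternative also does not close the gap as stated. The approximants $e^{\tau}\varphi^f_{0,\tau}$ converge to $f$, not to $\tfrac{1}{M}f$, and are bounded by $e^{\tau}\to\infty$ rather than uniformly; you acknowledge this but offer no concrete time change or rescaling that produces elements of $S^0(\Do)$ converging locally uniformly to $MG(\tfrac{1}{M}f)$. Invoking the Runge property of $\varphi^f_{0,\tau}(\Do)$ in the spirit of Lemma \ref{pol} and Proposition \ref{marina} does not help either: those arguments perturb within $S^*(\Do)$ by small polynomial terms and show certain maps are \emph{not} support points; they do not produce membership of a composition $MG(\tfrac{1}{M}f)$ in $S^0(\Do)$. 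In short, your analysis of where the obstruction lies is accurate and consistent with the paper's discussion, but what you have is a correct reduction to an open problem, not a proof.
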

\begin{conj}\label{Conny3}(Conjecture 2 in \cite{MR2943779})
 If $G\in S^0(\Do)$ is bounded by $M>0,$ then $\frac1{M}G=\varphi_{0,t}$ for some $t\geq 0$ and some evolution family $\varphi_{s,t}$ that corresponds to a $\M(\Do)$--Herglotz vector field.
\end{conj}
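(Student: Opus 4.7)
The plan is to reduce the conjecture to the construction of a normalized Loewner chain on $\Do$ that interpolates between $G$ and $M\,\mathbf{id}_{\Do}$. Concretely, it suffices to exhibit a normalized Loewner chain $\{f_t\}_{t\geq 0}$ with $\{e^{-t}f_t\}_{t\geq 0}$ a normal family and satisfying $f_0 = G$ and $f_{\log M}(z) = Mz$. Indeed, for such a chain each $e^{-t}f_t$ lies in $S^0(\Do)$ by the proposition stated just after Theorem \ref{Gabriela}, the associated evolution family $\varphi_{s,t}=f_t^{-1}\circ f_s$ from Theorem \ref{Thomas} is generated by a Herglotz vector field lying in $\M(\Do)$ (this is the correspondence underlying Theorem \ref{Gabriela}), and
\begin{equation*}
\varphi_{0,\log M}(z) \;=\; f_{\log M}^{-1}(f_0(z)) \;=\; G(z)/M,
\end{equation*}
which is the required identity with $t=\log M$. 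The hypothesis that $G$ is bounded by $M$ is exactly what is needed to guarantee the inclusion $f_0(\Do)=G(\Do)\subseteq M\,\Do=f_{\log M}(\Do)$.

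For $t\geq \log M$ the chain would be extended trivially by $f_t(z):=e^t z$, which is itself a normalized Loewner chain whose normalized family is the constant $\{\mathbf{id}_{\Do}\}$. The heart of the matter is the construction on $[0,\log M]$. I would attempt to build an increasing family $\{\Omega_t\}_{t\in[0,\log M]}$ of subdomains of $M\,\Do$, each biholomorphically equivalent to $\Do$, with $\Omega_0=G(\Do)$ and $\Omega_{\log M}=M\,\Do$, together with compatible biholomorphisms $f_t:\Do\to \Omega_t$ normalized by $f_t(0)=0$ and $Df_t(0)=e^t I_n$, depending on $t$ with the $L^1$-regularity required by Definition \ref{Mats}. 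Remark \ref{Fil} (the embedding theorem of Arosio, Bracci and Wold), combined with Theorem \ref{runge} and Lemma \ref{ru}, suggests that this should first be attempted under favourable hypotheses such as $\overline{G(\Do)}$ being polynomially convex in $M\,\Do$ and $G(\Do)$ being strongly pseudoconvex with smooth boundary. There one could take the $\Omega_t$ as sublevel sets of a plurisubharmonic exhaustion function of $M\,\Do$ relative to $\overline{G(\Do)}$, with the level reparameterized in order to enforce the derivative normalization $Df_t(0)=e^tI_n$.

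The general case would then follow by an approximation argument: approximate $G$ by $G_k\in S^0(\Do)$ with images of the nicer geometric type above, built from automorphisms of $\C^n$ via Lemma \ref{ru}(d) together with a starlike correction in the spirit of Lemma \ref{pol} to stay inside $S^0(\Do)$, construct the interpolating chain $\{f_t^{(k)}\}$ for each $G_k$, and extract a limit using the compactness of $S^0(\Do)$ and the uniform normality of the families $\{e^{-t}f_t^{(k)}\}$. The principal obstacle is clearly the construction of the interpolating family $\{\Omega_t\}$: monotone deformations of subdomains of $\C^n$ for $n\geq 2$ can easily pass through domains that are not biholomorphic to $\Do$ and, even when they do not, controlling the biholomorphisms so that simultaneously $Df_t(0)=e^tI_n$ and the resulting Herglotz vector field lies in $\M(\Do)$ appears delicate. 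It is presumably not a coincidence that Conjectures \ref{Conny1} and \ref{Conny2} are posed alongside this one, as all three capture complementary aspects of the same geometric question about bounded elements of $S^0(\Do)$.
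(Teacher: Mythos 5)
The statement you are proving is not a theorem of the paper: it is stated there as an open problem (Conjecture~\ref{Conny3}, quoted from Conjecture~2 of \cite{MR2943779}), and the paper explicitly ranks it as the \emph{hardest} of the three conjectures \ref{Conny1}--\ref{Conny3}, proving only the implications Conjecture~\ref{Conny3} $\Rightarrow$ Conjecture~\ref{Conny2} $\Rightarrow$ Conjecture~\ref{Conny1}. Your text does not close this gap. What you have done correctly is reformulate the conjecture: requiring $\varphi_{0,t_0}=\frac1{M}G$ for an $\M(\Do)$--evolution family is indeed equivalent to producing a normalized Loewner chain $\{f_t\}$ with $f_0=G$, $f_{\log M}=M\cdot\textbf{id}_{\Do}$ and $\{e^{-t}f_t\}$ normal (so $t_0=\log M$ is forced by $D\varphi_{0,t_0}(0)=e^{-t_0}I_n$). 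But this reformulation is the entire content of the conjecture, and the construction of the interpolating chain --- which you yourself identify as ``the principal obstacle'' --- is exactly what is missing.

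Two concrete points where the proposed route breaks down. First, in the ``favourable'' case: sublevel sets of a plurisubharmonic exhaustion of $M\cdot\Do$ relative to $\overline{G(\Do)}$ are in general not biholomorphic to $\Do$, and even if each $\Omega_t$ were, the normalization $Df_t(0)=e^tI_n$ is a full matrix condition; reparameterizing the scalar level parameter can at best fix one scalar invariant (say $\det Df_t(0)$), not force $Df_t(0)$ to be a scalar multiple of the identity along the whole path. Composing with linear maps to repair this changes the domains and destroys the monotonicity $\Omega_s\subseteq\Omega_t$. (Remark~\ref{Fil} does not help here: the Arosio--Bracci--Wold theorem embeds $f$ into \emph{some} Loewner chain with range $\C^n$, with no control forcing the chain to pass through $M\cdot\Do$ at time $\log M$, nor forcing the associated vector field into $\M(\Do)$.) Second, the approximation step is circular as stated: you would need the conjecture already established for the approximants $G_k$, you would need $G_k\to G$ with bounds $M_k\to M$, and you would need the reachable set $\{e^{s}\varphi_{0,s}: s\le \log M\}$ to be closed in $S^0(\Do)$ --- none of which is argued. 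A minor additional point: that a normalized Loewner chain with $\{e^{-t}f_t\}$ normal satisfies the Loewner PDE with a vector field lying in $\M(\Do)$ (not merely in $\I(\Do)$ with the normalization $h(0)=0$, $Dh(0)=-I_n$) is a nontrivial fact for $n\ge 2$ and should be cited (it is Theorem~8.1.6 in \cite{graham2003geometric} for $\B_n$), not inferred from Theorem~\ref{Gabriela}, which goes in the opposite direction.
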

The last problem should be the most difficult as we have seen that Conjecture \ref{Conny3} implies Conjecture \ref{Conny2} (because of Lemma \ref{s0}) and that Conjecture \ref{Conny2} implies Conjecture \ref{Conny1}, for we could replace $\varphi_{s,t}$ by $\frac1{M}f$ in Lemma \ref{pol} and Proposition \ref{marina} provided that Conjecture \ref{Conny2} is true.

\begin{remark}
 The situation for extreme points is quite the same, see Proposition 1 in \cite{MR2943779} and its proof: It is known that $e^{t-s}\varphi_{s,t}$ cannot be an extreme point, but there could still be bounded extreme points of $S^0(\Do).$ Also, already Conjecture \ref{Conny2} would imply that all extreme points are unbounded mappings. 
\end{remark}

We finish this section by proving the following statement which touches Conjecture \ref{Isrunge}. (Note, however, that the statement is weaker than the one mentioned in Remark \ref{Fil}.)

\begin{proposition}\label{Fredi}
 Let $f\in F(\Do)$ map a neighborhood $U$ of $\overline{\Do}$ biholomorphically onto a Runge domain. Then $f\in E(\Do).$
\end{proposition}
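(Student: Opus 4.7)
Since $E(\D) = F(\D) = U(\D)$ when $n = 1$ (compare the proof of Theorem \ref{denseness}), I may assume $n \geq 2$. The plan is to approximate $f$ so strongly by an automorphism $\Psi$ of $\C^{n}$ that $\Psi^{-1}\circ f$ becomes a small $C^{2}$-perturbation of the identity; such a perturbation is starlike by the Matsuno--Suffridge criterion, hence lies in $S^{0}(\Do)\subset E(\Do)$, and $f\in E(\Do)$ then follows by post-composing the resulting Loewner chain with $\Psi$. The role of the hypothesis that $f$ extends biholomorphically to the neighborhood $U\supsetneq \overline{\Do}$ is precisely to give the slack needed to upgrade the Anders\'en--Lempert approximation from locally uniform on $\Do$ to $C^{2}$ on $\overline{\Do}$.

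First I would fix $r_{0} > 1$ with $r_{0}\overline{\Do}\subset U$ and verify that $f(r_{0}\Do)$ is itself a Runge domain. For any $h\in\Ho(f(r_{0}\Do),\C^{n})$, write $h = (h\circ f)\circ f^{-1}$: approximate $h\circ f$ by polynomials $q_{k}$ on compacta of the Runge domain $r_{0}\Do$, approximate $f^{-1}$ by polynomials $p_{k}$ on compacta of the Runge domain $f(U)\supset f(r_{0}\Do)$, and chase compact sets to see that $q_{k}\circ p_{k}\to h$ locally uniformly on $f(r_{0}\Do)$. Since $r_{0}\Do$ is starlike and $f(r_{0}\Do)$ is Runge, Lemma \ref{ru}\,d) applied to $f|_{r_{0}\Do}$ supplies $\Psi_{k}\in\aut(\C^{n})$ with $\Psi_{k}\to f$ locally uniformly on $r_{0}\Do$; Cauchy's estimates on $\overline{\rho\Do}\Subset r_{0}\Do$ for some $\rho\in(1,r_{0})$ then promote this to $C^{2}$-convergence on $\overline{\Do}$.

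Fix $\Psi := \Psi_{k}$ for $k$ large and set $h := \Psi^{-1}\circ f\in U(\Do)$, so that $h$ is $C^{2}$-close to $\textbf{id}_{\Do}$ on $\overline{\Do}$. The normalization $\tilde h(z) := Dh(0)^{-1}(h(z)-h(0))$ lies in $S(\Do)$ and remains $C^{2}$-close to the identity. The map $H := -(D\tilde h)^{-1}\tilde h$ then satisfies $H(0) = 0$ and $DH(0) = -I_{n}$, and admits an expansion $H(z) = -z + R(z)$ with $\|R(z)\|\leq M\|z\|^{2}$, where $M\to 0$ as $\tilde h\to\textbf{id}_{\Do}$ in $C^{2}$. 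For the unit ball this gives $\Re\langle H(z),z\rangle \leq -\|z\|^{2}+M\|z\|^{3} < 0$ on $\B_{n}\setminus\{0\}$ as soon as $M < 1$, and for the polydisc a parallel estimate gives $\Re(H_{j}(z)/z_{j})\leq -1 + Mn|z_{j}| < 0$ whenever $\|z\|_{\infty} = |z_{j}| > 0$ and $Mn < 1$. Hence by Theorem \ref{Juergen}, $H\in\M(\Do)$ and $\tilde h\in S^{*}(\Do)\subset S^{0}(\Do)$.

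Corollary \ref{range}(a) now embeds $\tilde h$ in a normalized Loewner chain $\{\tilde h_{t}\}_{t\geq 0}$ with range $\C^{n}$. Post-composing with the affine automorphism $T(w) := Dh(0)w + h(0)\in\aut(\C^{n})$ produces the Loewner chain $\{T\circ\tilde h_{t}\}$ with $T\circ\tilde h_{0} = h$ and range $\C^{n}$, so $h\in E(\Do)$; a further post-composition with $\Psi\in\aut(\C^{n})$ yields the chain $\{\Psi\circ T\circ\tilde h_{t}\}$ whose first element is $\Psi\circ h = f$ and whose range is $\Psi(\C^{n}) = \C^{n}$, proving $f\in E(\Do)$. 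I expect the quantitative Matsuno--Suffridge step (bounding $R$ sharply enough to obtain the strict defining inequality of $\M(\Do)$) to be the main technical obstacle; the whole scheme hinges on $r_{0}\overline{\Do}\subset U$, without which Lemma \ref{ru}\,d) would only give locally uniform approximation on $\Do$ and no derivative control on $\overline{\Do}$ would be available.
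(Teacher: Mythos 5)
Your proposal is correct and follows essentially the same route as the paper: approximate $f$ by automorphisms of $\C^n$ uniformly on $\overline{\Do}$ (which is exactly what the neighborhood $U$ buys), observe that $\Psi^{-1}\circ f$ is then a small perturbation of the identity and hence starlike by Theorem \ref{Juergen}, embed it in the starlike Loewner chain, and post-compose with $\Psi$ to recover $f$ with range $\C^n$. Your extra care in restricting to $r_0\Do$ (so that the Anders\'en--Lempert approximation is applied to a starlike domain with Runge image) and in making the Matsuno--Suffridge estimate quantitative are welcome refinements of details the paper handles by reference to Lemma \ref{pol}, but they do not change the argument.
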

\begin{proof} Obviously, the statement is true for $f$ if and only if it is true for $Df(0)^{-1}(f-f(0)).$ So assume that $f(0)=0, Df(0)=I_n.$\\
 Let $\phi_n$ be a sequence of automorphisms that approximate $f$ locally uniformly on $U$ with $\phi_n(0)=0, D\phi_n(0)=I_n$. Then, $\phi_n$ converges to $f$ uniformly on $\overline{\Do}.$ Choose $N\in\N$ large enough so that $\phi_N^{-1}\circ f$ is starlike on $\Do.$ This can be done analogously to the proof of Lemma \ref{pol}: Just write $(\phi_n^{-1}\circ f)(z)=z+ G_n(z).$ Then $G_n(z)\to0$ uniformly on $\overline{\Do}$.\\
Consequently, $f$ is the first element of the Loewner chain $\{\phi_N\circ (e^t \cdot \phi_N^{-1}\circ f)\}_{t\geq0},$ hence $f\in E(\Do).$
\end{proof}

\section{A remark on the Roper--Suffridge extension operator}\label{Ikkei}

In this section we only consider the case $\Do=\B_n.$\\
In order to get (nontrivial) examples for biholomorphic mappings for $n\geq 2,$ one can use extension operators, which are mappings of the form $\Theta: \mathcal{E}\to U(\B_{k+m}),$ $\mathcal{E}\subset U(\B_k)$ and $k,m\in \N.$  One example for such an operator is the classical Roper--Suffridge extension operator\\ $\Psi_{n}: S\to S(\B_{n})$ with $n\geq 2$ and  
\begin{equation}\label{Roper}
 \Psi_n(f)(z_1,...,z_n)=(f(z_1),z_2\sqrt{f'(z_1)},...,z_n\sqrt{f'(z_1)})
\end{equation}
(see Section 11.3 in \cite{graham2003geometric}). Here, the branch of the square root is chosen such that $\sqrt{1}=1.$
 $\Psi_n$ has many nice properties:
\begin{itemize}
 \item[(1)] $\Psi_n$  maps starlike / convex / Bloch mappings again onto starlike / convex / Bloch mappings (see section 11.1 in \cite{graham2003geometric}). 
 \item[(2)] It can easily be seen that $\Psi(f)(\B_n)$ is always a Runge domain by using Lemma \ref{ru} b), i.e. by calculating $\Psi(f)^{-1}$ and using the polynomial Runge theorem for the case of one variable. In fact, even more is true:  $\Psi_n(S)\subseteq S^0(\B_n)$ for all $n>1$ and $\Psi_n$ \textit{preserves normalized Loewner chains}, i.e., for any   normalized Loewner chain $\{f_t\}_{t\geq0}$ on $\D$, the family $\{e^t\Psi_n(e^{-t}f_t)\}_{t\geq0}$ is a normalized Loewner chain on $\B^{n}$ (see Theorem 11.3.1 in \cite{graham2003geometric} with $\alpha=\frac1{2}$).
 \item[(3)] If $f\in S$ is unbounded / a support point of $S$, then $\Psi_n(f)$ is also unbounded / a support point of $\Psi_n(S)$, because the first coordinate of $\Psi_n(f)(z_1,...,z_n)$ is just $f(z_1).$
\end{itemize}

Since its introduction,  many extension operators similar to $\Psi_n$ have been found. Some of them have been constructed in order to obtain geometric properties in the sense of (1), some others can be used to extend Loewner chains, comparable to property (2).\\

Another way to extend Loewner chains is to extend Herglotz vector fields, and thus the Loewner equations, directly. One can construct mappings $\xi: \mathcal{E}\to \mathcal{I}(\B_{k+m}),$ $\mathcal{E}\subset \mathcal{I}(\B_{k}),$ and use them to extend Herglotz vector fields on $\B_k$ to Herglotz vector fields on $\B_{k+m}.$

\begin{example} If $g(z,t)$ and $h(z,t)$ are Herglotz vector fields on $\D$, then $(g(z_1,t), h(z_2,t))$ is a Herglotz vector field on $\B_2.$ This follows immediately from the fact that $(\psi_t(z_1), \phi_t(z_2))$ is a semigroup on $\B_2$ when both $\psi_t$ and $\phi_t$ are semigroups on $\D.$ \hfill $\bigstar$
\end{example}

\begin{example} The mapping $\xi: \mathcal{M}_1\to \mathcal{M}_2,$ $-zp(z) \mapsto$ \\$\left(-z_1 p(z_1),-\frac{z_2}{2}(p(z_1)+z_1p'(z_1)+1) \right),$ with $(z_1,z_2)\in \B_2,$ produces infinitesimal generators on $\B_2$ (see, e.g., Theorem 7.1 in \cite{MR3043148} and its proof). It corresponds to the Roper-Suffridge operator $\Psi_2$ in the following sense: If we take a semigroup $\Theta_t$ with infinitesimal generator $G(z)=-zp(z)$ in $\D$ and differentiate $e^{t}\Psi_n(e^{-t}\Theta_t)$ with respect to $t$, we get:
\begin{eqnarray*} &&\lim_{t\to 0}\frac{e^{t}\Psi_n(e^{-t}\Theta_t)(z)-z}{t}=\lim_{t\to 0}\left(\frac{\Theta_t(z_1)-z_1}{t},z_2\frac{\sqrt{e^t\Theta_t'(z_1)}-1}{t}\right)\\
&&=\left(G(z_1),\frac{z_2}{2} (G'(z_1)+1)\right)=\left(-z_1p(z_1),-\frac{z_2}{2} (p(z_1)+z_1p'(z_1)+1)\right).
\end{eqnarray*} \hfill $\bigstar$
\end{example}

\begin{example}
 A similar, more abstract approach to find further examples for $\xi$-mappings by extending semigroups in a unit ball to semigroups in a higher dimensional unit ball can be found in  \cite{elin}. \hfill $\bigstar$
\end{example}

Property $(3)$ of $\Psi_n$ implies that there exist (infinitely many) unbounded support points of $\Psi_n$. Now we will prove Conjecture 3.1 in \cite{MR2341607}, which says that all support points of $\Psi_n(S)$ are in fact unbounded mappings. This, in turn, implies the following result.
\begin{theorem}\label{supp}
 Let $f\in S$ and $F=\Psi_n(f)$. Also, let $\{f_t\}_{t\geq0}$ be a normalized Loewner chain on $\D$ with $f_0=f$ and let $F_t(z)=e^t\Psi_n(e^{-t}f_t)(z).$ If $F\in \supp\Psi_n(S),$ then $e^{-t}F_t\in \supp\Psi_n(S)$ for all $t\geq 0.$ 
\end{theorem}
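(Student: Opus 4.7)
The plan is to run exactly the template of the proof of Theorem \ref{sup}, with $S^0(\Do)$ replaced by $\Psi_n(S)$ and Proposition \ref{marina} replaced by the newly established fact that bounded mappings in $\Psi_n(S)$ are not support points (Conjecture 3.1 of \cite{MR2341607}). Two ingredients are needed.

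First, I would verify the structural counterpart of Lemma \ref{s0} inside $\Psi_n(S)$: letting $\Phi_{s,t}$ denote the evolution family on $\B_n$ associated with $\{F_t\}$, one has $e^{t-s}G\circ\Phi_{s,t}\in\Psi_n(S)$ for every $G\in\Psi_n(S)$. A short calculation from $F_t(z)=(f_t(z_1),\,e^{t/2}z_2\sqrt{f_t'(z_1)},\dots,e^{t/2}z_n\sqrt{f_t'(z_1)})$ and $f_s=f_t\circ\varphi_{s,t}$ (where $\varphi_{s,t}$ is the evolution family on $\D$ for $\{f_t\}$) yields the explicit form
\begin{equation*}
\Phi_{s,t}(z_1,\ldots,z_n)=\bigl(\varphi_{s,t}(z_1),\,e^{(s-t)/2}z_2\sqrt{\varphi_{s,t}'(z_1)},\,\ldots,\,e^{(s-t)/2}z_n\sqrt{\varphi_{s,t}'(z_1)}\bigr),
\end{equation*}
so that $e^{t-s}\Phi_{s,t}=\Psi_n(e^{t-s}\varphi_{s,t})$. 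For $G=\Psi_n(g)$ with $g\in S$, the same calculation gives $e^{t-s}G\circ\Phi_{s,t}=\Psi_n(e^{t-s}g\circ\varphi_{s,t})$, and Lemma \ref{s0} in one variable shows that $e^{t-s}g\circ\varphi_{s,t}\in S^0(\D)=S$; hence $e^{t-s}G\circ\Phi_{s,t}\in\Psi_n(S)$.

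Second, I would observe that $e^t\Phi_{0,t}=\Psi_n(e^t\varphi_{0,t})$ is a \emph{bounded} element of $\Psi_n(S)$, and therefore not a support point by Conjecture 3.1 of \cite{MR2341607} (proved just above). Indeed, since $\varphi_{0,t}(0)=0$ and $\varphi_{0,t}$ maps $\D$ into $\D$, Schwarz--Pick gives $(1-|z_1|^2)|\varphi_{0,t}'(z_1)|\leq 1$. On $\B_n$ one has $|z_j|^2\leq 1-|z_1|^2$ for $j\geq 2$, so $e^{t/2}|z_j|\sqrt{|\varphi_{0,t}'(z_1)|}\leq e^{t/2}$, while $|e^t\varphi_{0,t}(z_1)|\leq e^t$; so $\Psi_n(e^t\varphi_{0,t})$ has bounded image in $\C^n$.

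With these two ingredients the proof mirrors that of Theorem \ref{sup}: given a continuous linear functional $L$ on $\Ho(\B_n,\C^n)$ with $\Re L$ nonconstant on $\Psi_n(S)$ and $\Re L(F)=\max_{G\in\Psi_n(S)}\Re L(G)$, fix $t\geq 0$ and set $J(H):=L(e^t H\circ\Phi_{0,t})$. Then $J(e^{-t}F_t)=L(F_t\circ\Phi_{0,t})=L(F_0)=L(F)$, and for every $G\in\Psi_n(S)$ the first ingredient yields $e^t G\circ\Phi_{0,t}\in\Psi_n(S)$, so $\Re J(G)\leq\Re L(F)=\Re J(e^{-t}F_t)$. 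If $\Re L(\Psi_n(e^t\varphi_{0,t}))$ equalled $\Re L(F)$, then $\Psi_n(e^t\varphi_{0,t})$ would itself be a support point of $\Psi_n(S)$ via the same nonconstant functional $L$, contradicting the second ingredient; hence $\Re J(\id_{\B_n})<\Re J(e^{-t}F_t)$ and $\Re J$ is nonconstant on $\Psi_n(S)$, giving $e^{-t}F_t\in\supp\Psi_n(S)$. The main obstacle is entirely contained in the second ingredient, i.e.\ in Conjecture 3.1 of \cite{MR2341607}; once bounded elements of $\Psi_n(S)$ are known to be non-support-points, both the explicit shape of $\Phi_{s,t}$ and the concluding extremal argument are routine adaptations of the one-dimensional case.
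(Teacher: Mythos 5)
Your proposal is correct and follows exactly the route the paper takes: it reruns the proof of Theorem \ref{sup} with $S^0(\B_n)$ replaced by $\Psi_n(S)$, using the preceding proposition (bounded elements of $\Psi_n(S)$ are not support points) in place of Proposition \ref{marina}, and the identity $e^{t-s}\Psi_n(g)\circ\Phi_{s,t}=\Psi_n(e^{t-s}g\circ\varphi_{s,t})$ in place of Lemma \ref{s0}. The explicit computation of $\Phi_{s,t}$ and the boundedness check are exactly the details the paper delegates to Remark 3.1 of \cite{MR2341607}, so nothing is missing.
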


In order to simplify notation, we only look at the Roper-Suffridge-Operator $\Psi:=\Psi_2:$
 $$\Psi(f)(z_1,z_2)=\left(f(z_1),z_2\sqrt{f'(z_1)}\right).$$

\begin{proposition}
Let $f\in S$ be bounded. Then $\Psi(f)$ is not a support point of $\Psi(S).$  \\
In particular, all support points of $\Psi(S)$ are unbounded.
\end{proposition}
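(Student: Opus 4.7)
The plan is to argue by contradiction: assume that $\Psi(f)\in\supp\Psi(S)$ with continuous linear functional $L$ on $\Ho(\B_2,\C^2)$, and decompose $L(F_1,F_2)=L_1(F_1)+L_2(F_2)$ for continuous linear $L_1,L_2\colon\Ho(\B_2,\C)\to\C$. My strategy is to use the boundedness of $f$ to produce a rich family of univalent perturbations of $f$ inside $S$, apply the maximum principle to deduce that $L\circ\Psi$ is constant along each such perturbation, and then parlay these identities together with the Runge property of $f(\D)$ into a global structural description of $L_1$ and $L_2$ that forces $L\circ\Psi$ to be constant on all of $S$.

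First I would observe that because $\overline{f(\D)}$ is compact in $\C$, for any entire $P$ with $P(0)=P'(0)=0$ the map $\id+\varepsilon P$ is univalent on $\overline{f(\D)}$ whenever $|\varepsilon|<\delta$ for some $\delta=\delta(P)>0$. Consequently
\[f_\varepsilon:=(\id+\varepsilon P)\circ f=f+\varepsilon P(f)\in S\]
for all $|\varepsilon|<\delta$. The function $\varepsilon\mapsto L(\Psi(f_\varepsilon))$ is holomorphic on that disc and its real part is maximised at $\varepsilon=0$, so by the maximum principle it is constant. Expanding
\[\Psi(f_\varepsilon)=\bigl(f+\varepsilon P(f),\,z_2\sqrt{f'}\sqrt{1+\varepsilon P'(f)}\bigr)\]
in powers of $\varepsilon$ and equating every coefficient to zero, the $\varepsilon^k$-relation for $k\geq 2$ yields $L_2(z_2\,Q(f)^k\sqrt{f'})=0$ for every entire $Q=P'$ with $Q(0)=0$. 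The algebraic identity $(f+f^k)^2-(f-f^k)^2=4f^{k+1}$ shows that the linear span of $\{Q(f)^2\}$ contains every polynomial in $f$ vanishing to order two at $0$. Since $f$ is bounded, $f(\D)$ is a bounded simply connected subdomain of $\C$ and therefore Runge, so polynomial Runge approximation plus continuity of $L_2$ upgrade this to $L_2(z_2\,h(z_1)\sqrt{f'(z_1)})=0$ for every $h\in\Ho(\D,\C)$ with $h(0)=h'(0)=0$. The substitution $v:=h\sqrt{f'}$, which is a bijection between $\{h:h(0)=h'(0)=0\}$ and $\{v:v(0)=v'(0)=0\}$, converts this into
\[L_2(z_2\,v(z_1))=L_2(z_2)\,v(0)+L_2(z_2z_1)\,v'(0)\qquad\text{for every }v\in\Ho(\D,\C).\]
Feeding the same Runge approximation back into the $\varepsilon^1$-relation then forces $L_1(h(z_1))$ to depend only on $h(0)$, $h'(0)$ and $h''(0)$, with coefficients expressible in terms of $L_1(1)$, $L_1(z_1)$ and $L_2(z_2z_1)$.

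The final step is a direct computation: for an arbitrary $g=z+a_2z^2+\cdots\in S$ one evaluates $L(\Psi(g))=L_1(g(z_1))+L_2(z_2\sqrt{g'(z_1)})$ using the two formulas just derived. I expect, and the algebra will confirm, that the $a_2$-contributions coming from $L_1(g(z_1))$ and from $L_2(z_2\sqrt{g'(z_1)})$ cancel exactly, leaving $L(\Psi(g))=L_1(z_1)+L_2(z_2)$, a value independent of $g$. This contradicts the assumption that $\Re L$ is nonconstant on $\Psi(S)$, completing the proof. The main obstacle, and the place where boundedness of $f$ is essential, is the step from the discrete family of $\varepsilon^k$-identities to the continuous structural formula for $L_2(z_2\,v(z_1))$: it relies critically on $\overline{f(\D)}$ being compact and $f(\D)$ being Runge, so that polynomial approximation yields every $h$ with $h(0)=h'(0)=0$. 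For unbounded $f$ this compactness fails and the argument would break down precisely there.
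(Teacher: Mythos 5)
Your proof is correct and follows essentially the same route as the paper's: perturbations $f+\varepsilon P(f)\in S$ (valid because $f$ is bounded), order-by-order vanishing of $L$ on the resulting directions, Runge approximation by polynomials in $f$, and the final cancellation of the $a_2$-contributions via $L_1(z_1^2)=-L_2(z_2z_1)$. One small correction to your closing remark: the Runge property of $f(\D)$ holds for \emph{every} $f\in S$ (any simply connected plane domain is Runge), so boundedness is needed only to ensure that $\textbf{id}+\varepsilon P$ is univalent on the compact set $\overline{f(\D)}$, i.e.\ so that the perturbations lie in $S$ in the first place.
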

\begin{proof}
 Let $f\in S$ be bounded and let $L:\Ho(\B_n,\C^n)\to\C$ be a continuous linear functional such that $\Re L$ is nonconstant on $\Psi(S)$ and 
$$ \Re L(\Psi(f)) = \max_{G\in\Psi(S)}\Re L(G). $$ 
For every $n\geq 1$ there is an $\eps_0>0$ such that $f+\frac{\eps}{n+1} f^{n+1}\in S,$ for all $\eps\in\C$ with $|\eps|<\eps_0,$ as $f$ is bounded. It follows \begin{align*}\Psi(f+\frac{\eps}{n+1} f^{n+1})(z_1,z_2)=\left(f(z_1)+\frac{\eps}{n+1} f(z_1)^{n+1}, z_2\sqrt{f'(z_1)}\sqrt{1+\eps f(z_1)^n}\right)=\\ \left(f(z_1)+\frac{\eps}{n+1} f(z_1)^{n+1}, z_2\sqrt{f'(z_1)}+\eps z_2\sqrt{f'(z_1)}(1/2\cdot f(z_1)^n+...)\right)=\\
\left(f(z_1), z_2\sqrt{f'(z_1)}\right) +\eps\underbrace{\left(\frac{1}{n+1} f(z_1)^{n+1},  z_2/2\sqrt{f'(z_1)}f(z_1)^n\right)}_{=:P_n}+\left(0, \LandauO(|\eps|^2)\right)\in\Psi(S).\end{align*}
If $\Re L(P_n)\not=0,$ then we can choose $\eps$ such that $\Re L(\Psi(f+\frac{\eps}{n+1} f^{n+1}))> \Re L(\Psi(f)),$ a contradiction. Hence 
\begin{equation}\label{0}
 \Re L\left(\frac{1}{n+1} f(z_1)^{n+1},  z_2/2\cdot\sqrt{f'(z_1)}f(z_1)^n\right)=0 \quad \forall n\geq 1.\end{equation}
Now we can repeat this argument for the $\eps$--terms of higher order, because all coefficients of the expansion $\sqrt{1+x}=1+\frac1{2}x\mp...$ are $\not=0$, and we get 
\begin{equation}\label{1}
 \Re L\left(0,  z_2\sqrt{f'(z_1)}f(z_1)^{jn}\right)=0 \quad \forall j\geq 2, n\geq1.
\end{equation}
Now consider an arbitrary function of the form $(0,z_2g(z_1))$ with $g(0)=g'(0)=0.$ Write $z_2g(z_1)=z_2\sqrt{f'(z_1)}\cdot \frac{g(z_1)}{\sqrt{f'(z_1)}}$ and approximate the second factor by a sequence of polynomials in $f$ (see Lemma \ref{ru} c)): $$z_2g(z_1)=z_2\sqrt{f'(z)}\cdot \sum_{k\geq 2} a_k f(z_1)^k=\sum_{k\geq2}a_kz_2\sqrt{f'(z)}f(z_1)^k.$$
By using (\ref{1}), it follows \begin{equation}\label{2}
 \Re L(0,z_2g(z_1))=0 \quad  \text{for all }\; g\in\Ho(\D,\C) \;\; \text{with} \quad g(0)=g'(0)=0.\end{equation} We can apply this to (\ref{0}) for $n\geq 2$ to get
$ \Re L\left(f(z_1)^{n},  0\right)=0$ for all $n\geq 3,$
and by Runge approximation it follows
\begin{equation}\label{3}
 \Re L\left(g(z_1),  0\right)=0 \quad  \text{for all }\; g\in\Ho(\D,\C) \;\; \text{with} \quad g(0)=g'(0)=g''(0)=0.\end{equation}
Now let $H(z_1,z_2)=(h(z_1),z_2\sqrt{h'(z_1)})\in \Psi(S).$ If $h(z)=z+a_2z^2+...,$ then $$\sqrt{h'(z)}=\sqrt{1+2a_2z+...}=1+a_2z+...$$ Thus $(h(z_1),z_2\sqrt{h'(z_1)})=(z_1+a_2z_1^2+...,z_2+a_2z_2z_1+...)$ and with (\ref{2}) and (\ref{3}):
\begin{equation*}
\Re L(H)=d_1 + d_2\cdot a_2 + d_3 + d_4\cdot a_2
\end{equation*}
 with $d_1=\Re L(z_1,0),$ $d_2=\Re L(z_1^2,0),$ $d_3=\Re L(0,z_2),$ $d_4=\Re L(0,z_2z_1).$
Finally, use (\ref{0}) with $n=1$ to get 
 \begin{equation*}
  \Re L\left(z_1^{2},  z_2z_1\right)=0 \iff d_2+d_4=0, 
  \end{equation*}
which implies $\Re L(H)=d_1+d_3.$ Hence, $\Re L$ is constant on $\Psi(S)$ and $\Psi(f)$ cannot be a support point of $\Psi(S).$
\end{proof}

\begin{proof}[Proof of Theorem \ref{supp}.]
The proof is now quite the same as the proof of Theorem \ref{sup}. We just have to replace $S^0(\B_n)$ by $\Psi(S)$, see Remark 3.1 in \cite{MR2341607}.
\end{proof}

\begin{remark}
The corresponding statement of Theorem \ref{supp} for extreme points instead of support points also holds true, see Theorem 3.1 in \cite{MR2341607}.
\end{remark}

\section{Further problems concerning the class \texorpdfstring{$S^0(\Do)$}{}}\label{Daniela}

\subsection{Projections}
It is possible to define counterparts to the extension operators of infinitesimal generators of the last section, e.g. ``projections'' of the form $\Theta: \mathcal{E}\to \mathcal{I}(\D),$ $\mathcal{E}\subset \mathcal{I}(\B_n).$ One possibility is to use the so-called \emph{Lempert projection devices}, see \cite{MR2578602}, Section 1.3: 

Let $\phi: \D\to \B_n$ be a complex geodesic, i.e. $\phi$ is holomorphic and preserves the Kobayashi metrics. For the unit ball, $\phi$ just parameterizes the slice $\B_n\cap \{z+h(w-z)\with h\in\C\},$ for two given points $z,w\in \overline{\B_n},$ $z\not=w.$ \\
Furthermore, let $\rho_{\phi}$ be the \emph{Lempert projection}, which is in this case just the orthogonal projection onto $\phi(\D)$ and finally define $\tilde{\rho}_\phi: \B_n\to \D, \tilde{\rho_\phi}:= \phi^{-1}\circ \rho_\phi.$\\ 
The triple $(\phi, \rho_\phi, \tilde{\rho_\phi})$ is called \emph{Lempert projection device}. For a holomorphic vector field $G:\B_n\to\C^n$ we define $\xi_{\phi,G}: \D \to \C$ by $$\xi_{\phi,G}(z):= D\tilde{\rho}_\phi (\phi(z)) \cdot G(\phi(z)).$$ 
The important connection to infinitesimal generators is given by the following result, see Proposition 4.5 in \cite{MR2578602}.
\begin{theorem}
If $G\in \mathcal{I}(\B_n),$ then $\xi_{\phi,G}\in \mathcal{I}(\D).$
\end{theorem}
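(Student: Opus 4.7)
The map $\xi_{\phi,G}$ is holomorphic as a composition of holomorphic maps, so it suffices to verify that it generates a continuous one-parameter semigroup of holomorphic self-maps of $\D$. I would use the standard \emph{Kobayashi-dissipativity} characterization of infinitesimal generators on a complete hyperbolic manifold $M$: writing $k_M$ for the Kobayashi distance, a holomorphic vector field $X$ on $M$ lies in $\I(M)$ iff
$$\limsup_{t\to 0^+} \frac{k_M\bigl(p+tX(p),\, q+tX(q)\bigr) - k_M(p,q)}{t} \leq 0 \qquad \text{for all } p,q \in M,$$
which is implicit in the equivalence cited from \cite{MR2887104} after the definition of a Herglotz vector field. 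The strategy is to use this to transfer dissipativity of $G$ on $\B_n$ to dissipativity of $\xi_{\phi,G}$ on $\D$.

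The three properties of the Lempert projection device to be used are: (i) $\phi\colon \D\to\B_n$ is a $(k_\D, k_{\B_n})$--isometric embedding, by definition of a complex geodesic; (ii) $\rho_\phi$ is a holomorphic self-map of $\B_n$, hence $k_{\B_n}$--nonexpanding; (iii) $\rho_\phi\circ\phi = \phi$, i.e.\ $\rho_\phi$ is a holomorphic retraction onto $\phi(\D)$. Differentiating the identity $\tilde\rho_\phi = \phi^{-1}\circ\rho_\phi$ at $\phi(a)$ and using (iii) gives $D\tilde\rho_\phi(\phi(a)) = (D\phi(a))^{-1}\!\circ D\rho_\phi(\phi(a))$; hence $D\phi(a)\,\xi_{\phi,G}(a) = D\rho_\phi(\phi(a))\,G(\phi(a))$. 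A first-order Taylor expansion at $\phi(a) \in \phi(\D)$ together with $\rho_\phi(\phi(a))=\phi(a)$ then yields the key infinitesimal matching
$$\phi\bigl(a + t\,\xi_{\phi,G}(a)\bigr) \;=\; \rho_\phi\bigl(\phi(a) + t\,G(\phi(a))\bigr) + O(t^2),$$
since both sides agree to order $t$ with $\phi(a) + t\,D\rho_\phi(\phi(a))\,G(\phi(a))$.

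Chaining this with the three ingredients, for any $a,b \in \D$:
$$k_\D\bigl(a+t\xi_{\phi,G}(a),\, b+t\xi_{\phi,G}(b)\bigr) \;\stackrel{(i)}{=}\; k_{\B_n}\bigl(\phi(a+t\xi_{\phi,G}(a)),\, \phi(b+t\xi_{\phi,G}(b))\bigr),$$
which, by the Taylor matching and the local Lipschitz continuity of $k_{\B_n}$, differs by $O(t^2)$ from $k_{\B_n}\bigl(\rho_\phi(\phi(a)+tG(\phi(a))),\, \rho_\phi(\phi(b)+tG(\phi(b)))\bigr)$; applying (ii), this is in turn $\leq k_{\B_n}\bigl(\phi(a)+tG(\phi(a)),\, \phi(b)+tG(\phi(b))\bigr) + O(t^2)$. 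Subtracting $k_\D(a,b)=k_{\B_n}(\phi(a),\phi(b))$, dividing by $t$, and letting $t\to 0^+$, the Kobayashi-dissipativity of $G$ on $\B_n$ (which follows from $G\in\I(\B_n)$) forces the limsup of the left-hand side to be $\leq 0$. Thus $\xi_{\phi,G}$ is Kobayashi-dissipative on $\D$, and so $\xi_{\phi,G}\in\I(\D)$.

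The main obstacle is the clean justification of two tools that one would like to cite rather than prove: the Kobayashi-dissipativity characterization of $\I$ on complete hyperbolic manifolds, and the local Lipschitz regularity of $k_{\B_n}$ with respect to the Euclidean structure needed to absorb the $O(t^2)$ perturbation in the arguments into an $O(t^2)$ error in $k_{\B_n}$. A naive attempt to define a candidate semigroup $\psi_t := \tilde\rho_\phi\circ\Phi_t\circ\phi$ on $\D$ (where $\Phi_t$ is the semigroup generated by $G$) fails because $\psi_t$ is generally not a semigroup; the Kobayashi-metric route bypasses this by working purely infinitesimally.
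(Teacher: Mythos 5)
Your argument is correct: the Kobayashi-dissipativity characterization of infinitesimal generators on complete hyperbolic manifolds (Theorem 1.1 of \cite{MR2887104}, which the thesis itself invokes when discussing Herglotz vector fields), combined with the isometry property of the geodesic $\phi$, the nonexpansiveness of the holomorphic retraction $\rho_\phi$, and the first-order matching $D\phi(a)\,\xi_{\phi,G}(a)=D\rho_\phi(\phi(a))\,G(\phi(a))$, gives exactly the chain of inequalities needed, and the $O(t^2)$ Euclidean errors are indeed absorbed by the local Lipschitz continuity of $k_{\B_n}$ on compacta. The thesis does not prove this statement but cites Proposition 4.5 of \cite{MR2578602}, which establishes it within the same Lempert projection device framework, so your proposal is essentially the standard argument rather than a genuinely different route.
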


Together with this result, Lempert projection devices are a useful tool to study properties of infinitesimal generators in the unit ball, see, e.g. \cite{bbig}, where a  Julia-Wolff--Carath\'{e}odory type theorem for  infinitesimal generators in the unit ball is proven.

\begin{example}\label{Pope}
 A geodesic through $0$ has the form $\phi(z)=z\cdot w,$ with $w\in \partial \B_n.$ Here we get $$\rho_\phi(z)=\left<z,w \right>\cdot w, \quad \tilde{\rho}_\phi(z)=\left<z,w \right> \quad \text{and} \quad \xi_{\phi,G}(z)=\left<G(z\cdot w),w \right>.$$ 

If $G\in \M(\B_n),$ then $\xi_{\phi,G}\in \M(\D)$. Furthermore, if $G(z,t)$ is a Herglotz vector field on $\B_n$ with $G(\cdot, t)\in \M(\B_n)$ for almost all $t\geq 0,$ then $g(z,t):=\xi_{\phi,G(\cdot,t)}(z)$ is a Herglotz vector field on $\D$ with  $g(\cdot,t)\in\M(\D)$ for almost all $t\geq0.$ Thus we get a ``projection'' of the Loewner ODE
\begin{equation}\label{G1}\dot{\varPhi}_{s,t}=G(\varPhi_{s,t},t),\; \varPhi_{s,s}(z)=z\in\B_n, \end{equation}
to \begin{equation}\label{G2} \dot{\varphi}_{s,t}=\left<G(\varphi_{s,t}\cdot w,t),w \right>,\; \varphi_{s,s}(z)=z\in\D.
 \end{equation}
\hfill $\bigstar$
\end{example}

\begin{question}
 Define $F\in S^0(\B_n)$ by $F:=\lim_{t\to\infty}e^t \varPhi_{0,t}$ and $f_w\in S$ by $f_w:=\lim_{t\to\infty}e^t \varphi_{0,t}.$ How is $F$ related to $f_w$? For example: If $F$ is unbounded, can we find $w\in \partial \B_n$ such that $f_w$ is unbounded, too? 
\end{question}

From Example \ref{Pope} we immediately get the following result which generalizes the well-known coefficient estimate of the class $\mathcal{P}.$

\begin{proposition}
If $G\in \M(\B_n),$ then $$ \left|\left<D^kG(0)(w,w,...,w),w \right> \right|\leq 2 $$
 for all $k\geq 2$ and $w\in \partial \B_n.$
\end{proposition}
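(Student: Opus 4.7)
The plan is to reduce the several-variable estimate to the classical Carath\'{e}odory coefficient inequality on the class $\mathcal{P}$ by slicing along a complex line through $0$ with the Lempert projection device of Example \ref{Pope}.

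First, I would fix $w\in\partial\B_n$ and take the geodesic $\phi:\D\to\B_n$, $\phi(z)=zw$. By Example \ref{Pope}, the projected vector field $\xi_{\phi,G}(z)=\langle G(zw),w\rangle$ belongs to $\M(\D)$. In one variable, $\M(\D)$ consists precisely of the maps $z\mapsto -z\,p(z)$ with $p\in\mathcal{P}$ (as recalled in Section 2.3; note that $\xi_{\phi,G}'(0)=\langle -w,w\rangle=-1$ forces the Denjoy--Wolff point to be $0$ and the normalization $p(0)=1$). Consequently there is a unique $p\in\mathcal{P}$ with
\begin{equation*}
\langle G(zw),w\rangle = -z\,p(z),\qquad z\in\D.
\end{equation*}

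Next, I would expand both sides as power series in $z$. Using the Taylor series of $G$ at the origin together with $G(0)=0$ and $DG(0)=-I_n$, the coefficient of $z^k$ on the left is a fixed scalar multiple of $\langle D^kG(0)(w,\ldots,w),w\rangle$, where the scalar is determined by the normalization of the symbol $D^k$ fixed in the text (calibrated by the base case $k=1$, which gives $\langle DG(0)w,w\rangle=-1$). On the right, writing $p(z)=1+\sum_{j\geq 1}p_jz^j$, the coefficient of $z^k$ in $-zp(z)$ equals $-p_{k-1}$ for every $k\geq 2$. Matching coefficients therefore identifies $\langle D^kG(0)(w,\ldots,w),w\rangle$ with one Taylor coefficient of $p$.

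Finally, I would invoke the classical Carath\'{e}odory inequality $|p_j|\leq 2$ for every $p\in\mathcal{P}$ and every $j\geq 1$, which is an immediate consequence of the Riesz--Herglotz representation \eqref{Christian} (apply it to $p$ and read off the Fourier coefficients of the representing probability measure). This yields the stated bound, uniformly in $w\in\partial\B_n$. There is essentially no obstacle: Example \ref{Pope} supplies the substantive one-to-one correspondence between $\M(\B_n)$--generators sliced along $zw$ and elements of $\M(\D)$, after which the estimate is just the standard one-variable coefficient bound for $\mathcal{P}$.
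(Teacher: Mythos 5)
Your argument is correct and is essentially the paper's own proof: both rest on Example \ref{Pope}, namely that the slice $z\mapsto\langle G(zw),w\rangle$ lies in $\M(\D)$ and hence equals $-zp(z)$ for some $p\in\mathcal{P}$, after which the classical Carath\'{e}odory coefficient bound finishes the job. The only cosmetic difference is that the paper first reduces to $w=e_1$ by a unitary rotation before slicing, whereas you work with general $w$ directly, which the projection device permits.
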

\begin{proof} For $n=1,$ we have the classical coefficient estimate for the Carath\'{e}odory class.\\
 Now let $n\geq 2.$
 In fact, it is enough to prove the inequality only for $w=e_1=(1,0,...,0)$: Let $G\in \M(\B_n)$ and let $w\in \partial \B_n.$ Consider the ``rotation'' $H:\B_n\to\C^n, z\mapsto U^*G(Uz),$ where $U$ is a unitary matrix with $Ue_1=w.$\footnote{For $A\in\C^{n\times n},$ we denote by $A^*$ the adjoint matrix of $A,$ i.e. its conjugate transpose.} Then $H\in \M(\B_n),$ and \begin{eqnarray*}
  \left<D^kH(0)(e_1,e_1...,e_1),e_1\right>=\left<U^*D^kG(0)(Ue_1,Ue_1...,Ue_1),e_1\right>=\\
\left<D^kG(0)(Ue_1,Ue_1...,Ue_1),Ue_1\right>=\left<D^kG(0)(w,w...,w),w\right>.                                                                                               \end{eqnarray*}
Now write $G(z_1,...,z_n)=\begin{pmatrix}
G_1(z_1,...,z_n)\\
\vdots
\\ G_n(z_1,...,z_n)\\
            \end{pmatrix}
$ and $G_1(z_1,0,...,0)=-z_1+a_2z_1^2+a_3z_1^3+...$\\
 We have to show that  $|a_n|\leq 2$ for all $n\geq 2.$ But as $G_1(z_1,0,...,0)=\left<G(z_1 e_1),e_1 \right>,$ we know that $G_1(z_1,0,...,0)=-z_1p(z_1)$ for a Carath\'{e}odory function $p\in\mathcal{P}.$ Thus the statement follows from the case $n=1.$
\end{proof}

There is an unproven version of the Bieberbach conjecture for the class $S^0(\B_n),$ namely:
\begin{conj}[see \cite{graham2003geometric}, p. 343]\label{Bieber}
If $f\in S^0(\B_n),$ then $$ \left|\frac1{k!}\left<D^kf(0)(w,w,...,w),w \right> \right|\leq k $$
 for all $k\geq 2$ and $w\in \partial \B_n.$
\end{conj}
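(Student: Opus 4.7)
The natural plan is to reduce Conjecture \ref{Bieber} to the classical Bieberbach conjecture (de Branges' theorem) by projecting the higher-dimensional Loewner machinery onto a one-dimensional slice, in the spirit of the preceding Proposition and of Example \ref{Pope}. Both sides of the desired inequality are invariant under $f \mapsto U^* f(U\,\cdot)$ for a unitary $U$ with $Ue_1 = w$ (exactly as in the proof of the preceding Proposition), so one may assume $w = e_1$. Since $f \in S^0(\B_n)$, fix a normalized Loewner chain $\{f_t\}_{t \geq 0}$ with $f_0 = f$ and $\{e^{-t}f_t\}$ a normal family, with associated evolution family $\{\varPhi_{s,t}\}$ and Herglotz vector field $G(z,t)$ satisfying $G(\cdot,t) \in \M(\B_n)$ for almost every $t$.

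Next I would apply the Lempert projection device attached to the complex geodesic $\phi(z) = ze_1$. By Example \ref{Pope}, the projected field $g(z,t) := \langle G(ze_1, t), e_1\rangle$ lies in $\M(\D)$ for almost every $t$, so the one-dimensional radial Loewner ODE yields an evolution family $\{\varphi_{s,t}\}$ on $\D$ normalized by $\varphi_{s,t}(0) = 0$ and $\varphi_{s,t}'(0) = e^{s-t}$. By Theorem \ref{Gabriela} applied in dimension one, $\tilde{f} := \lim_{t\to\infty} e^t \varphi_{0,t}$ exists and belongs to $S^0(\D) = S$, so de Branges' theorem delivers $|b_k| \leq k$ for every Taylor coefficient $b_k$ of $\tilde{f}$. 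The coefficient appearing in the conjecture is precisely the $k$-th Taylor coefficient of $h(z) := \langle f(ze_1), e_1\rangle$, so the argument is complete once one establishes the identification $h = \tilde{f}$ on $\D$.

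This identification is exactly the question posed immediately after Example \ref{Pope}, and it is the main obstacle. It would follow from $\langle \varPhi_{0,t}(ze_1), e_1\rangle = \varphi_{0,t}(z)$ for all $z \in \D$ and $t \geq 0$, which in turn would follow from invariance of the complex geodesic $\D \cdot e_1$ under $\varPhi_{0,t}$. Such invariance generically fails, however: the Lempert device produces a genuine infinitesimal generator on the slice, but the full flow of $G(\cdot,t)$ typically does not preserve $\D \cdot e_1$, so the $e_1$-component of $\varPhi_{0,t}(ze_1)$ is \emph{not} the solution of the projected ODE. This is, I expect, the reason the conjecture has resisted proof in general. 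A plausible workaround is to first establish the bound on subclasses where the slice \emph{is} preserved (for instance suitably symmetric starlike mappings, where by Theorem \ref{Juergen} the bound reduces to the classical starlike coefficient estimate in dimension one) and then attempt a density or extremal argument inside $S^0(\B_n)$; alternatively, one could try to adapt the FitzGerald--Pommerenke functional proof of Bieberbach to $n$ variables, bypassing the slice identification altogether.
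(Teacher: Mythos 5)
The statement you were given is labelled a \emph{conjecture} in the paper and is not proved there in general; what the paper actually establishes is (i) the reduction to $w=e_1$ via the unitary rotation $f\mapsto U^*f(U\,\cdot\,)$, exactly as you do, and (ii) the case $k=2$, by the projection technique. Your diagnosis of why the slice-projection strategy cannot settle the general case is correct and matches the situation in the paper: the flow of $G(\cdot,t)$ does not preserve the geodesic $\D\cdot e_1$, so $\left<\varPhi_{0,t}(ze_1),e_1\right>$ is not the solution of the projected radial ODE, and the identification $h=\tilde f$ you would need is false in general. So your proposal is an accurate account of an open problem rather than a proof, which is consistent with the paper's own framing.

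The one concrete idea you miss --- and it is exactly how the paper salvages $k=2$ --- is that the identification is not needed at the level of functions, only at the level of a single Taylor coefficient, and at order two the relevant coefficient ODE decouples from the transverse variables. Writing $G((z_1,\dots,z_n),t)$ and $\varPhi_{0,t}$ as power series, the Loewner ODE gives $a_{1,0}(t)=e^{-t}$ and $\dot a_{2,0}(t)=-a_{2,0}(t)+g_{2,0}(t)e^{-2t}$ with $a_{2,0}(0)=0$ for the coefficient of $z_1^2$ in the first component of $\varPhi_{0,t}$, while the projected one-dimensional evolution family $\varphi_{0,t}$ satisfies the identical initial value problem for its second coefficient $c_2(t)$; no mixed coefficients $a_{j,k}$ with $k\geq 1$ enter at this order. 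Hence $a_{2,0}\equiv c_2$, and $a_2=\lim_{t\to\infty}e^t c_2(t)$ is the second coefficient of $r:=\lim_{t\to\infty}e^t\varphi_{0,t}\in S$, so $|a_2|\leq 2$. For $k\geq 3$ the coefficient ODEs do couple to terms the projection cannot see, which is precisely where your obstruction bites and why the conjecture remains open. If you want to extract the provable part of your plan, restrict to $k=2$ and run this coefficient comparison instead of attempting the full slice identification.
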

Again, it is enough to prove this inequality only for $w=e_1=(1,0,...,0)$:\\
 Let $f\in S^0(\B_n)$ and let $w\in \partial \B_n.$ Consider the ``rotation'' $g:\B_n\to\C^n, z\mapsto U^*f(Uz),$ where $U$ is a unitary matrix with $Ue_1=w.$ Then $g\in S^0(\B_n),$ because $g=\lim_{t\to \infty}e^t \Omega_{0,t}$, where $\Omega_{s,t}$ is an evolution family that corresponds to the Herglotz vector field $(z,t)\mapsto U^*G(Uz,t).$ Moreover, \begin{eqnarray*}
  \left<D^kg(0)(e_1,e_1...,e_1),e_1\right>=\left<U^*D^kf(0)(Ue_1,Ue_1...,Ue_1),e_1\right>=\\\left<D^kf(0)(Ue_1,Ue_1...,Ue_1),Ue_1\right>=\left<D^kf(0)(w,w...,w),w\right>.
 \end{eqnarray*}

Let $f\in S^0(\B_n)$ and write $f(z_1,...,z_n)=\begin{pmatrix}
f_1(z_1,...,z_n)\\
\vdots\\
f_n(z_1,...,z_n)\\
           \end{pmatrix},$
and $f_1(z,0,...,0)=z+a_2z^2+a_3z^3+....$ For $w=e_1$,  Conjecture \ref{Bieber} states $|a_n|\leq n$ for all $n\geq 2.$
This is already known for $k=2$, see Corollary 8.3.15 in \cite{graham2003geometric}, and we give a simple proof by the projection technique. 
\begin{proposition}
 Conjecture \ref{Bieber} holds for $k=2.$
\end{proposition}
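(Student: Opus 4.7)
The plan is to reduce the bound to the classical Bieberbach inequality $|a_2| \leq 2$ for $S$ via the Lempert projection technique of Example \ref{Pope}. By the rotation argument given just before the statement, it suffices to prove the bound for $w = e_1$, i.e., to show $|a_2| \leq 2$ for $f_1(z_1, 0, \ldots, 0) = z_1 + a_2 z_1^2 + \ldots$.

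First I would take an $\M(\B_n)$-Herglotz vector field $G(z,t)$ with evolution family $\{\varPhi_{s,t}\}$ such that $f = \lim_{t\to\infty} e^t\varPhi_{0,t}$. Applying the Lempert projection device to the geodesic $\phi(z) = ze_1$, Example \ref{Pope} provides a Herglotz vector field $g(z,t) = \langle G(ze_1, t), e_1\rangle$ on $\D$ with $g(\cdot, t) \in \M(\D)$. Its evolution family $\{\varphi_{s,t}\}$ produces, via the one-dimensional version of Theorem \ref{Gabriela}, a function $f_{\ast} := \lim_{t\to\infty} e^t \varphi_{0,t}$ in $S^0(\D) = S$.

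The core of the argument is to show that the second Taylor coefficient of $f_{\ast}$ equals $a_2$. To this end I would analyze the Loewner PDE at second order: expanding
\[ f_t(z) = e^t z + P_2(t)(z,z) + O(\|z\|^3), \qquad G(z,t) = -z + Q_2(t)(z,z) + O(\|z\|^3), \]
where $P_2(t), Q_2(t)$ are $\C^n$-valued symmetric bilinear forms, the PDE $\dot f_t = -(Df_t)G$ yields the linear ODE $\dot P_2(t) = 2 P_2(t) - e^t Q_2(t)$. Integrating this with integrating factor $e^{-2t}$ and using that $e^{-2t}P_2(t) \to 0$ (which follows from $\{e^{-t}f_t\}$ being a normal family, the defining property of $S^0(\B_n)$) gives
\[ P_2(0) = \int_0^\infty e^{-t} Q_2(t)\,dt. \]
The identical calculation in dimension one shows that the second coefficient of $f_{\ast}$ is $\int_0^\infty e^{-t} q_2(t)\,dt$, where $q_2(t)$ denotes the second Taylor coefficient of $g(\cdot, t)$. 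A direct inspection of $g(z,t) = \langle G(ze_1, t), e_1\rangle$ yields $q_2(t) = \langle Q_2(t)(e_1, e_1), e_1\rangle$, so the second coefficient of $f_{\ast}$ equals $\langle P_2(0)(e_1, e_1), e_1\rangle = a_2$.

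Since $f_{\ast} \in S$, the classical Bieberbach theorem then gives $|a_2|\leq 2$, as desired. The main technical point is the passage to the integral representation $P_2(0) = \int_0^\infty e^{-t} Q_2(t)\,dt$, which requires the decay of the boundary term $e^{-2t}P_2(t)$ as $t \to \infty$; this is precisely where the normal-family condition built into the definition of $S^0(\B_n)$ enters in an essential way, reflecting its distinction from the larger, non-compact class $S(\B_n)$.
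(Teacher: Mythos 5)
Your argument is correct and follows essentially the same route as the paper: reduce to $w=e_1$ by the unitary rotation, project via the Lempert device onto the slice through $e_1$, and compare the second-order Taylor coefficient dynamics in $\B_n$ with those of the projected one-dimensional problem, finally invoking $|a_2|\le 2$ for the class $S$. The only cosmetic difference is that you run the comparison through the Loewner PDE for the chain (hence need the decay of the boundary term $e^{-2t}P_2(t)$, which you correctly obtain from normality), whereas the paper works with the ODE for the evolution family and observes that the two coefficient functions solve the same initial value problem.
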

\begin{proof}
 We only have to consider $w=e_1$ and to simplify notation we only look at $n=2.$ Let $f=(f_1,f_2)=\lim_{t\to \infty} e^t \varPhi_{0,t}$, where $\varPhi_{s,t}$ is an evolution family satisfying (\ref{G1}). Write $f_1(z,0)=z+\sum_{k=0}^\infty a_kz^k$ and \\
$$G(z,t)=\binom{-z_1+\sum_{j+k\geq 2}g_{j,k}(t)z_1^jz_2^k)}{-z_2+\sum_{j+k\geq 2}h_{j,k}(t)z_1^jz_2^k}, \qquad \varPhi_{0,t}(z)=\binom{a_{1,0}(t)z_1+\sum_{j+k\geq 2}a_{j,k}(t)z_1^jz_2^k}{b_{0,1}(t)z_2+\sum_{j+k\geq 2}b_{j,k}(t)z_1^jz_2^k}.$$
The projection of $G(z,t)$ onto the geodesic through $0$ and $(1,0)$ gives the Herglotz vector field $$g(z_1,t)=\left<G((z_1,0),t),(1,0)\right>, \quad \text{i.e.} \qquad  g(z_1,t)=-z_1+\sum_{j\geq 2}g_{j,0}(t)z_1^j. $$
Denote by $\varphi_{s,t}$ the corresponding evolution family on $\D$ and write 
$$\varphi_{0,t}(z_1)=\sum_{j\geq 1}c_{j}(t)z_1^j.$$ Let $a_2=\lim_{t\to\infty}e^t a_{2,0}(t).$ We have to show that $|a_2|\leq 2.$
The Loewner ODEs for $\varPhi_{0,t}$ and $\varphi_{0,t}$ immediately imply $$a_{1,0}(t)=e^{-t} \quad \text{and} \quad \dot{a}_{2,0}(t)=-a_{2,0}(t)+g_{2,0}(t)e^{-2t},\quad a_{2,0}(0)=0,$$
as well as 
$$c_1(t)=e^{-t} \quad \text{and} \quad \dot{c}_2(t)=-c_2(t)+g_{2,0}(t)e^{-2t},\quad c_2(0)=0.$$
 Thus $a_2=\lim_{t\to \infty}c_2(t)$ is the second coefficient of the function  $r:=\lim_{t\to\infty}e^t\varphi_{0,t},$ which belongs to $S$ and thus $|a_2|\leq 2.$\end{proof}

\begin{remark}
 If $f\in S$ with $f(z)=z+\sum_{k\geq 2}a_kz^k$, then we have the additional information, that $|a_2|=2$ if and only if $f\in S$ is a rotation of the Koebe function. For $f=(f_1,,...,f_n)\in S^0(\B_n)$ with $f_1(z_1,0,...,0)=z_1+a_2z_1^2+...$, the proof shows that $|a_2|=2$ if and only if the function $r\in S$ is a rotation of the Koebe function. Of course, many elements of $S^0(\B_n)$ satisfy this.
\end{remark}

\begin{remark}
 There is also a similar conjecture for the class $S^0(\D^n),$ see \cite{graham2003geometric}, p. 343:\\
Is it true that $$ \left\| \frac1{k!}D^kf(0)(w,w,...,w) \right\|_\infty \leq k $$
for all $k\geq 2$ and $w\in \partial \D^n?$
\end{remark}

\subsection{Generalization of slit mappings}

Slit mappings play an important role in complex analysis. For example, as the set of those elements of $S$ that map $\D$ onto $\C$ minus a slit is dense in $S$, it is enough to solve extremal problems for those slit mappings only. The set of support points as well as the set of extreme points of $S$ consists of slit mappings and the ubiquitous Koebe function is the simplest form of a slit mapping (and the only starlike slit mapping in $S$). This led many people to the question:
$$\text{Are there analogs to slit mappings in higher dimensions?}$$ 
Of course, the answer depends on which property one would like to generalize.\\
In \cite{MR1845017} and \cite{MR2272135}, J. Muir and T. Suffridge study biholomorphic mappings on the unit ball whose image domains are unbounded convex domains that can be written as the union of lines parallel to some vector. In \cite{MR2254484}, the same authors investigate properties of extreme points of the set of all normalized convex mappings on $\B_n.$ \\
In contrast to such concrete geometric properties, one could also ask for mappings that can be embedded in Loewner chains and have similar properties with respect to Loewner chains, evolution families and infinitesimal generators:
\begin{enumerate}[(1)]
 \item Which $f\in S^0(\Do)$ embed in only one normalized Loewner chain $\{f_t\}_{t\geq0}$ such that $\{e^{-t}f_t\}_{t\geq0}$ is a normal family? Compare with Theorem \ref{Pommes} (c).
\item Which mappings $f\in S^0(\Do)$ embed into a Loewner chain whose Herglotz vector field $G(z,t)$ satisfies $G(\cdot,t)\in ex\M(\Do)$ for all $t\geq 0$? \\
Note that slit mappings satisfy this property according to Theorem \ref{Pommes} and the fact that $$ex(\M(\D))= \left\{z\mapsto -z\frac{u+z}{u-z}\with u\in\partial\D\right\},$$
which can be easily derived from the Riesz-Herglotz representation (\ref{Christian}).
\item Characterize the set $ex\M(\Do)$ for $n\geq 2.$ 
\end{enumerate}
 
It even seems to be hard to find examples of extreme points of $\M(\Do).$\\
In \cite{Voda}, Proposition 2.3.5, M. Voda constructs some explicit mappings that belong to $ex\M(\B_n).$ Furthermore, he shows that mappings of the form $(z_1,...z_n)\mapsto (-z_1p_1(z_1),...,-z_np_n(z_n)),$ with $p_1,...,p_n\in\mathcal{P},$ never belong to $ex\M(\B_n),$ see Proposition 2.3.1 in \cite{Voda}.\\

Finally we would like to point  out that the evolution families of some Herglotz vector fields on $\D^n$ show a similar behavior as evolution families on $\D$ that describe a slit connecting a point $p\in\partial\D$ to $0.$ For every $u\in\partial\D,$ the function $G(z)=-zp(z)$ with $p(z):= \frac{u+z}{u-z}$ is an infinitesimal generator of a semigroup on $\D.$ It has the property that  $$\Re(p(z)) = 0 \qquad \text{for all} \; z\in\partial\D\setminus\{u\}. $$
Slit mappings correspond to Herglotz vector fields of the form  $G(z,t)=-z\frac{u(t)+z}{u(t)-z}$ with a continuous function $u:[0,\infty)\to\partial\D.$ In some sense, the function $u$ corresponds to the tip of the slit. We can solve the Loewner ODE in this case also for initial values on $\partial\D\setminus\{u(0)\}.$ The solution may not exist for all $t\geq0,$ as it can hit the singularity $u(t)$ for some $t\geq0.$

If we pass on to the polydisc $\D^n,$ we can find similar Herglotz vector fields provided that we don't look at the whole boundary $\partial (\D^n),$ but at the so-called distinguished boundary $(\partial\D)^n.$
\begin{example}
 Let $n=2$ and consider the function
 $$F_g(z,w):=\left(-z\frac{(1-z)(1-g(w))}{1-zg(w)},0\right).$$ 
It can be easily checked that, if $g\in\Ho(\D,\D),$ then $F_g \in \mathcal{I}(\D^2).$ \\
For some special choices of $g$, the function $F_g$ will have an analytic continuation to a ``large'' subset $A\subset (\partial \D)^2$ such that 
$$\Re \frac{(1-z)(1-g(w))}{1-zg(w)}=0 \quad \text{for all} \quad (z,w)\in A.$$ Take, e.g., $g(w)=g_a(w)=a\cdot w, a\in\partial\D$ and define $U=\{(z,w)\in(\partial\D)^2 \with azw=1\}.$ $U$ is a closed curve on the torus $(\partial \D)^2.$ Then we have $\Re F_{g_a}(z,w)=0$ for all $(z,w)\in (\partial\D)^2\setminus U$. If $a:[0,\infty)\to\partial\D$ is a continuous function, then the evolution family corresponding to the Herglotz vector field $G((z,w),t)=F_{g_{a(t)}}(z,w)$ somehow describes the growth of a ``slit surface'' emerging from the torus $(\partial \D)^2.$ \hfill $\bigstar$
\end{example}

\newpage
\chapter{The chordal multiple-slit equation}

\section{Hydrodynamic normalization}

The solutions to the ordinary radial Loewner equation are univalent functions that belong to the class $$ S^\geq:=\{f\in \Ho(\D,\D)\with f(0)=0, f'(0)\geq 0, f \;\text{is univalent}\}. $$ 
From a geometric, dynamical point of view, this equation describes families of compact sets that grow from the boundary $\partial \D$ towards $0$ within $\D.$\\
What if we want to replace $0$ by another ``point of attraction'' $z_0$, meaning that we would like to generate univalent functions $f:\D\to\D$ with $f(z_0)=z_0$ and an additional condition for $f'(z_0)$?\\
The case $z_0\in \D\setminus\{0\}$ is not different from $z_0=0$ at all, because we can always transfer the first case into the second by applying an appropriate automorphism of $\D.$ The case $z_0\in\partial\D$, however, is different, as we have to assume that $f(z_0)$ can actually be \textit{defined}, e.g. in the sense of a radial limit. Without loss of generality we may assume that $z_0=1.$ Now, one usually maps the unit disc conformally onto the upper half-plane by the Cayley map $z\mapsto \frac{iz+i}{1-z}$ and asks for univalent functions of the form $f:\Ha\to\Ha$ with $f(\infty)=\infty$ in a certain sense. The reason of using $\Ha$ instead of $\D$ in this case is that such univalent mappings can be handled easier and lead to a simpler differential equation.\\
In the simplest case, $\Ha\setminus f(\Ha)$ is a bounded set. Then, $f$ has a meromorphic continuation to a neighborhood of $\infty$ with Laurent expansion $f(z)=a_{-1}z+a_0+\frac{a_1}{z}+...$ \\
In this case, $f$ is said to have \emph{hydrodynamic normalization} if $a_{-1}=1$ and $a_0=0.$  In this case, $f(\infty)=\infty$ and $f$ is an automorphism of $\Ha$ if and only if $f$ is the identity, as every automorphism of $\Ha$ fixing $\infty$ is of the form $z\mapsto az+b$, $a>0,b\in\R.$ \\
More generally, a holomorphic function $f:\Ha\to\Ha$ is said to have  \textit{hydrodynamic normalization} if there exists $c\geq 0$ such that $f$ has the expansion $f(z)=z-\frac{c}{z}+\Landauo(\frac1{z})$ in an angular sense, i.e. the following limit exists and is finite:
$$\angle\lim_{z\to\infty}z\left(f(z)-z\right)=c\in\R.$$
Let $\mathcal{H}_\infty$ denote the set of all these functions and let $l(f):=c.$ Furthermore we denote by $\mathcal{H}_u$ the subset of all $f\in\mathcal{H}_\infty$ that are univalent.\\
The class $\mathcal{H}_u$ can be seen as an analog of the class $S^{\geq}$ and $l(f)$ is playing the role that $f'(0)$ is playing for the class $S^{\geq}.$\\
 
It is known that every $f\in \mathcal{H}_\infty$ has the following useful, so called Nevanlinna integral representation (see Lemma 1 in \cite{MR1201130}):
\begin{equation}\label{Nevanlinna} f(z)=z+\int_\R \frac{1}{x-z}\, d\mu(x), \end{equation}
where $\mu$ is a finite, nonnegative Borel measure on $\R.$ 
From this representation, it follows that $l(f)=\mu(\R)\geq 0$. Moreover, the following two properties hold:

\begin{itemize}\label{Penny}
 \item Semigroup property: If $f,g\in \mathcal{H}_\infty,$ then also $f\circ g\in\mathcal{H}_\infty$ and $l(f\circ g)=l(f)+g(f)$ (Theorem 1 in \cite{MR1201130}).
\item ``Schwarz Lemma'': If $f\in \mathcal{H}_\infty,$ then $\Im(f(z))\geq \Im z$ for all $z\in\Ha.$ Here, equality holds for one point if and only if $f(z)\equiv z.$
\end{itemize}

We denote by $\mathcal{H}_b$ the set of all $f\in\mathcal{H}_u$ such that $\Ha\setminus f(\Ha)$ is bounded. A function $f\in \mathcal{H}_u$ belongs to $\mathcal{H}_b$ if and only if the Borel measure $\mu$ in the Nevanlinna representation of $f$ has compact support.\\

In the rest of this chapter we will only deal with the class $\mathcal{H}_b.$ We will look at the so-called chordal differential equation for this class and use $l(f)$ - the so-called half-plane capacity -  as a time parameter for this equation.

\section{Hulls and half-plane capacity}

A bounded subset $A\subset\Ha$ with the property $A=\overline{A}\cap\Ha$ such that $\Ha\setminus A$ is simply connected is called a \textit{(compact) hull}. By $g_A$ we denote the unique conformal map $g_A:\Ha\setminus A\to \Ha$ with \textit{hydrodynamic normalization}, which means $g_A^{-1}\in \mathcal{H}_b,$ i.e. $$g_A(z)=z+\frac{b}{z}+ \LandauO(|z|^{-2}),\quad  b>0,\quad \text{for}\; z\to\infty.$$

The quantity $\hcap(A):=b$ is called \textit{half-plane capacity} of $A.$ In some sense, $\hcap(A)$ is the size of $A$ seen from $\infty.$ This can be made precise by a probabilistic formula that involves Brownian motions hitting the hull $A$. It will be used in the proof of Lemma \ref{ThomasFundamentallemma}. The half-plane capacity $\hcap(A)$ is comparable to the simpler geometric quantity ``$\operatorname{hsiz}(A)$'', introduced in \cite{MR2576752}.

\begin{theorem}\label{hsiz}(Theorem 1 in \cite{MR2576752})
Let $A$ be a compact hull and $$\operatorname{hsiz}(A)=\operatorname{area}\left(\bigcup_{x+iy\in A}\mathcal{B}(x+iy,y)\right).$$ Then $$\frac1{66}\operatorname{hsiz}(A)<\hcap(A)\leq \frac7{2\pi}\operatorname{hsiz}(A).$$
\end{theorem}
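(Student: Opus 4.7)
My plan is to exploit the probabilistic characterization of the half-plane capacity,
\[
\hcap(A) \;=\; \lim_{y\to\infty} y\cdot \mathbb{E}^{iy}\bigl[\,\Im B_{\tau}\bigr],
\]
where $B$ is a standard complex Brownian motion and $\tau$ is the first hitting time of $A\cup\R$, and to match it against the corresponding geometric bookkeeping for $\operatorname{hsiz}(A)$. From this representation one immediately reads off monotonicity of $\hcap$ and subadditivity $\hcap(A_{1}\cup A_{2})\leq\hcap(A_{1})+\hcap(A_{2})$. The core idea of the comparison is that a single point $z=x+iy\in A$ is ``responsible'' for its tangent-to-$\R$ disc $\mathcal{B}(z,y)$: this disc contributes area $\pi y^{2}$ to $\operatorname{hsiz}(A)$, and an explicit calculation (map $\Ha\setminus \mathcal{B}(i,1)$ to $\Ha$ by $z\mapsto -1/z$ followed by a strip-to-half-plane map and a M\"obius normalization) gives $\hcap(\mathcal{B}(x+iy,y))=\tfrac{\pi^{2}}{3}y^{2}$, a value of the same order.

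For the \emph{upper bound} $\hcap(A)\leq \tfrac{7}{2\pi}\operatorname{hsiz}(A)$ I would combine this single-disc calculation with a Vitali-type covering: from the family $\{\mathcal{B}(z,\Im z)\}_{z\in A}$ extract a countable disjoint sub-family $\{\mathcal{B}(z_{i},\Im z_{i})\}$ whose controlled enlargements still cover $\bigcup_{z\in A}\mathcal{B}(z,\Im z)\supseteq A$. Subadditivity of $\hcap$ together with the single-disc bound then gives $\hcap(A)\leq \sum_{i}\hcap(\text{enlarged disc}_{i})\leq C\sum_{i}(\Im z_{i})^{2}\leq\tfrac{7}{2\pi}\operatorname{hsiz}(A)$. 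Because the target constant $7/(2\pi)\approx 1.114$ is only slightly larger than the single-disc ratio $\pi/3\approx 1.047$, the enlargement factor and the bound on overlap multiplicity must be optimised almost to the breaking point.

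For the \emph{lower bound} $\tfrac{1}{66}\operatorname{hsiz}(A)<\hcap(A)$ I would read the Brownian formula in the opposite direction: for any $z_{0}=x_{0}+iy_{0}\in A$, a Brownian motion from $iy$ (with $y$ large) hits the tangent disc $\mathcal{B}(z_{0},y_{0})$ before $\R$ with probability of order $y_{0}/y$ (bounded below by the Poisson kernel of $\Ha$), and conditionally on hitting it, $\Im B_{\tau}$ is at least a fixed fraction of $y_{0}$. This yields a universal lower bound on the per-disc contribution to $\hcap$. Applying a Besicovitch packing to $\{\mathcal{B}(z,\Im z)\}_{z\in A}$ with bounded multiplicity, I would extract a disjoint sub-family whose total area captures a definite fraction of $\operatorname{hsiz}(A)$; summing the individual Brownian contributions over this sub-family then yields $\hcap(A)\geq c\,\operatorname{hsiz}(A)$.

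The main obstacle is pinning down the \emph{explicit} constants $\tfrac{7}{2\pi}$ and, above all, $\tfrac{1}{66}$. Both bounds require a simultaneous optimisation of the covering geometry (enlargement ratio and multiplicity in the Vitali/Besicovitch step), the Poisson-kernel estimates for tangent discs -- which degenerate because the discs touch $\R$ -- and the loss incurred when replacing $A$ by a disjoint family of model discs. A secondary technical point is that the discs $\mathcal{B}(z,\Im z)$ are not themselves compact hulls in the sense of the paper, because their closures meet $\R$; all intermediate statements must therefore be stabilised by an $\varepsilon$-truncation, keeping each disc strictly inside $\Ha$, and then passed to the limit using continuity of $\hcap$ under monotone convergence of hulls.
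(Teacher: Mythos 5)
The thesis does not actually prove this statement; it is quoted verbatim from Lalley--Lawler--Narayanan \cite{MR2576752}, so there is no in-paper argument to compare yours with. Your toolbox is nevertheless the right one: the representation $\hcap(A)=\lim_{y\to\infty}y\,\mathbb{E}^{iy}[\Im B_\tau]$ is the engine of the original proof, your single-disc value $\hcap(\mathcal{B}(x+iy,y))=\tfrac{\pi^2}{3}y^2$ is correct (map $z\mapsto -1/z$ onto a strip of height $1/(2y)$ and expand at the tangent point), and a version of your plan does prove $\hcap(A)\asymp\operatorname{hsiz}(A)$. But as a proof of the stated inequalities it has two genuine gaps, and they are not just matters of bookkeeping.

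For the upper bound, the Vitali route is arithmetically incapable of producing $\tfrac{7}{2\pi}$. Any Vitali extraction requires an enlargement factor $\lambda\ge 3$, and the enlarged hull $\mathcal{B}(z_i,\lambda \Im z_i)\cap\Ha$ already contains the half-disc of radius $\sqrt{\lambda^2-1}\,\Im z_i$ centered at $\Re z_i\in\R$, whose half-plane capacity is $(\lambda^2-1)(\Im z_i)^2\ge 8(\Im z_i)^2$; since the disjoint discs give you only $\sum_i\pi(\Im z_i)^2\le\operatorname{hsiz}(A)$, the best constant this scheme can output is at least $8/\pi\approx 2.55$, more than twice $\tfrac{7}{2\pi}\approx 1.11$. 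The sharp upper constant has to come from estimating $y\,\mathbb{E}^{iy}[\Im B_\tau]$ directly against $\operatorname{area}(\bigcup_z\mathcal{B}(z,\Im z))$, not from a disc-by-disc subadditivity bound. For the lower bound, the step ``summing the individual Brownian contributions'' is not licensed as written: $\hcap$ is subadditive, not superadditive, and the tangent discs are not subsets of $A$, so one piece of $A$ can screen another from the Brownian particle arriving from $iy$. The missing lemma is an anti-screening estimate: if the path enters $\mathcal{B}(z_j,\Im z_j/2)$, then with probability bounded below it hits $A\cup\R$ at a point of imaginary part $\gtrsim\Im z_j$ before leaving $\mathcal{B}(z_j,\Im z_j)$ (this uses that simple connectivity of $\Ha\setminus A$ forces $\overline{A}\cup\R$ to contain a connected set joining $z_j$ to $\R$, plus Beurling's estimate), after which disjointness of the selected discs prevents double counting of the single random variable $\Im B_\tau$. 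Even then, the trivial per-point bound $\hcap(A)\ge(\Im z_j)^2/2$ combined with a factor-$5$ Vitali selection only yields $\hcap(A)\ge\operatorname{hsiz}(A)/(50\pi)$, which is weaker than $\tfrac{1}{66}$; so the covering itself must be tuned more finely than your sketch allows. In short: correct strategy for comparability, but the stated constants are out of reach of the argument as proposed.
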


We summarize four basic properties of $hcap$ in the following lemma.

\begin{lemma}\label{hcap1}
 Let $A, A_1,A_2$ be hulls.
\begin{itemize}
 \item[a)] If $A_1\cup A_2$ and $A_1\cap A_2$ are hulls, then $$\hcap(A_1)+\hcap(A_2)\geq \hcap(A_1\cup A_2)+\hcap(A_1\cap A_2).$$
\item[b)] If  $A_1\subset A_2,$ then $\hcap(A_2)=\hcap(A_1)+\hcap(g_{A_1}(A_2\setminus A_1))\geq\hcap(A_1).$
\item[c)] If  $A_1 \cup A_2$ is a hull and $A_1 \cap A_2=\emptyset$, then
$\hcap(g_{A_1}(A_2)) \leq \hcap(A_2).$
\item[d)] If $\lambda>0$, then $\hcap(\lambda\cdot A)=\lambda^2 \cdot \hcap(A)$ and $\hcap(A\pm\lambda)=\hcap(A)$.
\end{itemize}
\end{lemma}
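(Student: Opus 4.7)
The four statements have quite different character: (d) and (b) are essentially formal, (c) follows from a Brownian-motion comparison, and (a) is the substantial ``strong subadditivity'' step. I will treat them in that order, with (a) as the main obstacle.

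For (d), both $z\mapsto g_A(z-\lambda)+\lambda$ and $z\mapsto \lambda g_A(z/\lambda)$ are hydrodynamically normalized conformal maps onto $\Ha$ from $\Ha\setminus(A+\lambda)$ and $\Ha\setminus(\lambda A)$, respectively; by uniqueness they coincide with $g_{A+\lambda}$ and $g_{\lambda A}$, and reading off the $1/z$ coefficient in the Laurent expansions gives $\hcap(A+\lambda)=\hcap(A)$ and $\hcap(\lambda A)=\lambda^2\hcap(A)$. For (b), let $B:=g_{A_1}(A_2\setminus A_1)$, which is again a hull. The map $g_B\circ g_{A_1}:\Ha\setminus A_2\to\Ha$ is univalent and hydrodynamically normalized, so by uniqueness it equals $g_{A_2}$. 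Composing the two expansions,
\[ g_B(g_{A_1}(z)) = z+\hcap(A_1)/z+\hcap(B)/z+O(1/z^2), \]
and matching with $g_{A_2}(z)=z+\hcap(A_2)/z+O(1/z^2)$ yields $\hcap(A_2)=\hcap(A_1)+\hcap(B)$. Since $\hcap(B)\geq 0$, monotonicity is immediate.

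For (c), I would use the probabilistic representation $\hcap(A)=\lim_{y\to\infty}y\cdot h_A(iy)$, where $h_A(z):=\mathbb{E}_z[\Im B_{\tau_A}]=\Im z - \Im g_A(z)$, with $B$ a planar Brownian motion and $\tau_A$ the first exit time of $\Ha\setminus A$ (the formula alluded to before Theorem \ref{hsiz}). By conformal invariance, starting BM from $iy$, running it to the first exit $\sigma$ of $\Ha\setminus(A_1\cup A_2)$, and applying $g_{A_1}$ realizes the exit of $\Ha\setminus g_{A_1}(A_2)$ from $g_{A_1}(iy)$. Exits through $\R\cup A_1$ are mapped into $\R$ and contribute zero imaginary part, so only exits through $A_2$ count. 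Combining this with $\Im g_{A_1}(z)\leq \Im z$ on $\Ha\setminus A_1$ (an immediate consequence of the Nevanlinna representation \eqref{Nevanlinna}) and the containment
\[ \{\text{BM hits }A_2\text{ before }\R\cup A_1\}\subset\{\text{BM hits }A_2\text{ before }\R\} \]
gives $h_{g_{A_1}(A_2)}(g_{A_1}(iy))\leq h_{A_2}(iy)$. Multiplying by $y$ and using $\Im g_{A_1}(iy)\sim y$ produces (c) in the limit.

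For (a)---the core of the lemma---I would reduce to the pointwise inequality
\[ F(z):=h_{A_1}(z)+h_{A_2}(z)-h_{A_1\cup A_2}(z)-h_{A_1\cap A_2}(z)\geq 0 \]
on $\Ha\setminus(A_1\cup A_2)$, from which (a) follows by taking $z=iy$ and letting $y\to\infty$. The function $F$ is harmonic on that domain, vanishes on $\R$ and at $\infty$, and reduces to zero on the boundary piece inside $A_1\cap A_2$ (each of the four terms equals $\Im z$). On the boundary piece in $A_1\setminus A_2$ one has $h_{A_1}=h_{A_1\cup A_2}=\Im z$, so $F=h_{A_2}-h_{A_1\cap A_2}\geq 0$ by the Brownian-motion monotonicity $A'\subset A''\Rightarrow h_{A'}\leq h_{A''}$; symmetrically on $A_2\setminus A_1$. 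The minimum principle then yields $F\geq 0$ throughout. The principal obstacle is the boundary analysis: for arbitrary (possibly non-smooth) hulls, justifying the claimed boundary values of $h_A$ on $\partial A$ and the rigorous application of the maximum principle requires either approximation of hulls by smoother ones (while controlling capacities) or an appeal to Perron-Wiener-Brelot theory for the Dirichlet problem. Once this technical point is secured, the rest is formal.
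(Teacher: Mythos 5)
Your argument is correct, but it is organized quite differently from the proof in the text. The paper does not prove a) and b) at all --- it simply cites \cite[p.~71]{Lawler:2005} --- and then obtains c) as a two-line formal consequence of a) and b): when $A_1\cap A_2=\emptyset$, part b) gives $\hcap(A_1\cup A_2)=\hcap(A_1)+\hcap(g_{A_1}(A_2))$, while a) gives $\hcap(A_1\cup A_2)\le\hcap(A_1)+\hcap(A_2)$, and subtracting yields c). You instead prove everything from scratch: b) by composing the normalized maps and matching Laurent coefficients (which is exactly the cited argument), a) by the maximum-principle argument for $F=h_{A_1}+h_{A_2}-h_{A_1\cup A_2}-h_{A_1\cap A_2}$, and c) by a direct Brownian-motion comparison. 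Your route makes the lemma self-contained, at the cost of re-deriving c), which a) and b) already hand you for free; the paper's route is shorter but leans entirely on the reference for the substantial parts. Two small remarks on your proof of a). First, the boundary regularity you flag as ``the principal obstacle'' is not really one: every boundary point of a simply connected planar domain whose complement in $\hat{\C}$ is a non-degenerate continuum is regular for the Dirichlet problem, so $h_A(z)\to\Im z_0$ as $z\to z_0\in\partial A\cap\Ha$ for an arbitrary hull $A$, and no approximation or Perron--Wiener--Brelot machinery is needed. Second, your boundary case analysis omits points $z_0\in\overline{A_1}\cap\overline{A_2}$ with $z_0\notin\overline{A_1\cap A_2}$ (the closures may touch even when $A_1\cap A_2=\emptyset$); there the first three terms of $F$ all tend to $\Im z_0$ while $h_{A_1\cap A_2}(z_0)\le\Im z_0$, since $h_B\le\Im z$ pointwise for every hull $B$, so $\liminf F\ge 0$ still holds and the minimum principle goes through.
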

\begin{proof} For a) and b) see \cite[p.~71]{Lawler:2005}. 
Now let $A_1 \cup A_2$ be a hull such that $A_1 \cap A_2=\emptyset$.
Then b) implies
$\hcap(A_1)+\hcap(g_{A_1}(A_2))=\hcap(A_1\cup A_2)$, while
a) shows $\hcap(A_1\cup A_2) \le \hcap(A_1)+\hcap(A_2)$. This proves c).\\
Finally,  note that $g_{\lambda A}(z)=\lambda g_A(z/\lambda) $ and $g_{A\pm\lambda}(z)=g_A(z\mp\lambda)\pm\lambda$. This proves d).\\
\end{proof}

Note that by Schwarz reflection, $g_A$ has an analytic continuation across 
$\R \backslash \overline{A}$ with $g_A(\R \backslash \overline{A}) \subseteq \R$
and $g_A^{-1}$ is analytic at $g_A(x)$ for every $x \in \R \backslash
\overline{A}$. If we denote by $\mu$ the measure in the Nevanlinna representation of $g_A^{-1}$, then the Stieltjes inversion formula (see \cite[Thm.~5.4]{MR1307384})
shows that $g_A(x) \not\in \supp(\mu_A)$ for every $x \in \R \backslash \overline{A}$.

The following lemma shows that $g_A$ is expanding outside the closed convex hull of
$\overline{A} \cap \R$ and nonexpanding in between points of $\R\setminus \overline{A}$.

\begin{lemma} \label{lem:hcapneu}
Let $A$ be a hull.
\begin{itemize}
\item[a)] If $\overline{A} \cap \R$ is contained in the closed interval
  $[a,b]$, then  $g_A(\alpha) \le \alpha$ for every $\alpha \in \R$ with
  $\alpha<a$ and  $g_A(\beta) \ge \beta$ for every $b \in \R$ with $\beta >b$.
\item[b)] If the open interval $(a,b)$ is contained in $\R \backslash
  \overline{A}$, then 
$|g_A(\beta)-g_A(\alpha)| \le |\beta-\alpha|$ for all $\alpha, \beta \in (a,b)$.
\end{itemize}
\end{lemma}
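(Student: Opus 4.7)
The plan is to exploit the Nevanlinna representation
$$g_A^{-1}(w) = w + \int_\R \frac{d\mu(x)}{x-w},\qquad w\in\Ha,$$
which the excerpt attaches to $g_A^{-1}\in\mathcal{H}_b$, together with the fact noted just before the lemma that $g_A(x)\notin\supp\mu$ for every $x\in\R\setminus\overline{A}$. Taking radial limits at real $w\notin\supp\mu$ (the integrands are bounded near such $w$), the same identity and its differentiated form
$$g_A^{-1}(w)=w+\int_\R\frac{d\mu(x)}{x-w},\qquad (g_A^{-1})'(w)=1+\int_\R\frac{d\mu(x)}{(x-w)^2}$$
remain valid on $\R\setminus\supp\mu$. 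The second formula gives $(g_A^{-1})'(w)\ge 1$ there, so the identity $g_A'(x)=1/(g_A^{-1})'(g_A(x))$ yields $g_A'(x)\in(0,1]$ for every $x\in\R\setminus\overline{A}$. This is already the engine for part (b).

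For part (b), the assumption $(a,b)\subseteq\R\setminus\overline{A}$ ensures $g_A$ is real-analytic on $(a,b)$ and that the above pointwise bound $g_A'\le 1$ holds throughout $(a,b)$. Integrating, for $\alpha<\beta$ in $(a,b)$,
$$|g_A(\beta)-g_A(\alpha)|=\int_\alpha^\beta g_A'(x)\,dx\le \beta-\alpha.$$

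For part (a), the strategy is to show that when $\alpha<a$ the value $g_A(\alpha)$ lies strictly below every point of $\supp\mu$, so that the integrand $(x-g_A(\alpha))^{-1}$ in the Nevanlinna identity is nonnegative and yields
$$\alpha-g_A(\alpha)=\int_\R\frac{d\mu(x)}{x-g_A(\alpha)}\ge 0.$$
To justify $g_A(\alpha)<\min\supp\mu$, I would combine three ingredients: $g_A'>0$ on $(-\infty,a)$ (already established in part (b)), so $g_A$ is strictly increasing there; the asymptotic $g_A(x)=x+\hcap(A)/x+\LandauO(|x|^{-2})$ forces $g_A(x)\to-\infty$ as $x\to-\infty$; and the image $g_A((-\infty,a))$ is a connected subset of $\R\setminus\supp\mu$ unbounded below. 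Consequently $g_A((-\infty,a))$ is a connected component of $\R\setminus\supp\mu$ of the form $(-\infty,s)$ with $s\le\min\supp\mu$, which gives the desired strict inequality for $g_A(\alpha)$. The statement for $\beta>b$ follows from the symmetric argument on $(b,\infty)$ using $\max\supp\mu$.

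The main obstacle is this last topological identification in part (a): one must rule out that $g_A((-\infty,a))$ somehow straddles points of $\supp\mu$. A priori the endpoint $a$ may be a highly singular boundary point of $\Ha\setminus A$, but once monotonicity of $g_A$ on $(-\infty,a)$ is in place and combined with the characterization of $\supp\mu$ as precisely the complement in $\R$ of the set where $g_A^{-1}$ extends analytically, the image is forced to be a single component of $\R\setminus\supp\mu$. After that technical point is secured, both halves of the lemma reduce to sign and magnitude computations with the Nevanlinna integral.
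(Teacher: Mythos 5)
Your proposal is correct and follows essentially the same route as the paper: both parts rest on the Nevanlinna representation of $g_A^{-1}$ combined with the fact that $g_A(\alpha)$, $g_A(\beta)$ (and, for part a), the entire ray below $g_A(\alpha)$) avoid $\supp(\mu_A)$, so the relevant integrands have a definite sign. The only cosmetic differences are that for b) the paper subtracts the two Nevanlinna identities at $g_A(\alpha)$ and $g_A(\beta)$ rather than differentiating to get $0<g_A'\le 1$ and integrating, and for a) you spell out the topological step (the image of $(-\infty,a)$ is a connected, unbounded-below subset of $\R\setminus\supp(\mu_A)$, hence lies below all of $\supp(\mu_A)$) that the paper asserts without detail.
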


\begin{proof}
a) Let $\alpha<a$. If we denote by $\mu_A$ the measure in the  Nevanlinna representation  
(\ref{Nevanlinna}) for $f:=g_A^{-1}$, then we obtain for $z=g_A(\alpha)$:
$$ g_A(\alpha)=f(g_A(\alpha))-\int \limits_{\R}
\frac{1}{x-g_A(\alpha)} \, \mu_A(dx).$$
Since the interval $(-\infty,g_A(\alpha)]$ has no point in common with
$\supp(\mu_A)$, we actually integrate over a set for which the integrand is
nonnegative, so $g_A(\alpha) \le \alpha$. The proof of $g_A(\beta) \ge \beta$
for every $\beta>b$ is similar.

\smallskip

b) Let $\alpha,\beta \in (a,b)$ and assume $\alpha \le \beta$, so
$g_A(\alpha) \le g_A(\beta)$. If we subtract (\ref{Nevanlinna}) for
$z=g_A(\alpha)$ from (\ref{Nevanlinna}) for $z=g_A(\beta)$, then a short
computation leads to
$$ g_A(\beta)-g_A(\alpha)=\beta-\alpha+\int \limits_{\R}
\frac{g_A(\alpha)-g_A(\beta)}{(x-g_A(\alpha)) (x-g_A(\beta))} \, \mu_A(dx) \,
.$$
Since the closed interval $[g_A(\alpha),g_A(\beta)]$ is disjoint from 
$\supp(\mu_A)$, we integrate over a set for which the integrand is
nonpositive, so $0 \le g_A(\beta)-g_A(\alpha) \le \beta-\alpha$.
\end{proof}

\section{The chordal Loewner differential equation for \texorpdfstring{$\mathcal{H}_b$}{Hb}}

Next we would like to have an ordinary Loewner equation for evolution families with the property that every element is contained in the class $\mathcal{H}_b.$ \\
For this, assume that $\varphi_{s,t}$ is an evolution family in $\D$ which does not contain automorphisms of $\D$ whenever $s<t.$ Recall that all non-identical elements of an evolution family $\varphi_{s,t}$ in $\D$ have the same Denjoy-Wolff point $1$ if and only if the Herglotz vector field $G(z,t)$ has the form $G(z,t)=(z-1)^2 p(z,t), \Re p(z,t)\geq 0$ for all $z\in\D$ and almost all $t\geq 0,$ see p. \pageref{loe}.\\
 By conjugating the corresponding semigroup of an infinitesimal generator $G$ of the form $G(z)=(z-1)^2p(z)$ with the Cayley transform $C(z)=\frac{z-i}{z+i}$,  we obtain an infinitesimal generator $\tilde{G}$ for $\Ha$ with $\tilde{G}(z)=2ip(C(z)),$ $z\in\Ha$, i.e. we get exactly all functions that map the upper half-plane onto the upper half-plane.\\

The next Theorem presents a Loewner equation of ``type III'' (see p. \pageref{Thomas}), i.e. a reversed Loewner ODE, which generates functions whose inverses belong to $\mathcal{H}_b.$  

\begin{theorem}[Theorem 4.6 in \cite{Lawler:2005}]\label{Gregory}
 Suppose $\{\mu_t\}_{t\geq0}$ is a family of Borel probability measures on $\R$ such that $t\mapsto \mu_t$ is continuous in the weak topology, and for each $t$,  there is an $M_t<\infty$ such that $\supp \mu_s\subset [-M_t,M_t],$ $s\leq t.$ For each $z\in \Ha,$ let $g_t(z)$ denote the solution of the initial value problem \begin{equation}\label{chordal}
  \dot{g}_t(z)=\int_\R \frac{2}{g_t(z)-u}\, \mu_t(du), \quad g_0(z)=z.                                                                                                          \end{equation}
Let $T_z$ be the supremum of all $t$ such that the solution exists up to time $t$ and $g_t(z)\in \Ha.$ Let $H_t:=\{z\in \Ha \with T_z>t\},$ then $g_t$ is the unique conformal mapping of $H_t$ onto $\Ha$ with hydrodynamic normalization and $g_t(z)=z+\frac{2t}{z}+\LandauO(|z|^{-2})$ for $z\to\infty.$  
\end{theorem}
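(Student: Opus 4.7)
The first step is to set up the vector field $F(z,t) := \int_\R \frac{2}{z-u}\,\mu_t(du)$ and verify that it is holomorphic in $z$ on $\Ha$ and jointly continuous on $\Ha\times[0,\infty)$, with $F(\cdot,t)$ locally Lipschitz in $z$ uniformly in $t$ on compact subsets of $\Ha$. The hypotheses (weak continuity of $t\mapsto\mu_t$ together with the uniform support bound $\supp\mu_s\subset[-M_t,M_t]$ for $s\le t$) deliver both joint continuity in $(z,t)$ (via weak convergence applied to the continuous, $u$-uniformly approximable integrand) and the required Lipschitz estimate. Classical Picard--Lindel\"of then yields a unique local solution $g_t(z)$ of (\ref{chordal}) for every $z\in\Ha$, defined up to a maximal time $T_z\in(0,\infty]$ at which either $\Im g_t(z)\to 0$ or $|g_t(z)|\to\infty$. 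Holomorphic dependence on the parameter $z$ is standard in the Cauchy theory for parameter-dependent ODEs, so $g_t$ is holomorphic on the open set $H_t=\{z\in\Ha:T_z>t\}$.

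Next I would establish the three analytic properties: (i) $g_t(H_t)\subset\Ha$, (ii) $g_t$ is injective on $H_t$, (iii) $g_t$ is surjective onto $\Ha$. For (i), a direct calculation gives
$$ \tfrac{d}{dt}\Im g_t(z) \;=\; -\int_\R \frac{2\,\Im g_t(z)}{|g_t(z)-u|^2}\,\mu_t(du) \;<\; 0, $$
so $\Im g_t(z)$ is strictly decreasing and the solution remains in $\Ha$ as long as it exists. For (ii), if $g_t(z_1)=g_t(z_2)$ then running the ODE backward in time (well-posed as long as one stays in $\Ha$) forces $z_1=z_2$ by uniqueness. For (iii), given $w\in\Ha$, I solve the time-reversed ODE
$$ \dot y_s \;=\; -\int_\R\frac{2}{y_s-u}\,\mu_{t-s}(du), \qquad y_0=w, $$
on $[0,t]$. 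The same computation with a sign flip shows that $\Im y_s$ is now strictly \emph{increasing} and bounded below by $\Im w$, so the solution never leaves $\Ha$ and exists for all $s\in[0,t]$; setting $z:=y_t$, the identity $g_s(z)=y_{t-s}$ shows $T_z>t$ and $g_t(z)=w$.

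For the hydrodynamic normalization I would argue that once $|z|$ is large compared with $M_t$, the vector field satisfies $|F(z,s)|\le 2/(|z|-M_t)$, and a Gr\"onwall-type estimate forces $g_s(z)$ to stay close to $z$ throughout $[0,t]$ and hence inside $\Ha$; this proves that $\Ha\setminus H_t$ is bounded, so $\Ha\setminus H_t$ is a hull. The asymptotic expansion follows by writing
$$ g_t(z)-z \;=\; \int_0^t\int_\R \frac{2}{g_s(z)-u}\,\mu_s(du)\,ds, $$
expanding $\frac{1}{g_s(z)-u}=\frac{1}{g_s(z)}+\frac{u}{g_s(z)^2}+\LandauO(|g_s(z)|^{-3})$ and using that $\mu_s$ is a probability measure supported in $[-M_t,M_t]$ together with $g_s(z)=z+\LandauO(|z|^{-1})$ uniformly in $s\in[0,t]$; the leading term integrates to exactly $2t/z$. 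Since $g_t$ is thus a holomorphic bijection $H_t\to\Ha$ with Laurent tail $z+\frac{2t}{z}+\LandauO(|z|^{-2})$, the standard uniqueness of conformal maps with hydrodynamic normalization (an automorphism of $\Ha$ fixing $\infty$ with identity leading term must be the identity) identifies $g_t$ with $g_{\Ha\setminus H_t}$.

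The crux is the surjectivity step (iii): it is essential that in the time-reversed equation the singularities \emph{repel} rather than attract, so global existence on $[0,t]$ is automatic; this works precisely because one integrates against $\mu_{t-s}$, not $\mu_s$. The remaining delicacy lies in extracting the coefficient $2t/z$ cleanly in the expansion at infinity, for which the uniform support bound on $\{\mu_s\}_{s\le t}$ is crucial in justifying the uniform error estimate and the interchange of limits.
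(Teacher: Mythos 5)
Your argument is correct and is essentially the standard proof of this result (the thesis itself does not prove the theorem but defers to Theorem 4.6 in \cite{Lawler:2005}, where the same strategy is carried out): Picard--Lindel\"of with the uniform Lipschitz bound $|\partial_z F|\le 2/(\Im z)^2$, monotonicity of $\Im g_t$, backward uniqueness for injectivity, the reversed flow $\dot y_s=-\int 2(y_s-u)^{-1}\mu_{t-s}(du)$ for surjectivity, and the Gr\"onwall estimate near $\infty$ for boundedness of the complement and the expansion $z+2t/z+\LandauO(|z|^{-2})$. All the key points, in particular that the reversed flow repels from $\R$ and hence exists on all of $[0,t]$, and that the probability-measure normalization produces exactly the coefficient $2t$, are correctly identified.
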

It is easy to see that the domains $H_t$ are strictly decreasing, i.e. $H_s\subsetneq H_t$ for all $t<s$ and that $H_t=\Ha\setminus K_t$ for a (compact) hull $K_t.$ Thus, equation (\ref{chordal}) generates a family $\{K_t\}_{t\geq0}$ of strictly increasing hulls with $\hcap(K_t)=2t$ for all $t\geq 0.$ 
\begin{remark}
In \cite{MR2107849}, section 5, the author treats a more general equation and proves a  one-to-one correspondence between certain evolution families in the class $\mathcal{H}_u$  and measurable families of probability measures on $\R.$
\end{remark}

Let us have a look at some examples for equation (\ref{chordal}).
 \begin{itemize}
\item A very important and simple case is $\mu_t=\delta_{U(t)},$ i.e. $\mu_t$ is the point measure with mass 1 in $U(t)$, with $U(t)$ being a continuous, real-valued function, called \textit{driving function}. We call the corresponding equation the \emph{chordal one-slit equation}. For sufficiently regular driving functions, the hulls will describe a slit, i.e. a simple curve growing from $U(0)$ into $\Ha$, see Section \ref{Sip}.
\item Let $\mu_t$  be a convex combination of $n$ point measures, i.e. $\mu_t=\lambda_1(t)\delta_{U_1(t)}+...+\lambda_n(t)\delta_{U_n(t)}$, where $\lambda_1(t),...,\lambda_n(t)$ map into the interval $[0,1]$ and $\lambda_1(t)+...+\lambda_n(t)=1$ for all $t$.
In this case, (\ref{chordal}) will be called the \emph{chordal multiple-slit equation}. We will discuss several properties of this equation in the following sections.
\item If every $\mu_t$ has a density with respect to the Lebesgue measure on $\R$, then equation  (\ref{chordal}) will generate some bubble- or tube-shaped hulls, see \cite{sola} for examples. 
\end{itemize}

Let $\{K_t\}_{t\geq 0}$ be a family of hulls generated by equation (\ref{chordal}) and the family $\{\mu_t\}_{t\geq 0}$ of probability measures.
There are four simple operations on $\{K_t\}$ which can be translated into changes of the family $\{\mu_t\}$ by calculating the changes of the half-plane capacity (Lemma \ref{hcap1}) and the corresponding conformal mappings, see also \cite{LindMR:2010}, Section 2.1.\\
 Let $d>0$ and $s>0$, then we have the following properties:

\begin{itemize}
\item \textit{Truncation: } For $t\geq s,$ the hulls $g_s(K_{s+t}\setminus K_s)$ correspond to the measures $\mu_{t+s}.$
 \item \textit{Reflection in $i\R$:} The reflected hulls $-\overline{K_t}$ correspond to the measures $A \mapsto \mu_t(-A)$.
\item \textit{Scaling:} The scaled hulls $dK_t$ correspond to $A\mapsto \mu_{t/d^2}(d^{-1}\cdot A)$.
\item \textit{Translation:} The translated hulls $K_t+d$ correspond to $A\mapsto \mu_t(A-d)$. 
\end{itemize}

\section{The backward equation}

A very useful tool to study the behavior of hulls generated by equation (\ref{chordal}) is its corresponding, so called \textit{backward equation}. For a given time $T,$ we can generate the inverse function $g_T^{-1}$ by simply reversing the flow of \ref{chordal}, i.e. we consider the initial value problem  
\begin{equation}\label{2slitsb} \dot{f}_t(z)=\int_\R \frac{-2 }{f_t(z)-u}\;\mu_{T-t}(du),\quad f_0(z)=z\in\Ha.  \end{equation}
For every $z\in \Ha$, $f_t(z)$ is defined for all $t\in[0,T]$ and we have $$f_T(z)=g_T^{-1}(z).$$ (However, $f_t(z)\not=g_t^{-1}(z)$ for $t\in(0,T)$ in general.) \\
Note that equation (\ref{2slitsb}) is of ``type I'', i.e. it is just an ordinary Loewner equation for evolution families.

We will need the following simple estimation for the imaginary part of hulls generated by equation (\ref{chordal}).
\begin{lemma}\label{imestim}
Let $\{K_t\}_{t\in[0,T]}$ be the family of hulls generated by equation (\ref{chordal}) with $0\leq t \leq T$. Then $$\max_{z\in K_T}\Im(z)\leq 2\sqrt{T}.$$
\end{lemma}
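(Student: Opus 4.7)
The natural approach is to exploit the backward equation (\ref{2slitsb}), since the inverse $g_T^{-1}=f_T$ pushes points from $\Ha$ out onto $\overline{H_T}$, and the boundary of $K_T$ inside $\Ha$ corresponds to points reached by starting near the real axis. So I would fix $z\in\Ha$ and track the imaginary part $y_t:=\Im f_t(z)$ along the flow. A direct computation gives
\[
\Im\!\left(\frac{-2}{f_t(z)-u}\right)=\frac{2y_t}{|f_t(z)-u|^2},
\]
so in particular $\dot y_t\geq 0$. Using the trivial bound $|f_t(z)-u|^2\geq y_t^2$ and the fact that each $\mu_{T-t}$ is a probability measure, I obtain the differential inequality
\[
\dot y_t \;=\; \int_\R\frac{2y_t}{|f_t(z)-u|^2}\,\mu_{T-t}(du)\;\leq\;\frac{2}{y_t}.
\]

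From this, $\tfrac{d}{dt}(y_t^2)=2y_t\dot y_t\leq 4$, and integrating from $0$ to $T$ yields the Gronwall-type bound
\[
y_T^2 \;\leq\; y_0^2 + 4T.
\]
Now let $w\in K_T$ with $\Im(w)>0$. Since $K_T=\overline{K_T}\cap\Ha$ is bounded and closed in $\Ha$, the point of $K_T$ with maximal imaginary part necessarily lies on $\partial K_T\cap\Ha$ (any point just above it in the vertical direction cannot belong to $K_T$), hence is accessible from $H_T$. Choose a sequence $w_n\in H_T$ with $w_n\to w$ and put $z_n:=g_T(w_n)\in\Ha$. Because $K_T$ is bounded and the boundary points of $H_T$ that lie in $\Ha$ are sent by $g_T$ to (bounded) points of $\R$, we have $\Im(z_n)\to 0$. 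Applying the displayed estimate with initial value $z_n$ gives $\Im(w_n)^2 = \Im(f_T(z_n))^2 \leq \Im(z_n)^2 + 4T$, and letting $n\to\infty$ yields $\Im(w)\leq 2\sqrt T$.

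The main step that has to be done carefully is the conformal boundary argument in the last paragraph, i.e.\ checking that the top of $K_T$ is approachable from $H_T$ and that the preimages under $g_T$ of approximating sequences have imaginary part tending to $0$; everything else is a short ODE computation. Once this is in place, taking the supremum over $w\in K_T$ gives the required inequality $\max_{z\in K_T}\Im(z)\leq 2\sqrt T$.
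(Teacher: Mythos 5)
Your proof is correct and follows essentially the same route as the paper: integrate the backward equation, compute $\dot y_t=\int 2y_t/((x_t-u)^2+y_t^2)\,\mu_{T-t}(du)\le 2/y_t$, deduce $y_T^2\le y_0^2+4T$, and let $y_0\downarrow 0$. The boundary step you flag at the end -- that the highest point of $K_T$ is the limit of points $f_T(z_n)$ with $\Im z_n\to 0$ -- is precisely what the paper's one-line ``letting $y_0\downarrow 0$'' leaves implicit, and your accessibility argument via the vertical segment is a sensible way to fill it in (strictly speaking one should argue via the limit along that segment, i.e.\ prime ends, rather than a pointwise boundary extension of $g_T$, but this is a matter of polish, not of substance).
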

\begin{proof}
We consider the backward equation for $f_T=g_T^{-1}.$ For any $x_0\in\R$ and $y_0\in (0,\infty)$ write $f_t(x_0+iy_0)=x_t+iy_t.$ (\ref{2slitsb}) gives 
 \begin{eqnarray*} \dot{y}_t=\int_\R \frac{2 y_t }{(x_t-u)^2+y_t^2}\;\mu_{T-t}(du)
\leq  \int_\R \frac{2 y_t}{y_t^2}\; \mu_{T-t}(du)=\frac{2}{y_t}.\end{eqnarray*}
Thus $y_t\leq \sqrt{4t+y_0^2}.$ Letting $t\uparrow T$ and $y_0\downarrow 0$ gives $y_T\leq 2\sqrt{T}.$
\end{proof}

Another simple application of equation (\ref{2slitsb}) implies the following property for the class $\mathcal{H}_u$.
\begin{theorem} \label{thm:halfplane1}
Let $z_0 \in \Ha$. Then
$$ \left\{f(z_0) \with f \in \mathcal{H}_u\right\}=\left\{z
\in \C \, : \, \Im z>\Im z_0\right\} \cup \{z_0\}\, .$$
\end{theorem}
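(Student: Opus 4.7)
The proof splits into two containments. For ``$\subseteq$'' I would invoke the Schwarz-lemma property for $\mathcal{H}_\infty$ recorded on page~\pageref{Penny}: any $f\in\mathcal{H}_u\subset\mathcal{H}_\infty$ satisfies $\Im f(z_0)\geq\Im z_0$, and equality at a single point forces $f=\id_\Ha$, hence $f(z_0)=z_0$. This immediately shows that the image set lies inside $\{\Im z>\Im z_0\}\cup\{z_0\}$ and that $z_0$ itself is attained only by the identity.

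For the reverse containment ``$\supseteq$'', the case $w=z_0$ is handled by $f=\id_\Ha\in\mathcal{H}_u$, so suppose $\Im w>\Im z_0$. I would realise $w$ as $g_K^{-1}(z_0)$ for a suitably chosen vertical-slit hull $K=[u_0,\,u_0+ih]$ with $u_0\in\R$ and $h>0$; since $g_K^{-1}\in\mathcal{H}_b\subseteq\mathcal{H}_u$, this produces the required member of $\mathcal{H}_u$. For a vertical slit one has the explicit uniformisation $g_K(z)=u_0+\sqrt{(z-u_0)^2+h^2}$, so the condition $f(z_0)=w$ becomes the single complex equation $g_K(w)=z_0$, i.e.\ two real conditions in the two real unknowns $u_0$ and $h$, and one expects unique solvability.

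Squaring, the equation $g_K(w)=z_0$ reads $h^2=(z_0+w-2u_0)(z_0-w)$. Writing $z_0=x_0+iy_0$ and $w=a+ib$, the requirement $\Im h^2=0$ is linear in $u_0$ and yields the unique real value
\[ u_0=\frac{ab-x_0y_0}{b-y_0}.\]
Substituting back, a short computation gives
\[ h^2=\frac{(b+y_0)\,|w-z_0|^2}{b-y_0},\]
which is strictly positive because $b>y_0>0$. Hence $u_0\in\R$ and $h>0$ exist uniquely, and the hull $K=[u_0,u_0+ih]$ is well-defined.

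The main technical step I anticipate is the branch/sign bookkeeping: one must verify that $w\notin K$ (so that $g_K(w)$ makes sense) and that the $\Ha$-valued branch of $\sqrt{\cdot}$ in the formula for $g_K$ indeed returns $z_0$ rather than its reflection. A direct case split — separating $a=u_0$, which forces $a=x_0$ and then $h<b$ so $w$ lies strictly above the slit tip, from $a\neq u_0$, where $w\notin K$ trivially — together with the observation that $w-u_0\in\Ha$ fixes the correct branch, closes this point. Combining both containments gives the claimed equality.
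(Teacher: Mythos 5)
Your proof is correct, but it takes a genuinely different route from the paper's. Both arguments dispose of the inclusion ``$\subseteq$'' via the Schwarz-lemma property of $\mathcal{H}_\infty$ (the paper does this in Remark \ref{rem:halfplane1}), so the real content is ``$\supseteq$''. The paper proceeds dynamically: it writes down the backward one-slit equation (\ref{eq:chor}) with an unknown driving function $U$, imposes the ansatz $U(t)-x(t)=c\,y(t)$ with $c=\frac{\Re z-\Re z_0}{\Im z-\Im z_0}$, solves the resulting ODE system explicitly, and \emph{defines} $U$ from the solution, so that the trajectory $t\mapsto w(t)$ starting at $z_0$ is exactly the half-line through $z$; evaluating at the appropriate time yields an $f\in\mathcal{H}_b$ with $f(z_0)=z$. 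You instead argue statically: you pick the explicit uniformisation $g_K(z)=u_0+\sqrt{(z-u_0)^2+h^2}$ of a vertical slit $K=[u_0,u_0+ih]$ and solve the two real equations $\Im h^2=0$, $h^2>0$ for $(u_0,h)$; your formulas $u_0=\frac{ab-x_0y_0}{b-y_0}$ and $h^2=\frac{(b+y_0)|w-z_0|^2}{b-y_0}$ check out, and your branch argument is the right one (since $(g_K(w)-u_0)^2=(z_0-u_0)^2$ and $g_K(w)\in\Ha$, the reflected root $2u_0-z_0$ is excluded), as is the case split ensuring $w\notin K$. Your approach is more elementary — it needs no Loewner ODE at all, only the explicit slit map and the fact that $g_K^{-1}\in\mathcal{H}_b\subseteq\mathcal{H}_u$ — and as a bonus it shows each $w$ with $\Im w>\Im z_0$ is realised by a \emph{unique} vertical slit. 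The paper's dynamic construction buys something else: it exhibits a continuous driving function whose trace through $z_0$ is a full half-line, which fits the flow-based viewpoint used throughout Chapter 3.
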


\begin{remark}  \label{rem:halfplane1}
By using the ``Schwarz lemma'' for the class $\mathcal{H}_\infty,$ see p. \pageref{Penny},
it is immediate that
$$ \{f(z_0) \with f \in \mathcal{H}_{u}\} \subseteq \{f(z_0) \with f \in \mathcal{H}_\infty\} \subseteq \{z
\in \C \, : \, \Im z>\Im z_0\} \cup \{z_0\}\, .$$
Hence, Theorem \ref{thm:halfplane1} tells us that the set  of values
$f(z_0)$ for all \textit{univalent} functions $f \in \mathcal{H}_{\infty}$
is the same as the set of values $f(z_0)$ for all $f \in
\mathcal{H}_{\infty}$. This is a significant difference to the unit disc
case, where the set of values $f(z_0)$ for all univalent
functions $f:\D\to\D$ with $f(0)=0,$ $f'(0)\geq0$ is strictly smaller than the set of values $f(z_0)$ for all holomorphic functions $f:\D\to\D$ with $f(0)=0,$ $f'(0)\geq0$, see \cite{RothSchl}, Section 1.
\end{remark}

\begin{proof}[Proof of Theorem \ref{thm:halfplane1}]
 Fix $z_0 \in \Ha$ and let $z\in \Ha$ with $\Im(z) > \Im(z_0).$

Now we look at equation (\ref{2slitsb}) with initial value $z_0$ for the case $\mu_t=\delta_{U(t)}$, where $U$ is a real valued function, i.e. 
\begin{equation} \label{eq:chor}
\begin{array}{rcl}
\dot{w}(t)&=& \, \displaystyle \frac{-2}{w(t)-U(t)} \, , \qquad t \ge 0 \,
, \\[3mm]
w(0) &=& z_0 \in \Ha.
\end{array}
\end{equation}

We need to find a driving function $U$ such that the
solution $w(t)$ of (\ref{eq:chor}) passes through $z$. We separate into real
and imaginary parts and write $w(t)=x(t)+iy(t)$ and $z_0=x_0+iy_0$. Now, we claim
that $U$ can be chosen such that $w(t)$ connects $z_0$ and $z$ by a straight line segment, i.e. 
\begin{equation*} x(t)=c\cdot y(t)+x_0 - c \cdot y_0\, , \end{equation*}
where
$$c=\frac{\Re z-\Re z_0}{\Im z - \Im z_0}\, .$$

In order to prove this, we separate
 equation (\ref{eq:chor}) into real and imaginary parts and obtain
\begin{equation*}
\begin{array}{rcl}
\dot{x}(t)&=& \, \displaystyle \frac{2(U(t)-x(t))}{(U(t)-x(t))^2+y(t)^2} \, ,
\quad  \dot{y}(t)=\, \displaystyle \frac{2y(t)}{(U(t)-x(t))^2+y(t)^2}  \, , 
\end{array}
\end{equation*}
with initial conditions $x(0)=x_0$ and $y(0)=y_0$.
We now assume that $x(t)$ and $y(t)$ are related by
$$U(t)-x(t)=c\cdot y(t) \, .$$ 
Then we get the following initial value problem:
\begin{equation*}
\begin{array}{rcl}
\dot{x}(t)= \, \displaystyle \frac{2c}{(1+c^2)y(t)} \, , \quad  \dot{y}(t)=\, \displaystyle \frac{2}{(1+c^2)y(t)}  \, , \quad
x(0)=x_0, \, y(0)=y_0 \, ,
\end{array}
\end{equation*}
which can be solved directly:
\begin{equation*}
\begin{array}{rcl}
y(t)=\, \displaystyle \sqrt{\frac{4}{1+c^2}t+y_0^2} \qquad \text{and} \qquad
x(t)= \, \displaystyle c y(t)+x_0-cy_0 \, , \quad t \ge 0\,  .
\end{array}
\end{equation*}
Hence if we now \textit{define}
$$ U(t):=c y(t)+x(t)= 2c \sqrt{\frac{4}{1+c^2}t+y_0^2}+ x_0-cy_0 \, ,$$
then by construction the solution $w(t)=x(t)+i y(t)$ of (\ref{eq:chor})
satisfies $x(t)=cy(t)+x_0-c y_0.$
 In particular, the trajectory $t \mapsto w(t)$ is the halfline starting at
 $z_0$ through the point $z$, so $z \in \{f(z_0)\with f\in \mathcal{H}_u\}$.
This completes the proof of Theorem \ref{thm:halfplane1}.
\end{proof}

\section{The one-slit equation }\label{Pascha}

In the case $\mu_t=\delta_{U(t)},$ with $U(t)$ being a real-valued, continuous function, equation (\ref{chordal}) becomes the one-slit equation
\begin{equation}\label{ivp}
   \dot{g}_t(z)=\frac{2}{g_t(z)-U(t)}, \quad g_0(z)=z.
\end{equation}

If $\gamma:[0,T]\to \overline{\Ha}$ is a simple curve, i.e. a one-to-one continuous function, with $\gamma(0)\in\R$ and $\gamma((0,1])\subset \Ha,$ then we call the compact hull $\Gamma:=\gamma(0,1]$ a \textit{slit}. The important connection between slits and equation (\ref{ivp}) is given by the following Theorem.

\begin{theorem}[Kufarev, Sobolev, Spory{\v{s}}eva]\label{slitex}
 For any slit $\Gamma$ with $\hcap(\Gamma)=2T$ there exists a unique, continuous driving function $U:[0,T]\to\R$ such that the solution $g_t$ of (\ref{ivp}) satisfies $g_T=g_{\Gamma}.$
\end{theorem}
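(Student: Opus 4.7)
The strategy is to parametrize the slit $\Gamma$ by its half-plane capacity, define the candidate driving function as the image of the tip under the normalized uniformizing map, verify the Loewner ODE via a one-step composition argument plus an asymptotic expansion for a vanishing hull, and then deduce continuity and uniqueness.

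\textbf{Reparametrization and candidate $U$.} First I would show that $t\mapsto \hcap(\gamma(0,t])$ is strictly increasing and continuous on $[0,T]$. Strict monotonicity is a direct consequence of Lemma \ref{hcap1} (b), since $\gamma(0,s]\subsetneq \gamma(0,t]$ for $s<t$ and the image $g_{\gamma(0,s]}(\gamma(s,t])$ is a non-degenerate hull. Continuity follows from Theorem \ref{hsiz}, since $\operatorname{hsiz}(\gamma(0,t])$ varies continuously along a continuous increasing family. Thus I may reparametrize so that $\hcap(\gamma(0,t])=2t$. Set $\Gamma_t:=\gamma(0,t]$ and $g_t:=g_{\Gamma_t}$. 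Because $\Ha\setminus\Gamma_t$ is a Jordan domain on the Riemann sphere (after separating the two banks of the slit), $g_t$ extends continuously to the boundary; the tip $\gamma(t)$ corresponds to a unique prime end and hence to a single real value $U(t):=g_t(\gamma(t))\in\R$.

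\textbf{Derivation of the ODE.} Fix $t_0\in[0,T)$ and let $h>0$ be small. By Lemma \ref{hcap1} (b) and the composition rule $g_{t_0+h}=g_{K_h}\circ g_{t_0}$ with $K_h:=g_{t_0}(\gamma(t_0,t_0+h])$, one has $\hcap(K_h)=2h$. Applying Lemma \ref{imestim} to the shifted family starting at $t_0$ gives $\max_{w\in K_h}\Im w\le 2\sqrt{h}$, and continuity of $g_{t_0}$ at $\gamma(t_0)$ together with the continuity of $\gamma$ yields $K_h\subset \mathcal{B}(U(t_0),\varepsilon(h))$ with $\varepsilon(h)\to 0$. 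Write the Nevanlinna representation
\[
g_{K_h}^{-1}(w)\;=\;w+\int_\R\frac{d\mu_h(x)}{x-w},\qquad \mu_h(\R)=2h,\ \supp\mu_h\subset[U(t_0)-\varepsilon(h),U(t_0)+\varepsilon(h)].
\]
Inverting this asymptotically (the integral is $O(h)$ on compacta away from $U(t_0)$) gives, uniformly on compact subsets of $\Ha\setminus\{U(t_0)\}$,
\[
g_{K_h}(w)\;=\;w+\frac{2h}{w-U(t_0)}+o(h),\qquad h\to 0^+.
\]
Setting $w=g_{t_0}(z)$ and dividing by $h$ produces the right derivative $\dot g_{t_0}(z)=2/(g_{t_0}(z)-U(t_0))$; the left derivative is handled symmetrically via the same expansion applied to the backward step. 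Hence $g_t$ satisfies (\ref{ivp}) with driving function $U$.

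\textbf{Continuity of $U$ and uniqueness.} As $t'\to t$, the sets $\Gamma_{t'}$ converge to $\Gamma_t$ in the Carath\'{e}odory kernel sense, so $g_{t'}\to g_t$ locally uniformly on $\Ha\setminus\Gamma_t$ and on each bank of the slit; this forces $U(t')=g_{t'}(\gamma(t'))\to g_t(\gamma(t))=U(t)$. For uniqueness, suppose $\widetilde U:[0,T]\to\R$ continuous also yields $g_T$ via (\ref{ivp}). Then by Theorem \ref{Gregory} the associated hulls $\widetilde K_t$ form a strictly increasing family with $\hcap(\widetilde K_t)=2t$ and $\widetilde K_T=\Gamma$; but the restriction of $g_T$ to $\Ha\setminus \widetilde K_t$ factors through $g_{\widetilde K_t}$, so $\widetilde K_t=\Gamma_t$ for every $t$, and applying the one-step expansion above pins down $\widetilde U(t)=U(t)$.

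\textbf{Main obstacle.} The delicate point is the localization $K_h\subset\mathcal{B}(U(t_0),\varepsilon(h))$ with a \emph{quantitative} modulus $\varepsilon(h)\to 0$, because one must control the diameter of the image of a small piece of slit under a conformal map whose observation point lies on the boundary at the tip. This is where the continuous boundary extension of $g_{t_0}$ at $\gamma(t_0)$, the imaginary-part bound of Lemma \ref{imestim}, and the Nevanlinna representation must be combined carefully; all remaining steps are comparatively routine.
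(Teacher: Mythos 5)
The paper does not prove Theorem \ref{slitex} at all; it cites \cite{MR0257336}, \cite[p.~92f]{Lawler:2005} and \cite{GM13}, so your proposal must be measured against the standard argument (essentially Lawler's), whose architecture you reproduce correctly: parametrize by half-plane capacity, set $U(t):=g_t(\gamma(t))$, use the one-step factorization $g_{t_0+h}=g_{K_h}\circ g_{t_0}$ together with the Nevanlinna expansion $g_{K_h}(w)=w+2h/(w-U(t_0))+o(h)$, and recover uniqueness by identifying the hulls. But two steps, as written, are genuinely open. First, continuity of $t\mapsto\hcap(\gamma(0,t])$ does not follow from Theorem \ref{hsiz}: a two-sided comparability $c_1\operatorname{hsiz}(A)\le\hcap(A)\le c_2\operatorname{hsiz}(A)$ transfers no continuity from one quantity to the other. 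What one actually needs is the increment formula of Lemma \ref{hcap1}~b), $\hcap(\gamma(0,t])-\hcap(\gamma(0,s])=\hcap\bigl(g_{\gamma(0,s]}(\gamma(s,t])\bigr)$, combined with $\hcap(A)\le C(\diam A)^2$, which reduces continuity to showing that $\diam\bigl(g_{\gamma(0,s]}(\gamma(s,t])\bigr)\to 0$ --- i.e.\ to the very localization you defer.

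That localization is the heart of the theorem, and your proof does not close it: you correctly flag it as the ``main obstacle'' but the tools you name do not suffice. Lemma \ref{imestim} bounds only the \emph{imaginary} part of $K_h$ by $2\sqrt{h}$ and says nothing about its horizontal spread around $U(t_0)$; and Carath\'eodory kernel convergence of $\Ha\setminus\Gamma_{t'}$ to $\Ha\setminus\Gamma_t$ yields locally uniform convergence of $g_{t'}$ \emph{in the interior only}, which does not imply convergence of the boundary values $g_{t'}(\gamma(t'))$ at the tip --- boundary behaviour of conformal maps is exactly what is not controlled by kernel convergence. The missing ingredient is a harmonic-measure or Beurling-projection estimate (or the extremal-length argument behind \cite[Lemma 4.2]{Lawler:2005}) showing that a subarc of $\Gamma$ of small diameter has image of small diameter under $g_{t_0}$, using nothing more than continuity of $\gamma$. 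Without it, neither the differentiation step nor the continuity of $U$ is established. A smaller omission: in the uniqueness step, the claim $\widetilde K_t=\Gamma_t$ uses that the hulls driven by a single continuous point mass are connected (hence, being connected subsets of a simple arc containing its root, are initial subarcs, which $\hcap=2t$ then pins down); this follows from the local growth property of Remark \ref{Lola} and should be stated rather than absorbed into ``factors through''.
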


To the best of our knowledge the first proof of Theorem \ref{slitex} is due
to Kufarev, Sobolev, Spory{\v{s}}eva in \cite{MR0257336}. A recent, English reference can be found in \cite{Lawler:2005}, p. 92f. We also refer to the survey paper \cite{GM13} for a complete and rigorous proof of Theorem \ref{slitex} using classical complex analysis.
\\

So every slit can be represented as the solution to (\ref{ivp}) with a unique driving function $U$. If $\gamma$ is the parameterization of $\Gamma$ by half-plane capacity, i.e. $\hcap(\gamma(0,t])=2t$ for all $t\in[0,T],$ then we have $$g_t=g_{\gamma[0,t]} \quad \text{for all} \quad t\in[0,T] \quad \text{and} \quad  U(t)=\lim_{z\to\gamma(t)}g_t(z).$$

\begin{remark}\label{Lola} Theorem \ref{slitex} has been generalized to a characterization of all hulls that correspond to the one-slit equation with continuous driving functions.
For an arbitrary hull $K$ let $\operatorname{rad}(K)=\inf\{r\geq0\with \exists x\in\R, K\subset\mathcal{B}(x,r)\}.$ The following statements can be found in \cite{MR1879850}, Theorem 2.6 or \cite{Lawler:2005}, p. 96. Pommerenke considered the corresponding radial case already in 1966, see \cite{MR0206245}, Theorem 1.\\
 Let $\{K_t\}$ be a family of hulls generated by equation (\ref{ivp}). Let $K_{t,s}=g_t(K_{t+s}\setminus K_t).$ Then $K_{t,s}$ has the \textbf{local growth property}:
$$ \lim_{h\downarrow 0}\sup_{\begin{subarray}{c}
t+s\in [0,T]\\
0<s\leq h
                             \end{subarray}
} \operatorname{rad}(K_{t,s}) = 0.$$

Conversely, let $\{K_t\}$ be a family of increasing hulls with $\hcap(K_t)=2t$ satisfying the local growth property. Then there exists a real valued, continuous functions $U$ such that equation (\ref{ivp}) generates the hulls $K_t.$ For each $t$, the value $U(t)$ is the unique $x\in\R$ with $\{x\}= \bigcap_{s>0}\overline{K_{t,s}}$.

\end{remark}

Let us have a look at a simple example.

\begin{example}\label{Maxime}
Consider a slit $L$ that is a segment of a straight line with initial point $0$. Denote by $\phi$ the angle between $\R$ and $L.$ The scaling property immediately implies that the driving function $U$ of $L$ satisfies $U(t)=c\sqrt{t}$ for some $c\in\R.$ \\
Conversely, let $U(t)=c\sqrt{t}$ for an arbitrary $c\in\R.$ In this case, the one-slit equation (\ref{ivp}) can be solved explicitly and one obtains for the generated hull $K_t$ at time $t:$ $K_t=\gamma[0,t]$ with $\gamma(t)=2\sqrt{t}\left(\frac{\pi}{\phi}-1\right)^{\frac1{2}-\frac{\phi}{\pi}}e^{i\phi},$ i.e. $K_t$ is a line segment with angle $\phi.$ \\
The connection between $c$ and $\phi$ is given by
$$c(\phi)=\frac{2(\pi-2\phi)}{\sqrt{\phi(\pi-\phi)}},\qquad \phi(c)=\frac{\pi}{2}\left(1-\frac{c}{\sqrt{c^2+16}}\right),$$
see Example 4.12 in \cite{Lawler:2005}. \hfill $\bigstar$
\end{example}

We will now look at two problems coming along with Theorem \ref{slitex}. In the next two sections, we will be concerned with these questions and we will derive some partial answers.\\

Firstly, we would like to generalize Theorem \ref{slitex} to several slits. Suppose we are given $n$ disjoint slits. We should expect to be able to assign $n$ unique continuous driving functions to those slits. However, for $n\geq 2$, we also have to determine $n-1$ coefficient functions that appear in the multiple-slit equation; so:
 \begin{equation}\label{Charlie}
   \begin{aligned}
 &\textit{How can Theorem \ref{slitex} be generalized to the multiple-slit equation, so that}\\ &\textit{one can assign $n$ unique continuous driving functions to $n$ disjoint slits?}
   \end{aligned}
\end{equation}
The second problem is to find a converse of Theorem  \ref{slitex}. The latter  induces a mapping from the set $$\{\Gamma\with \Gamma \; \text{is a slit with}\,\hcap(\Gamma)=2T\}$$ into the set $$\{f:[0,T]\to\R \with f \;\text{is continuous} \}.$$ This mapping is one-to-one but \emph{not surjective}, which was already known to Kufarev, see \cite{MR0023907}. The following example shows that equation (\ref{ivp}) does not necessarily produce slits even when the driving function is continuous.
\begin{example}\label{Felix}
 Consider the driving function $U(t) = c\sqrt{1-t}$ with $c\geq 4$ and $t\in [0,1]$: \\ 
The generated hull $K_t$ is a simple curve for $t\in[0,1),$ but at $t=1$ this curve hits the real axis at an angle $\varphi$ which can be calculated directly (see \cite{LindMR:2010}, chapter 3): $$ \varphi = \pi-\frac{2\pi\sqrt{c^2-16}}{\sqrt{c^2-16}+c}. $$ Its complement with respect to $\Ha$ has two connected components. If we denote by $B$ the bounded component, then $K_1=\overline{B}\cap \Ha$, see Figure \ref{Fi31}, and consequently $K_1$ is not a slit.
\hfill $\bigstar$
\end{example}
\begin{figure}[h] 
    \centering
   \includegraphics[width=135mm]{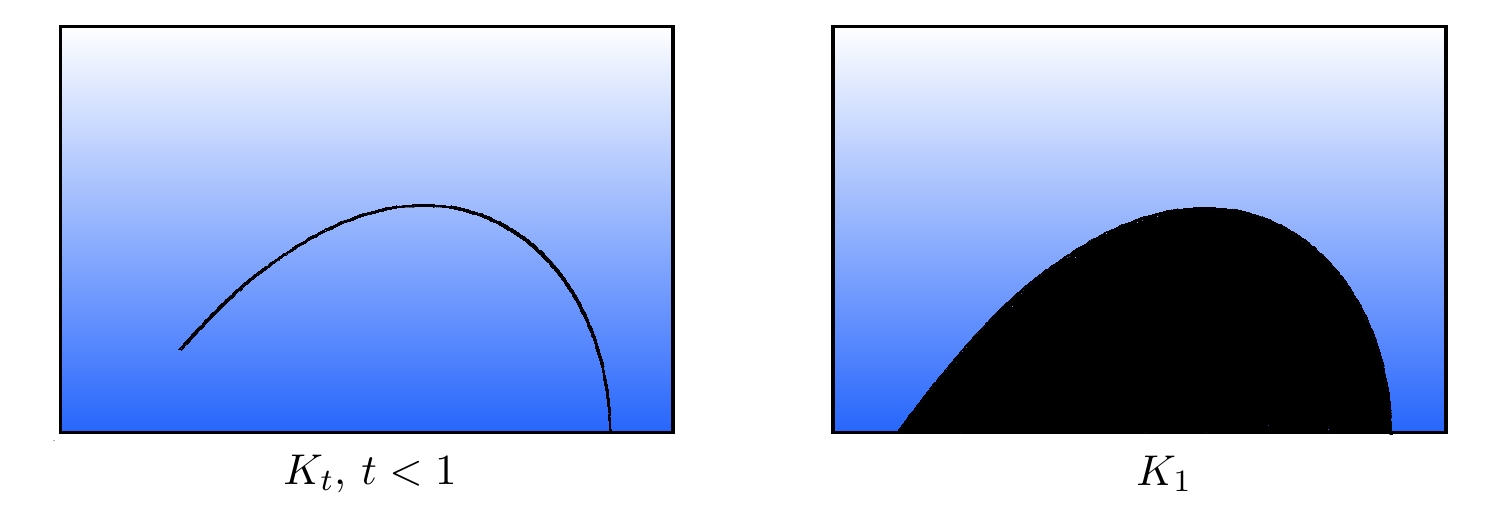}
\caption{Example \ref{Felix} with $c=5.$}\label{Fi31}
 \end{figure}

Now, the following question nearly suggests itself:
\begin{equation}
 \textit{Which driving functions let equation (\ref{ivp}) generate slit mappings?}
\end{equation} 
We will look at necessary and sufficient conditions in section \ref{Sip}, also in the setting of the multiple--slit equation.

\section{Constant coefficients in the multiple-slit equation}\label{Chiara}

Let us consider $n$ slits $\Gamma_1,\ldots,\Gamma_n$ with pairwise disjoint closure and $\hcap(\Gamma_1\cup...\cup\Gamma_n)=2T$. It is not hard to show that there exist continuous functions $\gamma_1,...,\gamma_n:[0,T]\to \C$ with $\gamma_j(0,T]=\Gamma_j$ such that
 \begin{itemize}
\item the functions $t\mapsto \hcap(\gamma_j(0,t])$ are nondecreasing for all $j=1,...,n,$
\item $\hcap(\gamma_1(0,t]\cup...\cup\gamma_n(0,t])=2t$ for every $t\in[0,T].$                    \end{itemize}
We call $(\gamma_1,...,\gamma_n)$ a \emph{Loewner parameterization} for the hull $\Gamma_1\cup...\cup\Gamma_n$.\\
In this case, the conformal mappings $g_t:=g_{\gamma_1(0,t]\cup...\cup\gamma_n(0,t]}$ satisfy the following chordal multiple-slit equation 

\begin{equation}\label{more}\dot{g}_t(z)=\sum_{k=1}^n\frac{2\lambda_k(t)}{g_t(z)-U_{k}(t)} \quad \text{for a.e.}\;\; t\in[0,T],\quad g_0(z)=z,\end{equation} 

where $\sum_{k=1}^n\lambda_k(t) = 1$ for a.e. $t\in[0,T]$ and for a.e. $t\in[0,T],$ the coefficient function $\lambda_k$ is given by 
$$ \lambda_k(t) = \left. \frac1{2}\frac{d}{ds}\right|_{s=0}\hcap(\gamma_k(0,t+s]\cup \bigcup_{j\not=k} \gamma_j(0,t]), $$ see Theorem 2.1 in \cite{RothSchl2}.\\

Again, $U_k(t)$ is the image of $\gamma_k(t)$ under the map $g_t$ and the functions $t\mapsto U_j(t)$ are called \emph{driving functions}. Roughly speaking, the coefficient function $t\mapsto \lambda_k(t)$ corresponds to the speed of the growth of $\Gamma_k$.

\begin{remark}[The multiple--slit equation in Mathematical Physics]
{\rm We note that the multiple--slit equation (\ref{more}) has
recently been used by
 physicists for the study of certain two--dimensional growth phenomena. 
For instance, in \cite{MR1945279}  the authors analyze ``Laplacian path models'', i.e.~Laplacian growth models for multi--slits. By mapping the upper half--plane conformally onto a half-strip one obtains a Loewner equation for the growth of slits in a half--strip, which can be used to describe Laplacian growth in the ``channel geometry'', see \cite{MR2495460} and \cite{PhysRevE.84.051602}. 
Furthermore, equation (\ref{more}) can be used to model so--called
multiple Schramm--Loewner evolutions, see \cite{MR2310306} and
\cite{MR2004294}, \cite{MR2187598}, \cite{MR2358649}, \cite{Graham2007}.} 
\end{remark}

If we compare the case $n\geq 2$ in the multiple-slit equation to the case $n=1,$ then there are two main differences:\\
Firstly, when $n=1$ there exists exactly one Loewner parameterization, but when $n\geq 2$, there exist many as we can choose different ``speeds'' for the growth of the slits. Consequently, there exist many choices for driving functions $U_1,...,U_n$ and nonnegative coefficient functions $\lambda_1,...\lambda_n$ with $\sum_{k=1}^n \lambda_k(t)=1$ for all $t$ such that equation (\ref{more}) generates the hull $\Gamma_1\cup...\cup\Gamma_n,$ $n\geq 2.$ It seems to be natural that the functions $\lambda_j(t), U_j(t)$ can be made unique by requiring constant growth speeds $\lambda_j(t)\equiv\lambda_j$.\\
Secondly, the differential equation (\ref{more}) holds only for almost every $t\in[0,T].$ For $n=1,$ however, we know by Theorem \ref{slitex} that it actually holds for \emph{every} $t\in[0,T].$ \\

In this section we show that there exists one unique Loewner parameterization such that equation (\ref{more}) holds for all $t$ with constant coefficients.

\begin{theorem}\label{Charlie2}
 Let $\Gamma_1,...,\Gamma_n$ be slits with disjoint closure and let $\hcap(\Gamma_1 \cup...\cup \Gamma_n)=2T.$\\
Then there exist unique $\lambda_1,...,\lambda_n\in(0,1)$ with $\sum_{k=1}^n\lambda_k=1$ and unique continuous functions $U_1,...,U_n:[0,T]\to\R,$ such that the solution of the chordal Loewner equation $$ \dot{g}_t(z) = \sum_{k=1}^n \frac{2\lambda_k}{g_t(z)-U_k(t)}, \qquad g_0(z)=z, $$
satisfies $\displaystyle g_T=g_{\Gamma_1\cup ...\cup\Gamma_n}.$ 
\end{theorem}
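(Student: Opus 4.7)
The strategy is to reduce Theorem \ref{Charlie2} to a shooting problem on an $n$-dimensional box and solve it by a Brouwer fixed-point argument, exploiting continuity of the half-plane capacity.

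\textbf{Setup.} Parametrize each slit by its individual half-plane capacity: fix $\beta_k:[0,S_k]\to\overline{\Ha}$ with $\hcap(\beta_k(0,s])=2s$, where $S_k=\tfrac{1}{2}\hcap(\Gamma_k)$. On the box $B:=\prod_{k=1}^n[0,S_k]$ define
\[
C(s_1,\dots,s_n):=\tfrac{1}{2}\,\hcap\Bigl(\bigcup_{k=1}^{n}\beta_k(0,s_k]\Bigr),
\]
which is continuous on $B$ and, I claim, of class $C^1$ on the interior of $B$ with strictly positive partial derivatives $\partial_kC$. Writing an arbitrary Loewner parametrization of $\Gamma_1\cup\cdots\cup\Gamma_n$ as $\gamma_k(t)=\beta_k(s_k(t))$, one has $C(s(t))=t$ and $\lambda_k(t)=\partial_kC(s(t))\,\dot s_k(t)$. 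The constant-coefficient requirement thus becomes the ODE system
\[
(\ast)\qquad \dot s_k(t)=\frac{\lambda_k}{\partial_kC(s(t))},\qquad s(0)=0,
\]
and we seek $\lambda$ in the open simplex $\Delta^{\circ}:=\{\lambda\in(0,1)^n:\sum_k\lambda_k=1\}$ for which the maximal solution of $(\ast)$ reaches the far corner $(S_1,\dots,S_n)$, where it arrives precisely at time $t=C(S_1,\dots,S_n)=T$.

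\textbf{Existence via Brouwer.} For every $\lambda\in\overline{\Delta}$ the system $(\ast)$ admits a unique maximal solution; since $\lambda_k\geq 0$ and $\partial_kC>0$, each $s_k$ is nondecreasing and the trajectory eventually exits $B$ through some face $\{s_k=S_k\}$. Let $E(\lambda)\in B$ denote the exit point; then $E_k(\lambda)=0$ whenever $\lambda_k=0$, and $\lambda\mapsto E(\lambda)$ is continuous on $\overline{\Delta}$ by continuous dependence of ODE flows on parameters together with continuity of $\partial_kC$. Assume for contradiction that $E(\lambda)\neq(S_1,\dots,S_n)$ for every $\lambda\in\overline{\Delta}$; then $\sum_j(S_j-E_j(\lambda))>0$ on $\overline{\Delta}$ and
\[
\Phi_k(\lambda):=\frac{S_k-E_k(\lambda)}{\sum_{j=1}^{n}(S_j-E_j(\lambda))}
\]
defines a continuous self-map of $\overline{\Delta}$. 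Brouwer gives a fixed point $\lambda^{\ast}$; if $\lambda^{\ast}_k=0$ then $E_k(\lambda^{\ast})=0<S_k$, so $\Phi_k(\lambda^{\ast})>0\neq\lambda^{\ast}_k$, a contradiction, hence $\lambda^{\ast}\in\Delta^{\circ}$. But then no coordinate $E_k(\lambda^{\ast})$ can equal $S_k$ (that would force $\Phi_k(\lambda^{\ast})=0=\lambda^{\ast}_k$), contradicting the definition of $E$. Thus $E(\lambda^{\ast})=(S_1,\dots,S_n)$ for some $\lambda^{\ast}\in\Delta^{\circ}$, and the driving functions $U_k(t):=g_t(\beta_k(s^{\lambda^{\ast}}_k(t)))$ are continuous in $t$ by applying Theorem \ref{slitex} at each intermediate configuration.

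\textbf{Uniqueness.} The exit map $E$ partitions $\Delta^{\circ}$ into open regions $R_k=\{\lambda:E_k(\lambda)=S_k\text{ and }E_j(\lambda)<S_j\text{ for }j\neq k\}$, and the solution set $\{\lambda:E(\lambda)=(S_1,\dots,S_n)\}$ is their common boundary $\bigcap_k\overline{R_k}\cap\Delta^{\circ}$. A monotonicity argument for $(\ast)$, exploiting strict positivity of $\partial_kC$ and the order-preserving structure of the level sets of $C$, shows that if $\lambda\neq\lambda'$ are both in the common boundary, then comparing the two trajectories one may find a coordinate $k$ along which the $\lambda$-flow dominates the $\lambda'$-flow strictly, so the two trajectories cannot both terminate at the corner. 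With $\lambda^{\ast}$ thus pinned down uniquely, $(\ast)$ determines $s^{\lambda^{\ast}}$ and hence the $U_k$ uniquely.

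\textbf{Main obstacle.} The technical heart of the plan is the $C^1$-regularity of $C$ on the interior of $B$ together with strict positivity of $\partial_kC$, and the precise behaviour of $\partial_kC$ as one approaches the coordinate faces where some slit has not yet appeared. This requires a quantitative analysis of how the half-plane capacity responds to an infinitesimal displacement of a single slit tip in the presence of the other, partially grown slits, which is a delicate real-analytic statement about the chordal Loewner semigroup. A secondary delicate point is the monotonicity lemma underlying uniqueness, which is not entirely obvious since the ODE $(\ast)$ is fully coupled through $\nabla C$.
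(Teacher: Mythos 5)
Your reduction to a shooting problem is a genuinely different route from the one taken in the paper: existence is proved here by a bang-bang approximation (alternating one-slit growth on dyadic intervals, Section \ref{existence}) combined with precompactness of the driving functions (Theorem \ref{aux}), and uniqueness through a dynamic interpretation of the coefficients obtained from Brownian-motion estimates (Lemmas \ref{ThomasFundamentallemma} and \ref{nointer}). Your Brouwer argument on the simplex is attractive because it treats all $n$ slits symmetrically, whereas the paper reduces to $n=2$ and appeals to induction; and your function $C(s_1,\dots,s_n)$ is essentially the object whose regularity the paper establishes piecemeal in Lemmas \ref{hcap2new} and \ref{LipDiff} via \cite[Lemma 2.8]{MR1879850}, so the $C^1$-claim you flag as the main obstacle is indeed provable with that machinery. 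The coefficient identity $\lambda_k(t)=\partial_kC(s(t))\,\dot s_k(t)$ is legitimate, but note that to pass from the a.e.\ identity to the integral form of $(\ast)$ you also need $s_k$ to be absolutely continuous; this follows from the Lipschitz bound of Lemma \ref{hcap2} and should be said.

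The unflagged gap is the uniqueness of solutions of $(\ast)$. Under your own regularity claim the right-hand side $\lambda_k/\partial_kC(s)$ is merely continuous in $s$; Peano then gives existence but not uniqueness. Without uniqueness the exit map $E$ is neither well defined nor continuous (continuous dependence on parameters presupposes uniqueness of solutions), so the Brouwer step does not get off the ground, and the closing assertion that ``$(\ast)$ determines $s^{\lambda^{*}}$ uniquely'' is exactly the point where two distinct families of driving functions with the same $\lambda$ could survive. The paper faces the identical difficulty and resolves it by showing that the scalar right-hand side $2\mu/C(x,t)$ is continuously differentiable in $t$ (Lemma \ref{LipDiff}) and invoking a uniqueness theorem for ODEs with positive right-hand sides that are Lipschitz in the \emph{time} variable \cite[Theorem 2.7]{Cid:2003}; continuity in the state variable alone does not suffice. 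You would need either to upgrade $\partial_kC$ to locally Lipschitz in $s$ --- which does not follow from anything you assert --- or to reduce $(\ast)$ to a scalar equation along the level sets of $C$ and import the same non-Lipschitz uniqueness theorem. Relatedly, your uniqueness-of-$\lambda$ step is only a slogan: for $n=2$ the first-crossing-time comparison (as in Section \ref{uniqueness}) makes it precise, but ``one may find a coordinate $k$ along which the $\lambda$-flow dominates'' is not yet an argument for $n\ge 3$.
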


Hence Theorem \ref{Charlie2} provides a canonical way of describing multiple slits, as Theorem \ref{slitex} does for a single slit. 

\begin{remark}
D. Prokhorov has proven the existence and uniqueness of constant 
	coefficients for the radial Loewner equation under the assumption that all slits are \emph{piecewise analytic}, see Theorem 1 and 2 in \cite{Prokhorov:1993}. This theorem forms the basis for Prokhorov's study of extremal problems for univalent functions in \cite{Prokhorov:1993} by using control-theoretic methods. We will use a completely different method which shows that one can drop any assumption on the regularity of the slits in order to generate them with constant coefficients. This method can also be applied to prove the analog of Theorem \ref{Charlie2} for the radial case, see Section \ref{Christoph}.
\end{remark}

\begin{remark}
 It would be interesting to find an interpretation of the coefficients $\lambda_1,\ldots,\lambda_n$ in terms of geometric or potential theoretic properties of the slits $\Gamma_1,\ldots,\Gamma_n.$  Somehow they describe the size of the slits relative to each other. 
\end{remark}

As in \cite{Prokhorov:1993}, we only give the proofs for the case $n=2,$ because the general cases follow by induction. In Section \ref{auxiliary} we prove some auxiliary lemmas.  The proof of Theorem \ref{Charlie2} is divided into two parts: proof of existence (Section \ref{existence}) and proof of uniqueness (Section \ref{uniqueness}). The latter uses a ``dynamic interpretation'' of the coefficients $\lambda_j$, which is proved in Section \ref{dynamic}.

The proof of existence needs the fact that a certain set of driving functions is precompact, which is proved in the next section.

\subsection{\label{auxiliary}A precompactness statement}

Let $\Theta_1, \Theta_2$ be slits with disjoint closures and 
$\hcap(\Theta_1)=\hcap(\Theta_2)=2$. 
In this section we will prove the following technical result, which states that the set
of driving functions for all Loewner parameterizations of a subhull $A$ of
$\Theta_1 \cup\Theta_2$, $\hcap(A)=2T,$ is  a precompact subset of the Banach space $C([0,T],\R)$ (equipped with the maximum norm).

\begin{theorem}\label{aux} Let $A$ be a subhull of 
$\Theta_1 \cup \Theta_2$ with $\hcap(A)=2T$. For any Loewner parameterization $\gamma=(\gamma_1,\gamma_2)$ of $A$, we let $g^\gamma_t:=g_{\gamma_1(0,t] \cup \gamma_2(0,t]}$  and 
$U^\gamma_1(t)=g^\gamma_{t}(\gamma_1(t))$ and $U^\gamma_2(t)=g^\gamma_{t}(\gamma_2(t))$
be the driving functions for $(\gamma_1,\gamma_2)$. Then the sets
$$ \{U^\gamma_1:[0,T]\to\R \with \gamma \; \text{is a Loewner parameterization of $A$}\}, $$
$$ \{U^\gamma_2:[0,T]\to\R \with \gamma \; \text{is a Loewner parameterization of $A$}\} $$
are precompact subsets of the Banach space $C([0,T],\R)$. \end{theorem}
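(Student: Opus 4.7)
The approach is the Arzel\`a--Ascoli theorem: I will verify that the families $\{U^\gamma_1\}_\gamma$ and $\{U^\gamma_2\}_\gamma$ in $C([0,T],\R)$ are uniformly bounded and equicontinuous.

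For uniform boundedness, observe that for every Loewner parameterization $\gamma$ the hull $K^\gamma_t := \gamma_1(0,t] \cup \gamma_2(0,t]$ is contained in the fixed compact set $\overline{A}$, whose trace on $\R$ consists of just the two starting points of $\Theta_1$ and $\Theta_2$. By Lemma \ref{lem:hcapneu}(a) combined with the Nevanlinna representation (\ref{Nevanlinna}) of $(g^\gamma_t)^{-1}$, whose measure has total mass $\hcap(K^\gamma_t) \le 2T$ and support contained in the image of $K^\gamma_t$ on $\R$, the driving values $U^\gamma_j(t)$ lie in a fixed interval depending only on $\overline{A}$ and $T$. In particular, $|U^\gamma_j(t)|$ is bounded uniformly in $\gamma$ and $t$.

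For equicontinuity, fix $0 \le s < t \le T$ and decompose $g^\gamma_t = h^\gamma_{s,t} \circ g^\gamma_s$ with $h^\gamma_{s,t} := g_{L^\gamma_{s,t}}$ and $L^\gamma_{s,t} := g^\gamma_s(\gamma_1(s,t] \cup \gamma_2(s,t])$. By Lemma \ref{hcap1}(b) one has $\hcap(L^\gamma_{s,t}) = 2(t-s)$, and by Lemma \ref{imestim}, $\max_{z \in L^\gamma_{s,t}} \Im z \le 2\sqrt{t-s}$. Since $U^\gamma_j(t) = h^\gamma_{s,t}(g^\gamma_s(\gamma_j(t)))$ and $g^\gamma_s(\gamma_j(t))$ is the tip of the slit-component of $L^\gamma_{s,t}$ rooted at $U^\gamma_j(s) \in \R$, a one-slit distortion estimate applied to each of the two connected components of $L^\gamma_{s,t}$ should yield $|U^\gamma_j(t) - U^\gamma_j(s)| \le C\sqrt{t-s}$ with $C$ independent of $\gamma$, $s$ and $t$.

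The main obstacle lies in this last step: reducing the two-slit distortion to the single-slit estimate requires a uniform lower bound $\inf_{\gamma, s} |U^\gamma_1(s) - U^\gamma_2(s)| \ge \delta > 0$, without which the two components of $L^\gamma_{s,t}$ could coalesce and the one-slit estimate would blow up. I would obtain this separation by a compactness argument: the set of sub-hulls of $A$ equipped with a choice of tip on each of the two slit-components is compact (in, say, the Carath\'eodory topology), the assignment to the pair $(U_1, U_2)$ is continuous, and since $\overline{\Theta_1} \cap \overline{\Theta_2} = \emptyset$ one always has $U_1 \ne U_2$; continuity plus compactness then forces the separation to be uniform. With this in hand, the one-slit estimate can be carried out locally near each component, completing the equicontinuity argument.
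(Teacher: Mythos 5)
Your framework (Arzel\`a--Ascoli), your uniform bound, and your identification of the separation issue are all sound; the separation itself is obtained in the paper not by compactness but by the monotonicity statement of Lemma \ref{lem:neu1} b), which follows from the contraction property in Lemma \ref{lem:hcapneu} b). The genuine gap is in the equicontinuity step. The ``one-slit distortion estimate'' you invoke does not exist in the generality you need: for a slit $P$ rooted at $x_0\in\R$, the displacement $|g_P(\mathrm{tip})-x_0|$ is controlled by $\diam(P)$ (Remark 3.30 in \cite{Lawler:2005}), or equivalently by the oscillation of the driving function of $P$, and \emph{not} by $\hcap(P)$ alone. A slit of tiny half-plane capacity can hug the real axis for a long horizontal distance and displace its tip image by an amount bounded away from $0$; so the bound $|U^\gamma_j(t)-U^\gamma_j(s)|\le C\sqrt{t-s}$ is false as a generic consequence of $\hcap(L^\gamma_{s,t})=2(t-s)$, and Lemma \ref{imestim} controls only the imaginary part. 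Worse, the components $g^\gamma_s(\gamma_j(s,t])$ of your hull $L^\gamma_{s,t}$ are images of pieces of $\Theta_j$ under the two-slit maps $g^\gamma_s$, which vary with $\gamma$ and $s$; a uniform bound on their diameters (or on the oscillation of their driving functions) is essentially the distortion control you are in the middle of proving, so the argument is circular as it stands.

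The paper escapes this by routing every estimate back to the \emph{fixed} slits $\Theta_1,\Theta_2$. In Lemma \ref{lem:III} the new growth is written as $g_{A_1}(B_1\setminus A_1)$ with $A_1\subseteq B_1$ subslits of $\Theta_1$, so its driving function is a truncation of the fixed, uniformly continuous one-slit driving function of $\Theta_1$ from Theorem \ref{slitex}; Lemma 4.13 in \cite{Lawler:2005} then bounds its radius, and the contraction Lemma \ref{lem:hcapneu} b) transfers the resulting estimate from the one-slit map $g_{B_1}$ to the two-slit map $g_{B_1\cup A_2}$. The effect of growing the \emph{other} slit is handled separately in Lemma \ref{lem:II} via the Nevanlinna representation together with the uniform gap of Lemma \ref{lem:neu1} b). Finally, Lemma \ref{hcap2new} (a compactness refinement of the local derivative formula of \cite{MR1879850}) is needed to convert the capacity increments of the individual slits into the total increment $2|t-s|$, yielding the uniform modulus $\omega(2|t-s|/c)+(2L/c)|t-s|$ of Lemma \ref{lem:IV}. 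Note that this modulus is generally \emph{not} H\"older-$1/2$, since the one-slit driving functions of $\Theta_1,\Theta_2$ are merely continuous; equicontinuity is all one gets, and all one needs. To repair your proof you would have to supply substitutes for Lemmas \ref{lem:II}, \ref{lem:III} and \ref{hcap2new}, at which point you have reconstructed the paper's argument.
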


The proof requires a number of technical estimates for the half--plane capacities of
two--slits and their subhulls.

\medskip

We start with a refinement of Lemma \ref{hcap1} c) for the case when the hulls are slits.

\begin{lemma} \label{hcap2new}
Let $\Theta_1$ and $\Theta_2$ be slits with disjoint closures. Then
there is a constant $c>0$ such that
$$ c \le 
\frac{\hcap(B \cup \Theta_2)-\hcap
    (A \cup \Theta_2)}{\hcap(B)-\hcap(A)} \, $$
for all subslits $A \subsetneq B \subseteq \Theta_1$.
\end{lemma}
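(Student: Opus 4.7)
The plan is to parameterize $\Theta_1$ by half-plane capacity, so $\gamma_1:[0,T_1]\to\overline{\Theta_1}$ with $\hcap(\gamma_1(0,r])=2r$, and to write $A=\gamma_1(0,s]$, $B=\gamma_1(0,t]$ for $0\le s<t\le T_1$. Setting $F(r):=\hcap(\gamma_1(0,r]\cup\Theta_2)-\hcap(\Theta_2)$, the claim becomes: there exists $c>0$ such that $F(t)-F(s)\ge 2c(t-s)$ for all $0\le s<t\le T_1$. This in turn will follow from $F\in C^1([0,T_1])$ together with a uniform lower bound $F'(r)\ge 2c$, via the mean value theorem.

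To set up the differentiation, I would combine Lemma \ref{hcap1} b) with the composition identity $g_{A\cup\Theta_2}=g_{g_A(\Theta_2)}\circ g_A$ to rewrite
\[
F(t)-F(s)=\hcap\bigl(g_{\Theta_2^s}(L_{s,t-s})\bigr),\qquad 2(t-s)=\hcap(L_{s,t-s}),
\]
where $\Theta_2^s:=g_A(\Theta_2)$ and $L_{s,t-s}:=g_A(B\setminus A)$. The slit $L_{s,u}$ emerges from the real point $U_1(s):=\lim_{z\to\gamma_1(s)}g_A(z)\in\R$, the standard Loewner driving term for $\Theta_1$ at time $s$ (see Theorem \ref{slitex}). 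The crucial geometric fact I would verify next is that $U_1(s)\notin\overline{\Theta_2^s}$ for every $s\in[0,T_1]$: since $g_A$ extends to a homeomorphism of the boundaries, $\overline{\Theta_2^s}\cap\R$ consists of the single image of the base $p_2$ of $\Theta_2$, which lies strictly to one side of $U_1(s)$ on $\R$ because $p_1\ne p_2$ and $\overline{\Theta_1}\cap\overline{\Theta_2}=\emptyset$.

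To compute $F'(s)$, I would let $u=t-s\downarrow 0$. As $L_{s,u}$ shrinks to the point $U_1(s)\in\R$, the map $g_{\Theta_2^s}$, being holomorphic across $\R$ in a neighborhood of $U_1(s)$ by Schwarz reflection, admits a first-order Taylor expansion there. Combined with the scaling and translation invariance of $\hcap$ from Lemma \ref{hcap1} d), this yields the local distortion asymptotic
\[
\hcap\bigl(g_{\Theta_2^s}(L_{s,u})\bigr)=|g_{\Theta_2^s}'(U_1(s))|^{2}\,\hcap(L_{s,u})+o\bigl(\hcap(L_{s,u})\bigr)
\]
as $u\downarrow 0$, so that $F'(s)=2|g_{\Theta_2^s}'(U_1(s))|^{2}$. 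Continuity of $s\mapsto\Theta_2^s$ (in Hausdorff distance) and $s\mapsto U_1(s)$ (continuity of the driving function from Theorem \ref{slitex}) delivers the continuity of $F'$ on $[0,T_1]$. Since $U_1(s)\notin\overline{\Theta_2^s}$, the derivative $g_{\Theta_2^s}'(U_1(s))$ is nonzero for every $s$, so $F'$ is a continuous positive function on the compact interval $[0,T_1]$; it attains a positive minimum $2c>0$, which is exactly what we need.

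The main obstacle I expect is the local distortion formula for $\hcap$, i.e.\ the asymptotic $\hcap(\phi(K))=|\phi'(x_0)|^{2}\hcap(K)+o(\hcap(K))$ as $\hcap(K)\to 0$ when $K$ is a hull concentrated near $x_0\in\R$ and $\phi$ is conformal in a neighborhood of $x_0$ with $\phi(\R)\subset\R$. This should follow from the comparison $\hcap\asymp\operatorname{hsiz}$ of Theorem \ref{hsiz} together with the first-order Taylor expansion of $\phi$, but the error control must be uniform in $s\in[0,T_1]$; this uniformity in turn rests on joint continuity of $(s,z)\mapsto g_{\Theta_2^s}(z)$ and its derivatives on a suitable compact neighborhood of the graph $\{(s,U_1(s)):s\in[0,T_1]\}$.
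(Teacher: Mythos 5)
Your reduction is exactly the paper's: both of you rewrite $\hcap(B\cup\Theta_2)-\hcap(A\cup\Theta_2)$ as the half-plane capacity of the image of the increment $g_A(B\setminus A)$ under the mapping-out function of $g_A(\Theta_2)$ (via Lemma \ref{hcap1} b) and the composition identity), identify the derivative of $s\mapsto\hcap(\gamma_1(0,s]\cup\Theta_2)$ as $2\bigl[g'_{\Theta_2^s}(U_1(s))\bigr]^2$, check that $U_1(s)$ stays off $\overline{\Theta_2^s}$ so this is positive, and finish by continuity, compactness and the mean value theorem. The one place where your proposal has a genuine gap is precisely the step you flag as the main obstacle: the local distortion asymptotic $\hcap(\phi(K))=|\phi'(x_0)|^2\hcap(K)+o(\hcap(K))$. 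This cannot be extracted from Theorem \ref{hsiz}. That theorem only gives $\tfrac1{66}\operatorname{hsiz}\le\hcap\le\tfrac{7}{2\pi}\operatorname{hsiz}$, so even granting that $\operatorname{hsiz}(\phi(K))\sim|\phi'(x_0)|^2\operatorname{hsiz}(K)$ via Taylor expansion, you only obtain $c_1|\phi'(x_0)|^2\hcap(K)\le\hcap(\phi(K))\le c_2|\phi'(x_0)|^2\hcap(K)$ with fixed absolute constants $c_1<c_2$; the multiplicative gap does not shrink as $\hcap(K)\to0$, so you get no asymptotic equality, hence no differentiability of $F$ and no mean value theorem. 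The exact scaling is the content of Lemma 2.8 of Lawler--Schramm--Werner \cite{MR1879850} (proved there through the Brownian-motion representation of half-plane capacity), and this is exactly what the paper cites at this point; you should do the same rather than rederive it.

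Two remarks on salvaging your version as written. First, since the lemma only asks for a positive lower bound, the crude $\operatorname{hsiz}$-based estimate $\hcap(g_{\Theta_2^s}(L_{s,u}))\ge c_1\bigl[g'_{\Theta_2^s}(U_1(s))\bigr]^2\hcap(L_{s,u})$ for small $u$, uniformly in $s$, would in fact suffice --- but then $F$ is no longer known to be $C^1$, so you must replace the mean value theorem by the standard fact that a continuous function with lower right Dini derivative $\ge 2c$ everywhere satisfies $F(t)-F(s)\ge 2c(t-s)$ (or telescope over a fine partition). Second, for the continuity of $s\mapsto\bigl[g'_{\Theta_2^s}(U_1(s))\bigr]^2$ you should argue, as the paper does, via continuity of the driving function $U_1$ together with continuity of $s\mapsto g_{\Theta_2^s}$ in the topology of locally uniform convergence (plus Schwarz reflection near $U_1(s)$), rather than Hausdorff continuity of the slits themselves, which does not directly control derivatives of the mapping-out functions.
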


We note that a local version of Lemma \ref{hcap2new} in the sense of
$$ \lim \limits_{\hcap(B) \searrow \hcap(A)} \frac{\hcap(B\cup \Theta_2)-\hcap
    (A\cup \Theta_2)}{\hcap(B)-\hcap(A)}>0 \, \quad  \text{ for fixed
  } A \, ,$$
has been proved by Lawler, Schramm and Werner \cite[Lemma 2.8]{MR1879850}.
Our proof shows how to obtain the global statement of Lemma \ref{hcap2new} from
this local version.

\begin{proof} 
Using $g_{A \cup \Theta_2}=g_{\Delta} \circ g_{\Theta_2}$ for $\Delta:=g_{\Theta_2}(A)$
and Lemma
\ref{hcap1}, it is easy to see that
\begin{equation} \label{eq:hcap2new0}
 \hcap(B \cup \Theta_2)-\hcap
    (A \cup \Theta_2)=\hcap(g_{\Theta_2}(B))-\hcap(g_{\Theta_2}(A)) \, .
\end{equation}
Let $T:=\hcap(\Theta_1)/2>0$ and let $\theta : [0,T] \to \C$ be the
  parameterization of $\Theta_1$ by its half--plane capacity.
 For fixed $s \in
  [0,T]$ let $L_s:=\theta (0,s]$ and $\gamma(s):=\hcap(
  g_{\Theta_2}(L_s))$. Hence, in view of (\ref{eq:hcap2new0}) and 
since $\hcap(L_s)=2s$, all we  need to show is that
  there is a constant $c>0$ such that
\begin{equation} \label{eq:hcap2new1}
c \le \frac{\gamma(s)-\gamma(\tau)}{s-\tau} \quad \text{ for all } 0 \le  \tau<s
\le T \, .
\end{equation}
In order to prove (\ref{eq:hcap2new1}), we proceed in several steps.

\smallskip

(i) \, Fix $\tau \in [0,T)$. For $s \in [\tau,T]$ let
$$\begin{array}{rcl}
& K_s:=g_{L_{\tau}}(L_s \backslash L_{\tau})  , \qquad &
K_s^*:=g_{g_{L_{\tau}}(\Theta_2)}(K_s) \\[2mm]
& b(s):=\hcap(K_t) \, , \, & b^*(s):=\hcap( K_s^*)\, . 
\end{array}$$
Then, by \cite[Lemma 2.8]{MR1879850},
the right derivatives $\dot{b}_+(\tau)$ and $\dot{b}^*_+(\tau)$ of $b$ and $b^*$ at $\tau$
exist and
\begin{equation} \label{eq:hcap2new2}
\dot{b}^*_+(\tau)=\left[ g'_{g_{L_{\tau}}(\Theta_2)} \left(
    g_{L_{\tau}} (\theta(\tau)) \right) \right]^2 \dot{b}_+(\tau) \, .
\end{equation}
Here, $$g_{L_{\tau}}(\theta(\tau)):=\lim \limits_{z \to \theta(\tau)}
g_{L_{\tau}}(z)\, , $$ 
where the limit is taken over $z \in \Ha \backslash \Gamma_{\tau}$.
Now note that by Lemma \ref{hcap1} b),
$$ b(s)=\hcap( g_{L_{\tau}}(L_s \backslash
L_{\tau})=\hcap(L_{s})-\hcap(L_{\tau})=s-\tau \, $$
and, in a similar way, $b^*(s)= 
\gamma(s)-\gamma(\tau)$.

Therefore, (\ref{eq:hcap2new2}) shows that the right derivative
$\dot{\gamma}_+(\tau)$ of the function $\gamma :[0,T] \to \R$ exists for every
$\tau \in [0,T)$ and 
\begin{equation} \label{eq:hcap2new3}
 \dot{\gamma}_+(\tau)= \left[ g'_{g_{L_{\tau}}(\Theta_2)} \left(
    g_{L_{\tau}} (\theta(\tau)) \right) \right]^2 \, .
\end{equation}
(ii) \, Next  $\tau \mapsto U(\tau):=g_{\Gamma_{\tau}}(\theta(\tau))$ is
continuous on $[0,T)$ (see \cite[Lemma 4.2]{Lawler:2005}).
Furthermore, since $\Theta_1 \cap \Theta_2=\emptyset$, i.e., 
$U(\tau)\not \in g_{L_{\tau}}(\Theta_2)$, the function
$g_{g_{L_{\tau}}(\Theta_2)}$ has an analytic continuation to a
neighborhood of $U(\tau)$ and $g'_{g_{L_{\tau}(\Theta_2)}}(U(\tau))\not=0$,
see \cite[p.~69]{Lawler:2005}.
Since $\tau \mapsto g_{g_{L_{\tau}}(\Theta_2)}$ is continuous in the topology
of locally uniform convergence, we hence conclude from  (\ref{eq:hcap2new3})  that
$\dot{\gamma}_+$ is a continuous nonvanishing function  on the interval $[0,T)$. 

\smallskip

(iii) \, From (ii) we see that $\gamma : [0,T] \to \R$ is continuous, has
a right derivative $\dot{\gamma}_+(\tau)$ for every point $\tau \in [0,T)$ and
$\dot{\gamma}_+ :[0,T) \to \R$ is continuous. By Lemma 4.3 in
\cite{Lawler:2005}, $\gamma :(0,T] \to \R$ is differentiable with
$\dot{\gamma}(\tau)=\dot{\gamma}_+(\tau)$ for every $\tau \in (0,T)$. Hence
the mean value theorem shows that (\ref{eq:hcap2new1}) holds with
$$c:=\min \limits_{\tau \in [0,T]} \left[ g'_{g_{L_{\tau}}(\Theta_2)} \left(
    g_{L_{\tau}} (\theta(\tau)) \right) \right]^2 >0 \, .$$
\end{proof}

We shall need the following slight extension of Lemma \ref{hcap2new}.

\begin{lemma}\label{hcap2}
 Let  $\Theta_1$ and $\Theta_2$ be slits with disjoint closures. Then there
 exists a constant $c>0$ such that 
$$ c \le \frac{\hcap(B_1 \cup B_2)-\hcap (A_1 \cup
  A_2)} {\hcap(B_j)-\hcap(A_j)}  \, , \qquad j=1,2 \, , $$
for all subslits $A_1 \subsetneq B_1$ of $\Theta_1$ and $A_2 \subsetneq B_2$
of $\Theta_2$.
\end{lemma}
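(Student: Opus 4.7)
The idea is to reduce Lemma \ref{hcap2} to Lemma \ref{hcap2new} by ``freezing'' $B_2$, and then upgrade the resulting pointwise constant $c(B_2)$ to a uniform constant via a compactness argument. By the symmetry of the hypothesis in $(\Theta_1, \Theta_2)$, it is enough to prove the inequality for $j=1$.

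First I would discard $A_2$. Since $A_2 \subseteq B_2$, monotonicity of $\hcap$ (Lemma \ref{hcap1} b)) gives $\hcap(A_1 \cup A_2) \leq \hcap(A_1 \cup B_2)$, so
\begin{equation*}
\hcap(B_1 \cup B_2) - \hcap(A_1 \cup A_2) \;\geq\; \hcap(B_1 \cup B_2) - \hcap(A_1 \cup B_2).
\end{equation*}
Hence it suffices to find $c>0$, independent of the subslits, with
\begin{equation*}
c\,[\hcap(B_1) - \hcap(A_1)] \;\leq\; \hcap(B_1 \cup B_2) - \hcap(A_1 \cup B_2)
\end{equation*}
uniformly in $A_1 \subsetneq B_1 \subseteq \Theta_1$ and in nonempty subslits $B_2 \subseteq \Theta_2$ (the case $B_2=\emptyset$ forces $A_2=\emptyset$ and is trivial).

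For fixed nonempty $B_2 \subseteq \Theta_2$, the slits $\Theta_1$ and $B_2$ have disjoint closures (since $\overline{B_2}\subseteq \overline{\Theta_2}$ and $\overline{\Theta_1}\cap\overline{\Theta_2}=\emptyset$), so Lemma \ref{hcap2new} applies and yields the desired inequality with some constant $c(B_2)>0$. Inspecting the proof of Lemma \ref{hcap2new} (in particular formula (\ref{eq:hcap2new3})), one may take
\begin{equation*}
c(B_2) \;=\; \min_{\tau\in[0,T_1]}\bigl[\,g'_{g_{L_\tau}(B_2)}(U_\tau)\,\bigr]^2,
\end{equation*}
where $T_1 = \hcap(\Theta_1)/2$, $L_\tau$ is the subslit of $\Theta_1$ of capacity $2\tau$, and $U_\tau = g_{L_\tau}(\theta_1(\tau))$ with $\theta_1$ the hcap-parameterization of $\Theta_1$.

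The heart of the argument is the uniform bound $\inf_{B_2} c(B_2)>0$. For this I would parameterize $B_2 = \theta_2(0,s]$ by its half-plane capacity $s\in[0,T_2]$, $T_2=\hcap(\Theta_2)/2$, and study
\begin{equation*}
F(\tau,s) \;:=\; g'_{g_{L_\tau}(\theta_2(0,s])}(U_\tau), \qquad (\tau,s)\in [0,T_1]\times[0,T_2],
\end{equation*}
with the natural convention $F(\tau,0):=1$ (since $g_{\emptyset}=\textbf{id}$). Once $F$ is shown to be continuous and strictly positive on the compact rectangle $[0,T_1]\times[0,T_2]$, it attains a positive minimum $m>0$, and then $c(B_2)\geq m^2$ for every $B_2\subseteq\Theta_2$, finishing the proof. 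Positivity is essentially built in: since $\overline{\Theta_1}\cap\overline{\Theta_2}=\emptyset$, the real point $U_\tau$ lies at positive distance from $\overline{g_{L_\tau}(\Theta_2)}$, and hence from every $\overline{g_{L_\tau}(B_2)}$, so by Schwarz reflection $g_{g_{L_\tau}(B_2)}$ extends holomorphically across $U_\tau$ with nonzero derivative there. The main obstacle is the joint continuity of $F$: continuity in $\tau$ for fixed $s>0$ is step (ii) in the proof of Lemma \ref{hcap2new} (applied with $B_2$ in place of $\Theta_2$), while continuity in $s$ (including at $s=0$) follows from Carath\'eodory kernel convergence of the domains $\Ha\setminus g_{L_\tau}(\theta_2(0,s])$, which transfers to locally uniform convergence of the hydrodynamically normalized maps $g_{g_{L_\tau}(\theta_2(0,s])}$ and of their derivatives near $U_\tau$; a uniform version of the same estimates gives joint continuity on the compact rectangle. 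The case $j=2$ follows by interchanging the roles of $\Theta_1$ and $\Theta_2$.
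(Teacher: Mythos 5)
Your argument is correct, but it takes a genuinely different and considerably longer route than the paper's. The paper also reduces to Lemma \ref{hcap2new}, but by two purely combinatorial steps that land on the \emph{fixed} slit $\Theta_2$ rather than on a varying subslit: monotonicity (Lemma \ref{hcap1}~b)) gives $\hcap(B_1\cup B_2)-\hcap(A_1\cup A_2)\ge \hcap(B_1\cup A_2)-\hcap(A_1\cup A_2)$, and then the submodularity estimate of Lemma \ref{hcap1}~a), applied to the hulls $B_1\cup A_2$ and $A_1\cup\Theta_2$ (whose union is $B_1\cup\Theta_2$ and whose intersection is $A_1\cup A_2$), gives $\hcap(B_1\cup A_2)-\hcap(A_1\cup A_2)\ge \hcap(B_1\cup\Theta_2)-\hcap(A_1\cup\Theta_2)$; Lemma \ref{hcap2new} then applies verbatim with its single constant and nothing is left to prove. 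Because you instead enlarge $A_2$ to $B_2$, you are left with a constant $c(B_2)$ depending on the varying subslit and must establish $\inf_{B_2}c(B_2)>0$; that is where all your analytic work goes (joint continuity of $F$ on the rectangle, the convention at $s=0$, Carath\'eodory convergence of the reflected maps together with their derivatives near $U_\tau$), and while these steps are standard and can be carried out, they are only sketched in your write-up. Two shortcuts would have closed your argument immediately: first, one further application of Lemma \ref{hcap1}~a), now to the hulls $B_1\cup B_2$ and $A_1\cup\Theta_2$ (union $B_1\cup\Theta_2$, intersection $A_1\cup B_2$), replaces your $B_2$ by $\Theta_2$ and recovers the paper's proof; second, the bound $c(B_2)\ge c(\Theta_2)$ follows with no compactness from the monotonicity of the derivative in the hull: for hulls $C\subseteq D$ and real $x\notin\overline{D}$ one has $g'_D(x)=g'_{g_C(D\setminus C)}(g_C(x))\cdot g'_C(x)\le g'_C(x)$, because the Nevanlinna representation (\ref{Nevanlinna}) shows $(g_A^{-1})'(w)\ge 1$, hence $g'_A(x)\le 1$, at real points off a hull $A$; applying this with $C=g_{L_\tau}(B_2)\subseteq D=g_{L_\tau}(\Theta_2)$ gives the uniform lower bound directly.
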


\begin{proof}
It suffices to prove the lemma for $j=1$. If we apply Lemma \ref{hcap1} b) for
the  hulls $B_1 \cup A_2 \subseteq B_1 \cup B_2$ and then Lemma \ref{hcap1}
a) for the hulls $B_1 \cup A_2$ and $A_1 \cup \Theta_2$, we obtain
\begin{eqnarray*}
 \hcap(B_1 \cup B_2)-\hcap (A_1 \cup A_2) &\ge&  \hcap (B_1 \cup A_2)-\hcap(A_1
\cup A_2) \\ &\ge & \hcap (B_1 \cup \Theta_2)-\hcap (A_1 \cup \Theta_2) \, .
\end{eqnarray*}
Therefore, the estimate of Lemma \ref{hcap2new} completes the proof of Lemma \ref{hcap2}.
\end{proof}

Let $\Gamma$ be the union of two slits $\Theta_1$ and
$\Theta_2$ with disjoint closures. Then $g_{\Gamma}$  extends continuously onto each of the sides of
$\overline{\Theta_1}$ and of $\overline{\Theta_2}$
and maps them into $\R$. For every $c\in \overline{\Gamma}$ which is neither the tip of
$\Theta_1$ nor of $\Theta_2$, we write $g^+_\Gamma(c)$ for the image w.r.t the right side and
$g^-_\Gamma(c)$ w.r.t. the left side, so that $g^-_\Gamma(c)<g^+_\Gamma(c).$

\begin{lemma} \label{lem:neu1}
Let $\Theta_1$  and $\Theta_2$ be two slits which start at $p_1 \in
\R$ resp.~$p_2 \in \R$ such that $p_1<p_2$ and $\overline{\Theta}_1 \cap \overline{\Theta}_2=\emptyset$. Then
\begin{itemize}
\item[a)]
$ g^-_{\Theta_1\cup \Theta_2}(p_1) \le g^-_{B_1 \cup B_2}(p_1) \le g^+_{B_1
  \cup B_2}(p_2) \le  g^+_{\Theta_1\cup \Theta_2}(p_2)$, and
\item[b)] $g^-_{B_1 \cup B_2}(p_2)-g^+_{B_1 \cup B_2}(p_1) \ge g^-_{\Theta_1
    \cup \Theta_2}(p_2)-g^+_{\Theta_1\cup\Theta_2}(p_1)$
\end{itemize}
for all subslits $B_1 \subseteq \Theta_1$ and $B_2 \subseteq \Theta_2$.
\end{lemma}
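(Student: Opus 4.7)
The plan is to reduce everything to applying the monotonicity Lemma \ref{lem:hcapneu} to the conformal map $g_A$, where $A$ is the image of the \emph{upper} parts of the slits under $g_{B_1\cup B_2}$.

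First I would factor $g_{\Theta_1\cup \Theta_2}=g_A\circ g_{B_1\cup B_2}$, where
$$A:=g_{B_1\cup B_2}\bigl((\Theta_1\setminus B_1)\cup(\Theta_2\setminus B_2)\bigr)=A_1\cup A_2.$$
Each $A_j$ is the image of the subarc of $\Theta_j$ lying above the tip of $B_j$. Writing $u_1:=g_{B_1\cup B_2}(\operatorname{tip}(B_1))$ and $u_2:=g_{B_1\cup B_2}(\operatorname{tip}(B_2))$, the hull $A_j$ is attached to $\R$ only at $u_j$, so $\overline{A}\cap\R=\{u_1,u_2\}$. Because the slits are disjoint and ordered with $p_1<p_2$, the map $g_{B_1\cup B_2}$ flattens each $B_j$ to an interval around $u_j$ and preserves the left-to-right order, giving
$$g^-_{B_1\cup B_2}(p_1)<u_1<g^+_{B_1\cup B_2}(p_1)<g^-_{B_1\cup B_2}(p_2)<u_2<g^+_{B_1\cup B_2}(p_2).$$
This chain of inequalities is essentially the only geometric input; the rest is a direct application of Lemma \ref{lem:hcapneu}.

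For part a), the middle inequality is immediate from the display above. For the outer inequalities, note that $g^-_{B_1\cup B_2}(p_1)$ lies strictly to the left of the convex hull $[u_1,u_2]$ of $\overline{A}\cap\R$, so Lemma \ref{lem:hcapneu} a) yields $g_A(g^-_{B_1\cup B_2}(p_1))\le g^-_{B_1\cup B_2}(p_1)$; the left-hand side is exactly $g^-_{\Theta_1\cup\Theta_2}(p_1)$. Symmetrically, $g^+_{B_1\cup B_2}(p_2)$ lies to the right of $[u_1,u_2]$, giving $g^+_{\Theta_1\cup\Theta_2}(p_2)\ge g^+_{B_1\cup B_2}(p_2)$.

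For part b), both $g^+_{B_1\cup B_2}(p_1)$ and $g^-_{B_1\cup B_2}(p_2)$ lie in the open interval $(u_1,u_2)$, which is contained in $\R\setminus\overline{A}$. Hence Lemma \ref{lem:hcapneu} b) applies on $(u_1,u_2)$, and since $g_A$ is monotonically increasing on each component of $\R\setminus\overline{A}$ (from Schwarz reflection), the order is preserved and
$$g^-_{\Theta_1\cup\Theta_2}(p_2)-g^+_{\Theta_1\cup\Theta_2}(p_1)=g_A(g^-_{B_1\cup B_2}(p_2))-g_A(g^+_{B_1\cup B_2}(p_1))\le g^-_{B_1\cup B_2}(p_2)-g^+_{B_1\cup B_2}(p_1),$$
which is exactly what we want.

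The only subtle point I anticipate is justifying the ordering in the display above — in particular that $g_{B_1\cup B_2}$ really does flatten each subslit $B_j$ onto an interval containing $u_j$ in its interior, and that these two intervals remain disjoint and ordered. This is standard from the boundary behavior of the conformal map on a slit domain (continuous extension to both sides of each slit, with the tip mapping into the interior of the image interval), so it is a routine but essential verification rather than a genuine obstacle.
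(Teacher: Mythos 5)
Your proof is correct and follows essentially the same route as the paper: you factor $g_{\Theta_1\cup\Theta_2}=g_A\circ g_{B_1\cup B_2}$ with $A$ the image of the remaining slit portions, and then apply Lemma \ref{lem:hcapneu} a) for the outer inequalities and Lemma \ref{lem:hcapneu} b) on the gap between the two image intervals for part b). The ordering chain you flag as the ``subtle point'' is exactly the fact the paper also uses (that the tip of each $B_j$ maps into the interior of the image interval of $\overline{B_j}$, so $g^-_{B_1\cup B_2}(p_1)\le u_1$ and $u_1<g^+_{B_1\cup B_2}(p_1)\le g^-_{B_1\cup B_2}(p_2)<u_2$), and it is standard boundary behavior of $g_{B_1\cup B_2}$, as you say.
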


\begin{proof}
a) Let $A_1:=g_{B_1\cup B_2}(\Theta_1\backslash B_1)$ and $A_2:=g_{B_1\cup
  B_2}(\Theta_2\backslash B_2)$.
Then $A_1$ and $A_2$ are two disjoint slits  which start say at $a \in \R$
resp.~$b \in \R$. Let $A:=A_1 \cup A_2$. Then $A$ is a hull such that
$\overline{A} \cap \R \subseteq [a,b]$. Now $\alpha:=g^-_{B_1 \cup B_2}(p_1) \le a$, so Lemma \ref{lem:hcapneu} a) implies $g_A(\alpha) \le \alpha$. Since
$g_{\Theta_1 \cup \Theta_2}=g_A \circ g_{B_1\cup B_2}$, this shows that
$g^-_{\Theta_1\cup \Theta_2}(p_1) \le g^-_{B_1 \cup B_2}(p_1)$ and proves the
left--hand inequality. The proof of the right--hand inequality is similar.

\smallskip

b) Let $A:=g_{B_1 \cup B_2}(\Theta_1\backslash B_1 \cup \Theta_2 \backslash
B_2)$. Then $A$ is a hull with $\overline{A}\cap \R=\{a,b\}$ such that $a < b$
and $a < g^+_{B_1\cup B_2}(p_1) \le g^-_{B_1 \cup B_2}(p_2) < b$. Hence,
$$
g^-_{\Theta_1 \cup \Theta_2}(p_2)-g^+_{\Theta_1\cup\Theta_2}(p_1) =
g_A(g^-_{B_1\cup B_2}(p_2))-g_A(g^+_{B_1\cup B_2}(p_1)) \le
g^-_{B_1\cup B_2}(p_2)-g^+_{B_1\cup B_2}(p_1)  $$
by Lemma \ref{lem:hcapneu} b).
\end{proof}

\begin{lemma} \label{lem:II}
Let $\Theta_1$ and $\Theta_2$ be slits with disjoint closures. Then there is a
constant $L>0$ such that
$$ |g_{B_1 \cup A_2}(b_1)-g_{B_1 \cup B_2}(b_1)| \le L \cdot
|\hcap(B_2)-\hcap(A_2)|$$
for all subslits $A_2,B_2$ of $\Theta_2$ and every subslit $B_1$ of $\Theta_1$
with tip $b_1 \in B_1$.
\end{lemma}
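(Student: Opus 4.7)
The plan is as follows. Since $A_2$ and $B_2$ are both subslits of a single slit $\Theta_2$, they are linearly ordered by inclusion, so I may assume without loss of generality that $A_2 \subseteq B_2$ (the reverse case being symmetric). Then $B_1 \cup A_2 \subseteq B_1 \cup B_2$, and we have the factorization $g_{B_1 \cup B_2} = g_\Delta \circ g_{B_1 \cup A_2}$ with $\Delta := g_{B_1 \cup A_2}(B_2 \setminus A_2)$, a slit in $\Ha$. Setting $w := g_{B_1 \cup A_2}(b_1)$, which is a single real value since $b_1$ is the tip of $B_1$, the estimate reduces to bounding $|w - g_\Delta(w)|$.

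First I would control the half--plane capacity of the intermediate hull $\Delta$. Lemma \ref{hcap1}(b) gives $\hcap(\Delta) = \hcap(B_1 \cup B_2) - \hcap(B_1 \cup A_2)$, and Lemma \ref{hcap1}(a) applied to the hulls $B_2$ and $B_1 \cup A_2$ (whose union is $B_1 \cup B_2$ and whose intersection is $A_2$, using $\overline{\Theta_1} \cap \overline{\Theta_2} = \emptyset$) yields the crucial estimate $\hcap(\Delta) \le \hcap(B_2) - \hcap(A_2)$.

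Next I would estimate $|w - g_\Delta(w)|$ via the Nevanlinna representation (\ref{Nevanlinna}) of $g_\Delta^{-1} \in \mathcal{H}_b$ evaluated at $g_\Delta(w)$, which gives
$$w - g_\Delta(w) = \int_{\R} \frac{d\mu_\Delta(x)}{x - g_\Delta(w)}, \qquad \text{hence} \qquad |w - g_\Delta(w)| \le \frac{\hcap(\Delta)}{\dist(g_\Delta(w),\, \supp \mu_\Delta)}.$$
The hard part will be the uniform lower bound on this distance, depending only on $\Theta_1$ and $\Theta_2$. Here I would observe that $g_\Delta(w) = g_{B_1 \cup B_2}(b_1)$ lies in the image interval $[g^-_{B_1 \cup B_2}(p_1), g^+_{B_1 \cup B_2}(p_1)]$ of $B_1$, while $\supp \mu_\Delta$ is the image of both sides of the slit $\Delta$ under $g_\Delta$ (this is where Schwarz reflection applies across $\R$); via the factorization this equals $g_{B_1 \cup B_2}$ applied to both sides of $B_2 \setminus A_2 \subseteq B_2$, and is therefore contained in $[g^-_{B_1 \cup B_2}(p_2), g^+_{B_1 \cup B_2}(p_2)]$.

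Finally, assuming without loss of generality $p_1 < p_2$, Lemma \ref{lem:neu1}(b) applied to $B_1 \subseteq \Theta_1$ and $B_2 \subseteq \Theta_2$ gives the uniform separation
$$\dist\bigl(g_\Delta(w),\, \supp \mu_\Delta\bigr) \ge g^-_{B_1 \cup B_2}(p_2) - g^+_{B_1 \cup B_2}(p_1) \ge g^-_{\Theta_1 \cup \Theta_2}(p_2) - g^+_{\Theta_1 \cup \Theta_2}(p_1) =: \delta_0 > 0,$$
the positivity of $\delta_0$ following from $\overline{\Theta_1} \cap \overline{\Theta_2} = \emptyset$. Combining this with the capacity estimate yields the lemma with $L := 1/\delta_0$.
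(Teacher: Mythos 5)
Your proposal is correct and follows essentially the same route as the paper's proof: reduce to $A_2\subseteq B_2$, bound $\hcap(\Delta)$ by $\hcap(B_2)-\hcap(A_2)$ via Lemma \ref{hcap1} a) and b), write the difference as a Nevanlinna integral for $g_\Delta^{-1}$, and bound the denominator from below by $g^-_{\Theta_1\cup\Theta_2}(p_2)-g^+_{\Theta_1\cup\Theta_2}(p_1)>0$ using Lemma \ref{lem:neu1} b). All steps match the paper's argument, including the choice of constant $L$.
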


\begin{proof}
We assume $A_2 \subseteq B_2$. Then $A:=g_{B_1 \cup A_2}(B_2 \backslash A_2)$
is a hull, so  Lemma \ref{hcap1} b) shows $\hcap(A)=\hcap(B_1 \cup
B_2)-\hcap(B_1 \cup A_2)$. On the other hand, 
 Lemma \ref{hcap1} a) applied to the two hulls $B_2$ and $B_1 \cup A_2$ 
gives
$\hcap(B_1 \cup B_2)-\hcap(B_1 \cup A_2) \le
\hcap(B_2)-\hcap(A_2)$. Therefore, 
\begin{equation} \label{eq:II1}
 \hcap(A) \le \hcap(B_2)-\hcap(A_2)
\, .
\end{equation}
Note that $g_{B_1 \cup B_2}=g_A \circ g_{B_1 \cup A_2}$, so the Nevanlinna
representation formula (\ref{Nevanlinna}) for $h_A:=g_A^{-1}$ and $w=g_{B_1
  \cup B_2}(b_1)$ shows that
\begin{equation} \label{eq:II2}
g_{B_1 \cup A_2}(b_1)-g_{B_1 \cup B_2}(b_1)=h_A(g_{B_1 \cup B_2}(b_1))-g_{B_1
  \cup B_2}(b_1)=\int \limits_{\R} \frac{d\mu_A(t)}{t-g_{b_1 \cup B_2}(b_1)}
\, .
\end{equation}
Let $\Theta_1$ start at $p_1 \in \R$ and $\Theta_2$ start at $p_2 \in \R$ with
$p_1<p_2$. Then the interval $(-\infty,g^-_{B_1 \cup B_2}(p_2)]$ is disjoint
from the support $\supp(\mu_A)$ of the measure $\mu_A$, so for every $t \in
\supp(\mu_A)$, we have
$$ t-g_{B_1\cup B_2}(b_1) \ge g^-_{B_1 \cup B_2}(p_2)-g^+_{B_1 \cup B_2}(p_1)
\ge  g^-_{\Theta_1 \cup \Theta_2}(p_2)-g^+_{\Theta_1 \cup \Theta_2}(p_1)=:L^{-1}>0$$
by Lemma \ref{lem:neu1} b).
Hence (\ref{eq:II2}) leads to
$$ 0< g_{B_1 \cup A_2}(b_1)-g_{B_1 \cup B_2}(b_1) \le L \int \limits_{\R} d\mu_A(t)
=L \hcap(A).
$$
 In view of (\ref{eq:II1}) the proof of Lemma \ref{lem:II}
is complete.
\end{proof}

\begin{lemma} \label{lem:III}
Let $\Theta_1$ and $\Theta_2$ be slits with disjoint closures. Then there
exists a monotonically increasing function $\omega : [0,\hcap(\Theta_1)] \to
[0,\infty)$ with $\lim \limits_{\delta \searrow 0} \omega(\delta)=\omega(0)=0$ such that
\begin{equation} \label{eq:III1}
 |g_{A_1 \cup A_2}(a_1)-g_{B_1 \cup A_2}(b_1)| \le \omega \left(
  |\hcap(A_1)-\hcap(B_1)| \right)
\end{equation}
for all subslits $A_1$ and $B_1$ of $\Theta_1$ with tips $a_1 \in A_1$ and
$b_1 \in B_1$ and every subslit $A_2 \subseteq \Theta_2$.
\end{lemma}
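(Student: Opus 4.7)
I propose the following strategy. Without loss of generality we may assume $A_1\subseteq B_1$, since $A_1$ and $B_1$ are subslits of the same slit $\Theta_1$. Parameterize both slits by half-plane capacity, so that $\theta_j:[0,T_j]\to\overline{\Ha}$ satisfies $\hcap(\theta_j(0,s])=2s$ for $j=1,2$. Write $A_1=\theta_1(0,s_1]$, $B_1=\theta_1(0,s_2]$ with $0\le s_1\le s_2\le T_1$, and $A_2=\theta_2(0,r]$ with $r\in[0,T_2]$, so that $a_1=\theta_1(s_1)$ and $b_1=\theta_1(s_2)$. Since $|\hcap(A_1)-\hcap(B_1)|=2(s_2-s_1)$, the claim (\ref{eq:III1}) reduces to producing a modulus $\omega_0$ with $\omega_0(0)=0$ and $\omega_0(\delta)\searrow 0$ as $\delta\searrow 0$ for which the function
$$V(s,r):=g_{\theta_1(0,s]\cup\theta_2(0,r]}(\theta_1(s))$$
defined on $[0,T_1]\times[0,T_2]$ satisfies $|V(s,r)-V(s',r)|\le \omega_0(|s-s'|)$ for every $r\in[0,T_2]$; the conclusion will then follow with $\omega(\delta):=\omega_0(\delta/2)$.

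The existence of such $\omega_0$ will be extracted from joint uniform continuity of $V$ on the compact square $[0,T_1]\times[0,T_2]$ via Heine--Cantor. Joint continuity is established in two steps. \textbf{(i) Lipschitz in $r$, uniformly in $s$:} applying Lemma \ref{lem:II} with the slit $B_1=\theta_1(0,s]\subseteq\Theta_1$ and the two subslits $\theta_2(0,r],\theta_2(0,r']$ of $\Theta_2$ (which are ordered since both are subslits of the same slit), we obtain
$$|V(s,r)-V(s,r')|\le L\,|\hcap(\theta_2(0,r])-\hcap(\theta_2(0,r'])| = 2L\,|r-r'|,$$
with $L$ a constant depending only on $\Theta_1$ and $\Theta_2$. \textbf{(ii) Continuity in $s$ for fixed $r$:} using the factorization $g_{\theta_1(0,s]\cup\theta_2(0,r]}=g_{\tilde\Gamma_r(s)}\circ g_{\theta_2(0,r]}$, where $\tilde\Gamma_r(s):=g_{\theta_2(0,r]}(\theta_1(0,s])$ is a subslit of the single slit $\tilde\Gamma_r:=g_{\theta_2(0,r]}(\Theta_1)\subset\Ha$, we rewrite
$$V(s,r)=\tilde U_r(\tilde\tau_r(s)),$$
where $\tilde U_r:[0,\hcap(\tilde\Gamma_r)/2]\to\R$ is the driving function of $\tilde\Gamma_r$ -- continuous by Theorem \ref{slitex} -- and $\tilde\tau_r(s):=(\hcap(\theta_1(0,s]\cup\theta_2(0,r])-2r)/2$ is continuous in $s$ (growth of half-plane capacity of an increasing family is continuous). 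Combining (i) and (ii) via $|V(s_n,r_n)-V(s,r)|\le 2L|r_n-r|+|V(s_n,r)-V(s,r)|$ yields joint continuity of $V$.

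Heine--Cantor then gives $V$ uniformly continuous on $[0,T_1]\times[0,T_2]$, so
$$\omega_0(\delta):=\sup\bigl\{\,|V(s,r)-V(s',r)|\ :\ r\in[0,T_2],\ s,s'\in[0,T_1],\ |s-s'|\le \delta\,\bigr\}$$
is monotonically increasing, vanishes at $\delta=0$, and tends to $0$ as $\delta\searrow 0$. Setting $\omega(\delta):=\omega_0(\delta/2)$ then delivers the bound (\ref{eq:III1}). The main obstacle is step (ii): the point $\theta_1(s)$ is a branch point of $g_{\theta_1(0,s]\cup\theta_2(0,r]}$, so $V(s,r)$ is really a boundary value taken along the complement of the hull; this subtlety is resolved precisely by the factorization above, which identifies $V(\cdot,r)$ with the driving function of the single slit $\tilde\Gamma_r$ composed with the continuous reparameterization $s\mapsto \tilde\tau_r(s)$, after which Theorem \ref{slitex} supplies the required continuity.
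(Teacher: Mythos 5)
Your argument is correct, but it follows a genuinely different route from the paper's. The paper defines $\omega(\delta)$ purely in terms of $\Theta_1$, as the supremum of the gaps $g^+_{B_1}(a_1)-g^-_{B_1}(a_1)$ over nested subslits $A_1\subseteq B_1$ with $\hcap(B_1)-\hcap(A_1)\le\delta$; the estimate (\ref{eq:III1}) then follows because both $g_{A_1\cup A_2}(a_1)$ and $g_{B_1\cup A_2}(b_1)$ lie in the interval $[g^-_{B_1\cup A_2}(a_1),g^+_{B_1\cup A_2}(a_1)]$, whose length is at most $g^+_{B_1}(a_1)-g^-_{B_1}(a_1)$ by the non-expansion Lemma \ref{lem:hcapneu}~b) -- adding $A_2$ only shrinks the gap -- and $\omega(\delta)\searrow 0$ is proved quantitatively from the uniform continuity of the driving function of $\Theta_1$ together with the diameter and $\operatorname{rad}$ estimates from Lawler's book. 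You instead establish joint continuity of $V(s,r)$ on a compact square (continuity in $s$ via Theorem \ref{slitex} applied to each image slit $g_{\theta_2(0,r]}(\Theta_1)$, uniform Lipschitz continuity in $r$ via Lemma \ref{lem:II}) and extract the modulus by Heine--Cantor. Both are sound. The paper's version yields an explicit modulus depending only on $\Theta_1$ and does not use Lemma \ref{lem:II}, which is only combined with Lemma \ref{lem:III} afterwards in Lemma \ref{lem:IV}; yours avoids the two-sided boundary values $g^{\pm}$ and the diameter machinery, at the price of a non-explicit modulus and of invoking the one-slit theorem for the whole family $\{g_{\theta_2(0,r]}(\Theta_1)\}_{r}$. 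Two details worth spelling out: the continuity of $s\mapsto\tilde\tau_r(s)$ is exactly what steps (i)--(iii) of the proof of Lemma \ref{hcap2new} provide (with $\Theta_2$ replaced by $\theta_2(0,r]$), and at $s=0$ one has $V(0,r)=g_{\theta_2(0,r]}(\theta_1(0))$, the base point of $\tilde\Gamma_r$, so the continuity argument also covers the left endpoint.
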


\begin{proof}
We first define $\omega(\delta)$ for $\delta \in (0,\hcap(\Theta_1)]$ by
$$ \omega(\delta):=\sup \{ g^+_{B_1}(a_1)-g^-_{B_1}(a_1) \} \, . $$
Here the supremum is taken over all subslits $A_1 \subseteq B_1$ of
$\Theta_1$ such that $\hcap(B_1)-\hcap(A_1) \le \delta$ and $a_1$ is the tip
of $A_1$. Clearly,  $\omega : (0,\hcap(\Theta_1)] \to (0,\infty)$ is
monotonically increasing and we need to prove (i) the estimate (\ref{eq:III1})
and (ii) $\lim_{\delta\searrow 0} \omega(\delta)=0$.

\smallskip

(i) Assume $A_1 \subseteq B_1$. Consider the slit $A:=g_{A_1 \cup A_2}(B_1
\backslash A_1)$, which starts at $g_{A_1 \cup A_2}(a_1)$. Then $g_{B_1 \cup B_2}=g_A \circ g_{A_1 \cup A_2}$, so Lemma
\ref{lem:hcapneu} a) implies $g^{-}_{B_1 \cup A_2}(a_1) =g^-_A(g_{A_1 \cup A_2}(a_1))
\le g_{A_1 \cup
  A_2}(a_1) \le g^+_A(g_{A_1 \cup A_2}(a_1))=g^+_{B_1 \cup A_2}(a_1)$. Since
we clearly also have $g^-_{B_1 \cup A_2}(a_1) \le
g_{B_1 \cup A_2}(b_1) \le g^+_{B_1 \cup A_2}(a_1)$, we deduce
$$ |g_{A_1 \cup A_2}(a_1)-g_{B_1 \cup A_2}(b_1)| \le g^+_{B_1 \cup
  A_2}(a_1)-g^-_{B_1 \cup A_2}(a_1) \, .$$
Since $g_{B_1 \cup A_2}=g_{g_{B_1}(A_2)} \circ g_{B_1}$, Lemma \ref{lem:hcapneu}
b) shows that
$$ g^+_{B_1 \cup A_2}(a_1)-g^-_{B_1 \cup A_2}(a_1)=g_{g_{B_1}(A_2)} \left(
  g^+_{B_1}(a_1) \right)-g_{g_{B_1}(A_2)} \left( g^-_{B_1}(a_1) \right) \le
g^+_{B_1}(a_1)-g^-_{B_1}(a_1) \, ,$$
so we get $ |g_{A_1 \cup A_2}(a_1)-g_{B_1 \cup A_2}(b_1)| \le
\omega(\hcap(B_1)-\hcap(A_1))$, i.e.~the estimate (\ref{eq:III1}) holds.

\smallskip

(ii) Let $c_1:=\hcap(\Theta_1)/2$, denote by $\theta_1 : [0,c_1] \to \C$
the parameterization of $\Theta_1$ by its half--plane capacity and let $U :
[0,c_1] \to \R$ be the driving function for the slit $\Theta_1$ according to
Theorem \ref{slitex}.
Let $A_1 \subseteq B_1$ be subslits of $\Theta_1$ and let $a_1$ be the tip of
$\Theta_1$. Then there are $t,s \in [0,c_1]$ with $t \le s$ such that
$\theta_1(t)=a_1$, $\theta_1(0,s]=B_1$ and $s-t=\hcap(B_1)/2-\hcap(A_1)/2$.
Consider the slit $P:=g_{A_1}(B_1 \backslash A_1)$, so $\overline{P} \cap
\R=\{U(t)\}$ and $g^+_{B_1}(a_1)-g^-_{B_1}(a_1)$ is the Euclidean length of
the interval $g_P(P)$.
 By Remark 3.30 in \cite{Lawler:2005} there is an absolute
constant $M>0$ such that
\begin{equation} \label{eq:III2}
g^+_{B_1}(a_1)-g^-_{B_1}(a_1) \le M \cdot \diam(P) \, ,
\end{equation}
where $\diam(P):=\sup \{|p-q| \, : \, p,q \in P\}$. Define 
$\operatorname{rad}(P):=\sup \{|z-U(t)| \, : \, z \in P\}$. Then, by Lemma 4.13 in
\cite{Lawler:2005},
\begin{eqnarray*}
 \operatorname{rad}(P) &\le & 4\max \left\{ \sqrt{s-t}, \sup \limits_{t \le \tau \le s}
  |U(\tau)-U(t)| \right\} \\[3mm]
& \le & 4 \max \left\{ \sqrt{s-t}, \sup \limits_{|\tau-\sigma|\le s-t} 
  |U(\tau)-U(\sigma)| \right\}
 \, .
\end{eqnarray*}
Hence, if we define 
$$ \varrho(\delta):=\max \left\{ \sqrt{\delta/2}, \sup \limits_{|\tau-\sigma|\le \delta/2} 
  |U(\tau)-U(\sigma)| \right\} $$
for $\delta \in [0,\hcap(\Theta_1)]$ then $\operatorname{rad}(P) \le 4 \varrho(\hcap(B_1)-\hcap(A_1))$.
Using the obvious estimate $\diam(P) \le 2 \operatorname{rad}(P)$, we obtain
from (\ref{eq:III2}) that  $g^+_{B_1}(a_1)-g^-_{B_1}(a_1) \le 8 M
\varrho(\hcap(B_1)-\hcap(A_1))$. Recalling the definition of $\omega(\delta)$,
this shows that $$\omega(\delta) \le 8 M \varrho(\delta) \quad \text{for all}\quad \delta \in
(0,\hcap(\Theta_1)].$$ Since the continuous driving function $U : [0,c_1] \to
\R$ is uniformly continuous on $[0,c_1]$, we see that $\varrho(\delta) \to 0$
as $\delta \searrow 0$, so $\lim \limits_{\delta \searrow 0} \omega(\delta)=0$.
\end{proof}

\begin{lemma} \label{lem:IV}
Let $\Theta_1$ and $\Theta_2$ be slits with disjoint closures. Then there
exist constants $c,L>0$ and a monotonically increasing function $\omega : [0,\hcap(\Theta_1)] \to
[0,\infty)$ with $\lim \limits_{\delta \searrow 0} \omega(\delta)=\omega(0)=0$  such that
\begin{eqnarray*}
|g_{A_1 \cup A_2}(a_1)-g_{B_1 \cup B_2}(b_1)| & \le &
\omega \left( \frac{1}{c} \, |\hcap(A_1\cup A_2)-\hcap(B_1 \cup B_2)|
\right)  \\ & & \hspace*{1cm} + \frac{L}{c} \, |\hcap(A_1 \cup A_2)-\hcap(B_1\cup B_2)| 
\end{eqnarray*}
for all subslits $A_1$ and $B_1$ of $\Theta_1$ with
tips $a_1 \in A_1$ and $b_1 \in B_1$ and all subslits
 $A_2, B_2$ of $\Theta_2$.
\end{lemma}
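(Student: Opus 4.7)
The plan is to chain together Lemmas \ref{lem:II}, \ref{lem:III} and \ref{hcap2} via the triangle inequality, using an intermediate hull to separate the two ``coordinates'' of the variation. Since any two subslits of $\Theta_j$ are nested, after possibly relabelling the pairs $(A_j,B_j)$ we may assume $A_1\subseteq B_1$ and $A_2\subseteq B_2$; the remaining ``mixed'' configurations ($A_1\subseteq B_1$ but $B_2\subseteq A_2$, or vice versa) will be reduced to this nested case by inserting the intermediate hull $A_1\cup B_2$ and applying the nested estimate on each piece.

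With $A_1\subseteq B_1$ and $A_2\subseteq B_2$, I would insert the hull $B_1\cup A_2$ and split
$$ |g_{A_1\cup A_2}(a_1)-g_{B_1\cup B_2}(b_1)| \;\le\; |g_{A_1\cup A_2}(a_1)-g_{B_1\cup A_2}(b_1)| \,+\, |g_{B_1\cup A_2}(b_1)-g_{B_1\cup B_2}(b_1)|. $$
The first summand is governed by the variation of the subslit of $\Theta_1$ with the second slit $A_2$ held fixed, so Lemma \ref{lem:III} bounds it by $\omega(|\hcap(A_1)-\hcap(B_1)|)$. The second summand is governed by the variation of the subslit of $\Theta_2$ with the first slit $B_1$ (and its tip $b_1$) held fixed, so Lemma \ref{lem:II} bounds it by $L\,|\hcap(A_2)-\hcap(B_2)|$.

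The final step is to convert each individual capacity difference into the joint one. Lemma \ref{hcap2}, applied with $j=1$ and with $j=2$, produces a constant $c>0$ with
$$ \hcap(B_j)-\hcap(A_j) \;\le\; \frac{1}{c}\bigl(\hcap(B_1\cup B_2)-\hcap(A_1\cup A_2)\bigr),\qquad j=1,2. $$
Plugging these into the two estimates above and using the monotonicity of $\omega$ yields exactly the asserted inequality. The real content has already been absorbed into the preceding lemmas: the uniform comparability of capacity increments (Lemma \ref{hcap2}, resting on the local derivative computation of Lemma \ref{hcap2new}), the Lipschitz dependence on the second slit (Lemma \ref{lem:II}), and the uniform-continuity modulus inherited from the driving function of $\Theta_1$ (Lemma \ref{lem:III}); Lemma \ref{lem:IV} is then essentially bookkeeping. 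The one subtle point to watch, and what I would write out most carefully, is the reduction of the mixed configurations to the nested one, since in those configurations the joint capacity difference can be small while the individual ones are not --- the interpolation through $A_1\cup B_2$ restores the nested structure needed to apply Lemma \ref{hcap2}.
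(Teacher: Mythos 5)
Your treatment of the nested case $A_1\subseteq B_1$, $A_2\subseteq B_2$ is exactly the paper's proof: the triangle inequality through the intermediate hull $B_1\cup A_2$, Lemma \ref{lem:III} for the first summand, Lemma \ref{lem:II} for the second, and Lemma \ref{hcap2} to convert the individual capacity increments into the joint one.

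The one place where you go beyond the paper --- the reduction of the mixed configurations --- does not work, and in fact cannot be made to work, because the inequality as literally stated fails there. Suppose $A_1\subsetneq B_1$ but $B_2\subsetneq A_2$. Interpolating through $A_1\cup B_2$ and applying the nested estimate twice produces bounds in terms of $\hcap(A_1\cup A_2)-\hcap(A_1\cup B_2)$ and $\hcap(B_1\cup B_2)-\hcap(A_1\cup B_2)$; both are nonnegative, so their sum is in general much larger than $|\hcap(A_1\cup A_2)-\hcap(B_1\cup B_2)|$, and the desired right-hand side is not recovered. Worse, by the intermediate value theorem one can choose such a mixed configuration with $\hcap(A_1\cup A_2)=\hcap(B_1\cup B_2)$ while $a_1\neq b_1$; the claimed bound would then force $g_{A_1\cup A_2}(a_1)=g_{B_1\cup B_2}(b_1)$, which is false in general. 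So your instinct that the mixed case is the delicate point is right, but the correct conclusion is that the lemma must be read as restricted to nested pairs (the paper's proof opens with ``we can assume $A_1\subsetneq B_1$ and $A_2\subsetneq B_2$'' and never returns to the other cases). This restriction is harmless for the only application, the equicontinuity argument in Theorem \ref{aux}, where one always compares $\gamma_j(0,s]\subseteq\gamma_j(0,t]$ for $s\le t$ simultaneously in both coordinates.
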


\begin{proof}
We can assume $A_1 \subsetneq B_1$ and $A_2 \subsetneq B_2$.
Then 
\begin{eqnarray*}
|g_{A_1 \cup A_2}(a_1)-g_{B_1 \cup B_2}(b_1)| & \le & |g_{A_1 \cup
  A_2}(a_1)-g_{B_1 \cup A_2}(b_1)|+|g_{B_1 \cup A_2}(b_1)-g_{B_1 \cup
    B_2}(b_1)| \\
&\le &  \omega (\hcap(B_1)-\hcap(A_1))+L \left( \hcap(B_2)-\hcap(A_1) \right)
\, .
\end{eqnarray*}
by Lemma \ref{lem:III} and Lemma \ref{lem:II}. Now the estimate of Lemma
\ref{hcap2}
completes the proof of Lemma \ref{lem:IV}.
\end{proof}

\begin{proof}[Proof of Theorem \ref{aux}] Let $A$ be a subhull of $\Theta_1\cup \Theta_2$ and let $(\gamma_1, \gamma_2)$ be a Loewner parameterization for $A.$ Let  $g_t:=g_{\gamma_1(0,t] \cup \gamma_2(0,t]}.$
 Then
$$ g^-_{t}(\gamma_1(0)) \le U^\gamma_j(t) \le
g^+_{t}(\gamma_2(0)) \, ,$$
so Lemma \ref{lem:neu1} a) implies
$$ g^-_{\Theta_1 \cup \Theta_2}(\theta_1(0)) \le U^\gamma_j(t) \le
g^+_{\Theta_1 \cup \Theta_2}(\theta_2(0)) \, , \qquad j=1,2\, .$$
This gives a uniform bound for $U^\gamma_1(t)$ and $U^\gamma_2(t)$.
Since $\hcap(\gamma_1(0,t] \cup \gamma_2(0,t])=2t$,
Lemma \ref{lem:IV} implies
$$
 |U^\gamma_1(t)-U^\gamma_1(s)|=|g_{t}(\gamma_1(t))-g_{s}(\gamma_1(s))| \le 
\omega \left( \frac{2|t-s|}{c} \right) +\frac{2L}{c} |t-s|$$
for all $t,s \in [0,T]$. This shows that the driving functions $U^\gamma_1$ 
for all Loewner parameterizations $(\gamma_1, \gamma_2)$ are
uniformly equicontinuous on $[0,T]$. By switching the roles of $U^\gamma_1$ and
$U^\gamma_2$, the same result holds for the driving functions $U^\gamma_2$.\\
The statement of Theorem \ref{aux} now follows directly from the Arzel\`a--Ascoli theorem.
\end{proof}

\subsection{\label{existence}Proof of Theorem \ref{Charlie2}, Part I (Existence)}
Now we describe the setting we need for the proof of Theorem \ref{Charlie2} when $n=2.$\\

Let $\Gamma_1$ and $\Gamma_2$ be slits in $\Ha$ with $\overline{\Gamma_1}\cap\overline{\Gamma_2}=\emptyset$.\\
We assume that $\hcap(\Gamma_1\cup \Gamma_2)=2$ and we let $c_1=\hcap(\Gamma_1)/2$ and $c_2=\hcap(\Gamma_2)/2.$\\[0.2cm]
 Furthermore we let $\Theta_1,$ $\Theta_2$ be slits with $\Gamma_j\subset \Theta_j$,  $\overline{\Theta_1}\cap\overline{\Theta_2}=\emptyset,$ and $\hcap(\Theta_1)=\hcap(\Theta_2)=2.$ We denote by $\theta_j(t)$ the corresponding parameterization of $\Theta_j$ by its half-plane capacity, so that we have $\Gamma_j=\theta_j[0,c_j],$ $j=1,2.$ \\
We want to find $\lambda \in [0,1]$ and two driving functions $U_1,U_2,$ such that the two slits $\Gamma_1,\Gamma_2$ are produced by the Loewner equation
\begin{equation}\label{2slits} \dot{g}_t(z)=\frac{2\lambda}{g_t(z)-U_{1}(t)}+\frac{2(1-\lambda)}{g_t(z)-U_{2}(t)},\quad g_0(z)=z. \end{equation}

To begin with, we define a family of Loewner equations, indexed by $(n,\lambda)\in\N_0\times [0,1].$\\ Let  $\alpha_{n,\lambda}:[0,1]\to \{0,1\},$ be a function with \begin{equation}\label{Pete} \alpha_{n,\lambda}(t)=\begin{cases}
1 \quad \text{when} \quad t\in (\frac{k}{2^n}, \frac{k+\lambda}{2^n}),\; k\in\{0,...,2^n-1\},\\
0 \quad \text{when} \quad t\in (\frac{k+\lambda}{2^n}, \frac{k+1}{2^n}),\; k\in\{0,...,2^n-1\}.                                                                  \end{cases}
\end{equation}
 Now consider the (one--slit) differential equation
\begin{equation}\label{e} \dot{g}_{t,n}(z)=\frac{2\alpha_{n,\lambda}(t)}{g_{t,n}(z)-U_{1,n}(t)}+\frac{2(1-\alpha_{n,\lambda}(t))}{g_{t,n}(z)-U_{2,n}(t)}, \quad g_{0,n}(z)=z. \end{equation}
We want to let $\Theta_1$ (resp. $\Theta_2$) grow whenever $\alpha_{n,\lambda}(t)=1$ (resp. $\alpha_{n,\lambda}(t)=0$) and the one-slit case gives us a uniquely determined driving function $U_{1,n}(t)$ (resp. $U_{2,n}(t)$) there (which also depends on $\lambda$). We extend $U_{1,n}(t)$ to all $t\in [0,1]$ by requiring that $U_{1,n}(t)$ is the image of the tip of the  part of $\Theta_1$ produced at time $t$ for all $t\in[0,1].$ Likewise, we extend $U_{2,n}$, so that we get two  continuous functions defined on the whole interval $[0,1].$

For $t\in[0,1]$ we have produced a hull $H_{n,\lambda, t}$ having the form $$H_{n,\lambda, t}=\theta_1[0,x_{n,\lambda, t}]\cup \theta_2[0,y_{n,\lambda, t}],$$ where $x_{n,\lambda, t}\in[0,1]$ depends continuously on $\lambda.$

For every $n\in\N_0$, we have $x_{n,0, 1}=0$ and $x_{n,1, 1}=1$ and consequently there is a  $\lambda_{n}$ with $x_{n,\lambda_n, 1}=c_1$ according to the intermediate value theorem. The monotonicity of $\hcap,$ Lemma \ref{hcap1} b), implies $y_{n,\lambda_n, 1}=c_2$ so that $H_{n,\lambda_{n}, 1}=\Gamma_1\cup \Gamma_2.$\\[0.2cm]
Thus we have a sequence of coefficient functions $\alpha_{n,\lambda_n}$ and continuous driving functions $U_{1,n},U_{2,n},$ such that the solution of the one-slit equation (\ref{e}) always generates the two slits $\Gamma_1\cup\Gamma_2$.\\

According to Theorem \ref{aux}, both sequences $U_{1,n}$ and $U_{2,n}$ possess uniformly converging subsequences. Since $(\lambda_n)_n$ is a sequence of real numbers in the interval $[0,1],$ we can find a convergent subsequence $(\lambda_{n_k})_k$ with limit $\lambda:=\lim_{k\to\infty}\lambda_{n_k}$, such that two sequences $U_{1,n_{k}},$ $U_{2,n_k}$ converge uniformly to $U_1$ and $U_2$ respectively. The functions $U_1$ and $U_2$ are continuous, too. Let $g_t$ be the solution of \begin{equation} \dot{g}_t(z)=\frac{2\lambda}{g_t(z)-U_{1}(t)}+\frac{2(1-\lambda)}{g_t(z)-U_{2}(t)},\quad g_0(z)=z. \end{equation}
 It is easy to see that 
\begin{equation}\label{weak} \int_0^t  \alpha_{n_k}(s) f(s) \, ds \; \underset{k\to\infty}{\longrightarrow}\;  \lambda \cdot \int_0^t  f(s) \, ds  \quad \text{for any} \quad f\in C([0,T], \C) \;\text{and}\; t\in[0,1]. \end{equation}
By combining (\ref{weak}) with the uniform convergence of $U_{1,n_k},$ $U_{2,n_k}$, we conclude that for any  $t\in[0,1]$ we have 
$$ \int_0^t \frac{2\alpha_{n_k,\lambda_{n_k}}(\tau)}{z-U_{1,n_k}(\tau)}+\frac{2(1-\alpha_{n_k,\lambda_{n_k}}(\tau))}{z-U_{2,n_k}(\tau)} \; d\tau \; \underset{k\to\infty}{\longrightarrow}\;
\int_0^t \frac{2\lambda}{z-U_{1}(\tau)}+\frac{2(1-\lambda)}{z-U_{2}(\tau)} \; d\tau $$
locally uniformly in $\Ha$.

From this, it follows that $g_{t,n_k}\to g_t$ locally uniformly for every fixed $t\in [0,1]$ when  $k\to\infty$, see \cite{Roth}, Lemma I.37, or \cite{RothSchl2}, Theorem 2.4. In particular, $$g_1=\lim_{k\to\infty}g_{1,n_k}=\lim_{k\to\infty}g_{\Gamma_1\cup \Gamma_2}=g_{\Gamma_1\cup \Gamma_2}.$$ 
Finally, the last equation implies that $\lambda \not\in\{0,1\}$. This completes the proof of the existence statement of Theorem (\ref{Charlie2}). \hfill \proofsymbol

\subsection{The dynamic interpretation of the coefficients}\label{dynamic}

Let $\Gamma_1$ and $\Gamma_2$ be slits with disjoint closures and $\hcap(\Gamma_1\cup\Gamma_2)=2$. We have proved
in the last section that there exist a constant $\lambda \in (0,1)$ and driving functions
$U_1,U_2 \in C([0,1],\R)$ such that the solution $g_t$ to the chordal Loewner equation
\begin{equation}\label{eq:2slits}
  \dot{g}_t(z)=\frac{2\lambda}{g_t(z)-U_{1}(t)}+\frac{2(1-\lambda)}{g_t(z)-U_{2}(t)},\quad
  g_0(z)=z \, , \end{equation}
satisfies $g_1=g_{\Gamma_1 \cup \Gamma_2}$. Let $\gamma_1(t)$ and
$\gamma_2(t)$ be the tip of the part of $\Gamma_1$ and $\Gamma_2$ respectively
at time $t$, so $(\gamma_1,\gamma_2)$ is a Loewner parameterization of
$(\Gamma_1,\Gamma_2)$ with  constant coefficients $\lambda$ and $1-\lambda$.
 In this section, we will derive some properties of this  Loewner parameterization
 $(\gamma_1,\gamma_2)$.

In the following lemma we let $\mathcal{B}(z,r):=\{w\in\C\with|z-w|<r\},$
where $z\in\C, r>0$ and for $A\subseteq \C$ we define
$\displaystyle\diam(A):=\sup_{z,w\in A}|z-w|.$ 

\begin{lemma}\label{ThomasFundamentallemma} Let $x(t)=\hcap(\gamma_1(0,t])$ and
$y(t)=\hcap(\gamma_2(0,t])$. Then
 $$x(t)+y(t)-2t=\Landauo(t) \quad \text{for} \quad t\to0.$$
\end{lemma}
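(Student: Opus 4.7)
The plan is to rewrite $x(t)+y(t)-2t$ as an ``interaction term'' between the two slits and bound this interaction by the product of the two individual half-plane capacities. Write $A_t:=\gamma_1(0,t]$, $B_t:=\gamma_2(0,t]$ and $p_j:=\gamma_j(0)$. Since $\overline{A_t}\cap\overline{B_t}=\emptyset$, Lemma~\ref{hcap1}~b) applied to the inclusion $A_t\subseteq A_t\cup B_t$, combined with $\hcap(A_t\cup B_t)=2t$, gives
\[
x(t)+y(t)-2t=\hcap(B_t)-\hcap(g_{A_t}(B_t)).
\]
Because $\hcap(A_t),\hcap(B_t)\le 2t$, it will be enough to prove the quantitative estimate
\[
\hcap(B_t)-\hcap(g_{A_t}(B_t))=O\bigl(\hcap(A_t)\cdot \hcap(B_t)\bigr)=O(t^{2})=o(t).
\]
The hypothesis $\overline{\Gamma_1}\cap\overline{\Gamma_2}=\emptyset$ forces $p_1\neq p_2$, and by continuity of $\gamma_1,\gamma_2$, for all small $t$ we have $A_t\subset\mathcal{B}(p_1,d)$ and $B_t\subset\mathcal{B}(p_2,\varepsilon_t)$, where $d:=|p_1-p_2|/4$ and $\varepsilon_t:=\diam(B_t)\to 0$.

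The next step is to isolate the linear part of $g_{A_t}$ near $p_2$ and exploit that real affine changes of coordinates interact well with $\hcap$. On $\mathcal{B}(p_2,2d)$ the Laurent-type expansion $g_{A_t}(z)=z+\hcap(A_t)/(z-p_1)+O\bigl(\hcap(A_t)\,\diam(A_t)\bigr)$ holds uniformly, and Schwarz reflection across $\R\setminus\overline{A_t}$ makes the Taylor coefficients of $g_{A_t}$ at $p_2\in\R$ real. Cauchy's inequalities then give $b_t:=g'_{A_t}(p_2)\in\R$ with $b_t=1+O(\hcap(A_t))$ and $\sup_{\mathcal{B}(p_2,d)}|g''_{A_t}|=O(\hcap(A_t))$. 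Writing $L_t(z):=g_{A_t}(p_2)+b_t(z-p_2)$, Lemma~\ref{hcap1}~d) (applied to the real translation and the real positive scaling) gives
\[
\hcap(L_t(B_t))=b_t^{2}\hcap(B_t)=\bigl(1+O(\hcap(A_t))\bigr)\hcap(B_t),
\]
while Taylor's theorem provides $|g_{A_t}(z)-L_t(z)|\le (M_t/2)\varepsilon_t^{2}$ for every $z\in B_t$, with $M_t=O(\hcap(A_t))$. Hence $g_{A_t}(B_t)$ and $L_t(B_t)$ are Hausdorff-close at scale $O(\hcap(A_t)\,\varepsilon_t^{2})$.

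The main obstacle is the final step: converting this pointwise closeness into the matching estimate $|\hcap(g_{A_t}(B_t))-\hcap(L_t(B_t))|=O(\hcap(A_t)\hcap(B_t))$, since $\hcap$ is not Hausdorff-continuous in general. The natural route is to rescale to unit size: set $\widetilde B_t:=(B_t-p_2)/\varepsilon_t$, of diameter $1$, and define the rescaled near-identity map $\widetilde\varphi_t(w):=\varepsilon_t^{-1}\bigl(L_t^{-1}(g_{A_t}(p_2+\varepsilon_t w))-p_2\bigr)$, which fixes $0$, has derivative $1$ there, and satisfies $\sup_{\mathcal{B}(0,1)}|\widetilde\varphi_t-\id|=O(\hcap(A_t)\,\varepsilon_t)\to 0$. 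By Lemma~\ref{hcap1}~d), $\hcap(g_{A_t}(B_t))/\hcap(B_t)=b_t^{2}\cdot\hcap(\widetilde\varphi_t(\widetilde B_t))/\hcap(\widetilde B_t)$, so it suffices to prove $\hcap(\widetilde\varphi_t(\widetilde B_t))/\hcap(\widetilde B_t)=1+O(\hcap(A_t))$ at this fixed unit scale, where $\hcap(\widetilde B_t)$ is bounded below by a positive constant. I would attempt this either via Theorem~\ref{hsiz} combined with an elementary area estimate for the symmetric difference of the two shadow unions defining $\operatorname{hsiz}$, or through Carath\'eodory kernel convergence of the associated complement domains; both routes demand careful bookkeeping to yield the correct quantitative rate in $\hcap(A_t)$, and I expect this to be where most of the real effort lies.
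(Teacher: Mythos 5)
Your opening reduction is correct and is a genuinely different starting point from the paper: by Lemma \ref{hcap1} b) applied to $A_t\subset A_t\cup B_t$ one gets $x(t)+y(t)-2t=\hcap(B_t)-\hcap(g_{A_t}(B_t))$, and the task becomes a quantitative statement that $g_{A_t}$ barely distorts the capacity of a hull sitting at a fixed positive distance from $A_t$. The paper's own proof is entirely probabilistic instead: it expresses $x(t)+y(t)-2t$ through the Brownian-motion representation of $\hcap$ as a sum of expectations over paths that hit one slit only after first hitting the other, and bounds these by the height estimate $\Im(B_{\tau})\le 2\sqrt{t}$ of Lemma \ref{imestim}, Beurling's estimate (a further factor $\LandauO(\sqrt{t})$ for escaping a fixed disc around the first-hit slit without touching $\R$), and the bound of the remaining harmonic-measure factor by $\diam(\gamma_j[0,t])\to 0$; the product is $t\cdot\Landauo(1)$, with no conformal distortion estimates at all.

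Your argument, however, has a genuine gap exactly where you locate it, and one supporting claim is false. The assertion that $\hcap(\widetilde B_t)$ is bounded below by a positive constant is wrong: a connected hull of diameter $1$ attached to $\R$ can have arbitrarily small half-plane capacity (a curve confined to $\{0<y<\delta\}$ has $\operatorname{hsiz}=\LandauO(\delta)$, hence $\hcap=\LandauO(\delta)$ by Theorem \ref{hsiz}), so the ratio estimate $\hcap(\widetilde\varphi_t(\widetilde B_t))/\hcap(\widetilde B_t)=1+\LandauO(\hcap(A_t))$ cannot follow from Hausdorff closeness alone. Moreover, the proposed ``area of the symmetric difference'' route is precisely the kind of argument that fails for plain Lebesgue area (a comb-shaped hull of zero area has an $\eta$-neighbourhood of area comparable to $1$); to make it work you must exploit the special structure of the shadow union in $\operatorname{hsiz}$, and that is real work, not bookkeeping. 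A cleaner way to close your argument with tools already in the paper is to skip the rescaling entirely: Lemma \ref{hcap1} c) gives $\hcap(g_{A_t}(B_t))\le\hcap(B_t)$, i.e.~$x(t)+y(t)-2t\ge 0$, while the differentiation of $\hcap$ along a slit used in the proof of Lemma \ref{hcap2new} (that is, \cite[Lemma 2.8]{MR1879850} together with the mean value theorem) yields $\hcap(g_{A_t}(B_t))\ge c_t\,\hcap(B_t)$ with $c_t$ the minimum over $B_t$ of the square of the relevant derivative of $g_{A_t}$; since $A_t$ shrinks to $p_1$ and $B_t$ stays at distance at least $|p_1-p_2|/2$ from it, $g_{A_t}\to\id$ locally uniformly off $p_1$, so $c_t\to 1$ and $\hcap(B_t)-\hcap(g_{A_t}(B_t))\le(1-c_t)\cdot 2t=\Landauo(t)$.
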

\begin{proof}
First, we note that $x(t)+y(t)-2t \geq 0$ for all $t$ because of Lemma
\ref{hcap1} a).

\medskip

We will use a formula which translates the half--plane capacity of an arbitrary hull $A$ into an expected value of a random variable derived from a Brownian motion hitting this hull. Let $B_s$ be a Brownian motion started in $z\in \Ha\setminus A.$ We write $\operatorname{\bf P}^{z}$ and $\operatorname{\bf E}^{z}$ for probabilities and expectations derived from $B_s.$ Let $\tau_A$ be the smallest time $s$ with $B_s\in \R\cup A.$ Then formula (3.6) of Proposition 3.41 in \cite{Lawler:2005} tells us
$$\hcap(A)=\lim_{y\to\infty}y\operatorname{\bf E}^{yi}[\operatorname{Im}(B_{\tau_A})].$$
Let $\varrho_t=\tau_{\gamma_1[0,t]}$ and $\sigma_t=\tau_{\gamma_2[0,t]}.$ Then we have (compare with the proof of Proposition 3.42 in \cite{Lawler:2005})
\begin{eqnarray*}x(t)+y(t)-2t=\lim_{y\to \infty}y \left(\operatorname{\bf E}^{yi}[\operatorname{Im}(B_{\varrho_t});\sigma_t<\varrho_t]+ \operatorname{\bf E}^{yi}[\operatorname{Im}(B_{\sigma_t});\sigma_t>\varrho_t]\right).
\end{eqnarray*}

We will estimate the two expected values. First, $\Im(B_{\varrho_t})\leq 2\sqrt{t}$ by Lemma \ref{imestim} and we get
$$\operatorname{\bf E}^{yi}[\operatorname{Im}(B_{\varrho_t});\sigma_t<\varrho_t]\leq 2\sqrt{t} \cdot \operatorname{\bf P}^{yi}\{B_{\varrho_t}\in \gamma_1[0,t];\sigma_t<\varrho_t\}. $$
Now for $t$ small enough there exists $R>0$ such that 
$$ \gamma_1[0,s]\subset \mathcal{B}(\Re(\gamma_1(s)), R), \qquad \gamma_2[0,s]\subset \mathcal{B}(\Re(\gamma_2(s)), R),  $$
$$ \gamma_1[0,t] \cap \mathcal{B}(\Re(\gamma_2(s)), R)=\emptyset,\qquad \gamma_2[0,t] \cap \mathcal{B}(\Re(\gamma_1(s)), R)=\emptyset, $$
for all $s\in[0,t].$

\medskip

 A Brownian motion satisfying $\sigma_t<\varrho_t$ will hit $\gamma_2[0,t]$, say at $\gamma_2(s)$ for some $s\in[0,t]$, and has to leave $\mathcal{B}(\Re(\gamma_2(s)),R)\cap \overline{\Ha}$ without hitting the real axis, see Figure \ref{Fi32}. Call the probability of this event $p_s$. Then we have 
$$ \operatorname{\bf P}^{yi}\{B_{\varrho_t}\in \gamma_1[0,t];\sigma_t<\varrho_t\} \leq  \operatorname{\bf P}^{yi}\{B_{\sigma_t}\in \gamma_2[0,t]\} \cdot \sup_{s\in[0,t]}p_s. $$
Lemma \ref{imestim} implies $\Im(B_{\sigma_t})\leq 2\sqrt{t}$ and Beurling's estimate (Theorem 3.76 in \cite{Lawler:2005}) says that there exists $c_1>0$ (depending on $R$ only) such that $$p_s\leq c_1 \cdot 2\sqrt{t}.$$
(Note that Theorem 3.76 in \cite{Lawler:2005} gives an estimate on the probability that a Brownian motion started in $\D$ will not have hit a fixed curve, say $[0,1]$, when leaving $\D$ the first time. The estimate we use can be simply recovered by mapping the half-circle $\D\cap \Ha$ conformally onto $\D\setminus [0,1]$ by $z\mapsto z^2.$)\\
 We get the same estimates for $\operatorname{\bf
   E}^{yi}[\operatorname{Im}(B_{\sigma_t});\sigma_t>\varrho_t]$ and putting
 all this together gives the following upper bound for $x(t)+y(t)-2t$:
\begin{eqnarray*} 
&\displaystyle x(t)+y(t)-2t \leq   \lim_{y\to \infty}y \left(2\sqrt{t}\cdot c_1\cdot 2\sqrt{t} \cdot \operatorname{\bf P}^{yi}\{B_{\sigma_t}\in \gamma_2[0,t]\}+ 2\sqrt{t}\cdot c_1\cdot 2\sqrt{t} \cdot \operatorname{\bf P}^{yi}\{B_{\varrho_t}\in \gamma_1[0,t]\}\right)\\
 & \displaystyle \hspace*{-3.2cm} =4c_1t\cdot\displaystyle\lim_{y\to \infty}y
 \left( \operatorname{\bf P}^{yi}\{B_{\sigma_t}\in
   \gamma_2[0,t]\}+\operatorname{\bf P}^{yi}\{B_{\varrho_t}\in
   \gamma_1[0,t]\}\right).\end{eqnarray*}
Here the  limit exists and (see \cite[p.~74]{Lawler:2005})
\begin{eqnarray*}
 \lim_{y\to \infty}y \operatorname{\bf P}^{yi}\{B_{\sigma_t}\in
 \gamma_2[0,t]\} &\leq & c_2  \diam(\gamma_2[0,t]) \, , \\
\lim_{y\to \infty}y \operatorname{\bf P}^{yi}\{B_{\varrho_t}\in
\gamma_1[0,t]\} &\leq & c_2 \diam(\gamma_1[0,t]) \end{eqnarray*}
with a universal constant $c_2>0$.
Finally $\diam(\gamma_j[0,t])\to 0$ for $t\to 0$ and $j=1,2;$ see, e.g., Lemma 4.13 in \cite{Lawler:2005}. Hence we have shown $x(t)+y(t)-2t=\Landauo(t).$ 
\end{proof}

\begin{figure}[H] \label{Brownian_motion}
    \centering
   \includegraphics[width=140mm]{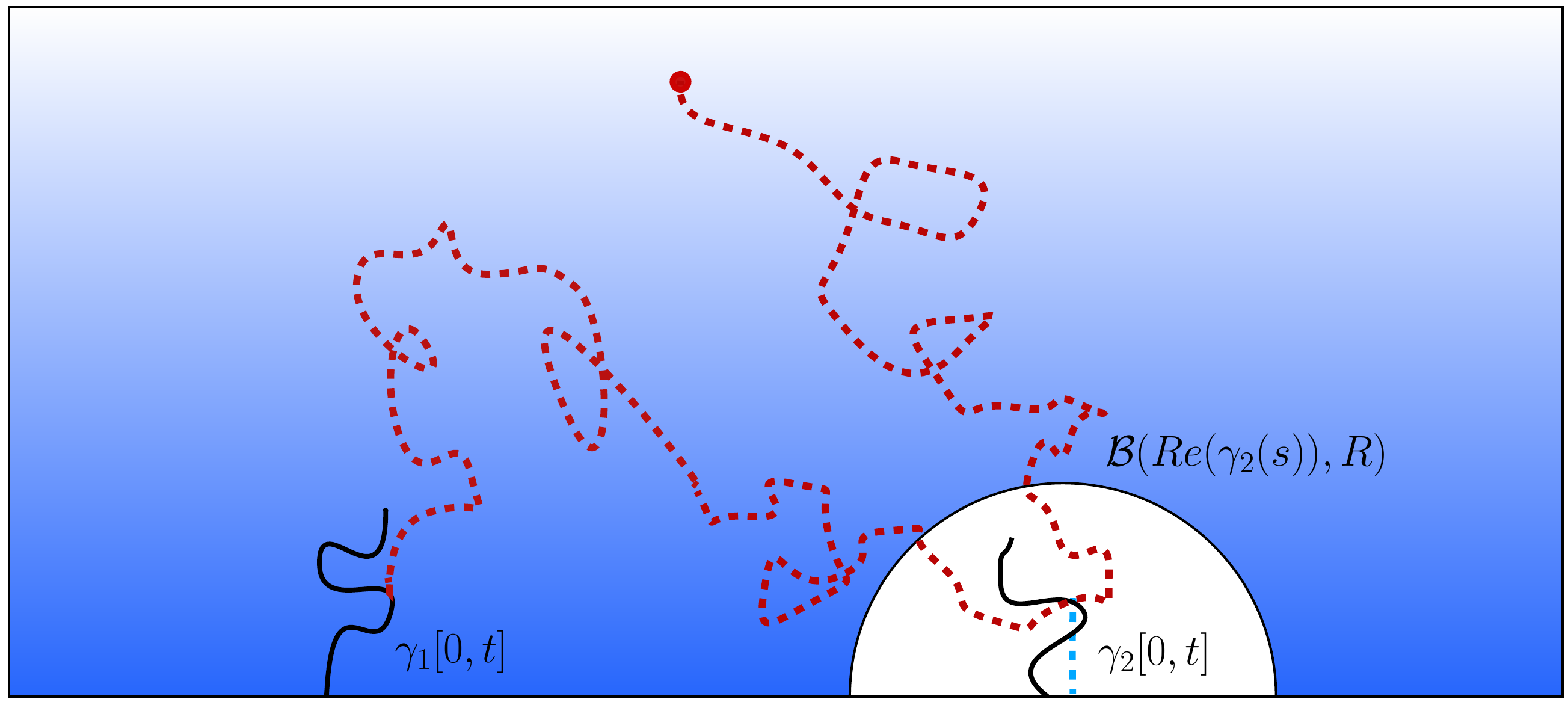}
\caption{A Brownian motion with $\sigma_t<\varrho_t$.}\label{Fi32}
 \end{figure}

{The following lemma gives a dynamical interpretation of the weights $\lambda$ and $1-\lambda$.}

\begin{lemma}\label{nointer} Let $x(t)=\hcap(\gamma_1(0,t])$ and
$y(t)=\hcap(\gamma_2(0,t])$.
Then $x(t)$ and $y(t)$ are differentiable in $t=0$ with $$\dot{x}(0)=2\lambda \quad \text{and} \quad\dot{y}(0)=2(1-\lambda).$$
\end{lemma}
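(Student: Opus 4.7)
The plan is to combine the Brownian-motion representation of the half-plane capacity (as in the proof of Lemma \ref{ThomasFundamentallemma}) with a direct analysis of the Nevanlinna measure associated to $g_t^{-1}$ via the Loewner equation, so as to separate the growth rates of the two slits. The first step is to refine the probabilistic decomposition. Writing $\tau:=\tau_{K_t}$ for the first time a Brownian motion in $\Ha$ hits $\R\cup K_t$, I would set
\begin{equation*}
 h_j(t):=\lim_{y\to\infty} y\cdot \operatorname{\bf E}^{yi}[\Im B_\tau \cdot \mathbf{1}\{B_\tau \in \gamma_j(0,t]\}], \quad j=1,2,
\end{equation*}
so that $h_1(t)+h_2(t)=\hcap(K_t)=2t$. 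Splitting $x(t)=\lim_{y\to\infty} y\,\operatorname{\bf E}^{yi}[\Im B_{\varrho_t}]$ into the case $\varrho_t<\sigma_t$ (which forces $\tau=\varrho_t$ and contributes exactly $h_1(t)$) and the complementary case $\sigma_t<\varrho_t$ (controlled by the Beurling-type estimate already used in the proof of Lemma \ref{ThomasFundamentallemma}, hence of order $o(t)$) then yields $x(t)=h_1(t)+o(t)$ and, symmetrically, $y(t)=h_2(t)+o(t)$.

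The heart of the argument is to show $h_1(t)=2\lambda t$. By conformal invariance of Brownian motion and the asymptotics of the Poisson kernel at $\infty$, one identifies
\begin{equation*}
 h_j(t)=\mu_t\bigl(g_t(\overline{\gamma_j(0,t]})\bigr),
\end{equation*}
where $\mu_t$ is the measure appearing in the Nevanlinna representation of $g_t^{-1}$. Because $\overline{\Gamma_1}\cap\overline{\Gamma_2}=\emptyset$ and $U_1,U_2$ are continuous, the two closed intervals $g_t(\overline{\gamma_j(0,t]})$ remain disjoint for all $t\in[0,1]$, and the singularity $U_j(t)$ sits in the $j$-th interval. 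Differentiating $g_t^{-1}(w)=w+\int (u-w)^{-1}\,d\mu_t(u)$ in $t$ and comparing with the identity $\dot g_t^{-1}(w)=-\dot g_t(g_t^{-1}(w))/g_t'(g_t^{-1}(w))$ together with the Loewner equation, one reads off that $\mu_t$ gains new mass at rate $2\lambda$ concentrated at $U_1(t)$ and at rate $2(1-\lambda)$ concentrated at $U_2(t)$. Since the two intervals stay disjoint, no mass migrates from one to the other, so $\mu_t(g_t(\overline{\gamma_1(0,t]}))=2\lambda t$, which gives $h_1(t)=2\lambda t$. Combining with the previous paragraph, $x(t)=2\lambda t+o(t)$, i.e.\ $\dot x(0)=2\lambda$; the argument for $\dot y(0)=2(1-\lambda)$ is symmetric.

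The main obstacle is the rigorous mass-evolution analysis of $\mu_t$. One needs to justify the differentiation of the Nevanlinna representation carefully (since $\mu_t$ need not have a density) and to prove that the two pieces of $\mu_t$ supported on the disjoint closed intervals $g_t(\overline{\gamma_j(0,t]})$ evolve independently at the claimed linear rates; concretely, one must rule out mass migration between the two components as $g_t$ varies with $t$, which is precisely what promotes Lemma \ref{ThomasFundamentallemma} into the sharper individual asymptotics required here.
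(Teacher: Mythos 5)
Your reduction $x(t)=h_1(t)+\Landauo(t)$ is sound: the event $\{\sigma_t<\varrho_t\}$ contributes $\Landauo(t)$ by the Beurling estimate, and this is essentially a repackaging of Lemma \ref{ThomasFundamentallemma}. The gap is in the heart of the argument, namely the claim that $h_1(t)=\mu_t\bigl(g_t(\overline{\gamma_1(0,t]})\bigr)=2\lambda t$ \emph{exactly}, derived from the assertion that $\mu_t$ gains mass only through point-mass injections of rate $2\lambda_j$ at $U_j(t)$ with ``no migration'' between the two disjoint intervals. This is false. Writing $f_t=g_t^{-1}$, the Loewner PDE reads $\dot f_t(w)=-f_t'(w)\,G(w,t)$, and by Stieltjes inversion the density of $\mu_t$ on $I_j(t):=g_t(\overline{\gamma_j(0,t]})$ is $\tfrac1\pi\Im f_t$; since the endpoints of $I_1(t)$ carry zero density, $\tfrac{d}{dt}\mu_t(I_1(t))=-\tfrac1\pi\int_{I_1(t)}\Im\bigl[f_t'(u)G(u,t)\bigr]\,du$. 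The factor $f_t'$ is \emph{not} real on $I_1(t)$ (it maps that interval onto the slit), so $\dot\mu_t$ has a spread-out component there; in particular the term $-\tfrac{2(1-\lambda)}{\pi}\int_{I_1(t)}\Im f_t'(u)\,(u-U_2(t))^{-1}\,du$ is generically nonzero, i.e.\ growth of $\Gamma_2$ alone changes the mass sitting on $I_1(t)$. A structural way to see the identity cannot hold: $\mu_t(I_1(t))$ is determined by the configuration $K_t$ alone (it is a harmonic-measure-weighted height of $\gamma_1(0,t]$ in $\Ha\setminus K_t$), whereas $\int_0^t 2\lambda_1(s)\,ds$ is history-dependent --- growing $\Gamma_1$ first and then $\Gamma_2$ gives $\hcap(\Gamma_1)$ for this integral, while the reverse order gives $\hcap(g_{\Gamma_2}(\Gamma_1))$, which is strictly smaller by Lemma \ref{hcap2new}; both orders produce the same final hull and hence the same left-hand side. (This is also why the coefficient $\lambda$ admits no such simple potential-theoretic formula; cf.\ the remark following Theorem \ref{Charlie2}.)

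What is actually true, and all that is needed, is the infinitesimal version $h_1(t)=2\lambda t+\Landauo(t)$ at $t=0$: the extra terms in $\tfrac{d}{dt}\mu_t(I_1(t))$ do vanish as $t\downarrow 0$ because $\int_{I_1(t)}\Im f_t'\,du=0$ and $I_1(t)$ shrinks to a point. But turning this into a proof requires uniform control of $\Im f_t'$ on the shrinking interval, justification of the differentiation of the Nevanlinna representation, and care at the tip singularity $u=U_1(t)$ --- none of which your ``no migration'' argument supplies, and at that point you are doing comparable work to the paper's proof. The paper avoids the measure evolution entirely: it proves the one-sided bound $x(t)\ge 2\lambda t$ by approximating the constant-coefficient equation with bang-bang one-slit equations and invoking the superadditivity of $\hcap$ (Lemma \ref{hcap1} c)), obtains $y(t)\ge 2(1-\lambda)t$ symmetrically, and then squeezes with $x(t)+y(t)=2t+\Landauo(t)$ from Lemma \ref{ThomasFundamentallemma}. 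You should either adopt that route or supply the quantitative $t\downarrow 0$ estimates for $\dot\mu_t(I_1(t))$; as written, the central step of your proof asserts a false identity.
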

\begin{proof} Let $U_1,U_2$ be the driving functions for the
Loewner parameterization  ($\gamma_1,\gamma_2)$. Without loss of generality we assume that $\Gamma_1$ is the left slit, i.e. $U_1(t)<U_2(t)$ for all $t\in[0,1].$

\medskip

(i) In a first step we prove
 $x(t)\geq 2\lambda t$ for all $t\in[0,\tau]$ with some $\tau>0$. Let $n\in\N$ and consider the Loewner equation
$$\dot{g}_{t,n}(z)=
\frac{2\alpha_{n,\lambda}(t)}{g_{t,n}(z)-U_{1}(t)}+\frac{2(1-\alpha_{n,\lambda}(t))}{g_{t,n}(z)-U_{2}(t)},
\quad g_{0,n}(z)=z,$$ where $\alpha_{n,\lambda}$ is defined as in
(\ref{Pete}). Let $K_{n,t}$, $t\in[0,1]$, be the corresponding family of
hulls. Similarly to Section \ref{existence}, we have $\Ha\setminus K_{n,t}\to \Ha\setminus (\gamma_1(0,t]\cup \gamma_2(0,t])$ for $n\to\infty$ in the sense of  kernel convergence.
Let $z_0\in (U_1(0), U_2(0))$ and denote by $z_n(t)$ the solution to

$$\dot{z}_n(t)= \frac{2\alpha_{n,\lambda}(t)}{z_n(t)-U_{1}(t)}+\frac{2(1-\alpha_{n,\lambda}(t))}{z_n(t)-U_{2}(t)}, \quad z_n(0)=z_0.$$ 
It may not exist until $t=1,$ but during its interval of existence we have $z_n(t)-U_{2}(t)<0<z_n(t)-U_{1}(t)$ and 
$$ \frac{2}{z_n(t)-U_2(t)}\leq \dot{z}_n(t) \leq \frac{2}{z_n(t)-U_1(t)}. $$
From this it follows that there exist $\tau,A,B>0$, independent of $n,$ such that $z_n(t)$ exists until $t=\tau$  and $$\max_{s\in[0,\tau]}U_1(s) < A < z_n(t) < B < \min_{s\in[0,\tau]}U_2(s).$$
Thus, for all $n\in \N$ and $t\in(0,\tau]$, we can write $K_{t,n}=C_{t,n}\cup D_{t,n}$, where $C_{t,n}$ and $D_{t,n}$ are disjoint subhulls of $K_{t,n}$ with $$\Ha\setminus C_{t,n} \to \Ha \setminus \gamma_1(0,t], \quad  \Ha\setminus D_{t,n} \to \Ha \setminus \gamma_2(0,t].$$
The cluster sets\footnote{If $A$ is a hull and $B\subset \overline{A}\cup\R$, then the cluster set of $B$ with respect to $g_A$ is the set $$\{w\in\overline{\Ha}\;|\;\text{There exists a sequence $(z_n)_n,$ $z_n\in \Ha\setminus A,$ with $z_n\to z_0\in \partial B$ and $g_A(z_n)\to w$} \}.$$} of $C_{t,n}$ and $D_{t,n}$ with respect to $g_{t,n}$ are sets
$I_1$ and $I_2$ respectively with $I_1\subset (-\infty, z_n(t))$ and
$I_2\subset (z_n(t),+\infty).$ Hence, $C_{t,n}$ is the hull that is growing if
and only if $\alpha_{n,\lambda}(t)=1.$

\medskip
 Let $x_n(t)=\hcap(C_{n,t})$. Then we get 
\begin{eqnarray*}\label{Alfons}x_n\left(\frac{k}{2^n}\right) = &&\sum_{j=1}^k \left(x_n\left(\frac{j}{2^n}\right)-x_n\left(\frac{j-1}{2^n}\right)\right) = \sum_{j=1}^k \left(x_n\left(\frac{j-1+\lambda}{2^n}\right)-x_n\left(\frac{j-1}{2^n}\right)\right) \\ \underset{\text{Lemma \ref{hcap1} c)}}{\geq} &&\sum_{j=1}^k 2\left(\frac{j-1+\lambda}{2^n}-\frac{j-1}{2^n}\right)  =\sum_{j=1}^k \frac{2\lambda}{2^n} = 2\lambda \cdot \frac{k}{2^n} \end{eqnarray*}
for all $n\in \N$ and $k\in\{1,...,2^n\}$ with $k/2^n\leq \tau$.\\
As $x_n(t)\to x(t)$ for every $t\in[0,\tau],$ we conclude that $x(t)\geq 2\lambda t$ for any $t$ of the form $t=k/2^n\leq \tau.$ The set of all those $t$ is dense in $[0,\tau]$ and as $x(t)$ is a continuous function, we deduce 
$x(t)\geq 2\lambda t$ for every $t\in [0,\tau]$.

\medskip

(ii) In a similar way as in step (i), now 
 utilizing the Loewner equation
$$\dot{h}_{t,n}(z)=
\frac{2(1-\alpha_{n,1-\lambda}(t))}{h_{t,n}(z)-U_{1}(t)}+\frac{2\alpha_{n,1-\lambda}(t)}{h_{t,n}(z)-U_{2}(t)}\,
,
\quad h_{0,n}(z)=z\, ,$$ 
we obtain $y(t)\geq 2(1-\lambda)t$ for all $t\ge 0$ small enough.

\medskip

(iii) Using the estimates in (i) and (ii), Lemma  \ref{ThomasFundamentallemma} gives
$$2\lambda t\leq x(t)\leq 2\lambda t+\Landauo(t) \qquad \text{for} \quad t\to0,$$
i.e., $\dot{x}(0)$ exists and $\dot{x}(0)=2\lambda$. In the same way we obtain
$\dot{y}(0)=2(1-\lambda)$.
\end{proof}

\begin{lemma}\label{LipDiff}  
 Let $x(t)=\hcap(\gamma_1(0,t])$.
Then the function $x:[0,1]\to [0,\infty)$ is continuously differentiable with
$$\dot{x}(0)= 2\lambda \quad\text{and} \quad\dot{x}(t)> 2\lambda \quad \text{for all}
\;\; t \in (0,1]\, .$$
In addition,$$\dot{x}(t)=\frac{2 \lambda}{C(x(t),t)}\, , $$
with a continuous function $C : \{(x_0,t) \, : \, 0 \le x_0 \le t, 0 \le t \le
1\} \to (0,1]$, which is continuously differentiable w.r.t.~$t$.
\end{lemma}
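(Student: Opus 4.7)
The plan rests on a time-shift property of the constant-coefficient chordal equation together with Lemma \ref{nointer}. Fix $t\in[0,1)$; the maps $\tilde g_s:=g_{t+s}\circ g_t^{-1}$, $s\in[0,1-t]$, satisfy the constant-coefficient Loewner equation with the \emph{same} $\lambda$ and shifted driving functions $U_1(t+\cdot),\,U_2(t+\cdot)$, and they produce the pair of slits $g_t(\gamma_1(t,1])$ and $g_t(\gamma_2(t,1])$, which have disjoint closures and joint half-plane capacity $2(1-t)$. The uniqueness clause in Theorem \ref{Charlie2} identifies $\lambda$ as \emph{the} constant coefficient of this shifted pair, so Lemma \ref{nointer} (rescaled in time and space by the obvious factors, cf.~Lemma \ref{hcap1}\,d)) applied to it yields
\[\left.\frac{d}{ds}\right|_{s=0^+}\hcap\bigl(g_t(\gamma_1(t,t+s])\bigr)=2\lambda.\]

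To translate this into a statement about $x$, factor $g_t=g_{\Sigma_2(t)}\circ\Phi_1(t)$ with $\Phi_1(t):=g_{\gamma_1(0,t]}$, $\Sigma_2(t):=\Phi_1(t)(\gamma_2(0,t])$, and $V_1(t):=\Phi_1(t)(\gamma_1(t))\in\R$, and set $H_s:=\Phi_1(t)(\gamma_1(t,t+s])$. Lemma \ref{hcap1}\,b) gives $\hcap(H_s)=x(t+s)-x(t)$ and $g_t(\gamma_1(t,t+s])=g_{\Sigma_2(t)}(H_s)$. Since $\overline{\Gamma_1}\cap\overline{\Gamma_2}=\emptyset$, the point $V_1(t)$ stays bounded away from $\overline{\Sigma_2(t)}$, so $g_{\Sigma_2(t)}$ extends analytically through $V_1(t)$; combining the affine approximation of $g_{\Sigma_2(t)}$ near $V_1(t)$ with the scaling and translation invariance of $\hcap$ from Lemma \ref{hcap1}\,d) yields the local comparison
\[\hcap\bigl(g_{\Sigma_2(t)}(H_s)\bigr)=g_{\Sigma_2(t)}'(V_1(t))^2\,\hcap(H_s)+o\bigl(\hcap(H_s)\bigr)\]
as $\diam(H_s)\to 0$. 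Together with the previous display, the right derivative of $x$ exists at $t$ and equals $2\lambda/C(t)$, where $C(t):=g_{\Sigma_2(t)}'(V_1(t))^2$; the symmetric argument with $s\to 0^-$ handles the left derivative on $(0,1]$. The Nevanlinna representation (\ref{Nevanlinna}) for $g_{\Sigma_2(t)}^{-1}$ gives $(g_{\Sigma_2(t)}^{-1})'(y)=1+\int(u-y)^{-2}\,d\mu_{\Sigma_2(t)}(u)\ge 1$ for real $y$ off $\supp\mu_{\Sigma_2(t)}$, with equality iff $\Sigma_2(t)=\emptyset$, i.e.~iff $t=0$. Hence $C(t)\in(0,1]$, $C(0)=1$, and $C(t)<1$ whenever $t>0$, giving $\dot x(0)=2\lambda$ and $\dot x(t)>2\lambda$ for $t\in(0,1]$.

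Finally, I would extend $C$ to a two-variable function by a natural reparametrisation: for admissible $(x_0,t)$, replace the sub-slit $\gamma_1(0,t]$ by the subslit of $\Gamma_1$ of half-plane capacity $x_0$ and $\gamma_2(0,t]$ by the subslit of $\Gamma_2$ of half-plane capacity $2t-x_0$ in the definitions of $\Phi_1,\Sigma_2,V_1$. Continuity of $C(x_0,t)$ then follows from Carath\'eodory kernel continuity of the underlying hulls together with the uniform separation $\dist(V_1^{x_0},\overline{\Sigma_2^{x_0,2t-x_0}})>0$, which makes $g_A'$ jointly continuous in $(A,\text{evaluation point})$ at the relevant boundary point. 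The hardest step is the $C^1$-dependence of $C$ on $t$ for fixed $x_0$: varying $t$ corresponds, in the $\Phi_1^{x_0}$-coordinate, to extending the tail of $\gamma_2$ by a one-slit chordal Loewner flow with continuous driving function, so $t\mapsto g_{\Sigma_2^{x_0,2t-x_0}}'(V_1^{x_0})$ is the derivative, at a fixed boundary point bounded away from the growing hull, of a conformal map governed by a one-slit Loewner ODE; differentiability in $t$ with continuous derivative then follows from standard smoothness of solutions of the Loewner ODE together with analyticity of the conformal map off the hull.
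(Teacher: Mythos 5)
Your overall strategy coincides with the paper's: you decompose $g_t$ as the map removing $\gamma_1(0,t]$ followed by the map removing the image of $\gamma_2(0,t]$, use the conformal-distortion formula for half-plane capacity (\cite[Lemma 2.8]{MR1879850}) to compare $\hcap(H_s)=x(t+s)-x(t)$ with $\hcap\bigl(g_{\Sigma_2(t)}(H_s)\bigr)$, and identify the derivative of the latter as $2\lambda$ by applying Lemma \ref{nointer} to the truncated slits $g_t(\Gamma_1\setminus\gamma_1(0,t])$ and $g_t(\Gamma_2\setminus\gamma_2(0,t])$; the two-variable extension of $C$ and its regularity in $t$ are also handled as in the paper (via $C^1$-dependence of $y_0$ on $t$ and smoothness of the one-slit Loewner flow away from the hull). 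Your Nevanlinna computation showing $C(t)\le 1$ with equality exactly when the second hull is empty is a nice self-contained substitute for the paper's citation of \cite[Prop.~5.15]{Lawler:2005}.

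Two points need repair. First, you must not invoke the uniqueness clause of Theorem \ref{Charlie2}: Lemma \ref{LipDiff} is itself an ingredient of that uniqueness proof, so the argument as phrased is circular. Fortunately the appeal is unnecessary — the truncated evolution $g_{t+s}\circ g_t^{-1}$ satisfies the constant-coefficient equation with the same $\lambda$ simply by restricting the ODE to $[t,1]$, and Lemma \ref{nointer} applies to any such parameterization; no uniqueness is needed. Second, the ``symmetric argument with $s\to 0^-$'' for the left derivative does not go through as stated: Lemma \ref{nointer} yields a right derivative at the \emph{initial} time of a parameterization, so to compute $\lim_{h\downarrow 0}\bigl(x(t)-x(t-h)\bigr)/h$ you would have to apply it at the moving base points $t-h$ while letting $h\to 0$, which requires a uniformity you have not established. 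The correct route — and the one the paper takes — is to note that the right derivative $2\lambda/C(x(t),t)$ is continuous in $t$ (your continuity argument for $C$, together with continuity of $x$, gives this) and then invoke the standard fact that a continuous function with a continuous right derivative is continuously differentiable, see \cite[Lemma 4.3]{Lawler:2005}.
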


\begin{proof}
For $j=1,2$ denote by $\theta_j(s)$  the parameterization  of $\Gamma_j$
 by its half--plane capacity. Let $t\in[0,1]$ and  let $0\le x_0 \leq t.$ Then
 there exists a unique $y_0 \in [0,1]$ such that
 $\theta_1[0,x_0]\cup\theta_2[0,y_0]$ has half--plane capacity $2t$. Apply the
 mapping $A:=g_{\theta_1[0,x_0]}$ on the two slits. We define $\chi(\Delta):=A(\theta_1(x_0+\Delta))$ for all $\Delta \ge 0$ small
 enough. Then we have by Lemma \ref{hcap1} b),
 $\hcap(\chi[0,\Delta])=\hcap(\theta_1[0,x_0+\Delta])-\hcap(\theta_1[0,x_0])=2\Delta$.
 Next we apply the mapping $B:=g_{A(\theta_2[0,y_0])}.$ Let $\psi(\Delta):=B(\chi(\Delta))$.  Now we have $$\frac{\hcap(\psi[0,\Delta])}{2\Delta}\to B'(\chi(0))^2 \quad \text{for} \quad \Delta\to 0,$$ see \cite{MR1879850}, Lemma 2.8. Note that $B'(\chi(0))^2$ depends on $x_0$ and $t$ only. So let us define the function $C(x_0,t):=B'(\chi(0))^2.$ $C$ has the following properties:
 \begin{itemize} \item $(x_0,t)\mapsto C(x_0,t)$ is continuous: $A$ and $\chi(0)$ depend continuously on $x_0$. Furthermore, $y_0$ depends continuously on the pair $(x_0,t),$ so $B$ depends continuously on $(x_0,t)$ as well as $C(x_0,t)=B'(\chi(0))^2.$  
\item $C$ is continuously differentiable with respect to $t$: For fixed $x_0$, both the value $y_0$ and the mapping $B$ are continuously differentiable with respect to $t,$ see section 4.6.1 in \cite{Lawler:2005}. Hence, as $\chi(0)$ is fixed, also $B'(\chi(0))$ is continuously differentiable w.r.t.~$t$.
\item $C(x_0,t)\in (0,1)$ for all $0\leq x_0 < t \leq 1$: see Proposition 5.15 in \cite{Lawler:2005}.
\end{itemize}
Now we look at the case $x_0=x(t).$ Then
$x(t+h)-x(t)=\hcap(\chi[0,\Delta(h)])=2\Delta(h)$ and we know
that $$\lim_{h\downarrow 0}\frac{\hcap(\psi[0,\Delta(h)])}{h}=2\lambda.$$
This follows by applying Lemma \ref{nointer} to the slits $g_t(\Gamma_1\setminus \gamma_1[0,t])$ and $g_t(\Gamma_2\setminus \gamma_2[0,t])$. Thus
 $$\lim_{h\downarrow 0}\frac{x(t+h)-x(t)}{h}=\lim_{h\downarrow
  0}\frac{2\Delta(h)}{h}=\lim_{h\downarrow 0}\frac{2\Delta(h)\cdot
  \hcap(\psi[0,\Delta(h)])}{\hcap(\psi[0,\Delta(h)])\cdot
  h}=\frac{2\lambda}{C(x(t),t)}.$$  Hence, the right derivative of $x(t)$ exists
and is continuous, so $x(t)$ is continuously differentiable, see Lemma 4.3 in
\cite{Lawler:2005}, and 
$$\dot{x}(t)=\frac{2\lambda}{C(x(t),t)}\, .$$
\end{proof}

\subsection{\label{uniqueness}Proof of Theorem \ref{Charlie2}, Part II (Uniqueness)}

Let $\nu, \mu\in[0,1]$ be constant weights  and  $U_1,U_2,V_1,V_2:[0,1]\to\R$
be continuous driving functions such that the solutions $g_t$ and $h_t$ of
 \begin{eqnarray*}
  \dot{g}_t(z)&=&\frac{2\nu}{g_t(z)-U_1(t)}+\frac{2(1-\nu)}{g_t(z)-U_2(t)},\, g_0(z)=z, \quad\text{and}\\
 \quad\dot{h}_t(z)&=&\frac{2\mu}{h_t(z)-V_1(t)}+\frac{2(1-\mu)}{h_t(z)-V_2(t)}, \,h_0(z)=z,
 \end{eqnarray*}
satisfy $g_1=h_1=g_{\Gamma_1\cup \Gamma_2}.$

\medskip

Assume $\nu >\mu$.
Let $x_1(t)$ and $x_2(t)$ be the half--plane capacities of the generated part
of $\Gamma_1$ at time $t$ with respect to $g_t$ and $h_t$, and let $y_1(t)$
and $y_2(t)$ be the corresponding half--plane capacities of $\Gamma_2.$ Then
$\dot{x}_1(0)=\nu>\mu=\dot{x_2}(0)$ by Lemma \ref{nointer}. Consequently,
$x_1(t)>x_2(t)$ for all $t \ge 0$ small enough. 
Since $x_1(1)=\hcap(\Gamma_1)=x_2(1)$, there is a first time $\tau \in (0,1]$
such that $x_1(\tau)=x_2(\tau)$. Then $x_1(t)>x_2(t)$ for every $t \in
[0,\tau)$, so $\dot{x}_1(\tau)\le \dot{x}_2(\tau)$. On the other hand,
 Lemma \ref{LipDiff} shows that
$$ \dot{x}_1(\tau)=\frac{2 \nu}{C(x_1(\tau),\tau)} > \frac{2
  \mu}{C(x_2(\tau),\tau)}=\dot{x}_2(\tau) \, , $$
a contradiction. Hence we know that $\nu \le \mu$. By switching the roles of
$\nu$ and $\mu$, we deduce $\nu=\mu$.

\medskip

Next, again with the help of  Lemma \ref{LipDiff}, we see that both
functions $x_1$ and $x_2$ are solutions to the same initial value problem,
$$ \dot{x}(t)=\frac{2 \mu}{C(x(t),t)}\, , \quad x(0)=0\, , $$
where $(x_0,t)\mapsto 2\mu/C(x_0,t)$ is continuous, positive and Lipschitz
continuous in $t$. However, the solution to such a problem is
unique according to Theorem 2.7 in \cite{Cid:2003}. Hence $x_1=x_2$ and also
$y_1=y_2$, so
we have $$H(t):=\theta_1[0,x_1(t)]\cup \theta_2[0,y_1(t)] =
\theta_1[0,x_2(t)]\cup \gamma_2[0,y_2(t)]$$ for all $t.$ Using the geometric
meaning of the driving functions, we finally get
$$ U_j(t)=g_{H(t)}(\theta_j(x_1(t))=g_{H(t)}(\theta_j(x_2(t))=V_j(t)$$ for $j=1,2$.
This completes the proof of the uniqueness statement of Theorem \ref{Charlie2}.
\hfill \proofsymbol
\subsection{Some remarks}

We finish Section \ref{Chiara} by noting some remarks about further conclusions revealed by the proof of Theorem \ref{Charlie2}.

We start with a simple estimate for the constant coefficients as a consequence of Lemma \ref{LipDiff}. 
\begin{corollary} Given slits $\Gamma_1,...,\Gamma_n$ with disjoint closures, then the corresponding coefficients $\lambda_1,...,\lambda_n$ from Theorem \ref{Charlie2} satisfy 
 $$2T-\sum_{m\not=k} \hcap(\Gamma_m) < 2\lambda_k T < \hcap(\Gamma_k).$$
\end{corollary}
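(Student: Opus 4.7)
The plan is to derive both bounds from a single fact: the $n$--slit analog of Lemma \ref{LipDiff}, namely that if $(\gamma_1,\dots,\gamma_n)$ is the Loewner parameterization of $\Gamma_1\cup\dots\cup\Gamma_n$ with the constant coefficients $\lambda_1,\dots,\lambda_n$ from Theorem \ref{Charlie2}, and if we set $x_k(t):=\hcap(\gamma_k(0,t])$, then $x_k:[0,T]\to[0,\infty)$ is continuously differentiable with
\[
\dot{x}_k(t)=\frac{2\lambda_k}{C_k(t)}\,,\qquad C_k(t)\in(0,1],
\]
and $C_k(t)<1$ for every $t\in(0,T]$. The proof is an immediate adaptation of the proof of Lemma \ref{LipDiff}: one first strips off $g_{\gamma_k(0,x_k(t)]}$ and then $g$ of the images of the $n-1$ other already--grown subslits; the resulting ``conformal factor'' $C_k(t)=B_k'(\chi_k(0))^2$ is strictly less than $1$ precisely because at least one of the other slits has already been grown to positive capacity as soon as $t>0$ (here one uses that $\lambda_m>0$ for every $m$, which was shown in Section \ref{existence}).

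With this in hand, the upper bound is a one--line integration: since $\dot{x}_k(t)>2\lambda_k$ for all $t\in(0,T]$ and $\dot{x}_k$ is continuous, we have strict inequality on a set of full measure, so
\[
\hcap(\Gamma_k)=x_k(T)=\int_0^T \dot{x}_k(t)\,dt>\int_0^T 2\lambda_k\,dt=2\lambda_k T.
\]

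The lower bound follows formally from the upper bound applied to every other index. Summing $\hcap(\Gamma_m)>2\lambda_m T$ over $m\neq k$ and using $\sum_{m=1}^n \lambda_m=1$, we get
\[
\sum_{m\neq k}\hcap(\Gamma_m)>\sum_{m\neq k}2\lambda_m T=2T(1-\lambda_k)=2T-2\lambda_k T,
\]
which rearranges to $2T-\sum_{m\neq k}\hcap(\Gamma_m)<2\lambda_k T$, as required.

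The only genuine work is the $n$--slit version of Lemma \ref{LipDiff}, and the main point one must check there is the strict inequality $C_k(t)<1$ for $t>0$; this is where disjointness of the closures and positivity of all the $\lambda_m$ enter. Everything else is purely algebraic.
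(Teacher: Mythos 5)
Your proposal is correct and follows essentially the same route as the paper: integrate the strict inequality $\dot{x}_k(t)>2\lambda_k$ from (the $n$-slit version of) Lemma \ref{LipDiff} to get $\hcap(\Gamma_k)>2\lambda_kT$, then obtain the lower bound by summing this over $m\neq k$ and using $\sum_m\lambda_m=1$. Your explicit remark that the general-$n$ analogue of Lemma \ref{LipDiff} (in particular $C_k(t)<1$ for $t>0$) is the only substantive input is accurate, since the paper proves that lemma only for $n=2$ and invokes the general case implicitly.
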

\begin{proof}
 By Lemma \ref{LipDiff}, $\hcap(\Gamma_k)> \int_0^T 2\lambda_k \, ds= 2\lambda_k T.$\\ Furthermore, $2\lambda_kT = 2T-\sum_{m\not=k} 2\lambda_mT  > 2T-\sum_{m\not=k} \hcap(\Gamma_k).$
\end{proof}

Next we note the - quite intuitive - ``monotonicity'' of the constant coefficients.
\begin{corollary}
Let $\Gamma_1,...,\Gamma_n$ be pairwise disjoint slits with coefficients $\lambda_1,...,\lambda_n$ and let $\Gamma\subsetneq \Gamma_k$ be a subslit of $\Gamma_k.$ Denote by $\mu_1,...,\mu_k$ the coefficients that belong to the configuration $\Gamma_1,...,\Gamma_{k-1},\Gamma,\Gamma_{k+1},...,\Gamma_n.$ Then $$\lambda_k>\mu_k.$$
\end{corollary}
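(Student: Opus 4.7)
The natural strategy is to argue by contradiction, mimicking the uniqueness portion of the proof of Theorem~\ref{Charlie2} but now comparing two \emph{different} configurations. Normalize so that $\hcap(\Gamma_1\cup\cdots\cup\Gamma_n)=2T$, set $c_j:=\hcap(\Gamma_j)/2$, $s:=\hcap(\Gamma)/2<c_k$, and write $2T'$ for the total capacity of the subslit configuration; by Lemma~\ref{hcap1}(b) we have $T'<T$. Denote by $\vec{x}(t)$ and $\tilde{\vec{x}}(t)$ the vectors of halved half-plane capacities of the grown portions in the two constant--coefficient Loewner parameterizations.

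First I would extend Lemma~\ref{LipDiff} to $n$ slits: each $x_j$ is continuously differentiable with
\[\dot{x}_j(t)=\frac{2\lambda_j}{C_j(\vec{x}(t))}\,,\]
where $C_j$ is a conformal derivative built as in the proof of Lemma~\ref{LipDiff}. Crucially, $C_j$ depends only on the \emph{already grown} hull $\bigcup_m\theta_m[0,x_m]$ and not on what remains to be produced; consequently $C_j$ is \emph{the same} function of $\vec{x}$ in the original and in the subslit configurations (so long as $x_k\le s$, the common admissible region). Both trajectories therefore solve the same autonomous system $\dot{x}_j=2a_j/C_j(\vec{x})$ from $\vec{x}(0)=\vec{0}$, but with different constants $a_j=\lambda_j$ resp.\ $a_j=\mu_j$; moreover $C_j(\vec{0})=1$ and $C_j(\vec{x})\in(0,1)$ for $\vec{x}\neq\vec{0}$.

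Next I would locate the time $t^*\in(0,T)$ with $x_k(t^*)=s$. The grown hull $H_{t^*}=\bigcup_{j\neq k}\theta_j[0,x_j(t^*)]\cup\Gamma$ is a \emph{strict} subhull of $\tilde{H}_{T'}=\bigcup_{j\neq k}\Gamma_j\cup\Gamma$, since $x_j(t^*)<c_j$ for every $j\neq k$, so Lemma~\ref{hcap1}(b) gives $2t^*<2T'$, i.e.\ $t^*<T'$ and hence $x_k(T')>s=\tilde{x}_k(T')$. Now assume for contradiction that $\mu_k\ge\lambda_k$. In the equality case $\mu_k=\lambda_k$, the two trajectories $\vec{x}$ and $\tilde{\vec{x}}$ solve the same ODE with the same initial datum, so by uniqueness they agree on $[0,T']$, contradicting $x_k(T')>\tilde{x}_k(T')$. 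In the strict case $\mu_k>\lambda_k$, set $\phi(t):=\tilde{x}_k(t)-x_k(t)$: then $\phi(0)=0$, $\phi'(0)=2(\mu_k-\lambda_k)>0$, and $\phi(T')<0$, so a first zero $\tau^*\in(0,T')$ of $\phi$ exists at which $\phi'(\tau^*)\le 0$, equivalently
\[\mu_k\,C_k(\vec{x}(\tau^*))\le\lambda_k\,C_k(\tilde{\vec{x}}(\tau^*)).\]
For $n=2$ the sum relation $x_1+x_2=2t$ together with $\tilde{x}_k(\tau^*)=x_k(\tau^*)$ forces $\tilde{\vec{x}}(\tau^*)=\vec{x}(\tau^*)$, whence the two $C_k$ values coincide and one obtains the required contradiction $\mu_k\le\lambda_k$.

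The main obstacle is the case $n\ge 3$: at $\tau^*$ one only knows the two scalar identities $\tilde{x}_k(\tau^*)=x_k(\tau^*)$ and $\sum_{j\neq k}\tilde{x}_j(\tau^*)=\sum_{j\neq k}x_j(\tau^*)$, which do not pin down the full state vector. To close the argument I would establish a Schwarz-type monotonicity for $C_k$ under enlargement of the complementary grown hull (a statement about the conformal derivative of $B=g_{A(\bigcup_{j\neq k}\theta_j[0,x_j])}$ at the image of the $k$-th tip), or, alternatively, induct on $n$ by grouping the non-$k$ slits into a single composite hull so as to reduce to the scalar comparison that works cleanly for $n=2$. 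Deriving this monotonicity rigorously from the definition of $C_k$ as the square of a conformal derivative is the delicate analytical step on which the general case hinges.
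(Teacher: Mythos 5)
Your strategy is the same as the paper's: reduce the statement to an ODE comparison via Lemma \ref{LipDiff}, show that the original configuration must have overshot $\Gamma$ by the time the subslit configuration finishes, and derive a contradiction from $\mu_k\ge\lambda_k$ by a first-crossing argument. The one substantive difference is how you set up the function $C$. You write $\dot{x}_k=2\lambda_k/C_k(\vec{x})$ with $C_k$ a function of the full state vector, which is why you then get stuck at the crossing time $\tau^*$ for $n\ge 3$: the two state vectors need not agree there. The paper avoids this by the formulation already built into Lemma \ref{LipDiff}: $C=C(x_0,t)$ is a function of the grown capacity of the $k$-th slit \emph{and the elapsed time}, because (for $n=2$) the pair $(x_0,t)$ uniquely determines the complementary grown hull via the constraint that the union has half-plane capacity $2t$. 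Since $\Gamma\subsetneq\Gamma_k$ share the capacity parameterization and the other slits are identical in both configurations, $x$ and $y$ then satisfy ODEs with literally the same right-hand side $C$, and the comparison at $\tau^*$ is immediate — no monotonicity of $C_k$ under enlargement of the complementary hull is needed. Note, though, that this device only pins down the state from $(x_0,t)$ when $n=2$; the paper, as it announces at the start of Section \ref{Chiara}, only writes out the two-slit case and defers $n\ge 3$ to induction (essentially your "grouping" suggestion), so the obstacle you flag is real but is not resolved in the paper either beyond that remark.

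Two smaller points. First, your justification "the sum relation $x_1+x_2=2t$" is false: half-plane capacity is not additive ($\hcap(A_1\cup A_2)\le\hcap(A_1)+\hcap(A_2)$, usually strictly, by Lemma \ref{hcap1}). The correct statement, and the one you actually need, is that $\hcap\bigl(\theta_1[0,x_1]\cup\theta_2[0,x_2]\bigr)=2t$ together with the strict monotonicity of $\hcap$ determines $x_2$ from $(x_1,t)$; your conclusion $\tilde{\vec{x}}(\tau^*)=\vec{x}(\tau^*)$ for $n=2$ survives with this repair. Second, your endgame (locating $t^*$ with $x_k(t^*)=s$, showing $t^*<T'$ by strict monotonicity of $\hcap$ on proper subhulls, hence $x_k(T')>\tilde{x}_k(T')$) is exactly the paper's inequality $2y(T)=\hcap(\Gamma)<2x(T)$, so that part is fine. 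With the $(x_0,t)$-parameterization of $C$ substituted for your $C_k(\vec{x})$, your argument coincides with the paper's proof for $n=2$.
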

\begin{proof}

Let $\gamma(t)$ be the parameterization of $\Gamma_k$ that we obtain from producing $\Gamma_1,...,\Gamma_n$ by the multiple-slit equation with constant coefficients and let $\alpha(t)$ be the corresponding parameterization of $\Gamma$ obtained by the multiple-slit equation with constant coefficients for the configuration $\Gamma_1,...,\Gamma_{k-1},\Gamma,\Gamma_{k+1},...,\Gamma_n.$ Furthermore, let $x(t)=\hcap(\gamma(0,t])$ and $y(t)=\hcap(\alpha(0,t]).$\\
 By Lemma \ref{LipDiff} we know that $x$ and $y$ satisfy the initial value problems
$$\dot{x}(t)=\frac{2\lambda_k}{C(x(t),t)}, \quad x(0)=0, \qquad \dot{y}(t)=\frac{2\mu_k}{C(y(t),t)}, \quad y(0)=0,$$
where $C$ is continuous and continuously differentiable w.r.t. $t$.\\
Let $2T=\hcap(\Gamma_1\cup...\cup\Gamma_{k-1}\cup\Gamma\cup\Gamma_{k+1}\cup...\cup\Gamma_n)$ and assume that $\lambda_k \leq \mu_k.$ Then we have $x(t)\leq y(t)$ for all $t\in[0,T],$ in particular $x(T)\leq y(T).$ But $$  2y(T)= \hcap(\Gamma) < 2x(T).$$
\end{proof}

A direct consequence of Theorem \ref{Charlie2} is that infinite slits can be generated with arbitrary constant coefficients:
\begin{corollary}
 Let $\gamma_1,..., \gamma_n:[0,1]\to \overline{\Ha}\cup\{\infty\}$ be $n$ simple curves with $\gamma_1(1)=...=\gamma_n(1)=\infty, \gamma_1(0),...,\gamma_n(0)\in\R$, $\gamma_1(0,1)\cup...\cup \gamma_n(0,1)\subset\Ha$ and $\gamma_j[0,1)\cap\gamma_k[0,1)=\emptyset$ whenever $j\not=k.$ Then, for every $(\lambda_1,...,\lambda_n)\in(0,1)^n$ with $\sum_{k=1}^n\lambda_k=1,$ there exist unique driving functions $U_1,...,U_n:[0,\infty)\to\R$ such that equation (\ref{more}) with $T=\infty$ generates the curves $\gamma_1,...,\gamma_n$, i.e. the generated hulls $K_t$ satisfy $$\bigcup_{0\leq t< \infty}K_t=\bigcup_{k=1}^n \gamma_k(0,1).$$
\end{corollary}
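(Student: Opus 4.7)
The plan is to adapt the existence argument from Section~\ref{existence} by reversing the roles of coefficient and slits: rather than solving for $\lambda$ given finite slits, I would solve for the driving functions given the (infinite) slits and the prescribed coefficients. For each $n\in\N$, define a switching function $\alpha_n:[0,\infty)\to\{1,\ldots,n\}$ that on each dyadic window $[j/2^n,(j+1)/2^n]$ selects slit $k$ during a subinterval of length $\lambda_k/2^n$. Because each $\gamma_k$ is an infinite slit, the associated alternating one-slit equation can be solved on all of $[0,\infty)$ without running into a singularity, producing continuous driving functions $U_k^n:[0,\infty)\to\R$ whose corresponding hull at any time is a union of subslits of the $\gamma_k$.

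On any compact interval $[0,T]$, the subhulls produced by the $U_k^n$ lie inside fixed ambient subslits of the $\gamma_k$ (depending only on $T$), so a version of Theorem~\ref{aux} adapted to these ambient slits shows that $\{U_k^n\}_{n\in\N}$ is precompact in $C([0,T],\R)$. A Cantor diagonal extraction then yields a subsequence $U_k^{n_j}\to U_k$ locally uniformly on $[0,\infty)$. The weak convergence $\int_0^t \mathbf{1}_{\{\alpha_{n_j}=k\}}(\tau)f(\tau)\,d\tau\to\lambda_k\int_0^t f(\tau)\,d\tau$ combined with the locally uniform convergence of the driving functions (exactly as in Section~\ref{existence}) implies that $U_1,\ldots,U_n$ satisfy the multiple-slit equation (\ref{more}) with the prescribed constant coefficients $\lambda_k$ and that the generated hulls are the expected subunions $\bigcup_k\gamma_k(0,\tau_k(t)]$. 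Since $\lambda_k>0$ forces $\hcap(\gamma_k(0,\tau_k(T)])\to\infty$ as $T\to\infty$ (by a multi-slit analog of Lemma~\ref{LipDiff}), and $\gamma_k(1)=\infty$, the union of all hulls exhausts $\bigcup_k\gamma_k(0,1)$.

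For uniqueness I would argue as in Section~\ref{uniqueness}. If $U_k$ and $\tilde U_k$ both solve the problem, their truncation functions $\tau_k(T)$ and $\tilde\tau_k(T)$ satisfy on every $[0,T]$ the same initial value problem of the form $\dot x_k=2\lambda_k/C_k(x_1,\ldots,x_n)$ with $x_k(0)=0$, where $C_k$ is continuous and Lipschitz in its spatial arguments (by the multi-slit extension of Lemma~\ref{LipDiff}); uniqueness of the solution to this system implies that the two truncations agree for all $T$, and Theorem~\ref{Charlie2} applied to the common truncation then gives $U_k=\tilde U_k$ on $[0,T]$ for every $T$.

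The main technical obstacle is extending the estimates underlying Theorem~\ref{aux} and Lemma~\ref{LipDiff} from the setting of two finite slits to the infinite-slit case. For Theorem~\ref{aux} one needs the constants in Lemmas~\ref{lem:neu1}--\ref{lem:IV} to depend only on ambient slits chosen large enough to contain every truncation generated by the $U_k^n$ on $[0,T]$; inspection of those proofs shows the estimates are essentially local, so this extension should be routine. The $n\geq 3$ version of the statement follows by induction on the number of slits, exactly as in the passage from $n=2$ to arbitrary $n$ in Theorem~\ref{Charlie2}, using the reduction of the boundary faces $\{\tau_k=0\}$ to the corresponding $(n-1)$-slit problem.
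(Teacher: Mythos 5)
Your proposal is sound, but it is a genuinely different (and heavier) route than the one the thesis intends. The paper gives no proof at all: the corollary is presented as a direct consequence of Theorem \ref{Charlie2}, the implicit argument being that for each $T$ one finds truncations $\gamma_k(0,s_k]$ of total half-plane capacity $2T$ whose (unique) Theorem-\ref{Charlie2} coefficients equal the prescribed $(\lambda_1,\dots,\lambda_n)$ --- using continuity of the truncation-to-coefficient map together with the estimate and monotonicity corollaries proved just before --- and then checks consistency of the resulting driving functions as $T\to\infty$ via uniqueness. You instead re-run the bang-bang construction of Section \ref{existence} with the roles of $\lambda$ and the truncation lengths interchanged: $\lambda$ is fixed in the switching function and the truncations come out of the limit. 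This is legitimate and arguably more self-contained, since the ``direct'' route still requires proving continuity (and, for $n\ge 3$, surjectivity) of the coefficient map, which the thesis never does. Your accounting of the needed extensions is accurate: Theorem \ref{aux} must be applied with ambient slits $\Theta_k=\gamma_k(0,S_k(T)]$ chosen so that every bang-bang subhull on $[0,T]$ is contained in $\Theta_1\cup\dots\cup\Theta_n$ (possible because each bang-bang hull has capacity $2t\le 2T$ and the sets $\gamma_k[0,1)$ are closed and pairwise disjoint in $\C$), and the exhaustion $\bigcup_t K_t=\bigcup_k\gamma_k(0,1)$ follows from $\hcap(\gamma_k(0,\tau_k(T)])\ge 2\lambda_k T$ together with $\hcap(\gamma_k(0,s])\to\infty$ as $s\to1$.

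One inaccuracy: in the uniqueness step you assert that $C_k$ is ``Lipschitz in its spatial arguments by the multi-slit extension of Lemma \ref{LipDiff}''. Lemma \ref{LipDiff} gives continuity in $(x_0,t)$ and continuous differentiability in $t$ only; the uniqueness proof in Section \ref{uniqueness} deliberately invokes the Cid--Pouso theorem for scalar ODEs whose right-hand side is positive, continuous, and Lipschitz in $t$, not in $x$. For $n=2$ your argument goes through verbatim with this corrected hypothesis, and once the truncation functions agree the driving functions agree automatically (no need to re-invoke Theorem \ref{Charlie2}). For $n\ge3$ the truncation functions solve a genuine system, and neither the scalar Cid--Pouso result nor spatial Lipschitz regularity is available from the thesis; this gap is shared with the paper's own unwritten induction, so it is not a defect specific to your proposal, but you should not claim the spatial Lipschitz property.
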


\begin{remark}
 In Theorem \ref{Charlie2} we assumed that $\overline{\Gamma_j} \cap \overline{\Gamma_k}=\emptyset,$ $j\not=k.$ In particular, all slits have different starting points on the real axis. If we relax the latter condition and two (or more) slits have a common starting point, then we should still find unique constant coefficients. Also, our proof should still work in this case. However, this certainly requires some non-trivial modifications, e.g. a generalization of Lemma \ref{hcap2new}.

Now suppose two slits intersect in finitely many points. Then we can reduce this case to the case of slits intersecting only in their starting point, \textit{provided} that one slit does not enclose the tip of the other slit. Conversely, when the tip of one slit is enclosed by another slit, then these curves can never be generated by a Loewner equation with constant coefficients, see \cite{Graham2007}, Appendix.
\end{remark}

\begin{remark}
 Given two (or more) disjoint slits $\Gamma_1$ and $\Gamma_2$, how can we calculate the coefficient $\lambda$ corresponding to $\Gamma_1$ and the driving functions $U_1$ and $U_2$? It seems that there is no way to calculate $\lambda$ by computing simple geometric quantities of the two slits. However, in principle, one can use the proof of Theorem \ref{Charlie} to find a numerical approximation of $\lambda$ as well as of $U_1$ and $U_2.$\\
Conversely, given $\lambda,$ $U_1$ and $U_2,$ one can use the approximation  $$\int_0^t \frac{2\lambda}{z-U_{1}(\tau)}+\frac{2(1-\lambda)}{z-U_{2}(\tau)} \; d\tau   \quad \approx \quad  \int_0^t \frac{2\alpha_{n,\lambda}(\tau)}{z-U_{1}(\tau)}+\frac{2(1-\alpha_{n,\lambda}(\tau))}{z-U_{2}(\tau)} \; d\tau, $$
to solve the multiple-slit equation numerically by solving one-slit equations only. Some ways how to solve the one-slit equation are explained in \cite{MR2570752} and \cite{MR2348786}, see also \cite{HuyTran} for a convergence result concerning the simulation of SLE.\\
 In numerical mathematics, such a method - dividing a vector field into a sum of ``simple'' vector fields and integrating each of them separately - is usually called ``splitting method''. 
\end{remark}

\section{On constant coefficients in the radial multiple-slit equation}\label{Christoph}

Already in 1936, E. Peschl considered the multiple--slit version of Loewner's radial one-slit equation (\ref{Obama}), see \cite{0016.03501}, Section IV. He proved that for every disjoint union of $n$ Jordan arcs $\Gamma_1, \ldots, \Gamma_n$ in $\overline{\D} \backslash \{0\}$ such that 
$\D \backslash \Gamma$ is simply connected, there are continuous parameter functions
$\lambda_1, \ldots, \lambda_n : [0,T] \to \R$ with $\lambda_j(t) \ge 0$ and
$\lambda_1(t)+\ldots+\lambda_n(t)=1$ for every $t \in [0,T]$, and continuous
driving functions $\kappa_j : [0,T] \to \partial \D$ such that the solution
$w_t$ to the radial Loewner equation
\begin{equation} \label{eq:rad}
 \dot{w}_t(z)=-w_t(z)\sum \limits_{j=1}^n \lambda_j(t)
\frac{\kappa_j(t)+w_t(z)}{\kappa_j(t)-w_t(z)} \, , \qquad w_0(z)=z \, ,\end{equation}
has the property that $w_T$ maps $\D$ conformally onto $\D \backslash (\Gamma_1,\ldots,\Gamma_n)$.

Now one can also ask for the existence and uniqueness of constant coefficients for the slits $\Gamma_1,...,\Gamma_n$ in this radial case. As already mentioned, D. Prokhorov has proven the existence and uniqueness of constant coefficients for several slits for the radial equation under the assumption that all slits are \emph{piecewise analytic}.\\

It seems to be natural that one can apply the same idea of the proof of Theorem \ref{Charlie2} here, too, in order to show that one can drop any assumption on the regularity of the slits to 
	 generate them with constant coefficients. On the other hand, we used several properties of the half-plane capacity in the proof which seem not to have counterparts in the radial case at all. \\
The author of this thesis presented Theorem \ref{Charlie2} and its proof during the Doc Course ``Complex Analysis and Related Areas'' (IMUS 2013) in Sevilla. Another participant of this course was Christoph B\"ohm, who was working on a radial multiple-slit equation for multiply connected domains (a special case of a Komatu-Loewner differential equation). He realized how to change the proof of Theorem \ref{Charlie2} such that the technical difficulties can be handled in the radial case (and even for multiply connected domains!) by using results from his own work. The result can be found in the joint paper \cite{BoehmSchl} and we cite the special case of a simply connected domain. 

Let $f:\D\setminus(\Gamma_1\cup\ldots\cup\Gamma_n)\to \D$ be the unique conformal mapping with $f(0)=0$ and $f'(0)>0$, then $f'(0)> 1$ and the 
\emph{logarithmic mapping radius} of $\D\setminus(\Gamma_1\cup\ldots\cup\Gamma_n)$ is 
defined to be the real number $\log f'(0)> 0$.

\begin{theorem}[Corollary 2 in \cite{BoehmSchl}]
 	Let  $\Gamma_1,\ldots,\Gamma_n$ be slits in $\D\setminus\{0\}$ with disjoint closures and let $L$ be the logarithmic 
	mapping radius of $\D\setminus(\Gamma_1\cup\ldots\cup\Gamma_n)$. 
	Then there exist unique $\lambda_1,...,\lambda_n\in(0,1)$ with $\sum_{k=1}^n \lambda_k=1$ and unique continuous 
	driving functions $\kappa_1,\ldots,\kappa_n:[0,L]\to\partial\D$ such that the solution of the 
	Loewner equation 
	\begin{equation*}
		 \dot{w}_t(z)=-w_t(z)\sum \limits_{j=1}^n \lambda_j
\frac{\kappa_j(t)+w_t(z)}{\kappa_j(t)-w_t(z)} , 
		\qquad w_0(z)=z,
	\end{equation*}
	generates the slits $\Gamma_1,\ldots,\Gamma_n,$ i.e. $w_L$ maps 
	$\D$ conformally onto $\D\setminus(\Gamma_1\cup\ldots\cup\Gamma_n)$.
\end{theorem}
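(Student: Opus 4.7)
The plan is to mirror, step for step, the proof of Theorem \ref{Charlie2}, substituting the logarithmic mapping radius for the half-plane capacity and the Berkson--Porta kernel $z\mapsto(\kappa+z)/(\kappa-z)$ for the chordal kernel. As in the chordal case I would handle $n=2$ first and obtain the general statement by induction on the number of slits.

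\textbf{Existence.} First embed each $\Gamma_j$ into a larger slit $\widetilde\Gamma_j$ with endpoint on $\partial\D$ so that the closures remain pairwise disjoint, and parametrize each by its logarithmic mapping radius. For $n\in\N$ and $\lambda\in[0,1]$ define the switching function $\alpha_{n,\lambda}$ on $[0,L]$ exactly as in \eqref{Pete}, and consider the radial one-slit equation in which only $\widetilde\Gamma_1$ grows when $\alpha_{n,\lambda}=1$ and only $\widetilde\Gamma_2$ grows when $\alpha_{n,\lambda}=0$. Classical radial Loewner theory (Theorem \ref{Michelle}) provides unique continuous driving functions during each growth phase, and these can be extended continuously to all of $[0,L]$ by tracking the current tip of the temporarily frozen slit under the current conformal map. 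By continuity in $\lambda$ of the logarithmic mapping radius of the first component produced by time $L$, and by additivity of the logarithmic mapping radius under the composition $f=f_{\widetilde\Gamma_1}\circ(\,\cdot\,)$, the intermediate value theorem supplies $\lambda_n\in(0,1)$ for which the total hull at time $L$ is precisely $\Gamma_1\cup\Gamma_2$. A radial analog of Theorem \ref{aux} then yields precompactness of the driving functions $\kappa_{j,n}$ in $C([0,L],\partial\D)$; passing to a subsequence with $\lambda_{n_k}\to\lambda\in(0,1)$ and $\kappa_{j,n_k}\to\kappa_j$ uniformly, the weak convergence \eqref{weak} of $\alpha_{n_k,\lambda_{n_k}}$ to $\lambda$ allows us to pass to the limit in the integrated radial Loewner equation (via the continuity result used in \cite{Roth,RothSchl2}) and conclude $w_L=w_{\Gamma_1\cup\Gamma_2}$.

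\textbf{Uniqueness.} Let $x_j(t)$ be the logarithmic mapping radius of the piece of $\Gamma_j$ generated up to time $t$ under a Loewner parametrization with constant weights $(\lambda_j)$. I would first establish the radial counterpart of Lemma \ref{ThomasFundamentallemma}, namely $x_1(t)+x_2(t)-t=o(t)$ as $t\to 0$, using the representation of $\log w_t^{-1}{}'(0)$ via harmonic measure at the origin in place of the Brownian-hitting formula $\hcap(A)=\lim_{y\to\infty}y\,\mathrm{E}^{yi}[\Im B_{\tau_A}]$. Combined with the lower bound $x_j(t)\ge\lambda_j t$ obtained by comparison with the dyadic approximating flow (as in Lemma \ref{nointer}) and the monotonicity of the logarithmic mapping radius under composition, this yields $\dot{x}_j(0)=\lambda_j$; the radial analog of Lemma \ref{LipDiff} upgrades this to $\dot{x}_j(t)=\lambda_j/C_j(x(t),t)$ with $C_j>0$ continuous and $C^1$ in $t$. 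If two weight vectors $(\lambda_j)$ and $(\mu_j)$ for the same slit configuration both worked, comparing initial derivatives exactly as in Section \ref{uniqueness} forces $\lambda=\mu$, and Cid--Pouso uniqueness for Lipschitz-in-$t$ initial value problems then forces the $x_j$ to coincide; since the driving functions are recovered geometrically as the images of the current tips, they must also agree.

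\textbf{Main obstacle.} The genuine technical work lies in the radial versions of the capacity lemmas of Section \ref{auxiliary} and of Lemma \ref{ThomasFundamentallemma}. In the chordal setting they exploit the Nevanlinna representation \eqref{Nevanlinna} of $g_A^{-1}$ and the Brownian description of $\hcap$ seen from $\infty$, neither of which transfers verbatim. One must instead use monotonicity and subadditivity of the logarithmic mapping radius (both consequences of Schwarz's lemma), estimates of harmonic measure at the origin, and Beurling-type projection bounds to secure the equicontinuity/Lipschitz estimates underpinning the precompactness statement and the asymptotic $x_1(t)+x_2(t)-t=o(t)$. This is precisely the body of estimates developed by B\"ohm in his work on the Komatu--Loewner equation for multiply connected domains, which is why the radial theorem is proved in the joint paper \cite{BoehmSchl}; the approximation--plus--intermediate--value scheme is easy once these analytic inputs are in place, and it is they, rather than the scheme, that constitute the real substance of the argument.
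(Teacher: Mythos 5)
You should first note that the thesis does not actually prove this theorem: it is quoted as Corollary 2 of the joint paper \cite{BoehmSchl}, and the surrounding text explicitly warns that ``we used several properties of the half-plane capacity in the proof which seem not to have counterparts in the radial case at all,'' crediting B\"ohm with finding the right substitutes. So there is no in-thesis proof to match your argument against; the only available benchmark is the chordal proof of Theorem \ref{Charlie2}, which is indeed the template you follow, and which is also the template followed in \cite{BoehmSchl}.

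Measured against that template, your proposal is a correct \emph{roadmap} but not a proof, and the gap is exactly where you yourself locate it. The existence half needs a radial analogue of Theorem \ref{aux}, whose chordal proof runs through Lemmas \ref{hcap2new}--\ref{lem:IV}; these rest on the Nevanlinna representation (\ref{Nevanlinna}) of $g_A^{-1}$, on the monotonicity and expansion properties of $g_A$ on $\R$ (Lemma \ref{lem:hcapneu}), and on the radius bounds of Lemma 4.13 in \cite{Lawler:2005}. The uniqueness half needs a radial analogue of Lemma \ref{ThomasFundamentallemma}, whose proof uses the formula $\hcap(A)=\lim_{y\to\infty}y\operatorname{\bf E}^{yi}[\Im B_{\tau_A}]$, the bound $\Im(B_{\tau})\le 2\sqrt{t}$ of Lemma \ref{imestim}, and Beurling's projection estimate. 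You replace all of this with the phrases ``harmonic measure at the origin,'' ``Schwarz's lemma,'' and ``Beurling-type projection bounds,'' without stating, let alone proving, a single one of the required inequalities (for instance, the radial counterpart of the lower bound $c\le(\hcap(B\cup\Theta_2)-\hcap(A\cup\Theta_2))/(\hcap(B)-\hcap(A))$ of Lemma \ref{hcap2new}, or the modulus-of-continuity estimate of Lemma \ref{lem:III}, or the $\Landauo(t)$ estimate for the excess $x(t)+y(t)-t$). Since the approximation-plus-intermediate-value scheme and the Cid--Pouso uniqueness step are, as you say, routine once these estimates are in hand, the estimates \emph{are} the theorem; deferring them to ``the body of estimates developed by B\"ohm'' reduces your argument to a citation of the very paper being quoted. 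To turn this into a proof you would need to state and verify the radial versions of Lemmas \ref{hcap2new}--\ref{lem:IV} and \ref{ThomasFundamentallemma} explicitly, with the logarithmic mapping radius in place of $\hcap$ and harmonic measure from $0$ in place of the Brownian description from $\infty$.
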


\begin{remark} In Corollary 2 from \cite{BoehmSchl}, this statement is formulated for the corresponding reversed Loewner ODE. In the simply connected case, however, the statement can be easily transferred from the reversed to the ordinary ODE and vice versa.  
\end{remark}

\section{The simple-curve problem}\label{Sip}

Recall the \textit{simple-curve problem}: Under which conditions does the multiple-slit equation \ref{more} generate simple curves?\\

A sufficient condition for getting slits in the one-slit equation was found by J. Lind, D. Marshall and S. Rohde. First we need the following two definitions.

\begin{definition}${}$\\[-0.5cm]
 \begin{enumerate}[(1)]
  \item A slit $\Gamma$ is called \emph{quasislit} if $\Gamma$ is the image of the line segment $[0,i]$ under a quasiconformal mapping $Q:\Ha\to\Ha$ with $Q(\Ha)=\Ha$ and $Q(\infty)=\infty.$
\item We let $\Lip$ be the set of all continuous functions $U:[0,T]\to\R$ with $|U(t)-U(s)|\leq c\sqrt{|s-t|},$ for a $c>0$ and all $s,t\in[0,T]$.
 \end{enumerate}

\end{definition}

A quasislit is a slit that is a quasiarc\footnote{A \emph{quasiarc} is the image of a line segment under a quasiconformal mapping.} approaching $\R$ nontangentially, see Lemma 2.3 in \cite{MarshallRohde:2005}. 

Together with the metric characterization of quasiarcs by Ahlfors' three point property (also called \emph{bounded turning property}, see \cite{MR0344463}, Section 8.9 or \cite{MR0210889}, Theorem 1), we obtain the following geometric description of quasislits:

A slit $\Gamma$ is a quasislit if and only if $\Gamma$ approaches $\R$ nontangentially and  
\begin{equation}\label{boundedturning} \sup_{
\substack{x,y \in \Gamma \\ x\not=y}} \frac{\diam(x,y)}{|x-y|} < \infty,\end{equation} 
where we denote by $\diam(x,y)$ the diameter of the subcurve of $\Gamma$ joining $x$ and $y.$

Now the following connection between $\Lip$ and quasislits holds.

\begin{theorem}\label{lmr}(Theorem 1.1 in \cite{MarshallRohde:2005} and Theorem 2 in \cite{Lind:2005})\textcolor{white}{0}\\
 If $\Gamma$ is a quasislit with driving function $U$, then $U \in \Lip$. Conversely, if $U \in \Lip$ and for every $t\in[0,T]$ there exists an $\eps>0$  such that
$$ \sup_{\substack{r,s\in[0,T]\\ r\not=s\\|r-t|,|s-t|<\eps}}\frac{|U(r)-U(s)|}{\sqrt{|r-s|}}<4, $$
then $\Gamma$ is a quasislit.
\end{theorem}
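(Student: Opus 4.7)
The theorem splits into a forward direction and a converse. I would attack them by quite different means; the forward direction is essentially quasiconformal distortion, while the converse requires a sharp scaling analysis of the reverse Loewner flow.

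\textbf{Forward direction (quasislit $\Rightarrow$ $U\in\operatorname{Lip}(1/2)$).} The plan is to reduce everything to the truncation/scaling properties of hulls. Fix $0\le s<t\le T$ and consider the ``incremental'' hull $A_{s,t}:=g_s(\gamma(s,t])$, which is the image of the subarc $\gamma[s,t]$ under $g_s$. By truncation and Lemma \ref{hcap1}\,b), $\hcap(A_{s,t})=2(t-s)$, and since $\gamma$ is a quasislit, $A_{s,t}$ is a quasislit as well whose constants in \eqref{boundedturning} and the nontangentiality are controlled uniformly in $s,t$ (this uses that the quasiconformal reflection across $\R$ that straightens $\Gamma$ can be post-composed with $g_s$, which is quasiconformally compatible). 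The driving value $U(t)-U(s)=g_t(\gamma(t))-g_s(\gamma(s))$ is exactly the image of the tip of $A_{s,t}$ under $g_{A_{s,t}}$, shifted by the base point. Applying Lemma \ref{imestim} and Theorem \ref{hsiz} (or Lemma 4.13 of \cite{Lawler:2005}) to the rescaled quasislit of capacity $2(t-s)$ yields $\operatorname{diam}(A_{s,t})\lesssim\sqrt{t-s}$, and the uniform bounded-turning constant then gives $|U(t)-U(s)|\lesssim\sqrt{t-s}$, i.e.\ $U\in\operatorname{Lip}(1/2)$.

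\textbf{Converse, scale model.} Before tackling the general hypothesis I would revisit Example \ref{Maxime}: the driving function $U(t)=c\sqrt{1-t}$ on $[0,1]$ is the explicitly solvable, self-similar model, and a direct computation (Example \ref{Felix}) shows the generated hull is a slit if and only if $|c|<4$; at $|c|=4$ the curve hits $\R$ tangentially and for $|c|>4$ it encloses a bounded region. This identifies $4$ as the sharp constant and tells us exactly where the obstruction lies: the derivative of the trajectory $g_t^{-1}$ at the singularity can blow up precisely when the local $\operatorname{Lip}(1/2)$-seminorm of $U$ reaches $4$.

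\textbf{Converse, general case.} Given $U\in\operatorname{Lip}(1/2)$ with local seminorm strictly less than $4$ at every $t\in[0,T]$, I would work with the backward equation \eqref{2slitsb}. For a fixed tip time $t_0$, set $\tilde U(s):=U(t_0-s)-U(t_0)$ and study $f_s(z)=g_s^{-1}(z+U(t_0))$; then $\tilde U$ is again $\operatorname{Lip}(1/2)$ with the same local seminorm $<4$. The goal is to show the trajectory $s\mapsto f_s(0)$ in $\Ha$ is well-defined, continuous, and approaches a point in $\Ha\cup\R$ as $s\downarrow 0$ (the tip), and that it does so uniformly in $t_0$ with quasiconformal compatibility. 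The standard device is to compare $f_s$ with the self-similar flow driven by $c\sqrt{s}$ with $c$ slightly larger than the local seminorm: a Gronwall/comparison argument applied to $\operatorname{Im} f_s(0)$ and $|\dot f_s(0)|$ shows that, for $c<4$, the imaginary part stays comparable to $\sqrt{s}$ and the real part is controlled by $\sqrt{s}$. Iterating this across overlapping neighborhoods (using the local-seminorm hypothesis and compactness of $[0,T]$) shows $\gamma(t_0):=\lim_{s\downarrow0}f_s(0)+U(t_0)$ exists and $t_0\mapsto\gamma(t_0)$ is injective. The same estimate, applied to two tips $\gamma(t_0),\gamma(t_1)$ via the sub-hull $g_{t_0}(\gamma(t_0,t_1])$, bounds the diameter of subarcs by the chordal distance and yields the bounded-turning inequality \eqref{boundedturning}. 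Finally, the nontangentiality of the approach to $\R$ is read off from the $\operatorname{Im} f_s(0)\asymp \sqrt{s}$ asymptotic together with the sharp comparison with the line-segment model in Example \ref{Maxime}.

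\textbf{Main obstacle.} The serious step is the uniform lower bound $\operatorname{Im} f_s(0)\gtrsim\sqrt{s}$ under only a local $\operatorname{Lip}(1/2)$-hypothesis with strict inequality $<4$. The constant $4$ is sharp, so no soft argument suffices: one must extract from $|U(r)-U(\sigma)|<c\sqrt{|r-\sigma|}$ (with $c<4$) a quantitative statement that the backward trajectory cannot spiral into the singularity $U(t)$. Cast correctly, this is a differential inequality for the angular variable $\arg(f_s(0)-\tilde U(s))$ that one compares with the exactly solvable case $\tilde U(s)=c\sqrt s$; the threshold $c=4$ is precisely where the comparison breaks. Once this quantitative non-spiraling is in place, the remaining steps (injectivity, bounded turning, nontangentiality) are routine consequences.
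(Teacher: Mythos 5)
This theorem is not proved in the thesis; it is quoted from \cite{MarshallRohde:2005} and \cite{Lind:2005}, and the closest thing to an internal proof is the multiple-slit generalization developed in Lemmas \ref{help}--\ref{nodepend}, Theorem \ref{sevslits} and Lemma \ref{Michael}. Your forward direction is essentially the standard Marshall--Rohde argument: $|U(t)-U(s)|\lesssim \diam(A_{s,t})$ by the estimate behind Lemma \ref{lem:III}, and $\diam(A_{s,t})\lesssim\sqrt{t-s}$ because a nontangentially approaching bounded-turning hull has $\diam^2\lesssim\operatorname{hsiz}\lesssim\hcap$ (Theorem \ref{hsiz}). The one step you gloss over is that $A_{s,t}=g_s(\gamma(s,t])$ is a quasislit with constants \emph{uniform} in $s,t$; this is a genuine lemma in \cite{MarshallRohde:2005} (conformal maps do not automatically preserve quasiarc constants near the boundary), but as a sketch your outline is sound.

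The converse, however, has a real gap exactly where you place the ``main obstacle,'' and your proposed repair is not the one that works. You suggest a Gronwall/angular comparison for the backward trajectory of the tip, $\Im f_s(0)\gtrsim\sqrt{s}$. The known proofs do not track the tip in $\Ha$ at all: Lind's argument (reproduced in generalized form in Lemmas \ref{help} and \ref{hitting}) analyzes the \emph{real} forward trajectories $\dot x=2/(x-U(t))$ and shows, via the iteration $h_{n+1}(x)=x+\tau-4\lambda/h_n(x)$ whose largest zero converges to $4\sqrt{\lambda}-\tau$, that a real solution can reach the singularity only if the local left H\"older quotient is at least $4$. This real-line analysis is what produces the sharp constant $4$ and, via Proposition \ref{Fritz} and Lemma \ref{nodepend}, the welded property together with a quantitative two-sided bound that makes the welding homeomorphism \emph{quasisymmetric}; the quasislit conclusion then follows from the welding characterization (Lemma 2.2 in \cite{MarshallRohde:2005}, Lemma 6 in \cite{Lind:2005}), not from a direct bounded-turning estimate. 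Your route leaves two things unestablished: (i) that the angular comparison with the $c\sqrt{s}$ model actually closes at the threshold $c=4$ (a soft Gronwall bound on $\Im f_s(0)$ does not see the constant $4$, only the welding iteration does), and (ii) the passage from tip estimates to the bounded-turning inequality \eqref{boundedturning} and injectivity, which you declare ``routine'' but which is precisely what the quasisymmetric-welding machinery is for. As it stands the converse is a correct identification of the difficulty rather than a proof of it.
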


In the following, we take a look at some necessary and sufficient conditions for the multiple-slit equation to generate simple curves.  In Section \ref{Ingwer}, we have a look at the following question:
\begin{eqnarray}\label{4ne}\begin{aligned}
&\textit{Is the constant $4$ in Theorem \ref{lmr} sharp for generating quasislits?} &\end{aligned}
\end{eqnarray} 

Finally, in Section \ref{Approach} we will look for relations between the driving functions and the way how the slits approach $\R.$

\subsection{Continuous driving functions}

 In the following, we will look at the multiple-slit equation
\begin{equation}\label{more2}\dot{g}_t(z)=\sum_{k=1}^n\frac{2\lambda_k(t)}{g_t(z)-U_{k}(t)},\quad g_0(z)=z, \quad t\in[0,T], \end{equation}
with the following assumptions:
\begin{itemize}
\item[(a)] Each $\lambda_k:[0,T]\to[0,1]$ is continuous and $\lambda_k(t)\in(0,1)$ for all $t\in[0,T].$
\item[(b)] $\sum_{k=1}^n \lambda_k(t)=1$ for all $t\in[0,1].$
\item[(c)] The driving functions $U_k:[0,T]\to \R$ are continuous and
\item[(d)] $U_j(t)<U_k(t)$ for all $t\in[0,T]$ and $1\leq j<k\leq n.$
\end{itemize}

Condition (b) simply implies that the generated hulls $K_t$ satisfy $\hcap(K_t)=2t$ for all $t\in[0,T].$ \\ 
We are interested in those cases where equation (\ref{more2}) generates hulls that have exactly $n$ connected components. Assumption (d) is necessary for this case. Discontinuous driving functions could also generate $n$ connected components, but we will not consider this case. Property (a) is not necessary either, but we want to exclude that a coefficient function $\lambda_k(t)$ can be equal to $0$ in a whole interval. Clearly, in this case, we could have less than $n$ connected components. 

\begin{lemma}\label{Miriam}
Let $\{K_t\}_{t\in[0,T]}$ be hulls generated by equation (\ref{more2}). Then there exists $\tau>0$ such that $K_t$ falls into $n$ connected components $C^t_1,...,C^t_n$ for all $t\in(0,\tau],$ with $C^s_j \subset C^t_j$ for all $j=1,...,n$ and $s\leq t.$\\
Furthermore, for every $j=1,...,n,$ the family $\{C^t_j\}_{t\in[0,\tau]}$ satisfies the local growth property and $x(t):=\hcap(C^t_j)$ is continuously differentiable with $$\dot{x}(0)=2\lambda_j(0)\quad \text{and} \quad \dot{x}(t)> 2\lambda_j(t) \quad \text{for all} \quad t\in(0,\tau].$$
\end{lemma}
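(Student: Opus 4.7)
The plan is to mimic the strategy of Lemmas \ref{nointer} and \ref{LipDiff}, adapted to continuous coefficients $\lambda_k$ and driving functions $U_k$.

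First I would show that $K_t$ splits into exactly $n$ connected components for $t$ small enough. Since $U_1(0)<\dots<U_n(0)$ and every $U_k$ is continuous, I can select disjoint closed intervals $I_1,\dots,I_n\subset\R$ with $U_k(0)\in\operatorname{int}I_k$ and $U_k(t)\in\operatorname{int}I_k$ for all $t\in[0,\tau_0]$. Lemma \ref{imestim} gives $\max_{z\in K_t}\Im z\le 2\sqrt t$, and the local growth property of $\{K_t\}$ (Remark \ref{Lola}) forces $K_t$ to be contained in a union of thin rectangles over $I_1,\dots,I_n$ for all $t\in[0,\tau]$ for some $\tau\in(0,\tau_0]$; in particular $K_t$ has at most $n$ components. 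To see that there are at least $n$, pick $p_j\in\R$ strictly between $I_j$ and $I_{j+1}$ and solve the real ODE $\dot w_j(t)=\sum_k 2\lambda_k(t)/(w_j(t)-U_k(t))$ with $w_j(0)=p_j$: because $p_j$ stays bounded away from every $U_k(t)$ on $[0,\tau]$, the solution $w_j(t)\in\R$ exists on $[0,\tau]$ and is the image under $g_t$ of a point of $\R\setminus\overline{K_t}$, producing the required separation on the real axis. Labelling the components $C^t_1,\dots,C^t_n$ from left to right, the monotonicity $C^s_j\subseteq C^t_j$ for $s\le t$ is immediate from $K_s\subseteq K_t$ and this labelling.

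To obtain the derivative formula I follow Lemma \ref{LipDiff}: fix $j$ and write $g_t=B_t\circ g_{C^t_j}$ with $B_t:=g_{g_{C^t_j}(\bigcup_{k\ne j}C^t_k)}$. The disjoint-closure hypothesis makes $B_t$ extend holomorphically across a real neighbourhood of $U_j(t)$, with derivative $\varrho_j(t):=B_t'(U_j(t))$ depending continuously (and in $t$ also continuously differentiably) on $t$; moreover $\varrho_j(0)=1$ and $\varrho_j(t)\in(0,1)$ for $t\in(0,\tau]$ by Proposition 5.15 of \cite{Lawler:2005}. Setting $\psi_h:=g_{C^t_j}(C^{t+h}_j\setminus C^t_j)$, Lemma \ref{hcap1}\,b) gives $\hcap(\psi_h)=x(t+h)-x(t)$ and the $j$-th connected component of $g_t(K_{t+h}\setminus K_t)$ equals $B_t(\psi_h)$. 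Lemma 2.8 of \cite{MR1879850} then yields
\[
\lim_{h\downarrow 0}\frac{\hcap(B_t(\psi_h))}{\hcap(\psi_h)}=\varrho_j(t)^2.
\]
The local growth property of $\{C^t_j\}$ is obtained by pulling the local growth of $\{K_t\}$ back through $B_t^{-1}$, which is Lipschitz on a fixed real neighbourhood of $U_j(t)$.

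The main technical obstacle is the identification
\[
\hcap(B_t(\psi_h))=2\lambda_j(t)\,h+o(h)\qquad(h\downarrow 0),
\]
which has to replace the constant-coefficient discretisation step of Lemma \ref{nointer}. I plan to extract this directly from the Loewner ODE. Writing $D_h:=g_t(K_{t+h}\setminus K_t)$, so that $B_t(\psi_h)$ is its component near $U_j(t)$ and $\hcap(D_h)=2h$, expansion of $(g_{t+h}\circ g_t^{-1})(w)-w$ for $w\in\Ha$ bounded away from $U_1(t),\dots,U_n(t)$ gives
\[
(g_{t+h}\circ g_t^{-1})(w)-w=h\sum_{k=1}^n\frac{2\lambda_k(t)}{w-U_k(t)}+o(h),
\]
which via the Nevanlinna representation of the inverse map forces the associated Nevanlinna measure $\mu_h$ to carry mass $2\lambda_k(t)h+o(h)$ in a small neighbourhood of each $U_k(t)$. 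A Brownian-motion style additivity estimate analogous to Lemma \ref{ThomasFundamentallemma}, applied to the $n$ disjoint components $D^k_h$ shrinking to the distinct points $U_k(t)$, then identifies each local mass with $\hcap(D^k_h)$ up to error $o(h)$, producing $\hcap(B_t(\psi_h))=2\lambda_j(t)h+o(h)$. With this in hand,
\[
\dot x(t)=\frac{2\lambda_j(t)}{\varrho_j(t)^2}
\]
is continuous in $t$, so $x$ is continuously differentiable on $[0,\tau]$ by \cite[Lemma 4.3]{Lawler:2005}; evaluating at $t=0$ yields $\dot x(0)=2\lambda_j(0)$, while $\varrho_j(t)<1$ for $t\in(0,\tau]$ gives the strict inequality $\dot x(t)>2\lambda_j(t)$.
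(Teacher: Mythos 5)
Your argument for the derivative formula is structurally close to the paper's (the factorization $g_t=B_t\circ g_{C^t_j}$, Lemma 2.8 of \cite{MR1879850}, continuity of the right derivative plus \cite[Lemma 4.3]{Lawler:2005}), and your replacement of the bang--bang lower bound of Lemma \ref{nointer} by the first-order expansion of $g_{t+h}\circ g_t^{-1}$ together with a Nevanlinna-measure decomposition is a legitimate alternative route, provided the identification $\mu_h^k(\R)=\hcap(D_h^k)+o(h)$ is actually carried out (the measure of $g_{D_h}^{-1}$ near $U_k(t)$ is the measure of $g_{D_h^k}^{-1}$ distorted by the map erasing the other components, and controlling that distortion is essentially the Brownian-motion estimate of Lemma \ref{ThomasFundamentallemma} again). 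That part I would accept as a plan.

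The genuine gap is in the first half. You invoke ``the local growth property of $\{K_t\}$ (Remark \ref{Lola})'', but for $n\ge 2$ the hulls generated by (\ref{more2}) \emph{never} satisfy that property: $g_t(K_{t+s}\setminus K_t)$ has pieces near each of the separated points $U_1(t),\dots,U_n(t)$, so $\operatorname{rad}\bigl(g_t(K_{t+s}\setminus K_t)\bigr)$ is bounded below by half the minimal gap between the driving functions and does not tend to $0$. The containment of $K_t$ in thin disjoint rectangles over $I_1,\dots,I_n$ is still true, but must be derived directly from the ODE (points at distance $\ge\delta$ from all trajectories $U_k([0,\tau])$ survive up to time of order $\delta^2$). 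More seriously, even granting that containment and your separating real solutions $w_j$, you only conclude that distinct rectangles carry distinct components of $K_t$; nothing in your argument shows that $K_t$ intersected with a single rectangle is \emph{connected}, so ``at most $n$ components'' does not follow, and the labelling $C^t_1,\dots,C^t_n$ is not yet well defined. The same false premise undermines your derivation of the local growth property of $\{C^t_j\}$ by ``pulling back the local growth of $\{K_t\}$''. This is exactly where the paper takes a different route: it approximates (\ref{more2}) by a bang--bang \emph{one-slit} equation with step coefficients $\alpha_n$, for which the hull visibly splits into $n$ growing connected pieces each satisfying the local growth property, and then passes to the Carath\'eodory limit, from which connectedness, monotonicity and the local growth property of each $C^t_j$ are inherited. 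You need either this approximation or some substitute argument for the connectedness and local growth of each piece before the rest of your proof can run.
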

\begin{proof}
The statements can be shown by using the same ideas we used in the proofs of Lemma \ref{nointer} and Lemma \ref{LipDiff}. So we only sketch the steps and, in order to simplify notation, we let $n=2$.\\
First, consider the one-slit equation $$  \dot{g}_{t,n}(z)= \frac{2\alpha_{n}(\tau)}{g_{t,n}(z)-U_{1}(\tau)}+\frac{2(1-\alpha_{n}(\tau))}{g_{t,n}(z)-U_{2}(\tau)} \; d\tau,\qquad g_{0,n}(z)=z\in\Ha,$$
where $\alpha_{n}(t)$ is a step function with \begin{equation} \alpha_{n}(t)=\begin{cases}
1 \quad \text{when} \quad t\in (\frac{k}{2^n}, \frac{k+\lambda(\frac{k}{2^n})}{2^n}),\; k\in\{0,...,2^n-1\},\\
0 \quad \text{when} \quad t\in (\frac{k+\lambda(\frac{k}{2^n})}{2^n}, \frac{k+1}{2^n}),\; k\in\{0,...,2^n-1\}.                                                                  \end{cases}\end{equation}
Denote by $K_{t,n}$ the generated hulls. Then we know that $\Ha \setminus K_{t,n} \to \Ha \setminus K_t$ for every $t\in[0,T].$ As in the proof of Lemma \ref{nointer}, we conclude that there exists $\tau$ such that $K_{t,n}$ consists of two subhulls $A_{t,n},$ $B_{t,n}$ for every $t\in[0,\tau]$ and $n\in\N$ with $A_{s,n}\subset A_{t,n},$ $B_{s,n}\subset B_{t,n}$ whenever $s\leq t$ and $A_{t,n}$ grows whenever $\alpha_n(t)=1$, $B_{t,n}$ grows whenever $\alpha_n(t)=0$. From this it follows that $K_t$ consists of two disjoint subhulls $C^t_1,$ $C^t_2$ for all $t\in[0,\tau]$ with $C^s_j \subset C^t_j$, $s\leq t$, and $\Ha \setminus A_{t,n} \to \Ha \setminus C_1^t,$ $\Ha \setminus B_{t,n} \to \Ha \setminus C_2^t$ for $n\to\infty$. It is easy to verify that the family $\{C_j^t\}_{t\in[0,\tau]}$ has the local growth property. In particular, this shows that $C_1^t$ and $C_2^t$ are connected, so $K_t$ has exactly two connected components for all $t\in(0,\tau]$.\\
Finally, by repeating the steps of the the proof of Lemma \ref{LipDiff}, we obtain that $x(t):=\hcap(C^t_j)$ is continuously differentiable with $\dot{x}(0)=2\lambda_j(0)$ and $\dot{x}(t)> 2\lambda_j(t)$ for all $t\in(0,\tau].$ \end{proof}

The global statement that also $K_T$ has $n$ connected components is not true in general, as, for example, $C^t_1$ could hit $C^t_2$ for some $t\in[0,T].$ \\

 In Example \ref{Felix}, we have already seen that there are continuous driving functions that generate curves hitting the real axis (or itself) at a certain time. From this time on, the hull is not a simple curve any longer.
There are further, more subtle obstacles preventing the one-slit equation from producing simple curves as the following example shows.

\begin{example}\label{spiral}
There exists a driving function $U\in \Lip$ such that $K_t$ is a slit $\gamma(0,t]$ for all $t\in (0,T)$ and for $t\to T$ the curve $\gamma$ wraps infinitely often around, say, $\mathcal{B}(2i,1).$ Hence $K_T=\gamma(0,T)\cup \overline{\mathcal{B}(2i,1)}$ is not locally connected, see Example 4.28 in \cite{Lawler:2005}. \hfill $\bigstar$
\end{example}

In order to distinguish between these two kinds of obstacles, one has introduced two further notions, which are more general than ``the hull is a slit''.

\begin{definition}
Let $\{K_t\}_{t\in[0,T]}$ be a family of hulls generated by the multiple-slit equation (\ref{more2}). We say that $\{K_t\}_{t\in[0,T]}$ is \emph{generated by curves}, if there exist $n$ continuous functions $\gamma_1,\ldots,\gamma_n:[0,T]\to\overline{\Ha},$ such that $\Ha\setminus K_t$ is the unbounded component of $\Ha\setminus (\gamma_1[0,t]\cup\ldots \cup \gamma_n[0,t])$ for every $t\in[0,T].$
\end{definition}
\begin{remark}
Several properties of hulls that are generated by curves are described in \cite{Lawler:2005}, Section 4.4.
\end{remark}

The second notion of ``welded hulls'' is discussed in the next section. We end this subsection by a last example.

\begin{example}
 It is possible to generate space-filling curves by the one-slit equation. The Hilbert space-filling curve, e.g., can be generated by a driving function which is in $\Lip$, see Corollary 1.4 in \cite{spacefill}. The corresponding hulls are generated by a curve, which is ``self-touching everywhere''. ${}$\hfill $\bigstar$
\end{example}

\subsection{Welded hulls}

First, we consider the backward equation to (\ref{more2}) with real initial values, i.e.,
\begin{equation}\label{back}
 \dot{x}(t) = \sum_{j=1}^n\frac{-2\lambda_j(T-t)}{x(t)-U_j(T-t)}, \quad x(0)=x_0\in\R\setminus\{U_1(T),...,U_n(T)\}.
\end{equation}
Here, the solution may not exist for all $t\in[0,T].$ If a solution ceases to exist, say at $t=s$, it will hit a singularity, i.e. $\lim_{t\uparrow s}x(t)=U_j(s),$ with at most two possibilities for $j$, depending on the position of $x_0$ with respect to $U_1(T),...,U_n(T).$\\

Now assume that there are two different solutions $x(t),y(t)$ with $x(0)=x_0<y_0=y(0).$ If $x(t)$ and $y(t)$ meet a singularity after some time, i.e. $\lim_{t\uparrow s}x(t)=\lim_{t\uparrow s}y(t)=U_j(T-s)$ for some $s\in(0,T]$, then $x_0$ and $y_0$ lie on different sides  with respect to $U_j(T)$, i.e. $x_0<U_j(T)<y_0.$ Otherwise, the difference $y(t)-x(t)$ would satisfy 
$$ \dot{y}(t)-\dot{x}(t)= \sum_{j=1}^n\frac{2\lambda_j(T-t)(y(t)-x(t))}{(y(t)-U_j(T-t))(x(t)-U_j(T-t))}>0$$
for all $0\leq t<s$ and thus, $\lim_{t\uparrow s}(x(t)-y(t))=0$ would be impossible.\\
  Consequently, for any $1\leq j\leq n$ and any $s\in(0,T]$, there are at most two initial values so that the corresponding solutions will meet $U_j(T-s)$.

\begin{definition}
Let $\{K_t\}_{t\in[0,T]}$ be a family of hulls generated by the multiple-slit equation (\ref{more2}). We say that $\{K_t\}_{t\in[0,T]}$ \emph{is welded} if for every $j\in\{1,...,n\}$ and every $s\in(0,T]$ there exist exactly two real values $x_0^j,y_0^j$ with $x_0^j<U_j(T)<y_0^j$ such that the corresponding solutions $x(t)$ and $y(t)$ of (\ref{back}) with $x(0)=x_0^j,$ $y(0)=y_0^j$ satisfy $$\lim_{t\uparrow s}x(t)=\lim_{t\uparrow s}y(t)=U_j(T-s).$$

\end{definition}

Before we give some explanations for this definition, we need the following characterization.

\begin{proposition}\label{Fritz} The following statements are equivalent:
\begin{itemize}
 \item[a)] $\{K_t\}_{t\in[0,T]}$ is welded.
\item[b)] For every $\tau\in[0,T)$ there exists $\eps>0$ such that for all $x_0\in\R\setminus\{U_1(\tau),...,U_n(\tau)\}$ the solution $x(t)$ of 
\begin{equation}\label{more3}\dot{x}(t)=\sum_{j=1}^n\frac{2\lambda_j(t)}{x(t)-U_{j}(t)},\quad x(\tau)=x_0, \end{equation}
does not hit $U_1(t)$, ..., $U_n(t)$ for $t<T$ and satisfies $|x(T)-U_j(T)|>\eps$ for all $j\in\{1,...,n\}.$ 
\end{itemize}
\end{proposition}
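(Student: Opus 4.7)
My plan hinges on the time-reversal $y(u)=x(T-u)$, which identifies backward solutions of \eqref{back} on $[0,T-\tau]$ with $y(0)=y_0$ with forward solutions of \eqref{more3} on $[\tau,T]$ satisfying $x(T)=y_0$ and $x(\tau)=y(T-\tau)$. Write $\Phi_{\tau,T}(x_0):=x(T)$ for the forward flow from time $\tau$ to $T$ and $\Psi_{0,s_0}$ for the backward flow on $[0,s_0]$, $s_0:=T-\tau$. Throughout I exploit the monotonicity computation already recorded before the proposition: two solutions that lie on the same side of every singularity satisfy $\dot y-\dot x$ of the sign of $y-x$, so their order is preserved and their separation cannot collapse while both remain on the same side. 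Standard continuous dependence then makes $\Phi_{\tau,T}$ continuous and strictly increasing on each open strip $(U_k(\tau),U_{k+1}(\tau))$ on which it is defined.

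For \textbf{(b)$\Rightarrow$(a)}, fix $\tau\in[0,T)$ and let $\eps$ be as in (b). Then $\Phi_{\tau,T}$ is defined on every $x_0\in\R\setminus\{U_1(\tau),\ldots,U_n(\tau)\}$. A forward solution starting in $(U_j(\tau),U_{j+1}(\tau))$ cannot cross any $U_k(t)$ by the non-hitting clause of (b), so continuity forces $x(T)\ge U_j(T)$, and the $\eps$-gap upgrades this to $x(T)>U_j(T)+\eps$. By monotonicity the one-sided limits $\beta_j(s_0):=\lim_{x_0\downarrow U_j(\tau)}\Phi_{\tau,T}(x_0)$ and $\alpha_j(s_0):=\lim_{x_0\uparrow U_j(\tau)}\Phi_{\tau,T}(x_0)$ exist and obey $\alpha_j(s_0)\le U_j(T)-\eps<U_j(T)<U_j(T)+\eps\le\beta_j(s_0)$. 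Via the time-reversal, these are exactly the initial values in \eqref{back} whose solutions reach $U_j(\tau)=U_j(T-s_0)$ at $u=s_0$; uniqueness of ``exactly two'' such values is forced by strict monotonicity of $\Phi_{\tau,T}$ on each strip, since a third candidate would have to produce a third distinct one-sided limit at $U_j(\tau)$. Since $s_0=T-\tau\in(0,T]$ was arbitrary, (a) holds.

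For \textbf{(a)$\Rightarrow$(b)}, fix $\tau$, set $s_0=T-\tau$, and for $s\in(0,s_0]$ let $\alpha_j(s)<U_j(T)<\beta_j(s)$ be the two welded values at $(j,s)$. Uniqueness of the backward ODE rules out any point being bad for two different pairs $(j,s)\ne(j',s')$, so the continuous curves $s\mapsto\alpha_j(s),\beta_j(s)$ are pairwise disjoint; together with the limits $\alpha_j(s),\beta_j(s)\to U_j(T)$ as $s\downarrow 0$ and the ordering $U_1(T)<\cdots<U_n(T)$, continuity prevents any crossing, so the closed intervals $I_j:=[\alpha_j(s_0),\beta_j(s_0)]$ are pairwise disjoint and ordered. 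The ``good set'' $G:=\R\setminus\bigcup_j I_j$ is then exactly the set of $y_0$ whose backward solution of \eqref{back} exists without collisions on $[0,s_0]$, and by monotonicity together with the defining property of the $\alpha_j(s_0),\beta_j(s_0)$, the restricted flow $\Psi_{0,s_0}\colon G\to\R$ is a continuous, strictly increasing bijection onto $\R\setminus\{U_1(\tau),\ldots,U_n(\tau)\}$ which maps each component $(\beta_j(s_0),\alpha_{j+1}(s_0))$ onto $(U_j(\tau),U_{j+1}(\tau))$ (with the conventions $\beta_0=-\infty$, $\alpha_{n+1}=+\infty$). Inverting gives $\Phi_{\tau,T}$ on every non-singular $x_0$ with image in $G$, whence $|\Phi_{\tau,T}(x_0)-U_j(T)|>\eps:=\min_j\min\{U_j(T)-\alpha_j(s_0),\,\beta_j(s_0)-U_j(T)\}>0$, proving both clauses of (b).

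The main technical obstacle will be the disjointness and ordering of the welded curves $\alpha_j,\beta_j$ used in the second direction: this is exactly where the ``exactly two'' clause of the welded condition (rather than mere existence) has to be combined with uniqueness for the backward ODE and continuity in $s$, in order to rule out the \emph{a priori} possibility that the bad intervals for different singularity indices interleave or collide.
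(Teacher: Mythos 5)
Your proof of (b)$\Rightarrow$(a) is essentially the paper's: you let the forward initial condition tend to $U_j(\tau)$ from one side, use monotonicity and the $\eps$-gap to extract the one-sided limit of the terminal values, and identify that limit as a welded value, with the ``at most two'' count coming from the order-collapse computation preceding the definition. This direction is fine, at the same level of rigor as the paper (which likewise leaves implicit the squeeze argument showing the limiting backward solution really survives until $s_0$ and collides exactly there).

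The direction (a)$\Rightarrow$(b) is where there is a genuine gap. Your plan reduces (b) to the claim that $\Psi_{0,s_0}$ maps each good component $(\beta_j(s_0),\alpha_{j+1}(s_0))$ \emph{onto} the full gap $(U_j(\tau),U_{j+1}(\tau))$, and you justify this surjectivity only ``by monotonicity together with the defining property of the $\alpha_j(s_0),\beta_j(s_0)$''. Monotonicity plus the fact that $\Psi_{0,u}(\beta_j(s_0))\to U_j(\tau)$ as $u\uparrow s_0$ only give that the image is an interval \emph{contained} in $(U_j(\tau),U_{j+1}(\tau))$; a priori it could be a proper subinterval $(c,d)$ with $c>U_j(\tau)$, and then for $x_0\in(U_j(\tau),c)$ your construction produces no backward solution to invert and says nothing about the forward solution started at $(\tau,x_0)$ --- which is precisely the content of (b). Establishing $c=U_j(\tau)$ means controlling $\Psi_{0,s_0}(y_0)$ as $y_0\downarrow\beta_j(s_0)$, i.e.\ continuity of the terminal value at a point where the limiting solution blows up; this is not standard continuous dependence and needs an extra estimate (for instance, that on $[u,s_0]$ the $j$-th term of the vector field only pushes the solution toward $U_j$ from above while the remaining terms stay uniformly bounded). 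The paper sidesteps all of this by a direct barrier argument: the forward solution with $x(\tau)=x_0\in(U_j(\tau),U_{j+1}(\tau))$ is trapped between the time-reversals of the two welded backward solutions attached to $U_j$ from above and to $U_{j+1}$ from below; these emanate from $U_j(\tau)$ and $U_{j+1}(\tau)$, stay strictly between the singularities, and order preservation then gives both the non-collision and the bound $|x(T)-U_j(T)|\ge\beta_j(s_0)-U_j(T)$ at once. You should either adopt that comparison or supply the missing limit estimate. Note also that the obstacle you flag as the main one --- disjointness and ordering of the curves $\alpha_j,\beta_j$ --- is in fact the routine part (it follows from the same squeeze and order-collapse arguments); the surjectivity is the real crux.
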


\begin{proof}
$a)\Longrightarrow b):$ Firstly, the solution $x(t)$ to (\ref{more3}) exists locally, say in the interval $[\tau, T^*),$ $T^*\leq T.$ 
Now we know that there are $x_0^j,y_0^j,$ $j=1,...,n,$ such that the solutions $x_j(t)$ and $y_j(t)$ to equation (\ref{back}) with initial values $x_0^j$ and $y_0^j$ respectively hit the singularity $U_j(\tau)$ (at $s=T-\tau$) and $x_0^j<U_j(T)<y_0^j$. But this implies that $x(t)$ can be extended to the interval $[0,T^*]$ with $|U_j(T^*)-x(T^*)|>\eps,$ where $\eps:=\min\{U_j(T^*)-x_j(T-T^*),y_j(T-T^*)-U_j(T^*)\}.$\\
$b)\Longrightarrow a):$ Let $s\in(0,T]$ and $\tau=T-s.$ We set $a_n:=U_j(\tau)-\frac{1}{n}$ for all $n\in\N.$ The solution $x_n(t)$ of (\ref{more3}) with initial value $a_n$ exists up to time $T$. Hence we can define $\xi_n:=x_n(T)$ for all $n\geq N$ and we have $U_j(T)-\xi_n>\eps.$ The sequence $\xi_n$ is increasing and bounded above, and so it has a limit $x_0<U_j(T).$ Then the solution $x(t)$ of (\ref{back}) with $x_0$ as initial value satisfies $$\displaystyle \lim_{t\uparrow s}x(t)=\lim_{n\to\infty}a_n=U_j(\tau)=U_j(T-s).$$
The second value $y_0$ can be obtained in the same way by considering the sequence $U_j(\tau)+\frac{1}{n}$ instead of $a_n.$
\end{proof}

Figure \ref{2slitsmeet} shows an example of a situation for $n=2$, where all solutions to (\ref{back}) with initial value in $(U_1(T),U_2(T))$ will meet $U_2(t)$. So the whole interval $(U_1(T),U_2(T))$ ``belongs'' to the second slit.

\begin{figure}[h]
  \centering
       \includegraphics[width=155mm]{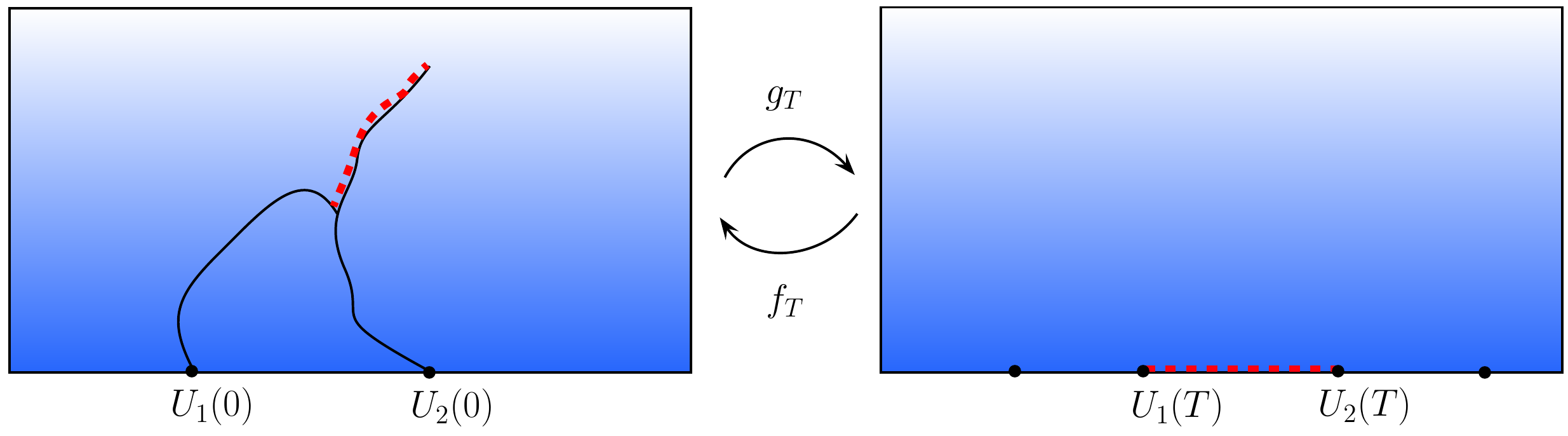}
\caption{The left slit hits the right one at $t=T.$}\label{2slitsmeet}
\end{figure}  

\begin{corollary}
Let $\{K_t\}_{t\in[0,T]}$ be generated by equation (\ref{more2}). If $\{K_t\}_{t\in[0,T]}$ is welded, then $K_T$ has $n$ connected components.
\end{corollary}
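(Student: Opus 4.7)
I argue by contradiction: assume $K_T$ has fewer than $n$ connected components. Lemma~\ref{Miriam} guarantees that $K_t$ has exactly $n$ components $C_1^t,\ldots,C_n^t$ for $t$ in some interval $(0,\tau]$, so by Carath\'eodory continuity of $t\mapsto\Ha\setminus K_t$ there is a smallest time $T^*\in(0,T]$ at which the number of components drops. At $T^*$ two of the components, say $C_i^t$ and $C_j^t$ with $i<j$ and starting points $p_i<p_j$ on $\R$, come into contact for the first time at some point $z_0\in\overline{\Ha}$. The idea is to exhibit a real initial value whose forward Loewner trajectory violates Proposition~\ref{Fritz}(b).

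For any $x_0\in(p_i,p_j)$ the forward trajectory $x(t)=g_t(x_0)$ is well defined and real-valued on $[0,T^*)$, and lies in $g_t((p_i,p_j))\subset\R$, the ``gap'' separating the $g_t$-images of $C_i^t$ and $C_j^t$ on $\R$. The key geometric observation is that this gap collapses to a single point $y^*\in\R$ as $t\nearrow T^*$, where $y^*$ is the boundary value of $g_{T^*}$ at $z_0$ approached from the unbounded component of $\Ha\setminus(C_i^{T^*}\cup C_j^{T^*})$. Consequently $x(t)\to y^*$ as $t\nearrow T^*$ for \emph{every} choice of $x_0\in(p_i,p_j)$.

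I now split into two cases for $y^*$. If $y^*\notin\{U_1(T^*),\ldots,U_n(T^*)\}$, then the right-hand side of \eqref{more2} is locally Lipschitz near $(y^*,T^*)$. Picard-Lindel\"of applied backward from the terminal condition $x(T^*)=y^*$ yields a unique solution on a short interval $[T^*-\delta,T^*]$; since on $[0,T^*)$ the trajectory $x(t)$ stays inside $g_t((p_i,p_j))$, which is bounded away from the singular set on every compact subinterval, the standard ODE continuation argument propagates this uniqueness all the way back to $t=0$. But then two distinct initial values $x_0\neq x_0'$ in $(p_i,p_j)$ would yield identical trajectories, a contradiction. If instead $y^*=U_k(T^*)$ for some $k$, then $x(t)\to U_k(T^*)$ as $t\nearrow T^*$: when $T^*<T$ this means $x(t)$ hits the singularity $U_k$ strictly before time $T$, violating Proposition~\ref{Fritz}(b); when $T^*=T$ it gives $|x(T)-U_k(T)|=0$, again contradicting the uniform bound $|x(T)-U_k(T)|>\eps$ from the same proposition.

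\textbf{Principal obstacle.} The non-routine technical ingredient is the claim that the gap $g_t((p_i,p_j))\subset\R$ shrinks to a single point as $t\nearrow T^*$. This reduces to showing that the contact point $z_0$ corresponds to a unique prime end of the unbounded component of $\Ha\setminus(C_i^{T^*}\cup C_j^{T^*})$, which I would establish via Carath\'eodory kernel convergence of $\Ha\setminus K_t$ to the unbounded component of $\Ha\setminus K_{T^*}$ combined with the local connectedness of the limiting boundary. The case $z_0\in\R$ (the two components first touching on the real line) is handled by the same dichotomy, since $g_{T^*}$ still extends continuously to a single real value at $z_0$.
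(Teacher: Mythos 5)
Your strategy (contradiction via a first merging time $T^*$) is workable in spirit, but it hinges entirely on the claim you yourself flag as the principal obstacle --- that $g_t\bigl((p_i,p_j)\setminus\overline{K_t}\bigr)$ collapses to a single point as $t\nearrow T^*$ --- and the route you propose for it does not go through. You want to deduce it from Carath\'eodory kernel convergence ``combined with the local connectedness of the limiting boundary'', but local connectedness is precisely what you cannot assume: Example \ref{spiral} shows that hulls generated by continuous driving functions need not be locally connected, and nothing in the hypotheses forces the components $C_i^{T^*},C_j^{T^*}$ to be curves at all. What would actually prove the collapse is a Beurling-type harmonic-measure estimate (the length of $g_t(I)$ for an interval $I\subset\R\setminus\overline{K_t}$ equals $\lim_{y\to\infty}\pi y\,\omega(iy,I,\Ha\setminus K_t)$, and a Brownian motion must squeeze through the closing aperture near $z_0$ to reach $(p_i,p_j)$) --- the same tool used in Lemma \ref{ThomasFundamentallemma} --- and that is nontrivial work you have not done. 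Two further gaps: the component count can also drop because one component returns to $\R$ and encloses another, or because non-adjacent components merge; in the latter case $(p_i,p_j)$ contains intermediate singularities $U_k$, so your Lipschitz/backward-uniqueness step in Case 1 fails and you must first reduce to an adjacent pair whose gap closes.

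For comparison, the paper's proof needs no merging-time analysis and no collapse statement. By Proposition \ref{Fritz} b) with $\tau=0$, every real trajectory started in a small interval $\left(U_j(0)+\frac1N-\eps,\,U_j(0)+\frac1N+\eps\right)$ survives to time $T$ and lands at distance $>\eps$ from every $U_k(T)$; the landing points form an open set disjoint from the cluster set of $K_T$ with respect to $g_{K_T}$, and these $n-1$ open sets separate that cluster set into at least $n$ pieces. Since the cluster set of each connected component of $K_T$ is an interval, $K_T$ has at least $n$ components, and Lemma \ref{Miriam} gives at most $n$. If you want to keep your contradiction scheme, the shortest repair is to show that a merger forces some real point strictly between two consecutive base points to be swallowed by time $T$; its trajectory must then hit a singularity, contradicting Proposition \ref{Fritz} b) directly, with no need for the single-point collapse or the dichotomy on $y^*$.
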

\begin{proof}
Clearly, $K_T$ cannot have more than $n$ connected components by Lemma \ref{Miriam}.\\
Let  $N\in\N$ and for $j=1,...,n$ define $p_j^-=U_j(0)-\frac1{N},$ $p_j^+=U_j(0)+\frac1{N}.$
We can choose $N$ large enough such that $p_{j+1}^->U_j(0)$ and $p_{j}^+<U_{j+1}(0),$ for all $j=1,...,n-1$. Furthermore, choose $\eps>0$ so small that
$$p_1^-+\eps < U_1(0) < p_1^+ -\eps < p_1^++\eps < U_2(0) <... < U_n(0) < p_n^+-\eps.$$
Now solve equation (\ref{more3}) with $\tau=0$ and initial value $x_0\in(p_1^+ -\eps,  p_1^++\eps).$ The solution $x(t)$ will always exist until $t=T$ and the set $E:=\{x(T)\;|\; x_0\in(p_1^+ -\eps,  p_1^++\eps)\}$ forms an open set. Thus, the cluster set of $K_T$ with respect to $g_{K_T}$ has at least two connected components, one lying on the left side of $E$ and one on the right side. If we pass on to the case $x_0\in(p_2^+ -\eps,  p_2^++\eps),$ we get one further connected component and so on.\end{proof}

\begin{remark}\label{Schorsch}
 The notion of welded hulls was introduced in \cite{MarshallRohde:2005} for the radial Loewner equation. The chordal case is considered in \cite{Lind:2005}. \\
As we have seen, welded hulls always have $n$ connected components and, informally speaking, each component has a left and a right side.

Assume that $\{K_t\}_{t\in[0,T]}$ is welded. Let $C_T$ be the component of $K_T$ corresponding to $U_j(0)$ and let the interval $I=[a,b]$ be the cluster set of $\overline{C_T}$ with respect to $g_T$, then $a<U_j(T)<b$ and for every $a\leq x_0<U_j(T)$ there exists $U_j(T)<y_0\leq b$ such that the solutions to (\ref{back}) with initial values $x_0$ and $y_0$ hit the singularity at the same time. This gives a welding homeomorphism $h:[a,b]\to[a,b]$ by defining $$h(x_0):=y_0,\quad h(y_0):=x_0, \quad h(U_j(T)):=U_j(T).$$

If $C_T$ is a slit, then $h$ is directly connected to ``classical'' welding (or sewing) homeomorphisms of a Jordan domain and one can relate properties of $h$ to properties of the slit. However, in general, $C_T$ is not a slit and therefore $h$ is only a ``generalized welding'' homeomorphism in the sense of \cite{MR1139801}, see also \cite{MR2373370}.
\end{remark}

The hull of Example \ref{spiral}, which is sketched in picture a) of Figure \ref{Fi34}, is not generated by a curve. Picture c) shows an  example of a hull that is generated by a curve. Here, the curve hits itself and the real axis and consequently, this hull is not welded. The hull in picture b), a ``topologist's sine curve'' approaching a compact interval on $\R$, is neither welded nor generated by a curve. 
\begin{figure}[h]
 \centering \includegraphics[width=135mm]{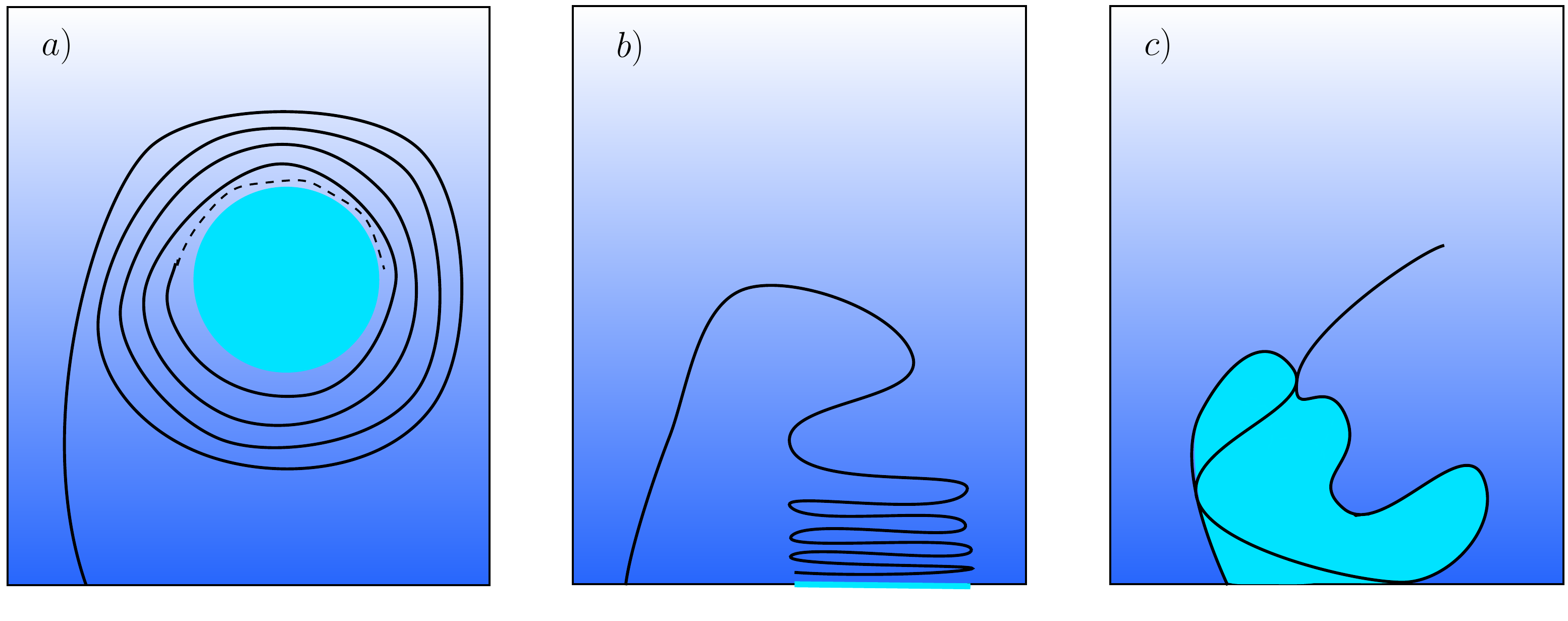}
\caption{Three cases of hulls that are not slits.}\label{Fi34}
\end{figure}

\begin{proposition} 
Let $\{K_t\}_{t\in[0,T]}$ be a family of hulls generated by equation (\ref{more2}). Then $K_T$ is a union of slits with disjoint closures if and only if $\{K_t\}_{t\in[0,T]}$ is generated by curves and welded.
\end{proposition}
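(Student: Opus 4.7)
First, assume $K_T = \Gamma_1 \cup \cdots \cup \Gamma_n$ is a disjoint union of slits with pairwise disjoint closures. The plan is to show directly that $\{K_t\}$ is generated by curves and is welded. Since the $\Gamma_j$'s are fixed and pairwise disjoint, any connected subset of $K_T$ lies in a single $\Gamma_j$; combining this with the monotonicity of $t \mapsto K_t$ and Lemma~\ref{Miriam}, I would argue that each $K_t$ decomposes globally as $C^t_1 \sqcup \cdots \sqcup C^t_n$ with $C^t_j$ an initial sub-arc of $\Gamma_j$. Parameterising each $C^t_j$ by its half-plane capacity then produces continuous $\gamma_j : [0,T] \to \overline{\Ha}$ with $\gamma_j[0,t] = C^t_j$, and since each $C^t_j$ is a Jordan arc, $\Ha \setminus K_t$ equals the (unique, hence unbounded) component of $\Ha \setminus \bigcup_k \gamma_k[0,t]$, so $\{K_t\}$ is generated by curves. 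To verify welding, I fix $j$ and $s \in (0,T]$ and study the image hull $\Delta := g_{T-s}(K_T \setminus K_{T-s})$, which is again a disjoint union of slits $\Delta_1, \ldots, \Delta_n$, with $\Delta_j$ emanating from $U_j(T-s)$ and with its tip mapped by $g_\Delta := g_T \circ g_{T-s}^{-1}$ to $U_j(T)$. Since $\Delta_j$ is a Jordan arc with closure disjoint from the other $\Delta_i$'s, its base $U_j(T-s)$ has exactly two prime ends in $\Ha \setminus \Delta$, mapped by $g_\Delta$ to exactly two reals $x_0^j < U_j(T) < y_0^j$; reversing the forward flow shows that the backward solutions of (\ref{back}) with initial values $x_0^j, y_0^j$ coalesce at $U_j(T-s)$ at time $s$, which is the welding condition.

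\textbf{Backward implication, set-up.} For the reverse direction, I assume $\{K_t\}$ is generated by curves $\gamma_1, \ldots, \gamma_n$ and is welded, and the goal is to show $K_T$ is a disjoint union of $n$ slits. The Corollary above immediately gives that $K_T$ has exactly $n$ connected components. Since each $\gamma_j[0,T]$ is connected and contains the distinct starting point $U_j(0) \in \R$, and any bounded component of $\Ha \setminus \bigcup_k \gamma_k[0,T]$ lying in $K_T$ is bordered by (and hence attached to) some $\gamma_k[0,T]$, the number of components of $K_T$ equals that of $\bigcup_k \gamma_k[0,T]$. It follows by counting that $\gamma_i[0,T] \cap \gamma_j[0,T] = \emptyset$ for $i \neq j$, with each $\gamma_j[0,T]$ filling a single component of $K_T$ (possibly up to attached sealed regions). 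It then remains to show each $\gamma_j$ is injective, whence no sealed region arises and $\gamma_j[0,T]$ is a Jordan arc.

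\textbf{Injectivity of the $\gamma_j$'s.} I would argue this by contradiction. Suppose some $\gamma_j$ is not injective, and let $s_2 \in (0,T]$ be the minimal time with $\gamma_j(s_2) = \gamma_j(s_1)$ for some $s_1 < s_2$. For $t < s_2$ all curves are simple and pairwise disjoint, so $K_t = \bigcup_k \gamma_k[0,t]$ has no sealed region; at $t = s_2$ the self-touch of $\gamma_j$ seals off a bounded subregion $R \subset \Ha$ (bordered by $\R$ if $\gamma_j(s_2) \in \R$), and $R \subseteq K_t$ for all $t \geq s_2$. Pick $s_0 \in (s_1, s_2)$ and $\eps > 0$ small, and let $z_0^+$ denote the prime end of $\gamma_j(s_0)$ in $\Ha \setminus K_{s_2-\eps}$ approached from the $R$-side; then $x_0 := g_{s_2-\eps}(z_0^+) \in \R \setminus \{U_k(s_2-\eps)\}_{k=1}^n$, and the trajectory $x(t) := g_t(z_0^+)$ on $[s_2-\eps, s_2)$ solves the forward ODE (\ref{more3}) with $\tau = s_2-\eps$ and initial value $x_0$. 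The key claim is that as $t \nearrow s_2$ the pinching neck of $R$ collapses under $g_t$ onto the tip $\gamma_j(s_2)$, so $\lim_{t \nearrow s_2} x(t) = U_j(s_2)$; this contradicts Proposition~\ref{Fritz}\,b), which under the welding hypothesis forces $x$ to extend to all of $[s_2-\eps, T]$ and to stay bounded away from every $U_k(t)$. Hence each $\gamma_j$ must be injective, no sealed region arises, and $K_T = \gamma_1[0,T] \sqcup \cdots \sqcup \gamma_n[0,T]$ is a disjoint union of slits with pairwise disjoint (compact) closures.

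\textbf{Main obstacle.} The hard part will be the limit $\lim_{t \nearrow s_2} g_t(z_0^+) = U_j(s_2)$: I need to argue rigorously that the $g_t$-image of the shrinking mouth of $R$ really collapses onto the singular value $U_j(s_2)$ rather than some other point of $\R$. Heuristically this is clear because the self-touch occurs exactly at the tip $\gamma_j(s_2) \mapsto U_j(s_2)$, but its rigorous justification likely requires a Carath\'{e}odory-type kernel-convergence analysis of the conformal maps $g_t$ near the pinching self-intersection, together with a careful identification of the prime end $z_0^+$ and its boundary behaviour under $g_t$.
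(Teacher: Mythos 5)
Your proposal is essentially correct in outline, but note first that the paper does not actually prove this proposition: it refers the non-trivial direction to Lemma 4.34 in \cite{Lawler:2005}, so your self-contained two-way argument supplies genuinely more than the text does. The forward direction works as you describe: the decomposition of each $K_t$ into initial sub-arcs follows from Lemma \ref{Miriam} together with truncation, and the two welded points are the images $g_T^{\pm}(\gamma_j(T-s))$ of the two sides of the interior point $\gamma_j(T-s)$ of $\Gamma_j$, which the backward flow $f_t=g_{T-t}\circ g_T^{-1}$ carries to the two boundary extensions $g_{T-t}^{\pm}(\gamma_j(T-s))$ and which therefore coalesce at $U_j(T-s)$ exactly at $t=s$; the preamble to the definition of welded hulls already gives that at most two initial values can do this, so exhibiting one on each side yields ``exactly two''.

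For the backward direction, the step you single out as the main obstacle is a red herring, and discarding it closes your gap. You do not need $\lim_{t\nearrow s_2}x(t)=U_j(s_2)$: Proposition \ref{Fritz}\,b) is violated as soon as the trajectory converges to \emph{any} $U_k(t)$ with $t\le T$. What actually has to be justified is that the trajectory of $x_0=g_{s_2-\eps}(z_0^+)$ cannot survive past time $s_2$. This holds because $x_0$ lies in the interior of the interval of $\R$ obtained as the $g_{s_2-\eps}$-image of the $R$-side of the arc of $\gamma_j$ bounding the sealed region, hence in the interior of the trace on $\R$ of the future hull $g_{s_2-\eps}(K_{s_2}\setminus K_{s_2-\eps})$; a real point swallowed in this way has a forward trajectory that ceases to exist by time $s_2$, and a real solution of (\ref{more3}) can only cease to exist by converging to one of the singularities $U_k(t)$. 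No Carath\'{e}odory kernel analysis of the pinching neck is needed. Two smaller repairs: take $s_2$ to be the first time at which \emph{any} degeneracy occurs (a self-intersection of some $\gamma_j$ or a return of some $\gamma_j(0,T]$ to $\R$), and observe that the first self-intersection produces a Jordan loop, so the sealed region has nonempty interior; and, since the proposition concerns the hull rather than the parameterizations, finish by noting that once sealed regions are excluded one has $K_T=\bigcup_j\gamma_j[0,T]$ with each trace a compact simple arc meeting $\R$ only at its base, i.e.\ a slit, the closures being pairwise disjoint by your component count.
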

\begin{proof}
For the non-trivial direction of the statement, see Lemma 4.34 in \cite{Lawler:2005}.
\end{proof}

\subsubsection{A necessary condition}

Next we derive a non-trivial, necessary condition for getting welded hulls from equation (\ref{more2}), basically by using Example \ref{Felix}.

\begin{proposition}\label{nec} Let $\{K_t\}_{t\in[0,T]}$ be a family of hulls generated by the multiple-slit equation (\ref{more2}). Assume there exists an $s\in(0,T]$ and $j\in\{1,...,n\}$  such that $$\liminf_{h\downarrow0}\frac{|U_j(s)-U_j(s-h)|}{\sqrt{h}}> 4\sqrt{\lambda_j(s)}.$$
Then $\{K_t\}_{t\in[0,s]}$ is not welded.
\end{proposition}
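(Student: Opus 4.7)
The strategy is a proof by contradiction combined with a rescaling argument that reduces the behavior near time $s$ to a single-slit Loewner equation, so that Example \ref{Felix} can be invoked. Suppose, for contradiction, that $\{K_t\}_{t\in[0,s]}$ is welded. By Proposition \ref{Fritz}~b), for every $\tau\in[0,s)$ there exists $\eps(\tau)>0$ such that every solution of (\ref{more3}) starting at $(\tau,x_0)$ exists on $[\tau,s]$ with $|x(s)-U_k(s)|>\eps(\tau)$ for all $k$. I intend to show that this fails when $\tau=s-h$ for all sufficiently small $h>0$.

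Fix small $h>0$, set $\tau=s-h$, and apply the rescaling
$$t=s-h+h\sigma,\qquad \tilde{x}(\sigma)=\frac{x(t)-U_j(s)}{\sqrt{h}},\qquad \hat{U}_j(\sigma)=\frac{U_j(s-h+h\sigma)-U_j(s)}{\sqrt{h}},$$
to the forward equation (\ref{more3}). The resulting equation has the form
$$\frac{d\tilde{x}}{d\sigma}=\frac{2\lambda_j(s-h+h\sigma)}{\tilde{x}-\hat{U}_j(\sigma)}+\sqrt{h}\sum_{k\neq j}\frac{2\lambda_k(s-h+h\sigma)}{\sqrt{h}\,\tilde{x}+U_j(s)-U_k(s-h+h\sigma)}.$$
Since $U_k(s)\neq U_j(s)$ for $k\neq j$ by assumption (d), the second sum is $O(\sqrt{h})$ uniformly on compact sets of $(\sigma,\tilde{x})$. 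Rescaling the time further by $\hat\sigma=\lambda_j(s)\sigma\in[0,\lambda_j(s)]$ reduces this, up to a vanishing remainder, to the standard chordal one-slit Loewner equation $d\tilde{x}/d\hat\sigma=2/(\tilde{x}-\hat{V}(\hat\sigma))$ with driving function $\hat{V}(\hat\sigma)=\hat{U}_j(\hat\sigma/\lambda_j(s))$.

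The hypothesis $\liminf_{h\downarrow 0}|U_j(s)-U_j(s-h)|/\sqrt{h}>4\sqrt{\lambda_j(s)}$ furnishes $c>4\sqrt{\lambda_j(s)}$ and $h_0>0$ with $|U_j(s)-U_j(s-h')|>c\sqrt{h'}$ for all $h'\in(0,h_0)$. Writing $h'=h(1-\sigma)$ and transferring the estimate to $\hat V$, I obtain the uniform bound
$$|\hat{V}(\hat\sigma)|>\frac{c}{\sqrt{\lambda_j(s)}}\sqrt{\lambda_j(s)-\hat\sigma}\qquad\text{for all }\hat\sigma\in[0,\lambda_j(s)),\;h\in(0,h_0),$$
with $c/\sqrt{\lambda_j(s)}>4$. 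This is precisely the driving-function regime of Example \ref{Felix}: one can compare $\hat{V}$ with the self-similar extremal $t\mapsto c_\star\sqrt{\lambda_j(s)-t}$ for $c_\star>4$, whose forward trajectories admit the explicit self-similar solutions $y=a_\pm\sqrt{\lambda_j(s)-\hat\sigma}$ with $a_\pm=(c_\star\pm\sqrt{c_\star^2-16})/2$. A comparison argument for the one-dimensional ODE then shows that the hull generated by $\hat{V}$ at time $\lambda_j(s)$ encloses a bounded region based on a non-degenerate interval $I\subset\R$ on the real axis, and consequently there is a sub-interval of initial values $\tilde{x}(0)$ for which the forward ODE either meets $\hat V(\hat\sigma^*)$ at some $\hat\sigma^*\le \lambda_j(s)$ or satisfies $|\tilde{x}(\lambda_j(s))|<\eta$ for arbitrarily small prescribed $\eta>0$.

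Translating back through the two rescalings, for any $\eta>0$ and any sufficiently small $h$ one can choose $x_0\in\R\setminus\{U_1(s-h),\dots,U_n(s-h)\}$ (lying within distance $\LandauO(\sqrt{h})$ of $U_j(s-h)$) for which the forward solution of (\ref{more3}) starting at $(s-h,x_0)$ either hits some $U_k$ strictly before time $s$, or satisfies $|x(s)-U_j(s)|\leq\sqrt{h}\,\eta$. By Proposition \ref{Fritz}~b), this contradicts the assumed welding. The main technical obstacle is the final paragraph: controlling the $O(\sqrt{h})$ perturbation from the other singularities together with transferring the Example~\ref{Felix} conclusion from the self-similar driving function $c_\star\sqrt{\lambda_j(s)-t}$ to the general $\hat V$ satisfying only the one-sided envelope bound. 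This can be done either by a direct ODE comparison (as sketched via $a_\pm$) or by a limit/Carath\'{e}odory-convergence argument for the hulls generated by the rescaled equations as $h\to 0$.
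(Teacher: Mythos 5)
Your overall strategy is sound and, at its core, it is the same as the paper's: reduce to a one--slit problem, compare the forward real trajectories with those of the explicit square--root driving function from Example \ref{Felix}, and contradict Proposition \ref{Fritz}~b). Where you genuinely differ is in the reduction from $n$ slits to one. The paper does not rescale: for $s'$ close to $s$ it truncates the hulls, writes $g_{s'+t}=H_t\circ l_t$ where $l_t$ uniformizes only the component belonging to $U_j$, and observes that $l_t$ satisfies an honest one--slit equation (after the time change $x(t)=\hcap(C_t)$, using Lemma \ref{Miriam}) with driving function $V_{s'}(t)=H_t^{-1}(U_j(s'+t))$. Since $(t,z)\mapsto H_t^{-1}(z)$ is $C^1$ and $\partial_z H_t^{-1}\to 1$ as $s'\to s$, the left $\liminf$ condition transfers \emph{exactly} to $V_{s'}$ with a constant still exceeding $4\sqrt{\lambda_j(s)}$ after the time change; the $n=1$ case is then applied verbatim. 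Your parabolic rescaling $t=s-h+h\sigma$, $\tilde x=(x-U_j(s))/\sqrt h$ instead turns the other poles into an $O(\sqrt h)$ additive error, so your reduction is perturbative where the paper's is exact. Both work; the paper's version spares you the perturbation analysis at the cost of invoking the component decomposition and the regularity of $H_t$, while yours stays entirely at the level of the real ODE.

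Two points where you should tighten the write-up. First, note that by continuity the difference $U_j(s)-U_j(s-h)$ has constant sign for $0<h<h_0$ (it never vanishes there), so you may run a one--sided comparison, say $\hat U_j(\sigma)\ge c\sqrt{1-\sigma}$; the absolute-value bound alone does not set up the comparison. Second, your conclusion ``$|x(s)-U_j(s)|\le\sqrt h\,\eta$ for any $\eta$ and any sufficiently small $h$'' is logically too weak: Proposition \ref{Fritz}~b) must fail at a \emph{single} $\tau$, and if the admissible $h$ shrinks with $\eta$ you never exhibit one. In fact the squeeze gives more: once the comparison trajectory $y$ (the one ending exactly at the singularity for the model driving function) is trapped below $\tilde x$, and $\tilde x<\hat U_j$ throughout, then $y(T^*)=\hat U_j(T^*)=0$ forces $\tilde x(T^*)=0$ \emph{exactly}, i.e.\ $x(s)=U_j(s)$ or the trajectory dies earlier --- either alternative negates b) at the fixed time $\tau=s-h$. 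This is precisely how the paper's $n=1$ argument concludes. The $O(\sqrt h)$ error and the discrepancy $\lambda_j(s-h+h\sigma)$ versus $\lambda_j(s)$ can both be absorbed by comparing against a model constant $c_\star\in(4,c/\sqrt{\lambda_j(s)})$: away from the endpoint the gap between the two singular terms is bounded below by a positive constant, and near the endpoint the singular term dominates any bounded error, so the comparison survives for all $h$ small. With these repairs your route is a complete and legitimate alternative proof.
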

\begin{proof}
Without loss of generality we assume that $s=1$ and $U_j(1)=0.$\\
We know that there exists $c>4$ and $\eps>0$ such that $|U_j(1)-U_j(1-h)|=|U_j(1-h)|\geq c\sqrt{\lambda_j(1)\cdot h}$ for all $h\in[0,\eps],$ i.e. $|U_j(t)|\geq c\sqrt{\lambda_j(1)}\cdot \sqrt{1-t}$ for all $t\in[1-\eps,1].$ We can assume that 
$$U_j(t)\geq c\sqrt{\lambda_j(1)}\cdot \sqrt{1-t}$$ for all $t\in[1-\eps,1].$\\

First, let $n=1.$ Then $U_1(t)\geq c \cdot \sqrt{1-t}$ for all $t\in[1-\eps,1].$\\
 As the driving function $c\sqrt{1-t}$ generates a simple curve that finally hits $\R$ when $t=1,$ see Example \ref{Felix}, we conclude that for any $t_0 \in [0,1)$ there exists $x_0=x_0(t_0)< c\cdot \sqrt{1-t_0}\leq U_1(t_0)$ such that the solution to \begin{equation}\label{String}\dot{y}(t)=\frac{2}{y(t)-c\sqrt{1-t}},\quad y(t_0)=x_0, \end{equation}
exists until $t=1$ and satisfies $y(1)=0=U(1).$\\
Let $\tau \in[1-\eps,1)$ and consider the real initial value problem

\begin{equation*}\label{Valju}\dot{x}(t)=\frac{2}{x(t)-U_{1}(t)},\quad x(\tau)=x_0(\tau) < U_1(\tau). \end{equation*}
Let $y(t)$ be the solution to (\ref{String}) with $t_0=\tau$. As $\dot{x}(t)\geq \dot{y}(t)$ for all $t\geq \tau$ where $x(t)$ exists, we conclude that, if $x(t)$ exists until $t=1,$ then $U_1(1)\geq x(1)\geq y(1)= U_1(1),$ hence $x(1)=U_1(1)$ and the generated hulls $\{K_t\}_{t\in[0,1]}$ are not welded according to Proposition \ref{Fritz}. \\

Now let $n\geq 2.$ In fact we can apply the same trick here. However, it remains to show that the hulls of a multiple-slit equation are not welded provided that one driving function $U_j$ has the form \begin{equation}\label{Drago}U_j(t)=c\sqrt{1-t}, \qquad c>4\sqrt{\lambda_j(1)}.\end{equation}
Fix $s\in[0,1).$ For $t\in[0,1-s],$ let $A_t=g_s(K_{s+t}\setminus K_s).$ Furthermore, let $d_t=g_{A_t}.$ Assume that $s$ is so close to $1$ that $A_t$ has $n$ connected components for all $t\in[0,1-s].$ Denote by $C_t$ the component that belongs to $U_j$ and let $x(t)=\hcap(C_t).$ Furthermore, let $l_t=g_{C_t}$ and define $H_t$ by $d_t=H_t\circ l_t.$
The mappings $l_t$ satisfy a one-slit Loewner equation
$$ \dot{l}_t=\frac{2\dot{x}(t)}{l_t-V_s(t)}, $$
where $x(t)$ is continuously differentiable with $\dot{x}(0)=2\lambda_j(s),$ see Lemma \ref{Miriam}, and $V_s$ is a continuous driving function. This function $V_s$ is related to $U_j$ via $$V_s(t)=H_t^{-1}(U_j(s+t)).$$

For $s$ big enough, the function $H_t^{-1}$ can be extended to the lower half-plane by reflection and to a fixed neighborhood $\mathcal{N}$ of $\{U_j(t+s)\with t\in[0,1-s]\}$ for all  $t\leq 1-s.$
As the function $[0,1-s]\times \mathcal{N} \ni(t,z)\mapsto F_s(t,z):=H_t^{-1}(z)$ is continuously differentiable, we have
\begin{eqnarray*} V_s(1-s-h)-V_s(1-s) = \frac{\partial F_s}{\partial t}(1-s,U_j(1)) \cdot (-h) + \frac{\partial F_s}{\partial z}(1-s,U_j(1)) \cdot  (U_j(1-h)-U_j(1)) \\ +\Landauo(|h|+|U_j(1)-U_j(1-h)|)
= \frac{\partial F_s}{\partial z}(1-s,U_j(1)) \cdot  (c\sqrt{h}) + \Landauo(\sqrt{|h|})
\end{eqnarray*}
for $h\downarrow 0.$ Thus
$$\liminf_{h\downarrow0}\frac{|V_s(1-s)-V_s(1-s-h)|}{\sqrt{h}} = \left|\frac{\partial F_s}{\partial z}(1-s,U_j(1))\right|\cdot c>\left|\frac{\partial F_s}{\partial z}(1-s,U_j(1))\right|\cdot 4\sqrt{\lambda_j(1)}.$$ 

Moreover, $F_s(1-s,\cdot)$ converges uniformly to the identity for $s\to1.$ Thus, $\frac{\partial F_s}{\partial z}(1-s,U_j(1))\to 1$ for $s\to 1.$ Hence, we can choose $s\in[0,1)$ so close to $1$ that the driving function $V_s$ satisfies
$$\liminf_{h\downarrow0}\frac{|V_s(1)-V_s(1-h)|}{\sqrt{h}}> 4\sqrt{\lambda_j(1)}.$$ 
By noting that $\dot{x}(0)=2\lambda_j(s)\to 2\lambda_j(1)$ for $s\to 1,$ a simple time change for $V_s$ shows that we can apply the statement for $n=1$ to $V_s$ when $s$ is close enough to $1$, which shows that $\{g_s(K_{s+h}\setminus K_s)\}_{h\in[0,1-s]}$ is not welded. Consequently, $\{K_t\}_{t\in[0,1]}$ is not welded.

\end{proof}

If we want the driving functions $U_j$ to generate welded hulls, then the condition of the previous Proposition has to be violated. However, this can be done in two different ways.\\
 First, $U_j$ can be too smooth, as in Theorem \ref{lmr}. \\
A second, totally different possibility is represented by the behavior of a Brownian motion $B(t)$. In this case, we always find a sequence of points $t_n$ with $t_n\to1$ and $\frac{|B(t_n)-B(1)|}{\sqrt{|t_n-1|}}>4$ (with probability one), but $B(t)$ fluctuates so much that the inequality of Proposition \ref{nec} never holds. Let's state the contrapositive again:\\ 

If $K_T$ is welded (e.g., it consists of $n$ disjoint simple curves) then, for every $s\in(0,T],$ and $j\in\{1,...,n\},$ we have\\
\begin{eqnarray*}
 \limsup_{h\downarrow0}\frac{|U_j(s)-U_j(s-h)|}{\sqrt{h}}&\leq& 4 \quad \;\;(\text{\bf ``regular case''}), \;\; \text{or}    \\     
 \liminf_{h\downarrow0}\frac{|U_j(s)-U_j(s-h)|}{\sqrt{h}} &\leq& 4 < \limsup_{h\downarrow0}\frac{|U_j(s)-U_j(s-h)|}{\sqrt{h}} \;\;({\text{\bf ``irregular  case''}}). \end{eqnarray*} 

Next we will see that the regular case is close to being a sufficient condition for getting welded hulls. 

\subsubsection{A sufficient condition for the regular case}

In the following, we denote by $\lip$ the set of all ``pointwise left $\frac{1}{2}$-H\"older continuous'' functions $U:[0,T]\to\R$, so that for every $t\in(0,T]$ there is a $c>0$ and an $\eps>0$ such that 
\begin{equation*}\label{left}|U(t)-U(s)|< c\sqrt{t-s} \quad \text{for all} \quad s\in[t-\eps,t].
\end{equation*}

An equivalent formulation is: For every $t\in (0,T],$ there exists $c>0$ such that 
\begin{equation*}
\limsup_{h\downarrow0}\frac{|U(t)-U(t-h)|}{\sqrt{h}}< c. 
\end{equation*}

Analyzing the proof of Theorem \ref{lmr} immediately yields that in the case $n=1$, $U_1$ generates welded hulls if we assume that  
\begin{equation}\label{limsup}
\limsup_{h\downarrow0}\frac{|U_1(t)-U_1(t-h)|}{\sqrt{h}}< 4 
\end{equation}
for every $t\in(0,T]$. By using techniques of \cite{Lind:2005}, we will prove the following statement which generalizes the sufficient condition (\ref{limsup}).

\begin{theorem}\label{sevslits}
Let $\{K_t\}_{t\in[0,T]}$ be a family of hulls generated by equation (\ref{more2}). Suppose that  \begin{equation}\label{hold}\limsup_{h\downarrow0}\frac{|U_j(t)-U_j(t-h)|}{\sqrt{h}}< 4\sqrt{\lambda_j(t)},\end{equation} for every $j=1,...,n$ and $t\in(0,T]$, then $\{K_t\}_{t\in[0,T]}$ is welded.
\end{theorem}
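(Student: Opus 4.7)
The plan is to reduce Theorem \ref{sevslits} to the one-slit sufficient condition of Theorem \ref{lmr} by stripping off all components of $K_t$ except the one ``endangered'' by the failure of welding, using the coordinate change already developed in the proof of Proposition \ref{nec}. I would argue by contradiction via Proposition \ref{Fritz}: suppose welding fails, and let $T^{*} \in (0,T]$ be the infimum of times at which it does. Lemma \ref{Miriam} then guarantees that for $t \in [0,T^{*})$ the hull $K_t$ splits into $n$ disjoint components $C^{1}_{t},\ldots,C^{n}_{t}$, and the obstruction at $T^{*}$ must localise to a single component, say $C^{j}_{t}$ (after shrinking $T^{*}$ if needed).

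Next, for an auxiliary parameter $s \in [0,T^{*})$ close to $T^{*}$, I would repeat verbatim the setup in the proof of Proposition \ref{nec}: writing $A_t := g_{s}(K_{s+t}\setminus K_s)$, decomposing $d_t = g_{A_t}$ as $d_t = H_t \circ l_t$ with $l_t := g_{C^{j}_{s+t}}$, one checks that $l_t$ satisfies the chordal one-slit Loewner equation
\begin{equation*}
  \dot{l}_t = \frac{2\dot{x}(t)}{l_t - V_{s}(t)}, \qquad V_{s}(t) := H_{t}^{-1}(U_{j}(s+t)),
\end{equation*}
where $x(t) = \hcap(C^{j}_{s+t})/2$ is continuously differentiable with $\dot{x}(0) = \lambda_{j}(s)$ by Lemma \ref{LipDiff}. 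Reparametrising by half-plane capacity through $u = x(t)$ produces a standard chordal one-slit equation with driving function $W(u) := V_{s}(x^{-1}(u))$.

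The heart of the argument is the computation that the limsup-Hölder bound transfers cleanly through this reduction. Since $(t,z) \mapsto H_{t}^{-1}(z)$ is jointly continuously differentiable on a neighbourhood of $\{(t,U_{j}(s+t)) : t \in [0, T^{*}-s]\}$ (cf.\ Section~4.6.1 of \cite{Lawler:2005}), the Taylor expansion in the proof of Proposition \ref{nec} shows that $V_{s}(t_{0}) - V_{s}(t_{0}-h) = (H_{t_{0}}^{-1})'(U_{j}(s+t_{0}))\cdot(U_{j}(s+t_{0}) - U_{j}(s+t_{0}-h)) + O(h)$, and the further time change by $x(t)$ multiplies the $\sqrt{h}$-coefficient by $1/\sqrt{\dot{x}(t_{0})} = 1/\sqrt{2\lambda_{j}(s+t_{0})}$ up to lower-order terms. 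Therefore
\begin{equation*}
  \limsup_{h\downarrow 0}\frac{|W(u_{0}) - W(u_{0}-h)|}{\sqrt{h}} = \frac{|(H_{t_{0}}^{-1})'(U_{j}(s+t_{0}))|}{\sqrt{2\lambda_{j}(s+t_{0})}}\cdot\limsup_{h\downarrow 0}\frac{|U_{j}(s+t_{0}) - U_{j}(s+t_{0}-h)|}{\sqrt{h}}.
\end{equation*}
As $s \nearrow T^{*}$ the prefactor $|(H_{t_{0}}^{-1})'(U_{j}(s+t_{0}))|$ tends to $1$ uniformly (because $H_{t}$ converges to the identity), and hypothesis (\ref{hold}) combined with the continuity of $\lambda_{j}$ then produces a strict pointwise upper bound strictly less than $4$ for $W$, uniformly on a neighbourhood of the critical parameter.

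Finally, I would invoke Theorem \ref{lmr} (in the pointwise form, which is exactly what is stated there) to conclude that $W$ generates a quasislit, hence a welded hull, in the rescaled single-slit equation. Unwinding the coordinate change gives that $C^{j}_{t}$ extends past $T^{*}$ as a welded component, contradicting the definition of $T^{*}$. The main obstacle will be the uniform pointwise control in the third step: Theorem \ref{lmr} demands the strict $<4$ inequality at every base point of a whole interval, so one must show that the multiplicative correction $|(H_{t}^{-1})'|/\sqrt{2\lambda_{j}}$ can be made arbitrarily close to $1/\sqrt{2\lambda_{j}(T^{*})}$ uniformly along the relevant arc, which rests on the joint continuity of conformal welding data with respect to the slits and the fact that the residual ``other slits'' shrink to nothing as $s \nearrow T^{*}$.
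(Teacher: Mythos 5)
Your reduction strategy has a genuine gap at its final and decisive step: Theorem \ref{lmr} is \emph{not} stated in pointwise form. Its sufficient condition requires both that $U\in\Lip$ globally and that the \emph{two-sided, locally uniform} H\"older quotient $\sup|U(r)-U(s)|/\sqrt{|r-s|}$, taken over all pairs $r\neq s$ in a neighbourhood of $t$, be less than $4$. The hypothesis (\ref{hold}) of Theorem \ref{sevslits} is much weaker: it is a pointwise, one-sided condition controlling only differences $|U_j(t)-U_j(t-h)|$ with one endpoint anchored at $t$, with an $h$-threshold that may depend on $t$. It implies neither membership in $\Lip$ nor the two-sided local bound, so Theorem \ref{lmr} cannot be invoked as stated; moreover its conclusion (quasislit) is strictly stronger than weldedness and is reserved in this text for the stronger hypothesis (\ref{Babutza}) of Theorem \ref{Michael2}. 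This is precisely why the paragraph preceding Theorem \ref{sevslits} says that even for $n=1$ one must ``analyze the proof'' of Theorem \ref{lmr} rather than apply its statement. The argument actually used here works directly with the real-line trajectories of the multi-slit equation (\ref{2slit}): Lemma \ref{help} sets up the recursive comparison functions $h_n$, Lemma \ref{hitting} shows that a real solution reaching $U_j(T)$ forces $\limsup_{h\downarrow 0}|U_j(T)-U_j(T-h)|/\sqrt{h}\geq 4\sqrt{\lambda_j(T)}$, Lemma \ref{nodepend} upgrades this to a gap $\eps$ uniform over all initial values, and Proposition \ref{Fritz} then yields weldedness. No reduction to the one-slit case is made.

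Two secondary problems would remain even if a one-slit ``pointwise welded'' criterion were available. First, the conjugation by $H_t$ transfers condition (\ref{hold}) with prefactor close to $1$ only on a short terminal interval $[s,T^{*}]$: it is the increments $g_s(K_{s+t}\setminus K_s)$ of the other components that are small there, not the components themselves, so you would still have to handle trajectories started at earlier times $\tau<s$, which Proposition \ref{Fritz} requires. Second, the equivalence between weldedness of the full family and weldedness of the single-component family (which live in coordinates related by $H_t$) is asserted but not proved; the real trajectories of the two equations are not simply images of one another. These points are repairable with the machinery of Proposition \ref{nec}, but the appeal to Theorem \ref{lmr} in ``pointwise form'' is not.
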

\begin{remark}
In particular, the ``pointwise left $\frac{1}{2}$-H\"older continuous'' condition (\ref{hold}) forces $K_T$ to fall into $n$ disjoint connected components.
\end{remark}

Again, we will give the proof only for $n=2$ in order to simplify notation. We begin with the following lemma.
\begin{lemma}\label{help}
 Let $\lambda>0,$ $0\leq\tau<2\sqrt{\lambda}$ and $h_n$ be the following sequence of functions
\begin{eqnarray*}
&&h_1:\R\to\R, \quad h_1(x)=x,\\
&&h_{n+1}:\{x\in\R\;|\; h_n(x)\not=0\}\to\R, \quad h_{n+1}(x)=x+\tau-\frac{4\lambda}{h_{n}(x)} \quad \text{for}\quad n\geq1. 
\end{eqnarray*}
Let $x_n$ denote the largest zero of $h_n.$ Then $(x_n)_n$ is an increasing sequence that converges to $4\sqrt{\lambda}-\tau.$ Furthermore, if  $h_n(c)\geq0$ for every $n\in\N,$ then $c\geq4\sqrt{\lambda}-\tau.$
\end{lemma}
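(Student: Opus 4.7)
Set $L:=4\sqrt{\lambda}-\tau$; since $0\le\tau<2\sqrt{\lambda}$ one has $L>2\sqrt{\lambda}>0$. The number $L$ is the threshold at which the auxiliary iteration $\phi_c(y):=c+\tau-4\lambda/y$ first acquires a positive fixed point: writing
\begin{equation*}
\phi_c(y)-y=\frac{-y^2+(c+\tau)y-4\lambda}{y},
\end{equation*}
the numerator has discriminant $(c+\tau)^2-16\lambda$, which is nonnegative iff $c\ge L$, and at $c=L$ the double fixed point is $y=2\sqrt{\lambda}$. The whole proof is driven by this single computation.

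\textbf{Construction of $x_n$ and monotonicity.} I would prove by induction on $n\ge 1$ the following statement: there exists $x_n\in[0,L)$ such that $h_n$ is continuous and strictly increasing on $[x_n,L]$ with $h_n(x_n)=0$ and $h_n(L)\ge 2\sqrt{\lambda}$. The base case $n=1$ is immediate with $x_1=0$. For the step, $h_n>0$ on $(x_n,L]$ by monotonicity, so $h_{n+1}(x)=x+\tau-4\lambda/h_n(x)$ is well-defined, continuous, and strictly increasing on $(x_n,L]$ (sum of the increasing $x\mapsto x$ and the increasing $x\mapsto -4\lambda/h_n(x)$). Moreover $h_{n+1}(x)\to-\infty$ as $x\downarrow x_n$, whereas
\begin{equation*}
h_{n+1}(L)=4\sqrt{\lambda}-\frac{4\lambda}{h_n(L)}\ \ge\ 4\sqrt{\lambda}-\frac{4\lambda}{2\sqrt{\lambda}}=2\sqrt{\lambda}>0,
\end{equation*}
so the intermediate value theorem produces a unique zero $x_{n+1}\in(x_n,L)$, which completes the induction. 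Extending the same monotonicity argument past $L$ shows $h_n$ stays strictly positive on $(x_n,\infty)\cap\mathrm{dom}(h_n)$, so $x_n$ is indeed globally the largest zero. In particular $(x_n)$ is strictly increasing and bounded above by $L$, hence converges to some $\ell\in(0,L]$.

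\textbf{Second assertion and identification of the limit.} For fixed $c\in\R$ the orbit $a_n:=h_n(c)$ satisfies $a_{n+1}=\phi_c(a_n)$. Assume $c<L$; then $(c+\tau)^2<16\lambda$, so by the opening computation $\phi_c(y)<y$ for every $y>0$ and $\phi_c$ has no fixed point in $(0,\infty)$. If $(a_n)$ were positive for all $n$, it would be strictly decreasing and bounded below, hence convergent to some $m\ge 0$: a positive limit would be a forbidden fixed point of $\phi_c$, whereas $m=0$ would force $a_{n+1}=\phi_c(a_n)\to-\infty$, contradicting positivity. Hence some $a_n\le 0$; if $a_n=0$ then $h_{n+1}(c)$ is not defined. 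This proves the second assertion: $h_n(c)\ge 0$ for every $n$ forces $c\ge L$. Applying it to $c=\ell$ (where $h_n(\ell)\ge h_n(x_n)=0$ for every $n$ by the monotonicity of $h_n$ on $[x_n,L]\ni\ell$ established above) gives $\ell\ge L$, and combined with $\ell\le L$ we conclude $\ell=L=4\sqrt{\lambda}-\tau$.

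\textbf{Main obstacle.} The only real delicacy is bookkeeping the singular set of $h_{n+1}$, namely the zero set of $h_n$, which a priori could produce further zeros of $h_{n+1}$ outside the interval $(x_n,L]$ on which the induction has control. This is handled by carrying both strict monotonicity \emph{and} positivity of $h_n$ through the induction not merely on $[x_n,L]$ but on the whole one-sided neighborhood $(x_n,\infty)\cap\mathrm{dom}(h_n)$, which is enough to identify the inductively constructed zero with the globally largest one and to make the monotone convergence $x_n\uparrow L$ rigorous.
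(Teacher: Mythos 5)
Your proof is correct, and its backbone is the same as the paper's: an induction showing that $h_n$ is strictly increasing past its largest zero with $h_n(4\sqrt{\lambda}-\tau)\ge 2\sqrt{\lambda}$ (so $x_n$ increases and stays below $4\sqrt{\lambda}-\tau$), followed by a fixed-point analysis of the scalar iteration $y\mapsto c+\tau-4\lambda/y$. The difference lies in the endgame. The paper pins down $\tilde x:=\lim x_n$ directly: it shows the orbit $h_n(\tilde x)$ is decreasing and bounded below by $\sqrt{\lambda}$, hence converges to a positive fixed point $\tilde h=\tilde x+\tau-4\lambda/\tilde h$, which forces $(\tilde x+\tau)^2\ge 16\lambda$ and so $\tilde x\ge 4\sqrt{\lambda}-\tau$; the ``furthermore'' clause is then left implicit (it would follow from $c\ge x_n$ for all $n$). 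You reverse the order: you first prove the ``furthermore'' clause by contraposition --- for $c<4\sqrt{\lambda}-\tau$ the map $\phi_c$ has no positive fixed point and satisfies $\phi_c(y)<y$ on $(0,\infty)$, so a forever-positive orbit would have to converge either to a forbidden fixed point or to $0$, both impossible --- and then apply it to $c=\ell$. This buys an explicit proof of the second assertion, which is precisely the part invoked later in Lemmas \ref{hitting} and \ref{nodepend} and which the paper's proof glosses over, at no extra cost. One cosmetic caveat: the equivalence ``discriminant $\ge 0$ iff $c\ge 4\sqrt{\lambda}-\tau$'' only holds when $c+\tau\ge 0$ (the discriminant is also nonnegative for $c+\tau\le -4\sqrt{\lambda}$, where both roots are negative); this is harmless because $h_1(c)=c\ge 0$ and $\tau\ge 0$ force $c+\tau\ge 0$ in the only case that matters, and your standalone claim that $\phi_c(y)<y$ for all $y>0$ remains true there anyway, but a parenthetical would tidy it up.
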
 
\begin{proof} By induction, it can be shown that $h_{n+1}$ maps $(x_n,+\infty)$ strictly monotonically onto $\R.$ Consequently, $(x_n)_n$ is an increasing sequence. We prove that it is bounded above by $4\sqrt{\lambda}-\tau$ by showing $h_n(4\sqrt{\lambda}-\tau)>2\sqrt{\lambda}$
for all $n\geq 1$ inductively:
First, $$h_1(4\sqrt{\lambda}-\tau)=4\sqrt{\lambda}-\tau>4\sqrt{\lambda}-2\sqrt{\lambda}=2\sqrt{\lambda},$$
and for $n\geq 1$ we have \begin{eqnarray*} h_{n+1}(4\sqrt{\lambda}-\tau)&=&4\sqrt{\lambda}-\tau+\tau-\frac{4\lambda}{h_{n}(4\sqrt{\lambda}-\tau)}\\&>&4\sqrt{\lambda}-\frac{4\lambda}{2\sqrt{\lambda}} = 2\sqrt{\lambda}.
\end{eqnarray*}

Hence $x_n$ converges to \begin{equation}\label{*}\tilde{x}\leq4\sqrt{\lambda}-\tau.\end{equation}
Obviously, we have $h_n(\tilde{x})>0$ for all $n.$ Now suppose $h_n(\tilde{x})\leq\sqrt{\lambda},$ then $h_{n+1}(\tilde{x})=\tilde{x}+\tau-\frac{4\lambda}{h_n(\tilde{x})}\leq\tilde{x}+\tau-4\sqrt{\lambda}\leq0,$ a contradiction.
 Hence $ h_n(\tilde{x})>\sqrt{\lambda} .$\\ 
Furthermore, we get from (\ref{*}) 
\begin{eqnarray*}h_n(\tilde{x})-h_{n+1}(\tilde{x}) &=&  h_n(\tilde{x})-\tilde{x}-\tau+\frac{4\lambda}{h_n(\tilde{x})} \geq h_n(\tilde{x})-4\sqrt{\lambda}+\frac{4\lambda}{h_n(\tilde{x})} \\ &=& \frac{h_n(\tilde{x})^2-4\sqrt{\lambda}h_n(\tilde{x})+4\lambda}{h_n(\tilde{x})}=
\frac{(h_n(\tilde{x})-2\sqrt{\lambda})^2}{h_n(\tilde{x})} \geq0.
\end{eqnarray*} It follows that the sequence $h_n(\tilde{x})$ is decreasing and bounded below by $\sqrt{\lambda}.$ It converges to $\tilde{h}$ with
$$\tilde{h}=\tilde{x}+\tau-\frac{4\lambda}{\tilde{h}}.$$
So $\tilde{h}=\frac{\tilde{x}+\tau\pm\sqrt{(\tilde{x}+\tau)^2-16\lambda}}{2}$ and hence $(\tilde{x}+\tau)^2\geq16\lambda.$ As $\tilde{x}$ must be positive, we conclude $\tilde{x}\geq4\sqrt{\lambda}-\tau$ and together with (\ref{*}) this implies $\tilde{x}=4\sqrt{\lambda}-\tau$.
\end{proof}
 A great advantage of J. Lind's proof in \cite{Lind:2005} is the fact that we don't have to work with equation (\ref{more2}) for arbitrary initial values in the upper half-plane, but we can concentrate on the real initial value problem 
\begin{equation}\label{2slit}
 x(t_0) = x_0\in\R\setminus\{U_1(t_0),U_2(t_0)\},  \qquad \dot{x}(t) = \frac{2\lambda_1(t)}{x(t)-U_1(t)} + \frac{2\lambda_2(t)}{x(t)-U_2(t)}, \qquad t\in[t_0,T].
\end{equation}

   \begin{lemma}\label{hitting}
     Let $j=1$ or $j=2$ and $U_j \in \lip$. Suppose that the solution $x(t)$ of (\ref{2slit}) with initial value $x_0 \in\R\setminus\{U_1(t_0),U_2(t_0)\}$ exists until $t=T$ and $x(T)=U_j(T).$ Then $$\limsup_{h\downarrow0}\frac{|U_j(T)-U_j(T-h)|}{\sqrt{h}}\geq 4\sqrt{\lambda_j(T)}.$$
   \end{lemma}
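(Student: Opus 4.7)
The strategy is to follow the one--slit argument of \cite{Lind:2005} adapted to the multi--slit equation, using Lemma \ref{help} as the main iterative tool. The crucial observation is that the collision $x(T)=U_j(T)$ is, near $T$, essentially a one--slit phenomenon with the effective coefficient $2\lambda_j(t)$, since by disjointness of the driving functions the other singularities stay bounded away from $x(t)$.

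\textbf{Setup.} Without loss of generality $j=1$, $T=1$ and, by translation, $U_1(1)=0$; write $\lambda:=\lambda_1(1)>0$. Continuity of $U_2$ and $x$ together with $U_2(1)\ne 0$ produces $\varepsilon>0$ and $M<\infty$ such that for $t\in[1-\varepsilon,1]$ one has $|x(t)-U_2(t)|\ge\varepsilon$, hence
\[
\left|\frac{2\lambda_2(t)}{x(t)-U_2(t)}\right|\le M\qquad(t\in[1-\varepsilon,1]).
\]
In addition, continuity of $\lambda_1$ lets us assume $|\lambda_1(t)-\lambda|$ is as small as we wish on $[1-\varepsilon,1]$. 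Set $y(t):=x(t)-U_1(t)$; by uniqueness of the forward solution, $y(t)$ does not change sign on $[t_0,1)$, so we may assume $y(t)>0$ there, with $y(1)=0$.

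\textbf{Contradiction hypothesis and iterative inequality.} Suppose for contradiction that $\limsup_{h\downarrow 0}|U_1(1-h)|/\sqrt{h}<4\sqrt{\lambda}$. Choose $\tau\in[0,2\sqrt{\lambda})$ so that $4\sqrt{\lambda}-\tau$ strictly exceeds the limsup; then there is $h_0>0$ with $|U_1(1-h)|\le\tau\sqrt{h}$ for $h\in(0,h_0]$ (the case where this only holds for $\tau$ close to $4\sqrt{\lambda}$ is absorbed at the end by a scaling/continuation argument applied to a subinterval $[1-\varepsilon',1]$ with smaller $\varepsilon'$, where the bounded perturbations become negligible and the relevant bound is $4\sqrt{\lambda}-\tau$ with arbitrarily small $\tau$). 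Integrating the ODE between successive times $t_n\uparrow 1$ chosen so that $s_n:=\sqrt{1-t_n}$ decreases geometrically, and using both the Hölder bound on $U_1$ and the boundedness of $M$ and $|\lambda_1-\lambda|$, one obtains for $\alpha_n:=y(t_n)/s_n$ a recursion of the form
\[
\alpha_{n+1}\;\ge\;\alpha_n+\tau-\frac{4\lambda}{\alpha_n}+o(1)\qquad\text{as }t_n\to 1,
\]
which is precisely the defining recursion of the sequence $h_n$ in Lemma \ref{help}, applied at $c:=\alpha_1$.

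\textbf{Extraction of the contradiction.} Since $y(t)>0$ for all $t<1$, positivity of $\alpha_n$ propagates, so $h_n(\alpha_1)\ge 0$ for every $n$. By Lemma \ref{help} this forces $\alpha_1\ge 4\sqrt{\lambda}-\tau>0$. However, by choosing the initial time $t_1$ sufficiently close to $1$, the ratio $\alpha_1=y(t_1)/\sqrt{1-t_1}$ can be made arbitrarily small (by passing to a subsequence along which $y(t_n)=o(\sqrt{1-t_n})$, which must exist since otherwise $y(t)\gtrsim\sqrt{1-t}$ and the ODE would prevent $y$ from reaching $0$ at all). This contradicts $\alpha_1\ge 4\sqrt{\lambda}-\tau$ and completes the argument.

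\textbf{Main obstacle.} The technical heart of the proof is establishing the recursion for $\alpha_n$ cleanly, since three sources of error must be controlled: the bounded but possibly sign--changing contribution of $2\lambda_2(t)/(x(t)-U_2(t))$, the variation of $\lambda_1(t)$ around its limit $\lambda$, and the fact that $U_1$ is merely continuous (so the bound on $U_1(t)-U_1(1)$ must be used in integral form rather than through a derivative). All three of these effects are $o(1)$ on the correct scale $s_n$, but matching the resulting inequality exactly with the recursion of Lemma \ref{help} requires a careful choice of the time mesh $\{t_n\}$.
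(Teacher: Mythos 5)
Your overall strategy (treat the collision as an effectively one--slit phenomenon, bound the influence of the other singularity, and feed the resulting inequality into Lemma \ref{help}) is indeed the approach of the paper, but the way you wire the quantities into Lemma \ref{help} contains two genuine gaps. First, you have swapped the roles of $c$ and $\tau$. In Lemma \ref{help}, $\tau$ is the small additive perturbation and must satisfy $\tau<2\sqrt{\lambda}$, while $c$ is the unconstrained seed of the iteration. In the paper's argument the left H\"older constant of $U_j$ at $T$ is the seed $c$ (it enters through $U_j(t)-U_j(T)\le c\sqrt{T-t}$ and $x(T)=U_j(T)$), whereas $\tau$ is the quantity $\eps=\tfrac{2}{\delta}\sqrt{1-S}$ coming from the bounded term $2\lambda_{3-j}(t)/(x(t)-U_{3-j}(t))$ integrated over the short window $[S,1]$, and it is made arbitrarily small by taking $S\to 1$. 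You instead place the H\"older constant in the $\tau$ slot; then the restriction $\tau<2\sqrt{\lambda}$ leaves the range of H\"older constants in $[2\sqrt{\lambda},4\sqrt{\lambda})$ uncovered, and the "scaling/continuation" parenthetical does not repair this. Relatedly, the recursion you write, $\alpha_{n+1}\ge\alpha_n+\tau-4\lambda/\alpha_n$, is \emph{not} the recursion of Lemma \ref{help}, which is $h_{n+1}(c)=c+\tau-4\lambda/h_n(c)$: the first summand is always the seed $c$, not the previous iterate. That anchoring at $c$ comes from re-using $U_j(t)-x(t)\le U_j(t)-U_j(T)+(x(T)-x(t))$ at every step of the induction, integrating the ODE from $t$ all the way to $T$; a pointwise recursion on a discrete mesh has different dynamics (its fixed point is $4\lambda/\tau$, not $4\sqrt{\lambda}-\tau$).

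Second, your extraction of the contradiction fails. You conclude $\alpha_1\ge 4\sqrt{\lambda}-\tau$ and want to contradict this by finding $t_n\to T$ with $y(t_n)=o(\sqrt{T-t_n})$, arguing that otherwise "$y(t)\gtrsim\sqrt{T-t}$ and the ODE would prevent $y$ from reaching $0$." That is false: $y(t)\asymp\sqrt{T-t}$ is perfectly compatible with $y(T)=0$, and is in fact exactly the behaviour in the model collision generated by $U(t)=c\sqrt{1-t}$ (Example \ref{Felix}). The paper's contradiction is of a different nature: one shows inductively that the \emph{positive} quantity $U_j(t)-x(t)$ is bounded above by $h_n(c)\sqrt{1-t}$ for every $n$, so $h_n(c)\ge 0$ for all $n$, and Lemma \ref{help} then forces $c\ge 4\sqrt{\lambda_j^S}-\eps$; letting $S\to 1$ gives $c\ge 4\sqrt{\lambda_j(T)}$ for every admissible H\"older constant $c$, which is the limsup statement. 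You need to restructure the argument along these lines: keep the perturbation small by shrinking the time window, seed the iteration with the H\"older constant, and derive the lower bound on that constant rather than on the normalized gap.
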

\begin{proof} We will assume that $j=2$ and $T=1$, and we start with the case $U_1(t_0)<x_0<U_2(t_0)$, so that for all $t<1$ the solution satisfies $U_1(t)<x(t)<U_2(t)$.\\
First we define $$\displaystyle \lambda_2^s:=\min_{t \in [s,1]} \lambda_2(t).$$
 Let $S\in[t_0,1)$ be so close to $1,$ that 
\begin{itemize}
 \item $x(t)-U_1(t) > \delta > \frac{1}{\lambda_2^S}(U_2(t)-x(t))$ for a $\delta>0$ and all $t \in [S, 1]$
(which can be achieved because $\frac{1}{\lambda_2^S}$ is bounded and
 $U_2(t)-x(t)$ goes to zero when $t\to1$) and
\item  $|U_2(1)-U_2(t)|\leq c\sqrt{1-t}$ for all $t \in [S, 1]$.
\end{itemize}
Next we define $\eps := \frac{2}{\delta}\sqrt{1-S}$ and we can assume (by passing on to a larger $S$ if necessary)
\begin{equation}\label{cond}
 \eps<2\sqrt{\lambda_2^S}.
\end{equation}

Now $x(t)$ is decreasing in $[S,1]$, since 
$$\dot{x}(t)= \frac{2\lambda_1(t)}{x(t)-U_1(t)} + \frac{2\lambda_2(t)}{x(t)-U_2(t)} < \frac{2}{\frac{1}{\lambda_2^S}(U_2(t)-x(t))} + \frac{2\lambda_2^S}{x(t)-U_2(t)} = 0.$$ 
 
We will now show by induction that 
\begin{equation*}
 U_2(t)-x(t) \leq h_n(c)\sqrt{1-t} \quad \text{for every} \quad n\in\N, \; t\in[S,1],
\end{equation*}
where $h_n$ is the function from Lemma \ref{help} with $\lambda=\lambda_2^S$ and $\tau=\eps.$\\
First we have $$U_2(t)-x(t)\leq U_2(t)-x(1)=U_2(t)-U_2(1) \leq c\sqrt{1-t}=h_1(c)\sqrt{1-t}.$$
Now assume the inequality holds for one $n\in\N.$ Then we have
\begin{equation*}
\dot{x}(t) \leq \frac{2}{\delta} + \frac{2\lambda_2^S}{x(t)-U_2(t)}
 \leq  \frac{2}{\delta} - \frac{2\lambda_2^S}{h_n(c)\sqrt{1-t}}.
  \end{equation*}  
Integrating yields  \begin{equation*}\label{betterest}
x(1)-x(t) \leq \frac{2}{\delta}(1-t)  - \frac{4\lambda_2^S}{h_n(c)}\sqrt{1-t}
\leq (\eps - \frac{4\lambda_2^S}{h_n(c)})\sqrt{1-t}.
  \end{equation*}                                
This implies $$U_2(t)-x(t)\leq U_2(t)-U_2(1)+(\eps - \frac{4\lambda_2^S}{h_n(c)})\sqrt{1-t} 
\leq (c+\eps - \frac{4\lambda_2^S}{h_n(c)})\sqrt{1-t}=h_{n+1}(c)\sqrt{1-t}.$$ 
As $U_2(t)-x(t)$ is always positive, we conclude that $h_n(c)\geq0$ for every $n\in\N$ and Lemma \ref{help} tells us that 
\begin{equation*}
 c \geq4 \sqrt{\lambda_2^S}- \eps.
\end{equation*}
Finally, sending $S$ to 1 yields
\begin{equation*}
 c \geq 4 \sqrt{\lambda_2^1} -0  = 4 \sqrt{\lambda_2(1)}.
\end{equation*}
The case $x_0>U_2(t_0)$ can be treated in the same way and in the case $x_0<U_1(t_0),$ the solution $x(t)$ cannot fulfill $x(1)=U_2(1).$
\end{proof}

Consequently, if we have $U_1,U_2\in\lip$ and $$\limsup_{h\downarrow0}\frac{|U_j(t)-U_j(t-h)|}{\sqrt{h}}< 4\sqrt{\lambda_j(t)}\;\; \text{for all}\;\; t\in(t_0,T], \quad j=1,2,$$ 
then the solution for any $x_0 \in\R\setminus\{U_1(t_0),U_2(t_0)\}$ will exist up to time $t=T$ and $x(T)$ cannot equal $U_1(T)$ or $U_2(T)$. In fact, there are even fixed intervals around $U_1(T)$ and $U_2(T)$ which cannot be reached by $x(T)$ for any initial value $x_0.$

\begin{lemma}\label{nodepend}
Let $U_1,U_2\in\lip$ with $$\limsup_{h\downarrow0}\frac{|U_j(t)-U_j(t-h)|}{\sqrt{h}}< 4\sqrt{\lambda_j(t)}$$ for every $t\in(t_0,T]$ and $j=1,2$. Suppose that $x(t)$ is a solution of (\ref{2slit}) with $x_0\in\R\setminus\{U_1(t_0),U_2(t_0)\}.$ Then there exists $\eps>0$ such that $$|x(T)-U_1(T)|>\eps \quad \text{and} \quad |x(T)-U_2(T)|>\eps$$ for every $x_0 \in\R\setminus\{U_1(t_0),U_2(t_0)\}.$
\end{lemma}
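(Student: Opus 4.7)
The plan is to prove Lemma~\ref{nodepend} by contradiction, combining the non-crossing property of solutions of (\ref{2slit}), continuous dependence on initial data, Dini's theorem, and Lemma~\ref{hitting}. Suppose no uniform $\eps$ works; then there exist $x_0^{(n)} \in \R \setminus \{U_1(t_0), U_2(t_0)\}$ whose corresponding solutions $x^{(n)}$ of (\ref{2slit}) exist on $[t_0, T]$ and satisfy $x^{(n)}(T) \to U_j(T)$ for some $j \in \{1,2\}$; without loss of generality $j = 1$. Uniqueness for (\ref{2slit}) implies that distinct trajectories never cross, so the flow $x_0 \mapsto x(\cdot)$ is strictly monotone. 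After extracting a subsequence, $(x_0^{(n)})$ is strictly monotone and converges to some $x_0^* \in [-\infty, +\infty]$. A crude a priori bound on $|\dot x|$, together with the boundedness of $U_1, U_2$ on $[t_0, T]$, excludes $x_0^* = \pm\infty$. By flow monotonicity, each $x^{(n)}$ lies in a fixed one of the three strips $(-\infty, U_1(t))$, $(U_1(t), U_2(t))$, $(U_2(t), \infty)$, and $x^{(n)}(t) \to x^\infty(t)$ pointwise on $[t_0, T]$ with $x^\infty(T) = U_1(T)$.

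The argument then splits into two scenarios. If there exists $\tau \in (t_0, T)$ such that $x^\infty(\tau) \notin \{U_1(\tau), U_2(\tau)\}$, then standard continuous dependence for (\ref{2slit}) shows $x^\infty$ is a classical solution of (\ref{2slit}) on $[\tau, T]$, and $x^\infty(T) = U_1(T)$ together with Lemma~\ref{hitting} applied on $[\tau, T]$ yields $\limsup_{h \downarrow 0} |U_1(T) - U_1(T-h)|/\sqrt{h} \ge 4\sqrt{\lambda_1(T)}$, contradicting the hypothesis of Lemma~\ref{nodepend}. Otherwise $x^\infty(\tau) \in \{U_1(\tau), U_2(\tau)\}$ for every $\tau \in (t_0, T)$; a short case analysis using the fixed strip, the monotonicity of $x^{(n)}$ in $n$, the strict inequality $U_1 < U_2$ on $[t_0, T]$, and $x^\infty(T) = U_1(T)$ then forces $x^\infty \equiv U_1$ on $[t_0, T]$ (the outer right strip and the alternative $x^\infty \equiv U_2$ are immediately ruled out because they would give $x^\infty(T) \ge U_2(T) > U_1(T)$).

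In this degenerate case, the sequence of continuous nonnegative functions $t \mapsto |U_1(t) - x^{(n)}(t)|$ is monotone in $n$ and converges pointwise to $0$ on the compact interval $[t_0, T]$, so Dini's theorem yields uniform convergence: $\delta_n := \sup_{[t_0, T]} |U_1 - x^{(n)}| \to 0$. Since $\inf_{[t_0, T]} (U_2 - U_1) > 0$, the $U_2$-term in (\ref{2slit}) is uniformly bounded, so the ODE gives $|\dot x^{(n)}(t)| \ge 2\lambda_1(t)/\delta_n - C$ on $[t_0, T]$ for a constant $C$ independent of $n$; integration forces the variation of $x^{(n)}$ over $[t_0, T]$ to tend to infinity, contradicting $\sup_{[t_0,T]}|x^{(n)} - U_1| = \delta_n \to 0$. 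The main obstacle is precisely this degenerate scenario: classical continuous dependence fails at the singular boundary values, and Dini's theorem combined with the inverse-distance singularity of the Loewner vector field is what quantitatively converts uniform position-collapse into velocity blow-up, producing the contradiction.
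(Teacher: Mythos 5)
Your route is genuinely different from the paper's. The paper does not pass to a limiting trajectory at all: for each $\eps$ it takes a bad initial value $x_0^\eps$ and reruns the induction of Lemma \ref{hitting} with an explicit error sequence $e_n$ tracking the $\eps$-slack, then sends $\eps\to 0$ to force $h_N(c)\ge 0$, contradicting the choice of $N$ from Lemma \ref{help}. You instead use Lemma \ref{hitting} as a black box and replace the quantitative bookkeeping by compactness in the initial value; the price is the degenerate scenario where the limit trajectory collapses onto a driving curve, which you handle with Dini plus the inverse-distance blow-up of the vector field. The non-degenerate branch and the blow-up mechanism are both sound, though in the first branch you should spell out the dichotomy: either the solution $y$ of (\ref{2slit}) through $(\tau,x^\infty(\tau))$ ends at a regular point, in which case continuous dependence holds up to $t=T$ and contradicts $x^{(n)}(T)\to U_1(T)$, or $y(T)\in\{U_1(T),U_2(T)\}$ and Lemma \ref{hitting} applies. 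This is needed because $x^\infty$ is a priori only a semicontinuous monotone limit, so you cannot simply interchange $n\to\infty$ with $t\to T$ to assert $y(T)=U_1(T)$.

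The genuine gap is in the degenerate case. From $x^\infty(\tau)\in\{U_1(\tau),U_2(\tau)\}$ for all $\tau$ you conclude $x^\infty\equiv U_1$ on $[t_0,T]$, but the only alternatives you rule out are $x^\infty\equiv U_2$ and the right strip. Since $x^\infty$ is merely semicontinuous, it may coincide with $U_1$ on part of $(t_0,T)$ and with $U_2$ elsewhere, and the value $x^\infty(T)=U_1(T)$ controls nothing on $(t_0,T)$ because $x^\infty$ need not be continuous at $T$ (for the same reason, "$x^\infty\equiv U_2$ would give $x^\infty(T)\ge U_2(T)$" is not actually immediate). In the mixed case Dini's theorem on all of $[t_0,T]$ fails, as the limit is discontinuous. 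The repair stays inside your toolkit: using $\inf_{[t_0,T]}(U_2-U_1)>0$ and the lower (resp.\ upper) semicontinuity of an increasing (resp.\ decreasing) limit, one of the coincidence sets $\{x^\infty=U_1\}$, $\{x^\infty=U_2\}$ is open, so one of the two contains a nondegenerate closed interval $[a',b']$; apply Dini and the blow-up there, noting that the singular term of $\dot x^{(n)}$ has constant sign on the fixed strip, so that $|x^{(n)}(b')-x^{(n)}(a')|\ge (b'-a')\bigl(2\lambda_j^{\min}/\delta_n-C\bigr)\to\infty$ contradicts $\sup_{[a',b']}|x^{(n)}-U_j|\to 0$. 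With that localization your argument closes.
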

\begin{proof} In order to simplify notation, we let $T=1$ in the proof.\\
We prove the statement by contradiction, hence we assume that for every\\ $\eps>0$ there is an $x_0^\eps \in\R\setminus\{U_1(t_0),U_2(t_0)\}$ such that\\ $$|x(1)-U_1(1)|\leq\eps \quad \text{or} \quad |x(1)-U_2(1)|\leq\eps.$$
Without loss of generality we may assume that $U_1(t_0)<x_0^\eps<U_2(t_0)$ and\\ $U_2(1)-x(1)\leq\eps.$\\
Now there is an $\epsilon>0$ such that for every $\eps\in(0,\epsilon)$, the solution to the corresponding initial value $x_0^\eps$ is decreasing in an interval $[S_0,1]$ and $x(t)-U_1(t)>\delta$ for all $t\in[S_0,1]$ and a $\delta>0$.\\
From now on we require $\eps < \epsilon$ and furthermore we assume that $S\in[t_0,1)$ is so close to $1$ that
 \begin{itemize}
\item $S\geq S_0$,
\item $\tau:=\frac{2}{\delta}\sqrt{1-S}<2\sqrt{\lambda_2^S}$ and
\item $|U_2(1)-U_2(t)|\leq c\sqrt{1-t}\quad \text{with}\;  c<4\sqrt{\lambda_2^S}-\tau$ for all $t \in [S, 1]$.
\end{itemize}

Again, we denote by $h_n$ the sequence
\begin{eqnarray*}
h_1(x)&=& x,\\
 h_{n+1}(x)&=&x+\tau-\frac{4\lambda_2^S}{h_{n}(x)} \quad \text{for}\quad n\geq1.
\end{eqnarray*}
Lemma (\ref{help}) implies that there is an $N \in \N$ such that \begin{equation}\label{theK}h_N(c)<0.\end{equation}
We take the \textbf{smallest} such $N$ and assume that $h_n(c)>0$ for all $n<N.$\\ (If there is a $n$ with $h_n(c)=0$, one can pass on to a slightly greater $c$.)\\
Next, define $e_n$ by
\begin{eqnarray*}
e_1&=& \eps,\\
e_{n+1} &=& \eps + \frac{4\lambda_2^Se_{n}}{(h_n(c))^2}\log\left(1+\frac{h_n(c)}{e_n}\right) \quad \text{for}\quad 1\leq n < N. 
\end{eqnarray*} 
Inductively one can easily show that for every $n\leq N$ we have $e_n>0$ and
 \begin{equation}\label{limit}
 \lim_{\eps\to0}e_n = 0.
 \end{equation}
Now we prove by induction that 
\begin{equation*}
x(1)-x(t) \leq e_n -\eps + (h_n(c)-c)\sqrt{1-t} \quad \text{for all} \quad t \in[S,1]
\quad \text{and} \quad n\leq N .
\end{equation*}
The case $n=1$ states $x(1)-x(t)\leq0$ which is true because $x(t)$ is decreasing. Next assume that the statement holds for a $n<N.$ Then
 \begin{equation*}
-x(t) \leq -x(1) - \eps+ e_n + (h_n(c)-c)\sqrt{1-t} \quad \text{for all} \quad t \in[S,1].
\end{equation*}
Consequently
 \begin{eqnarray*}
U_2(t) - x(t) &\leq& U_2(t)-U_2(1)+U_2(1)-x(1)-\eps+e_n-(c-h_n(c))\sqrt{1-t} \\
&\leq& e_n+h_n(c)\sqrt{1-t}\quad \text{for all} \quad t \in[S,1],
\end{eqnarray*}
which implies
 \begin{eqnarray*}
\dot{x}(t) = \frac{2\lambda_1(t)}{x(t)-U_1(t)} + \frac{2\lambda_2(t)}{x(t)-U_2(t)}\leq \frac{2}{\delta} -  \frac{2\lambda_2^S}{e_n+h_n(c)\sqrt{1-t}}
\end{eqnarray*}
and integrating gives
 \begin{eqnarray*}
x(1)-x(t) &\leq& \frac{2}{\delta}(1-t)-\frac{4\lambda_2^S}{h_n(c)}\sqrt{1-t}+\frac{4\lambda_2^Se_n}{(h_n(c))^2}\log(1+\frac{h_n(c)}{e_n}\sqrt{1-t})\\
&=&\left(\frac{2}{\delta}\sqrt{1-t}-\frac{4\lambda_2^S}{h_n(c)}\right)\sqrt{1-t}+\frac{4\lambda_2^Se_n}{(h_n(c))^2}\log(1+\frac{h_n(c)}{e_n}\sqrt{1-t}) \\
&\leq&(c+\tau-\frac{4\lambda_2^S}{h_n(c)}-c)\sqrt{1-t}+\frac{4\lambda_2^Se_n}{(h_n(c))^2}\log(1+\frac{h_n(c)}{e_n}\sqrt{1-t}) \\
&=&(h_{n+1}(c)-c)\sqrt{1-t}+\eps+\frac{4\lambda_2^Se_n}{(h_n(c))^2}\log(1+\frac{h_n(c)}{e_n}\sqrt{1-t})- \eps.
\end{eqnarray*}
As $h_n(c)>0$ and $e_n>0$ we conclude
 \begin{eqnarray*}
x(1)-x(t) &\leq& (h_{n+1}(c)-c)\sqrt{1-t}+\eps+\frac{4\lambda_2^Se_n}{(h_n(c))^2}\log(1+\frac{h_n(c)}{e_n})-\eps \\ &=& (h_{n+1}(c)-c)\sqrt{1-t}+e_{n+1}-\eps.
\end{eqnarray*}
For $n=N$ we get
 \begin{equation*}x(1)-x(t)\leq e_N -\eps + (h_N(c)-c)\sqrt{1-t}. \end{equation*} On the other hand,  \begin{equation*}x(1)-x(t)=x(1)-U_2(1)+U_2(1)-x(t)\geq-\eps+U_2(1)-U_2(t)\geq-\eps-c\sqrt{1-t}. \end{equation*}
Thus $$h_N(c)\sqrt{1-S}+e_N \geq 0.$$ Now we can send $\eps \to 0$ and get $h_N(c)\geq0$ by (\ref{limit}), a contradiction to (\ref{theK}). 
\end{proof}

\begin{proof}[Proof of Theorem \ref{sevslits}] The statement follows directly from combining Proposition \ref{nodepend} with Proposition \ref{Fritz}. 
\end{proof}

\subsubsection{A sufficient condition for the irregular case}

For driving functions that are irregular in some points, it is somehow harder to find out whether the generated hulls are welded or not. Here we derive a sufficient condition for a very special case. This case will appear later in the proof of Theorem \ref{proop}. In the following, we let $n=1,$ though it is not difficult to generalize the statement to the general multiple-slit case.

\begin{lemma}\label{stran}
Let $U:[0,1]\to\R$ be continuous with $U(1)=0$ and let $\{K_t\}_{t\in[0,1]}$ be the hulls generated by the one-slit equation (\ref{ivp}). Suppose that there are two increasing sequences $s_n,t_n$ of positive numbers with $s_n,t_n\to1, 
$ such that for $$\overline{M}_n:=\max_{s_n\leq t\leq1}\{U(t)\}\quad \text{and} \quad \underline{M}_n:=\min_{t_n\leq t\leq1}\{U(t)\}$$ the two inequalities 
$$4(1-s_n)+U(s_n)^2-2U(s_n)\overline{M}_n>0 \quad \text{and} \quad 4(1-t_n)+U(t_n)^2-2U(t_n)\underline{M}_n>0$$
 hold for all $n\in\N.$ If $\{K_s\}_{s\in[0,t]}$ is welded for all $t\in(0,1)$, then so is $\{K_s\}_{s\in[0,1]}$.
\end{lemma}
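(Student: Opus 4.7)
My plan is to apply Proposition~\ref{Fritz}(b) and argue by contradiction. If $\{K_s\}_{s\in[0,1]}$ were not welded, there would exist $\tau\in[0,1)$ and initial values $x_0^k\neq U(\tau)$ whose forward solutions $x_k$ to $\dot{x}=2/(x-U)$ satisfy $|x_k(1)|\to 0$. The welded hypothesis for every $t<1$ prevents $x_k$ from meeting $U$ on $[\tau,1)$, so $x_k-U$ has constant sign there; after passing to a subsequence I may assume $x_k>U$ on $[\tau,1)$ for all $k$, which makes each $x_k$ strictly increasing. Using the uniform welding gap at an intermediate time $\tau'\in(\tau,1)$ for compactness and then invoking continuous dependence on initial data, I would produce a limit solution $x^*$ on $[\tau',1]$ satisfying $x^*>U$ on $[\tau',1)$ and $x^*(1)=0=U(1)$. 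The opposite case $x_k<U$ is handled symmetrically and yields a decreasing limit.

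The heart of the argument is a conservation-type identity: since $(x^*-U)\dot{x}^*=2$, integrating from $t_n$ to $1$ and using $x^*(1)=0$ yields
\[
\int_{t_n}^{1} U(t)\,\dot{x}^*(t)\,dt \;=\; -\tfrac{1}{2}\,x^*(t_n)^2 - 2(1-t_n).
\]
Since $\dot{x}^*>0$ and $U\geq\underline{M}_n$ on $[t_n,1]$, the left-hand side is bounded below by $-\underline{M}_n\,x^*(t_n)$, and rearranging gives
\[
(x^*(t_n)-\underline{M}_n)^2 \;\leq\; \underline{M}_n^{\,2}-4(1-t_n).
\]
Because $x^*(t_n)>U(t_n)\geq\underline{M}_n$, taking non-negative square roots produces $U(t_n)-\underline{M}_n<\sqrt{\underline{M}_n^{\,2}-4(1-t_n)}$, and squaring yields $U(t_n)^2-2U(t_n)\underline{M}_n+4(1-t_n)<0$, in direct contradiction with the assumed inequality on $t_n$. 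The "stays below" case reverses $\dot{x}^*<0$ and uses $U\leq\overline{M}_n$ on $[s_n,1]$; the identical algebra produces the forbidden $U(s_n)^2-2U(s_n)\overline{M}_n+4(1-s_n)<0$, contradicting the hypothesis on $s_n$.

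The main obstacle will be the construction of the limit $x^*$ and the verification that $x^*(1)=0$. The welded hypothesis for $\{K_s\}_{s\in[0,\tau']}$ furnishes a uniform bound $x_k(\tau')-U(\tau')\geq\varepsilon>0$, which simultaneously yields compactness of $\{x_k(\tau')\}$ and keeps the limit away from the ODE's singularity; continuous dependence then produces $x^*$ on $[\tau',1)$ which remains above $U$ by the intermediate welding gaps at each $t<1$. The equality $\lim_{t\uparrow 1}x^*(t)=0$ follows because $x^*$ is increasing and bounded above by $\limsup_k x_k(1)=0$: a strict limit $L<0$ would, by continuity of $U$ at $1$, force $x^*<U$ near $t=1$ and contradict $x^*>U$ on $[\tau',1)$.
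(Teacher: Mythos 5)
Your argument is correct, but it takes a genuinely different and somewhat heavier route than the paper's. The paper establishes the uniform bound required by Proposition~\ref{Fritz}~b) \emph{directly}: for a solution with $x_0<U(\tau)$ (hence decreasing and staying below $U$) it compares $\dot x=2/(x-U)$ with the autonomous equation $\dot y=2/(y-\overline{M}_m)$, whose explicit solution $y(t)=\overline{M}_m-\sqrt{(\overline{M}_m-y(s_m))^2+4(t-s_m)}$ immediately yields $x(1)\le \overline{M}_m-\sqrt{(\overline{M}_m-U(s_m))^2+4(1-s_m)}<0$; the last inequality is exactly your quadratic, a single index $m$ (the first $s_n\ge\tau$) suffices, and the resulting bound is explicit and manifestly independent of $x_0$. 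You instead argue by contradiction, extract a limit solution $x^*$ with $x^*(1)=0$, and feed the same quadratic through the integral identity $\int_{t_n}^1 U\dot x^*\,dt=-\tfrac12 x^*(t_n)^2-2(1-t_n)$; the concluding algebra is sound in both cases. The compactness step is where your work hides: you correctly isolate that the welding gap at an intermediate time $\tau'$ keeps the limiting initial value off the singularity, and one should add explicitly that the limit solution cannot hit $U$ at any time $\beta<1$ because Proposition~\ref{Fritz}~b) applied to $T=\beta$ (using weldedness of $\{K_s\}_{s\in[0,\beta]}$) forbids it. Note, however, that your integral identity applies verbatim to each individual solution $x_k$: it gives $(x_k(1)-\underline{M}_n)^2\ge (U(t_n)-\underline{M}_n)^2+4(1-t_n)>\underline{M}_n^{\,2}$, hence the explicit lower bound $x_k(1)>\underline{M}_n+\sqrt{(U(t_n)-\underline{M}_n)^2+4(1-t_n)}>0$ with $n$ the first index for which $t_n\ge\tau$. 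So the passage to the limit is dispensable; applying your identity solution by solution recovers the paper's direct proof (in integrated rather than comparison-ODE form) and avoids the compactness argument entirely.
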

\begin{proof}
Let $\tau\in[0,1)$ and $x_0\in\R\setminus\{U(\tau)\}.$
By Proposition \ref{Fritz} we know that the solution $x(t)$ of the initial value problem
 $$ x(\tau)=x_0, \qquad \dot{x}(t)=\frac{2}{x(t)-U(t)}$$
 exists until $t=1$ and we have to show that there is a positive lower bound for $|x(1)-U(1)|$ which is independent of $x_0$.

 Assume that $x_0<U(\tau)$. Then $x(t)$ is decreasing and we have $x(s_m)<U(s_m)$ with $m:=\min\{n\in\N \; |\; s_n\geq \tau\}.$ The initial value problem 
$$ \dot{y}(t)=\frac{2}{y(t)-\overline{M}_m}, \qquad  y(s_m)=x(s_m),$$
has the solution $y(t)=\overline{M}_m-\sqrt{(\overline{M_m}-x(s_m))^2+4(t-s_m)}$. Now we have
 $$\dot{x}(t)\leq\frac{2}{x(t)-\overline{M}_m} \quad \text {for all} \; t\in [s_m,1)$$ and $x(s_m)=y(s_m)$. Consequently,
 \begin{eqnarray*}
  &&x(1)\leq y(1)=\overline{M}_m-\sqrt{(\overline{M}_m-x(s_m))^2+4(1-s_m)}\\
&&<\underbrace{\overline{M}_m-\sqrt{(\overline{M}_m-U(s_m))^2+4(1-s_m)}}_{=:L_1}<\overline{M}_m-\sqrt{\overline{M}_m^2}=0.
 \end{eqnarray*}
The case $x_0>U(\tau)$ can be treated in the same way and gives a bound $L_2>0$ for $x(1)=x(1)-U(1).$
Thus, the condition in Proposition \ref{Fritz} b) is satisfied for $\eps=\min\{-L_1,L_2\}$ and it follows that $\{K_s\}_{s\in[0,1]}$ is welded.
\end{proof}
\begin{corollary}\label{cori}
 If $\{K_s\}_{s\in[0,t]}$ is welded for all $t\in(0,1)$ and there are two increasing sequences $s_n,t_n$ of positive numbers with $s_n, t_n\to1,$ and $U(s_n)\leq U(1) \leq U(t_n)$ for all $n,$ then $\{K_s\}_{s\in[0,1]}$ is welded, too.
\end{corollary}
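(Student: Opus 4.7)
The plan is to deduce Corollary \ref{cori} directly from Lemma \ref{stran} by verifying the two sign conditions on the sequences $s_n$ and $t_n$.

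First I would reduce to the case $U(1)=0$. The welding property for the family $\{K_s\}_{s\in[0,t]}$ is invariant under translation of the driving function: replacing $U$ by $U-U(1)$ simply translates the generated hulls by $-U(1)$ (as listed among the basic operations after Theorem \ref{Gregory}) and leaves the welding behaviour unchanged, since the backward equation (\ref{back}) is equivariant under simultaneous real translation of the initial value and of $U$. Thus I may and will assume $U(1)=0$, so that the hypothesis becomes $U(s_n)\le 0\le U(t_n)$ for all $n$.

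Next I would verify the inequalities of Lemma \ref{stran}. With $\overline{M}_n=\max_{s_n\le t\le 1}U(t)$ we have $\overline{M}_n\ge U(1)=0$, and by hypothesis $U(s_n)\le 0$. Hence each of the three summands in
\[
4(1-s_n)+U(s_n)^2-2U(s_n)\overline{M}_n
\]
is nonnegative, and the first one is strictly positive since $s_n<1$; therefore the whole expression is positive. Symmetrically, with $\underline{M}_n=\min_{t_n\le t\le 1}U(t)$ we have $\underline{M}_n\le 0$ and $U(t_n)\ge 0$, so $-2U(t_n)\underline{M}_n\ge 0$, and
\[
4(1-t_n)+U(t_n)^2-2U(t_n)\underline{M}_n>0
\]
as well, again because $t_n<1$.

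Finally, since by assumption $\{K_s\}_{s\in[0,t]}$ is welded for every $t\in(0,1)$, Lemma \ref{stran} applies and yields that $\{K_s\}_{s\in[0,1]}$ is welded, completing the proof. No step here is really an obstacle: the whole argument is a bookkeeping observation that the sign hypothesis $U(s_n)\le U(1)\le U(t_n)$ is exactly what makes the two quadratic expressions in Lemma \ref{stran} manifestly positive, the only minor care being the initial normalization $U(1)=0$ via translation invariance.
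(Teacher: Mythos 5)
Your proof is correct and follows essentially the same route as the paper's: normalize to $U(1)=0$, observe that the sign hypothesis makes $-2U(s_n)\overline{M}_n\ge 0$ and $-2U(t_n)\underline{M}_n\ge 0$ while $4(1-s_n)>0$ and $4(1-t_n)>0$, and invoke Lemma \ref{stran}. The extra care you take in justifying the translation normalization is fine but not needed beyond a ``without loss of generality.''
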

\begin{proof}
Without loss of generality we can assume $U(1)=0.$ We can apply Lemma \ref{stran} as  $$4(1-s_n)+U(s_n)^2-2U(s_n)\overline{M}_n>-2U(s_n)\overline{M}_n\geq0 \quad \text{and}$$ 
$$ 4(1-t_n)+U(t_n)^2-2U(t_n)\underline{M}_n>-2U(t_n)\underline{M}_n\geq0.$$
\end{proof}

\subsection{Quasisymmetric weldings}

Suppose that $\{K_t\}_{t\in[0,T]}$ is a family of hulls generated by the multiple-slit equation and that it is welded. In this case, $K_T$ has $n$ connected components $C_1,...,C_n.$ Denote by $I_j$ the cluster set of $\overline{C_j}$ with respect to $g_{K_T}.$ Then $U_j(T)\in I_j$ and there exist welding homeomorphisms $h_j:I_j\to I_j,$ $j=1,...,n,$ in the sense of Remark \ref{Schorsch}. \\
We call $h_j$ \textit{quasisymmetric} provided that there is a constant $M\geq 1$ such that
$$\frac1{M}\leq \frac{x-U_j(T)}{U_j(T)-h_j(x)} \leq M$$
for all $x>U_j(T), x\in I_j,$ and 
$$\frac1{M}\leq \frac{h_j(x)-h_j(y)}{h_j(y)-h_j(z)} \leq M$$ for all $x,y,z\in I_j$ with $U_j(T)\leq x<y<z$ and $y-x=z-y$.\\
If we think of the ``fundamental Theorem of conformal welding'', then one could hope to find an affirmative answer to the following question: 
\begin{question}
 Is $C_j$ a quasislit provided that $h_j$ is quasisymmetric?
\end{question}

However, we have the problem that we don't know whether $C_j$ is a slit or not, i.e. we would need a uniqueness result for generalized conformal weldings that are quasisymmetric.\\

In \cite{Lind:2005}, Lemma 6, and \cite{MarshallRohde:2005}, Lemma 2.2 respectively, it was shown for $n=1$ that $C_1=K_T$ is a quasislit if and only if $h_1$ is quasisymmetric and $C_1$ is a slit. This result readily generalizes to the multiple-slit case.

\begin{lemma}
 $C_j$ is a quasislit if and only if $C_j$ is a slit and $h_j$ is quasisymmetric. 
\end{lemma}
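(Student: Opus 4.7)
The idea is to reduce the statement to the single--slit case already proved in \cite{Lind:2005} and \cite{MarshallRohde:2005} by peeling off the remaining components of $K_T$ with a single conformal map.

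Set $K^{(j)}:=\bigcup_{k\neq j}C_k$ and let $\phi:=g_{K^{(j)}}$, so that $\phi:\Ha\setminus K^{(j)}\to\Ha$ is conformal and $g_{K_T}=g_{\phi(C_j)}\circ\phi$. Because $\overline{C_j}\cap \overline{K^{(j)}}=\emptyset$, the base point of $C_j$ on $\R$ sits in an open interval $(a,b)\subset\R\setminus\overline{K^{(j)}}$, across which $\phi$ extends analytically by Schwarz reflection. The resulting biholomorphism---still denoted $\phi$---is defined on an open neighbourhood $W$ of $\overline{C_j}$ in $\C$ and maps it conformally onto a neighbourhood of $\overline{\phi(C_j)}$. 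In particular $\phi(C_j)$ is a single connected subhull of $\Ha$, and it is a slit precisely when $C_j$ is.

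Next, observe that the welding $h_j$ on $I_j$ identifies the two $g_{K_T}$--images of each point of $C_j$ approached from its two prime ends; via the factorization above and the fact that $\phi$ gives a homeomorphism between the prime end compactifications of $C_j$ and $\phi(C_j)$ in $W$ respectively $\phi(W)$, this identification coincides with the classical welding $\tilde h_j$ of the single hull $\phi(C_j)$. Hence $h_j$ is quasisymmetric if and only if $\tilde h_j$ is. On the other hand, the quasislit property is geometrically characterised by nontangential approach to $\R$ at the root plus the bounded--turning condition \eqref{boundedturning}. Both properties are preserved under $\phi$: bounded turning is preserved because $\phi$ is bi--Lipschitz on the compact set $\overline{C_j}\subset W$, and nontangential approach is preserved because $\phi$ is conformal at the real base point, hence angle--preserving there. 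Thus $C_j$ is a quasislit if and only if $\phi(C_j)$ is.

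Applying the cited single--slit result to $\phi(C_j)$ gives: $\phi(C_j)$ is a quasislit iff $\phi(C_j)$ is a slit and $\tilde h_j$ is quasisymmetric. Chaining the three equivalences established above completes the proof. The main technical point I expect is the careful verification that $\phi$ really extends as a biholomorphism across the root of $C_j$ on $\R$ and that this extension transports both bounded turning and the nontangentiality of the approach; once the strict disjointness of the closures of the $C_k$'s is used to produce the interval $(a,b)$ and the neighbourhood $W$, the rest is routine.
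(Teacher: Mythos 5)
Your proof is correct, and it is the mirror image of the one in the paper. Both arguments reduce to the one--slit case of Lemma 6 in \cite{Lind:2005}, but via dual factorizations of $g_{K_T}$: the paper writes $g_{K_T}=g_A\circ g_{C_j}$ with $A=g_{C_j}\bigl(\bigcup_{k\neq j}C_k\bigr)$, keeps $C_j$ itself as the object to which the one--slit lemma is applied, and only has to observe that the multi--slit welding $h_j$ is the conjugate $g_A\circ h\circ g_A^{-1}$ of the one--slit welding $h$ of $C_j$ by a map that is conformal in a neighbourhood of the welding interval --- so quasisymmetry transfers and no geometric property of the hull needs to be transported at all. You instead write $g_{K_T}=g_{\phi(C_j)}\circ\phi$ with $\phi=g_{K^{(j)}}$, which buys you an exact identity $h_j=\tilde h_j$ between the weldings (no conjugation needed), but at the cost of having to push the quasislit property itself through $\phi$; your justification for that step (Schwarz reflection across an interval $(a,b)\subset\R\setminus\overline{K^{(j)}}$ around the root, bi--Lipschitzness of $\phi$ on the compact $\overline{C_j}$ for the bounded--turning condition \eqref{boundedturning}, and conformality at the root for nontangential approach) is sound. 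One small point that both your write-up and the paper leave implicit, and which you should at least mention: to invoke the one--slit lemma you need to know that $\phi(C_j)$ (respectively $C_j$ in the paper's version) is itself generated by the one--slit equation with a continuous driving function, which follows from the local growth property of the components established in Lemma \ref{Miriam} together with Remark \ref{Lola}.
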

 \begin{proof}
  Let $A=g_{C_j}\left(\cup_{k\not=j}C_k\right).$ By applying the function $g_{A}$ we can simply reduce the statement to the case $n=1:$ 
The hull $C_j$ can be generated by the one-slit equation. Denote by $h:[a,b]\to[a,b]$ its (one-slit) welding homeomorphism. Now we have the following connection between $h$ and $h_j:$ 
$$h(y) = g_A^{-1}(h_j(g_A(y)))\quad \text{for every} \quad y\in [a,b].$$
As $g_A$ is conformal in a neighborhood of $[a,b]$, we conclude that $h$ is quasisymmetric if and only if $h_j$ is quasisymmetric. The statement follows now from the one-slit case, i.e. from Lemma 6 in \cite{Lind:2005}.
 \end{proof}

\subsection{The regular case and quasislits}

Now we generalize Theorem \ref{lmr} to the multiple-slit case and we start with the following statement.

\begin{lemma}\label{Michael}
Denote by $\{K_t\}_{t\in[0,T]}$ the generated hulls by equation (\ref{more2}). Assume that $K_T$ has $n$ connected components $C^1_T,...,C^n_T$ with $U_j(0)\in\overline{C^j_T}.$ \\
Assume there exists one $j\in\{1,...,n\}$ such that for every $t\in[0,T]$ there exists an $\eps>0$  such that
\begin{equation}\label{Olga} \sup_{\substack{r,s\in[0,T]\\ r\not=s\\|r-t|,|s-t|<\eps}}\frac{|U_j(r)-U_j(s)|}{\sqrt{|r-s|}}<4\sqrt{\lambda_j(t)}, \end{equation}
then $C^j_T$ is a quasislit.\\
Conversely, if $C^j_T$ is a quasislit, then $U_j \in \Lip.$
\end{lemma}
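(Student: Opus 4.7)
The plan is to reduce the assertion to the one-slit case of Theorem \ref{lmr} by exhibiting $C^j_T$ as the hull generated by a standard one-slit Loewner equation with a new driving function, and to compare the $1/2$-H\"older moduli. Since $K_T$ has $n$ components, for every $t\in(0,T]$ the hull $K_t$ decomposes as $K_t=C^t_1\cup\ldots\cup C^t_n$ with each $C^t_k$ connected, $U_k(0)\in\overline{C^t_k}$, $C^s_k\subseteq C^t_k$ for $s\le t$, and pairwise disjoint closures. By the arguments used for Lemma \ref{Miriam} and Lemma \ref{LipDiff}, the family $\{C^t_j\}_{t\in[0,T]}$ has the local growth property, $x(t):=\hcap(C^t_j)$ is continuously differentiable, and $\dot{x}(t)=2\lambda_j(t)/C(x(t),t)$ for a continuous function $C\in(0,1]$ that is continuously differentiable in $t$. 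Remark \ref{Lola} then produces a continuous $V_j:[0,T]\to\R$ with $l_t:=g_{C^t_j}$ satisfying $\dot l_t=2\dot x(t)/(l_t-V_j(t))$, and the time change $\tilde l_s:=l_{x^{-1}(s/2)}$ converts this into the standard chordal one-slit equation with continuous driving function $\tilde V_j(s):=V_j(x^{-1}(s/2))$ on $[0,2x(T)]$, generating exactly the hull $C^T_j$.

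Next I would relate the local H\"older behavior of $\tilde V_j$ and $U_j$. Writing $g_{K_t}=H_t\circ l_t$ with $A_t:=l_t\bigl(\bigcup_{k\not=j}C^t_k\bigr)$ and $H_t:=g_{A_t}$, one has $U_j(t)=H_t(V_j(t))$. Because the closures of the $C^t_k$ are disjoint, $A_t$ stays uniformly bounded away from $V_j(t)$; hence $H_t$ extends analytically to a fixed neighborhood of $V_j(t)$ for every $t\in[0,T]$, and $(t,z)\mapsto H_t^{-1}(z)$ is continuously differentiable in both variables on a neighborhood of $\{(t,U_j(t))\with t\in[0,T]\}$, as in the proof of Proposition \ref{nec}. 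Repeating the conformal computation at the end of the proof of Lemma \ref{LipDiff} yields $[H_t'(V_j(t))]^2=C(x(t),t)$, so that
$$\frac{|(H_t^{-1})'(U_j(t))|}{\sqrt{2\dot x(t)}}=\frac{1}{\sqrt{C(x(t),t)}}\cdot\sqrt{\frac{C(x(t),t)}{4\lambda_j(t)}}=\frac{1}{2\sqrt{\lambda_j(t)}}.$$
Fixing $t_0\in[0,T]$ and setting $\tilde s_r:=2x(r)$, a linearization of $V_j(r)-V_j(s)=H_r^{-1}(U_j(r))-H_s^{-1}(U_j(s))$ combined with $\tilde s_r-\tilde s_s=2\dot x(t_0)(r-s)+o(r-s)$ gives
$$\frac{|\tilde V_j(\tilde s_r)-\tilde V_j(\tilde s_s)|}{\sqrt{|\tilde s_r-\tilde s_s|}}=\frac{1}{2\sqrt{\lambda_j(t_0)}}\cdot\frac{|U_j(r)-U_j(s)|}{\sqrt{|r-s|}}+o(1)$$
as $r,s\to t_0$.

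Consequently, hypothesis (\ref{Olga}) at every $t_0\in[0,T]$ forces the local $1/2$-H\"older ratio of $\tilde V_j$ at $2x(t_0)$ to be strictly below $2$, far below the critical value $4$ in Theorem \ref{lmr}; the theorem then implies that $C^T_j$ is a quasislit. Conversely, if $C^T_j$ is a quasislit, Theorem \ref{lmr} gives $\tilde V_j\in\Lip$ on $[0,2x(T)]$, and the same identity, read in reverse and combined with the continuity and strict positivity of $\lambda_j$ on $[0,T]$, yields $U_j\in\Lip$. I expect the principal obstacle to be the uniform control of the error terms in the linearization; this relies on the joint continuous differentiability of $(t,z)\mapsto H_t^{-1}(z)$ and on $C(x(t),t)$ being bounded away from $0$ and $1$ uniformly in $t$, both of which follow from the disjointness of the closures of the components $C^t_k$ and the compactness of $[0,T]$. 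A related technical point is the extension of Lemma \ref{LipDiff} to arbitrarily many components and time-dependent coefficients $\lambda_j(t)$, which I expect to go through by localization around each $t_0$.
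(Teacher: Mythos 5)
Your strategy is essentially the paper's: decompose $K_t$ into its connected components, write $g_{K_t}=H_t\circ g_{C^t_j}$ so that $C^T_j$ is generated by a one-slit equation with driving function $V_j(t)=H_t^{-1}(U_j(t))$, and transport the pointwise $\tfrac12$-H\"older condition through this conjugation so that Theorem \ref{lmr} applies. The difference is organizational: the paper argues by contradiction and reduces (via truncation) to the case where the quasislit property fails near the base point $U_j(0)$, so that it only needs the comparison at $t_0=0$, where $H_0=\operatorname{\bf id}$ and $C(0,0)=1$ make the conversion trivial; you instead carry out the comparison at every $t_0\in[0,T]$ using $H_{t_0}'(V_j(t_0))^2=C(x(t_0),t_0)$ and $\dot x=2\lambda_j/C$. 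Your route treats both directions of the lemma symmetrically, at the price of needing the joint $C^1$-dependence of $(t,z)\mapsto H_t^{-1}(z)$ and the lower bound on the gap between $V_j(t)$ and $A_t$ uniformly on all of $[0,T]$; that is acceptable at the paper's own level of detail.

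There is, however, a concrete normalization error that you should fix, because the constant $4\sqrt{\lambda_j(t)}$ is sharp. With $x(t)=\hcap(C^t_j)$, the reparameterization bringing $g_{C^t_j}$ into the standard chordal form $\frac{d}{d\sigma}g=2/(g-W(\sigma))$ is $\sigma=x(t)/2$ (so that $\hcap=2\sigma$), not $s=2x(t)$ as in your definition $\tilde l_s=l_{x^{-1}(s/2)}$. Redoing the computation with the correct change of variables, $|\sigma_r-\sigma_s|\approx\tfrac12\dot x(t_0)\,|r-s|=\lambda_j(t_0)C(x(t_0),t_0)^{-1}|r-s|$, so the conversion factor is
$$\frac{\bigl|(H_{t_0}^{-1})'(U_j(t_0))\bigr|}{\sqrt{\dot x(t_0)/2}}=\frac{C^{-1/2}}{\sqrt{\lambda_j(t_0)}\,C^{-1/2}}=\frac{1}{\sqrt{\lambda_j(t_0)}}\,,$$
twice your value, and hypothesis (\ref{Olga}) converts into a local ratio for $W$ that is strictly below $4$ --- exactly the threshold of Theorem \ref{lmr}, with no slack. (Equivalently: under your time change the generated hulls satisfy $\hcap=s/2$ rather than $2s$, and the critical constant of Theorem \ref{lmr} for that normalization is $2$, not $4$; your bound $<2$ is again exactly at threshold.) So the conclusion stands, but the assertion that the hypothesis lands ``far below the critical value $4$'' is false; were it true, the constant in the lemma could be relaxed to $8\sqrt{\lambda_j(t)}$, which Proposition \ref{nec} shows is impossible.
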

\begin{proof}
First, assume that condition (\ref{Olga}) holds. Also $K_t$ consists of $n$ disjoint connected components $C^1_t,...,C^n_t $ for every $t\in(0,T],$ where we assume that $C^j_t$ corresponds to $U_j.$ Now assume that $C^j_T$ is not a quasislit. Then, $C^j_T$ does not approach $\R$ nontangentially or there exists a point $p \in \overline{C^j_T}$ such that any neighborhood of $p$ in $\overline{C^j_T}$ is not a quasiarc. Without loss of generality we may assume that the quasislit property is violated in any neighborhood of $\overline{C^j_T}\cap \R=U_j(0),$ i.e. we assume that any subhull of $C^j_T$ is not a quasislit. Let $x(t)=\hcap(C^j_t).$ Let $h_t=g_{C^j_t}$ and define $H_t$ by $g_t=H_t\circ h_t.$ The mappings $h_t$ satisfy a one-slit Loewner equation
$$ \dot{h}_t=\frac{2\dot{x}(t)}{h_t-V(t)}, $$
where $x(t)$ is continuously differentiable with $\dot{x}(0)=2\lambda_j(0),$ see Lemma \ref{Miriam}, and $V$ is a continuous driving function. This function $V$ is related to $U_j$ via 
$$V(t)=H_t^{-1}(U_j(t)).$$
For all $t$ small enough, say $t<t',$ the function $H_t^{-1}$ can be extended to the lower half-plane by reflection and to a fixed neighborhood $\mathcal{N}$ of $\{U_j(t)\with t\in[0,t']\}$ for all  $t< t'.$
As the function $[0,t')\times \mathcal{N} \ni(t,z)\mapsto F(t,z):=H_t^{-1}(z)$ is continuously differentiable, we have for all $r,s\in[0,t')$:
\begin{eqnarray*} |V(r)-V(s)|&\leq& |\frac{\partial F}{\partial t}(r,U(r))| \cdot |(r-s)| + |\frac{\partial F}{\partial z}(r,U(r))| \cdot  |(U_j(r)-U_j(s))| \\ &+&\Landauo(|r-s|+|U_j(r)-U_j(s)|)
\leq |\frac{\partial F}{\partial z}(r,U(r))| \cdot  |(U_j(r)-U_j(s))| + \Landauo(\sqrt{|r-s|})
\end{eqnarray*}
for $s\to r.$
 Moreover, $\frac{\partial F}{\partial z}(0,z)=1$ for all $z\in \mathcal{N}.$ Thus, for every $\eps>0$ we can find $\delta>0$ such that for all $r,s\in[0,\delta],$ $r\not=s,$ we have
\begin{itemize}
\item  $|\frac{\partial F}{\partial z}(r,z)|\leq 1+\eps$ for all $z\in \mathcal{N}$,
\item  $\frac{|U_j(r)-U_j(s)|}{\sqrt{|r-s|}}<4\sqrt{\lambda_j(0)}.$
\end{itemize}
Thus, for $\eps$ small enough, $V$ satisfies
$$  \frac{|V(r)-V(s)|}{\sqrt{|r-s|}}<4\sqrt{\lambda_j(0)} $$
for all $r,s\in[0,\delta],$ $r\not=s.$ By Theorem \ref{lmr} (where we have to change the time $t\to x(t)$) we conclude that $C^j_\delta$ is a quasislit for all $\delta$ small enough, a contradiction.\\
Conversely, if $C^j_T$ is a quasislit, then we can show in a similar way that $U_j \in \Lip.$
\end{proof}

\begin{theorem}\label{Michael2}
Let $U_1,\ldots,U_n\in \Lip$ and assume that for every $j\in\{1,\ldots,n\}$ and every $t\in[0,T]$ there exists an $\eps>0$  such that
\begin{equation}\label{Babutza} \sup_{\substack{r,s\in[0,T]\\ r\not=s\\|r-t|,|s-t|<\eps}}\frac{|U_j(r)-U_j(s)|}{\sqrt{|r-s|}}<4\sqrt{\lambda_j(t)}. \end{equation}
Denote by $\{K_t\}_{t\in[0,T]}$ the generated hulls by equation (\ref{more2}). Then $K_T$ consists of $n$ disjoint connected components $C^1,...,C^n,$ and each $C^j$ is a quasislit.
\end{theorem}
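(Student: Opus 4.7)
The plan is to deduce Theorem \ref{Michael2} by combining Theorem \ref{sevslits} with Lemma \ref{Michael}. First I would observe that the local supremum estimate (\ref{Babutza}) implies pointwise the one-sided condition
$$\limsup_{h\downarrow 0}\frac{|U_j(t)-U_j(t-h)|}{\sqrt{h}} < 4\sqrt{\lambda_j(t)}$$
for every $t\in(0,T]$ and every $j\in\{1,\ldots,n\}$. Therefore Theorem \ref{sevslits} applies and the family $\{K_t\}_{t\in[0,T]}$ is welded. The same reasoning, applied to the restricted family $\{K_s\}_{s\in[0,t]}$ for each $t\in(0,T]$ (the hypothesis is purely local and stays intact), shows that every $K_t$ is welded, so by the corollary on welded hulls each $K_t$ with $t\in(0,T]$ decomposes into exactly $n$ disjoint connected components.

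Next I would set up a consistent labeling of these components across the whole interval $[0,T]$. Lemma \ref{Miriam} provides, for some $\tau>0$, labels $C^1_t,\ldots,C^n_t$ of the connected components of $K_t$ for $t\in(0,\tau]$, nested as $C^j_s\subseteq C^j_t$ whenever $0<s\le t\le\tau$, with $U_j(0)$ on the boundary of $C^j_t$ in the limiting sense and with the ordering inherited from $U_1(t)<\ldots<U_n(t)$. Because the family $\{K_t\}$ is monotone and continuous in $t$ and because each $K_t$ with $t\in(0,T]$ has exactly $n$ components, this labeling extends uniquely to all of $(0,T]$: no component can split or disappear, and two components cannot merge without dropping the total count below $n$. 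Setting $C^j := C^j_T$ we obtain pairwise disjoint subhulls whose union is $K_T$, with $U_j(0)\in\overline{C^j}$.

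Finally, I would invoke Lemma \ref{Michael} separately for each $j\in\{1,\ldots,n\}$: condition (\ref{Babutza}) is verbatim the hypothesis (\ref{Olga}) of that lemma, and the previous paragraph verifies its standing assumption that $K_T$ has $n$ connected components with $U_j(0)\in\overline{C^j}$. Lemma \ref{Michael} then concludes that each $C^j$ is a quasislit, which finishes the proof. The main technical hurdle is the global component-tracking step, since Lemma \ref{Miriam} only supplies a labeling near $t=0$; the persistence of weldedness on every subinterval $[0,t]$ is precisely what rules out any later merger of components and keeps the labeling well-defined all the way up to $t=T$.
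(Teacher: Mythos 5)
Your proof is correct and follows essentially the same route as the paper: Theorem \ref{sevslits} (via the observation that (\ref{Babutza}) implies the pointwise one-sided limsup condition) yields weldedness and hence the decomposition of $K_T$ into $n$ components, and Lemma \ref{Michael} then gives the quasislit property of each component. The extra care you take with tracking the labeling of components over $[0,T]$ is a detail the paper's two-line proof leaves implicit, but it does not change the argument.
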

\begin{proof} All driving functions satisfy the condition from Theorem \ref{sevslits}. Hence we know that $K_T$ consists of $n$ disjoint connected components $C^1,...,C^n.$ Every $C^j$ is a quasislit by Lemma \ref{Michael}.
\end{proof}

\begin{remark}
The condition in the last result can be weakened in the following way: If (\ref{Babutza}) is satisfied for all $j\in\{1,...,n\}$ and only for all $t\in(0,T],$ then $K_T$ consists of $n$ connected components $C^1,...,C^n$ (by Theorem \ref{sevslits}) and every $C^j$ is a slit. This can be seen as follows: Let $g_t$ be the mapping from equation (\ref{more2}). By Theorem \ref{Michael2} we know that the hull $g_\eps(K_T\setminus K_\eps)$ consists of $n$ quasislits for every $\eps\in (0,T).$ Thus, $C^j \setminus A$ is a quasiarc for any nonempty subhull $A\subsetneq C^j$. This implies that $C^j$ can be parameterized by a continuous, injective function $\gamma:(0,1]\to\Ha,$ where $\gamma(1)$ is the tip of $C^j.$ The local growth property of $C^j$implies that $\gamma(t)$ can be extended continuously to $t=0$ and $\gamma(0)=U_j(0).$ Consequently, $C^j$ is a slit (but not a quasislit in general).
\end{remark}

\subsection{An example for the irregular case}\label{Ingwer}
Recall question \ref{4ne}: Is the local H\"older constant $4$ in Theorem \ref{lmr} also necessary for generating quasislits? The answer is ``no'': For any $s\in[0,T)$, the ``right'' pointwise H\"older norm, i.e. the value $$\limsup_{h\downarrow0}\frac{|U(s+h)-U(s)|}{\sqrt{h}}$$ can get arbitrarily large, as the driving function $U(t)=c\sqrt{t}$ shows; it generates a straight line for every $c\in \R$, see Example \ref{Maxime}.

Next we show that the ``left'' pointwise H\"older norm can also become arbitrarily large within the space of all driving functions generating quasislits. To this end, we will use Corollary \ref{cori} to construct a driving function that is irregular at one point and generates a quasislit.
\begin{theorem}\label{proop}
 For every $C>0,$ there exists a driving function $U:[0,1]\to\R$ that generates a quasislit and satisfies
$$ \limsup_{h\downarrow0}\frac{|U(1)-U(1-h)|}{\sqrt{h}}=C. $$
\end{theorem}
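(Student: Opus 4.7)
The plan is to split into two regimes according to the size of $C$, using the monotone ``straight-line'' example when $C$ is below the Marshall--Rohde threshold and an oscillatory, self-similar construction when it is not. Throughout, I will arrange $U(1)=0$, so the quantity of interest is simply $\limsup_{h\downarrow 0}|U(1-h)|/\sqrt{h}$.

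\emph{The subcritical case $0<C<4$.} Here I would simply take $U(t):=C\sqrt{1-t}$ on $[0,1]$. A direct computation gives $|U(r)-U(s)|=C|\sqrt{1-r}-\sqrt{1-s}|\le C\sqrt{|r-s|}$, so $U\in\Lip$ with global constant $C<4$. In particular the local constant in the sense of Theorem \ref{lmr} is $\le C<4$ at every $t\in[0,1]$, so $K_1$ is a quasislit. Clearly $|U(1-h)-U(1)|/\sqrt{h}=C$ for every $h>0$, so the $\limsup$ equals $C$.

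\emph{The supercritical case $C\ge 4$.} Here I cannot keep the local Lip$(1/2)$-constant under $4$ at $t=1$, so I will engineer $U$ to cross $U(1)=0$ on both sides arbitrarily close to $t=1$ and invoke Corollary \ref{cori}. Fix a smooth $1$-periodic function $\phi:\R\to\R$ with $\max\phi=1$, $\min\phi=-1$, and $\phi(0)=1$, and set
\[
U(t):=C\sqrt{1-t}\,\phi\!\bigl(\log_2\tfrac{1}{1-t}\bigr)\quad\text{for }t\in[0,1),\qquad U(1):=0.
\]
Then $|U(t)|\le C\sqrt{1-t}$, so $U$ is continuous on $[0,1]$ and in $\Lip$ (by the same inequality as in the subcritical case). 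Plugging $h_n=4^{-n}$ gives $|U(1-h_n)|/\sqrt{h_n}=C$, while $|U(1-h)|/\sqrt{h}\le C$ always, hence the $\limsup$ equals $C$ exactly. Because $\phi$ attains both $+1$ and $-1$ periodically, there are increasing sequences $s_n,t_n\to 1$ with $U(s_n)\le 0=U(1)\le U(t_n)$.

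\emph{Quasislit for every $t<1$.} For any $t_0\in[0,1)$, $U$ is $C^\infty$ in a neighbourhood of $t_0$, so
$$\sup_{r,s\in[t_0-\varepsilon,t_0+\varepsilon],\,r\ne s}\frac{|U(r)-U(s)|}{\sqrt{|r-s|}}\;\le\;\|U'\|_{L^\infty([t_0-\varepsilon,t_0+\varepsilon])}\sqrt{2\varepsilon}\;\xrightarrow[\varepsilon\downarrow 0]{}\;0.$$
Applying Theorem \ref{lmr} to $U$ restricted to $[0,t_0]$ (the condition holding trivially at the endpoint $t_0$ too), I conclude that $K_{t_0}$ is a quasislit for every $t_0\in[0,1)$. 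In particular, the trace $\gamma:[0,1)\to\overline{\Ha}$ is a simple arc that approaches $\R$ non-tangentially.

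\emph{Passing to $t=1$ (the main obstacle).} At $t=1$, Theorem \ref{lmr} no longer applies, but Corollary \ref{cori}, applied with the sequences $s_n,t_n$ above, shows that the family $\{K_s\}_{s\in[0,1]}$ is welded. In particular $K_1$ has a single connected component and admits a welding homeomorphism. What remains is to upgrade this welding to the full quasislit property, i.e.\ to show continuity of $\gamma$ at $t=1$, injectivity up to and including the tip, and uniform bounded turning on the whole arc. For this I would exploit the self-similarity built into the construction: by the $1$-periodicity of $\phi$,
\[
U\!\bigl(1-4^{-1}(1-t)\bigr)=\tfrac{1}{2}\,U(t)\qquad\text{for every }t\in[0,1],
\]
which under the scaling invariance of the chordal Loewner equation (if $U$ generates hulls $\{K_s\}$, then $aU(\cdot/a^2)$ generates $\{aK_{s/a^2}\}$) translates into a self-similarity of the hulls: the pieces $g_{1-4^{-n}}(K_1\setminus K_{1-4^{-n}})$ are biholomorphic to a fixed rescaling of $K_1$ itself. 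Because each $K_{t_0}$, $t_0<1$, is already a quasislit with some bounded-turning and non-tangentiality constants, the self-similarity forces these constants to be uniformly controlled on the countable collection of rescaled copies that together cover all of $K_1$. A standard patching argument, together with the fact that the tip $\gamma(1)$ is the unique accumulation point of $\gamma$ at $t=1$ (granted by welding), then yields that $\gamma:[0,1]\to\overline{\Ha}$ is a quasi-arc approaching $\R$ non-tangentially, i.e.\ $K_1$ is a quasislit. The hard part, as indicated, is precisely this uniform control step; everything else is routine once the self-similar driving function has been chosen.
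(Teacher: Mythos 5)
Your strategy is essentially the paper's: an oscillating driving function with envelope $C\sqrt{1-t}$ that is self-similar under the parabolic rescaling $t\mapsto 1-\tfrac{1-t}{4}$, weldedness at $t=1$ via Corollary \ref{cori}, and then the self-similarity of the hulls to pass from ``quasislit for every $t<1$'' to ``quasislit at $t=1$''. (The paper uses a single piecewise-linear $U$ with zeros at $1-2^{-n}$ and peaks of height $C\sqrt{3/2^{n+2}}$ for all $C>0$; your shortcut $U(t)=C\sqrt{1-t}$ for $C<4$ is correct and slightly simpler there, and your smooth $\phi$-construction is a legitimate variant for $C\ge 4$.) The problem is that the step you yourself flag as ``the hard part'' is exactly where the substance of the theorem lies, and your sketch of it contains one incorrect claim and one unjustified one.

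First, weldedness does \emph{not} grant that $\gamma$ has a unique accumulation point as $t\uparrow 1$: welding is a statement about the pairing of backward real trajectories and is a priori compatible with a trace that oscillates without converging (compare the non-slit hulls in Figure \ref{Fi34}). What the paper actually does is turn the self-similarity into a dynamical statement: the successive pieces $S_{n+1}=\overline{K_{1-\delta^{n+1}}\setminus K_{1-\delta^{n}}}$ satisfy $S_{n+1}=I(S_n)$ for a single holomorphic self-map $I$ of $\Ha$ (in your normalization $I(z)=g_{3/4}^{-1}(z/2)$) which is not an automorphism; the Denjoy--Wolff theorem then gives $I^n\to S_\infty$ uniformly on $S_1$, and only at that point are compactness (to exclude $S_\infty=\infty$) and weldedness (to exclude $S_\infty\in\R$) invoked, yielding that $K_1$ is a simple curve with tip $S_\infty\in\Ha$. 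Second, ``self-similarity forces the bounded-turning constants to be uniformly controlled'' is not automatic: $I$ is a conformal map, not a Euclidean similarity, and conformal maps do not preserve ratios $\diam(x,y)/|x-y|$. The paper closes this by localizing at the attracting fixed point, using $|I'(S_\infty)|<1$ together with distortion bounds of the form $(1-a_2c^n)|I'(S_\infty)|\,|x-y|\le |I(x)-I(y)|\le (1+a_1c^n)|I'(S_\infty)|\,|x-y|$ on the tail pieces, and then pulling any pair of points near the tip back by $I^{-k}$ to a fixed piece (where the ratio is bounded because $K_t$ is a quasislit for $t<1$) at the cost of the convergent product $\prod_{j\ge 0}(1+a_1c^j)/(1-a_2c^j)$. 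Without these two ingredients your ``standard patching argument'' does not close. (A minor additional point: $|U(t)|\le C\sqrt{1-t}$ only controls increments against the endpoint; the global $\Lip$ bound for your oscillating $U$ needs $|U'(t)|=O((1-t)^{-1/2})$, which holds but should be said.)
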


\begin{proof}
Let $C>0.$ First we construct the driving function $U$, which is shown in Figure \ref{Fi35} for the case $C=5$. 

We set $U(r_n):=0$ with $r_n:=1-\frac1{2^n}$ for all $n\geq 0.$ The mean value of $r_n$ and $r_{n+1}$ is equal to $w_n:=1-\frac3{2^{n+2}}$ and here we define $$U(w_n) := C\sqrt{\frac3{2^{n+2}}}\quad \text{for} \quad n\geq0.$$ Now we define $U(t)$ for $t\in[0,1)$ by linear interpolation, so that 
\begin{eqnarray*}
 U(t)&=& C\sqrt{3\cdot2^{n+2}}\cdot (t-r_n) \quad \text{for} \quad t\in[r_n, w_n]\quad \text{and}\\
U(t)&=& C\sqrt{3\cdot2^{n+2}}\cdot (r_{n+1}-t) \quad \text{for} \quad t\in[w_n,r_{n+1}].
\end{eqnarray*}
  By defining $U(1):=0$ we now have a continuous driving function and 
$$ \limsup_{h\downarrow0}\frac{|U(1)-U(1-h)|}{\sqrt{h}}=
\lim_{n\to\infty}\frac{|U(1)-U(w_n)|}{\sqrt{1-w_n}} = 
\lim_{n\to\infty}\frac{C\sqrt{3/2^{n+2}}}{\sqrt{3/2^{n+2}}} = C.$$
At each $0\leq t <1,$ the hull $K_t$ produced by this function will be a quasislit according to Theorem \ref{lmr}. Thus, we have to show that also $K_1$ is a slit and that this slit is a quasiarc.\\

First, we know that $\{K_t\}_{t\in[0,1]}$ is welded: This follows directly from  Corollary \ref{cori} by setting $s_n:=t_n:=r_n$.

If we scale our hull by $\frac1{\sqrt{2}}$, we end up with the new driving function $\tilde{U}:[0,1/2]\to\R,$ $\tilde{U}(t)=\frac1{\sqrt{2}}U(2t)$. However, this is again $U(t)$, confined to the interval $[1/2,1],$ i.e. $\tilde{U}(t)=U(t+1/2).$ Geometrically, this means that $g_{1/2}(\overline{K_1\setminus K_{1/2}})$ is just the same as $\frac1{\sqrt{2}}K_1,$ the original hull scaled by $\frac1{\sqrt{2}}.$\\

If $f:=g_{1/2}^{-1}$, and $S_n:=\overline{K_{1-1/2^n}\setminus K_{1-1/2^{n-1}}}, n\geq1,$ then  we have  $$S_{n+1}=f\left(\frac1{\sqrt{2}}S_n\right).$$
As the function $z\mapsto f(\frac{1}{\sqrt{2}}z)=:I(z)$ is not an automorphism of $\Ha$, the Denjoy--Wolff Theorem implies that the iterates $I^n=(I\circ\ldots\circ I)$ converge uniformly on $S_1$ to a point $S_\infty \in \overline{\Ha}\cup\{\infty\}.$ $S_\infty=\infty$ is not possible as the hull $K_1$ is a compact set and the case $S_\infty\in \R$ would imply that $K_1$ is not welded. Consequently $S_\infty\in \Ha$ and $K_1=\displaystyle \bigcup_{n\geq 1} S_n \cup \{S_\infty\}$ is a simple curve whose tip is $S_\infty$.\\

Now we show that this curve is a quasiarc.

For this, we will use the metric characterization of quasiarcs which says that $K_1$ is a quasiarc if and only if 
$$ \sup_{
\substack{x,y \in K_1 \\ x\not=y}} \frac{\diam(x,y)}{|x-y|} < \infty,$$ 
where we denote by $\diam(x,y)$ the diameter of the subcurve of $K_1$ joining $x$ and $y.$

For $m\in\N\cup\{0\}$ we define $F_m:=\bigcup_{k\geq m}S_{k}\cup\{S_\infty\}$. As $K_t$ is a quasislit for every $t\in(0,1),$ it suffices to show that 
\begin{equation}\label{show}
 \sup_{
\substack{x,y \in F_m \\ x\not=y}} \frac{\diam(x,y)}{|x-y|} < \infty \quad \text{for one} \; m \in\N.
\end{equation}

The set $S_n$ contracts to $S_\infty$ when $n\to\infty,$ in particular $\diam(S_n)\to0.$ 

As $I$ is conformal in $B(S_\infty, \eps)$ for $\eps>0$ small enough, there is an $N=N(\eps)\in\N$, such that 
$S_n\subset B(S_\infty, \eps)$ for all $n\geq N$. 

$S_\infty$ is a fixpoint of $I(z)$ and so $|I'(S_\infty)|<1.$ Otherwise, $I$ would be an automorphism of $\Ha.$\\

Now, for $x\in S_{N+n}$, $n\geq 0,$ we have $I(x)\in S_{N+n+1}$ and
 \begin{eqnarray*}
 &&|I(x)-S_\infty|=|I(x)-I(S_\infty)|=|I'(S_\infty)(x-S_\infty)+\LandauO(|x-S_\infty|^2)|\\&=&
|I'(S_\infty)+\LandauO(|x-S_\infty|)|\cdot|x-S_\infty|=|I'(S_\infty)||1+\LandauO(|\eps|)|\cdot|x-S_\infty|.
\end{eqnarray*} Consequently we can pass on to a smaller $\eps$ (and larger $N$) such that $\dist(S_\infty, S_{N+n+1})\leq c\cdot \dist(S_\infty, S_{N+n})$ with $c<1$ and for all $n\geq0$. Hence 
$S_{N+n}\subset B(S_\infty,c^n\eps).$\\

Furthermore, for $x,y\in F_{N+n}$ we have
\begin{eqnarray*}
 |I(x)-I(y)|= |I'(x)+\LandauO(c^n\eps)|\cdot|x-y|=|I'(S_\infty)+\LandauO(c^n\eps)|\cdot|x-y|.
\end{eqnarray*}

Hence there exist positive constants $a_1,a_2$ with $1-a_2c^n>0$ such that
\begin{equation}\label{diam}(1-a_2c^n)|I'(S_\infty)||x-y|\leq|I(x)-I(y)|\leq(1+a_1c^n)|I'(S_\infty)||x-y|.\end{equation} Thus \begin{equation}\label{diam2}\diam(I(x),I(y))\leq (1+a_1c^n)|I'(S_\infty)|\diam(x,y).\end{equation}
 
Now we will show (\ref{show}) for $m=N.$ Let $x, y \in F_N$ with $x\not=y.$ We assume that $\diam(x,S_\infty)\geq \diam(y,S_\infty).$ In particular, $x\not=S_\infty$ and thus there is a $k\geq 0$ and an $\hat{x}\in S_N$ such that $x=I^k(\hat{x}).$ Let $\hat{y}\in F_N$ be defined by $y=I^k(\hat{y}).$ 
First note that
$$ \sup_{
\substack{a\in S_N, b\in F_N \\ a\not=b}} \frac{\diam(a,b)}{|a-b|}=:E<\infty,
$$
for $K_t$ is a quasislit for every $t\in(0,1).$ Now we get with (\ref{diam}) and (\ref{diam2}):

\begin{eqnarray*}
 \frac{\diam(x,y)}{|x-y|} &=&  \frac{\diam(I^k(\hat{x}),I^k(\hat{y}))}{|I^k(\hat{x})-I^k(\hat{y})|}\leq \frac{(1+a_1c^{k-1})|I'(S_\infty)|\diam(I^{k-1}(\hat{x}),I^{k-1}(\hat{y}))}{(1-a_2c^{k-1})|I'(S_\infty)||I^{k-1}(\hat{x})-I^{k-1}(\hat{y})|} =\\
&=& \frac{(1+a_1c^{k-1})}{(1-a_2c^{k-1})}\cdot \frac{\diam(I^{k-1}(\hat{x}),I^{k-1}(\hat{y}))}{|I^{k-1}(\hat{x})-I^{k-1}(\hat{y})|} \leq ... \leq \\
&\leq& \prod \limits_{j=0}^{k-1} \frac{(1+a_1c^j)}{(1-a_2c^j)}\cdot \frac{\diam(\hat{x},\hat{y})}{|\hat{x}-\hat{y}|}  \leq
 \prod \limits_{j=0}^{k-1} \frac{(1+a_1c^j)}{(1-a_2c^j)} \cdot E \leq   E \prod  \limits_{j=0}^{\infty}\frac{1+a_1c^j}{1-a_2c^j} =\\ &=&
 E  \prod \limits_{j=0}^{\infty}(1+a_1c^j) / \prod \limits_{j=0}^{\infty}(1-a_2c^j)   <\infty.
\end{eqnarray*}

The two Pochhammer products converge because $|c|<1.$ Consequently, $K_1$ is a quasislit.
\end{proof}

  \begin{figure}[h]
    \centering
\includegraphics[width=170mm]{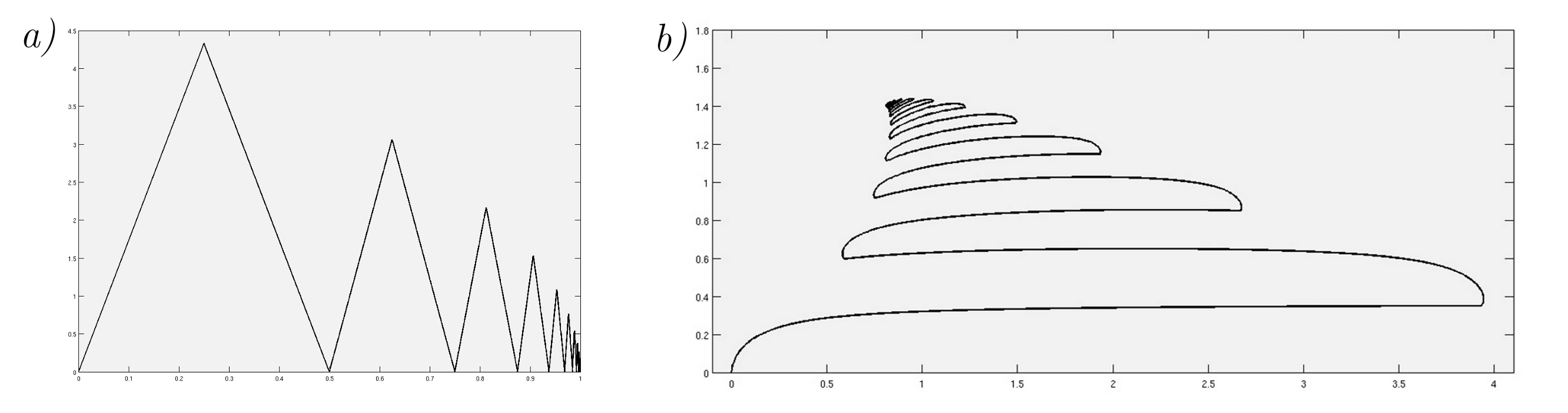}
 \caption{The driving function $U$ from Theorem \ref{proop} with $C=5$ (left) and the generated quasislit (right).}\label{Fi35}
 \end{figure}

\begin{remark}
 The argument that $U$ from the proof of Theorem \ref{proop} generates a slit holds for a more general case:

 Let $U:[0,1]\to\R$ be continuous with $U(1)=0.$ Call such a function $d$--self--similar with $0<d<1$ if $V(t):=U(1-t)$ satisfies $$d\cdot V(t/d^2)=V(t) \quad \text{for all}\quad 0<t\leq d^2.$$ Every $d$--self--similar function can be constructed by defining $V(1)$ arbitrarily, putting $V(d^2)=d\cdot V(1)$ and then defining $V(t)$ for $d^2<t<1$ such that $V$ is continuous in $[d^2,1].$ Then, $V$ is uniquely determined for all $0\leq t \leq 1.$ Now we have:\\ If $U$ is $d$--self--similar such that it produces a slit for all $0\leq t<1$ and $K_1$ is welded, then $K_1$ is a slit, too.
\end{remark}

\begin{example}
 There exists a driving function $V:[0,1]\to\R$ that is ''irregular`` at infinitely many points and generates a quasislit:\\

 Let $U$ be the driving function from the proof of Theorem \ref{proop}. We construct  $V:[0,1]\to\R$ by sticking pieces of $U$ appropriately together. For $n\geq0$ let
$$ V(t):=  U((t-(1-1/2^n))\cdot2^{n+1}) / \sqrt{2^n}
  \quad\text{for}\quad t\in[1-1/2^n, 1-1/2^{n+1}], $$ 
and $V(1):=0.$ Then $V$ is ''irregular`` at $1-1/2^n$ for all $n\geq1$ and it produces a quasislit: The hull generated at $t=1/2$ is a quasislit by Theorem \ref{proop}. Now one can repeat the proof of Theorem \ref{proop} to show that the whole hull is a quasislit, too. \hfill $\bigstar$
\end{example}

Theorem \ref{proop} together with Theorem \ref{lmr} suggests the following question:
 Does $U$ generate a quasislit if $U$ generates a slit and $U\in \Lip$?

The answer is no: There are $\Lip$-driving functions that generate slits with positive area. These slits cannot be quasislits as they are not uniquely determined by their welding homeomorphisms, see Corollary 1.4 in \cite{spacefill}.
\newpage
\section{Approach to \texorpdfstring{$\R$}{R}}\label{Approach}

Recall that a quasislit is a quasiarc in $\Ha$ that approaches $\R$ nontangentially. In this section we take a closer look at the way how a hull generated by the multiple-slit equation approaches the real axis. First we introduce two notions for a family $\{K_t\}_{t\in[0,T]}$ generated by the chordal multiple-slit equation (\ref{more2}).\\

Let $\varphi\in(0,\pi).$ We say that $K_t$ \emph{approaches $\R$ at $x\in\R$ in $\varphi$-direction} if for every $\eps>0$ there is a $t_0>0$ such that the connected component of $K_{t_0}$ having $U_j(0)$ as a boundary point is contained in the set $\{z\in\Ha\with\varphi-\eps<\arg(z-U_j(0))<\varphi+\eps\}.$\\

A further definition extends the ``line approach'' to a ``sector approach'':\\
$K_t$ \emph{approaches $\R$ at $U_j(0)$ in a sector} (or \emph{nontangentially}) if there exist angles $\alpha, \beta\in(0,\pi)$ and a $t_0>0$ such that the connected component of $K_{t_0}$ near $U_j(0)$ is contained in $$\{z\in\Ha\with \alpha<\arg(z-U_j(0))<\beta\}.$$

\subsection{Sector approach}

We start with a necessary condition for the sector approach of a family $\{K_t\}_t$ generated by equation (\ref{more2}).

 \begin{proposition}\label{sec}
  If $K_t$ approaches $\R$ at $U_j(0)$ in a sector, then $$\limsup_{h\downarrow0}\frac{|U_j(h)-U_j(0)|}{\sqrt{h}}<\infty. $$
\end{proposition}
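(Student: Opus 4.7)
The plan is to combine the sector hypothesis with Theorem \ref{hsiz} to obtain a diameter bound on the relevant connected component of $K_t$, reduce the multiple-slit situation to a one-slit equation via the decomposition from the proof of Lemma \ref{Michael}, and finally apply a scaling argument to the resulting one-slit driving function.

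By Lemma \ref{Miriam}, for $t\in(0,t_0]$ with $t_0$ small, $K_t$ decomposes into $n$ disjoint connected components $C^1_t,\ldots,C^n_t$ with $x(t):=\hcap(C^j_t)$ continuously differentiable and $\dot x(0)=2\lambda_j(0)>0$, so $x(t)\asymp t$ near $0$. The sector hypothesis means, after shrinking $t_0$, that $C^j_t\subset S:=\{z\in\Ha:\alpha<\arg(z-U_j(0))<\beta\}$ with $0<\alpha\le\beta<\pi$ for all $t\in(0,t_0]$. A direct computation of the area in the definition of $\operatorname{hsiz}$ shows that any hull $H$ contained in such a sector at a real basepoint satisfies $\operatorname{hsiz}(H)\ge c(\alpha,\beta)\,\operatorname{diam}(H)^2$, and Theorem \ref{hsiz} then gives
\[
\operatorname{diam}(C^j_t)\le C\sqrt{\hcap(C^j_t)}\le C'\sqrt{t}.
\]

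I next reduce to a single slit exactly as in the proof of Lemma \ref{Michael}. Define $h_t:=g_{C^j_t}$ and $H_t:=g_t\circ h_t^{-1}$, so $g_t=H_t\circ h_t$. Then $\{h_t\}$ satisfies a one-slit Loewner equation with continuous driving function $V(t):=H_t^{-1}(U_j(t))$ and $V(0)=U_j(0)$. Because the remaining components $\bigcup_{k\ne j}C^k_t$ stay uniformly bounded away from $U_j(0)$, the map $F(t,z):=H_t^{-1}(z)$ extends to a $C^1$ function on $[0,t_0]\times\mathcal{N}$ for some neighborhood $\mathcal{N}$ of $U_j(0)$, with $F(0,\cdot)=\id$. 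A first-order expansion then gives $H_t^{-1}(z)-z=O(t)$ uniformly for $z\in\mathcal{N}$, whence
\[
U_j(t)-U_j(0)=\bigl(V(t)-V(0)\bigr)+\bigl(U_j(t)-H_t^{-1}(U_j(t))\bigr)=\bigl(V(t)-V(0)\bigr)+O(t),
\]
so it suffices to show $|V(t)-V(0)|=O(\sqrt{t})$.

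For this final step I would use a scaling argument. Reparametrizing to standard capacity time $s=x(t)/2\asymp\lambda_j(0)\,t$, the family of hulls $\widetilde K_s:=C^j_{t(s)}$ is generated by a standard one-slit Loewner equation with driving function $\tilde V(s)=V(t(s))$. The complex affine map $z\mapsto(z-U_j(0))/\sqrt{s}$ rescales $\widetilde K_s$ to a hull of capacity $2$, lying in the fixed bounded region $S\cap B(0,R)$ for some $R=R(\alpha,\beta,C')$, and with exactly one point of $\R$ in its closure (the origin); the rescaled driving function value at capacity $1$ equals $(\tilde V(s)-\tilde V(0))/\sqrt{s}$. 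The key geometric fact needed is that this quantity is uniformly bounded on the family of hulls contained in $S\cap B(0,R)$: this can be extracted from the Nevanlinna representation of $g^{-1}$, whose measure has total mass $2$ and support inside the cluster interval $[g^-(0),g^+(0)]$, the endpoints of which are controlled in terms of the diameter by Lemma \ref{lem:hcapneu}. Putting everything together gives $|V(t)-V(0)|\le C''\sqrt{t}$, hence $|U_j(t)-U_j(0)|=O(\sqrt{t})$, and the proposition follows.

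The main obstacle is the uniform bound on the rescaled driving function in the last step: while the geometric picture is clear, one needs either the Nevanlinna/cluster-interval argument sketched above or a compactness/normal-family argument on the space of hulls in a bounded sector to extract a genuinely uniform constant. Once that is in hand, the rest is the routine diameter-to-capacity dictionary from Theorem \ref{hsiz} and the smoothness of $t\mapsto H_t^{-1}$ already established in the proof of Lemma \ref{Michael}.
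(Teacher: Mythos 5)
Your overall strategy is, at its core, the same parabolic-rescaling argument the paper uses: after rescaling by $t^{-1/2}$ the relevant component becomes a hull of capacity at most $2$ sitting in a fixed bounded subset of the sector, and the rescaled driving value, being a point of the cluster interval of that component, must then be uniformly bounded. Your preliminary steps are fine: the inequality $\operatorname{hsiz}(H)\ge c(\alpha,\beta)\diam(H)^2$ for a hull contained in a sector with vertex on $\R$ is correct, and with Theorem \ref{hsiz} and $\hcap(C^j_t)\le\hcap(K_t)=2t$ it gives $\diam(C^j_t)=O(\sqrt t)$. However, the detour through Lemma \ref{Miriam}, the factorization $g_t=H_t\circ h_t$ and the time change $s=x(t)/2$ is unnecessary: the paper simply notes that $t^{-1/2}U_j(t)$ lies in the cluster set $J_t$ of $G_t:=t^{-1/2}C_t$ with respect to $g_{t^{-1/2}K_t}$ and bounds that set directly, obtaining the containment of $G_t$ in a fixed bounded set (a triangle $\Delta$) from the sector condition together with the height bound of Lemma \ref{imestim} rather than from $\operatorname{hsiz}$.

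The genuine gap is exactly where you flag it, and the repair you sketch does not work as stated. Lemma \ref{lem:hcapneu} only tells you that the cluster interval $[g^-(0),g^+(0)]$ of the rescaled hull \emph{contains} the base point (part a) gives $g^-(0)\le 0\le g^+(0)$); neither part gives an upper bound on $g^+(0)$ or a lower bound on $g^-(0)$, which is what you need, and the Nevanlinna measure having total mass $2$ controls capacity but not the location of its support. What closes the gap is a comparison of the cluster interval of a hull with that of a fixed \emph{larger} hull: if $G\subset\Delta$ are hulls attached to $\R$ at $0$, then writing $g_\Delta=g_{g_G(\Delta\setminus G)}\circ g_G$ and applying Lemma \ref{lem:hcapneu} a) to the hull $g_G(\Delta\setminus G)$ (exactly as in the proof of Lemma \ref{lem:neu1} a)) shows that the cluster interval of $G$ is contained in that of $\Delta$. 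The paper takes $\Delta$ to be a fixed triangle containing every rescaled component $G_t$, so all the cluster intervals are trapped in $[-\gamma_t,\gamma_t]$ with $\gamma_t\to c:=\lim_{z\to0^+}g_\Delta(z)<\infty$, and the bound follows. Alternatively, Lawler's standard estimate $|g_A(z)-z|\le 3\operatorname{rad}(A)$ applied to the rescaled hull would do; either way, some comparison with a fixed enveloping hull is required, and Lemma \ref{lem:hcapneu} alone cannot substitute for it.
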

\begin{proof} 
By translation we can assume $U_j(0)=0.$ Let $t_0>0$ be small enough such that the connected component $C_t$ of $K_{t}$ near $U_j(0)$ satisfies $C_t\subset S:= \{z\in\Ha\with \alpha<\arg(z)<\pi-\alpha\}$ for an $\alpha\in (0,\pi/2)$ and all $\in[0,t_0].$\\
 For $t\in(0,t_0]$, let $G_t=t^{-1/2}C_t$ and $H_t=t^{-1/2}K_t$. The monotonicity of $\hcap$ and the scaling property imply \begin{equation*}
\hcap(G_t)<\hcap(H_t)=t^{-1}\hcap(K_t)=2.
\end{equation*}
Then we also have $G_t\subset S$ for all $t\in(0,t_0].$
  From Lemma \ref{imestim} it follows that $$\sup_{t\in(0,t_0)}\max_{z\in G_t}\Im(z)<2.$$ Let $\Delta$ be the following (Euclidean) triangle with height $2$:
 \begin{figure}[h] \centering
\includegraphics[width=65mm]{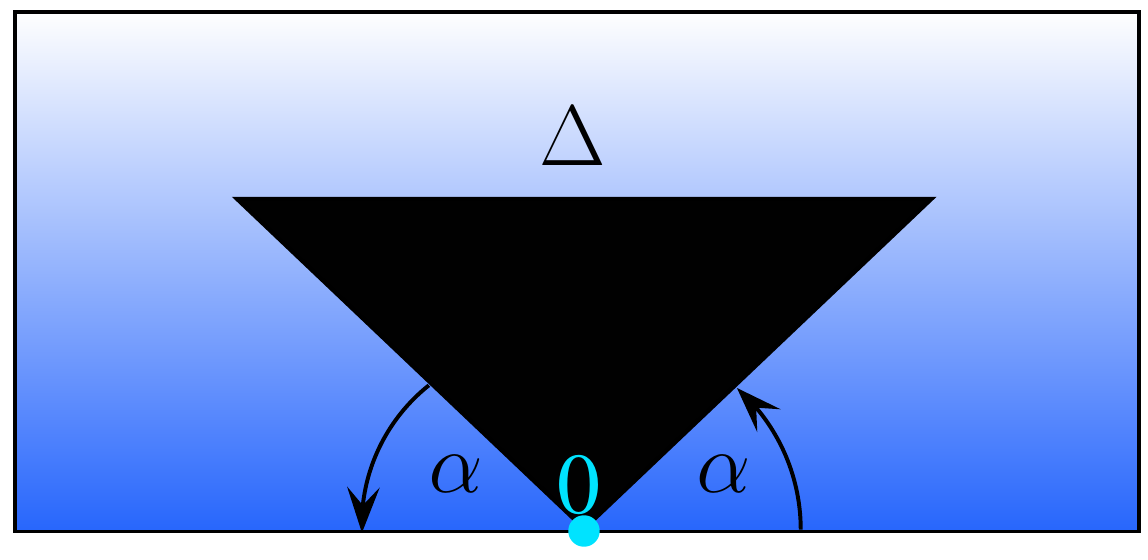}\caption{The triangle $\Delta.$ } \end{figure} 

Then it follows that $G_t\subset \Delta$ for all $t\in(0,t_0].$ \\ Now let $c= \lim_{z\to0^+}g_{\Delta}(z).$ If we denote by $I_t$ the cluster set of $G_t$ with respect to $g_{G_t},$ then $I_t\subset [-c,c]$. Likewise, if we denote by $J_t$ the cluster set of $G_t$ with respect to $g_{H_t}$ and 
\begin{eqnarray*}\gamma_t := \max\{|\lim_{z\to0^-} g_{\Delta \cup H_t}(z)|,|\lim_{z\to0^+}g_{\Delta \cup H_t}(z)|\}, \end{eqnarray*}
then $J_t\subset [-\gamma_t,\gamma_t].$
We have $\gamma_t\to c$ for $t\to 0.$ As $t^{-1/2}U(t)\in J_t$, we can conclude that $$ |t^{-1/2}U(t)|\leq \gamma_t $$
and consequently $\limsup_{t\downarrow0}|t^{-1/2}U(t)|\leq c<\infty.$
\end{proof}

The converse of  Proposition \ref{sec} is wrong, as the following counterexample shows.

\begin{proposition}
 There is a simple curve that does not approach $\R$ within a sector and its driving function $U$ satisfies $$\limsup_{t\downarrow0}\frac{|U(t)-U(0)|}{\sqrt{t}}<\infty.$$
\end{proposition}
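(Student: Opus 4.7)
The plan is to construct $U$ explicitly on a shrinking sequence of dyadic intervals $I_n=[t_{n+1},t_n]$ with $t_n=2^{-n}$, arranging that on each $I_n$ the driving function produces first a straight-line segment at an angle $\phi_n\in(0,\pi)$ and then a short ``return'' piece, with $U$ returning near zero at each endpoint $t_n$. By the explicit formula of Example \ref{Maxime}, a line segment at angle $\phi$ of parameter length $\ell$ starting at time $t_0$ is generated by $U(t)=U(t_0)+c(\phi)\sqrt{t-t_0}$ with $c(\phi)=2(\pi-2\phi)/\sqrt{\phi(\pi-\phi)}$. The key design choice will be to alternate the angles so that $\phi_{2k}\to 0$ and $\phi_{2k+1}\to\pi$ as $k\to\infty$; this forces the tangent directions of the segments attached near $U(0)$ to accumulate at both $0$ and $\pi$, so the resulting curve cannot be contained in any angular sector with vertex at $U(0)$.

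The bound $|U(t)|\leq C\sqrt{t}$ will be enforced by allowing only a small fraction $\varepsilon_n$ of each interval $I_n$ for the ``active'' line-segment phase. Since $|c(\phi_n)|^2\asymp 1/\min(\phi_n,\pi-\phi_n)$ from the formula above, the choice $\varepsilon_n\asymp\min(\phi_n,\pi-\phi_n)$ yields
\[
|U(t)-U(t_{n+1})|^{2}\ \lesssim\ c(\phi_n)^{2}\,\varepsilon_n|I_n|\ \lesssim\ |I_n|\ \leq\ 2t
\]
throughout the active phase on $I_n$. The return phase is a symmetric square-root piece that brings $U$ back to (approximately) zero at $t_n$; this guarantees continuity of $U$ at each $t_n$ and, together with $U(t_n)\to 0$, gives $\limsup_{t\downarrow 0}|U(t)|/\sqrt{t}\leq C$ for an absolute constant $C$.

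The main obstacle will be verifying that the resulting hull $K_T$ is a simple curve, i.e.\ that the successive segments, after being transported back into $\Ha$ via $g_{t_{n+1}}^{-1}$, stay pairwise disjoint. The strategy is inductive: by Lemma \ref{imestim} the new hull generated over $I_n$ lies in a half-ball of radius $O(2^{-n/2})$ around $U(t_{n+1})$, while the older hull $K_{t_{n+1}}$ already sits in $\Ha$ at heights bounded below by its own (much larger) capacity, so after the near-identity conformal perturbation $g_{t_{n+1}}^{-1}$ the new segment is added well away from the earlier pieces. A delicate point is to control the distortion of much older, higher-lying segments under the successive compositions $g_{t_{n+1}}^{-1}$; since these maps are conformal and close to the identity off the (comparatively tiny) support of $K_{t_{n+1}}$ as seen from the older segments, the effect is a small bending that preserves simplicity. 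Once simpleness is established, the angular alternation $\phi_{2k}\to 0$, $\phi_{2k+1}\to\pi$ immediately rules out any sector approach, completing the proof.
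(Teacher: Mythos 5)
There is a genuine gap at the heart of your construction: you identify the angle $\phi_n$ of the line segment that $U$ produces in the $g_{t_{n+1}}$-coordinates with the direction in which the corresponding piece of the curve sits relative to the base point $U(0)$, and these are not the same thing. By Example \ref{Maxime}, the driving increment $c(\phi_n)\sqrt{t-t_{n+1}}$ makes the \emph{image hull} $g_{t_{n+1}}(K_t\setminus K_{t_{n+1}})$ a segment at angle $\phi_n$ emanating from the point $U(t_{n+1})\in\R$; but the actual new piece of the curve is the preimage of that segment under $g_{t_{n+1}}^{-1}$, attached to the \emph{tip} of $K_{t_{n+1}}$ up in $\Ha$, where $g_{t_{n+1}}^{-1}$ has a square-root singularity that unfolds the boundary: a segment hugging $\R$ in the uniformized picture becomes a piece hugging the \emph{existing curve} near its tip, not a piece hugging $\R$. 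Whether the new piece has small (or near-$\pi$) argument as seen from $U(0)$ depends on where the tip is and how long the piece is, and your quantitative choices work against you: the active phase has duration $\eps_n|I_n|$ with $\eps_n\asymp\min(\phi_n,\pi-\phi_n)\to0$, so the angled pieces have diameter $o(\sqrt{|I_n|})$, i.e.\ much smaller than the scale of $K_{t_n}$ itself, while the bulk of the hull is generated during the uncontrolled ``return'' phases. Nothing in the argument produces points $z$ of the curve with $\Im z=o(|z-U(0)|)$ accumulating at $U(0)$, which is what failure of sector approach requires; so the sentence ``the angular alternation immediately rules out any sector approach'' is unjustified and, as set up, most likely false.

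The simpleness argument also does not work as sketched: $K_{t_{n+1}}$ is the part of the curve generated at \emph{earlier} times, hence the part nearest the base; it touches $\R$ and its ``height'' is not bounded below by its capacity (capacity never bounds height from below). The real danger is that the new piece, attached at the tip and nearly doubling back along the old curve when $\phi_n$ is close to $0$ or $\pi$ (note $|c(\phi_n)|\to\infty$, the regime where the $\Lip$-norm-$4$ threshold of Theorem \ref{lmr} is violated), collides with the old curve or with $\R$ -- exactly the phenomenon of Example \ref{Felix}. The paper avoids both difficulties by going in the opposite direction: it writes down the curve explicitly in the plane (a zigzag between $\{i2^{-n}\}$ and $\{2^{-n}+i4^{-n}\}$), so that simpleness and the tangential accumulation at $0$ are evident by construction, and then obtains $|U(t)|\lesssim\sqrt t$ from the capacity--diameter comparisons of Theorem \ref{hsiz} and Lemma \ref{imestim}, with Theorem \ref{slitex} supplying the driving function for free. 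If you want to keep a driving-function-first construction, you would have to reconstruct the planar geometry of the trace from $U$ near an irregular point, which is precisely the hard direction of the theory.
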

\begin{proof}
 We construct the curve by connecting the set $\{i/2^n\with n\in\N_0\}$ of points with $\{1/2^n+i(1/2^n)^2 \with n\in\N_0\},$ namely: Start with $i$ and connect it with $1+i$ by a straight line segment. Then connect $1+i$ with $i/2$, $i/2$ with $1/2+i(1/2)^2,$ etc.\\
We get a simple curve $\gamma$ that starts from the real line in $0$ and does not approach $\R$ in a sector, because the curve $x\to x+ix^2$ approaches $\R$ tangentially. 
Assume that $\gamma(t)$ is parameterized by half-plane capacity with driving function $U:[0,T]\to\R.$ Let $\delta_t=(t/T)^{-1/2}\gamma(0,t],$ then $\hcap(\delta_t)=T/t\cdot \hcap(\gamma[0,t])=T/t\cdot 2t=2T.$ By construction of $\gamma$ we have $$\max_{z\in \delta_t}\Im(z)\asymp\max_{z\in \delta_t}\Re(z)$$ and as $\sup_{t\in(0,1]}\max_{z\in \delta_t}\Im(z)<\infty$ by Lemma \ref{imestim}, we also have $\sup_{t\in(0,1]}\max_{z\in \delta_t}\Re(z)<\infty.$ Hence, there exists $R>0$ such that $\delta_t\subset \mathcal{B}(0,R)$ for all $t\in(0,1]$ and thus there exists $K>0$ with $|(t/T)^{-1/2}U(t)|=|g_{\delta_t}(``\text{tip of}\; \delta_t\text{''})|\leq K.$
\end{proof}

Another example for this statement is the half-Sierpinski gasket described in \cite{spacefill}.

\subsection{Line approach}

Next we derive a sufficient condition for the hulls $K_t$ to approach $\R$ in $\varphi$-direction.

\begin{theorem}\label{line_approach}
Let $j\in\{1,...,n\}$ and let $\{K_t\}_{t\in[0,T]}$ be a family of hulls generated by equation (\ref{more2}). Then $K_t$ approaches $\R$ at $U_j(0)$ in $\varphi$-direction provided that $$ \lim_{h\downarrow0}\frac{U_j(h)-U_j(0)}{\sqrt{h}} =  
\frac{2\sqrt{\lambda_j(0)}(\pi-2\varphi)}{\sqrt{\varphi(\pi-\varphi)}}.
$$
\end{theorem}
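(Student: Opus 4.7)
The plan is to reduce to the one-slit case via a scaling argument that isolates the behavior of the hull near $U_j(0)$. Without loss of generality we may assume $U_j(0)=0$, and we abbreviate $\lambda:=\lambda_j(0)$ and $c:=\frac{2\sqrt{\lambda}\,(\pi-2\varphi)}{\sqrt{\varphi(\pi-\varphi)}}$. For each $\tau>0$, define the rescaled conformal mappings $\tilde g^\tau_s(w):=\tau^{-1/2}g_{\tau s}(\tau^{1/2}w)$ and the rescaled hulls $\tilde K^\tau_s:=\tau^{-1/2}K_{\tau s}$; by the scaling property $\hcap(\tilde K^\tau_s)=2s$. A direct computation shows that $\tilde g^\tau_s$ satisfies the multiple-slit Loewner equation
\begin{equation*}
\frac{d}{ds}\tilde g^\tau_s(w)=\sum_{k=1}^n\frac{2\lambda_k(\tau s)}{\tilde g^\tau_s(w)-\tau^{-1/2}U_k(\tau s)},\qquad \tilde g^\tau_0(w)=w.
\end{equation*}

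The heart of the argument is to pass to the limit $\tau\downarrow 0$. By hypothesis, $\tau^{-1/2}U_j(\tau s)\to c\sqrt{s}$ uniformly for $s$ in any compact subset of $(0,\infty)$, and by continuity $\lambda_j(\tau s)\to\lambda$. For $k\ne j$ we have $U_k(0)\ne 0$, so $\tau^{-1/2}U_k(\tau s)\to\pm\infty$, hence the summands with index $k\ne j$ tend to $0$ locally uniformly in $w\in\Ha$ (for $s$ in any compact interval of $(0,\infty)$). Consequently the rescaled driving vector fields converge locally uniformly to the time-dependent Herglotz vector field $(w,s)\mapsto 2\lambda/(w-c\sqrt{s})$, and the standard continuity theorem for Loewner chains (as cited after equation \eqref{weak} in Section~\ref{existence}) implies that $\tilde g^\tau_s\to\tilde g_s$ locally uniformly in $\Ha$, where $\tilde g_s$ solves the \emph{one-slit} equation
\begin{equation*}
\dot{\tilde g}_s(w)=\frac{2\lambda}{\tilde g_s(w)-c\sqrt{s}},\qquad \tilde g_0(w)=w.
\end{equation*}
After the time change $r=\lambda s$ this becomes the chordal one-slit equation with driving function $\frac{c}{\sqrt{\lambda}}\sqrt{r}=\frac{2(\pi-2\varphi)}{\sqrt{\varphi(\pi-\varphi)}}\sqrt{r}$, which by Example~\ref{Maxime} generates precisely the straight line segment $\{re^{i\varphi}\with 0<r\le r(s)\}$ at angle $\varphi$.

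Carathéodory kernel convergence of $\Ha\setminus\tilde K^\tau_s$ to the complement of this line segment (which follows from the locally uniform convergence of $\tilde g^\tau_s$ to $\tilde g_s$) now translates into the desired geometric statement. Fix $\varepsilon>0$ and choose $s_0>0$ so small that the limiting line segment up to capacity $2s_0$ lies in the sector $\{w\in\Ha\with\varphi-\tfrac{\varepsilon}{2}<\arg w<\varphi+\tfrac{\varepsilon}{2}\}$. Since the rescaled hulls $\tilde K^\tau_{s_0}$ converge to this segment in the Hausdorff sense on compacta (another consequence of kernel convergence, using Lemma~\ref{imestim} to get a uniform bound on $\max_{z\in\tilde K^\tau_{s_0}}\Im z$), for all sufficiently small $\tau$ the connected component of $\tilde K^\tau_{s_0}$ with $0$ on its boundary lies inside $\{\varphi-\varepsilon<\arg w<\varphi+\varepsilon\}$. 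Undoing the scaling (which preserves arguments) and observing that for small $\tau$ this component is $\tau^{-1/2}C^{\tau s_0}_j$, where $C^{\tau s_0}_j$ is the component of $K_{\tau s_0}$ emanating from $U_j(0)=0$ (existence and separation for small $t$ is guaranteed by Lemma~\ref{Miriam}), we conclude that $C^{\tau s_0}_j\subset\{\varphi-\varepsilon<\arg z<\varphi+\varepsilon\}$. This is precisely the $\varphi$-direction approach.

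The main obstacle will be making the kernel/Hausdorff convergence of the rescaled hulls rigorous: the convergence of the Loewner PDEs is a standard consequence of the convergence of the driving terms, but turning this into uniform control of the generated hulls near $0$ requires (i) a uniform-in-$\tau$ bound on $\diam(\tilde K^\tau_{s})$, which follows from Lemma~\ref{imestim} and a standard estimate for the horizontal extent of hulls with bounded capacity, and (ii) a genuine geometric (rather than merely function-theoretic) convergence statement, which one obtains by combining the convergence $\tilde g^\tau_s\to\tilde g_s$ with the fact that the limit hull is a straight slit, so its complement is a John domain and the conformal maps extend continuously to the boundary.
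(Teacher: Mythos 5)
Your proposal is correct and follows essentially the same route as the paper: the paper also rescales by $d=\tau^{-1/2}$, observes that the $k\neq j$ terms of the rescaled equation vanish and that $\lambda_j(t/d^2)\to\lambda_j(0)$ and $dU_j(t/d^2)\to c\sqrt{t}$ uniformly, and then reduces to the one-slit equation with driving function $c\sqrt{t}$, which generates the line at angle $\varphi$ by Example~\ref{Maxime} after the time change $t\mapsto\lambda_j(0)t$. The only cosmetic difference is that the paper inserts an intermediate one-slit comparison flow $h_t(\cdot,d)$ driven by $dU_j(t/d^2)$ with frozen coefficient $\lambda_j(0)$ and invokes Proposition 4.47 of Lawler for the passage from uniform convergence of driving functions to kernel convergence, whereas you pass to the limit of the vector fields directly.
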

\begin{proof}
By translation we can assume that $U_j(0)=0$. \\
We note that the following Loewner equation generates a straight line segment starting in $0$ with angle $\varphi$ (see Example \ref{Maxime} and change the time $t\mapsto \lambda_2(0)t$): $$\dot{g_t}(z)=\frac{2\lambda_j(0)}{g_t(z)-c\sqrt{t}}, \quad \text{with} \;\; c:=\frac{2\sqrt{\lambda_j(0)}(\pi-2\varphi)}{\sqrt{\varphi(\pi-\varphi)}}.$$
Let $L$ be the generated hull at time $t=1.$\\
Now let $d>0$, then the corresponding conformal mappings $g_t(z,d)$ for the scaled hulls $dK_t$ satisfy \begin{equation}\label{scal}\dot{g_t}(z,d)=\sum_{\begin{subarray}{c}
k=1\\
k\not=j
                             \end{subarray}}^n\frac{2\lambda_k(t/d^2)}{g_t(z,d)-dU_k(t/d^2)}+\frac{2\lambda_j(t/d^2)}{g_t(z,d)-dU_j(t/d^2)}, \quad g_0(z,d)=z.\end{equation} If we choose $d$ large enough, the corresponding hull at $t=1$ will always have $n$ connected components. Let $G_d$ be the one near $0.$ We will have to look at the limit case $d\to\infty.$\\
First, let $h_t(z,d)$ be the solution of the Loewner equation 
$$\dot{h_t}(z,d)=\frac{2\lambda_j(0)}{h_t(z,d)-dU_j(t/d^2)}, \quad h_0(z,d)=z,$$
and let $H_d$ be the generated hull at $t=1$.
Choose an $R>0$ and let $D_R:=\Ha\cap\{|z|<R\}.$
If we denote by $g(\cdot,d)\cara g$ the Carath\'{e}odory convergence for Loewner chains\footnote{$g(\cdot, d)\cara g(\cdot)$ if for every $\eps>0$ and every $S\in[0,T],$ $g_t(z,d)$ converges to $g_t(z)$ uniformly on $[0,S]\times\{z\in\Ha\with\operatorname{dist}(z,K_S)\geq\eps\}.$ \\[-2mm]}, defined in \cite{Lawler:2005}, p. 114, then we have $$g(\cdot,d) \cara g \;\; \text{for} \;\; d\to\infty \;\; \text{in} \;\; D_R \quad \text{if and only if} \quad h(\cdot,d) \cara g \;\; \text{for} \;\; d\to\infty \;\; \text{in} \:\; D_R.$$
This can be shown by using the fact that left summand of (\ref{scal}) converges uniformly to $0$ on $D_R\times[0,1]$ and that $\lambda_j(t/d^2)$ converges uniformly to $\lambda_j(0)$, again on $D_R\times[0,1].$ \\

Now, the hulls $K_t$ approach $\R$ at $0$ in $\varphi$-direction if and only if 
 $\Ha\setminus G_d\to \Ha\setminus L$ for $d\to\infty$ in the sense of kernel convergence. Because of the above relation of $g(\cdot,d)$ to $h(\cdot,d)$, this is equivalent to 

\begin{equation}\label{Jenny}\Ha\setminus H_d\to \Ha\setminus L \quad \text{for}\; d\to\infty. \end{equation}

Now suppose $U_j(t)=c\sqrt{t}+\Landauo(\sqrt{t})$, then $dU_j(t/d^2) \to c\sqrt{t}$ uniformly on $[0,1]$ for $d\to\infty$. Uniform convergence of driving functions generally implies kernel convergence of the corresponding domains by Proposition 4.47 in \cite{Lawler:2005}. Consequently, $\Ha \setminus H_d \to \Ha \setminus L$  for $d\to\infty$.\\
\end{proof}

The converse of Theorem \ref{line_approach} is not true in general as the following example shows.\footnote{The author would like to thank Huy Tran for pointing out the existence of such an example.}
\begin{proposition}
 There exists a driving function $U$ with $$\limsup_{t\downarrow0}\frac{U(t)}{\sqrt{t}}\not=0$$ such that the hulls $K_t$ generated by the one-slit equation approach $0$ in $\frac{\pi}{2}$--direction.
\end{proposition}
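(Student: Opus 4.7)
The counterexample must exploit the nonlocal nature of the driving function: $U(t)$ is the conformal image of the tip $\gamma(t)$ under $g_t$, and this image depends on the entire past of the slit, not just its current geometric position. The idea is to use a rapidly oscillating driving function whose pointwise values $U(t)/\sqrt t$ do not tend to zero, but whose high-frequency oscillations ``average out'' in the Loewner evolution, producing a hull that, after rescaling by $\sqrt t$, converges to the vertical segment $[0,2i]$.

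\textbf{Proposed candidate.} Fix a small constant $c>0$ and define
\[
U(t) := c\sqrt{t}\,\sin(\omega(t)),\qquad t\in(0,1],\quad U(0):=0,
\]
where $\omega:(0,1]\to[0,\infty)$ is a strictly increasing phase function tending to $+\infty$ as $t\downarrow0$ faster than linearly in $\log(1/t)$; a concrete choice is $\omega(t)=(\log(1/t))^{3/2}$. Since $|U(t)|\le c\sqrt t$, $U$ is continuous on $[0,1]$.

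\textbf{Step 1 (the $\limsup$ condition).} For each $n\in\N$ the equation $\omega(t_n)=\pi/2+2\pi n$ has a unique solution $t_n\in(0,1]$ with $t_n\to0$. At these points $U(t_n)/\sqrt{t_n}=c$, so
\[
\limsup_{t\downarrow0}\frac{U(t)}{\sqrt t}\;\ge\;c\;>\;0.
\]

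\textbf{Step 2 (vertical approach via homogenization).} By the scaling property of the chordal equation, the hull $K_t/\sqrt t$ has half-plane capacity $2$ and is generated on $[0,1]$ by the rescaled driving function
\[
\tilde U_t(s):=\frac{U(ts)}{\sqrt t}=c\sqrt s\,\sin(\omega(ts)),\qquad s\in(0,1].
\]
The frequency of $s\mapsto\omega(ts)$ on any fixed subinterval of $(0,1]$ equals $|t\,\omega'(ts)|$, which, under the choice $\omega(t)=(\log(1/t))^{3/2}$, tends to $+\infty$ as $t\downarrow0$. Hence $\tilde U_t$ oscillates with unbounded frequency around zero mean on every subinterval of $(0,1]$. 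I will argue that $K_t/\sqrt t\to[0,2i]$ in the Carath\'eodory kernel sense as $t\downarrow0$: partition $[0,1]$ into dyadic subintervals $I^t_k$ on which $\omega(ts)$ varies by at least $2\pi$, so that $\int_{I^t_k}\tilde U_t(s)\,ds=o(|I^t_k|)$; then compare the solution of (\ref{ivp}) driven by $\tilde U_t$ with the solution driven by the piecewise-constant averages of $\tilde U_t$ on these subintervals (which are $o(1)$ uniformly), using the Gronwall-type estimates behind Proposition~4.47 of \cite{Lawler:2005}. This yields, in the limit, the hull generated by $\tilde U\equiv0$, namely $[0,2i]$. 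Since cones centered at $0$ are scale-invariant, for every $\varepsilon>0$ there is $t_0>0$ so that $K_t/\sqrt t\subset\{|\arg z-\pi/2|<\varepsilon\}$, and hence $K_t$ itself lies in that same cone, for all $t\le t_0$. This is precisely vertical approach.

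\textbf{Main obstacle.} The sufficient condition for Carath\'eodory convergence of hulls that appears in \cite{Lawler:2005} (Proposition~4.47) requires \emph{uniform} convergence of the driving functions, and this fails here: $\tilde U_t(1)=c\sin(\omega(t))$ does not converge to $0$. Making the averaging argument rigorous—i.e., showing that rapidly oscillating zero-mean perturbations of the driving function produce Loewner hulls that converge to the unperturbed hull—is the heart of the proof. The key technical input is a quantitative estimate: if $\tilde V$ and $\tilde W$ are continuous driving functions on $[0,1]$ such that $\sup_{s\in[0,1]}\big|\int_0^s(\tilde V-\tilde W)(\tau)\,d\tau\big|\le\delta$ and both are bounded by $c$, then the associated hulls are $\eta(\delta,c)$-close in Hausdorff distance with $\eta(\delta,c)\to0$ as $\delta\to0$ for fixed $c$. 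Applied with $\tilde W\equiv0$, the antiderivative bound $\int_0^s\tilde U_t(\tau)d\tau\to0$ (which follows from the integration-by-parts identity together with the lower bound $t\omega'(t)\to\infty$) gives the desired conclusion. Verifying this stability estimate for the singular ODE (\ref{ivp}) is the technical core of the proof.
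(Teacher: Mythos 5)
Your Step 1 is fine, but the argument collapses at the ``Main obstacle'': the stability estimate you propose as the technical core is not merely unproved, it is false. Take $\tilde V_n(s)=c\operatorname{sgn}(\sin(ns))$ and $\tilde W\equiv 0$; then $\sup_s\bigl|\int_0^s(\tilde V_n-\tilde W)\,d\tau\bigr|\le 2c/n\to 0$, yet the hulls driven by $\tilde V_n$ converge not to the vertical segment $[0,2i]$ but to the hull generated by the measure-driven equation (\ref{chordal}) with $\mu_t=\frac{1}{2}\delta_c+\frac{1}{2}\delta_{-c}$, i.e.\ two slits emanating from $\pm c$. This is exactly the bang--bang phenomenon exploited in Section \ref{existence} and Lemma \ref{nointer} of this thesis: rapidly oscillating driving functions converge, in the only sense that matters for the Loewner flow, to the evolution driven by the local \emph{occupation measures} $\delta_{U(t)}\,dt$, not by the pointwise average of $U$. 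For your candidate $U(t)=c\sqrt t\,\sin(\omega(t))$ the rescaled occupation measures converge to $\mu_s=$ law of $c\sqrt s\sin\Theta$ with $\Theta$ uniform, a family of measures whose width $2c\sqrt s$ is of the same order as the natural scale of the hull. The limit of $K_t/\sqrt t$ is therefore a bubble-type hull in the sense of the remark following Theorem \ref{Gregory}, with positive angular aperture at the origin, and since cones are scale invariant this means the $\tfrac{\pi}{2}$-approach \emph{fails} for your $U$ (for $\eps$ small depending on $c$), rather than being merely unverified. Homogenization to the zero driving function cannot work when the oscillation amplitude is comparable to $\sqrt t$.

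For contrast, the thesis avoids driving functions altogether at the construction stage: it builds an explicit curve confined to the region between $x=y^2$ and $x=-y^2$ (so the vertical approach is immediate), consisting of nested symmetric ``triangles'' returning to $0$ at a sequence of times $t_n$, and then bounds $|U(t_n)|$ from \emph{below} by $\tfrac{\pi}{2}\operatorname{cap}_{\Ha}(K_{t_n})\gtrsim\diam(K_{t_n})\gtrsim\sqrt{\operatorname{hsiz}(K_{t_n})}\gtrsim\sqrt{t_n}$, using the symmetry of $K_{t_n}$ and Theorem \ref{hsiz}. If you want to keep an analytic construction, the amplitude $c\sqrt t$ must be attained only on a sparse, asymptotically negligible set of times so that the occupation measures still converge to $\delta_0$; as written, your proposal does not do this.
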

\begin{proof}
 Consider the region $R$ in $\Ha$ between the two curves $$\{x+iy\in\Ha\with x=y^2\} \quad \text{and} \quad \{x+iy\in\Ha\with x=-y^2\}.$$ Any hull inside this region clearly approaches $0$ in $\frac{\pi}{2}$--direction. Now, for $n\in\N,$ let $a_n$ be the intersection point of $L_n:=\{x+iy\in\Ha\with x=-y^{n+2}\}$ and $C_n:=\{x+\frac{1}{2^{n-1}}i\in\Ha\}$ and let $b_n$ be the intersection point of $R_n:=\{x+iy\in\Ha\with x=y^{n+2}\}$ and $C_n.$ First, we construct a curve $\hat{\gamma}:[0,1]\to\C$ in the following way:\\
Connect $0$ to $a_1$ via $L_1$ (i.e. by the subcurve of $L_1$ connecting 0 to $a_1$), then $a_1$ to $b_1$ via $C_1$ and $b_1$ to $0$ via $R_1$. Assume that $\hat{\gamma}:[0,1/2]\to\C$ describes this ``triangle'' $T_1.$\\
 Next we increase $n$ and construct another triangle $T_2$: connect $0$ to $a_2$ via $L_2,$ $a_2$ to $b_2$ via $C_2$ and then $b_2$ to $0$ via $R_2$. Now we may assume that $\hat{\gamma}:[1/2,3/4]\to\C$ parameterizes this curve.\\
Now we continue inductively and obtain a sequence $(T_n)$ of nested triangles, each parameterized by $\hat{\gamma}:[1-1/2^{n-1},1-1/2^n]\to\C.$ As  $$\diam(T_n)\to 0 \quad \text{for} \quad n\to\infty\quad \text{and} \quad 0\in T_n \quad \text{for all} \quad n\in\N,$$ we can extend $\hat{\gamma}$ continuously to the interval $[0,1]$ by requiring $\hat{\gamma}(1)=0.$\\
Now consider the curve $\hat{\gamma}(1-t)$ and let $\gamma$ be a parameterization of this curve such that $\hcap(K_t)=2t$, where we denote by $K_t$ the smallest hull containing $\gamma[0,t].$\\
The family $\{K_t\}_t$ satisfies the local growth property and thus, by Remark \ref{Lola}, it can be generated by the one-slit equation with a driving function $U.$\\ Finally, let $t_1>t_2>t_3>...$ be the decreasing sequence of zeros of $\gamma(t)$. We have $U(t_n)=\lim_{x\uparrow 0}g_{K_{t_n}}(x)$ or $U(t_n)=\lim_{x\downarrow 0}g_{K_{t_n}}(x).$ However, as $K_{t_n}$ is symmetric with respect to the imaginary axis, we certainly have $$2|U(t_n)|=\lim_{x\downarrow 0}g_{K_{t_n}}(x)-\lim_{x\uparrow 0}g_{K_{t_n}}(x)=:\pi\cdot \operatorname{cap}_\Ha(K_{t_n}).$$
The quantity $\operatorname{cap}_\Ha$ is introduced in \cite{Lawler:2005}, p. 73, see also the first equation on p. 74. There, it is shown that there exists a constant $c_1>0$ such that $$\operatorname{cap}_\Ha(K_{t_n}) \geq c_1 \cdot \diam(K_{t_n}),$$
see (3.14) on p. 74 in \cite{Lawler:2005}. As $\diam(K_{t_n})\geq c_2\cdot \sqrt{\operatorname{hsiz}(K_{t_n})}$ for another constant $c_2>0$ and $\text{hsiz}(K_{t_n}) \geq c_3 \cdot t_n$ for some $c_3>0$ by Theorem \ref{hsiz}, we arrive at 
$$|U(t_n)| = \pi/2 \cdot \operatorname{cap}_\Ha(K_{t_n}) \geq c_1 \cdot \pi/2 \cdot \diam(K_{t_n}) \geq c_1c_2\sqrt{c_3} \cdot \pi/2 \cdot \sqrt{t_n}.$$
As $U(t_n)>0$ for infinitely many $t_n,$ we conclude $$\limsup_{t\downarrow0}\frac{U(t)}{\sqrt{t}}>c_1c_2\sqrt{c_3} \cdot \pi/2.$$
\end{proof}

Under some nice conditions, however, the converse of Theorem \ref{line_approach} is true. As the line approach is equivalent to (\ref{Jenny}), we  need an additional condition that ensures that the kernel convergence of $\Ha\setminus H_d$ implies convergence of the driving functions. The main ingredient for this purpose is the following result by Lind, Marshall and Rohde.

\begin{theorem}[Theorem 4.3 in \cite{LindMR:2010}]\label{Uschi}
Let $\eps, c,R \in(0,\infty)$ and let $A_1,$ $A_2$ be two hulls with $\diam(A_j)\leq R$. Suppose that there exists a hull $B\supset A_1\cup A_2$ such that
$$ \dist(\zeta, A_1)<\eps\;\text{and}\; \dist(\zeta,A_2)<\eps \;\text{for all}\; \zeta \in B.  $$
Suppose further that there are curves $\sigma_j\subset \Ha\setminus A_j$ connecting a point $p\in\Ha\setminus B$  to $p_j\in A_j$ with $\diam \sigma_j \leq c\eps < \diam A_j,$ for $j=1,2.$ Then there exists a constant $c_0$ depending on $R$ only such that
$$ |g_{A_1}(p_1)-g_{A_2}(p_2)| \leq 2 c_0 \sqrt{\eps}(\sqrt{c}+\rho_{\hat{\C}\setminus \tilde{B}}(p,\infty)),$$
where $\tilde{B}=\overline{B\cup B'}\cup_j I_j$, $B'=\{\overline{z}\;|\; z\in B\},$ $I_j$ are the bounded intervals in $\R\setminus \overline{B\cup B'}$ and $\rho_\Omega(z_1,z_2)$ denotes the hyperbolic distance in $\Omega\subset\hat{\C}$ (with curvature $-1$). 
\end{theorem}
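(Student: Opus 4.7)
The plan is to estimate $|g_{A_1}(p_1)-g_{A_2}(p_2)|$ by interpolating through the common reference point $p$, splitting into a ``boundary-behaviour'' part near each $A_j$ and a ``hull-comparison'' part seen from $p$:
$$|g_{A_1}(p_1)-g_{A_2}(p_2)| \;\le\; |g_{A_1}(p_1)-g_{A_1}(p)| \;+\; |g_{A_1}(p)-g_{A_2}(p)| \;+\; |g_{A_2}(p)-g_{A_2}(p_2)|.$$
The outer two terms are handled by a Beurling-type half-plane distortion argument; the middle term is handled via the Nevanlinna representation of $g_{A_2}\circ g_{A_1}^{-1}$, combined with a hyperbolic-distance bound coming from the geometry of $\tilde B$.

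For the outer terms, I would use that $\sigma_j$ is a curve in $\Ha\setminus A_j$ of diameter at most $c\eps$ with one endpoint $p\in\Ha$ and the other at $p_j\in\partial A_j$. Its image $g_{A_j}(\sigma_j)$ is a curve in $\Ha$ from $g_{A_j}(p)$ to $g_{A_j}(p_j)\in\R$. Since $\diam(A_j)\le R$, the diameter of $g_{A_j}(\sigma_j)$ is controlled by a half--plane Beurling projection estimate (compare Remark~3.30 and Lemma~3.26 in \cite{Lawler:2005}) of order $\sqrt{c\eps}$ times a constant depending only on $R$. This yields $|g_{A_j}(p_j)-g_{A_j}(p)|\le c_0(R)\sqrt{c\eps}$.

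For the middle term, set $\Phi:=g_{A_2}\circ g_{A_1}^{-1}$, a hydrodynamically normalised univalent self-map of $\Ha$ defined on $\Ha$ minus the hull $K:=g_{A_1}(A_2\setminus A_1)$. By the Nevanlinna representation (\ref{Nevanlinna}) for $\Phi^{-1}$,
$$g_{A_1}(p)-g_{A_2}(p) \;=\; \Phi^{-1}(g_{A_2}(p))-g_{A_2}(p) \;=\; \int_{\R}\frac{d\mu(x)}{x-g_{A_2}(p)}\, ,$$
with $\mu(\R)=\hcap(K)$. The assumption $\dist(\zeta,A_1)<\eps$ for every $\zeta\in B$ forces $A_2\setminus A_1$ to lie in the $\eps$-neighbourhood of $A_1$, and Theorem \ref{hsiz} together with Lemma \ref{hcap1} b) then bounds $\hcap(K)$ (and symmetrically $\hcap(g_{A_2}(A_1\setminus A_2))$) by a constant times $\eps$. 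Hence $\mu(\R)=O(\eps)$. It remains to obtain the right lower bound on $|x-g_{A_2}(p)|$ for $x\in\supp\mu$: both $\supp\mu$ and $g_{A_2}(p)$ are images of subsets of $\Ha\setminus\tilde B$ under the conformal map $g_{A_2}$, and the symmetric domain $\hat\C\setminus\tilde B$ is designed precisely so that $p$ has a well-defined hyperbolic distance to $\infty$ there. Conformal invariance of the hyperbolic metric, combined with the standard comparison of hyperbolic and Euclidean distance in $\Ha$ minus a slit of size $O(\sqrt{\hcap})\asymp O(\sqrt{\eps})$, gives
$$\min_{x\in\supp\mu}|x-g_{A_2}(p)| \;\gtrsim\; \frac{\sqrt{\eps}}{\rho_{\hat\C\setminus\tilde B}(p,\infty)}\, .$$
Putting the $L^1$-bound $\mu(\R)=O(\eps)$ together with this $L^\infty$-bound on $1/(x-g_{A_2}(p))$ produces $|g_{A_1}(p)-g_{A_2}(p)|\le c_0(R)\sqrt{\eps}\,\rho_{\hat\C\setminus\tilde B}(p,\infty)$, and summing the three pieces gives the stated inequality.

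The principal obstacle will be the middle estimate: one has to verify that the obstruction seen by $\Phi^{-1}$ in the $w$-plane, when pulled back through $g_{A_2}^{-1}$, is faithfully captured by the \emph{symmetrised} hull $\tilde B$ in the $p$-plane (this is why the theorem insists on $\tilde B=\overline{B\cup B'}\cup_j I_j$ rather than $B$ itself), and to show that the resulting hyperbolic distance governs the Euclidean separation with the right power of $\eps$. Getting the exponent $\tfrac12$ correct and ensuring that all implicit constants depend on $R$ only, not on $\eps$ or $c$, is the delicate part; everything else reduces to standard Loewner-theoretic estimates already used earlier in the chapter.
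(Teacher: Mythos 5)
A preliminary remark: the thesis itself offers no proof of this statement --- it is imported verbatim as Theorem 4.3 of \cite{LindMR:2010} and used as a black box in the proof of Theorem \ref{line_approach2} --- so your attempt can only be measured against the original Lind--Marshall--Rohde argument. Your outline does reproduce its architecture: a triangle inequality through the common access point $p$, a Beurling-type distortion estimate of order $\sqrt{c\eps}$ for $\diam g_{A_j}(\sigma_j)$ (which is indeed where the $\sqrt{c}$ in the conclusion comes from), and a Nevanlinna/harmonic-measure estimate for the remaining term $|g_{A_1}(p)-g_{A_2}(p)|$ with total mass of order $\eps$.

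The gap is in the middle term, which is the entire substance of the theorem. Two concrete problems. First, $\Phi^{-1}=g_{A_1}\circ g_{A_2}^{-1}$ is defined only on $\Ha$ minus the hull $g_{A_2}(A_1\setminus A_2)$, since $A_1$ and $A_2$ are not nested; it is therefore not in $\mathcal{H}_\infty$ and (\ref{Nevanlinna}) does not apply to it. The standard repair is to compare each $g_{A_j}$ with $g_{\tilde A}$ for the filled union $\tilde A\supseteq A_1\cup A_2$, writing $g_{\tilde A}=g_{D_j}\circ g_{A_j}$ with $D_j=g_{A_j}(\tilde A\setminus A_j)$, so that $g_{A_1}(p)-g_{A_2}(p)$ becomes a difference of two genuine Nevanlinna integrals evaluated at $w=g_{\tilde A}(p)$, each with mass $\hcap(D_j)=O_R(\eps)$. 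Second, and more seriously, the separation estimate $\min_{x\in\supp\mu}|x-g_{A_2}(p)|\gtrsim\sqrt{\eps}\,/\,\rho_{\hat{\C}\setminus\tilde B}(p,\infty)$ is asserted without argument, and it is false in the geometries the theorem is designed for: if $p_2$ sits at the mouth of a deep fjord of the hulls, the derivative of $g_{A_2}$ there is exponentially small in the modulus of the fjord, so $\dist(g_{A_2}(p),\supp\mu)$ decays exponentially while $\rho_{\hat{\C}\setminus\tilde B}(p,\infty)$ grows only linearly; hence the product $\dist\cdot\rho$ can be made much smaller than $\sqrt{\eps}$ and the $L^1$--$L^\infty$ splitting of the integral is too lossy. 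What actually closes the argument in \cite{LindMR:2010} is an estimate of $\bigl|\int d\mu_j(x)/(x-w)\bigr|$ via Cauchy--Schwarz, a bound of order $\sqrt{\eps}$ on the density of $\mu_j$ (coming from $\max\Im D_j\lesssim\sqrt{\hcap(D_j)}$, cf.\ Lemma \ref{imestim}), and a harmonic-measure bound for $\supp\mu_j$ seen from $w$, which by conformal invariance is controlled through the extremal distance from $p$ to $\infty$ in the symmetrised complement $\hat{\C}\setminus\tilde B$; this is precisely where $\tilde B$ and the factor $\rho_{\hat{\C}\setminus\tilde B}(p,\infty)$ enter. Until that harmonic-measure step is carried out, your proposal has identified the correct ingredients but not proved the inequality.
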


Whenever we can apply this Theorem to the hulls $H_d$ and $L,$ the converse of Theorem \ref{line_approach} holds. Next we give a simple geometric condition for this case.

\begin{theorem}\label{line_approach2}
Let $j\in\{1,...,n\}$ and let $\{K_t\}$ be a family of hulls generated by equation (\ref{more2}). Assume that $K_t$ approaches $\R$ at $U_j(0)$ in $\varphi$-direction. Furthermore, assume that there exist $\tau>0$ such that the connected component $C_\tau$ of $K_\tau$ near $U_j(0)$ is a slit having the property that the orthogonal projection $P:C_\tau \to \{U_j(0)+re^{i\varphi}\with r>0\}$ is  one-to-one. Then $$ \lim_{h\downarrow0}\frac{U_j(h)-U_j(0)}{\sqrt{h}} = 
\frac{2\sqrt{\lambda_j(0)}(\pi-2\varphi)}{\sqrt{\varphi(\pi-\varphi)}}                            $$
\end{theorem}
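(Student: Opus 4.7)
Without loss of generality assume $U_j(0)=0$, write $c := 2\sqrt{\lambda_j(0)}(\pi-2\varphi)/\sqrt{\varphi(\pi-\varphi)}$ and $V_d(t) := dU_j(t/d^2)$. The claim reduces to showing $V_d(1)\to c$ as $d\to\infty$. My strategy is to upgrade the kernel convergence already provided by Theorem \ref{line_approach} to a boundary estimate via the Lind--Marshall--Rohde estimate (Theorem \ref{Uschi}). From the proof of Theorem \ref{line_approach}, the $\varphi$-direction approach is equivalent to $\Ha\setminus G_d\to\Ha\setminus L$ in the kernel sense, where $G_d := d\cdot C_{1/d^2}$ and $L$ is the straight line slit of angle $\varphi$ with $\hcap(L)=2\lambda_j(0)$; by Example \ref{Maxime} combined with the time rescaling $s=\lambda_j(0)t$, the driving function of $L$ at its tip is precisely $c$.

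\textbf{Hausdorff closeness of $G_d$ and $L$.} The projection hypothesis says that $C_\tau$ is the graph of a continuous function $\psi\colon[0,r_\tau]\to\R$ over the $\varphi$-ray at $0$, with $\psi(0)=0$. The $\varphi$-direction approach forces $\psi(r)/r\to 0$ as $r\to 0^+$. Rescaling, $G_d$ is the graph of $\psi_d(\tilde r):=d\psi(\tilde r/d)$ on an interval $[0,\tilde r_d]$, and $\psi_d\to 0$ locally uniformly on $(0,\infty)$. By Lemma \ref{Miriam}, $\hcap(G_d)\to 2\lambda_j(0)=\hcap(L)$, and since the half-plane capacity of a near-flat graph depends continuously and monotonically on its length, the endpoints satisfy $\tilde r_d\to\tilde r_L$. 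Together this produces Hausdorff convergence $G_d\to L$ with rate $\eps_d:=\max(\|\psi_d\|_\infty,|\tilde r_d-\tilde r_L|)\to 0$.

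\textbf{Reduction to a single-slit comparison.} Let $\Phi_d := g_{dK_{1/d^2}\setminus G_d}$ remove from the scaled hull every component other than $G_d$. Because the base points $dU_k(0)$ with $k\neq j$ recede to $\pm\infty$, $\Phi_d$ converges uniformly to the identity on a fixed neighborhood of $L$, so $A_1 := \Phi_d(G_d)$ inherits Hausdorff closeness $A_1\to L =: A_2$ at the same rate $O(\eps_d)$. The composition rule $g_{dK_{1/d^2}}=g_{A_1}\circ\Phi_d$ gives $g_{A_1}(\text{tip of }A_1) = V_d(1)$, while $g_{A_2}(\text{tip of }A_2) = c$. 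Choosing a reference point $p$ at vertical distance $\sqrt{\eps_d}$ above the tip of $L$, short connecting curves $\sigma_1,\sigma_2$ of diameter $O(\sqrt{\eps_d})$, and a common enclosing hull $B_d$ equal to the $O(\eps_d)$-thickening of $L$ in $\overline{\Ha}$, Theorem \ref{Uschi} yields
$$|V_d(1)-c| \;\leq\; C\bigl(\eps_d^{1/4} + \sqrt{\eps_d}\,\rho_{\hat{\C}\setminus\widetilde{B_d}}(p,\infty)\bigr) \;\longrightarrow\; 0,$$
which is the pointwise convergence sought.

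\textbf{Main obstacle.} The principal difficulty is packaging the Hausdorff convergence $G_d\to L$ so that all geometric quantities in Theorem \ref{Uschi} are controlled simultaneously. The reference point $p$ must be chosen close enough to the tips to keep $\diam\sigma_j$ small, yet far enough from the enclosing hull that $\rho_{\hat{\C}\setminus\widetilde{B_d}}(p,\infty)$ grows no faster than $O(\log(1/\eps_d))$; the balance $\dist(p,L)\asymp\sqrt{\eps_d}$ is what makes the bound collapse. The tacit continuity assertion ``$\hcap$ of a graph depends continuously on its length when the graph is nearly flat'' should follow from either the Brownian-motion formula for $\hcap$ or direct conformal estimates, but the quantitative version needed here requires explicit verification.
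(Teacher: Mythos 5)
Your core strategy is the same as the paper's: rescale by $d$, compare the rescaled hull with the straight slit $L$ of angle $\varphi$ and $\hcap(L)=2\lambda_j(0)$, and convert geometric closeness into closeness of the boundary values $g_{A_1}(\mathrm{tip})$ and $g_{A_2}(\mathrm{tip})=c$ via the Lind--Marshall--Rohde estimate (Theorem \ref{Uschi}), with the hyperbolic term killed by $\sqrt{\eps}\log(1/\eps)\to 0$. Your handling of the case $n\geq 2$ is genuinely different and legitimate: you remove the \emph{other} components first via $\Phi_d=g_{dK_{1/d^2}\setminus G_d}$ and argue $\Phi_d\to\mathrm{id}$ because those components recede to $\pm\infty$ with bounded total half-plane capacity (this follows from the Nevanlinna representation of $\Phi_d^{-1}$). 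The paper instead peels off the $j$-th component, writes $g_t=H_t\circ g_{C_t}$, and uses continuous differentiability of $(t,z)\mapsto H_t(z)$ to get $U_j(t)=V(t)+\Landauo(\sqrt{t})$; your route avoids that regularity input at the cost of the recession argument. Either works.

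The genuine gap is the step you yourself flag: deducing $\tilde r_d\to\tilde r_L$ (equivalently, convergence of the tip of $G_d$ to the tip of $L$) from ``$\hcap$ of a near-flat graph depends continuously and monotonically on its length.'' No such monotonicity is available: a graph over a longer interval is not nested with the shorter straight segment, so Lemma \ref{hcap1} b) does not apply, and $\hcap$ is not continuous under Hausdorff convergence in general. Without tip convergence the whole application of Theorem \ref{Uschi} collapses, since you need the common hull $B_d$ both to contain $A_1$ (no overshoot) and to have every point within $\eps_d$ of $A_1$ (no undershoot). The paper closes exactly this point differently and more cheaply: the Carath\'eodory kernel convergence $\Ha\setminus H_d\to\Ha\setminus L$ is already available from the proof of Theorem \ref{line_approach}, and injectivity of $P$ on the slit implies that $P\circ\gamma$ is strictly monotone along the curve, so the tip projects to the \emph{endpoint} of the projected interval; kernel convergence then forces $P(T(d))\to T$ and the sector condition forces $|T(d)-P(T(d))|\to 0$, giving $T(d)\to T$. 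You should replace your capacity argument by this one (or by a compactness/kernel-convergence argument showing that a subsequential limit $r^*\neq\tilde r_L$ contradicts $\hcap(G_d)\to\hcap(L)$); as written the step is not justified. You should also say a word about why $\sigma_2$ can be chosen in $\Ha\setminus A_1$ with small diameter --- this is the second place where injectivity of $P$ is actually used.
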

\begin{proof} Note that we only have to show that $\lim_{t\downarrow 0}\frac{U_j(t)}{\sqrt{t}}$ exists or $=\pm\infty$. The relation between the exact value of the limit and $\varphi$ was already proven in Theorem \ref{line_approach}. Assume that $U_j(0)=0$ and let $H_d$ and $L$ be defined as in the proof of Theorem \ref{line_approach}.

First we consider the case $n=1$. Note that $H_d$ is a part of $K_t$ scaled by $d$ in this case.\\

We know that \begin{equation}\label{star2}\Ha\setminus H_d\to \Ha\setminus L \quad \text{for} \quad d\to \infty \tag{$*$}\end{equation} and that the function $P:H_d\to \{re^{i\varphi}\with r>0\}$ is one-to-one for all $d$ large enough. Denote by $T(d)$ the tip of $H_d$ and by $T$ the tip of $L.$ The Carath\'{e}odory convergence (\ref{star2}) implies that $P(T(d))\to T$ and the injectivity of $P|_{H_d}$ that $T(d)\to T.$ For a given $\eps>0$ with \begin{equation}\label{star3}2\eps<|T|\tag{$**$}\end{equation} there exists $D>0$ such that $$H_d\subset B:=\{z\in\Ha\with |P(z)-z|<\eps/3, 0<|z| <|T|+\eps/3\}\;  \text{for all} \; d>D.$$
Furthermore we assume that $D$ is so large that  
\begin{equation}\label{star4}|T(d)-T|<\frac{\eps}{3} \quad \text{for all} \; d>D.\tag{$***$}\end{equation}
Now we would like to apply Theorem \ref{Uschi} with $A_1=L$, $A_2=H_d$ and $p_1=T,$ $p_2=T(d)$ and $B$ as defined above. A simple geometric consideration shows that
$$\dist(\zeta, L)<\eps \;\text{and}\; \dist(\zeta,H_d)<\eps \; \text{for all}\; \zeta \in B.$$ Next we let $p=T+\frac{4}{3}\eps e^{i\varphi}\in \Ha\setminus B.$ We can connect $p$ to $T$ by a straight line segment $\sigma_1.$ Then  $\diam(\sigma_1) = 4/3\eps \underset{\eqref{star3}}{<} |T| = \diam(A_1).$ As $P$ is one-to-one on $H_d,$ we can construct a curve $\sigma_2$ connecting $p$ to $T(d)$ in $\Ha\setminus H_d$ with $\diam(\sigma_2) \underset{\eqref{star4}}{\leq} 5/3\eps =2\eps -\eps/3 \underset{\text{(\ref{star3})}}{<}|T|-\eps/3\leq \diam(A_2).$
Let $c:=g_{L}(T),$ then Theorem \ref{Uschi} implies $$|g_{H_d}(T(d))-c|=\left|dU_1\left(\frac1{d^2}\right)-c\right|\leq 2c_0 \sqrt{\eps}\left(\sqrt{5/3}+\varrho_{\hat{\C}\setminus \tilde{B}}(p,\infty)\right).$$ 
With the explicit formula for the hyperbolic metric of a disc (see Lemma 4.6 in \cite{LindMR:2010}) we obtain 
 \begin{eqnarray*}&&\varrho_{\hat{\C}\setminus \tilde{B}}(p,\infty)\leq \varrho_{\hat{\C}\setminus \mathcal{B}(0,|T|+\frac{\eps}{3})}(p,\infty)
=2\log(1+\frac{2(|T|+\frac{\eps}{3})}{\eps})\\&<&2\log(1+\frac{4|T|}{\eps})=\LandauO(|\log \frac1{\eps}|)=\Landauo(\eps^{-1/2}) \quad \text{for} \; \eps \to 0.
\end{eqnarray*} Thus $|dU_1(\frac1{d^2})-c|\to 0$ for $d\to \infty,$ i.e. $\lim_{t\downarrow 0}\frac{U_1(t)}{\sqrt{t}}=c.$\\

Now let $n\geq 2.$ As in the proof of Theorem \ref{Michael}, for all $t\in[0,\tau],$ we let $C_t$ be the connected component of $K_t$ near $U_j(0),$ $h_t=g_{C_t},$ $x(t)=\hcap(C_t)$, $g_t=g_{K_t}$ and we define $H_t$ by $g_t=H_t\circ h_t.$ Then $x(t)$ is continuously differentiable and $\dot{h_t}=\frac{\dot{x}(t)}{h_t-V(t)}$ with a driving function $V.$ Our proof for the one-slit case implies that $\lim_{t\downarrow 0}\frac{V(t)}{\sqrt{x(t)}}$ exists and from $\frac{V(t)}{\sqrt{x(t)}}=\frac{V(t)}{\sqrt{t}}\cdot \sqrt{\frac{t}{x(t)}}$ and $\dot{x}(0)\not=0,$ we conclude that also $\lim_{t\downarrow 0}\frac{V(t)}{\sqrt{t}}$ exists. We have to show that the same is true for $U_j.$ \\
The function $[0,\tau]\times \mathcal{U}\ni(t,z)\mapsto F(t,z):=H_t(z)$ is continuously differentiable, where $\mathcal{U}$ is a sufficiently small neighborhood of $0$ in $\C.$ For $t$ small enough, we have $V(t)\in \mathcal{U}$ and from $U_j(t)=H_t(V(t))$ for all $t\in[0,\tau],$ we obtain 
\begin{eqnarray*}U_j(t) &=& U_j(t)-U_j(0)= \frac{\partial F}{\partial t}(0,0) \cdot t + \frac{\partial F}{\partial z}(0,0) \cdot V(t) + \Landauo(|t|+|V(t)|)\\ &=&\frac{\partial F}{\partial z}(0,0) \cdot  V(t) + \Landauo(\sqrt{t}) = V(t) + \Landauo(\sqrt{t}) \quad \text{for} \quad t\downarrow 0. \end{eqnarray*}
Thus, the limit $\lim_{t\downarrow 0}\frac{U_j(t)}{\sqrt{t}}$ exists whenever $\lim_{t\downarrow 0}\frac{V(t)}{\sqrt{t}}$ exists. 
\end{proof} 

 \begin{remark}
  The condition of the injectivity of the projection on the line is satisfied, e.g., when $\arg \gamma(t)$ is continuously differentiable with $(\arg\circ \gamma)'(0)=\varphi,$ where $\gamma$ parameterizes the slit $C_\tau$ from Theorem \ref{line_approach2}; see \cite{DongWu}, where the authors also derive further results concerning the line approach.
 \end{remark}

\section{Two further problems}

The fact that we can assign unique driving functions to slits provokes several questions of the form: How can property $X$ of slits be related to property $Y$ of the driving functions?\\

Exemplarily we mention the important property of smoothness, which has been discussed by several authors for the one-slit case: In \cite{MR0480952}, it is shown for the radial case that a slit is in $C^1$ provided that its driving function has a bounded derivative. C. Wong obtains several results concerning the regularity of a generated slit in \cite{Carto} (for the chordal case). Heuristically, his results can be summarized as: If the driving function is in $C^{\beta},$ then the generated slit is in $C^{\beta+1/2},$ where $\beta>1/2,$ see \cite[p. 3]{Carto}.\\
The converse problem, i.e. finding properties of the driving functions of a smooth slit, is discussed in \cite{MR1845014} (for the radial case). The authors prove that a $C^n$ slit has a $C^{n-1}$ driving function and that an analytic slit has a real analytic driving function.\\

It would be interesting to find more relations of this type. Next we mention two further problems, which are motivated by Theorem \ref{Charlie2} and its proof.

\subsection{Jordan hulls}\label{Jordan}
We have seen how to generate given disjoint slits with constant speeds. The corresponding Loewner equation then has a simple form because only the driving functions depend on  time. Furthermore, this equation is unique, so we exploited all degrees of freedom.\\
We call a compact hull $K$ \textit{Jordan hull} if the boundary $\partial K$ is a Jordan curve and $\partial K \cap \R=[a,b]$ is a closed interval with positive length, i.e. $a<b$. Can we find a similar unique Loewner equation for Jordan hulls? Here, ``similar'' means that we think of $K$ as a thickened slit. Let us assume that $\hcap(K)=2$ and recall the general Loewner equation (\ref{chordal}):

\begin{equation}\label{meas}
  \dot{g}_t(z)=\int_\R \frac{2\mu_t(du)}{g_t(z)-u}, \quad g_0(z)=z.                                                                                                          \end{equation}
We would like to find a family of measures $\{\mu_t\}_{t\in[0,1]}$ of the form $$\mu_t(A)=\mu(a(t)+c(t)\cdot A),$$ where $\mu$ is a probability measure, $a:[0,1]\to \R$ is a continuous (``driving'') function with $a(0)=0$ and $c:[0,1]\to [0,\infty)$ is a continuous (``scaling'') function with $c(0)=1.$\\
The initial values $a(0)=0$ and $c(0)=1$ are equivalent to $\mu=\mu_0.$ So, $\mu_t$ is nothing but $\mu$ stretched and translated. We expect that $\supp\mu \subset [a,b].$\\
In fact, it is easy to find those measures: We  can use the one-slit equation to generate the boundary -- either growing  from $a$ to $b$ or from $b$ to $a$ -- or the two--slit equation to generate the boundary by two slits growing from $a$ and $b$ and meeting in the upper half--plane. In the latter case, we have infinitely many possibilities as we can prescribe an arbitrary $\lambda\in (0,1)$ as the speed for one of the slits. \\  The slit equations generate the hull in a quite ``non-uniform'' way, as they let the boundary grow and then ``fill up'' the rest of $K$ at the end. In other words, we always have $\supp\mu \subsetneq [a,b].$ This leads to the following question.

\begin{question}
Let  $K$ be a Jordan hull with $\hcap(K)=2.$ Can we find  continuous functions $a:[0,1]\to\R,$ $c:[0,1]\to [0,\infty)$ with $a(0)=0$ and $c(0)=1$ and a probability measure $\mu$ with $$\supp \mu = \overline{K}\cap \R$$ such that the solution of equation (\ref{meas}) with $\mu_t(A)=\mu(a(t)+c(t)\cdot A)$ satisfies $g_1=g_K$?\\ If they exist, are $\mu, a$ and $c$ determined uniquely? 
\end{question}

\newpage
\subsection{A problem concerning multiple SLE}

We mention one further problem that arises if one thinks of the proof of Theorem \ref{Charlie2} and multiple SLE, roughly speaking: Does bang-bang of single SLE$(\kappa)$ converge to multiple SLE$(\kappa)$?\\
In \cite{MR2310306} the authors define multiple SLE as a generalization of the stochastic Loewner evolution (for one slit). Let us fix the points $0,1$ and $\infty$ and let $\kappa\in[0,4].$ Multiple SLE($\kappa$) from $(0,1)$ to $(\infty,\infty)$ within $\Ha$ can be viewed as a certain probability measure on the space of all pairs of disjoint curves that connect $0$ to $\infty$ and $1$ to $\infty$ in $\Ha,$ see \cite{MR2310306} for more details.\\
Fix $n\in\N$ and $\kappa\in[0,4].$ We construct two random curves $\Gamma_n$ and $\Delta_n$ in the following way:
\begin{enumerate}[(1)]
 \item 
Generate an SLE($\kappa$) curve in $\Ha$ starting from $0$ and going to $\infty$ by the stochastic Loewner differential equation and stop it when it has half-plane capacity $1/n.$ Call this random curve $\gamma_1.$
\item  Next, construct an SLE($\kappa$) curve within the domain $\Ha\setminus \gamma_1$ from $1$ to $\infty$. Stop it when it has half-plane capacity $1/n$ and call this curve $\delta_1.$ Clearly, $\gamma_1\cap \delta_1 = \emptyset$ w.p.1.
\item Continue this process: $\gamma_{k+1}$ is a part of an SLE($\kappa$) curve from the tip of $\gamma_k$ to $\infty$ within $\Ha\setminus (\bigcup_{j=1}^k \gamma_j \cup \bigcup_{j=1}^k \delta_j)$ having half-plane capacity $1/n;$ and $\delta_{k+1}$ is a part of an SLE($\kappa$) curve from the tip of $\delta_k$ to $\infty$ within $\Ha\setminus (\bigcup_{j=1}^{k+1} \gamma_j \cup \bigcup_{j=1}^k \delta_j)$ also having half-plane capacity $1/n$.
\item Finally, let $$ \Gamma_n=\bigcup_{k=1}^\infty\gamma_k, \qquad  \Delta_n=\bigcup_{k=1}^\infty\delta_k.$$
\end{enumerate}

\begin{figure}[h]
 \centering \includegraphics[width=105mm]{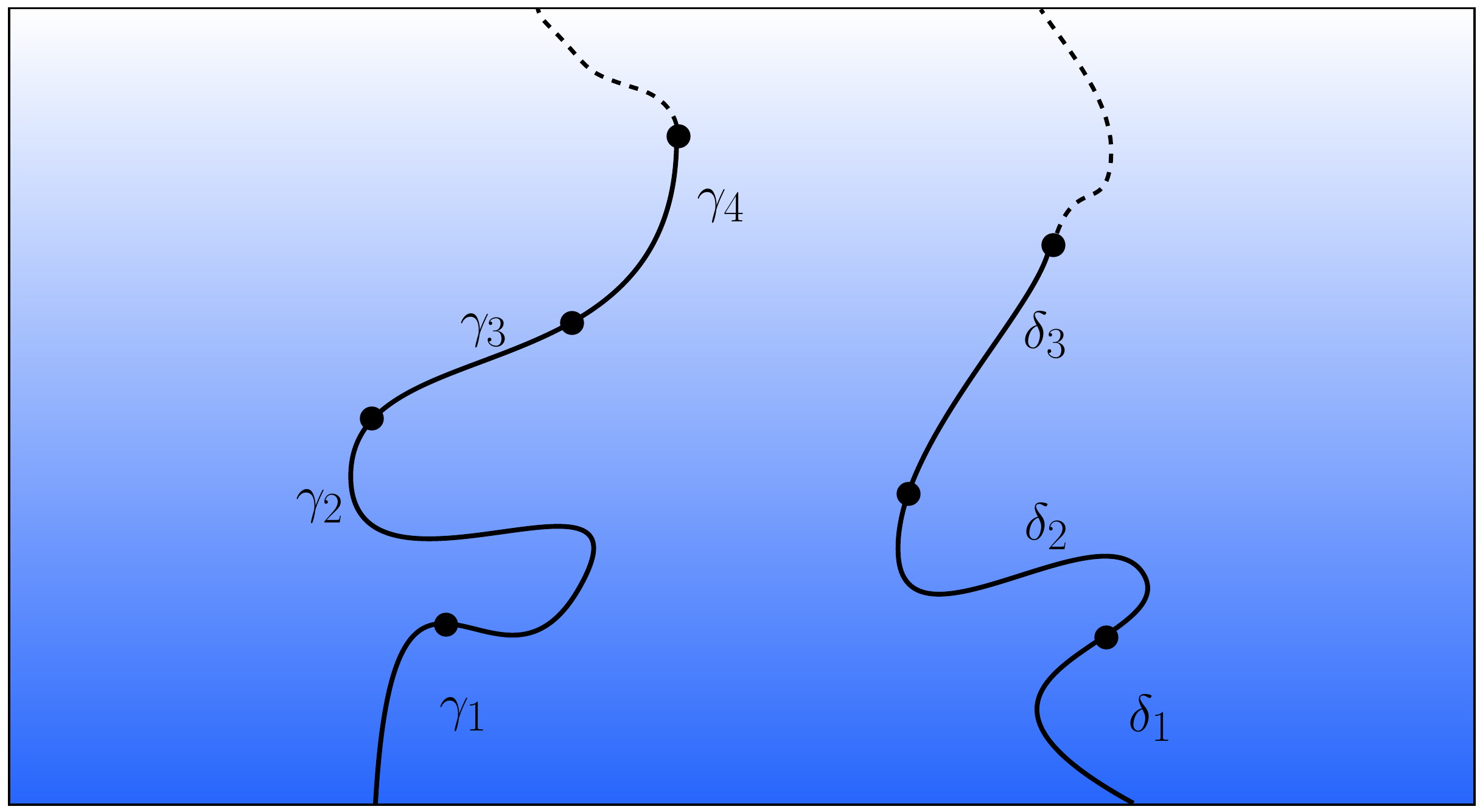}
\caption{$\Gamma_n$ and $\Delta_n$.}
\end{figure}

\begin{question}
 Does $(\Gamma_n,\Delta_n)$ converge to multiple SLE for $n\to\infty$ in some sense?
\end{question}

\textit{Update:} In general, the answer to the question is ``no'', see \cite{MR2358649}.

\newpage

\newcommand{\etalchar}[1]{$^{#1}$}
\providecommand{\bysame}{\leavevmode\hbox to3em{\hrulefill}\thinspace}
\providecommand{\MR}{\relax\ifhmode\unskip\space\fi MR }

\providecommand{\MRhref}[2]{%
  \href{http://www.ams.org/mathscinet-getitem?mr=#1}{#2}
}
\providecommand{\href}[2]{#2}

\end{document}